\documentclass[11pt, a4paper]{article}
\usepackage{amsmath}
\usepackage{amsthm}
\usepackage{mathtools}
\usepackage{mathrsfs}
\usepackage[numbers]{natbib}
\usepackage{lmodern}
\usepackage{mathpazo}
\usepackage{amssymb}
\usepackage[T1]{fontenc}
\numberwithin{equation}{section}
\usepackage[margin=1in]{geometry} 
\usepackage{indentfirst}
\usepackage{enumitem}
\usepackage{array}
\usepackage{placeins}
\usepackage{xcolor}
\usepackage{hyperref}
\hypersetup{
    colorlinks=true,
    linkcolor=blue,
    citecolor=blue,
    urlcolor=blue,
    pdftitle={Classification of finite-dimensional Nichols algebras of rank two in twisted Yetter--Drinfeld categories},
    pdfauthor={Bowen Li},
    pdfsubject={Classification of finite-dimensional Nichols algebras of rank two},
    pdfkeywords={Nichols algebra, Yetter--Drinfeld module, coquasi-Hopf algebra, Cartan graph, Weyl groupoid}
}
\usepackage{cleveref}

\newtheorem{thm}{Theorem}[section]
\newtheorem{definition}[thm]{Definition}
\newtheorem{lemma}[thm]{Lemma}
\newtheorem{cor}[thm]{Corollary}
\newtheorem{prop}[thm]{Proposition}
\newtheorem{rmk}[thm]{Remark}
\newtheorem*{theorema}{Theorem A}
\newtheorem*{theoremb}{Theorem B}
\newcolumntype{L}[1]{>{\raggedright\arraybackslash}p{#1}}
\newcommand{\supp}{\operatorname{supp}}

\newcommand{\GG}{{^{G}_{G}\mathcal{YD}^{\Phi}}}

\newcommand{\brhd}{\blacktriangleright}

\newcommand{\bB}{\mathcal{B}}

\newcommand{\gaplink}[1]{%
  \href{https://github.com/lbwheizi/rank2/blob/835a6ad3388e06c3e67cc71832f20c8c3cc4613a/certificates/#1}%
  {\texttt{GAP source}}}
\makeatletter
\newcommand\blfootnote[1]{%
  \begingroup
  \renewcommand\thefootnote{}%
  \footnotetext[0]{%
    \begin{minipage}[t]{\dimexpr\linewidth-1.8em\relax}
      \setlength{\parindent}{0pt}%
      #1
    \end{minipage}%
  }%
  \endgroup
}
\makeatother
\title{Classification of finite-dimensional Nichols algebras of rank two in twisted Yetter--Drinfeld categories}
\author{Bowen Li}
\date{}

\begin{document}

\maketitle
\begin{center}
\itshape
Dedicated to the memory of my beloved cat Goudan.
\end{center}
\blfootnote{
\textit{Affiliation:} School of Mathematics Sciences, Shanghai Jiao Tong University.

 \textit{Email:}
\href{mailto:lbwwysusu@gmail.com}%
{lbwwysusu@gmail.com}

}

\begin{abstract}
Let \(G\) be a finite non-abelian group, let
\(\Phi\in Z^3(G,\mathbb C^\times)\) be normalized, and let \(V,W\) be
finite-dimensional simple objects of
\({}_G^G\mathcal{YD}^{\Phi}\).  We classify, up to interchange, the
braided-indecomposable pairs \((V,W)\) whose supports generate \(G\)
and for which \(\mathcal B(V\oplus W)\) is finite-dimensional.  The
classification consists of eight cases with five possible support
quandles.  In every case the Cartan graph is standard of type \(A_2\),
\(B_2\), or \(G_2\), and the dimension is determined explicitly.  A
new phenomenon occurs  {in the \(\Gamma _2\) case}: the twisted setting admits a
family of type \(G_2\) absent from the ordinary \(\Gamma _2\)
classification and
we construct an explicit example over a non-abelian group of order
\(16\).
\end{abstract}
\section{Introduction}

Nichols algebras first appeared in the study of bialgebras of type one
\cite{Nichols1978}.  If \(V\) is an object in a braided tensor
category, its Nichols algebra \(\mathcal B(V)\) is the connected graded
braided Hopf algebra generated by \(V\), with \(V\) as its space of
primitive elements.  Nichols algebras play a central role in the
lifting method for pointed Hopf algebras and are closely related to
quantum groups and root systems, see
\cite{ASpointed,AS10,rootsys}.  For braided vector spaces of diagonal
type, Weyl groupoids and arithmetic root systems led to the
classification of finite-dimensional Nichols algebras and to explicit
presentations by generators and relations
\cite{Hec06,Hec09,Angiono13,A15,AA17}.

For ordinary Yetter--Drinfeld modules over a non-abelian group, a
finite-dimensional simple object is determined by a finite conjugacy
class and an irreducible representation of the centralizer of a chosen
element.  The conjugation action on the class is part of the braiding
and places strong restrictions on finite-dimensional Nichols algebras,
see \cite{Bcop,AGn03,Gn00,Gn+11}.  Reflections and root systems for
semisimple Yetter--Drinfeld modules were developed in
\cite{AHS10,HS10,HY08,CH15}.  Finite-dimensional Nichols algebras
\(\mathcal B(V\oplus W)\), where \(V\) and \(W\) are simple
Yetter--Drinfeld modules, were studied in
\cite{rank2-1,rank2-2,rank2-3} and completely classified in
\cite{rank2-classification}.  The corresponding
higher-rank cases with finite root systems were classified in
\cite{HV17}.

Quasi-Hopf algebras were introduced by Drinfeld \cite{Drinfeld}, and
twisted quantum doubles of finite groups were constructed in
\cite{DPR}.  In this paper, \(G\) is finite and non-abelian, and we work
in the twisted Yetter--Drinfeld category
\({}_G^G\mathcal{YD}^{\Phi}\).  A simple object is determined by a
conjugacy class and an irreducible projective representation of the
corresponding centralizer. Yetter--Drinfeld modules and bosonization in
quasi-Hopf and coquasi-Hopf settings were studied in
\cite{rightyd,YD-module,quasibon}.  Related classifications of finite
quasi-quantum groups over abelian groups appear in
\cite{rank1,nondiagonal,QQG,FQQR2,huang2024classification}.

The associator changes the projective representations and the braided
adjoint maps.  Thus the ordinary classification cannot be applied
directly.  Reflection theory for Nichols algebras over coquasi-Hopf
algebras was established in
\cite{reflection1,reflection2}.  The Cartan graphs, tensor decomposition, and finite-dimensionality criterion used below are
proved in \cite{reflection3}.

For a tuple \(M=(V,W)\), we say that \(M\) is
{braided-indecomposable} if
$
c_{W,V}c_{V,W}\ne\operatorname{id}_{V\otimes W}.
$
The main classification is the following.  Theorem~\ref{thm:complete-classification} gives an equivalent
formulation in terms of reflections.

\begin{theorema}
Let \(G\) be a finite non-abelian group, let
\(\Phi\in Z^3(G,\mathbb C^\times)\) be normalized, and let
\(M=(V,W)\) be a tuple of finite-dimensional simple objects in
\({}_G^G\mathcal{YD}^{\Phi}\).  Assume that
\(G=\langle\operatorname{supp}V\cup\operatorname{supp}W\rangle\)
and that \(M\) is braided-indecomposable.  Then
\(\mathcal B(V\oplus W)\) is finite-dimensional if and only if,
after possibly interchanging \(V\) and \(W\), the tuple satisfies one
of the alternatives in statement \textnormal{(3)} of
Theorem~\ref{thm:complete-classification},
equivalently one of the eight cases in
Theorems~\ref{thm:Gamma2-classification},
\ref{thm:Gamma4-classification},
\ref{thm:T-classification}, and
\ref{thm:Gamma3-classification}.
In these cases, 
the Cartan graph is standard of type \(A_2\), \(B_2\), or \(G_2\),
and the dimension is given in
Table~\ref{tab:complete-classification}.
\end{theorema}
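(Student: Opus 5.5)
The plan is to reduce Theorem A to the reflection-theoretic finiteness criterion of \cite{reflection3} and then run a case analysis organized by the support quandle, mirroring the ordinary classification \cite{rank2-classification} but keeping track of the associator \(\Phi\) throughout.

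\textbf{Step 1 (reflection criterion).} First I would use the criterion from \cite{reflection3}: for a braided-indecomposable pair \(M=(V,W)\) of simple objects in \(\GG\), \(\mathcal B(V\oplus W)\) is finite-dimensional if and only if \(M\) admits all reflections, the resulting Cartan graph is finite, and every Nichols algebra attached to a real root is finite-dimensional. In rank two, a finite Cartan graph coming from a tuple of simple objects is forced to be standard of type \(A_2\), \(B_2\) or \(G_2\), or else belongs to a short list that I would rule out separately. The dimension then follows from the tensor decomposition of \(\mathcal B(V\oplus W)\) along the positive roots, giving the entries of Table~\ref{tab:complete-classification}. This reduces the theorem to the equivalence in statement (3) of Theorem~\ref{thm:complete-classification}.

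\textbf{Step 2 (support quandle).} Next I would constrain \(\operatorname{supp}(V\oplus W)\), which is a union of two conjugacy classes generating \(G\). Finiteness of the Cartan entries \(a_{12},a_{21}\) bounds the length of the \((\operatorname{ad} V)^m(W)\) strings. The combinatorial conditions on the class action, namely that \(\operatorname{supp}V\) and \(\operatorname{supp}W\) commute elementwise or nearly so, only depend on the quandle structure and not on \(\Phi\). Hence they go through as in the untwisted case, and I expect them to leave exactly the five quandles \(\Gamma_2,\Gamma_3,\Gamma_4,T\) (and their variants). For each quandle, I would normalize \(\Phi\) on the small generated group. I would also reduce the simple objects to pairs (class, irreducible \(\Phi\)-projective representation of the centralizer).

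\textbf{Step 3 (case analysis per quandle).} For each quandle I would compute \((\operatorname{ad} V)^m(W)\) explicitly. The braided adjoint action acquires scalar factors from \(\Phi\), and vanishing of these elements is governed by quantum-symmetrizer identities involving products of the braiding eigenvalues and 2-cocycle values. Solving these gives the Cartan matrices. I would then iterate the reflections, identifying each reflected object through its support and projective representation, to check that the Cartan graph closes up. The fibers over real roots are rank-one Nichols algebras, whose finite-dimensionality is known from \cite{rank1} and the ordinary results. This produces Theorems~\ref{thm:Gamma2-classification}, \ref{thm:Gamma4-classification}, \ref{thm:T-classification}, \ref{thm:Gamma3-classification}. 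Conversely, if a pair fails these conditions, some \(a_{ij}\) is infinite or a reflection yields an infinite-dimensional rank-one piece.

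\textbf{Main obstacle.} I expect the hardest part to be the \(\Gamma_2\) case. There the twisted associator allows projective representations whose braiding scalars avoid the ordinary constraints, and a new \(G_2\) family appears. Certifying that the reflection orbit closes with \(G_2\) type, and ruling out all other parameter choices, requires tracking \(\Phi\)-dependent scalars through several reflections. I would first try to untwist \(\Phi\) by restricting it to abelian subgroups, where it becomes a coboundary times a diagonal part. The remaining scalar equations would then be verified by computation, as the GAP certificates suggest. Realizability of this family would be shown by the explicit order-16 example.
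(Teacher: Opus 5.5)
Your plan has genuine gaps, both in its overall architecture and in the mechanism you propose for necessity.

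\textbf{Step 1 and Step 2 cannot be run at $M$ itself.} There is no general fact that a finite rank-two Cartan graph coming from a tuple of simple objects is standard \emph{or in a short list}. There are infinitely many finite rank-two Cartan graphs, most of them non-standard; many arise already from rank-two diagonal Nichols algebras. In this paper, standardness is an \emph{output} of the case analysis, not an input.

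Likewise, the support classification (Theorem~\ref{thm:support-classification}, adapted from \cite{rank2-2}) needs the precise vanishings $(\operatorname{ad}V)^2(W)=0$ and $(\operatorname{ad}W)^4(V)=0$. Mere finiteness of $a_{12},a_{21}$ at $M$ does not give these. The missing step has three parts:
\begin{enumerate}
\item Use \cite[Theorem~4.2]{HV17} (Lemma~\ref{lem:finite-type-object}) to pick $N\in\mathcal F_2(M)$ whose Cartan matrix is of finite type.
\item Order $N$ so that $-a_{12}^{[N]}=1$, and run the support classification and the case analysis at $N$.
\item Transport the conclusion back to $M$. This requires showing that each parameter family is closed under both reflections (Propositions~\ref{prop:Gamma2-standard-G2-stability-under-reflections}, \ref{prop:Gamma4-explicit-B2-stability-under-reflections}, \ref{prop:T-G2-stability-under-reflections}, \ref{prop:Gamma3-reflection-closure}). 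That closure is also what proves standardness.
\end{enumerate}

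\textbf{Your account of necessity misses the genuinely twisted obstructions.} You propose that a failing pair has \emph{some $a_{ij}$ infinite, or a reflection yielding an infinite-dimensional rank-one piece}. Neither mechanism detects the cohomological obstructions, which local adjoint calculations cannot see.
\begin{itemize}
\item In the $T$ case, $H^3(T,\mathbb C^\times)\simeq\mathbb Z/8$. The class of order two satisfies all five scalar conditions, every reflection is defined, and every Cartan entry is finite. It is excluded only because four reflections return, up to a braided autoequivalence, to the initial pair. The transport matrix $P$ satisfies $(P-I)^2=0$, which yields infinitely many real roots $3n\alpha_1+(6n+1)\alpha_2$.
\item In the $\Gamma_3$ case you cannot simply \emph{normalize $\Phi$}. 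One must first prove $\kappa_\Phi(\varepsilon)=1$ from the second adjoint object, using $H_3(\Gamma_3,\mathbb Z)\simeq\mathbb Z/3$. Only then can you pull back to $\Gamma_3$, untwist, and invoke \cite{rank2-classification}.
\item In the $\Gamma_2$ case untwisting is impossible in principle. The $G_2$ family requires $\Theta_\Phi(g,h)=-1$, while $\Theta_1=1$. The adjoint maps involve the non-commuting elements $g,h$, so restricting $\Phi$ to abelian subgroups helps only for the rank-one diagonal factors.
\item Excluding $B_2$ in the $\Gamma_2$ case needs a global input you do not anticipate. The Sylow splitting shows that $\Theta_\Phi$ has $2$-power order, which is incompatible with $\lambda^2=\Theta_\Phi$ and $1+\lambda+\lambda^2=0$.
\end{itemize}

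\textbf{The rank-one factors are not all covered by \cite{rank1}.} The four-element root objects in the $\Gamma_4$ and $T$ cases have non-abelian support, so \cite{rank1} does not apply to them. The paper reduces them either to its own $\Gamma_2$ case of type $A_2$, or, after trivializing the pulled-back cocycle, to \cite{AGn03}.
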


The proof proceeds in two stages.  Suppose first that
\(\mathcal B(V\oplus W)\) is finite-dimensional.  The
finite-dimensionality criterion gives all reflections and a finite
Cartan graph.  After finitely many reflections and, if necessary,
interchanging the entries, one reaches a pair whose Cartan matrix is
of type \(A_2\), \(B_2\), or \(G_2\).  The corresponding adjoint
vanishings force its support quandle to be one of
\[
Z_T^{4,1},\quad Z_2^{2,2},\quad Z_3^{3,1},\quad
Z_3^{3,2},\quad Z_4^{4,2}.
\]
Their enveloping groups show that \(G\) is an epimorphic image of
\(T,\Gamma _2,\Gamma _3,\Gamma _3\), or \(\Gamma _4\), respectively.
Thus necessity reduces to the four case classifications in
Sections~4--7.  These calculations yield exactly eight alternatives,
and reflection  transfers the appropriate alternative back to
the original pair.
Conversely, each alternative is covered by the corresponding case
theorem, which proves finite-dimensionality and identifies the Cartan
graph.  The tensor decomposition gives the dimension formulas
in Table~\ref{tab:complete-classification}. 

Table~\ref{tab:complete-classification} lists the eight cases.  The notation in the dimension formulas is that of the
corresponding case sections.  The symbols
\(\mathcal P_i^\Phi\) denote the four \(\Gamma _3\) cases defined in
\eqref{eq:Gamma3-P1}--\eqref{eq:Gamma3-P4}.  The integers
\(N_\bullet\) in the \(\Gamma _2\)-\(A_2\) and
\(\Gamma _4\)-\(B_2\) rows are defined in the corresponding case
sections.

\begin{table}[htbp]
\centering
\small
\renewcommand{\arraystretch}{1.25}
\begin{tabular}{@{}c c c p{0.38\textwidth}@{}}
\hline
Case & Quandle & Cartan type & \multicolumn{1}{c}{Dimension} \\
\hline
\(\Gamma _2\)-\(A_2\)
& \(Z_2^{2,2}\) & \(A_2\)
& \(64N_VN_{X_1}N_W\)
  (Corollary~\ref{cor:Gamma2-A2-Nichols-dimension}) \\
\(\Gamma _2\)-\(G_2\)
& \(Z_2^{2,2}\) & \(G_2\)
& \(2^{21}\) \\
\(\Gamma _3\)-\(\mathcal P_1^\Phi\)
& \(Z_3^{3,2}\) & \(B_2\)
& \(10368\) \\
\(\Gamma _3\)-\(\mathcal P_2^\Phi\)
& \(Z_3^{3,2}\) & \(B_2\)
& \(2304\) \\
\(\Gamma _3\)-\(\mathcal P_3^\Phi\)
& \(Z_3^{3,1}\) & \(B_2\)
& \(2304\) \\
\(\Gamma _3\)-\(\mathcal P_4^\Phi\)
& \(Z_3^{3,1}\) & \(B_2\)
& \(10368\) \\
\(\Gamma _4\)-\(B_2\)
& \(Z_4^{4,2}\) & \(B_2\)
& \(\displaystyle 2^{16}N_VN_{Y_2}\prod_{\gamma\in\{0,01,1\}}N_{W,\gamma}
    \prod_{\gamma\in\{0,01,1\}}N_{Y_1,\gamma}\)
  \newline
  (Corollary~\ref{cor:Gamma4-Nichols-dimension}) \\
\(T\)-\(G_2\)
& \(Z_T^{4,1}\) & \(G_2\)
& \(6^3\,72^3=80\,621\,568\) \\
\hline
\end{tabular}
\caption{The eight finite-dimensional cases in Theorem~A.}
\label{tab:complete-classification}
\end{table}
\FloatBarrier

The main new result is the \(\Gamma _2\)-\(G_2\) case.
For the support quandle \(Z_2^{2,2}\), the ordinary
finite-dimensional classification has only Cartan type \(A_2\)
\cite[Theorem~4.6]{rank2-1}.  In the twisted setting, a family
of type \(G_2\) also occurs.  Thus the associator changes the
possible Cartan matrix for a fixed support quandle.
The following theorem gives the parameter conditions for this
family, together with its dimension and an explicit realization.

\begin{theoremb}
Let \(G\) be a finite non-abelian quotient of \(\Gamma _2\), let
\(\Phi\in Z^3(G,\mathbb C^\times)\) be normalized, and let
\[
V=M(g,\rho),\qquad W=M(h,\sigma).
\]
The inducing projective representations \(\rho\) and \(\sigma\) are
automatically one-dimensional by
Lemma~\ref{lem:Gamma2-projective-representations-one-dimensional}.
Assume that the tuple \((V,W)\) is braided-indecomposable.  Then
\(\mathcal B(V\oplus W)\) is finite-dimensional and the tuple has a
standard Cartan graph of type \(G_2\) if and only if, after possibly
interchanging \((g,V,\rho)\) and \((h,W,\sigma)\), the parameters
computed for the resulting order satisfy
\[
\Delta=1,
\qquad
\lambda'=-1,
\qquad
\lambda^2=\Theta_\Phi(g,h)=-1.
\]
The parameters are defined in
\eqref{eq:Gamma2-parameters}, \eqref{eq:Delta-definition}, and
\eqref{eq:Theta-Phi-definition}.  In this case
\[
\dim\mathcal B(V\oplus W)=2^{21}.
\]
This case is realized by the explicit example over a non-abelian
group of order \(16\) in
Section~\ref{subsec:order16-G2-example}.
\end{theoremb}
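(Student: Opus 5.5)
Theorem~B is the \(G_2\) branch of the \(\Gamma_2\) case classification, so I would prove it by specializing the general machinery of Theorem~\ref{thm:Gamma2-classification} rather than starting from scratch.

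\textbf{Setup.} Since \(\Gamma_2\) has support quandle \(Z_2^{2,2}\), the classes of \(g\) and \(h\) each have two elements. By Lemma~\ref{lem:Gamma2-projective-representations-one-dimensional}, \(\rho\) and \(\sigma\) are one-dimensional, so \(V\) and \(W\) are two-dimensional. I would write the braiding of \(V\oplus W\) explicitly in a basis \(\{v_1,v_2,w_1,w_2\}\). The diagonal scalars \(\lambda,\lambda'\), the mixed term \(\Delta\), and the associator contribution \(\Theta_\Phi(g,h)\) from \eqref{eq:Gamma2-parameters}, \eqref{eq:Delta-definition}, \eqref{eq:Theta-Phi-definition} then encode all the braiding constants. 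The first step is to compute the Cartan entries \(a_{12}=-\max\{m:(\operatorname{ad}_c V)^m(W)\neq0\}\) and \(a_{21}\). This uses the twisted braided adjoint, where the associator enters through \(\Theta_\Phi\). I would find \(V\) of Nichols dimension \(4\) exactly when \(\lambda=-1\) or the class-2 analogue holds, and compute the degree-\(m\) adjoint vanishing conditions as polynomial identities in the parameters.

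\textbf{Necessity.} Assume \(\mathcal B(V\oplus W)\) is finite-dimensional with a standard Cartan graph of type \(G_2\). After interchanging, \(a_{12}=-3\) and \(a_{21}=-1\). Then \((\operatorname{ad}V)^3(W)\neq0\) but \((\operatorname{ad}V)^4(W)=0\), and \((\operatorname{ad}W)^2(V)=0\); these are the vanishing criteria for the twisted braided adjoint. The second vanishing should force \(\Delta=1\) together with a relation among \(\lambda'\) and \(\Theta_\Phi\). The cubic condition should force \(\lambda^2=\Theta_\Phi(g,h)=-1\) and \(\lambda'=-1\). In the untwisted case \(\Theta_\Phi\equiv1\), so \(\lambda^2=-1=\Theta\) is impossible, which explains why \(G_2\) does not appear in \cite{rank2-1}.

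\textbf{Sufficiency and dimension.} I would apply the reflection \(R_1\) and identify \(R_1(V,W)=(V^*,V_3)\), where \(V_3=(\operatorname{ad}V)^3(W)\). I would check that it is again a \(\Gamma_2\)-type pair whose parameters give Cartan matrix \(G_2\) at the new object. Doing the same for \(R_2\) shows the Cartan graph closes with constant Cartan matrix, hence is standard. The real roots are then the six positive roots of \(G_2\). By the tensor decomposition of \cite{reflection3}, the dimension of \(\mathcal B(V\oplus W)\) is the product of the Nichols dimensions of the six root objects. Each is a rank-one \(Z_2^{2,2}\)-type object whose parameters satisfy the \(\dim=4\) or \(\dim=8\) condition. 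A count giving \(2^{21}=4^3\cdot 8^3\) should follow from three short roots of dimension \(4\) and three long roots of dimension \(8\) (or the converse assignment). Finally, to show the case is nonempty, I would exhibit an order-16 quotient of \(\Gamma_2\) with a normalized 3-cocycle realizing \(\Theta_\Phi(g,h)=-1\) (Section~\ref{subsec:order16-G2-example}), and choose \(\rho,\sigma\) to meet the parameter conditions.

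\textbf{Main obstacle.} The hard step is computing \((\operatorname{ad}V)^m(W)\) for \(m=3,4\) in the twisted setting. There, associator factors \(\Phi\) accumulate through rebracketing. Tracking them to a single invariant \(\Theta_\Phi\), and showing the reflected pairs again have the same parameters, is where I expect most of the work to lie. I would first try to trivialize \(\Phi\) on the abelian subgroups involved via a coboundary adjustment, reducing to near-diagonal computations.
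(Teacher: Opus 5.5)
\textbf{The dimension count fails.} You propose \(2^{21}=4^3\cdot 8^3\), but \(4^3\cdot 8^3=2^{15}\), and swapping which roots get \(4\) and which get \(8\) does not change this. The short-root objects \(V,X_1,X_2\) (support representatives \(g,gh,g^2h\)) restrict to the abelian subgroup \(\langle\varepsilon,x\rangle\) as two-dimensional diagonal braidings with \(q_{00}=q_{11}=\lambda\) and \(q_{01}q_{10}=1\). Since \(\lambda^2=-1\), \(\lambda\) is a primitive fourth root of unity. Each of these Nichols algebras is therefore \(\mathbb C[u_0]/(u_0^4)\otimes\mathbb C[u_1]/(u_1^4)\), of dimension \(16\).

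The long-root objects \(W\), \(X_3\), and \((\operatorname{ad}Y_1)^3(W^*)\) have \(q_{00}=q_{11}=-1\) and \(q_{01}q_{10}=-1\). This is the super \(A_2\) braiding at \(-1\), of dimension \(2^3=8\). The total is \(16^3\cdot 8^3=2^{21}\).

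Your rank-one criterion ``dimension \(4\) iff \(\lambda=-1\)'' is irrelevant here, because \(\lambda\neq-1\) in this case. To obtain the correct labels you need three ingredients your sketch never supplies. First, the inducing values on the root objects: \(\sigma_1(gh)=-\lambda\lambda'\), \(\sigma_2(g^2h)=\lambda^3\lambda'\), and \(\sigma_3(g^3h)=-\lambda^6\lambda'\). Second, the identity \(q_{01}q_{10}=\tau(x)^2/\Theta_P\). Third, the invariance of \(\Theta_P=-1\) under interchanging the entries and under reflection.

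\textbf{Necessity omits simplicity.} Vanishing conditions alone give only two facts. The equality \((\operatorname{ad}W)^2(V)=0\) gives \(\lambda'=-1\), once \(\Delta'=1\); it forces neither \(\Delta=1\) nor any relation between \(\lambda'\) and \(\Theta_\Phi\). The equality \(X_4=0\) gives \((1+\lambda)(1+\lambda^2)=0\).

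The equations \(\Delta=1\) and \(\lambda^2=\Theta_\Phi\) are instead simplicity criteria for \(X_1\) and \(X_2\). In each case the relevant homogeneous component of the image must be one-dimensional, because irreducible \(\Phi_x\)-projective representations of the abelian centralizers are one-dimensional. These criteria are forced only because, for a finite Cartan graph, every nonzero object in the root string is simple (Lemma~\ref{lem:root-string-adjoint-simplicity}). The clean formulas \(x_n=(1+\lambda+\cdots+\lambda^{n-1})(\cdots)\) for \(n=3,4\) also presuppose that the preceding adjoint objects are simple, so that \(X_{n-1}=\mathbb CG\rhd x_{n-1}\) with one-dimensional components.

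For sufficiency, you must also prove that \(X_3\) is simple. The paper does this via a determinant condition that reduces to \(\Theta_\Phi^2=1\).

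Finally, trivializing \(\Phi\) by a coboundary can help only for the root objects restricted to \(\langle\varepsilon,x\rangle\). In the adjoint computations the supports generate \(G\), and \(\Theta_\Phi=-1\) is exactly the part of the associator that cannot be removed.
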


In the \(\Gamma _3\) and \(T\) cases, finite-dimensionality
forces the pullback of \(\Phi\) to the corresponding enveloping
group to be a coboundary.  For \(\Gamma _3\), this permits a
reduction to the ordinary classification after  {lifting} the
support degrees and twisting.  In the \(T\) case, the adjoint
calculations leave one possible nontrivial cohomology class.
We exclude this class by a sequence of reflections producing
infinitely many real roots.

The \(\Gamma _2\)-\(A_2\) case is obtained from the same
adjoint calculations as the \(G_2\) case.  The \(\Gamma _4\)
case retains a standard Cartan graph of type \(B_2\), with
finite-dimensionality characterized by explicit conditions
on the cocycle and the inducing projective representations.
In every case, we  determine the
Nichols algebras of the simple objects corresponding to the
positive roots.  Their tensor decomposition gives
finite-dimensionality and the dimension formulas.

The paper is organized as follows.  Section~2 reviews simple objects,
Nichols algebras, braided adjoint actions, reflections, and Cartan
graphs.  Section~3 describes the five quandles and their enveloping
groups.  Sections~4--7 treat the \(\Gamma _2\), \(\Gamma _4\), \(T\),
and \(\Gamma _3\) cases, respectively.  Section~8 proves the complete
classification theorem.  The appendices contain the cocycle identities used in the proofs,
together with detailed coefficient calculations and computations in
the bar complex.

\section{Preliminaries}
Throughout the paper, the base field is
$\Bbbk=\mathbb C$.
\subsection{Twisted Yetter--Drinfeld modules and simple objects}
This subsection fixes the conventions for twisted Yetter--Drinfeld
modules used throughout the paper. We recall the action, tensor
product, associativity, braiding, and then describe
simple objects in terms of projective representations of centralizers.

Let $G$ be a finite group and let $\Phi$ be a normalized $3$-cocycle
on $G$. We first recall Yetter--Drinfeld modules over the
coquasi-Hopf algebra $(\Bbbk G,\Phi)$. Let $V$ be a left
$\Bbbk G$-comodule. Denote its structure map by
\(\delta_L\colon V\longrightarrow\Bbbk G\otimes V\), and set
\[
V_g:={}^gV:=\{v\in V\mid\delta_L(v)=g\otimes v\},
\qquad
\supp V:=\{g\in G\mid V_g\ne0\}.
\]
Thus \(V=\bigoplus_{g\in G}{}^gV\).
An element of ${}^gV$ is called homogeneous of degree $g$. Thus
$\deg v=g$ for $v\in{}^gV$.
The left $\Bbbk G$-comodule $(V,\delta_L)$ is a left-left
Yetter--Drinfeld module over $(\Bbbk G,\Phi)$ if there is a $\mathbb{C}$-linear map
$\rhd:\Bbbk G\otimes V\to V$ such that, for all $e,f\in G$ and
$v\in V_g$,
\begin{align}\label{eq:YD-projective-action}
&e \rhd (f \rhd v) = \frac{\Phi(e, f, g)\Phi(efgf^{-1}e^{-1}, e, f)}{\Phi(e, fgf^{-1}, f)} (ef) \rhd v, \\
&1_G \rhd v = v, \nonumber \\
&e \rhd v \in V_{ege^{-1}}. \nonumber
\end{align}

The category of left-left Yetter--Drinfeld modules over
$(\Bbbk G,\Phi)$ is denoted by $\GG$.
It is a $\mathbb{C}$-linear braided monoidal abelian category. For $M,N\in\GG$, the
coaction and action on $M\otimes N$ are given by
\begin{align}
\delta_L(m_g \otimes n_h) &:= gh \otimes m_g \otimes n_h, \nonumber \\
x \rhd (m_g \otimes n_h) &:= \frac{\Phi(x, g, h)\Phi(xgx^{-1}, xhx^{-1}, x)}{\Phi(xgx^{-1}, x, h)} x \rhd m_g \otimes x \rhd n_h,\label{eq:tensor-product}
\end{align}
for all $x, g, h \in G$ and $m_g \in {}^gM$, $n_h \in {}^hN$.
The associativity constraint $a$ and the braiding $c$ of $\GG$ are
given respectively by
\begin{align*}
a((u_e \otimes v_f) \otimes w_g) &= \Phi(e, f, g)^{-1}u_e \otimes (v_f \otimes w_g), \\
c(u_e \otimes v_f) &:= e \rhd v_f \otimes u_e,
\end{align*}
for all $e, f, g \in G$, $u_e \in {}^eU$, $v_f \in {}^fV$, $w_g \in {}^gW$ and $U, V, W \in \GG$.
These conventions are the specialization of
\cite[Definition~2.5 and equations~(2.9), (2.12)--(2.13)]{reflection1}
to $(\Bbbk G,\Phi)$.
For convenience, set
\begin{align*}
      {\Phi}_x(g,h)&:=\frac{\Phi(g, h, x)\Phi(ghxh^{-1}g^{-1}, g, h)}{\Phi(g, hxh^{-1}, h)},\\
      \Phi^x(g,h)&:=\frac{\Phi(x, g, h)\Phi(xgx^{-1}, xhx^{-1}, x)}{\Phi(xgx^{-1}, x, h)}.
\end{align*}
These functions satisfy the following relation.
\begin{lemma}
\label{lem:local-cocycle-coherence}
For all \(x,y,z,w\in G\), one has
\begin{equation}
\label{eq:local-cocycle-coherence}
\Phi_x(z,w)\Phi_x(y,zw)
=
\Phi_{wxw^{-1}}(y,z)\Phi_x(yz,w).
\end{equation}
\end{lemma}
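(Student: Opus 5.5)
The identity is the statement that $\Phi_x$ behaves like a $2$-cocycle of $G$ twisted by conjugation, and it is equivalent to the associativity of the projective action in \eqref{eq:YD-projective-action}. To see this, note that \eqref{eq:YD-projective-action} reads $e\rhd(f\rhd v)=\Phi_g(e,f)\,(ef)\rhd v$ for $v\in V_g$. Compute $y\rhd\bigl(z\rhd(w\rhd v)\bigr)$ for $v$ of degree $x$ in two ways:
\begin{itemize}
\item contracting $z\rhd(w\rhd v)$ first gives $\Phi_x(z,w)\Phi_x(y,zw)$;
\item using that $w\rhd v$ has degree $wxw^{-1}$ and contracting $y\rhd(z\rhd\,\cdot\,)$ first gives $\Phi_{wxw^{-1}}(y,z)\Phi_x(yz,w)$.
\end{itemize}
Both are coefficients of $(yzw)\rhd v$, which is exactly \eqref{eq:local-cocycle-coherence}. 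This heuristic is circular as a proof, since a nonzero module with these rules presupposes the identity. I would therefore give a direct cocycle computation and keep the heuristic only as motivation. One could also cite associativity of the twisted quantum double of \cite{DPR}, but conventions would then need matching.

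\textbf{Direct computation.} Introduce the successive conjugates
\[
x_1=wxw^{-1},\qquad x_2=zx_1z^{-1},\qquad x_3=yx_2y^{-1}=(yzw)x(yzw)^{-1}.
\]
Write each of the four factors using the definition of $\Phi_x$:
\[
\Phi_x(z,w)=\frac{\Phi(z,w,x)\Phi(x_2,z,w)}{\Phi(z,x_1,w)},\qquad
\Phi_x(y,zw)=\frac{\Phi(y,zw,x)\Phi(x_3,y,zw)}{\Phi(y,x_2,zw)},
\]
\[
\Phi_{x_1}(y,z)=\frac{\Phi(y,z,x_1)\Phi(x_3,y,z)}{\Phi(y,x_2,z)},\qquad
\Phi_x(yz,w)=\frac{\Phi(yz,w,x)\Phi(x_3,yz,w)}{\Phi(yz,x_1,w)}.
\]
Here I used $zw\,x\,(zw)^{-1}=x_2$ and $(yz)x_1(yz)^{-1}=x_3$. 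The ratio LHS/RHS is then a product of twelve $\Phi$-values with exponents $\pm1$.

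Cancel these using the $3$-cocycle condition
\[
\Phi(b,c,d)\,\Phi(a,bc,d)\,\Phi(a,b,c)=\Phi(ab,c,d)\,\Phi(a,b,cd)
\]
applied to four quadruples, each chosen so that one side of it contains two of the twelve factors:
\begin{itemize}
\item $(y,z,w,x)$, which relates $\Phi(z,w,x)$, $\Phi(y,zw,x)$, $\Phi(yz,w,x)$ and introduces $\Phi(y,z,wx)$, $\Phi(y,z,w)$;
\item $(x_3,y,z,w)$, which relates $\Phi(x_3,y,zw)$, $\Phi(x_3,y,z)$, $\Phi(x_3,yz,w)$ and introduces $\Phi(y,z,w)$, $\Phi(x_3y,z,w)$;
\item $(y,z,x_1,w)$, which involves $\Phi(y,z,x_1)$, $\Phi(z,x_1,w)$, $\Phi(yz,x_1,w)$, $\Phi(y,z,x_1w)$ and $\Phi(y,zx_1,w)$;
\item $(y,x_2,z,w)$, which involves $\Phi(x_2,z,w)$, $\Phi(y,x_2,zw)$, $\Phi(y,x_2,z)$, $\Phi(yx_2,z,w)$ and $\Phi(y,x_2z,w)$.
\end{itemize}
The extra terms then cancel pairwise via the group identities
\[
x_1w=wx,\qquad zx_1=x_2z,\qquad x_3y=yx_2.
\]
For example, $\Phi(y,z,wx)=\Phi(y,z,x_1w)$, $\Phi(y,zx_1,w)=\Phi(y,x_2z,w)$, and $\Phi(x_3y,z,w)=\Phi(yx_2,z,w)$. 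The two copies of $\Phi(y,z,w)$ also cancel.

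\textbf{Main obstacle.} The only real difficulty is the sign (exponent) bookkeeping: one must check that every auxiliary term appears exactly once with each exponent. I would carry this out by listing all factors with exponents in a table after rewriting each auxiliary argument in the normal form $x_1w$, $x_2z$, $yx_2$. If a term fails to cancel, I would swap the orientation (inverse) of one of the four cocycle identities until everything cancels.
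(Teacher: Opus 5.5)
Your proposal is correct and is essentially the paper's proof. The paper applies the cocycle identity to exactly the same four quadruples $(y,z,w,x)$, $(y,z,x_1,w)$, $(y,x_2,z,w)$, $(x_3,y,z,w)$ (its $v,t,u$ are your $x_1,x_2,x_3$) and finishes with the same relations $x_1w=wx$, $zx_1=x_2z$, $x_3y=yx_2$. With the standard orientation no exponent adjustment is needed: the twelve-factor quotient splits into $\Phi(y,zx_1,w)/\Phi(y,z,w)$ times $\Phi(y,z,w)/\Phi(y,x_2z,w)$, and this product is $1$.
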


\begin{proof}
Write \({}^{a}b=aba^{-1}\), and let
\(v={}^{w}x\), \(t={}^{z}v={}^{zw}x\), and
\(u={}^{y}t={}^{yzw}x\).
We use the normalized cocycle identity in the form
\begin{equation}
\label{eq:three-cocycle-identity-local}
\Phi(b,c,d)\Phi(a,bc,d)\Phi(a,b,c)
=\Phi(ab,c,d)\Phi(a,b,cd).
\end{equation}
After substituting the definition of \(\Phi_x\), the quotient
of the left-hand side by the right-hand side in
\eqref{eq:local-cocycle-coherence} is \(AB\), where
\begin{align*}
A&=
\frac{\Phi(z,w,x)\Phi(y,zw,x)\Phi(yz,v,w)}
{\Phi(yz,w,x)\Phi(z,v,w)\Phi(y,z,v)},\\
B&=
\frac{\Phi(t,z,w)\Phi(u,y,zw)\Phi(y,t,z)}
{\Phi(y,t,zw)\Phi(u,y,z)\Phi(u,yz,w)}.
\end{align*}
Applying \eqref{eq:three-cocycle-identity-local} to
\((y,z,w,x)\) and \((y,z,v,w)\), and using \(vw=wx\), gives
\[
A=\frac{\Phi(y,zv,w)}{\Phi(y,z,w)}.
\]
Applying the same identity to \((y,t,z,w)\) and \((u,y,z,w)\),
and using \(uy=yt\) and \(tz=zv\), gives
\[
B=\frac{\Phi(y,z,w)}{\Phi(y,tz,w)}
=\frac{\Phi(y,z,w)}{\Phi(y,zv,w)}.
\]
Thus \(AB=1\), which proves
\eqref{eq:local-cocycle-coherence}.
\end{proof}

\begin{cor}
If \(a,b,c\in C_G(x)\), then
\[
\Phi_x(b,c)\Phi_x(a,bc)
=
\Phi_x(a,b)\Phi_x(ab,c).
\]
Thus the restriction of \(\Phi_x\) to \(C_G(x)\) is a normalized
\(2\)-cocycle.
\end{cor}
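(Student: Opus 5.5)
The corollary is the special case of Lemma~\ref{lem:local-cocycle-coherence} in which all four group elements commute with \(x\), so I would simply specialize that identity. Apply \eqref{eq:local-cocycle-coherence} with \(y=a\), \(z=b\), \(w=c\), keeping \(x\) fixed. This gives
\[
\Phi_x(b,c)\Phi_x(a,bc)=\Phi_{cxc^{-1}}(a,b)\Phi_x(ab,c).
\]
Since \(c\in C_G(x)\), we have \(cxc^{-1}=x\). The subscript on the first factor on the right therefore collapses to \(x\), and this is exactly the asserted identity.

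To call \(\Phi_x|_{C_G(x)}\) a \(2\)-cocycle, two further points are needed. First, the identity just obtained is the standard \(2\)-cocycle condition for the trivial action, since \(C_G(x)\) is a subgroup and \(bc,ab\in C_G(x)\). Second, normalization: I would check \(\Phi_x(1,h)=\Phi_x(g,1)=1\) directly from the definition of \(\Phi_x\) using that \(\Phi\) is normalized.
\begin{itemize}
\item For \(g=1\), the three factors are \(\Phi(1,h,x)\), \(\Phi(hxh^{-1},1,h)\) and \(\Phi(1,hxh^{-1},h)\). Each has an entry equal to \(1\), so each equals \(1\).
\item For \(h=1\), the factors \(\Phi(g,1,x)\), \(\Phi(gxg^{-1},g,1)\) and \(\Phi(g,x,1)\) are likewise trivial.
\end{itemize}
Normalization in fact holds for arbitrary \(g,h\), not only inside the centralizer.

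There is no real obstacle here. The only care needed is to use the commuting hypothesis on \(c\) (the element in the \(w\)-slot) so that \(wxw^{-1}=x\), and to note closure of \(C_G(x)\) so that every argument stays in the subgroup.
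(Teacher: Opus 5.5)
Your proposal is correct and is the paper's argument: specialize Lemma~\ref{lem:local-cocycle-coherence} at \((y,z,w)=(a,b,c)\) and use \(cxc^{-1}=x\), so that the subscript \(cxc^{-1}\) becomes \(x\). Your explicit normalization check, \(\Phi_x(1,h)=\Phi_x(g,1)=1\), is correct; the paper leaves it implicit.
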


\begin{proof}
If \(c\in C_G(x)\), then \(cxc^{-1}=x\), and the assertion follows
from Lemma~\ref{lem:local-cocycle-coherence}.
\end{proof}

Recall that a \(\Phi_x\)-projective representation of \(C_G(x)\) on a vector
space \(U\) is a map
\(\rho:C_G(x)\to\operatorname{GL}(U)\) satisfying
\(\rho(a)\rho(b)=\Phi_x(a,b)\rho(ab)\) for
\(a,b\in C_G(x)\). The simple objects in \(\GG\) are described as follows.
\begin{prop}\label{prop:simple-objects}
Let \(V\) be a simple object of \(\GG\), and let \(x\in\supp V\).
Then \(\supp V=x^G\), where
\(x^G=\{gxg^{-1}\mid g\in G\}\), and \(V_x\) is an irreducible
\(\Phi_x\)-projective representation of \(C_G(x)\). 

Conversely, for every \(x\in G\), each irreducible
\(\Phi_x\)-projective representation \(\rho\) of \(C_G(x)\)
determines a simple object \(M(x,\rho)\) with support \(x^G\).
The \(\Phi_x\)-projective representation of \(C_G(x)\) on
\(M(x,\rho)_x\) is isomorphic to \(\rho\).
After fixing one representative of each conjugacy class, the pairs
\((x,\rho)\), with \(\rho\) taken up to isomorphism, parametrize the
isomorphism classes of simple objects in \(\GG\).
\end{prop}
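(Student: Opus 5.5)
We argue in direct analogy with the untwisted case: an induction/restriction equivalence between simple objects supported on the class \(x^G\) and irreducible \(\Phi_x\)-projective representations of \(C_G(x)\). The preliminary observation is that \eqref{eq:YD-projective-action} with \(g=x\) and \(e,f\in C_G(x)\) reads \(e\rhd(f\rhd v)=\Phi_x(e,f)\,(ef)\rhd v\). So the action restricted to \(C_G(x)\) on \(V_x\) is a \(\Phi_x\)-projective representation. By the corollary to Lemma~\ref{lem:local-cocycle-coherence}, \(\Phi_x\) is a normalized \(2\)-cocycle on \(C_G(x)\), so this notion is consistent.

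\emph{Support.} Let \(V\) be simple. The last axiom, \(e\rhd V_g\subseteq V_{ege^{-1}}\), shows that for each conjugacy class \(C\) the subspace \(\bigoplus_{g\in C}V_g\) is a subobject: it is a subcomodule and is stable under every \(e\rhd\). By simplicity \(V\) equals the one containing \(V_x\), hence \(\supp V\subseteq x^G\). Each \(e\rhd-\) is invertible on \(V\), since \(e^{-1}\rhd(e\rhd v)\) is a nonzero scalar multiple of \(v\) by \eqref{eq:YD-projective-action} and normalization. Therefore \(V_{exe^{-1}}=e\rhd V_x\neq0\), and \(\supp V=x^G\).

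\emph{Irreducibility of \(V_x\).} Fix coset representatives \(t_1=1,\dots,t_n\) of \(G/C_G(x)\). If \(U\subseteq V_x\) is a \(C_G(x)\)-stable subspace, set \(\tilde U=\sum_i t_i\rhd U\). I claim \(\tilde U\) is a subobject. For \(e\in G\) write \(et_i=t_jc\) with \(c\in C_G(x)\). Two applications of \eqref{eq:YD-projective-action} give \(e\rhd(t_i\rhd u)=\lambda\,(et_i)\rhd u=\lambda'\,t_j\rhd(c\rhd u)\) for nonzero scalars \(\lambda,\lambda'\), so \(\tilde U\) is stable. Then \(\tilde U\cap V_x=U\), and simplicity forces \(U=0\) or \(U=V_x\).

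\emph{Converse.} Given \(\rho\) on \(U\), define \(M(x,\rho)=\bigoplus_i t_i\otimes U\), with \(t_i\otimes U\) of degree \(t_ixt_i^{-1}\). Set
\[
e\rhd(t_i\otimes u)=\kappa(e,t_i)\,t_j\otimes\rho(c)u,
\]
where \(et_i=t_jc\) and \(\kappa(e,t_i)\in\mathbb C^\times\) is a cocycle correction. The natural choice is \(\kappa(e,t_i)=\Phi_x(e,t_i)\Phi_x(t_j,c)^{-1}\), obtained by formally demanding \(e\rhd(t_i\rhd u)=\Phi_x(e,t_i)(et_i)\rhd u\) and \((t_jc)\rhd u=\Phi_x(t_j,c)^{-1}t_j\rhd(c\rhd u)\). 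Verifying \eqref{eq:YD-projective-action} for this action means checking that a product of \(\Phi_x\)-values equals \(\Phi_{t_ixt_i^{-1}}(e,f)\). This follows from Lemma~\ref{lem:local-cocycle-coherence}, applied with \(x\) and with conjugates \(wxw^{-1}\), used as the substitute for the \(2\)-cocycle identity in the non-centralizing variables. \textbf{This bookkeeping is the main obstacle.} I would organize it by first establishing the twisted group-algebra statement: \(\Phi_x(a,b)\) for \(a\in G\), \(b\in C_G(x)\) turns \(\Bbbk G\) into an algebra \(\Bbbk_{\Phi_x}G\), and then \(M(x,\rho)=\Bbbk_{\Phi_x}G\otimes_{\Bbbk_{\Phi_x}C_G(x)}U\). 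Associativity of this algebra, restricted to the needed triples, is exactly the identity in Lemma~\ref{lem:local-cocycle-coherence}, which makes the verification conceptual. Simplicity of \(M(x,\rho)\) follows from the previous step, because its degree-\(x\) component is \(1\otimes U\cong\rho\), which is irreducible, and every nonzero subobject meets it nontrivially by the support argument.

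\emph{Parametrization.} For the classification, a simple \(V\) with \(x\in\supp V\) receives a nonzero morphism \(M(x,V_x)\to V\), given by \(t_i\otimes u\mapsto t_i\rhd u\) after matching scalars; it is an isomorphism by simplicity of both objects. A morphism between \(M(x,\rho)\) and \(M(x,\rho')\) restricts to an intertwiner \(\rho\to\rho'\) on degree \(x\), and is determined by it. Finally, replacing \(x\) by a conjugate \(yxy^{-1}\) transports \(\rho\) via conjugation by \(y\rhd\), up to a coboundary. Hence fixing one representative per class gives the stated bijection.
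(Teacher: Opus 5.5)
This is essentially the paper's proof. The ingredients match one for one:
\begin{itemize}
\item the class-by-class decomposition for the support;
\item the induced subspace \(\sum_i t_i\rhd U\) for irreducibility;
\item the correction factor \(\kappa(e,t_i)=\Phi_x(e,t_i)\Phi_x(t_j,c)^{-1}\), which is exactly the paper's \(\gamma(g,g_y)\);
\item the transitivity argument for simplicity;
\item the map \(t_i\otimes u\mapsto t_i\rhd u\) for the classification. With this \(\kappa\) that map is \(G\)-linear with no further scalar matching.
\end{itemize}

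The one thing to correct is how you propose to organize the deferred check of \eqref{eq:YD-projective-action} for \(M(x,\rho)\). \(\Bbbk G\) with \(a\cdot b=\Phi_x(a,b)ab\) is not an associative algebra. Lemma~\ref{lem:local-cocycle-coherence}, applied to \((e,f,a)\), gives
\[
e\cdot(f\cdot a)=\Phi_{axa^{-1}}(e,f)\,(ef)\cdot a,
\]
not associativity. This is nevertheless exactly the required relation in the degree \(axa^{-1}\) of \(a\otimes u\).

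The same lemma with its last argument in \(C_G(x)\) does two further things. It makes right multiplication a genuine \(\Bbbk_{\Phi_x}C_G(x)\)-module structure, and it gives the compatibility \((e\cdot a)\cdot c=e\cdot(a\cdot c)\). Hence \(\Bbbk G\otimes_{\Bbbk_{\Phi_x}C_G(x)}U\) is well defined, and your induced-module packaging works once phrased this way.

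That packaging replaces the paper's explicit coordinate computation. The paper applies the lemma to \((h,g,g_y)\), \((h,g_z,c_{g,y})\), and \((g_u,c_{h,z},c_{g,y})\), then eliminates \(\Phi_x(h,gg_y)\).
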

\begin{proof}
Let \(V\) be a simple object of \(\GG\). Since the sum of the
homogeneous components belonging to a fixed conjugacy class is a
subobject, the support of \(V\) is a single conjugacy class
\(\mathcal O\). Fix \(x\in\mathcal O\). The component \(V_x\) is
stable under \(C_G(x)\), and \eqref{eq:YD-projective-action} gives
\(a\rhd(b\rhd v)=\Phi_x(a,b)(ab)\rhd v\) for
\(a,b\in C_G(x)\) and \(v\in V_x\). Thus \(V_x\) is a
\(\Phi_x\)-projective representation of \(C_G(x)\).

This representation is irreducible. Indeed, suppose that
\(0\neq U\subsetneq V_x\) is \(C_G(x)\)-stable and set
\(N=\sum_{a\in G}a\rhd U\).
The relation \eqref{eq:YD-projective-action} implies that \(N\) is \(G\)-stable.
It is also graded and hence is a subobject of \(V\). Moreover, only
elements of \(C_G(x)\) contribute to the component of degree \(x\).
Therefore \(N_x=U\). Thus \(N\) is a nonzero proper subobject of \(V\),
contrary to the simplicity of \(V\).

Conversely, let \(\rho\) be an irreducible
\(\Phi_x\)-projective representation of \(C_G(x)\), with underlying
space \(W\). Choose elements \(g_y\in G\), \(y\in\mathcal O=x^G\),
such that \(g_yxg_y^{-1}=y\) and \(g_x=1\), and set
\[
M(x,\rho)=\bigoplus_{y\in\mathcal O}M_y,
\qquad
M_y=g_y\otimes W.
\]
For \(w\in W\), the comodule structure on \(M_y\) is
\(\delta_L(g_y\otimes w)=y\otimes(g_y\otimes w)\). The left
\(G\)-action on \(M\) is defined by
\(g\rhd(g_y\otimes w)=\gamma(g,g_y)
(g_{gyg^{-1}}\otimes\rho(c)w)\),
where $gg_y$ decomposes uniquely as $g_{gyg^{-1}}c$ with
$c\in C_G(x)$, and
\begin{equation}\label{eq:induced-action}
\begin{aligned}
\gamma(g,g_y)
=\frac{\Phi_x(g,g_y)}{\Phi_x(g_{gyg^{-1}},c)}
=\frac{
   \Phi(g,g_y,x)\Phi(gyg^{-1},g,g_y)
  }{
   \Phi(g,y,g_y)\Phi(g_{gyg^{-1}},c,x)
  }
  \frac{
   \Phi(g_{gyg^{-1}},x,c)
  }{
   \Phi(gyg^{-1},g_{gyg^{-1}},c)
  }.
\end{aligned}
\end{equation}
These formulas define an object of \({}_G^G\mathcal{YD}^{\Phi}\).
For \(a\in G\) and \(y\in\mathcal O\), let
\(c_{a,y}\in C_G(x)\) be determined by
\(ag_y=g_{aya^{-1}}c_{a,y}\).
Thus the action defined above takes \(M_y\) into \(M_{aya^{-1}}\).
Moreover, \(c_{1,y}=1\), and the normalization of \(\Phi_x\) gives
\(\gamma(1,g_y)=1\). Hence \(1\rhd m=m\) for all \(m\in M\).

 {To verify the first identity in \eqref{eq:YD-projective-action}, observe that}
$
c_{hg,y}
=
c_{h,gyg^{-1}}c_{g,y}.
$
Since \(\rho\) is a \(\Phi_x\)-projective representation of
\(C_G(x)\), we have
\[
\rho(c_{h,gyg^{-1}})\rho(c_{g,y})
=
\Phi_x(c_{h,gyg^{-1}},c_{g,y})\rho(c_{hg,y}).
\]
Let
$
z=gyg^{-1},\
u=hzh^{-1}=hgyg^{-1}h^{-1}.
$
Thus
\[
gg_y=g_zc_{g,y},
\qquad
hg_z=g_uc_{h,z},
\qquad
c_{hg,y}=c_{h,z}c_{g,y}.
\]
Applying \eqref{eq:local-cocycle-coherence}, respectively, to
$
(h,g,g_y),\
(h,g_z,c_{g,y}),\
(g_u,c_{h,z},c_{g,y}),
$
gives
\begin{align}
\Phi_x(g,g_y)\Phi_x(h,gg_y)
&=
\Phi_y(h,g)\Phi_x(hg,g_y),
\label{eq:gamma-reduction-1}
\\
\Phi_x(g_z,c_{g,y})\Phi_x(h,gg_y)
&=
\Phi_x(h,g_z)\Phi_x(hg_z,c_{g,y}),
\label{eq:gamma-reduction-2}
\\
\Phi_x(c_{h,z},c_{g,y})
\Phi_x(g_u,c_{h,z}c_{g,y})
&=
\Phi_x(g_u,c_{h,z})
\Phi_x(hg_z,c_{g,y}).
\label{eq:gamma-reduction-3}
\end{align}
Here \(c_{g,y},c_{h,z}\in C_G(x)\), while
\(g_zc_{g,y}=gg_y\) and \(g_uc_{h,z}=hg_z\).
Solving \eqref{eq:gamma-reduction-2} and
\eqref{eq:gamma-reduction-3} for \(\Phi_x(h,gg_y)\), and then
substituting the result into \eqref{eq:gamma-reduction-1}, yields
\[
\begin{aligned}
\frac{\Phi_x(g,g_y)}{\Phi_x(g_z,c_{g,y})}
\frac{\Phi_x(h,g_z)}{\Phi_x(g_u,c_{h,z})}
\Phi_x(c_{h,z},c_{g,y})=
\Phi_y(h,g)
\frac{\Phi_x(hg,g_y)}
     {\Phi_x(g_u,c_{h,z}c_{g,y})}.
\end{aligned}
\]
By the definition of \(\gamma\) and the identity
\(c_{hg,y}=c_{h,z}c_{g,y}\), this is precisely
\[
\gamma(g,g_y)\gamma(h,g_z)
\Phi_x(c_{h,z},c_{g,y})
=
\Phi_y(h,g)\gamma(hg,g_y).
\]
Consequently, for \(w\in W\),
\[
\begin{aligned}
h\rhd\bigl(g\rhd(g_y\otimes w)\bigr)
&=
\gamma(g,g_y)\gamma(h,g_{gyg^{-1}})
g_{hgyg^{-1}h^{-1}}
 \otimes
\rho(c_{h,gyg^{-1}})\rho(c_{g,y})w
\\
&=
\Phi_y(h,g)\,
\gamma(hg,g_y)
g_{hgyg^{-1}h^{-1}}\otimes\rho(c_{hg,y})w
\\
&=
\Phi_y(h,g)\,(hg)\rhd(g_y\otimes w).
\end{aligned}
\]
Thus the action satisfies all the defining conditions of a
left-left Yetter--Drinfeld module over
\((\Bbbk G,\Phi)\).
By construction, \(\supp M(x,\rho)=x^G\). If \(c\in C_G(x)\), then
\(g_x=1\), and normalization gives \(\gamma(c,1)=1\). Hence the
action of \(C_G(x)\) on \(M(x,\rho)_x\) is \(\rho\).

To prove that \(M(x,\rho)\) is simple, let
\(0\neq N\subseteq M(x,\rho)\) be a subobject and choose
\(y\in\mathcal O\) such that \(N_y\neq0\). Then
the action of \(g_y^{-1}\) is invertible by
\eqref{eq:YD-projective-action}. Hence
\(g_y^{-1}\rhd N_y\) is a nonzero subspace of \(N_x\). Since
\(N_x\) is \(C_G(x)\)-stable and \(\rho\) is irreducible, one has
\(N_x=W\). The transitivity of the \(G\)-action on \(\mathcal O\)
now gives \(N=M(x,\rho)\).
Finally, let \(\rho\) be the \(\Phi_x\)-projective representation of
\(C_G(x)\) on \(V_x\). For every \(y\in\mathcal O\), the action of
\(g_y\) is an isomorphism from \(V_x\) to \(V_y\). Therefore the
 map
\[
\theta:M(x,\rho)\longrightarrow V,
\qquad
\theta(g_y\otimes w)=g_y\rhd w,
\]
is an isomorphism of graded vector spaces. If
\(ag_y=g_{aya^{-1}}c_{a,y}\), then
\begin{align*}
\theta\bigl(a\rhd(g_y\otimes w)\bigr)
 &=
 \gamma(a,g_y)\,
 g_{aya^{-1}}\rhd(c_{a,y}\rhd w)  \\
 &=
 \Phi_x(a,g_y)(ag_y)\rhd w
 =
 a\rhd\theta(g_y\otimes w).
\end{align*}
Hence \(\theta\) is an isomorphism in \(\GG\).

If \(x,x'\) are fixed representatives and
\(M(x,\rho)\simeq M(x',\rho')\), equality of their supports gives
\(x=x'\). Restricting the isomorphism to degree \(x\) then gives
\(\rho\simeq\rho'\). This proves the parametrization.

\end{proof}

\subsection{Dual objects}
We record the duality conventions needed later. In particular, we
identify the projective representation defining the dual of a simple
object when the original projective representation is one-dimensional.

Let \(U\) be a finite-dimensional object of \(\GG\). For a homogeneous
basis \(\{u_j\}\), with \(u_j\in U_{x_j}\), let
\(\{u_j^\vee\}\) be the dual basis, so that
\(u_j^\vee\in(U^*)_{x_j^{-1}}\). We use the evaluation and
coevaluation maps
\[
\operatorname{ev}_U:U^*\otimes U\longrightarrow\mathbb C,
\qquad
\operatorname{coev}_U:\mathbb C\longrightarrow U\otimes U^*,
\qquad
\operatorname{coev}_U(1)
=\sum_j\Phi(x_j,x_j^{-1},x_j)u_j\otimes u_j^\vee.
\]
The  {triangle identities} follow from
\eqref{eq:cocycle-a-ainv-a}.
We shall use the following identity for duals.
\begin{lemma}
For every \(a\in G\), one has
\begin{equation}
\Phi(a,a^{-1},a)
\Phi(a^{-1},a,a^{-1})
=1.
\label{eq:cocycle-a-ainv-a}
\end{equation}
\end{lemma}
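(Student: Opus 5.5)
The plan is to obtain the identity directly from the normalized $3$-cocycle identity \eqref{eq:three-cocycle-identity-local},
\[
\Phi(b,c,d)\Phi(a,bc,d)\Phi(a,b,c)=\Phi(ab,c,d)\Phi(a,b,cd),
\]
by specializing to quadruples built from \(a\) and \(a^{-1}\) so that most terms collapse by normalization. We use that \(\Phi(x,y,z)=1\) whenever one of \(x,y,z\) equals \(1_G\).

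The key substitution is \((a,b,c,d)\mapsto(a,a^{-1},a,a^{-1})\). The left-hand side becomes
\[
\Phi(a^{-1},a,a^{-1})\,\Phi(a,1,a^{-1})\,\Phi(a,a^{-1},a)
=\Phi(a^{-1},a,a^{-1})\,\Phi(a,a^{-1},a).
\]
Here the middle factor is \(1\) by normalization, since \(a^{-1}a=1\). The right-hand side becomes
\[
\Phi(1,a,a^{-1})\,\Phi(a,a^{-1},1)=1,
\]
since \(aa^{-1}=1\) and \(a a^{-1}=1\) in the last slot. Comparing the two sides gives exactly \eqref{eq:cocycle-a-ainv-a}.

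The only point needing care is matching the convention for the cocycle identity with the one stated in \eqref{eq:three-cocycle-identity-local}; the paper already fixes this. The one real check is therefore that each collapsing factor indeed has \(1_G\) in one of its three slots. If some other convention for the coboundary were in force, the same substitution would still work, since the quadruple \((a,a^{-1},a,a^{-1})\) is symmetric enough that every product of adjacent entries equals \(1\).
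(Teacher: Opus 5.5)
Your proof is correct and is exactly the paper's argument: the paper also applies the normalized $3$-cocycle identity to the quadruple $(a,a^{-1},a,a^{-1})$. In that substitution, three of the five factors collapse by normalization, and your bookkeeping of which factors contain $1_G$ is accurate.
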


\begin{proof}
 {Applying} the normalized \(3\)-cocycle identity to
\((a,a^{-1},a,a^{-1})\) proves the assertion.
\end{proof}
\begin{lemma}
\label{lem:dual-projective-representation}
Let \(U=M(x,\chi)\), where \(\chi\) is one-dimensional, and choose
\(0\neq u\in U_x\). Let \(u^\vee\in(U^*)_{x^{-1}}\) satisfy 
\(u^\vee(u)=1\). For \(g\in C_G(x)\), write
\(g\rhd u=\chi(g)u\). Then
\(U^*\simeq M(x^{-1},\chi^*)\), where
\begin{equation}
\label{eq:dual-projective-representation}
\chi^*(g)
=
\frac{1}{\Phi^g(x^{-1},x)\chi(g)}.
\end{equation}
In particular, \(x^{-1}\rhd u^\vee=\chi(x)u^\vee\).
\end{lemma}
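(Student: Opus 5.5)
The plan is to compute the dual directly. First I would show that \(U^*\) is simple with support \((x^G)^{-1}=(x^{-1})^G\). Then I would read off the projective character by which \(C_G(x^{-1})=C_G(x)\) acts on the line \((U^*)_{x^{-1}}\), and apply Proposition~\ref{prop:simple-objects}.

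Simplicity comes from rigidity. The functor \(U\mapsto U^*\) is an anti-equivalence on finite-dimensional objects, so it sends simple objects to simple objects. The grading statement \(u_j^\vee\in(U^*)_{x_j^{-1}}\) gives \(\supp U^*=(x^G)^{-1}\). Since \(\dim U_x=1\), the component \((U^*)_{x^{-1}}\) is spanned by \(u^\vee\). By Proposition~\ref{prop:simple-objects}, \(U^*\simeq M(x^{-1},\chi^*)\), where \(g\rhd u^\vee=\chi^*(g)u^\vee\) for \(g\in C_G(x)\).

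To compute \(\chi^*\), I would use that \(\operatorname{ev}_U\) is a morphism in \(\GG\), so it commutes with the action of \(g\). On \(u^\vee\otimes u\), which lies in degree \(x^{-1}x=1\), the tensor product formula \eqref{eq:tensor-product} gives
\[
g\rhd(u^\vee\otimes u)=\Phi^g(x^{-1},x)\,(g\rhd u^\vee)\otimes(g\rhd u)=\Phi^g(x^{-1},x)\chi^*(g)\chi(g)\,u^\vee\otimes u .
\]
The action of \(g\) on the unit object \(\mathbb C\) is trivial, so applying \(\operatorname{ev}_U\) to both sides forces
\[
\Phi^g(x^{-1},x)\chi^*(g)\chi(g)=1,
\]
which is \eqref{eq:dual-projective-representation}. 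Here \(gx^{-1}g^{-1}=x^{-1}\) and \(gxg^{-1}=x\) for \(g\in C_G(x)\), so no other homogeneous components interfere.

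The main obstacle is pinning down the precise action on \(U^*\). The paper fixes \(\operatorname{ev}\) and \(\operatorname{coev}\), with the factor \(\Phi(x_j,x_j^{-1},x_j)\) in \(\operatorname{coev}\). The left dual action on \(U^*\) is then the one that makes \(\operatorname{ev}\) equivariant. I would take that as the definition, or check it against the coevaluation using the triangle identities and \eqref{eq:cocycle-a-ainv-a}. Either way, equivariance of \(\operatorname{ev}\) is the input needed.

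Finally, for the last statement, put \(g=x\). Then
\[
\Phi^x(x^{-1},x)=\frac{\Phi(x,x^{-1},x)\,\Phi(x^{-1},x,x)}{\Phi(x^{-1},x,x)}=\Phi(x,x^{-1},x).
\]
Hence \(x\rhd u^\vee=\chi(x)^{-1}\Phi(x,x^{-1},x)^{-1}u^\vee\). To get the action of \(x^{-1}\), use the first identity in \eqref{eq:YD-projective-action} with \(e=x^{-1}\), \(f=x\), acting on the degree-\(x^{-1}\) element \(u^\vee\):
\[
x^{-1}\rhd(x\rhd u^\vee)=\Phi_{x^{-1}}(x^{-1},x)\,u^\vee .
\]
This gives \(x^{-1}\rhd u^\vee=\Phi_{x^{-1}}(x^{-1},x)\,\Phi(x,x^{-1},x)\,\chi(x)\,u^\vee\).

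It remains to show \(\Phi_{x^{-1}}(x^{-1},x)\Phi(x,x^{-1},x)=1\). From the definition of \(\Phi_x\), using that \(x\) and \(x^{-1}\) commute,
\[
\Phi_{x^{-1}}(x^{-1},x)=\frac{\Phi(x^{-1},x,x^{-1})\,\Phi(x^{-1},x^{-1},x)}{\Phi(x^{-1},x^{-1},x)}=\Phi(x^{-1},x,x^{-1}).
\]
By \eqref{eq:cocycle-a-ainv-a}, \(\Phi(x^{-1},x,x^{-1})\,\Phi(x,x^{-1},x)=1\), so \(x^{-1}\rhd u^\vee=\chi(x)u^\vee\).
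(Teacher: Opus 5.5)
Your proposal is correct and follows the paper's proof. Equivariance of \(\operatorname{ev}_U\) together with \eqref{eq:tensor-product} gives \(\chi^*\), duality gives simplicity and the support \((x^{-1})^G\), and the final claim reduces to \eqref{eq:cocycle-a-ainv-a}. The only cosmetic difference is that you evaluate the formula at \(g=x\) and use the projective multiplication rule on \((U^*)_{x^{-1}}\), whereas the paper evaluates at \(g=x^{-1}\) and uses \(\chi(x^{-1})\chi(x)=\Phi_x(x^{-1},x)\) on \(U_x\).
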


\begin{proof}
Since the evaluation is a morphism in \(\GG\), it gives
\begin{align*}
1
=
\operatorname{ev}_U(u^\vee\otimes u)
=
\operatorname{ev}_U\bigl(g\rhd(u^\vee\otimes u)\bigr)
=
\Phi^g(x^{-1},x)
\operatorname{ev}_U(g\rhd u^\vee\otimes g\rhd u).
\end{align*}
The component \((U^*)_{x^{-1}}\) is one-dimensional. Thus
\(g\rhd u^\vee=\chi^*(g)u^\vee\), and the preceding equality gives
\eqref{eq:dual-projective-representation}.
Duality preserves simplicity and reverses homogeneous degrees. Hence
\(U^*\) is simple with support \((x^{-1})^G\), and
Proposition~\ref{prop:simple-objects} identifies it with
\(M(x^{-1},\chi^*)\).
The relation \eqref{eq:YD-projective-action} on
\(U_x\) gives
\(
\chi(x^{-1})\chi(x)=\Phi_x(x^{-1},x).
\)
Hence
\[
\chi^*(x^{-1})
=\frac{\chi(x)}
{\Phi^{x^{-1}}(x^{-1},x)\Phi_x(x^{-1},x)}.
\]
By definition, the product in the denominator is
\(
\Phi(x^{-1},x,x^{-1})\Phi(x,x^{-1},x)=1
\)
by \eqref{eq:cocycle-a-ainv-a}. Therefore
\(\chi^*(x^{-1})=\chi(x)\).
\end{proof}

\subsection{Nichols algebras}
We recall the categorical definition of a Nichols algebra and its
behavior under braided  monoidal equivalences. These facts allow
Nichols algebras and their dimensions to be compared after changing
the  braided category.

Let \(\mathcal C\) be a twisted Yetter--Drinfeld category over a
group, and let \(V\in\mathcal C\). The tensor algebra
\(T(V)=\bigoplus_{n\geq0}V^{\otimes n}\) is a graded braided Hopf
algebra. Its largest graded Hopf ideal
contained in \(\bigoplus_{n\geq2}T^n(V)\) is denoted by \(I(V)\), and
\[
\mathcal B(V)=T(V)/I(V).
\]
Equivalently, \(\mathcal B(V)\) is generated by
\(\mathcal B(V)_1=V\), has
\(\mathcal B(V)_0=\mathbf 1_{\mathcal C}\), and its primitive
subobject is \(V\), see \cite[Subsection~2.5]{reflection3}.

If \(F:\mathcal C\longrightarrow\mathcal D\) is a \(\Bbbk\)-linear
braided  monoidal equivalence of braided
monoidal abelian categories, then its tensor structure induces braided Hopf algebra
isomorphisms
\begin{equation}\label{eq:Nichols-under-equivalence}
T(F(V))\simeq F(T(V)),
\qquad
\mathcal B(F(V))\simeq F(\mathcal B(V)).
\end{equation}
See \cite[Lemma~2.16]{reflection1}.
\subsection{Braided decomposability}
We characterize when the Nichols algebra of a direct sum is  {isomorphic} to the
braided tensor product of the Nichols algebras of its two summands.
This criterion gives the notion of braided decomposability used
throughout the classification.

Let \(V,W\in\GG\) and let \(U=V\oplus W\). Denote by
\[
j_V:\mathcal B(V)\longrightarrow\mathcal B(U),
\qquad
j_W:\mathcal B(W)\longrightarrow\mathcal B(U)
\]
the canonical injective braided Hopf algebra morphisms.

\begin{lemma}\label{lem:canonical-nichols-factorization}
The following conditions are equivalent:

(1) The double braiding satisfies
\(c_{W,V}c_{V,W}=\operatorname{id}_{V\otimes W}\).

(2) The canonical morphisms \(j_V\) and \(j_W\) induce an
isomorphism of graded braided Hopf algebras
\[
m_{V,W}:
\mathcal B(V)\otimes\mathcal B(W)
\xrightarrow{\;\simeq\;}
\mathcal B(V\oplus W),
\qquad
a\otimes b\longmapsto j_V(a)j_W(b).
\]
\end{lemma}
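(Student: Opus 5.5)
The plan is to prove both directions by comparing $\mathcal B(V\oplus W)$ with the braided tensor product $\mathcal B(V)\underline{\otimes}\mathcal B(W)$. Here the algebra structure uses the braiding $c_{W,V}$ to move elements of $\mathcal B(W)$ past elements of $\mathcal B(V)$, and all associators are inserted as in $\GG$. The first step is to check that this object is a graded braided Hopf algebra in $\GG$ exactly when the relevant braidings are symmetric on the two factors. The coproduct on $\mathcal B(V)\underline{\otimes}\mathcal B(W)$ involves $c_{\mathcal B(V),\mathcal B(W)}$. Multiplicativity of this coproduct then requires the compatibility $c_{\mathcal B(W),\mathcal B(V)}c_{\mathcal B(V),\mathcal B(W)}=\operatorname{id}$. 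By naturality of the braiding and the hexagon axioms, this follows from the degree-one condition $c_{W,V}c_{V,W}=\operatorname{id}_{V\otimes W}$, since $\mathcal B(V)$ and $\mathcal B(W)$ are quotients of $T(V)$ and $T(W)$.

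For (1)$\Rightarrow$(2), assume the double braiding is trivial. Then $\mathcal B(V)\underline{\otimes}\mathcal B(W)$ is a connected graded braided Hopf algebra generated in degree one by $V\oplus W$. I would show that its primitives are exactly $V\oplus W$, using that primitives of a tensor product of connected graded coalgebras are $P(\mathcal B(V))\otimes 1\oplus 1\otimes P(\mathcal B(W))$. By the characterization of Nichols algebras recalled above, it is therefore isomorphic to $\mathcal B(V\oplus W)$. The isomorphism is induced by the identity on $V\oplus W$, and by construction it equals $m_{V,W}$, which sends $a\otimes b\mapsto j_V(a)j_W(b)$.

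For (2)$\Rightarrow$(1), use the degree-two component. If $m_{V,W}$ is an isomorphism, then
\[
\dim\mathcal B(V\oplus W)_2=\dim\mathcal B(V)_2+\dim\mathcal B(W)_2+\dim V\dim W .
\]
On the other hand, $\mathcal B(U)_2=U^{\otimes 2}/\ker(\operatorname{id}+c_{U,U})$. The mixed part $V\otimes W\oplus W\otimes V$ maps to $\mathcal B_2$ with kernel equal to the kernel of $\operatorname{id}+c$ restricted there. This restricted map has block form with off-diagonal entries $c_{V,W}$ and $c_{W,V}$. Its image has dimension $\dim(V\otimes W)+\operatorname{rank}(\operatorname{id}+c_{W,V}c_{V,W}\text{-correction})$. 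More precisely, the image of $\operatorname{id}+c$ on $V\otimes W\oplus W\otimes V$ is isomorphic to $V\otimes W$ plus the image of $\operatorname{id}-c_{W,V}c_{V,W}$. The count above forces this image to have dimension $\dim V\dim W$, so $c_{W,V}c_{V,W}=\operatorname{id}$. An alternative route is to compare the relation $j_W(w)j_V(v)=$ multiplication of the images of $c_{W,V}$, which holds in the tensor product picture, with the braided commutator in $\mathcal B(U)$, which vanishes iff the double braiding is trivial on $v\otimes w$.

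I expect the main obstacle to be bookkeeping the associator $\Phi$ in the definition of the braided tensor product algebra and in its coproduct. I would sidestep this by working via the equivalence \eqref{eq:Nichols-under-equivalence} only formally, and otherwise invoke the general statement for braided monoidal abelian categories (e.g.\ \cite[Subsection~2.5]{reflection3} or \cite{reflection1}), where all constraints are handled by coherence. The degree-two argument needs no associator beyond $a$ acting by scalars on homogeneous components, so it is unaffected.
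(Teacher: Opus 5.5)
Your proof is correct. For (1)\(\Rightarrow\)(2) it follows the paper's argument: the hexagon axioms and naturality kill the mixed double braiding on \(\mathcal B(V)\otimes\mathcal B(W)\). The braided tensor product is then a connected graded braided Hopf algebra generated by \(V\oplus W\), its primitives are \(V\oplus W\), and the Nichols characterization identifies it with \(\mathcal B(V\oplus W)\) via \(m_{V,W}\).

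Two small remarks on this direction. You cite the unbraided formula for primitives of a tensor product. The paper verifies it in the braided setting by projecting the reduced coproduct of an element of bidegree \((p,q)\), with \(p,q>0\), onto \(A_{(p,0)}\otimes A_{(0,q)}\); this works because braiding with the unit is trivial. Also, your ``exactly when'' in the first step is more than you prove or need; only the ``if'' direction is used.

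For (2)\(\Rightarrow\)(1), your main route differs from the paper's. The paper uses that \(m_{V,W}\) is multiplicative for the braided tensor product. This gives \(w\otimes v-c_{W,V}(w\otimes v)\in I(U)_2=\ker(\operatorname{id}+c_{U,U})\), and applying \(\operatorname{id}+c_{U,U}\) yields \((\operatorname{id}-c_{V,W}c_{W,V})(w\otimes v)=0\). This is your ``alternative route''.

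Your rank count uses only that \(m_{V,W}\) is a linear bijection in degree two. It relies on two facts: \(\operatorname{id}+c_{U,U}\) is block diagonal for the decomposition \(V\otimes V\oplus W\otimes W\oplus(V\otimes W\oplus W\otimes V)\), and row reduction of the mixed block gives rank \(\dim V\dim W+\operatorname{rank}(\operatorname{id}-c_{V,W}c_{W,V})\). The advantage is a stronger statement: no algebra structure on \(\mathcal B(V)\otimes\mathcal B(W)\) is needed, so you never have to say which product the hypothesis refers to.

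The cost is that, as written, your count needs \(\dim V,\dim W<\infty\), while the lemma is stated for arbitrary objects of \(\GG\). You can remove this by working in bidegree \((1,1)\), which \(m_{V,W}\) preserves. There, bijectivity of \(V\otimes W\to\mathcal B(U)_{(1,1)}\) means that \(V\otimes W\) is a complement in \(V\otimes W\oplus W\otimes V\) of \(\{(-c_{W,V}y,y)\mid c_{V,W}c_{W,V}y=y\}\). This holds if and only if \(c_{V,W}c_{W,V}=\operatorname{id}_{W\otimes V}\), with no dimensions involved.
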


\begin{proof}
Suppose first that
\(c_{W,V}c_{V,W}=\operatorname{id}_{V\otimes W}\). By the hexagon
axioms, the corresponding mixed double braidings are trivial on all
tensor powers of \(V\) and \(W\), and hence descend to
\(\mathcal B(V)\) and \(\mathcal B(W)\). Consequently,
\(A=\mathcal B(V)\otimes\mathcal B(W)\) is a connected
\(\mathbb N_0^2\)-graded braided Hopf algebra generated
by \(A_{(1,0)}\oplus A_{(0,1)}=V\oplus W\).

We claim that the primitive subobject of \(A\) is \(V\oplus W\).
Since the coproduct preserves the \(\mathbb N_0^2\)-grading, every
bihomogeneous component of a primitive element is primitive. Let
\(0\ne a\in A_{(p,q)}\). If \(p,q>0\), write
\(a=\sum_i u_i\otimes w_i\), where
\(u_i\in\mathcal B(V)_p\) and \(w_i\in\mathcal B(W)_q\). The
projection of the reduced coproduct of \(a\) onto
\(A_{(p,0)}\otimes A_{(0,q)}\) is
\[
\sum_i (u_i\otimes1)\otimes(1\otimes w_i).
\]
This is the image of \(a\) under the natural identification
\(A_{(p,q)}\simeq A_{(p,0)}\otimes A_{(0,q)}\), so it is nonzero.
Thus \(a\) is not primitive.

If \(q=0\), then \(a\) lies in the braided Hopf subalgebra
\(\mathcal B(V)\otimes1\), whose primitive subobject is \(V\).
Similarly, the primitive subobject of
\(1\otimes\mathcal B(W)\) is \(W\). Hence the primitive subobject of
\(A\) is \(V\oplus W\). Since \(A\) is connected, graded, and
generated by \(V\oplus W\), the characterization of Nichols algebras
 gives
\[
A\simeq\mathcal B(V\oplus W).
\]
This isomorphism restricts to the identity on \(V\oplus W\).
Therefore the isomorphism induced by \(j_V\) and \(j_W\) is
\(m_{V,W}\).

Conversely, suppose that \(m_{V,W}\) is an isomorphism. For
\(v\in V\) and \(w\in W\), multiplication in the braided tensor
product gives
\((1\otimes w)(v\otimes1)=c_{W,V}(w\otimes v)\). It follows that
\(E=w\otimes v-c_{W,V}(w\otimes v)\) belongs to the degree-two
component of the Nichols ideal \(I(U)\subseteq T(U)\). Since
\(I(U)_2=\ker(\operatorname{id}_{U\otimes U}+c_{U,U})\),
we obtain
\[
0=
\bigl(\operatorname{id}_{U\otimes U}+c_{U,U}\bigr)(E)
=
\bigl(
\operatorname{id}_{W\otimes V}
-c_{V,W}c_{W,V}
\bigr)(w\otimes v).
\]
Hence \(c_{V,W}c_{W,V}=\operatorname{id}_{W\otimes V}\), which is
equivalent to
\(c_{W,V}c_{V,W}=\operatorname{id}_{V\otimes W}\).
\end{proof}
\begin{definition}
We call \((V,W)\) \emph{braided-decomposable} if it satisfies the
equivalent conditions of Lemma~\ref{lem:canonical-nichols-factorization}.
Otherwise, it is \emph{braided-indecomposable}.
\end{definition}

\subsection{The braided adjoint action in Nichols algebras}
We describe the iterated braided adjoint objects by recursive
morphisms in the Nichols algebra. We then reduce their computation to
the images of a small number of homogeneous tensors.

Let \(\mathcal C=\GG\), let \(V,W\in\mathcal C\), and let
\(\mathcal B=\mathcal B(V\oplus W)\). Denote the canonical inclusions
by \(\iota_V:V\longrightarrow\mathcal B\) and
\(\iota_W:W\longrightarrow\mathcal B\), and denote by
\(\mu_{\mathcal B}\) the multiplication of \(\mathcal B\).
Since the elements of \(V\) are primitive, the restriction of the
left braided adjoint action of \(\mathcal B\) to
\(V\otimes\mathcal B\) is the morphism
\[
\operatorname{ad}_V
=
\mu_{\mathcal B}\circ
   (\iota_V\otimes\operatorname{id}_{\mathcal B})
-
\mu_{\mathcal B}\circ
   (\operatorname{id}_{\mathcal B}\otimes\iota_V)
   \circ c_{V,\mathcal B}.
\]

Set \(D_0=W\) and, recursively,
\(D_n=V\otimes D_{n-1}\) for \(n\geq1\). Define morphisms
\(\operatorname{ad}^{(n)}:D_n\longrightarrow\mathcal B\) by
\[
\operatorname{ad}^{(0)}=\iota_W,
\qquad
\operatorname{ad}^{(n)}
=
\operatorname{ad}_V\circ
\bigl(
\operatorname{id}_V\otimes\operatorname{ad}^{(n-1)}
\bigr),
\qquad n\geq1.
\]
All the maps involved are morphisms in \(\GG\), hence so is
\(\operatorname{ad}^{(n)}\). The \(n\)-th adjoint object of \(W\)
with respect to \(V\) is
\((\operatorname{ad}V)^n(W):=
\operatorname{Im}(\operatorname{ad}^{(n)})
\subseteq\mathcal B(V\oplus W)\).
Define morphisms
$\varphi_n^\Phi\in\operatorname{End}_{\mathcal C}(D_n)$ recursively.
For \(n=1\), set
\(\varphi_1^\Phi=\operatorname{id}_{V\otimes W}-c_{W,V}c_{V,W}\).
For \(n\geq2\), we define the
operator
\[
c_{1,2}^\Phi
=a_{V,V,D_{n-2}}\circ
 (c_{V,V}\otimes\operatorname{id}_{D_{n-2}})\circ
 a^{-1}_{V,V,D_{n-2}},
\]
and set
\[
\varphi_n^\Phi
=\operatorname{id}_{V\otimes D_{n-1}}
-c_{D_{n-1},V}c_{V,D_{n-1}}
+(\operatorname{id}_V\otimes\varphi_{n-1}^\Phi)\circ c_{1,2}^\Phi.
\]
Starting with \(X_0^{V,W}=W\), we regard
\(V\otimes X_{n-1}^{V,W}\) as a subobject of \(D_n\) and define
\[
X_n^{V,W}
=
\operatorname{Im}
\left(
\left.
\varphi_n^\Phi
\right|_{V\otimes X_{n-1}^{V,W}}
\right),
\qquad n\geq1.
\]

\begin{lemma}\label{lem:adjoint-object-realization}
For every \(n\in\mathbb N_0\), one has
\(X_n^{V,W}\cong(\operatorname{ad}V)^n(W)\) in \(\GG\).
\end{lemma}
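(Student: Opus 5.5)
The proof is by induction on \(n\), comparing \(\operatorname{ad}^{(n)}\) with the quantum symmetrizer of \(\mathcal B(V\oplus W)\). The goal is the identity
\[
\operatorname{ad}^{(n)} = \pi_{n+1}\circ\varphi_n^\Phi ,
\]
read as morphisms \(D_n\to\mathcal B\), where \(\pi_{n+1}\colon T^{n+1}(V\oplus W)\to\mathcal B(V\oplus W)\) is the canonical projection. Here \(D_n\) is regarded as a summand of \(T^{n+1}(V\oplus W)\) with the parenthesization \(V\otimes(V\otimes(\cdots\otimes W))\).

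For \(n=1\) the identity is immediate. The element \(\operatorname{ad}_V(v\otimes w)\) is the class of \(v\otimes w-\mu\circ c_{V,W}(v\otimes w)\). In \(T(V\oplus W)\) this is \(\bigl(\operatorname{id}-c_{W,V}\bigr)\) applied to \(v\otimes w\) read in \(T^2\), and it has the same image in \(\mathcal B\) as \(\varphi_1^\Phi=\operatorname{id}-c_{W,V}c_{V,W}\) up to a term in \(I_2=\ker(\operatorname{id}+c)\).

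That last point is cleaner to do at the level of the braided derivation. I would therefore prove the stronger statement with the skew derivations \(\partial_f\), \(f\in V^*\oplus W^*\), or equivalently with the map \(\Omega_{n+1}\) (quantum symmetrizer) whose kernel is \(I_{n+1}\). Concretely, write \(\Omega_{n+1}=(\operatorname{id}\otimes\Omega_n)\circ T_{n+1}\) with \(T_{n+1}=\operatorname{id}+c_{1,2}+c_{1,2}c_{2,3}+\cdots\), all with associators inserted. The claim is then
\[
\Omega_{n+1}\circ\widetilde{\operatorname{ad}}^{(n)}
=\Omega_{n+1}\circ\varphi_n^\Phi
\qquad\text{on } D_n,
\]
where \(\widetilde{\operatorname{ad}}^{(n)}\) is the lift of \(\operatorname{ad}^{(n)}\) to the tensor algebra, \(v\otimes X\mapsto v\otimes X-c_{V,D_{n-1}}(v\otimes X)\) read in \(T\).

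The inductive step unfolds \(\operatorname{ad}_V\circ(\operatorname{id}_V\otimes\operatorname{ad}^{(n-1)})\) and uses the induction hypothesis to replace the inner \(\operatorname{ad}^{(n-1)}\) by \(\varphi_{n-1}^\Phi\) modulo the Nichols ideal. The outer commutator \(\operatorname{id}-c_{D_{n-1},V}\) is then moved through the symmetrizer via the braid relation and naturality of \(c\). This produces the term \(\operatorname{id}-c_{D_{n-1},V}c_{V,D_{n-1}}\). The cross term arises because the first letter \(V\) can be braided past the second letter \(V\) inside \(T_{n+1}\), and it produces \((\operatorname{id}_V\otimes\varphi_{n-1}^\Phi)\circ c_{1,2}^\Phi\). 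The associators \(a_{V,V,D_{n-2}}\) appear precisely because \(c_{V,V}\) must act on the first two factors of \(V\otimes(V\otimes D_{n-2})\).

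Once the displayed identity holds, the images follow directly. We get \(\operatorname{Im}\operatorname{ad}^{(n)}\cong\operatorname{Im}\bigl(\Omega_{n+1}\varphi_n^\Phi\bigr)\) on \(V\otimes X_{n-1}^{V,W}\), using that \(\operatorname{ad}^{(n-1)}\) factors through \(X_{n-1}\) by induction. The remaining point is that \(\Omega_{n+1}\) is injective on \(X_n^{V,W}\). For this I would use the standard fact (valid in \(\GG\), cf.\ \cite{reflection1,reflection3}) that for \(W\) primitive, \(\Omega\) restricted to \(\varphi_n(V\otimes X_{n-1})\) agrees with \(\varphi_n\) up to a factor that is invertible on these images. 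An alternative is to define \(X_n\) directly as the image and appeal to \cite[Subsection~2.5]{reflection3}, where this realization is established for coquasi-Hopf Yetter--Drinfeld modules.

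I expect the main obstacle to be the bookkeeping of associators in the inductive step: checking that the hexagon axioms turn \(c_{V,V\otimes D_{n-2}}\) into \(c_{1,2}^\Phi\) followed by \(\operatorname{id}_V\otimes c_{V,D_{n-2}}\), with the \(\Phi\)-factors cancelling. I would handle this by working entirely with morphisms and naturality in the braided monoidal category rather than with elements.
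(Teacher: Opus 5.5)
There is a genuine gap: the identity \(\operatorname{ad}^{(n)}=\pi_{n+1}\circ\varphi_n^\Phi\) that your induction targets is false already for \(n=1\). For \(x\in V\otimes W\), \(\operatorname{ad}^{(1)}(x)\) is the class of \((\operatorname{id}-c_{V,W})(x)\), not \((\operatorname{id}-c_{W,V})(x)\). Its difference from \(\varphi_1^\Phi(x)\) is \(u-c_{W,V}(u)\), where \(u=c_{V,W}(x)\). Since \((\operatorname{id}+c)(u-c_{W,V}u)=u-c_{V,W}c_{W,V}u\), this difference lies in \(I_2=\ker(\operatorname{id}+c)\) for all \(x\) only when \((V,W)\) is braided-decomposable. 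That is never the case in which the lemma is used.

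A concrete instance: take one-dimensional diagonal \(V=\Bbbk v\) and \(W=\Bbbk w\) with \(q_{12}q_{21}\neq1\). Then \(vw-q_{12}wv\) and \((1-q_{12}q_{21})vw\) span different lines in degree two of \(\mathcal B(V\oplus W)\). The two images are isomorphic objects but not equal, which is why the lemma asserts only \(\cong\).

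Your ``stronger'' statement \(\Omega_{n+1}\circ\widetilde{\operatorname{ad}}^{(n)}=\Omega_{n+1}\circ\varphi_n^\Phi\) is not stronger. Because \(\ker\Omega_{n+1}=I_{n+1}\), it is equivalent to the first identity and fails for the same reason. What does hold at \(n=1\) is \(\Omega_2\circ\widetilde{\operatorname{ad}}^{(1)}=\varphi_1^\Phi\). So \(\varphi_n\) arises after applying the symmetrizer, as a coproduct component of the adjoint element, not as a second lift of it.

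A correct direct argument must then show that, on \(V\otimes X_{n-1}^{V,W}\), the adjoint element vanishes exactly when that component does. This is the real content of the lemma. Your last paragraph only asserts it (``a factor that is invertible on these images'') or defers it to \cite{reflection3}, which replaces the proof rather than supplying it.

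The paper sidesteps both this comparison and the associator bookkeeping you flag as the main obstacle. It strictifies via a Joyal--Street braided monoidal equivalence \(F\colon\mathcal C\to\mathcal C^{\mathrm{str}}\). It then proves \(F(\varphi_n^\Phi)\circ\Xi_n=\Xi_n\circ\varphi_n^{\mathrm{str}}\) by induction; under \(\Xi_n\), \(c_{1,2}^\Phi\) becomes \(c_{F(V),F(V)}\otimes\operatorname{id}\) and all \(\Phi\)-factors disappear. Exactness of \(F\) identifies \(X_n^{\mathrm{str}}\) with \(F(X_n^{V,W})\). The paper applies \cite[Theorem~13.3.1]{rootsys} in \(\mathcal C^{\mathrm{str}}\) and transports back using \(F(\mathcal B(V\oplus W))\cong\mathcal B(F(V)\oplus F(W))\).

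If you prefer a direct proof, this transport is the efficient way to eliminate the associators. The comparison between \(\operatorname{ad}^{(n)}\) and \(\varphi_n^\Phi\) must still be set up through the symmetrizer or coproduct as above, not as an equality of maps into \(\mathcal B\).
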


\begin{proof}
By  \cite[Section~1]{JoyalStreet1993}, we can choose a strict braided monoidal category
\(\mathcal C^{\mathrm{str}}\) and a braided monoidal equivalence
\[
F:\mathcal C\longrightarrow\mathcal C^{\mathrm{str}}.
\] The braided structure is
transported along the equivalence as in
\cite[Example~2.4]{JoyalStreet1993}.
We transport the \(\Bbbk\)-linear abelian structure and countable
coproducts of \(\mathcal C\) to \(\mathcal C^{\mathrm{str}}\). The
tensor product in \(\mathcal C^{\mathrm{str}}\) then preserves these
coproducts, and \(F\) is \(\Bbbk\)-linear and exact.
Let
\(\xi_{A,B}:F(A)\otimes F(B)\to F(A\otimes B)\)
denote its tensor structure.

Set \(E_0=F(W)\) and \(E_n=F(V)\otimes E_{n-1}\). Define the canonical
isomorphisms \(\Xi_n:E_n\to F(D_n)\) recursively by
\(\Xi_0=\operatorname{id}_{F(W)}\) and
\(\Xi_n=\xi_{V,D_{n-1}}
(\operatorname{id}_{F(V)}\otimes\Xi_{n-1})\).
Let \(\varphi_n^{\mathrm{str}}\) be the operator on \(E_n\) defined by
the same recursion as \(\varphi_n^\Phi\) in the strict category.
These maps satisfy
\begin{equation}\label{eq:transport-varphi}
F(\varphi_n^\Phi)\circ\Xi_n
=
\Xi_n\circ\varphi_n^{\mathrm{str}},
\qquad n\geq1.
\end{equation}
For \(n=1\), this follows from the compatibility of \(F\) with the
braiding, applied twice. Suppose that
\eqref{eq:transport-varphi} holds for \(n-1\), and write
\(c^{\mathrm{str}}\) for the braiding of
\(\mathcal C^{\mathrm{str}}\). Naturality of \(\xi\), braidedness of
\(F\), and strictness give
\begin{align*}
&\Xi_n^{-1}
 F\!\left(c_{D_{n-1},V}\circ c_{V,D_{n-1}}\right)
 \Xi_n
 =
 c^{\mathrm{str}}_{E_{n-1},F(V)}
 \circ c^{\mathrm{str}}_{F(V),E_{n-1}},\\
&\Xi_n^{-1}F(c_{1,2}^{\Phi})\Xi_n
 =
 c^{\mathrm{str}}_{F(V),F(V)}
 \otimes\operatorname{id}_{E_{n-2}},\\
&\Xi_n^{-1}
 F\!\left(\operatorname{id}_V\otimes\varphi_{n-1}^{\Phi}\right)
 \Xi_n
 =
 \operatorname{id}_{F(V)}
 \otimes\varphi_{n-1}^{\mathrm{str}}.
\end{align*}
Substituting the three identities into the recursive
formula for \(\varphi_n^\Phi\) proves
\eqref{eq:transport-varphi}.

Let \(X_n^{\mathrm{str}}\subseteq E_n\) be defined by the same
recursion in \(\mathcal C^{\mathrm{str}}\). An induction
shows that \(\Xi_n\) identifies \(X_n^{\mathrm{str}}\) with
\(F(X_n^{V,W})\). The claim holds for \(n=0\) by the definitions. If it holds for \(n-1\),
then the tensor structure identifies
\(F(V)\otimes X_{n-1}^{\mathrm{str}}\) with
\(F(V\otimes X_{n-1}^{V,W})\), while
\eqref{eq:transport-varphi} identifies the two restricted morphisms
whose images define \(X_n^{\mathrm{str}}\) and \(F(X_n^{V,W})\).
Exactness of \(F\) therefore identifies their images. Thus
\(\Xi_n\) restricts to an isomorphism
\(X_n^{\mathrm{str}}\xrightarrow{\sim}F(X_n^{V,W})\).

The proof of \cite[Theorem~13.3.1]{rootsys} uses only the strict
braided structure. Applied in \(\mathcal C^{\mathrm{str}}\), it gives
\(X_n^{\mathrm{str}}\cong
(\operatorname{ad}F(V))^n(F(W))\).
Moreover, the universal property of Nichols algebras yields a braided
Hopf algebra isomorphism
\(F(\mathcal B_{\mathcal C}(V\oplus W))
 \cong
 \mathcal B_{\mathcal C^{\mathrm{str}}}(F(V)\oplus F(W))\).
Since \(F\) is monoidal and braided, it carries the multiplication and
braiding in each adjoint morphism to the corresponding maps in
\(\mathcal C^{\mathrm{str}}\).  Since an equivalence preserves images,
\[
F(X_n^{V,W})
\cong X_n^{\mathrm{str}}
\cong(\operatorname{ad}F(V))^n(F(W))
\cong F\bigl((\operatorname{ad}V)^n(W)\bigr).
\]
Since \(F\) is an equivalence, the assertion follows.
\end{proof}

Since
$
X_1^{V,W}
=
\operatorname{Im}\bigl(
\operatorname{id}_{V\otimes W}-c_{W,V}c_{V,W}
\bigr),
$
Lemma~\ref{lem:adjoint-object-realization} gives
\begin{equation}\label{eq:indecomposable-first-adjoint}
(V,W)\text{ is braided-indecomposable}
\quad\Longleftrightarrow\quad
(\operatorname{ad}V)(W)\neq0.
\end{equation}

For a subgroup \(H\leq G\) and a homogeneous vector \(u\), write
\[
\Bbbk H\rhd u
:=
\operatorname{span}_{\Bbbk}\{a\rhd u\mid a\in H\}.
\]
The relation \eqref{eq:YD-projective-action} implies that
\(\Bbbk G\rhd u\) is \(G\)-stable. The following results reduce the
computations in Sections~4--7 to one or a few homogeneous vectors.
\begin{lemma}\label{lem:homogeneous-orbit-tensor-generation}
Let \(V,X\in\GG\), and let \(0\neq x\in X_s\) be homogeneous with
\(X=\Bbbk G\rhd x\). Suppose that \(H\leq G\) and that
\(0\neq v_j\in V_{r_j}\), \(j\in J\), are homogeneous vectors
satisfying \(H\rhd x\subseteq\Bbbk x\) and
\(V=\sum_{j\in J}\Bbbk H\rhd v_j\). Then
\[
V\otimes X=\sum_{j\in J}\Bbbk G\rhd(v_j\otimes x).
\]
In particular, if \(V=\Bbbk H\rhd v\), then
\(V\otimes X=\Bbbk G\rhd(v\otimes x)\).
\end{lemma}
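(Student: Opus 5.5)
The inclusion \(\supseteq\) is immediate: \(V\otimes X\) is an object of \(\GG\), so it is stable under the action of \(G\), and each \(v_j\otimes x\) lies in it. For \(\subseteq\), set \(S=\sum_{j\in J}\Bbbk G\rhd(v_j\otimes x)\). By the remark preceding the lemma, each \(\Bbbk G\rhd(v_j\otimes x)\) is \(G\)-stable, so \(S\) is a \(G\)-stable subspace of \(V\otimes X\). The plan is to show that \(S\) contains every tensor \(v\otimes y\) with \(v\in V\) and \(y\) in a spanning set of \(X\). I will take the spanning set to be \(\{g\rhd x\mid g\in G\}\), which spans \(X\) because \(X=\Bbbk G\rhd x\).

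First I show that \(V\otimes x\subseteq S\). Take \(h\in H\) and \(j\in J\). By the tensor product formula \eqref{eq:tensor-product},
\[
h\rhd(v_j\otimes x)=\Phi^h(r_j,s)\,(h\rhd v_j)\otimes(h\rhd x).
\]
The hypothesis \(H\rhd x\subseteq\Bbbk x\) gives \(h\rhd x=\lambda_h x\). The scalar \(\lambda_h\) is nonzero, because \(h\) acts invertibly: by \eqref{eq:YD-projective-action}, \(h^{-1}\rhd(h\rhd x)\) is a nonzero multiple of \(x\). The cocycle factor \(\Phi^h(r_j,s)\) is also nonzero. Hence \((h\rhd v_j)\otimes x\in S\) for all \(h\in H\) and \(j\in J\). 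Since \(V=\sum_j\Bbbk H\rhd v_j\), linearity gives \(V\otimes x\subseteq S\).

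Next I transport this to the other components of \(X\). Fix \(g\in G\) and a homogeneous \(v\in V_r\). The element \(g^{-1}\rhd v\) is homogeneous of degree \(g^{-1}rg\), and \((g^{-1}\rhd v)\otimes x\in S\) by the previous step. Applying \(g\) and using \eqref{eq:tensor-product} together with \eqref{eq:YD-projective-action}, I get
\[
g\rhd\bigl((g^{-1}\rhd v)\otimes x\bigr)=\kappa\,v\otimes(g\rhd x)
\]
for some nonzero scalar \(\kappa\). All the scalars involved are values of \(\Phi\), which lie in \(\mathbb C^\times\), and \(g\rhd(g^{-1}\rhd v)\) is a nonzero multiple of \(v\) by \eqref{eq:YD-projective-action}. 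Since \(S\) is \(G\)-stable, \(v\otimes(g\rhd x)\in S\). Letting \(v\) range over homogeneous elements of \(V\) and \(g\) over \(G\), the tensors \(v\otimes(g\rhd x)\) span \(V\otimes X\). Hence \(S=V\otimes X\).

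The special case \(V=\Bbbk H\rhd v\) is the instance of the lemma with \(J\) a single index. The only delicate point is bookkeeping: each transport step multiplies by a cocycle scalar, and one must check that it never vanishes. This holds because \(\Phi\) takes values in \(\mathbb C^\times\) and every group element acts invertibly by \eqref{eq:YD-projective-action}. Beyond that, no real obstacle is expected.
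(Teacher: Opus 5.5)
Your proposal is correct and follows essentially the same route as the paper's proof. Like the paper, you use the $H$-stability of $\Bbbk x$ and the tensor-product formula to get $V\otimes x\subseteq S$, then transport by $g\rhd\bigl((g^{-1}\rhd v)\otimes x\bigr)$ with a nonzero cocycle coefficient to reach every $v\otimes(g\rhd x)$.
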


\begin{proof}
Set \(L=\sum_{j\in J}\Bbbk G\rhd(v_j\otimes x)\). It follows from
\eqref{eq:YD-projective-action} that \(L\) is \(G\)-stable.
For \(f\in H\), write \(f\rhd x=\chi_f x\), where
\(\chi_f\in \Bbbk^\times\). The relation \eqref{eq:tensor-product} gives
\(f\rhd(v_j\otimes x)
 =\Phi^f(r_j,s)\chi_f(f\rhd v_j)\otimes x\).
Thus \((f\rhd v_j)\otimes x\in L\), and consequently
\(V\otimes x\subseteq L\).

Now let \(u\in V_t\) be homogeneous and \(g\in G\). Since
\((g^{-1}\rhd u)\otimes x\in L\) and \(L\) is \(G\)-stable,
\[
g\rhd\bigl((g^{-1}\rhd u)\otimes x\bigr)
 =
\Phi^g(g^{-1}tg,s)\Phi_t(g,g^{-1})
u\otimes(g\rhd x)\in L.
\]
The coefficient is nonzero, so \(u\otimes(g\rhd x)\in L\).
Since homogeneous vectors span \(V\) and \(X=\Bbbk G\rhd x\), this proves
\(V\otimes X\subseteq L\). Each summand defining \(L\) lies in
\(V\otimes X\), which gives the reverse inclusion.
\end{proof}

Write \(X_n=X_n^{V,W}\). Regard
\(V\otimes X_{n-1}\) as a subobject of \(D_n\) and denote the
restriction of \(\varphi_n^\Phi\) to this subobject by
\(\psi_n\). Thus \(X_n=\operatorname{Im}\psi_n\).

\begin{prop}\label{prop:adjoint-orbit-generation}
Let \(n\geq1\), and choose homogeneous vectors
\(0\neq v\in V_r\) and
\(0\neq x_{n-1}\in(X_{n-1})_s\).
Suppose that, for some subgroup \(H\leq G\),
\(V=\Bbbk H\rhd v\), \(X_{n-1}=\Bbbk G\rhd x_{n-1}\), and
\(H\rhd x_{n-1}\subseteq\Bbbk x_{n-1}\).
Set \(x_n:=\psi_n(v\otimes x_{n-1})\). Then
\(X_n=\Bbbk G\rhd x_n\). Consequently,
\(X_n=0\) if and only if \(x_n=0\). Moreover, if \(x_n\neq0\), then
\(x_n\in(X_n)_{rs}\).
\end{prop}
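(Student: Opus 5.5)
The argument combines Lemma~\ref{lem:homogeneous-orbit-tensor-generation} with the fact that \(\psi_n\) is a morphism in \(\GG\). We apply that lemma with \(X=X_{n-1}\), \(x=x_{n-1}\), the subgroup \(H\), and the single vector \(v\). The hypotheses \(V=\Bbbk H\rhd v\), \(X_{n-1}=\Bbbk G\rhd x_{n-1}\) and \(H\rhd x_{n-1}\subseteq\Bbbk x_{n-1}\) are exactly those required, so
\[
V\otimes X_{n-1}=\Bbbk G\rhd(v\otimes x_{n-1}).
\]
This is an equality inside \(V\otimes X_{n-1}\), which we regard as a subobject of \(D_n\). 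It is the domain of \(\psi_n\), the restriction of \(\varphi_n^\Phi\).

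Since \(\varphi_n^\Phi\) is built from braidings, associativity constraints and identities, it is an endomorphism of \(D_n\) in \(\GG\). Hence \(\psi_n\) is \(G\)-linear and preserves degrees. Applying \(\psi_n\) to the displayed equality gives
\[
X_n=\operatorname{Im}\psi_n=\operatorname{span}\{\psi_n(a\rhd(v\otimes x_{n-1}))\mid a\in G\}=\Bbbk G\rhd x_n.
\]

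The remaining assertions follow directly. If \(x_n=0\), then \(a\rhd x_n=0\) for every \(a\in G\), so \(X_n=0\). Conversely, if \(X_n=0\), then \(x_n\in X_n\) forces \(x_n=0\). For the degree, \(v\otimes x_{n-1}\) is homogeneous of degree \(rs\) by the coaction formula on tensor products. Since \(\psi_n\) is a comodule map, \(x_n\in(D_n)_{rs}\cap X_n=(X_n)_{rs}\).

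No step here is a real obstacle. The only point needing care is that \(\psi_n\) is \(G\)-equivariant on the nose, not merely up to scalar. This holds because the associator \(a\) and the braiding \(c\) are morphisms in \(\GG\), so \(c_{1,2}^\Phi\) and each \(\varphi_n^\Phi\) are morphisms in \(\GG\) by induction on \(n\). The projective scalars appearing in \eqref{eq:YD-projective-action} therefore play no role, since \(G\)-linearity is all that is used.
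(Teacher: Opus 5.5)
Your proposal is correct and follows the paper's own argument essentially verbatim: apply Lemma~\ref{lem:homogeneous-orbit-tensor-generation} to get \(V\otimes X_{n-1}=\Bbbk G\rhd(v\otimes x_{n-1})\), then use that \(\psi_n\) is a morphism in \(\GG\) (hence \(G\)-linear and grading-preserving) to conclude \(X_n=\Bbbk G\rhd x_n\) and \(x_n\in(X_n)_{rs}\). Your added justification that \(\varphi_n^\Phi\) is a morphism by induction, built from associators and braidings, is a welcome detail the paper leaves implicit.
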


\begin{proof}
Lemma~\ref{lem:homogeneous-orbit-tensor-generation} gives
\(V\otimes X_{n-1}=\Bbbk G\rhd(v\otimes x_{n-1})\).
Since \(\psi_n\) is a morphism in \(\GG\), it follows that
\[
X_n
 =\psi_n(V\otimes X_{n-1})
 =\Bbbk G\rhd\psi_n(v\otimes x_{n-1})
 =\Bbbk G\rhd x_n.
\]
Hence, \(X_n=0\) if and only if \(x_n=0\). Finally,
\(v\otimes x_{n-1}\) has degree \(rs\), and \(\psi_n\), being a
comodule morphism, preserves the \(G\)-grading.
\end{proof}

\subsection{Racks and enveloping groups}
We recall the relation between conjugation quandles and groups. In
particular, the support of a nonzero twisted Yetter--Drinfeld module
is a quandle under conjugation.
\begin{definition}
A rack is a nonempty set $X$ with a binary operation
\(\blacktriangleright\) such that each map
\(f_a(x)=a\blacktriangleright x\) is bijective and
\[
a\blacktriangleright(b\blacktriangleright c)
=(a\blacktriangleright b)\blacktriangleright
 (a\blacktriangleright c)
\]
for all $a,b,c\in X$. A quandle is a rack satisfying
\(a\blacktriangleright a=a\) for every $a\in X$.
\end{definition}
For racks $X$ and $Y$, a map $f:X\to Y$ is a rack morphism if
\(f(a\blacktriangleright b)=f(a)\blacktriangleright f(b)\) for all
\(a,b\in X\).
For a finite quandle $X=\{1,\ldots,n\}$, the notation
\(X:f_1\cdots f_n\) records its left translations. The conjugacy
class of $g$ in a group $G$, with its conjugation quandle
structure, is denoted by $g^G$.  {The inner automorphism group of $X$ is}
\(\operatorname{Inn}(X)=\langle f_a\mid a\in X\rangle\).
The quandle is indecomposable if this group acts transitively on $X$.

\begin{definition}
The enveloping group of a quandle $X$ is
\[
G_X=\langle x\in X\mid xyx^{-1}=x\blacktriangleright y
\text{ for all }x,y\in X\rangle.
\]
\end{definition}
Any group $G$ defines the conjugation quandle $\operatorname{Conj}(G)$,
with $a\brhd b=aba^{-1}$. This gives the conjugation functor from
groups to quandles, and the enveloping-group construction is its left
adjoint.
\begin{lemma}
There is a natural bijection
\[
\operatorname{Hom}_{\mathbf{Qnd}}(X,\operatorname{Conj}(G))
\cong \operatorname{Hom}_{\mathbf{Grp}}(G_X,G).
\]
\end{lemma}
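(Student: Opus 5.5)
The plan is to write down the two maps explicitly and then check that they are mutually inverse and natural.

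Given a group homomorphism \(\varphi:G_X\to G\), define \(\Psi(\varphi)=\varphi\circ\eta_X\). Here \(\eta_X:X\to G_X\) is the canonical map sending \(x\) to the class of the generator \(x\). By the defining relations,
\[
\eta_X(x)\eta_X(y)\eta_X(x)^{-1}=\eta_X(x\brhd y),
\]
so \(\eta_X\) is a quandle morphism \(X\to\operatorname{Conj}(G_X)\). A group homomorphism preserves conjugation, so \(\varphi\) is a quandle morphism \(\operatorname{Conj}(G_X)\to\operatorname{Conj}(G)\). Hence \(\Psi(\varphi)\) is a quandle morphism \(X\to\operatorname{Conj}(G)\).

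Conversely, let \(f:X\to\operatorname{Conj}(G)\) be a quandle morphism. Consider the homomorphism from the free group \(F(X)\) to \(G\) sending each generator \(x\) to \(f(x)\). The relator \(xyx^{-1}(x\brhd y)^{-1}\) maps to
\[
f(x)f(y)f(x)^{-1}f(x\brhd y)^{-1}.
\]
Since \(f\) is a quandle morphism, \(f(x\brhd y)=f(x)f(y)f(x)^{-1}\), so this image is \(1\). Therefore the homomorphism factors through \(G_X\) and yields \(\Theta(f):G_X\to G\) with \(\Theta(f)\circ\eta_X=f\).

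This gives \(\Psi\Theta=\operatorname{id}\) immediately. For \(\Theta\Psi=\operatorname{id}\), note that \(\Theta(\Psi(\varphi))\) and \(\varphi\) agree on the images of the generators, which generate \(G_X\), so they are equal.

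It remains to check naturality in both variables. For a quandle morphism \(g:X'\to X\) and a group homomorphism \(h:G\to G'\), both composites send \(\varphi\) to \(h\circ\varphi\circ G_g\circ\eta_{X'}\). This uses \(G_g\circ\eta_{X'}=\eta_X\circ g\), which is the definition of the induced map \(G_g\).

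There is no real obstacle here. The only point needing care is that the relators of \(G_X\) are exactly the conjugation relations, so that a quandle morphism kills them. The argument is the standard one for a presentation-defined free construction.
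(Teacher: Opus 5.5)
Your proposal is correct and follows the paper's approach. The paper's proof is the one-line remark that the bijection is the universal property of the displayed presentation of \(G_X\). You have written that universal property out in full: the map \(\varphi\mapsto\varphi\circ\eta_X\), its inverse obtained by factoring through the free group, the check that the two maps are mutually inverse, and naturality in both variables.
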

\begin{proof}
This is the universal property of the displayed presentation of
\(G_X\).
\end{proof}

\begin{lemma}
Let \(0\neq V\in\GG\). Then \(\supp V\) is a quandle
under $g\brhd h=ghg^{-1}$.
\end{lemma}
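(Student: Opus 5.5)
The plan is to show that \(\supp V\) is closed under the operation \(\brhd\), and that the rack and quandle axioms follow from \(\supp V\) being a subset of the conjugation quandle \(\operatorname{Conj}(G)\). The first step is the closure statement: for \(g,h\in\supp V\), the element \(ghg^{-1}\) again lies in \(\supp V\). Choose \(0\neq v\in V_h\). The third condition in \eqref{eq:YD-projective-action} gives \(g\rhd v\in V_{ghg^{-1}}\). It remains to see that \(g\rhd v\neq0\), and for this I will show that the action of each \(g\in G\) is invertible.

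For invertibility, apply the first identity of \eqref{eq:YD-projective-action} with \(e=g^{-1}\) and \(f=g\). It gives
\[
g^{-1}\rhd(g\rhd v)=\Phi_h(g^{-1},g)\,(g^{-1}g)\rhd v=\Phi_h(g^{-1},g)\,v,
\]
using \(1_G\rhd v=v\). The scalar \(\Phi_h(g^{-1},g)\) is a ratio of values of \(\Phi\) in \(\mathbb C^\times\), so it is nonzero. Hence \(g\rhd v\neq0\), so \(V_{ghg^{-1}}\neq0\) and \(ghg^{-1}\in\supp V\). This invertibility argument is the only step that uses the twisted structure, and it is the one point that needs care.

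Next come the axioms. Each left translation \(f_g\colon h\mapsto ghg^{-1}\) maps \(\supp V\) into itself and is injective. Since \(V\) is finite-dimensional here, \(\supp V\) is finite, so \(f_g\) is a bijection. In general, the inverse map \(h\mapsto g^{-1}hg\) also preserves \(\supp V\): by the same argument applied to \(g^{-1}\), which lies in \(G\) though not necessarily in \(\supp V\), the action of \(g^{-1}\) carries \(V_h\) injectively into \(V_{g^{-1}hg}\). Self-distributivity holds by direct computation:
\[
g(khk^{-1})g^{-1}=(gkg^{-1})(ghg^{-1})(gkg^{-1})^{-1}.
\]
Idempotence holds because \(ggg^{-1}=g\).

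Finally, \(\supp V\) is nonempty since \(V\neq0\) and \(V=\bigoplus_{g\in G}{}^gV\). Therefore \(\supp V\) is a subquandle of \(\operatorname{Conj}(G)\).
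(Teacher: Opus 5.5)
Your proposal is correct and follows the paper's argument. The paper says only that the Yetter--Drinfeld compatibility makes $\supp V$ a union of conjugacy classes and that the quandle axioms are conjugation identities in $G$. You also justify the closure step the paper leaves implicit: the action of each $g\in G$ is invertible, because $g^{-1}\rhd(g\rhd v)=\Phi_h(g^{-1},g)\,v$ with $\Phi_h(g^{-1},g)\neq0$.
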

\begin{proof}
The Yetter--Drinfeld compatibility makes $\supp V$ a union of
conjugacy classes. The quandle axioms are the corresponding identities
for conjugation in $G$.
\end{proof}
\subsection{Cartan graphs of Nichols algebras}
We recall the rank-two reflection theory used throughout the
classification. The main results collected here are the
tensor-decomposition criterion, the positive-root factorization, and
the invariance of finite-dimensionality and dimension under
reflections. We use the notation of
\cite[Sections~2.1 and~2.6]{reflection3}.
All statements in this subsection remain valid after replacing
\((G,\Phi)\) by any finite group and any normalized \(3\)-cocycle.

Let \(\mathbb I=\{1,2\}\), and let \(P=(P_1,P_2)\) be a tuple of
finite-dimensional simple objects in \(\GG\). Write
\(\mathcal B(P)=\mathcal B(P_1\oplus P_2)\), with its standard
\(\mathbb N_0^2\)-grading \(\deg P_i=\alpha_i\), where
\(\alpha_1,\alpha_2\) are the standard basis vectors of \(\mathbb Z^2\).
For tuples
\(Q=(Q_1,Q_2)\) and \(Q'=(Q'_1,Q'_2)\), write \(Q\simeq Q'\) if
\(Q_i\simeq Q'_i\) for every \(i\in\mathbb I\), and denote the
componentwise isomorphism class of \(Q\) by \([Q]\).

\begin{definition}
For \(i\ne j\), the tuple \(P\) admits the \(i\)-th reflection if
there exists an integer \(m_{ij}^P\geq0\) such that
$
(\operatorname{ad}P_i)^{m_{ij}^P}(P_j)
\text{ is nonzero}
$ and 
$(\operatorname{ad}P_i)^{m_{ij}^P+1}(P_j)=0.$
In this case, define
\[
R_i(P)_j =
\begin{cases}
P_i^*, & \text{if } j = i, \\
(\operatorname{ad}P_i)^{m_{ij}^P}(P_j), & \text{if } j \neq i.
\end{cases}
\]
If \(P\) admits both reflections, define
$a_{ii}^{[P]}=2,$
$a_{ij}^{[P]}=-m_{ij}^P.$
\end{definition}

\begin{lemma}\label{lem:reflection-basic-properties}
Suppose that \(P\) admits the \(i\)-th reflection. Then \(R_i(P)\) is
a tuple of finite-dimensional simple objects, and its isomorphism
class depends only on \([P]\). Moreover, \(R_i(P)\) admits the
\(i\)-th reflection,
\[
R_i^2(P)\simeq P,
\qquad
m_{ij}^{R_i(P)}=m_{ij}^P\quad(j\ne i).
\]
Consequently, whenever both Cartan matrices are defined,
\(A^{[P]}\) and \(A^{[R_i(P)]}\) have the same \(i\)-th row.
\end{lemma}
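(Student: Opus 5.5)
The plan is to transport the rank-two reflection theory of \cite{rootsys,AHS10} to \(\GG\) by the same strictification device used in Lemma~\ref{lem:adjoint-object-realization}, and to cite \cite{reflection1,reflection3} wherever the coquasi-Hopf versions are already available. Fix a strict braided monoidal category \(\mathcal C^{\mathrm{str}}\) and a \(\Bbbk\)-linear exact braided monoidal equivalence \(F:\GG\to\mathcal C^{\mathrm{str}}\). By \eqref{eq:Nichols-under-equivalence}, \(F\) identifies \(\mathcal B(P_1\oplus P_2)\) with \(\mathcal B(F(P_1)\oplus F(P_2))\). By Lemma~\ref{lem:adjoint-object-realization}, it also identifies the adjoint objects \((\operatorname{ad}P_i)^n(P_j)\). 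Hence \(m_{ij}^P\), simplicity, and isomorphism classes are unchanged by \(F\). \(F\) also preserves duals up to canonical isomorphism. So it suffices to prove every assertion in a strict braided category with duals. There the proofs of \cite[Chapters~12--13]{rootsys} apply verbatim, since they use only strict braided structure, rigidity, and semisimplicity of the relevant objects.

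First, \(R_i(P)\) consists of finite-dimensional simple objects. \(P_i^*\) is simple because duality is an anti-equivalence; in the one-dimensional case this is also Lemma~\ref{lem:dual-projective-representation}. For \(m=m_{ij}^P\), \((\operatorname{ad}P_i)^m(P_j)\) is finite-dimensional as a subobject of a graded component of \(\mathcal B(P)\). For its simplicity I would use the description \(X_n=\operatorname{Im}\psi_n\). The key point is the \cite{rootsys} argument that \(\mathcal B(P)\simeq K\#\mathcal B(P_i)\), where \(K\) is the Nichols algebra of \(\bigoplus_{n}(\operatorname{ad}P_i)^n(P_j)\) in the category of Yetter--Drinfeld modules over \(\mathcal B(P_i)\). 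In that category, the object \(L=\bigoplus_{n\le m}(\operatorname{ad}P_i)^n(P_j)\) is simple, because it is generated by \(P_j\) and has \(P_j\) as its \(\mathcal B(P_i)\)-coinvariants. Its top component \((\operatorname{ad}P_i)^m(P_j)\) is then simple in \(\GG\). The argument is that a proper nonzero subobject of the top component would generate, via the coaction of \(\mathcal B(P_i)\) and the vanishing of \((\operatorname{ad}P_i)^{m+1}(P_j)\), a proper Yetter--Drinfeld submodule of \(L\). Dependence only on \([P]\) follows because \((\operatorname{ad}P_i)^n(P_j)\) is the image of a morphism built naturally from the braiding and the multiplication. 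An isomorphism \(P\simeq P'\) induces an isomorphism of Nichols algebras intertwining these morphisms.

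The remaining assertions follow from the reflection isomorphism \(\Omega_i\). This is the braided-Hopf isomorphism
\[
\mathcal B(R_i(P))\simeq K\#\mathcal B(P_i^*),
\]
obtained from the Heckenberger--Schneider duality between \(\mathcal B(P_i)\) and \(\mathcal B(P_i^*)\) and the bosonization over coquasi-Hopf algebras in \cite{quasibon,reflection1}. Under \(\Omega_i\), the adjoint objects \((\operatorname{ad}P_i^*)^n(R_i(P)_j)\) correspond to \((\operatorname{ad}P_i)^{m-n}(P_j)\), up to duality twisting. Consequently \((\operatorname{ad}P_i^*)^n(R_i(P)_j)\neq0\) exactly for \(0\le n\le m\), and \((\operatorname{ad}P_i^*)^m(R_i(P)_j)\simeq P_j\). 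This gives \(m_{ij}^{R_i(P)}=m_{ij}^P\) and shows that \(R_i(P)\) admits the \(i\)-th reflection. Together with \(P_i^{**}\simeq P_i\), it gives \(R_i^2(P)\simeq P\). The statement about the \(i\)-th rows of the Cartan matrices is then immediate from the definition \(a_{ij}=-m_{ij}\).

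The main obstacle is this last step, the reversal of the adjoint filtration under \(\Omega_i\). It requires checking that the associator does not spoil the dual pairing between \(\mathcal B(P_i)\) and \(\mathcal B(P_i^*)\) and the smash-product identification. I would handle it entirely through strictification, so that no cocycle computations are needed. If one wants the concrete statement in \(\GG\), this is precisely what \cite[Sections~2.1 and~2.6]{reflection3} records, and I would cite it there.
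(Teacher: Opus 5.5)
Your outline is the standard reflection argument, but it is not how the paper proceeds, and its load-bearing step is not justified. The paper's proof is a citation. Simplicity of the last nonzero adjoint object is \cite[Lemma~5.8]{reflection2}. Independence of the representative is functoriality of the adjoint construction. The remaining assertions are \cite[Lemma~5.9(2) and Corollary~5.15(1)]{reflection2}. All of these are proved intrinsically for the coquasi-Hopf algebra $(\Bbbk G,\Phi)$.

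Your simplicity step is sound once it is made explicit. Since $(\operatorname{ad}P_i)^{m+1}(P_j)=0$, the top component $L_m$ is annihilated by the augmentation ideal of $\mathcal B(P_i)$. The Yetter--Drinfeld compatibility then shows that the $\mathcal B(P_i)$-subcomodule generated by a subobject $U\subseteq L_m$ is stable under the action. Its degree-$m$ part is $U$, so it is a proper nonzero submodule of the simple object $L$ unless $U=L_m$. As you phrase it (generating by action and coaction together), it is not clear that the top component stays equal to $U$; the point is to use only the coaction.

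The gap is the claim that, after Joyal--Street strictification, the proofs of \cite[Chapters~12--13]{rootsys} apply verbatim. Strictification gives a strict braided category, but not a Hopf algebra $H$ with $\GG\simeq{}^H_H\mathcal{YD}$, and for cohomologically nontrivial $\Phi$ no such $H$ need exist. The Heckenberger--Schneider reflection theory is set up in ${}^H_H\mathcal{YD}$, and three of its ingredients go through ordinary bosonization over $H$:
\begin{itemize}
\item the decomposition $\mathcal B(P)\simeq K\#\mathcal B(P_i)$;
\item the Yetter--Drinfeld structure on $L$;
\item the equivalence between (rational) Yetter--Drinfeld modules over $\mathcal B(P_i)\#H$ and over $\mathcal B(P_i^*)\#H$, which reverses the adjoint filtration.
\end{itemize}
In $\GG$ each of these must be replaced by a statement about Hopf algebras inside the braided category. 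This includes a duality functor that does not assume $\dim\mathcal B(P_i)<\infty$, since the lemma makes no such assumption.

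Supplying exactly this, via bosonization over $(\Bbbk G,\Phi)$, is the content of \cite{reflection1,reflection2}. You implicitly concede this when you invoke \cite{quasibon,reflection1} for $\Omega_i$, and at that point the strictification does no work. Your fallback reference should therefore be the results of \cite{reflection2} quoted above, rather than \cite[Sections~2.1 and~2.6]{reflection3}, which the paper uses only for notation. With that citation in place your proposal coincides with the paper's proof; without it, the transfer step is asserted rather than proved.
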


\begin{proof}
The  {last nonzero} adjoint object is simple by
\cite[Lemma~5.8]{reflection2}. Functoriality of the adjoint
construction gives the assertion about isomorphism classes. The
remaining assertions are
\cite[Lemma~5.9\textnormal{(2)} and
Corollary~5.15\textnormal{(1)}]{reflection2}.
\end{proof}

Moreover, \cite[Remark~2.26]{reflection3} shows that
\(A^{[P]}=(a_{ij}^{[P]})_{i,j\in\mathbb I}\) is a generalized Cartan
matrix whenever \(P\) admits both reflections.

\begin{definition}
The tuple \(P\) admits the empty sequence. For \(l\geq1\), it admits
\((i_1,\ldots,i_l)\) if it admits the \(i_1\)-th reflection and
\(R_{i_1}(P)\) admits
\((i_2,\ldots,i_l)\). The tuple \(P\) admits all reflections if it
admits every finite sequence in \(\mathbb I\). For such a tuple, let
\[
\mathcal F_2(P)
=\bigl\{R_{i_l}(\cdots R_{i_1}(P)\cdots)
\mid l\in\mathbb N_0,\ i_1,\ldots,i_l\in\mathbb I\bigr\}.
\]
Define
\[
\mathcal X(P)=\{[Q]\mid Q\in\mathcal F_2(P)\},
\qquad
r_i([Q])=[R_i(Q)],
\]
and set
\[
\mathcal G(P)=
\bigl(\mathbb I,\mathcal X(P),(r_i)_{i\in\mathbb I},
(A^{[Q]})_{[Q]\in\mathcal X(P)}\bigr).
\]
\end{definition}

\begin{prop}\label{prop:associated-Cartan-graph}
If \(P\) admits all reflections, then \(\mathcal G(P)\) is a connected
Cartan graph.
\end{prop}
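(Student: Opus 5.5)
We must check the Cartan graph axioms for \(\mathcal G(P)\). In the rank-two setting of \cite[Section~2.1]{reflection3}, a Cartan graph is a quadruple \((\mathbb I,\mathcal X,(r_i),(A^X))\) satisfying three conditions:
\begin{enumerate}[label=(\roman*)]
\item each \(A^X\) is a generalized Cartan matrix;
\item each \(r_i\) is an involution of \(\mathcal X\);
\item \(a^{X}_{ij}=a^{r_i(X)}_{ij}\) for all \(X\in\mathcal X\) and \(i,j\in\mathbb I\).
\end{enumerate}
Connectedness means the groupoid generated by the \(r_i\) acts transitively. The plan is to verify these in order, reducing everything to Lemma~\ref{lem:reflection-basic-properties} and to \cite[Remark~2.26]{reflection3}.

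First, well-definedness. Since \(P\) admits all reflections, every \(Q\in\mathcal F_2(P)\) admits both reflections. Indeed, \(R_i(Q)\in\mathcal F_2(P)\) and \(Q\) itself admits every finite sequence, so \(A^{[Q]}\) is defined. By Lemma~\ref{lem:reflection-basic-properties}, the isomorphism class of \(R_i(Q)\) depends only on \([Q]\). The integers \(m_{ij}^Q\) also depend only on \([Q]\): an isomorphism \(Q\simeq Q'\) induces an isomorphism \(\mathcal B(Q)\simeq\mathcal B(Q')\) compatible with the adjoint morphisms \(\operatorname{ad}^{(n)}\), so the vanishing of \((\operatorname{ad}Q_i)^n(Q_j)\) is invariant. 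Hence \(r_i\) is a well-defined map \(\mathcal X(P)\to\mathcal X(P)\), and \(A^{[Q]}\) is well defined. Condition (i) is exactly \cite[Remark~2.26]{reflection3}.

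Next, the involution and compatibility axioms. Lemma~\ref{lem:reflection-basic-properties} gives \(R_i^2(Q)\simeq Q\), so \(r_i^2=\operatorname{id}\) on \(\mathcal X(P)\). Since \(R_i(Q)\in\mathcal F_2(P)\), this is condition (ii). For condition (iii), the same lemma gives \(m_{ij}^{R_i(Q)}=m_{ij}^Q\) for \(j\neq i\), and \(a_{ii}=2\) on both sides, so the \(i\)-th rows of \(A^{[Q]}\) and \(A^{[R_i(Q)]}\) agree. Connectedness is immediate from the definition: every element of \(\mathcal X(P)\) has the form \(r_{i_l}\cdots r_{i_1}([P])\), so all objects lie in the orbit of \([P]\).

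The only delicate point is the well-definedness step, namely that \(m_{ij}\) and the reflected tuple are invariants of the componentwise isomorphism class. This rests on the functoriality of the Nichols algebra and of the adjoint morphisms under isomorphisms of tuples. I expect it to follow directly from the universal property, with no computation required, as already indicated in the proof of Lemma~\ref{lem:reflection-basic-properties}. If the axiom list in \cite{reflection3} additionally demands finiteness of \(\mathcal X\), I would note that this is not needed for the proposition as stated.
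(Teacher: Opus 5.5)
Your checks of (i)--(iii) and of connectedness are fine. However, (i)--(iii) are only the axioms of a \emph{semi-Cartan graph}.

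In the notion of Cartan graph used here (the notation of \cite[Section~2.1]{reflection3}, following Heckenberger--Schneider), there are two further real-root axioms. First, every real root at every object lies in $\mathbb N_0^2$ or in $-\mathbb N_0^2$, i.e.\ $\Delta^{[Q],\mathrm{re}}=\Delta^{[Q],\mathrm{re}}_+\cup-\Delta^{[Q],\mathrm{re}}_+$. Second, if $m=|\Delta^{[Q],\mathrm{re}}_+|$ is finite, then $(r_1r_2)^m([Q])=[Q]$.

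This is why the paper's proof does not stop at Lemma~\ref{lem:reflection-basic-properties}, which already contains everything you use, but appeals to \cite[Theorem~4.6]{reflection3}. The extra axioms are needed later. They give meaning to the longest morphism and to the enumeration of $\Delta_+^{[Q],\mathrm{re}}$ by a reduced expression in Proposition~\ref{prop:positive-root-factorization}. They are also part of the hypothesis ``Cartan graph'' in \cite[Theorem~4.2]{HV17}, which is invoked in Lemma~\ref{lem:finite-type-object}.

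These axioms do not follow formally from (i)--(iii). Take eight objects $X_0,\dots,X_7$ on a cycle. Let $r_1$ interchange $X_{2k}$ and $X_{2k+1}$, let $r_2$ interchange $X_{2k+1}$ and $X_{2k+2}$ (indices mod $8$), and put the $A_2$ Cartan matrix at every object. Then (i)--(iii) hold and the graph is connected. Every word in $s_1,s_2$ is realized by a morphism ending at $X_0$, so the real roots are exactly those of $A_2$ and $m=3$. But $(r_1r_2)^3(X_0)=X_2\neq X_0$.

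So the missing step requires input from the Nichols algebras themselves, relating real roots to the $\mathbb Z^2$-gradings of the Nichols algebras along reflection paths. The combinatorics of $R_i$ and $m_{ij}$ alone are not enough. This input is exactly what \cite[Theorem~4.6]{reflection3} supplies, and your proposal neither proves nor cites it. Your closing remark about finiteness of $\mathcal X(P)$ does not address this: finiteness is not the missing axiom.
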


\begin{proof}
This follows from
\cite[Remark~2.26 and Theorem~4.6]{reflection3}.
\end{proof}

\begin{definition}
Assume that \(P\) admits all reflections. For
\([Q]\in\mathcal X(P)\), define
$
s_i^{[Q]}(\alpha_j)=\alpha_j-a_{ij}^{[Q]}\alpha_i.
$
The Weyl groupoid \(\mathcal W(\mathcal G(P))\) is generated by the
morphisms \(s_i^{[Q]}:[Q]\to r_i([Q])\). The real roots at \([Q]\)
and the positive real roots are
\[
\Delta^{[Q],\operatorname{re}}
=\{w(\alpha_i)\mid i\in\mathbb I,
\ w:[Q']\to[Q]\text{ for some }[Q']\in\mathcal X(P)\},
\qquad
\Delta_+^{[Q],\operatorname{re}}
=\Delta^{[Q],\operatorname{re}}\cap\mathbb N_0^2.
\]
The Cartan graph is finite if \(\Delta^{[Q],\operatorname{re}}\) is
finite for every \([Q]\in\mathcal X(P)\). It is standard if its
Cartan matrix is the same at every object.
\end{definition}

\begin{definition}
An \(\mathbb N_0^2\)-graded Nichols  {algebra} is tensor decomposable if, for some
\(l\geq0\), it is isomorphic as a graded object to
\[
\begin{cases}
\mathcal B(Q_l)\otimes\bigl(\cdots(\mathcal B(Q_2)\otimes
\mathcal B(Q_1))\bigr),&l\geq1,\\
\mathbf 1,&l=0,
\end{cases}
\]
where the \(Q_k\) are finite-dimensional simple objects, each
concentrated in a single degree in
\(\mathbb N_0^2\setminus\{0\}\), and these degrees are pairwise
distinct.
\end{definition}

\begin{thm}\label{thm:tensor-decomposability-criterion}
Let \(P=(P_1,P_2)\) be a tuple of finite-dimensional simple objects in
\(\GG\). Then \(\mathcal B(P)\) is tensor decomposable if and only if
\(P\) admits all reflections and \(\mathcal G(P)\) is finite.
\end{thm}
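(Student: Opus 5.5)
The proof transfers the corresponding result from \cite{reflection3}. We will not redo the reflection theory here. Instead, we check that every hypothesis used there holds for a tuple \(P=(P_1,P_2)\) of finite-dimensional simple objects in \(\GG\); the Remark preceding this subsection states that the results of \cite[Sections~2.1 and~2.6]{reflection3} apply to any finite group with a normalized \(3\)-cocycle. The argument has two directions.

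For the direction ``tensor decomposable \(\Rightarrow\) all reflections and finite Cartan graph'', suppose
\[
\mathcal B(P)\simeq\mathcal B(Q_l)\otimes\cdots\otimes\mathcal B(Q_1)
\]
as \(\mathbb N_0^2\)-graded objects, with the \(Q_k\) finite-dimensional simple objects in pairwise distinct degrees. We proceed in three steps.
\begin{enumerate}
\item Restrict to degrees \(m\alpha_i+\alpha_j\). The tensor factorization bounds the dimensions of these homogeneous components. Since \((\operatorname{ad}P_i)^m(P_j)\) lies in degree \(m\alpha_i+\alpha_j\), the adjoint objects must eventually vanish. So \(P\) admits the \(i\)-th reflection for \(i=1,2\), and the resulting objects are simple by Lemma~\ref{lem:reflection-basic-properties}.
\item Show that tensor decomposability is preserved under each \(R_i\). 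We would invoke the reflection isomorphism of \cite{reflection2}: the projection onto \(\mathcal B(P_i)\) gives a smash-product decomposition
\[
\mathcal B(P)\simeq K_i\otimes\mathcal B(P_i),
\qquad
\mathcal B(R_i(P))\simeq K_i\otimes\mathcal B(P_i^*),
\]
and the grading on \(K_i\) is transformed by \(s_i^{[P]}\). It follows by induction on the length of the sequence that \(P\) admits all reflections.
\item Identify the degrees of the factors \(Q_k\) with the positive real roots at \([P]\). Since there are only finitely many factors, the set of real roots is finite. Proposition~\ref{prop:associated-Cartan-graph} then says \(\mathcal G(P)\) is a connected Cartan graph, and it is finite.
\end{enumerate}

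For the converse, assume \(P\) admits all reflections and \(\mathcal G(P)\) is finite. Then the Weyl groupoid has a longest element \(w_0=s_{i_1}\cdots s_{i_n}\) at \([P]\), and the positive real roots are
\[
\beta_k=s_{i_1}\cdots s_{i_{k-1}}(\alpha_{i_k}),\qquad k=1,\dots,n.
\]
Define root objects \(Q_k\) by transporting the simple object \(P^{(k)}_{i_k}\) back along the reflections, where \(P^{(k)}=R_{i_{k-1}}\cdots R_{i_1}(P)\). Iterating the smash-product decomposition along the reduced word then gives
\[
\mathcal B(P)\simeq\mathcal B(Q_n)\otimes\cdots\otimes\mathcal B(Q_1),
\]
with \(Q_k\) concentrated in degree \(\beta_k\). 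These degrees are pairwise distinct because the \(\beta_k\) are distinct roots.

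The main obstacle is the iterated decomposition in the converse. It has to be carried out in the non-strict category \(\GG\), and it requires the reflection isomorphisms to be graded compatibly with the Weyl groupoid maps. My plan for this step is to pass to a strict braided equivalent category \(\mathcal C^{\mathrm{str}}\), exactly as in the proof of Lemma~\ref{lem:adjoint-object-realization}. There I would use \eqref{eq:Nichols-under-equivalence} to move the Nichols algebras, apply \cite[Theorem~4.6]{reflection3} and the root-factorization results of \cite{reflection3} in the strict setting, and transport the graded isomorphism back along the equivalence.
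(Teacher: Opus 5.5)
Your first sentence is already the paper's entire proof. The statement is \cite[Theorem~5.8]{reflection3}, which is proved there for Yetter--Drinfeld modules over coquasi-Hopf algebras. It therefore applies to $\GG$ verbatim, with nothing further to check and no strictification needed.

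The reconstruction you add, however, would not stand as an argument.

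\textbf{Step~1.} A tensor factorization does \emph{not} bound the components of degree $m\alpha_i+\alpha_j$. The definition allows $\mathcal B(P_i)$ to be infinite-dimensional; for instance, take $c_{P_i,P_i}=\operatorname{id}$ with $(P_i,P_j)$ braided-decomposable, so that $\mathcal B(P)=\mathcal B(P_i)\otimes\mathcal B(P_j)$. The component of degree $m\alpha_i+\alpha_j$ always contains $P_j\cdot\mathcal B(P_i)_m$, which is nonzero for every $m$ in that case. What actually forces $(\operatorname{ad}P_i)^m(P_j)=0$ for large $m$ is a comparison of graded classes. In $\mathcal B(P)\simeq K_i\otimes\mathcal B(P_i)$, the part of $K_i$ of $\alpha_j$-degree one is $\bigoplus_m(\operatorname{ad}P_i)^m(P_j)$. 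The class of $\mathcal B(P_i)$ is invertible, so cancelling it against the given factorization shows that $(\operatorname{ad}P_i)^m(P_j)$ has the class of the factor of degree $m\alpha_i+\alpha_j$, or vanishes.

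\textbf{Step~2.} The same cancellation, using semisimplicity of $\GG$, is what you need to see that $K_i$, and hence $\mathcal B(R_i(P))$, is again tensor decomposable.

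\textbf{Step~3.} Identifying the factor degrees with the positive real roots is the real content of the forward direction, and you only assert it.

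\textbf{The converse.} The strictification plan is misdirected. The results of \cite{reflection3} are theorems about Yetter--Drinfeld modules over coquasi-Hopf algebras, and hence about $\GG$ itself, not about an abstract strict category $\mathcal C^{\mathrm{str}}$. ``Applying them in the strict setting'' can only mean applying them in $\GG$ and transporting, which gains nothing. Moreover, the root-factorization result you would invoke is \cite[Corollary~5.9]{reflection3}, i.e.\ Proposition~\ref{prop:positive-root-factorization}. That is precisely the converse direction of the statement in refined form, so this step is a second citation of what you are proving, not a proof of the iterated smash-product decomposition. An independent argument would have to construct the reflection isomorphisms $\mathcal B(P)\simeq K_i\otimes\mathcal B(P_i)$ and $\mathcal B(R_i(P))\simeq K_i\otimes\mathcal B(P_i^*)$, with the $s_i^{[P]}$-regrading, directly in the non-strict category. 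That is exactly what \cite{reflection1,reflection2,reflection3} do.

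So keep the citation, and either drop the sketch or repair Step~1 as above and actually prove Step~3.
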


\begin{proof}
This is \cite[Theorem~5.8]{reflection3}.
\end{proof}

\begin{prop}\label{prop:positive-root-factorization}
Assume that \(P\) admits all reflections and that \(\mathcal G(P)\) is
finite. Fix \(Q\in\mathcal F_2(P)\). Let \(w_0\) be a longest
morphism with target \([Q]\), choose a reduced expression
\(w_0=\operatorname{id}_{[Q]}s_{i_1}\cdots s_{i_l}\), and let
\[
\beta_k=
\operatorname{id}_{[Q]}s_{i_1}\cdots s_{i_{k-1}}(\alpha_{i_k}),
\qquad 1\leq k\leq l.
\]
Then
$\{\beta_1,\ldots,\beta_l\}
=\Delta_+^{[Q],\operatorname{re}},
$
 {and there are finite-dimensional simple objects
\(Q_{\beta_1},\ldots,Q_{\beta_l}\), with \(Q_{\beta_k}\)
having \(\mathbb N_0^2\)-degree \(\beta_k\), such that}
\[
\mathcal B(Q_{\beta_l})\otimes
\bigl(\cdots(\mathcal B(Q_{\beta_2})\otimes
\mathcal B(Q_{\beta_1}))\bigr)
\xrightarrow{\ \sim\ }\mathcal B(Q)
\]
as \(\mathbb N_0^2\)-graded objects.
\end{prop}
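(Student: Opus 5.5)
This proposition should not need a new argument: it is the specialization to rank two and to \(\GG\) of the PBW-type factorization in the reflection theory of \cite{reflection2,reflection3}. The plan is to check that the hypotheses there hold and then read off the conclusion. It has three steps.

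First, since \(P\) admits all reflections and \(\mathcal G(P)\) is finite, Proposition~\ref{prop:associated-Cartan-graph} makes \(\mathcal G(P)\) a connected finite Cartan graph. The standard theory of finite Weyl groupoids (Heckenberger--Yamane, see \cite{HY08,CH15}) then gives several facts at each object \([Q]\). There is a longest morphism \(w_0\) with target \([Q]\). Its length \(l\) equals \(|\Delta_+^{[Q],\operatorname{re}}|\). For any reduced expression \(\operatorname{id}_{[Q]}s_{i_1}\cdots s_{i_l}\), the roots \(\beta_k\) are pairwise distinct and positive, and they exhaust \(\Delta_+^{[Q],\operatorname{re}}\). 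This is purely combinatorial and yields the first assertion.

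Second, to construct the \(Q_{\beta_k}\), I will transport along the reflections. Put \(Q^{(k-1)}=R_{i_{k-1}}\cdots R_{i_1}(Q)\) and set \(Q_{\beta_k}\) to be the object of \(\mathcal B(Q)\) obtained from \(Q^{(k-1)}_{i_k}\) through the chain of reflection isomorphisms. In the Heckenberger--Schneider approach, these are the isomorphisms relating \(\mathcal B(R_i(Q))\) to \(\mathcal B(Q)\) via the Radford--Majid bosonization and the braided Hopf subalgebra \(K_i=\mathcal B(Q)^{\operatorname{co}\mathcal B(Q_i)}\), whose analogues over coquasi-Hopf algebras are proved in \cite{reflection2}. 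Under the change of grading \(s_{i_1}\cdots s_{i_{k-1}}\), this object sits in degree \(\beta_k\). Since reflections preserve simplicity and finite dimension (Lemma~\ref{lem:reflection-basic-properties}), each \(Q_{\beta_k}\) is a finite-dimensional simple object.

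Third, I will prove the factorization by induction on \(l\), using the decomposition \(\mathcal B(Q)\simeq K_{i_1}\otimes\mathcal B(Q_{i_1})\) of graded objects together with the identification of \(K_{i_1}\) with the corresponding coinvariant piece of \(\mathcal B(R_{i_1}(Q))\), regraded by \(s_{i_1}\). Iterating along the reduced word peels off the factors \(\mathcal B(Q_{\beta_1}),\mathcal B(Q_{\beta_2}),\ldots\) from the right, which produces the nested tensor product in the order stated. The iteration terminates because after \(l\) steps every remaining degree would be a non-positive root. Equivalently, one can cite \cite[Theorem~5.8]{reflection3} and its proof, where exactly this decomposition is what realizes tensor decomposability in Theorem~\ref{thm:tensor-decomposability-criterion}.

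The main obstacle is the bookkeeping in this induction. One must check that the degree shifts given by the Weyl groupoid match the actual \(\mathbb N_0^2\)-degrees of the objects inside \(\mathcal B(Q)\), and that the associator does not disturb the order of the nested tensor product. I would handle the associator issue by passing to a strict equivalent category, as in the proof of Lemma~\ref{lem:adjoint-object-realization}, and invoking \eqref{eq:Nichols-under-equivalence}. The degree matching is the content of the cited results in \cite{reflection3}.
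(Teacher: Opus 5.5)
Your proposal matches the paper's route: the paper gives no independent argument and simply cites \cite[Corollary~5.9]{reflection3}, which is exactly the reduced-word factorization you describe (the explicit statement with factors indexed by \(\beta_1,\ldots,\beta_l\) is that corollary, not \cite[Theorem~5.8]{reflection3} by itself). Once you make the citation your inductive sketch is not needed, but as phrased an induction on \(l\) would not close: the tail \(s_{i_2}\cdots s_{i_l}\) is not a longest word at \([R_{i_1}(Q)]\), so the iteration really rests on the cited theory's statement for arbitrary reduced words.
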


\begin{proof}
This is \cite[Corollary~5.9]{reflection3}.
\end{proof}

\begin{lemma}\label{lem:root-factors-and-reflected-components}
Under the hypotheses and notation of
Proposition~\ref{prop:positive-root-factorization}, let
\(S\in\mathcal F_2(P)\), \(i\in\mathbb I\), and let
\(w:[S]\to[Q]\) be a morphism in \(\mathcal W(\mathcal G(P))\). If
\(\beta_k=w(\alpha_i)\), then \(Q_{\beta_k}\simeq S_i\).
\end{lemma}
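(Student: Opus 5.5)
The plan is to trace how the factors $Q_{\beta_k}$ are built in the proof of Proposition~\ref{prop:positive-root-factorization} (that is, in \cite[Corollary~5.9]{reflection3}), and then to use the fact that the factor attached to a real root does not depend on the chosen reduced expression. Concretely, the factors come from iterated reflections. Along the reduced expression $w_0=\operatorname{id}_{[Q]}s_{i_1}\cdots s_{i_l}$, the object $Q_{\beta_k}$ is the $i_k$-th entry of the tuple
\[
R_{i_{k-1}}\cdots R_{i_1}(Q).
\]
This tuple sits at the source of $\operatorname{id}_{[Q]}s_{i_1}\cdots s_{i_{k-1}}$, which is a morphism into $[Q]$ sending $\alpha_{i_k}$ to $\beta_k$. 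The first step is to record this description explicitly, with the orientation of the Weyl groupoid morphisms matched to the reflections $r_i$.

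The second step reduces an arbitrary morphism $w:[S]\to[Q]$ with $w(\alpha_i)=\beta_k$ to a reduced expression of this form. Since $\beta_k$ is a positive real root, I use the standard Cartan graph facts from \cite{reflection3} (the theory of Heckenberger--Yamane). Write $w=\operatorname{id}_{[Q]}s_{j_1}\cdots s_{j_m}$ as a reduced word. If $w(\alpha_i)$ is positive, then the length satisfies $\ell(ws_i)=\ell(w)+1$. The longest element $w_0$ with target $[Q]$ has the property that every reduced word extends to a reduced expression of $w_0$. Hence $(j_1,\ldots,j_m,i)$ extends to a reduced expression of $w_0$, and the root in position $m+1$ of this expression is $w(\alpha_i)=\beta_k$. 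By the first step, the corresponding factor is $S'_i$, where $S'=R_{j_m}\cdots R_{j_1}(Q)$. In a Cartan graph the source object of a morphism is determined by the morphism, so $[S']=[S]$. Because $R$ depends only on isomorphism classes (Lemma~\ref{lem:reflection-basic-properties}), this gives $S'_i\simeq S_i$.

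The third step, which I expect to be the main obstacle, is independence of the factor from the chosen reduced expression. The statement fixes one reduced expression $\beta_1,\ldots,\beta_l$, while the second step produced a possibly different one. I need the factor attached to a given positive root to be well defined up to isomorphism. My first approach is to compare graded components. In the tensor decomposition, $\mathcal B(Q)$ is graded, and the root $\beta_k$ has multiplicity one because the roots are distinct and real. I would then identify the factor for $\beta$ intrinsically. One option is via the braided Hopf algebra of coinvariants; another is as the lowest-degree piece of $\mathcal B(Q)_{\mathbb N_0\beta}$ modulo contributions from other roots. I expect to rely on \cite[Corollary~5.9]{reflection3} or a companion statement asserting this independence.

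If that intrinsic comparison fails, the fallback is a Matsumoto-type argument for Weyl groupoids. Any two reduced expressions of $w_0$ are related by Coxeter moves of rank two, and the simple objects attached to the roots in a rank-two braid block are permuted compatibly, up to the isomorphisms $R_i^2\simeq\operatorname{id}$ and the invariance of $m_{ij}$ under reflection. This shows that the multiset of pairs $(\text{root},[\text{factor}])$ does not depend on the reduced expression, which completes the argument.
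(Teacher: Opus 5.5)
Your Steps~1 and~2 are sound. Step~1 is the standard description of the factors built in the proof of \cite[Corollary~5.9]{reflection3}. Step~2 correctly uses \(\ell(ws_i)=\ell(w)+1\) when \(w(\alpha_i)>0\), together with the fact that every reduced word with target \([Q]\) is a prefix of a reduced expression of \(w_0\). But these two steps only show that \(S_i\) is the factor at the root \(\beta_k\) in the decomposition attached to a \emph{different} reduced expression \((j_1,\ldots,j_m,i,\ldots)\).

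The content of the lemma is therefore Step~3, and you do not prove it. Deferring to ``a companion statement asserting this independence'' amounts to deferring to the lemma itself; the paper's proof is just the citation \cite[Lemma~5.10]{reflection3}. Your Matsumoto fallback is circular in rank two. Every reduced word is alternating, so \(w_0\) has exactly the two reduced expressions \((1,2,1,\ldots)\) and \((2,1,2,\ldots)\), and these differ by a single braid move of full length \(l\). To say that the factors in that block are ``permuted compatibly'' is exactly the claim to be proved. It does not come out of \(R_i^2(P)\simeq P\) and \(m_{ij}^{R_i(P)}=m_{ij}^P\). Already for \(A_2\), matching the middle factors needs \((\operatorname{ad}Q_1)(Q_2)\simeq(\operatorname{ad}Q_2)(Q_1)\), which is \eqref{eq:first-adjoint-braiding-symmetry}. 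Matching \((R_2R_1(Q))_1\) with \(Q_2\) needs \cite[Lemma~3.8]{reflection3}, and for \(B_2\) and \(G_2\) the chains are longer.

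Your first idea for Step~3 does close the gap if you carry it out. The point is that the factors are determined up to isomorphism by \(\mathcal B(Q)\) as an \(\mathbb N_0^2\)-graded object, which you can show by induction on the height of \(\beta\in\Delta_+^{[Q],\operatorname{re}}\).
\begin{itemize}
\item Each \(s_i^{[Q]}\) lies in \(\operatorname{GL}_2(\mathbb Z)\), so real roots are primitive vectors, and the \(\beta_j\) are pairwise distinct.
\item Hence the degree-\(\beta\) component of \(\mathcal B(Q_{\beta_l})\otimes(\cdots\otimes\mathcal B(Q_{\beta_1}))\) is \(Q_\beta\) plus a finite direct sum of products \(\bigotimes_j\mathcal B(Q_{\beta_j})_{n_j}\). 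In each such product \(\sum_j n_j\beta_j=\beta\) and at least two \(n_j\) are nonzero.
\item Every \(\beta_j\) with \(n_j\neq0\) in those products has smaller height than \(\beta\). By induction, and using the braiding to ignore the order of the tensor factors, their isomorphism class is already determined.
\item The Krull--Schmidt theorem for finite-dimensional objects of \(\GG\) then determines \(Q_\beta\) from \(\mathcal B(Q)_\beta\). The base case \(\beta=\alpha_i\) gives \(Q_{\alpha_i}\simeq Q_i\).
\end{itemize}
Apply this to the decompositions attached to \((i_1,\ldots,i_l)\) and to your extended expression; it gives \(Q_{\beta_k}\simeq(R_{j_m}\cdots R_{j_1}(Q))_i\simeq S_i\). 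The same argument shows the statement is meaningful even though Proposition~\ref{prop:positive-root-factorization} only asserts that the \(Q_{\beta_k}\) exist.
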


\begin{proof}
This  is \cite[Lemma~5.10]{reflection3}. 
\end{proof}

\begin{lemma}\label{lem:root-string-adjoint-simplicity}
Under the hypotheses and notation of
Proposition~\ref{prop:positive-root-factorization}, let
\(i\ne j\) and \(0\leq t\leq-a_{ij}^{[Q]}\). Then there exists
\(k\in\{1,\ldots,l\}\) such that
\[
\beta_k=\alpha_j+t\alpha_i,
\qquad
(\operatorname{ad}Q_i)^t(Q_j)\simeq Q_{\beta_k}.
\]
In particular, \((\operatorname{ad}Q_i)^t(Q_j)\) is simple.
\end{lemma}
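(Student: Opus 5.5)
The plan is to reduce the statement to the defining property of the reflection \(R_i\) and to the description of \(\mathcal B(Q)\) by positive roots. Fix \(i\ne j\) and set \(m=-a_{ij}^{[Q]}=m_{ij}^Q\). By definition of the reflection, \((\operatorname{ad}Q_i)^t(Q_j)\neq0\) for \(0\le t\le m\). First I will check that \(\alpha_j+t\alpha_i\) is a positive real root for every such \(t\). For \(t=0\) this is clear, with \(w=\operatorname{id}\). For \(t=m\), the morphism \(s_i^{r_i([Q])}:r_i([Q])\to[Q]\) sends \(\alpha_j\) to \(\alpha_j+m\alpha_i\). Here Lemma~\ref{lem:reflection-basic-properties} ensures that the \(i\)-th rows of the two Cartan matrices agree. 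Lemma~\ref{lem:root-factors-and-reflected-components} then gives \(Q_{\beta_k}\simeq R_i(Q)_j=(\operatorname{ad}Q_i)^m(Q_j)\) for the corresponding \(k\), which settles both endpoints.

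For intermediate \(t\), I will use the grading rather than the Weyl groupoid directly. The subobject \((\operatorname{ad}Q_i)^t(Q_j)\subseteq\mathcal B(Q)\) is concentrated in \(\mathbb N_0^2\)-degree \(\alpha_j+t\alpha_i\) and is nonzero. Consider the graded isomorphism of Proposition~\ref{prop:positive-root-factorization}. I will choose the reduced expression of \(w_0\) beginning with \(i\), so that \(\beta_1=\alpha_i\). The degree-\((\alpha_j+t\alpha_i)\) component of \(\mathcal B(Q_{\beta_l})\otimes\cdots\otimes\mathcal B(Q_{\beta_1})\) is a sum of tensor products of homogeneous pieces. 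Since the \(\beta_k\) are positive and each has degree in \(\alpha_j\) at most one, exactly one factor carries the \(\alpha_j\)-part. That factor has degree \(\alpha_j+s\alpha_i\) for some root \(\beta_k\), and the remaining \(\alpha_i\)-degree is supplied by \(\mathcal B(Q_{\alpha_i})=\mathcal B(Q_i)\).

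The key step is to identify \((\operatorname{ad}Q_i)^t(Q_j)\) with a root factor. I would use the standard argument from \cite{reflection3}, namely the structure of \(\mathcal B(Q)\) as a bosonization-type smash product over \(\mathcal B(Q_i)\). The subalgebra \(K=\operatorname{ad}\mathcal B(Q_i)(\mathcal B(Q_j))\)-generated part satisfies \(\mathcal B(Q)\simeq K\otimes\mathcal B(Q_i)\). Moreover, \(K\) is the Nichols algebra of \(\bigoplus_{t=0}^m(\operatorname{ad}Q_i)^t(Q_j)\), and this is the input to the reflection theory of \cite{reflection2}. Comparing the graded pieces of degree \(\alpha_j+t\alpha_i\), in which \(\mathcal B(Q_i)\) contributes only in degree \(0\), shows the following. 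The piece \(K_{\alpha_j+t\alpha_i}=(\operatorname{ad}Q_i)^t(Q_j)\) must match the corresponding piece of \(\bigotimes_{k\ge2}\mathcal B(Q_{\beta_k})\). That piece is exactly \(Q_{\beta_k}\) when \(\alpha_j+t\alpha_i=\beta_k\), and zero otherwise, because any product of two or more factors has \(\alpha_j\)-degree at least two. Since \((\operatorname{ad}Q_i)^t(Q_j)\neq0\), some \(\beta_k\) equals \(\alpha_j+t\alpha_i\), and the isomorphism follows. Simplicity is then inherited from \(Q_{\beta_k}\).

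The main obstacle is this last comparison. It requires the factorization of Proposition~\ref{prop:positive-root-factorization} to be compatible with the decomposition \(K\otimes\mathcal B(Q_i)\) in the chosen reduced word. It also requires the map to be an isomorphism of objects in \(\GG\), not merely of graded vector spaces. If the available results only give graded vector-space isomorphisms, I would fall back on citing the corresponding statement in \cite[Section~5]{reflection3}, namely that the factors \(Q_{\beta_k}\) are realized as subquotients of \(\mathcal B(Q)\) in the relevant degrees. I would then use \cite[Lemma~5.8]{reflection2}, which gives simplicity of the nonzero adjoint objects within the root string.
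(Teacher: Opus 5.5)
Your endpoint cases are fine. For \(t=0\) take \(w=\operatorname{id}\). For \(t=m=-a_{ij}^{[Q]}\), apply Lemma~\ref{lem:root-factors-and-reflected-components} to \(S=R_i(Q)\) and \(w=s_i^{[R_i(Q)]}\colon[R_i(Q)]\to[Q]\), together with Lemma~\ref{lem:reflection-basic-properties}.

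The gap is the intermediate case \(0<t<m\). You compare \(K_{\alpha_j+t\alpha_i}=(\operatorname{ad}Q_i)^t(Q_j)\) with the degree-\((\alpha_j+t\alpha_i)\) piece of \(N=\mathcal B(Q_{\beta_l})\otimes\cdots\otimes\mathcal B(Q_{\beta_2})\). Your justification is that \(\mathcal B(Q_i)\) contributes only in degree \(0\), and that is false:
\[
\bigl(K\otimes\mathcal B(Q_i)\bigr)_{\alpha_j+t\alpha_i}
=\bigoplus_{s=0}^{t}K_{\alpha_j+s\alpha_i}\otimes\mathcal B(Q_i)_{(t-s)\alpha_i},
\]
and similarly for \(N\otimes\mathcal B(Q_i)\). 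Knowing only that both products are isomorphic to \(\mathcal B(Q)\), you cannot read off \(K_{\alpha_j+t\alpha_i}\simeq N_{\alpha_j+t\alpha_i}\). You would need either to identify the two factorizations, which you flag as the main obstacle and leave open, or a cancellation argument. So for intermediate \(t\), neither the existence of \(k\) nor the isomorphism is proved. Separately, it is not true that every \(\beta_k\) has \(\alpha_j\)-coefficient at most one; \(3\alpha_1+2\alpha_2\) in type \(G_2\) is a counterexample. All you actually use is that \(\alpha_i\) is the unique positive root with vanishing \(\alpha_j\)-coefficient.

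The missing idea is cancellation in the Grothendieck ring. Since \(G\) is finite, \(\GG\) is semisimple, so finite-dimensional objects are determined up to isomorphism by their classes in \(\operatorname{Gr}(\GG)\). This ring is commutative because the category is braided. For a graded object \(X\), put \(\operatorname{ch}X=\sum_\gamma[X_\gamma]e^\gamma\), a formal power series over \(\operatorname{Gr}(\GG)\). Then
\[
\operatorname{ch}K\cdot\operatorname{ch}\mathcal B(Q_i)=\operatorname{ch}\mathcal B(Q)=\prod_k\operatorname{ch}\mathcal B(Q_{\beta_k}).
\]
The factor \(\operatorname{ch}\mathcal B(Q_i)\) also occurs on the right, for \(\beta_k=\alpha_i\), and it has constant term \(1\), so it can be cancelled. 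Comparing coefficients of \(e^{\alpha_j+t\alpha_i}\) gives \([(\operatorname{ad}Q_i)^t(Q_j)]=[Q_{\beta_k}]\) if \(\beta_k=\alpha_j+t\alpha_i\) for some \(k\), and \(0\) otherwise. Since the left side is nonzero, this yields both the root and the isomorphism, and you no longer need a reduced word starting with \(i\).

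Even then, you must import the decomposition \(\mathcal B(Q)\simeq K\otimes\mathcal B(Q_i)\) in \(\GG\), with \(K\) generated by \(\bigoplus_s(\operatorname{ad}Q_i)^s(Q_j)\), from the coquasi-Hopf reflection theory. It is not among the results collected in the paper.

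Your fallback does not close the gap either. As this paper uses it, \cite[Lemma~5.8]{reflection2} gives simplicity only of the last nonzero adjoint object, which is the case \(t=m\) you already have. Simplicity of a nonzero adjoint object that is not the last one is not automatic. For example, by Lemmas~\ref{lem:Gamma4-Y2-generators} and~\ref{lem:Gamma4-z0-criterion}, when \(\Delta_4=1\) and \(\sigma(g)\neq-1\) the object \(Y_2\) is nonzero and not simple. Intermediate simplicity genuinely depends on finiteness of the Cartan graph. The paper's proof is just the citation of \cite[Proposition~5.11]{reflection3}.
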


\begin{proof}
This is \cite[Proposition~5.11]{reflection3}.
\end{proof}

\begin{thm}\label{thm:Nichols-finiteness-criterion}
Let \(P=(P_1,P_2)\) be a tuple of finite-dimensional simple objects in
\(\GG\). Then \(\mathcal B(P)\) is finite-dimensional if and only if
the following conditions hold:

 (1)
 \(P\) admits all reflections,
 
 (2) \(\mathcal G(P)\) is finite and \(\mathcal B(Q_i)\) is finite-dimensional for every
      \(Q\in\mathcal F_2(P)\) and \(i\in\mathbb I\).

\end{thm}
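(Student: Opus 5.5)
The plan is to combine the tensor-decomposability criterion (Theorem~\ref{thm:tensor-decomposability-criterion}) with the positive-root factorization (Proposition~\ref{prop:positive-root-factorization}) and Lemma~\ref{lem:root-factors-and-reflected-components}. The point is that both directions reduce to comparing the dimension of \(\mathcal B(P)\) with the dimensions of the rank-one Nichols algebras \(\mathcal B(Q_i)\), where \(Q\in\mathcal F_2(P)\).

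\emph{Sufficiency.} Assume (1) and (2). By Theorem~\ref{thm:tensor-decomposability-criterion}, \(\mathcal B(P)\) is tensor decomposable. More precisely, Proposition~\ref{prop:positive-root-factorization} with \(Q=P\) gives a graded isomorphism
\[
\mathcal B(P)\simeq\mathcal B(Q_{\beta_l})\otimes\bigl(\cdots\otimes\mathcal B(Q_{\beta_1})\bigr).
\]
The set \(\Delta_+^{[P],\operatorname{re}}\) is finite, so \(l<\infty\). Each \(\beta_k\) has the form \(w(\alpha_i)\) for some morphism \(w:[S]\to[P]\) with \(S\in\mathcal F_2(P)\). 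Lemma~\ref{lem:root-factors-and-reflected-components} then gives \(Q_{\beta_k}\simeq S_i\), and \(\mathcal B(S_i)\) is finite-dimensional by (2). Hence \(\mathcal B(P)\) is a finite tensor product of finite-dimensional objects, so it is finite-dimensional.

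\emph{Necessity.} Assume \(\dim\mathcal B(P)<\infty\). The first task is to show that \(P\) admits all reflections. For the \(i\)-th reflection, note that \((\operatorname{ad}P_i)^m(P_j)\) lies in the \(\mathbb N_0^2\)-degree \(\alpha_j+m\alpha_i\) of \(\mathcal B(P)\). Finite-dimensionality means only finitely many degrees are nonzero, so these adjoint objects vanish for \(m\gg0\). The object \((\operatorname{ad}P_i)^0(P_j)=P_j\) is nonzero, so \(m_{ij}^P\) exists.

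To iterate this, I need \(\mathcal B(R_i(P))\) to be finite-dimensional whenever \(\mathcal B(P)\) is. This is the main obstacle. I would invoke the reflection theory of \cite{reflection2,reflection3}: when the \(i\)-th reflection exists, there is a graded isomorphism relating \(\mathcal B(P)\) and \(\mathcal B(R_i(P))\). Both are isomorphic, as graded objects, to \(K_i\otimes\mathcal B(P_i)\), respectively \(K_i\otimes\mathcal B(P_i^*)\), with the same \(K_i\) after the grading is transformed by \(s_i\). In addition, \(\dim\mathcal B(P_i^*)=\dim\mathcal B(P_i)\), since duality is an anti-equivalence and the Nichols algebra of the dual is the graded dual of the Nichols algebra. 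Together these give \(\dim\mathcal B(R_i(P))=\dim\mathcal B(P)\). Induction on the length of the reflection sequence then shows that every \(Q\in\mathcal F_2(P)\) admits both reflections and satisfies \(\dim\mathcal B(Q)=\dim\mathcal B(P)<\infty\).

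Next comes finiteness of \(\mathcal G(P)\). For each \(Q\in\mathcal F_2(P)\), the real roots at \([Q]\) are the degrees of root vectors in \(\mathcal B(Q)\). Concretely, each positive real root \(w(\alpha_i)\) supports the nonzero factor \(S_i\) of degree \(w(\alpha_i)\) obtained through the reflection isomorphisms. Since \(\mathcal B(Q)\) has only finitely many nonzero degrees, \(\Delta_+^{[Q],\operatorname{re}}\) is finite. By the connected Cartan graph structure (Proposition~\ref{prop:associated-Cartan-graph}), \(\Delta^{[Q],\operatorname{re}}=\Delta_+^{[Q],\operatorname{re}}\cup-\Delta_+^{[Q],\operatorname{re}}\), so \(\mathcal G(P)\) is finite.

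Finally, for each \(Q\in\mathcal F_2(P)\) and each \(i\), the canonical injection \(\mathcal B(Q_i)\hookrightarrow\mathcal B(Q)\) shows that \(\mathcal B(Q_i)\) is finite-dimensional. Alternatively, this follows directly from \cite[Theorem~5.8 and Corollary~5.9]{reflection3}. I expect the only nonformal step to be the invariance of the dimension under reflections, which I would cite from \cite{reflection3} rather than reprove.
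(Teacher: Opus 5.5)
Your sufficiency argument is correct. Several steps of necessity are also correct: the existence of each reflection from the $\mathbb N_0^2$-grading, the invariance of $\dim\mathcal B$ under reflections, and the embedding $\mathcal B(Q_i)\hookrightarrow\mathcal B(Q)$. The invariance is already Lemma~\ref{lem:reflection-dimension-invariance}, so it is not the obstacle. The paper itself gives no argument and simply cites \cite[Theorem~5.12]{reflection3}. You have misplaced the difficulty: the genuinely nonformal step is the finiteness of $\mathcal G(P)$, and your argument for it has a gap.

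\emph{Circularity.} You assert that every positive real root $w(\alpha_i)$, with $w:[S]\to[Q]$, is the degree of a nonzero subobject $\simeq S_i$ of $\mathcal B(Q)$. In the paper this is Lemma~\ref{lem:root-factors-and-reflected-components} together with Proposition~\ref{prop:positive-root-factorization}. Both assume that $\mathcal G(P)$ is already finite, so using them here is circular.

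\emph{The reflection isomorphisms are too weak.} The graded isomorphisms $\mathcal B(Q)\simeq K_j\otimes\mathcal B(Q_j)$ and $\mathcal B(R_j(Q))\simeq K_j\otimes\mathcal B(Q_j^*)$ (degrees of $K_j$ transformed by $s_j$) control supports only up to translation by multiples of $\alpha_j$. From $\mathcal B(R_j(Q))_\gamma\neq0$ you only get $\mathcal B(Q)_{s_j(\gamma)+n\alpha_j}\neq0$ for some $n\geq0$. So an induction on the length of $w$ does not close. You need the finer statement that the relevant root-vector subobject lies in the coinvariant algebra and is transported by the reflection isomorphism. In the Heckenberger--Schneider theory this appears as right coideal subalgebras attached to reduced words, of dimension at least $2^{\ell(w)}$. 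That bound is what controls $\ell(w)$ when $\dim\mathcal B(P)<\infty$.

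\emph{The sign dichotomy is unjustified.} The equality $\Delta^{[Q],\operatorname{re}}=\Delta_+^{[Q],\operatorname{re}}\cup-\Delta_+^{[Q],\operatorname{re}}$ does not follow from Proposition~\ref{prop:associated-Cartan-graph}. A connected Cartan graph can have real roots of mixed sign. For example, in rank two, suppose the three reflections in $s_1s_2s_1$, applied from right to left, use the entries $a_{12}=-2$, $a_{21}=-1$, $a_{12}=-1$. Then $s_1s_2s_1(\alpha_2)=\alpha_2-\alpha_1$. Excluding this is a root-system axiom that must itself be proved for Cartan graphs of Nichols algebras. These points are where the real work in \cite[Theorem~5.12]{reflection3} lies.
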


\begin{proof}
This is \cite[Theorem~5.12]{reflection3}.
\end{proof}

\begin{lemma}\label{lem:reflection-dimension-invariance}
Let \(P=(P_1,P_2)\) be a tuple of finite-dimensional simple objects in
\(\GG\). If \(P\) admits the \(i\)-th reflection, then
\begin{equation*}
\dim\mathcal B(R_i(P))<\infty
\quad\Longleftrightarrow\quad
\dim\mathcal B(P)<\infty,
\qquad
\dim\mathcal B(R_i(P))=\dim\mathcal B(P)
\quad\text{in this case}.
\end{equation*}
\end{lemma}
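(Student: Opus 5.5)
The plan is to derive Lemma~\ref{lem:reflection-dimension-invariance} from the tensor-decomposition machinery together with the structure of \(\mathcal B(P)\) as a bosonization-type object relative to \(P_i\). Fix \(i\ne j\) and set \(m=m_{ij}^P\). The key structural input, already used in the reflection theory of \cite{reflection2,reflection3}, is a graded decomposition
\[
\mathcal B(P)\;\simeq\; K_i(P)\otimes\mathcal B(P_i),
\]
where \(K_i(P)\) is the braided Hopf subalgebra of right coinvariants (in the coquasi setting, the appropriate coinvariant object). Moreover, \(K_i(P)\) is itself a Nichols algebra generated by \(\bigoplus_{t=0}^{m}(\operatorname{ad}P_i)^t(P_j)\). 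I would quote this decomposition from \cite{reflection2} (the construction underlying \cite[Lemma~5.9]{reflection2}), noting that it exists exactly when \(P\) admits the \(i\)-th reflection.

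Next I apply the same decomposition to \(R_i(P)\). This is legitimate because Lemma~\ref{lem:reflection-basic-properties} shows that \(R_i(P)\) admits the \(i\)-th reflection, with \(m_{ij}^{R_i(P)}=m\). It gives
\[
\mathcal B(R_i(P))\simeq K_i(R_i(P))\otimes\mathcal B(P_i^*).
\]
The central step, and the one I expect to be the main obstacle, is to identify \(K_i(R_i(P))\) with \(K_i(P)\), up to a twist by the braided equivalence given by taking duals/opposites. This is the coquasi-Hopf analogue of Heckenberger--Schneider's theorem that \(K\) is preserved under reflection via a Drinfeld-double argument. I would take the isomorphism \(\mathcal B(R_i(P))\#H\simeq\) (twist of) \(\mathcal B(P)\#H\), or the identification of the two coinvariant algebras, from \cite{reflection2} and make it explicit. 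In particular, the identification should respect dimensions and finiteness in each graded degree, since the categories involved are equivalent and \(\Bbbk\)-linear.

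With this in hand, the conclusion is straightforward. We have \(\dim\mathcal B(P_i^*)=\dim\mathcal B(P_i)\): applying \eqref{eq:Nichols-under-equivalence} to the braided equivalence from \(\mathcal C\) to its reverse/dual category sends \(P_i\) to \(P_i^*\), and a graded dual argument then shows that \(\mathcal B(P_i^*)\simeq\mathcal B(P_i)^*\) componentwise. Combining this with \(\dim K_i(R_i(P))=\dim K_i(P)\), as graded objects with finite-dimensional components, the two tensor factorizations give that \(\dim\mathcal B(R_i(P))\) and \(\dim\mathcal B(P)\) are either both infinite or both equal to \(\dim K_i(P)\cdot\dim\mathcal B(P_i)\). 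This proves both the finiteness equivalence and the equality of dimensions.
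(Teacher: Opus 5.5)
Your outline is sound, but it is not the paper's route. The paper never opens up the coinvariant decomposition. It quotes \cite[Lemma~6.2]{reflection1} for the one-sided statement that $\dim\mathcal B(P)<\infty$ implies $\dim\mathcal B(R_i(P))=\dim\mathcal B(P)<\infty$. It then gets the converse by applying that implication to $R_i(P)$, which admits the $i$-th reflection and satisfies $R_i^2(P)\simeq P$ by Lemma~\ref{lem:reflection-basic-properties}.

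You instead reconstruct what lies behind such a lemma, using three ingredients:
\begin{itemize}
\item the factorization $\mathcal B(P)\simeq K_i(P)\otimes\mathcal B(P_i)$;
\item the identification of $K_i(R_i(P))$ with a twist of $K_i(P)$, i.e.\ the coquasi-Hopf analogue of the isomorphism $\mathcal B(R_i(P))\simeq K_i(P)\#\mathcal B(P_i^*)$ of \cite{AHS10};
\item the equality $\dim\mathcal B(P_i^*)=\dim\mathcal B(P_i)$.
\end{itemize}
For the last point, the clean justification is the componentwise nondegenerate graded pairing between $\mathcal B(P_i^*)$ and $\mathcal B(P_i)$ extending $\operatorname{ev}_{P_i}$. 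Equation \eqref{eq:Nichols-under-equivalence} alone does not give it, since duality is contravariant.

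Your route buys a proof that is symmetric in $P$ and $R_i(P)$ and needs no involutivity. It also gives the finer formula $\dim\mathcal B(P)=\dim K_i(P)\cdot\dim\mathcal B(P_i)$.

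Its cost is that everything rests on the identification of coinvariants, which you do not locate precisely. In particular, your implication $\dim\mathcal B(P)=\infty\Rightarrow\dim\mathcal B(R_i(P))=\infty$ needs that identification with no finiteness assumption on $\mathcal B(P)$ or $\mathcal B(P_i)$. That requires extra care, because when $\mathcal B(P_i)$ is infinite-dimensional it is related to $\mathcal B(P_i^*)$ only through a graded pairing, not an honest duality. If the coquasi-Hopf references provide the identification only in the finite-dimensional case, as the one-sided form of \cite[Lemma~6.2]{reflection1} suggests, you must close the argument with $R_i^2(P)\simeq P$, exactly as the paper does.

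A minor point: the factorization $\mathcal B(P)\simeq K_i(P)\otimes\mathcal B(P_i)$ holds whether or not $P$ admits the $i$-th reflection. What $i$-finiteness provides is that the reflection is defined.
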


\begin{proof}
By \cite[Lemma~6.2]{reflection1}, if \(\mathcal B(P)\) is
finite-dimensional, then so is \(\mathcal B(R_i(P))\), and their
dimensions are equal. Moreover,
Lemma~\ref{lem:reflection-basic-properties} shows that \(R_i(P)\)
admits the \(i\)-th reflection and that \(R_i^2(P)\simeq P\).
Applying the same implication to \(R_i(P)\)
gives the converse.
\end{proof}
\subsection{Finite Cartan graphs of rank two}
We record two consequences used in the case-by-case classification.
Let $G$ be a finite non-abelian group and let $\Phi$ be a normalized
$3$-cocycle on
$G$. Let $V$ and $W$ be finite-dimensional simple Yetter--Drinfeld
modules in $\GG$, and let $M=(V,W)$. If $\bB(V\oplus W)$ is
finite-dimensional, then
Theorem~\ref{thm:Nichols-finiteness-criterion} shows that $M$ admits
all reflections and $\mathcal{G}(M)$ is a finite Cartan graph.
It is connected by Proposition~\ref{prop:associated-Cartan-graph}.

A connected Cartan graph is called indecomposable if one of its
Cartan matrices is indecomposable. In rank two, this is equivalent to
every Cartan matrix being indecomposable. Indeed, the \(i\)-th row is
unchanged along an \(i\)-edge, and \(a_{ij}=0\) if and only if
\(a_{ji}=0\).
If, in addition, \(M\) is braided-indecomposable, then by
\eqref{eq:indecomposable-first-adjoint}, \(m_{12}^M>0\). Since
\(A^{[M]}\) is a generalized Cartan matrix, also \(m_{21}^M>0\).
Thus \(A^{[M]}\), and hence \(\mathcal G(M)\), is indecomposable.

\begin{lemma}\label{lem:finite-type-object}
Let $\mathcal{G}$ be a connected indecomposable finite Cartan
graph. Then there exists $X\in \mathcal{X}$ such that $A^X$ is of finite type.
\end{lemma}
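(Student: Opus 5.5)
This is the rank-two case of the standard result of Cuntz and Heckenberger that a finite connected Cartan graph has an object whose Cartan matrix is of finite type. The plan is to use the root-system structure together with a length-minimization argument.

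First I fix the objects and roots. Since \(\mathcal G\) is finite, the set \(\Delta^{X,\mathrm{re}}\) is finite for every \(X\). I will use the standard facts from Cuntz--Heckenberger for finite Cartan graphs:
\begin{itemize}
\item every real root is either positive or negative, \(\Delta^{X,\mathrm{re}}=\Delta^{X,\mathrm{re}}_+\cup-\Delta^{X,\mathrm{re}}_+\);
\item there is a longest morphism \(w_0\) ending at each \(X\), and its length equals \(|\Delta_+^{X,\mathrm{re}}|\).
\end{itemize}
Among all objects I choose \(X\) with \(|\Delta_+^{X,\mathrm{re}}|=:n\) minimal; this number is in fact independent of \(X\) by connectedness. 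Indecomposability gives \(a_{12}^X,a_{21}^X<0\), so \(\alpha_1+\alpha_2\in\Delta^{X,\mathrm{re}}_+\) and \(n\ge3\).

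Next I show that some object has \(a_{12}^Xa_{21}^X\le3\). In rank two the Weyl groupoid is dihedral-like: a reduced word for \(w_0\) alternates \(s_1s_2s_1\cdots\) of length \(n\). I pass along the path \(X_0=X\), \(X_1=r_1(X_0)\), \(X_2=r_2(X_1)\), and so on. Write \(c_k\) for the off-diagonal entry of \(A^{X_{k}}\) in the row that is not preserved by the edge \(X_{k-1}\to X_k\) (the entries \(a_{12}\) and \(a_{21}\) alternate along the path). The positive roots are then \(\beta_k=s_{i_1}\cdots s_{i_{k-1}}(\alpha_{i_k})\).

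The key tool is the characterization of finite rank-two Cartan graphs by the sequences \((c_1,\dots,c_n)\), which satisfy the quiddity (frieze) relation \(\eta(c_1)\cdots\eta(c_n)=-\mathrm{id}\) with \(\eta(c)=\begin{pmatrix}c&-1\\1&0\end{pmatrix}\). Such sequences with positive entries (\(-a_{ij}\)) correspond to triangulations of an \(n\)-gon. Every triangulation of a polygon with \(n\ge 4\) has at least two ears, i.e. vertices with quiddity value \(1\), and an ear corresponds to an object whose Cartan entry satisfies \(-a_{ij}=1\).

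Finally I conclude at that object. If the Cartan matrix there has \(a_{ij}=-1\), then it is of type \(A_2\), \(B_2\) or \(G_2\) provided \(a_{ji}\in\{-1,-2,-3\}\). That bound follows because \(a_{ji}\le-4\) would force an infinite root string, contradicting finiteness of \(\Delta^{\mathrm{re}}\): the matrix \(\begin{pmatrix}2&-1\\-4&2\end{pmatrix}\) is affine or hyperbolic, and the reflection at that object already produces infinitely many roots in \(\alpha_1+\mathbb N\alpha_2\).

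The main obstacle is establishing the quiddity relation for the sequence \(c_k\) in the Cartan graph setting. It comes from \((s_1s_2)^{\cdots}\) composing to \(w_0\), which maps positive roots to negative ones, i.e. acts as \(-\mathrm{id}\) up to the swap of \(\alpha_1\) and \(\alpha_2\). Rather than reprove this, I would cite Cuntz--Heckenberger, or directly their statement that a finite connected indecomposable rank-two Cartan graph contains an object of finite type.
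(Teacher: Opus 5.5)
The gap is in your last step. It is false that an object with \(a_{ij}=-1\) and \(a_{ji}\le-4\) cannot occur in a finite Cartan graph.

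\begin{itemize}
\item The Cartan matrix changes from object to object. A reflection at that object only produces the finite string \(\alpha_j+k\alpha_i\), \(0\le k\le-a_{ji}\).
\item An infinite string would require the same matrix at every object along the alternating path, i.e.\ standardness, which is not assumed.
\item Concretely, \(\eta(4)\eta(1)\eta(2)\eta(2)\eta(2)\eta(1)=-\operatorname{id}\); this is the quiddity cycle of the fan triangulation of a hexagon. So there is a finite, connected, indecomposable rank-two Cartan graph, with six positive roots at each object, in which the consecutive pair \((1,4)\) gives an object with Cartan matrix \(\left(\begin{smallmatrix}2&-1\\-4&2\end{smallmatrix}\right)\).
\end{itemize}

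Nor can you fix this by taking the better of the two objects adjacent to your ear. Take the \(10\)-gon triangulated by a central triangle with a single ear on one side and a triangle carrying two ears on each of the other two sides. Its quiddity cycle is \((4,1,4,1,3,1,5,1,3,1)\). The ear between the two entries \(4\) yields only \(\left(\begin{smallmatrix}2&-4\\-1&2\end{smallmatrix}\right)\) and \(\left(\begin{smallmatrix}2&-1\\-4&2\end{smallmatrix}\right)\). So an arbitrary ear does not work.

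What is missing is an argument that produces an ear adjacent to a vertex of quiddity \(2\) or \(3\). One way to do this:

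\begin{itemize}
\item For \(n\ge5\), choose a diagonal \((u,v)\) and a side of it with at least four vertices, so that this side has as few vertices as possible.
\item If \((u,w,v)\) is the triangle of that side on \((u,v)\), minimality forces the pieces cut off by \((u,w)\) and \((w,v)\) to have at most three vertices. Hence the side is a quadrilateral or the pentagon \((u,u+1,w,w+1,v)\).
\item In the quadrilateral case, an ear inside it is adjacent to an interior vertex of quiddity \(2\). In the pentagon case, the ear at \(u+1\) is adjacent to \(w\), which has quiddity \(3\).
\item This gives an object of type \(B_2\) or \(G_2\). The cases \(n=3,4\) give \(A_2\) and \(B_2\) directly.
\end{itemize}

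With this replacement, and the Cuntz--Heckenberger correspondence you cite, your route becomes a genuine rank-two proof. The paper does not argue this way: its proof is only the citation \cite[Theorem~4.2]{HV17}, which holds in arbitrary rank. Your stated fallback of citing the statement directly amounts to the same thing. The choice of \(X\) minimizing \(|\Delta_+^{X,\mathrm{re}}|\) plays no role in your argument.
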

\begin{proof}
This is \cite[Theorem~4.2]{HV17}.
\end{proof}
\begin{prop}\label{prop:pair-with-finite-Cartan-matrix}
Let \(M=(V,W)\) be a tuple of finite-dimensional simple objects in
\(\GG\).
Assume that \(M\) is braided-indecomposable, admits all reflections,
and has a finite Cartan graph \(\mathcal G(M)\). Then there exists
\(N=(N_1,N_2)\in\mathcal F_2(M)\) such that
\[
1\leq a_{12}^{[N]}a_{21}^{[N]}\leq3.
\]
\end{prop}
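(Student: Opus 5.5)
We combine the indecomposability discussion preceding Lemma~\ref{lem:finite-type-object} with that lemma, and then read off the required inequality from the rank-two finite-type Cartan matrices.

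\emph{Step 1: the Cartan graph is connected and indecomposable.} Since \(M\) admits all reflections, Proposition~\ref{prop:associated-Cartan-graph} shows that \(\mathcal G(M)\) is a connected Cartan graph. It is finite by hypothesis. Because \(M\) is braided-indecomposable, \eqref{eq:indecomposable-first-adjoint} gives \((\operatorname{ad}V)(W)\ne0\). Hence \(m_{12}^M\ge1\), that is, \(a_{12}^{[M]}\le-1\). Since \(A^{[M]}\) is a generalized Cartan matrix, \(a_{12}=0\) if and only if \(a_{21}=0\), so \(a_{21}^{[M]}\le-1\) as well. Thus \(A^{[M]}\) is indecomposable, and so is \(\mathcal G(M)\).

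\emph{Step 2: locate an object of finite type.} Lemma~\ref{lem:finite-type-object} gives an object \(X\in\mathcal X(M)\) with \(A^X\) of finite type. By the definition of \(\mathcal X(M)\), we may write \(X=[N]\) for some \(N\in\mathcal F_2(M)\).

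\emph{Step 3: bound the product of the off-diagonal entries.} It remains to show that \(A^{[N]}\) is indecomposable, so that it is not \(A_1\times A_1\). As recalled before Lemma~\ref{lem:finite-type-object}, the \(i\)-th row of the Cartan matrix is unchanged along an \(i\)-edge, and \(a_{ij}=0\iff a_{ji}=0\). Suppose \(a_{12}^{[Q]}\ne0\) at some object \([Q]\). Then the first row is nonzero at \(r_1([Q])\), so both off-diagonal entries are nonzero there. A symmetric argument applies along \(2\)-edges. Since the graph is connected, every Cartan matrix is therefore indecomposable, and in particular \(a_{12}^{[N]}a_{21}^{[N]}\ge1\).

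A rank-two generalized Cartan matrix \(\begin{pmatrix}2&-a\\-b&2\end{pmatrix}\) is of finite type exactly when its determinant \(4-ab\) is positive. With \(a,b\ge1\) this forces \(ab\in\{1,2,3\}\), which are the types \(A_2\), \(B_2\), and \(G_2\). Hence \(1\le a_{12}^{[N]}a_{21}^{[N]}\le3\).

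None of these steps is hard. The only point requiring care is Step 3: one must rule out the finite-type object found in Step 2 being the decomposable type \(A_1\times A_1\), and this is exactly what the propagation of indecomposability along edges achieves.
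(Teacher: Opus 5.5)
Your proposal is correct and follows the paper's proof essentially step for step. Braided-indecomposability and the generalized Cartan matrix property make $A^{[M]}$, and hence $\mathcal G(M)$, indecomposable; Lemma~\ref{lem:finite-type-object} then supplies an object $[N]$ of finite type; connectedness propagates indecomposability to $A^{[N]}$; and the rank-two finite-type list gives $1\le a_{12}^{[N]}a_{21}^{[N]}\le 3$. The only difference is cosmetic: you derive that list from the determinant condition $4-ab>0$, whereas the paper simply quotes the types $A_2$, $B_2$, $G_2$.
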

\begin{proof}
Since \(M\) is braided-indecomposable, \(\mathcal{G}(M)\) is
indecomposable. Lemma~\ref{lem:finite-type-object} therefore gives an object
\(X\in\mathcal X(M)\) for which \(A^X\) is of finite type. By the
definition of \(\mathcal X(M)\), write \(X=[N]\) for some
\(N\in\mathcal F_2(M)\). Connectedness and indecomposability imply
that \(A^{[N]}\) is indecomposable. The indecomposable rank-two
generalized Cartan matrices of finite type are those of types
\(A_2\), \(B_2\), and \(G_2\), together with their transposes. Hence
\[
a_{12}^{[N]}a_{21}^{[N]}\in\{1,2,3\},
\]
as claimed.
\end{proof}
For a tuple \(P=(P_1,P_2)\) in \(\GG\), set
\(G_P=\langle\supp P_1\cup\supp P_2\rangle\).

\begin{lemma}\label{lem:reflections-preserve-support-group}
Let \(P=(P_1,P_2)\) be a tuple of finite-dimensional simple objects in
\(\GG\). If \(P\) admits the \(i\)-th reflection, then
\[
G_{R_i(P)}=G_P.
\]
If \(P\) admits all reflections, then consequently
\(G_Q=G_P\) for every \(Q\in\mathcal F_2(P)\).
\end{lemma}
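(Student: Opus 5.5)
By symmetry of the roles of \(1\) and \(2\), and since \(R_i(P)_i=P_i^*\) and \(R_i(P)_j=(\operatorname{ad}P_i)^{m}(P_j)\) with \(m=m_{ij}^P\), it suffices to prove
\[
\operatorname{supp}(P_i^*)\cup\operatorname{supp}\bigl((\operatorname{ad}P_i)^m(P_j)\bigr)
\]
generates \(G_P\). The second assertion then follows by induction on the length of an admissible sequence \((i_1,\ldots,i_l)\), because \(G_{R_{i_1}(P)}=G_P\) and the inductive hypothesis apply to \(R_{i_1}(P)\).

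First, \(\operatorname{supp}(P_i^*)=(\operatorname{supp}P_i)^{-1}\), since the dual basis of a homogeneous basis of degree \(x\) has degree \(x^{-1}\), as recorded in the subsection on dual objects. Hence \(\langle\operatorname{supp}P_i^*\rangle=\langle\operatorname{supp}P_i\rangle\).

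Second, I would use Lemma~\ref{lem:adjoint-object-realization}. The object \(X_m:=(\operatorname{ad}P_i)^m(P_j)\) is isomorphic to a subobject of \(P_i^{\otimes m}\otimes P_j\), with the bracketing of \(D_m\). Its support is therefore contained in products \(x_1\cdots x_m y\) with \(x_k\in\operatorname{supp}P_i\) and \(y\in\operatorname{supp}P_j\). So \(G_{R_i(P)}\subseteq G_P\).

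For the reverse inclusion, \(X_m\neq0\), so choose \(z\in\operatorname{supp}X_m\). Write \(z=x_1\cdots x_m y\) as above. Then
\[
y=(x_1\cdots x_m)^{-1}z\in\langle\operatorname{supp}P_i\cup\operatorname{supp}X_m\rangle=G_{R_i(P)}.
\]
The support of the simple object \(P_j\) is the conjugacy class \(y^G\) by Proposition~\ref{prop:simple-objects}, but we need conjugates of \(y\) by elements of \(G_{R_i(P)}\), not by all of \(G\). This is the main subtlety.

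To handle it, I would use the Yetter--Drinfeld structure of \(P_j\) together with simplicity of \(X_m\) and \(P_i\). The subgroup \(H:=G_{R_i(P)}\) contains \(\operatorname{supp}P_i\). It also contains every element of \(\operatorname{supp}P_j\) that occurs as the last factor \(y\) of some \(z\in\operatorname{supp}X_m\).

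Alternatively, I would argue directly as follows. Let \(K:=\langle\operatorname{supp}P_i\cup\{y\}\rangle\subseteq H\). The set \(\operatorname{supp}P_j\cap K\) is stable under conjugation by \(\operatorname{supp}P_i\) and by \(y\). It suffices to show that \(\operatorname{supp}P_j\subseteq H\); then \(G_P\subseteq H\).

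Here I would exploit that in \(G_P\) the conjugacy class \(\operatorname{supp}P_j\) is generated from \(y\) by conjugation with elements of \(\operatorname{supp}P_i\cup\operatorname{supp}P_j\). If \(\operatorname{supp}P_j\cap H\) were a proper union of \(H\)-orbits, I would consider the sum of the components \((P_j)_w\) with \(w\in\operatorname{supp}P_j\cap H\). This sum generates, together with \(P_i\), a braided subobject, and I would check that it produces the same \(X_m\) by the grading argument. I expect this step to be the main obstacle.

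If that argument stalls, the cleaner route is to use \(R_i^2(P)\simeq P\) from Lemma~\ref{lem:reflection-basic-properties}. Applying the inclusion already proved to \(R_i(P)\) gives
\[
G_P=G_{R_i(R_i(P))}\subseteq G_{R_i(P)}.
\]
Combined with \(G_{R_i(P)}\subseteq G_P\), this yields equality. I would adopt this as the actual proof, since it needs only the easy inclusion.
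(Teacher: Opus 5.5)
Your adopted argument is correct and is essentially the paper's proof: the easy inclusion \(G_{R_i(P)}\subseteq G_P\) comes from \(\supp P_i^*=(\supp P_i)^{-1}\) and from \((\operatorname{ad}P_i)^m(P_j)\) being a graded image of \(P_i^{\otimes m}\otimes P_j\). The reverse inclusion follows by applying this inclusion to \(R_i(P)\) and using \(R_i^2(P)\simeq P\) from Lemma~\ref{lem:reflection-basic-properties}, then induction on the reflection length, so the exploratory conjugacy-orbit detour in the middle can simply be dropped.
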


\begin{proof}
Write \(R_i(P)=(P'_1,P'_2)\). The reflected \(i\)-th component is
\(P'_i=P_i^*\), and therefore
\(\supp P'_i=(\supp P_i)^{-1}\subseteq G_P\). Let \(j\neq i\) and let
\(m=m_{ij}^P\). Then \(P'_j=(\operatorname{ad}P_i)^m(P_j)\) is
the image of a grading-preserving morphism from
\(P_i^{\otimes m}\otimes P_j\). Hence
\[
\supp P'_j
\subseteq
(\supp P_i)^m\supp P_j
\subseteq G_P.
\]
It follows that \(G_{R_i(P)}\subseteq G_P\).

The tuple \(R_i(P)\) again admits the \(i\)-th reflection, and
\(R_i^2(P)\simeq P\) by
Lemma~\ref{lem:reflection-basic-properties}. Applying the
preceding inclusion to \(R_i(P)\) gives
\(G_P=G_{R_i^2(P)}\subseteq G_{R_i(P)}\). Thus
\(G_{R_i(P)}=G_P\). The equality \(G_Q=G_P\) for every
\(Q\in\mathcal F_2(P)\) follows by induction on the length of a
reflection sequence.
\end{proof}
\section{Support quandles and their enveloping groups}
\label{sec:support-quandles}
We first describe five quandles and their enveloping groups.
We then establish the support classification in
Theorem~\ref{thm:support-classification} under the stated adjoint
vanishing conditions. The resulting enveloping groups are
\(T,\Gamma_2,\Gamma_3\), and \(\Gamma_4\), which determine the
four cases considered in the following sections. The five quandles are
\begin{align*}
Z_T^{4,1} & : (243) \ (134) \ (142) \ (123) \ \text{id}, \\
Z_2^{2,2} & : (24) \ (13) \ (24) \ (13), \\
Z_3^{3,1} & : (23) \ (13) \ (12) \ \text{id}, \\
Z_3^{3,2} & : (23)(45) \ (13)(45) \ (12)(45) \ (123) \ (132), \\
Z_4^{4,2} & : (24)(56) \ (13)(56) \ (24)(56) \ (13)(56) \ (1234) \ (1432).
\end{align*}

\subsection{The quandle \texorpdfstring{$Z_T^{4,1}$}{ZT(4,1)}
and its enveloping group}
\begin{lemma}\label{lem:ZT-enveloping-group}
The enveloping group of \(Z_T^{4,1}\) has the presentation
\[
T=
\left\langle x_1,x_2,z\ \middle|\
zx_1=x_1z,\quad
zx_2=x_2z,\quad
x_1x_2x_1=x_2x_1x_2,\quad
x_1^3=x_2^3
\right\rangle.
\]
Set
$x_3=x_2x_1x_2^{-1}$,
$x_4=x_1x_2x_1^{-1}.$
Then \(z\) is central, \(x_1^T=\{x_1,x_2,x_3,x_4\}\), and
\[
z^T\cup x_1^T\cong Z_T^{4,1}
\]
as quandles.
\end{lemma}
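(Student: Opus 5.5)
The plan is to compute the enveloping group directly from the definition. Label the elements of \(Z_T^{4,1}\) as \(1,\dots,5\) and the generators of \(G_X\) as \(x_1,\dots,x_4\) and \(z=x_5\). The defining relations are \(x_ix_jx_i^{-1}=x_{f_i(j)}\). I will show that, after eliminating \(x_3\) and \(x_4\), these relations are equivalent to the four relations of \(T\). I will then check, inside \(T\), that \(z^T\cup x_1^T\) reproduces the quandle.

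First, the relations involving \(5\). Since \(f_5=\mathrm{id}\), we get \(zx_jz^{-1}=x_j\). Since every \(f_i\) fixes \(5\), we get \(x_izx_i^{-1}=z\). Both families say exactly that \(z\) commutes with \(x_1,\dots,x_4\). Next, \(f_2(1)=3\) and \(f_1(2)=4\) give \(x_3=x_2x_1x_2^{-1}\) and \(x_4=x_1x_2x_1^{-1}\), so \(x_3,x_4\) can be eliminated. The relation \(x_1x_3x_1^{-1}=x_2\) (from \(f_1(3)=2\)) becomes the braid relation \(x_1x_2x_1=x_2x_1x_2\). The relation \(x_2x_4x_2^{-1}=x_1\) (from \(f_2(4)=1\)) gives the same braid relation.

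Using the braid relation, rewrite \(x_3=x_1^{-1}x_2x_1\) and \(x_4=x_2^{-1}x_1x_2\). Then \(x_1x_4x_1^{-1}=x_3\) (from \(f_1(4)=3\)) reduces to \(x_1^3x_2=x_2x_1^3\). Symmetrically, \(x_2x_3x_2^{-1}=x_4\) gives \(x_2^3x_1=x_1x_2^3\). The remaining eight relations have \(x_3\) or \(x_4\) as the conjugating element, namely those coming from \(f_3=(142)\) and \(f_4=(123)\). I will substitute the rewritten forms of \(x_3,x_4\) and reduce each one modulo the braid relation, for instance \(x_3x_4x_3^{-1}=x_2\) and \(x_4x_1x_4^{-1}=x_2\). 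I expect one of these, most likely \(x_3x_4x_3^{-1}=x_2\), to collapse to \(x_1^3=x_2^3\); I will try that one first. Once \(x_1^3=x_2^3\) holds, the commutation relations above follow, because \(x_1^3=x_2^3\) commutes with both \(x_1\) and \(x_2\). This book-keeping over all relations is the main obstacle. The goal is to show that every relation lies in the normal closure of the braid relation and \(x_1^3=x_2^3\), and that conversely both of these are consequences of the quandle relations.

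For the converse direction, the centrality of \(z\) in \(T\) is immediate from the presentation, so \(z^T=\{z\}\). The elements \(x_3,x_4\) are conjugates of \(x_1\) by the braid relation. A direct check with the braid relation and \(x_1^3=x_2^3\) shows that conjugation by \(x_1\) and by \(x_2\) permutes \(\{x_1,\dots,x_4\}\) by \((243)\) and \((134)\). Since \(x_1,x_2\) generate \(T\) modulo the central element \(z\), the set \(\{x_1,\dots,x_4\}\) is closed under conjugation, so it equals \(x_1^T\). The remaining permutations \(f_3,f_4\) are then checked the same way. To see that the five elements are distinct, use two maps. The first is the epimorphism \(T\to A_4\) given by \(x_1\mapsto(234)\), \(x_2\mapsto(134)\), \(z\mapsto1\); this respects the relations and sends \(x_1,\dots,x_4\) to four distinct \(3\)-cycles. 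The second is the abelianization \(T\to\mathbb Z^2\) given by \(x_i\mapsto(1,0)\), \(z\mapsto(0,1)\), which separates \(z\) from the \(x_i\). Hence \(z^T\cup x_1^T\cong Z_T^{4,1}\).
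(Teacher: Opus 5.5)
Your route, a direct Tietze computation from the relations \(x_ix_jx_i^{-1}=x_{f_i(j)}\) followed by a check inside \(T\), is a legitimate self-contained replacement for the paper's proof, which simply cites \cite[Section~2.1]{rank2-3}. Your elimination of \(x_3,x_4\) is correct, and so are your derivations of the braid relation and of \(x_1^3x_2=x_2x_1^3\), \(x_2^3x_1=x_1x_2^3\).

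There is, however, a concrete error in the distinctness step. The assignment \(x_1\mapsto(234)\), \(x_2\mapsto(134)\), \(z\mapsto1\) does not respect the braid relation: \((234)(134)(234)=(143)\), whereas \((134)(234)(134)=(243)\). Both words are palindromes, so this does not depend on the composition convention. Hence the assignment does not define a homomorphism \(T\to A_4\), and as written you have not shown that \(x_1,\dots,x_4\) are distinct. The fix is to use the conjugation action itself, \(x_1\mapsto(243)=f_1\) and \(x_2\mapsto(134)=f_2\). Then both sides of the braid relation map to \((12)(34)\), both cubes map to \(1\), and (composing right to left) \(x_1,x_2,x_3,x_4\) map to the four distinct \(3\)-cycles \((243),(134),(142),(123)\).

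Separately, the bookkeeping you flag as the main obstacle is unnecessary. By the rack identity \(f_af_bf_a^{-1}=f_{a\blacktriangleright b}\), conjugation by \(x_3=x_2x_1x_2^{-1}\) permutes the generators by \(f_2f_1f_2^{-1}=f_3\), and conjugation by \(x_4=x_1x_2x_1^{-1}\) permutes them by \(f_1f_2f_1^{-1}=f_4\). This holds as soon as the relations with conjugators \(x_1,x_2,z\) hold, so the relations coming from \(f_3,f_4\) are redundant. Moreover, \(x_1^3=x_2^3\) follows at once from what you already derived. The element \(x_1^3\) commutes with \(x_1\), \(x_2\) and \(z\), hence is central, and the braid relation gives \(x_2=(x_1x_2)x_1(x_1x_2)^{-1}\), so \(x_2^3=x_1^3\). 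Your guess is nevertheless correct: modulo the braid relation, \(x_3x_4x_3^{-1}=x_2\) is equivalent to \(x_1^3=x_2^3\). One sees this by writing \(x_2x_1=x_1^{-1}\Delta\) and \(x_1x_2=\Delta x_1^{-1}\), where \(\Delta=x_1x_2x_1\) and \(\Delta^2\) is central.
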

\begin{proof}
This is the presentation in \cite[Section~2.1]{rank2-3}.
\end{proof}

\subsection{The groups \texorpdfstring{$\Gamma_n$}{Gamma n}
and their support quandles}
\label{subsec:enveloping-groups-Gamma}
Let \(n\in\mathbb N_{\geq2}\). Recall from
\cite[Section~3]{rank2-1} that
\[
\Gamma_n=
\left\langle g,h,\varepsilon\ \middle|\
hg=\varepsilon gh,\quad
g\varepsilon=\varepsilon^{-1}g,\quad
h\varepsilon=\varepsilon h,\quad
\varepsilon^n=1
\right\rangle.
\]
Every element of \(\Gamma_n\) has a unique expression
\(\varepsilon^ih^jg^k\), where \(0\leq i<n\) and
\(j,k\in\mathbb Z\). For
\(z\in Z(\Gamma_n)=\langle\varepsilon^{-1}h^2,h^n,g^2\rangle\)
and \(1\leq j\leq\lfloor n/2\rfloor\), the conjugacy classes are
\begin{align*}
z^{\Gamma_n}&=\{z\},&
(gz)^{\Gamma_n}&=\{\varepsilon^mgz\mid0\leq m<n\},\\
(h^jz)^{\Gamma_n}&=\{h^jz,\varepsilon^{-j}h^jz\},&
(hgz)^{\Gamma_n}&=\{\varepsilon^mhgz\mid0\leq m<n\}.
\end{align*}
The centralizers of the noncentral representatives are
\begin{align*}
C_{\Gamma_n}(gz)
 &=\langle\varepsilon^{-1}h^2,g,h^n\rangle,&
C_{\Gamma_n}(hgz)
 &=\langle\varepsilon^{-1}h^2,hg,h^n\rangle,\\
C_{\Gamma_n}(h^jz)
 &=\langle\varepsilon,h,g^2\rangle.
\end{align*}
\begin{prop}
\label{prop:support-quandle-enveloping-groups}
Let \(Z_2^{2,2}=h^{\Gamma_2}\cup g^{\Gamma_2}\). Then
\(Z_2^{2,2}\cong D_4\), the dihedral quandle of four elements. Its
enveloping group is
\[
\left\langle x_1,x_2,x_3,x_4\ \middle|\
x_ix_j=x_{2i-j\ (\mathrm{mod}\ 4)}x_i,
\quad i,j\in\{1,2,3,4\}\right\rangle\cong\Gamma_2.
\]
The isomorphism sends \(x_1\) to \(g\) and \(x_2\) to \(h\).

Let \(Z_3^{3,1}=g^{\Gamma_3}\cup\{\varepsilon h\}\). The element
\(\varepsilon h\) is central, and the enveloping group of
\(Z_3^{3,1}\) is
\[
\langle z\rangle\times
\left\langle x_1,x_2,x_3\ \middle|\
x_ix_j=x_{2i-j\ (\mathrm{mod}\ 3)}x_i,
\quad i,j\in\{1,2,3\}\right\rangle\cong\Gamma_3.
\]
The latter isomorphism sends \(z\) to \(\varepsilon h\), \(x_1\) to
\(g\), \(x_2\) to \(\varepsilon g\), and \(x_3\) to
\(\varepsilon^2g\).

For \(Z_3^{3,2}=g^{\Gamma_3}\cup h^{\Gamma_3}\), the enveloping
group is \(\Gamma_3\).

For \(Z_4^{4,2}=g^{\Gamma_4}\cup h^{\Gamma_4}\), the enveloping
group is \(\Gamma_4\).
\end{prop}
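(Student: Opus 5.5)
The plan is to treat all four statements in the same way, using the adjunction between enveloping groups and conjugation quandles. In each case I first verify that the displayed subset \(X\subseteq\Gamma_n\) is closed under conjugation. Using the conjugacy classes listed above, \(g^{\Gamma_n}=\{\varepsilon^mg\}\) and \(h^{\Gamma_n}=\{h,\varepsilon^{-1}h\}\), and the relations \(hgh^{-1}=\varepsilon g\), \(g\varepsilon g^{-1}=\varepsilon^{-1}\), \(h\varepsilon h^{-1}=\varepsilon\), I compute the left translations explicitly. I then compare them with the permutation lists for \(Z_2^{2,2}\), \(Z_3^{3,1}\), \(Z_3^{3,2}\), \(Z_4^{4,2}\) under a suitable numbering. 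For instance, in \(\Gamma_2\) the element \(g\) swaps \(h\) and \(\varepsilon h\), and \(h\) swaps \(g\) and \(\varepsilon g\), which is the dihedral quandle on four elements. For \(Z_3^{3,1}\), the centrality of \(\varepsilon h\) follows from \(g(\varepsilon h)g^{-1}=\varepsilon^{-1}\cdot\varepsilon g hg^{-1}\cdot\)(rewritten via \(hg=\varepsilon gh\)) and from \(h\varepsilon=\varepsilon h\); it is also recorded among the generators of \(Z(\Gamma_3)\) as \(\varepsilon^{-1}h^2\) up to the class of \(h\), and I will simply check it directly.

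The inclusion \(X\hookrightarrow\operatorname{Conj}(\Gamma_n)\) gives, by the universal property lemma, a homomorphism \(\pi:G_X\to\Gamma_n\) restricting to the identity on \(X\). It is surjective because \(X\) generates \(\Gamma_n\): \(g,h\in X\) and \(\varepsilon=hgh^{-1}g^{-1}\); in the \(Z_3^{3,1}\) case \(g,\varepsilon g,\varepsilon h\in X\) generate. For the inverse, I define \(\sigma:\Gamma_n\to G_X\) on the generators. Take \(\sigma(g)=x_g\) and \(\sigma(h)=x_h\), where \(x_g,x_h\) are the quandle generators corresponding to \(g,h\) (respectively \(x_1\) and \(z x_1^{-1}\)-type expressions in the \(Z_3^{3,1}\) case, where \(h=\varepsilon^{-1}\cdot(\varepsilon h)\) and \(\varepsilon=(\varepsilon g)g^{-1}\)). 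Set \(\sigma(\varepsilon)=x_hx_gx_h^{-1}x_g^{-1}\).

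I must check that the four defining relations of \(\Gamma_n\) hold in \(G_X\). The relation \(hg=\varepsilon gh\) holds by definition. For the others, I use the quandle relations to identify \(x_hx_gx_h^{-1}\) with the generator \(x_{\varepsilon g}\). Then \(\sigma(\varepsilon)=x_{\varepsilon g}x_g^{-1}\), and more generally \(\sigma(\varepsilon)^m=x_{\varepsilon^mg}x_g^{-1}\). This last identity is proved by induction from the conjugation relations \(x_{\varepsilon^a g}\,x_{\varepsilon^b g}\,x_{\varepsilon^a g}^{-1}=x_{\varepsilon^{2a-b}g}\), which are exactly the dihedral relations \(x_ix_j=x_{2i-j}x_i\). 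From this, \(\varepsilon^n=1\) follows because the indices are taken mod \(n\). Likewise \(g\varepsilon g^{-1}=\varepsilon^{-1}\) follows from \(x_gx_{\varepsilon g}x_g^{-1}=x_{\varepsilon^{-1}g}\). Finally, \(h\varepsilon=\varepsilon h\) follows from the fact that \(x_h\) acts on the \(g\)-class by \(\varepsilon^mg\mapsto\varepsilon^{m+1}g\), read off from the translation of \(h\).

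Then \(\pi\sigma\) is the identity on the generators \(g,h,\varepsilon\) of \(\Gamma_n\). Also, \(\sigma\pi\) fixes each \(x\in X\): for elements of the form \(\varepsilon^mg\) and \(\varepsilon^{-1}h\), this uses the same identities, since \(\sigma(\varepsilon^{m}g)=x_{\varepsilon^mg}\) and \(\sigma(\varepsilon^{-1}h)=x_gx_hx_g^{-1}=x_{\varepsilon^{-1}h}\). Since \(X\) generates \(G_X\), \(\sigma\) and \(\pi\) are inverse isomorphisms. For \(Z_2^{2,2}\), this gives the stated isomorphism with \(x_1\mapsto g\) and \(x_2\mapsto h\). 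For \(Z_3^{3,1}\), the quandle is the disjoint union of the dihedral quandle on three elements and a trivial point acting trivially. Hence \(G_X\) is \(\langle z\rangle\) times the enveloping group of the three-element dihedral quandle, because \(z\) commutes with everything and nothing else relates it. The isomorphism to \(\Gamma_3\) is then the explicit assignment in the statement, checked as above, with \(h=\varepsilon^{-1}z\).

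I expect the main obstacle to be the bookkeeping showing that \(\sigma\) respects \(h\varepsilon=\varepsilon h\) and \(\varepsilon^n=1\) in the cases \(n=3,4\), where \(h^{\Gamma_n}\) has two elements. There I must confirm that the quandle relations imposed by \(h\) and \(\varepsilon^{-1}h\) on the \(g\)-class are exactly the rotations \(m\mapsto m\pm1\), as recorded in \((123)\), \((132)\), \((1234)\), \((1432)\). I will also check that no additional relation among \(x_h\) and \(x_{\varepsilon^{-1}h}\) is needed beyond what \(\Gamma_n\) already satisfies, namely \(x_hx_{\varepsilon^{-1}h}=x_{\varepsilon^{-1}h}x_h\), corresponding to the identity translations in the last two slots.
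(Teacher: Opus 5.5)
Your overall strategy (the universal map \(\pi\colon G_X\to\Gamma_n\), surjectivity, and an explicit inverse \(\sigma\) on \(g,h,\varepsilon\)) is a legitimate self-contained alternative to the paper. The paper gives no argument and simply cites \cite[Examples~2.4--2.7]{rank2-2}. The gap is the identity \(\sigma(\varepsilon)^m=x_{\varepsilon^mg}x_g^{-1}\), on which your checks of \(\varepsilon^n=1\), \(g\varepsilon g^{-1}=\varepsilon^{-1}\), \(h\varepsilon=\varepsilon h\), and \(\sigma\pi=\mathrm{id}\) all depend.

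This identity does \emph{not} follow from the \(g\)-class relations \(x_{\varepsilon^ag}x_{\varepsilon^bg}x_{\varepsilon^ag}^{-1}=x_{\varepsilon^{2a-b}g}\) when \(n\) is even. Already for \(m=2\), since \(x_{\varepsilon^2g}=x_{\varepsilon g}x_gx_{\varepsilon g}^{-1}\), the identity is equivalent to \(x_{\varepsilon g}^2=x_g^2\). For \(n=2,4\) one has \(2a-b\equiv b\pmod 2\). Hence sending \(x_{\varepsilon^ag}\) to \(e_1\) or \(e_2\) in \(\mathbb Z^2\), according to the parity of \(a\), respects every \(g\)-class relation, while \(x_g^2\neq x_{\varepsilon g}^2\) there.

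Deriving \(h\varepsilon=\varepsilon h\) from the rotation action of \(x_h\) does not rescue this. Indeed \(x_h\sigma(\varepsilon)x_h^{-1}=x_{\varepsilon^2g}x_{\varepsilon g}^{-1}\), and this equals \(\sigma(\varepsilon)\) only if the case \(m=2\) is already known. So the argument is circular exactly at the step you flagged as the main obstacle, for \(Z_2^{2,2}\) and \(Z_4^{4,2}\).

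The missing input is the relations coming from the \(h\)-class. Since \(ghg^{-1}=\varepsilon^{-1}h\), you have \(x_gx_hx_g^{-1}=x_{\varepsilon^{-1}h}\), and hence \(\sigma(\varepsilon)=x_hx_{\varepsilon^{-1}h}^{-1}\).
\begin{itemize}
\item Because \(h\) commutes with \(\varepsilon^{-1}h\), the generators \(x_h\) and \(x_{\varepsilon^{-1}h}\) commute, which gives \(h\varepsilon=\varepsilon h\) at once.
\item Conjugation by \(g\) interchanges \(h\) and \(\varepsilon^{-1}h\), which gives \(x_g\sigma(\varepsilon)x_g^{-1}=x_{\varepsilon^{-1}h}x_h^{-1}=\sigma(\varepsilon)^{-1}\).
\item Conjugating \(\sigma(\varepsilon)=x_{\varepsilon g}x_g^{-1}\) by \(x_h^k\) gives \(\sigma(\varepsilon)=x_{\varepsilon^{k+1}g}x_{\varepsilon^kg}^{-1}\) for every \(k\). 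Telescoping then yields \(\sigma(\varepsilon)^m=x_{\varepsilon^mg}x_g^{-1}\), hence \(\sigma(\varepsilon)^n=1\) and \(\sigma\pi=\mathrm{id}\).
\end{itemize}
This treats \(Z_2^{2,2}\), \(Z_3^{3,2}\), and \(Z_4^{4,2}\) uniformly.

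For \(Z_3^{3,1}\) there is no \(x_h\), so ``checked as above'' must be replaced by setting \(\sigma(\varepsilon)=x_2x_1^{-1}\) directly. There the dihedral relations do suffice, because \(3\) is odd. Each \(x_a^2\) is central, since its left translation is an involution. Therefore \(x_a^2=x_bx_a^2x_b^{-1}=x_{2b-a}^2\), and these reflections act transitively on \(\mathbb Z/3\). So all \(x_a^2\) coincide. This makes \(x_{\varepsilon^{k+1}g}x_{\varepsilon^kg}^{-1}\) independent of \(k\), and the same telescoping argument goes through.
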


\begin{proof}
These are the enveloping-group identifications in
\cite[Examples~2.4--2.7]{rank2-2}.
\end{proof}

\subsection{Classification of the possible support quandles}
\label{subsec:support-restrictions}

Throughout this subsection, let \(G\) be a non-abelian group, let
\(\Phi\) be a normalized \(3\)-cocycle on \(G\), and let
\(V,W\in{}_G^G\mathcal{YD}^{\Phi}\). Set \(D_0=W\) and
\(D_m=V\otimes D_{m-1}\) for \(m\geq1\).

For \(s\in\supp W\), let \(Q_0(s)=W_s\). If \(m\geq1\) and
\(r_1,\ldots,r_m\in\supp V\), define recursively
\[
Q_m(r_1,\ldots,r_m;s)
 :=
\varphi_m^\Phi\bigl(
V_{r_1}\otimes Q_{m-1}(r_2,\ldots,r_m;s)
\bigr)
\subseteq D_m.
\]
Thus \(X_m^{V,W}\) is the sum of the spaces
\(Q_m(r_1,\ldots,r_m;s)\).
 For \(t\in D_m\), let \(\supp t\) be the set of multidegrees
of its nonzero components. For a subspace \(Q\subseteq D_m\),
set \(\supp Q=\bigcup_{t\in Q}\supp t\). Since
\(X_m^{V,W}\cong(\operatorname{ad}V)^m(W)\), we have
\begin{equation}\label{eq:Qm-vanishing}
(\operatorname{ad}V)^m(W)=0
\quad\Longleftrightarrow\quad
Q_m(r_1,\ldots,r_m;s)=0
\end{equation}
for all \(r_1,\ldots,r_m\in\supp V\) and \(s\in\supp W\).

\begin{prop}\label{prop:multidegree-insertion}
Let \(m\in\mathbb N_0\). Suppose that
\(p_1,\ldots,p_m,r_1,\ldots,r_m\in\supp V\),
\(p_{m+1},s\in\supp W\), and
\[
(p_1,\ldots,p_m,p_{m+1})
 \in\supp Q_m(r_1,\ldots,r_m;s).
\]
Let \(p\in\supp V\) and \(i\in\{1,\ldots,m+1\}\). Assume that
\begin{align}
&p_i\brhd p\neq p,
\ \ \
p_j\brhd p=p
\ \text{for } \ i<j\leq m+1,
\label{eq:multidegree-insertion-1}\\
&p\notin
\{p_j:1\leq j\leq m\}
\cup
\{(p_{j+1}\cdots p_{m+1})^{-1}\brhd p_j:1\leq j<i\}.
\label{eq:multidegree-insertion-2}
\end{align}
Then
\[
(p\brhd p_1,\ldots,p\brhd p_{i-1},
 p,p_i,\ldots,p_m,p_{m+1})
\in
\supp Q_{m+1}(p,r_1,\ldots,r_m;s).
\]
\end{prop}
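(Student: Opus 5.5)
The plan is to unfold the recursion for \(\varphi_{m+1}^\Phi\) into an explicit sum of braid-type operators, track how each summand acts on multidegrees, and show that exactly one summand, applied to exactly one homogeneous component, reaches the target multidegree
\[
\mathbf d:=(p\brhd p_1,\ldots,p\brhd p_{i-1},p,p_i,\ldots,p_m,p_{m+1}).
\]
Since that one contribution is an injective linear map applied to a nonzero vector, it is nonzero and cannot be cancelled.

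\textbf{Step 1: unfold the recursion.} Iterating
\(\varphi_n^\Phi=\operatorname{id}-c_{D_{n-1},V}c_{V,D_{n-1}}+(\operatorname{id}_V\otimes\varphi_{n-1}^\Phi)c_{1,2}^\Phi\)
writes \(\varphi_{m+1}^\Phi\) as \(\sum_{k=0}^{m}\Psi_k\). Here \(\Psi_k\) first moves the new factor \(V\) past the first \(k\) factors by successive conjugated braidings \(c_{1,2}^\Phi\), including the associators. It then applies \(\operatorname{id}-(\text{double braiding of }V\text{ with the tail }D_{m-k})\) at position \(k+1\). Associators are scalars on homogeneous tensors and braidings are invertible, so each piece of \(\Psi_k\) sends a homogeneous tensor to a nonzero multiple of a homogeneous tensor of a determined multidegree.

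Fix \(0\ne v\in V_p\) and \(t\in Q_m(r_1,\ldots,r_m;s)\) whose component \(t_{\mathbf q}\) at \(\mathbf q=(p_1,\ldots,p_{m+1})\) is nonzero. Using \(c(u_e\otimes w_f)=e\rhd w_f\otimes u_e\), a component of multidegree \((q_1,\ldots,q_{m+1})\) has the following images under \(\Psi_k\).
\begin{itemize}
\item The identity part of \(\Psi_k\) produces multidegree \((p\brhd q_1,\ldots,p\brhd q_k,\ p,\ q_{k+1},\ldots,q_{m+1})\).
\item The double-braiding part of \(\Psi_k\) produces multidegree \((p\brhd q_1,\ldots,p\brhd q_k,\ p\brhd(Q_k\brhd p),\ p\brhd q_{k+1},\ldots,p\brhd q_{m+1})\), where \(Q_k=q_{k+1}\cdots q_{m+1}\). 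This follows after computing the degree of \(p\rhd y\) as \(pQ_kp^{-1}\).
\end{itemize}

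\textbf{Step 2: the surviving term.} For \(k=i-1\) and \(\mathbf q=(p_1,\ldots,p_{m+1})\), the identity part gives exactly \(\mathbf d\). I must also check that the double-braiding part of \(\Psi_{i-1}\) on this same component lands elsewhere. Since \(p_j\brhd p=p\) for \(j>i\), we get \(Q_{i-1}\brhd p=p_i\brhd p\ne p\) by \eqref{eq:multidegree-insertion-1}. Hence its \(i\)-th entry is \(p\brhd(p_i\brhd p)\ne p\).

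\textbf{Step 3: exclude all other contributions.} I compare a candidate image with \(\mathbf d\) entry by entry, using that conjugation by \(p\) is bijective.
\begin{itemize}
\item Identity part with \(k<i-1\): position \(k+1\) would force \(p=p\brhd p_{k+1}\), that is \(p_{k+1}=p\). This is excluded by \eqref{eq:multidegree-insertion-2}.
\item Identity part with \(k\ge i\): position \(k+1\) would force \(p=p_k\), again excluded.
\item Identity part with \(k=i-1\) but \(\mathbf q'\ne\mathbf q\): the other entries of \(\mathbf d\) determine \(\mathbf q'\) uniquely, so \(\mathbf q'=\mathbf q\).
\item Double-braiding parts: the tail entries force \(q_j=p^{-1}\brhd(\text{entry of }\mathbf d)\), so position \(k+1\) must equal \(p\brhd(Q_k\brhd p)\).
\end{itemize}
For the double-braiding parts, I split into \(k\ge i\) and \(k<i\).
\begin{itemize}
\item For \(k\ge i\), all of \(q_{k+1},\ldots,q_{m+1}\) fix \(p\), so \(p\brhd(Q_k\brhd p)=p\). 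This must equal \(p_k\), which is excluded.
\item For \(k<i\), the equation becomes \(p\brhd(Q_k\brhd p)=p\brhd p_{k+1}\) when \(k<i-1\), and the \(i=k+1\) case is already handled in Step 2. Unwinding this equation leads to \(p\in\{(p_{j+1}\cdots p_{m+1})^{-1}\brhd p_j\}\), which the second set in \eqref{eq:multidegree-insertion-2} excludes.
\end{itemize}

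\textbf{Conclusion and main obstacle.} The component of \(\varphi_{m+1}^\Phi(v\otimes t)\) at \(\mathbf d\) is therefore a nonzero scalar times the braided image of \(v\otimes t_{\mathbf q}\), hence nonzero. Since \(v\otimes t\) lies in \(V_p\otimes Q_m(r_1,\ldots,r_m;s)\), this gives \(\mathbf d\in\supp Q_{m+1}(p,r_1,\ldots,r_m;s)\). I expect the main obstacle to be the bookkeeping in the last step: matching exactly which conjugated elements appear in the double-braiding degrees against the list in \eqref{eq:multidegree-insertion-2}. This requires a careful check of whether the conjugation is by \(p_{j+1}\cdots p_{m+1}\) or by its inverse, after the tail has itself been conjugated by \(p\).
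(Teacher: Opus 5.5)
Your proposal is correct and is essentially the paper's proof. The paper also expands \(\varphi_{m+1}^\Phi(v\otimes t)\) into the same two families of multidegrees, an identity part and a double-braiding part for each insertion position (your \(\Psi_k\) with \(j=k+1\)). It then uses \eqref{eq:multidegree-insertion-1}--\eqref{eq:multidegree-insertion-2} to exclude every contribution except the identity part with \(j=i\) on the chosen component.

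There is one slip. Step~2 only treats the double-braiding part of \(\Psi_{i-1}\) on the component \(\mathbf q\) itself, but Step~3 defers all of \(k=i-1\) to it. For an arbitrary component, the tail entries force \(pQ_{i-1}=p_i\cdots p_{m+1}\,p\). The \(i\)-th entry is then \((p_i\cdots p_{m+1})\brhd p=p_i\brhd p\neq p\), so this case is still excluded by \eqref{eq:multidegree-insertion-1}.

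The same bookkeeping settles the inverse-versus-not question you flagged for \(k<i-1\). There the tail forces \(Q_k=p_{k+2}\cdots p_{m+1}\,p\), so the condition at position \(k+1\) becomes \(p=(p_{k+2}\cdots p_{m+1})^{-1}\brhd p_{k+1}\). This is exactly the set excluded in \eqref{eq:multidegree-insertion-2} with \(j=k+1\).
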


\begin{proof}
Choose \(t\in Q_m(r_1,\ldots,r_m;s)\) whose component of
multidegree \((p_1,\ldots,p_m,p_{m+1})\) is nonzero, and take
\(0\neq v\in V_p\). By the recursive definition of \(Q_{m+1}\),
the element \(\varphi_{m+1}^{\Phi}(v\otimes t)\) belongs to
\(Q_{m+1}(p,r_1,\ldots,r_m;s)\).

Associators preserve multidegrees and multiply homogeneous tensors by
nonzero scalars. The braiding is an isomorphism and  {replaces
each adjacent pair of degrees \((a,b)\) by \((aba^{-1},a)\)}. Hence each individual summand arising from a
nonzero homogeneous component is nonzero. For
\(1\leq j\leq m+1\), its multidegree has one of the forms
\[
\begin{aligned}
&(p\brhd p'_1,\ldots,p\brhd p'_{j-1},
  p,p'_j,\ldots,p'_m,p'_{m+1}),\\
&(p\brhd p'_1,\ldots,p\brhd p'_{j-1},
  (pp'_j\cdots p'_{m+1})\brhd p,
  p\brhd p'_j,\ldots,p\brhd p'_m,p\brhd p'_{m+1}),
\end{aligned}
\]
where
\((p'_1,\ldots,p'_m,p'_{m+1})\in\supp t\).

The required multidegree occurs in the first family with \(j=i\) and
\((p'_1,\ldots,p'_{m+1})=(p_1,\ldots,p_{m+1})\). In the first family,
equality with the required multidegree gives \(p=p_j\) if \(j<i\),
\(p'_\ell=p_\ell\) for all \(1\leq\ell\leq m+1\) if \(j=i\), and
\(p=p_{j-1}\) if \(j>i\). In the second family, it gives
\(p=p_{j-1}\) if \(j>i\), \(p_i\brhd p=p\) if \(j=i\), and
\(p=(p_{j+1}\cdots p_{m+1})^{-1}\brhd p_j\) if \(j<i\). Conditions
\eqref{eq:multidegree-insertion-1} and
\eqref{eq:multidegree-insertion-2} exclude every case except the first
family with \(j=i\). This is the comparison used in
\cite[Proposition~5.5]{rank2-2}. Thus the required component is the
image of the chosen nonzero component under a single isomorphism and
is therefore nonzero.
\end{proof}
The preceding adjoint and multidegree results give the following
support classification.

\begin{thm}\label{thm:support-classification}
Let \(G\) be a non-abelian group and let
\(V,W\in{}_G^G\mathcal{YD}^{\Phi}\) be simple objects. Assume that
\(G=\langle\supp V\cup\supp W\rangle\), that \((V,W)\) is
braided-indecomposable, and that
\[
(\operatorname{ad}V)^2(W)=0,
\qquad
(\operatorname{ad}W)^4(V)=0.
\]
Then \(\supp(V\oplus W)\), with its  quandle structure, is
isomorphic to one of
\[
Z_T^{4,1},\qquad
Z_2^{2,2},\qquad
Z_3^{3,1},\qquad
Z_3^{3,2},\qquad
Z_4^{4,2}.
\]
Accordingly, \(G\) is an epimorphic image of
\(T,\Gamma_2,\Gamma_3,\Gamma_3\), or \(\Gamma_4\), respectively.
\end{thm}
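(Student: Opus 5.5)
The plan is to reduce the statement to the corresponding support classification for ordinary Yetter--Drinfeld modules, \cite[Theorem~5.?]{rank2-2} (the classification of support quandles under \((\operatorname{ad}V)^2(W)=0\) and \((\operatorname{ad}W)^4(V)=0\)). That argument uses only two ingredients. The first is a list of multidegrees that must appear in \(\supp Q_m(r_1,\ldots,r_m;s)\). The second is that these supports are forced to vanish when \((\operatorname{ad}V)^m(W)=0\). Proposition~\ref{prop:multidegree-insertion} is the twisted analogue of \cite[Proposition~5.5]{rank2-2}: the associators only rescale homogeneous tensors by nonzero scalars, so the support calculus is identical to the untwisted one. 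Together with \eqref{eq:Qm-vanishing}, this lets us run the combinatorial part verbatim. The cocycle \(\Phi\) never enters except through nonvanishing of scalars.

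The concrete steps are as follows. First, by Proposition~\ref{prop:simple-objects}, \(\supp V=x^G\) and \(\supp W=y^G\) are single conjugacy classes. Their union \(X\) is a quandle, and \(G=\langle X\rangle\) makes \(\operatorname{Inn}(X)\) act with two orbits. Braided-indecomposability together with \eqref{eq:indecomposable-first-adjoint} gives \(Q_1(r;s)\neq0\) for some \(r,s\), so \(V\) and \(W\) do not commute elementwise. Second, I will exploit \((\operatorname{ad}V)^2(W)=0\). Start from a nonzero component in \(Q_1(r;s)\). Its degree is \((r,s)\) or \((rsr^{-1},r)\) with \(rs\neq sr\), since \(\varphi_1^\Phi=\operatorname{id}-c^2\) kills exactly commuting pairs. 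Then apply Proposition~\ref{prop:multidegree-insertion} with \(m=1\) to any \(p\in\supp V\) violating the expected relations. This yields a nonzero element of \(Q_2\), a contradiction. The constraints obtained this way are the same as in the ordinary case. Each \(s\in\supp W\) acts on \(\supp V\) by an involution fixing all but at most two points, and \(\supp V\) is either a commutative class or of the form \(g^{\Gamma_n}\)-like. The result is the condition \((rs)^2=(sr)^2\) on degrees, so \(|\supp V|\le 2\) essentially, or \(V\) is central-type. Third, I will exploit \((\operatorname{ad}W)^4(V)=0\) symmetrically. Inserting \(p\in\supp W\) repeatedly (\(m=1,2,3\)) bounds the length of orbits of \(\supp V\) acting on \(\supp W\) by \(4\). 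This shows that \(\supp W\) has at most \(4\) elements, cyclically permuted by elements of \(\supp V\), with the relations of the dihedral quandles of order \(2,3,4\), or with the tetrahedral quandle when \(\supp V\) is a single central point.

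Fourth, the case analysis on \(|\supp V|\in\{1,2\}\) and \(|\supp W|\in\{2,3,4\}\) should produce exactly the listed quandles. When \(|\supp V|=1\), \(V\) has central degree, and \(\supp W\) is an indecomposable quandle of size \(3\) or \(4\) with the involutory constraints. This gives \(Z_3^{3,1}\) or \(Z_T^{4,1}\). When \(|\supp V|=2\), we get \(Z_2^{2,2}\), \(Z_3^{3,2}\), or \(Z_4^{4,2}\). Non-abelianness of \(G\) rules out degenerate possibilities. Finally, the universal property of enveloping groups, together with Lemma~\ref{lem:ZT-enveloping-group} and Proposition~\ref{prop:support-quandle-enveloping-groups}, gives the epimorphisms from \(T,\Gamma_2,\Gamma_3,\Gamma_3,\Gamma_4\) onto \(G=\langle X\rangle\).

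The main obstacle is the third step. There, verifying hypotheses \eqref{eq:multidegree-insertion-1}--\eqref{eq:multidegree-insertion-2} at each insertion for \(m\) up to \(3\) requires tracking a specific surviving multidegree through three levels, and exhibiting an explicit chain of choices \(p\) for each putative larger orbit. I would first try to copy the chains used in \cite[Section~5]{rank2-2} and check that each excluded coincidence in \eqref{eq:multidegree-insertion-2} fails for the configurations at hand.
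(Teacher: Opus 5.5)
Your overall plan is the paper's: rerun the combinatorics of \cite[Section~5]{rank2-2}, then pass to the enveloping groups. In that rerun, Lemma~\ref{lem:adjoint-object-realization} and \eqref{eq:Qm-vanishing} replace the vanishing criteria there, and Proposition~\ref{prop:multidegree-insertion} replaces \cite[Proposition~5.5]{rank2-2}. However, step~1 contains a false inference that, followed literally, loses two of the five quandles.

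Braided-indecomposability does not force \(\supp V\) and \(\supp W\) to fail to commute elementwise, and \(\varphi_1^\Phi=\operatorname{id}-c_{W,V}c_{V,W}\) does not kill commuting pairs. If \(rs=sr\), the double braiding preserves \(V_r\otimes W_s\) and acts there by \(v\otimes w\mapsto(s\rhd v)\otimes(r\rhd w)\), which need not be the identity. In the \(T\) case with \(\dim\rho=1\), for instance, it is the scalar \(\rho(x_1)\sigma(z)\), and braided-indecomposability says exactly that this scalar is not \(1\) (Lemma~\ref{lem:T-central-support-adjoints}). Only the converse holds: the case \(m=0\) of Proposition~\ref{prop:multidegree-insertion} gives \((p,s)\in\supp Q_1(p;s)\) whenever \(s\brhd p\neq p\), which is \cite[Remark~5.6]{rank2-2}. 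Also, the components of \(Q_1(r;s)\subseteq V\otimes W\) have multidegrees \((r,s)\) and \(((rs)\brhd r,\,r\brhd s)\); the pair \((rsr^{-1},r)\) is a degree in \(W\otimes V\).

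The commuting configuration is precisely where \(Z_3^{3,1}\) and \(Z_T^{4,1}\) arise. The paper treats it as a separate branch, via \cite[Proposition~5.14]{rank2-2}, as opposed to \cite[Proposition~5.25]{rank2-2} when some pair fails to commute. Your step~4 lists these two quandles, but with step~1 as written your outline has no route to them. The repair is short:
\begin{itemize}
\item If the supports commute elementwise, the double braiding preserves each \(V_r\otimes W_s\). Braided-indecomposability and \(G\)-equivariance then give, for each \(r\in\supp V\), some \(s\) with \((r,s)\in\supp Q_1(r;s)\).
\item Suppose some \(p\in\supp V\) had \(r\brhd p\neq p\). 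Then Proposition~\ref{prop:multidegree-insertion} applies with \(m=1\) and \(i=1\), since \(s\brhd p=p\) and the exclusion set is just \(\{r\}\). It gives \((p,r,s)\in\supp Q_2(p,r;s)\), contradicting \((\operatorname{ad}V)^2(W)=0\).
\item Hence \(\supp V\) is commutative, so it is central in \(G=\langle\supp V\cup\supp W\rangle\), so it is a single point.
\item Only then does \((\operatorname{ad}W)^4(V)=0\) cut \(\supp W\) down to the three-point dihedral quandle or the tetrahedral quandle.
\end{itemize}
The noncommuting analysis of your steps~2--3 belongs to the other branch only. As heuristic summaries those steps are acceptable, since, like the paper, you ultimately defer to the chains of \cite[Section~5]{rank2-2}.
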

\begin{proof}
By Proposition~\ref{prop:simple-objects}, \(\supp V\) and \(\supp W\)
are conjugacy classes. Moreover,
\eqref{eq:indecomposable-first-adjoint} gives
\((\operatorname{ad}V)(W)\neq0\). Thus the group-theoretic hypotheses
in the proof of \cite[Theorem~4.4]{rank2-2} are satisfied.

The group-theoretic Lemma~5.1 of \cite{rank2-2} applies unchanged.
Lemma~\ref{lem:adjoint-object-realization} and
\eqref{eq:Qm-vanishing} replace
\cite[Lemmas~5.2--5.3 and Remark~5.4]{rank2-2}, while
Proposition~\ref{prop:multidegree-insertion} replaces
\cite[Proposition~5.5]{rank2-2}.  Its case \(m=0\) also gives
\cite[Remark~5.6]{rank2-2}. The arguments in
\cite[Corollaries~5.7--5.11, Lemmas~5.12--5.13 and
5.15--5.24]{rank2-2} therefore remain valid, since they use only
these results and the conjugation relations in
\(\supp V\cup\supp W\). 

If \(\supp V\) and \(\supp W\) commute elementwise,
\cite[Proposition~5.14]{rank2-2} gives
\(\supp(V\oplus W)\cong Z_3^{3,1}\) or \(Z_T^{4,1}\).
Otherwise, \cite[Proposition~5.25]{rank2-2} gives
\(\supp(V\oplus W)\cong Z_2^{2,2}\), \(Z_3^{3,2}\), or
\(Z_4^{4,2}\).

Let \(X=\supp(V\oplus W)\).  The inclusion
\(X\hookrightarrow\operatorname{Conj}(G)\) induces a homomorphism
\(G_X\to G\) from the enveloping group of \(X\). It is surjective
because \(G=\langle X\rangle\). Lemma~\ref{lem:ZT-enveloping-group}
and Proposition~\ref{prop:support-quandle-enveloping-groups}
identify \(G_X\) in the five cases and give the asserted epimorphic
images.
\end{proof}
\section{The \texorpdfstring{$\Gamma_2$}{Gamma2} case}
\label{sec:classification-Gamma2}

Let
\[
\Gamma_2
=
\left\langle
g,h,\varepsilon
\ \middle|\
hg=\varepsilon gh,\quad
\varepsilon^2=1,\quad
\varepsilon g=g\varepsilon, \quad \varepsilon h=h \varepsilon
\right\rangle,
\]
and let \(G\) be a finite non-abelian quotient of \(\Gamma_2\).
The images of \(g,h,\varepsilon\) in \(G\) are denoted by the same
letters.
Let \(\Phi\) be a normalized \(3\)-cocycle on \(G\).

Throughout this section, let \(V=M(g,\rho)\) be simple, and let
\(W=M(h,\sigma)\) be simple.  The support of \(V\) is
\(\{g,\varepsilon g\}\), and the support of \(W\) is
\(\{h,\varepsilon h\}\).  Assume that
\((V,W)\) is braided-indecomposable.

Lemma~\ref{lem:Gamma2-projective-representations-one-dimensional}
below shows that \(\rho\) and \(\sigma\) are one-dimensional under
the standing assumptions.  The parameters used below are defined in
\eqref{eq:Gamma2-parameters}, \eqref{eq:Delta-definition}, and
\eqref{eq:Theta-Phi-definition}.  Since their definitions depend on
the order of \(V\) and \(W\), they must be computed separately for
the two orders.  The classification is as follows.

\begin{thm}
\label{thm:Gamma2-classification}
Under the preceding assumptions,
\(\dim\mathcal B(V\oplus W)<\infty\) if and only if, for one of the
two orders of \(V\) and \(W\), one of the following two systems of
equations holds:
\begin{align}
&\Delta=1,
\qquad \lambda=\lambda'=-1,
\tag{A$_2$}\label{eq:Gamma2-classification-A2}
\\
&\Delta=1,
\qquad \lambda'=-1,
\qquad \lambda^2=\Theta_\Phi(g,h)=-1.
\tag{$\Gamma_2$-G$_2$}\label{eq:Gamma2-classification-G2}
\end{align}
\end{thm}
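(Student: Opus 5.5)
The plan is to use the reflection machinery of Section~2, reducing everything to explicit scalar computations on one-dimensional homogeneous components.

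\emph{Step 1: make $\rho,\sigma$ one-dimensional.} First I would prove Lemma~\ref{lem:Gamma2-projective-representations-one-dimensional}: $\rho$ and $\sigma$ are one-dimensional. The centralizers $C_G(g)$ and $C_G(h)$ are images of $\langle\varepsilon,h^2,g\rangle$ and $\langle\varepsilon,h,g^2\rangle$. These are abelian of a very restricted shape. I would show that braided-indecomposability, together with the vanishing $(\operatorname{ad}V)^{m+1}(W)=0$ forced in the finite case, kills any noncommuting pair of operators. Then $V$ and $W$ have two one-dimensional homogeneous components each. On them, $g,h,\varepsilon$ act by scalars, and these scalars together with $\Phi$ are exactly the data behind $\lambda,\lambda',\Delta,\Theta_\Phi(g,h)$.

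\emph{Step 2: compute the adjoint objects.} Using Proposition~\ref{prop:adjoint-orbit-generation} with $H=C_G(h)$ or $C_G(g)$, each $X_n^{V,W}$ is generated by a single vector $x_n=\psi_n(v\otimes x_{n-1})$. So $m_{12}$ is the first $n$ with $x_n=0$. Because $\supp V=\{g,\varepsilon g\}$ and $\supp W=\{h,\varepsilon h\}$, $x_n$ lives in a small graded piece, and $\psi_n$ reduces to a $2\times 2$ (or $4\times4$) scalar matrix whose entries are products of character values and $\Phi$-values.

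\emph{Step 3: read off the vanishing conditions.} I expect the following pattern.
\begin{itemize}
\item $x_2=0$ is equivalent to $\Delta=1$ together with $\lambda'=-1$, with the roles fixed by the chosen order.
\item The first row then gives $a_{12}=-1$.
\item On the other side, $(\operatorname{ad}W)^k(V)$ vanishes at $k=2$ when $\lambda=-1$, giving type $A_2$.
\item It vanishes only at $k=4$ when $\lambda^2=\Theta_\Phi(g,h)=-1$, giving type $G_2$.
\end{itemize}
The cocycle identities of the appendix, used via Lemma~\ref{lem:local-cocycle-coherence}, simplify the coefficients into $\Theta_\Phi$.

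\emph{Step 4: necessity.} Assume $\dim\mathcal B<\infty$. Theorem~\ref{thm:Nichols-finiteness-criterion} gives all reflections and a finite Cartan graph. Proposition~\ref{prop:pair-with-finite-Cartan-matrix} gives a pair in $\mathcal F_2(M)$ with a finite-type Cartan matrix. Its supports are again one of the $\Gamma_2$-type configurations, by Lemma~\ref{lem:reflections-preserve-support-group} and Theorem~\ref{thm:support-classification}. I would compute how the parameters transform under $R_1$ and $R_2$, using Lemma~\ref{lem:dual-projective-representation} for the dual $V^*$ and the explicit $x_m$ for the new component. This shows the transformation is closed within the listed systems, which lets me transfer the conditions back to $M$ up to interchange. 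The Steps 2--3 calculation then excludes every configuration other than \eqref{eq:Gamma2-classification-A2} and \eqref{eq:Gamma2-classification-G2}; in particular it excludes $a_{12}a_{21}=2$.

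\emph{Step 5: sufficiency.} Under either system, I would show directly that all reflections exist and that the Cartan matrix is the same at every object, so the graph is standard of type $A_2$ or $G_2$ and hence finite. For each positive root I would identify the simple object $Q_\beta$ via Lemma~\ref{lem:root-factors-and-reflected-components}. Each $Q_\beta$ is supported on a two-element class with one-dimensional components, so $\mathcal B(Q_\beta)$ is a rank-two Nichols algebra of diagonal or $\Gamma_2$-conjugacy type whose dimension I can compute. Theorem~\ref{thm:Nichols-finiteness-criterion} then gives finiteness.

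The main obstacle is Step 3 together with the transformation rules in Step 4. This means computing $x_3$ and $x_4$ in the $G_2$ direction, where $\Phi$-coefficients proliferate. Showing they reduce exactly to $\lambda^2=\Theta_\Phi(g,h)=-1$ requires carefully chosen 3-cocycle identities, which I would organize in the bar complex as in the appendices.
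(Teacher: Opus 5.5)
\textbf{The gap: your argument cannot exclude type $B_2$.} In Step~4 you assert that the Step~2--3 computations rule out $a_{12}a_{21}=2$. They do not.

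Carried out, those computations give the following, with $V$ acting. First, $X_1$ is simple iff $\Delta=1$. Then $X_2\neq0$ iff $\lambda\neq-1$, and $X_2$ is simple iff $\lambda^2=\Theta_\Phi(g,h)$. Finally, $X_3=0$ iff $1+\lambda+\lambda^2=0$, because $x_3$ is $(1+\lambda+\lambda^2)$ times a nonzero tensor; see \eqref{eq:X3-vanishing-criterion}.

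A local $B_2$ object therefore corresponds to the system $\Delta=1$, $\lambda'=-1$, $\lambda^2=\Theta_\Phi(g,h)$, $\lambda^2+\lambda+1=0$. This system is perfectly consistent whenever $\Theta_\Phi(g,h)$ is a primitive cube root of unity. Nothing in the adjoint calculus forbids that, so the obstruction must come from the cocycle itself.

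The paper supplies this in Lemmas~\ref{lem:Gamma2-primary-cocycle}--\ref{lem:Gamma2-X3-nonvanishing}:
\begin{itemize}
\item Since $[G,G]=\langle\varepsilon\rangle$ is central of order two, $G=P\times A$ with $P$ the Sylow $2$-subgroup and $A$ abelian of odd order.
\item Restriction and inflation show that $\Phi$ is cohomologous to $p_P^*(\Phi_P)\,p_A^*(\Phi_A)$, with $\Phi_P$ valued in $\mu_{|P|}$.
\item The $A$-factor contributes $f_{\Phi_A}(g_A,h_A,g_A)^3f_{\Phi_A}(h_A,h_A,g_A)=1$ to $\Theta$, so $\Theta_{\widetilde\Phi}\in\mu_{|P|}$.
\item After transporting the pair along the induced braided equivalence, $\widetilde\lambda^2=\Theta_{\widetilde\Phi}$ would have to have order three, which is impossible.
\end{itemize}
Without this, or some other global constraint on $\Theta_\Phi$, your necessity argument leaves $B_2$ open. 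You then cannot conclude that a finite-type object has type $A_2$ or $G_2$.

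\textbf{Two further points.}

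First, the one-dimensionality of $\rho$ and $\sigma$ (Lemma~\ref{lem:Gamma2-projective-representations-one-dimensional}) is unconditional and does not come from adjoint vanishing. The only possible noncommutativity in $\mathbb C^{\Phi_g}C_G(g)$ is $\vartheta_g=f_\Phi(g,\varepsilon,h^2)\in\{\pm1\}$. If $\vartheta_g=-1$, then $c_{V,V}^2$ has opposite eigenvalues on two lines, and the action of $h$ maps one line onto the other. This contradicts the fact that $c_{V,V}^2$ is a morphism. You need this unconditionally: the sufficiency direction does not assume finiteness, and the parameters are only defined once $\rho$ and $\sigma$ are characters.

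Second, your Step~3 has the roles reversed.
\begin{itemize}
\item The vanishing of $(\operatorname{ad}V)^n(W)$ is governed by $\lambda$, the self-braiding scalar of the acting object $V$. For example, $x_2\propto 1+\lambda$ and $x_4\propto(1+\lambda)(1+\lambda^2)$.
\item The scalar $\lambda'$ governs $(\operatorname{ad}W)^2(V)$, not $X_2$.
\item $\Delta=1$ is the simplicity condition for $X_1$, not part of a vanishing criterion.
\end{itemize}
Consequently, in the $G_2$ system it is $V$ that has $m_{12}=3$ and $W$ that has $m_{21}=1$, not the other way round. Since every parameter depends on the order of the pair, this matters for the phrase ``for one of the two orders''.
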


\medskip
\noindent\textit{Outline of the proof.}
\begin{itemize}[leftmargin=2em]
\item
Subsections~\ref{subsec:Gamma2-projective-representations}
and~\ref{subsec:Gamma2-adjoint-objects} compute the first adjoint
objects and show that the relevant projective representations are
one-dimensional.

\item
Subsection~\ref{subsec:Gamma2-duals-reflections} computes the duals
and the parameters after each reflection.

\item
Subsection~\ref{subsec:Gamma2-standard-A2} proves the \(A_2\) case,
and Subsection~\ref{subsec:Gamma2-local-B2-obstruction} shows that
type \(B_2\) cannot occur.

\item
Subsection~\ref{subsec:Gamma2-standard-G2-parameters}  {computes} the remaining
adjoint objects in the \(G_2\) case and  {proves} that its conditions are
preserved by reflections.

\item
Subsection~\ref{subsec:Gamma2-finite-Cartan-graphs} shows that every
finite-dimensional case has a standard Cartan graph of type \(A_2\)
or \(G_2\).

\item
Subsections~\ref{subsec:Gamma2-G2-simple-factors}
and~\ref{subsec:Gamma2-classification} prove finite-dimensionality
and compute the dimensions.

\item
Subsection~\ref{subsec:order16-G2-example} gives an explicit example
of the \(G_2\) case.
\end{itemize}
\medskip
\medskip
\noindent\textit{Parameters used in the theorem.}
The following table records where the parameters are defined and how
they are used.

\begingroup
\small
\setlength{\tabcolsep}{4pt}
\renewcommand{\arraystretch}{1.08}
\begin{center}
\begin{tabular}{@{}L{0.17\textwidth}L{0.25\textwidth}L{0.50\textwidth}@{}}
\hline
Parameter & Defined in & Use \\
\hline

\(\lambda\)
& \eqref{eq:Gamma2-parameters}
& If \(\Delta=1\), then \(X_2=0\) if and only if
  \(\lambda=-1\). \\

\(\lambda'\)
& \eqref{eq:Gamma2-parameters}
& If \(\Delta'=1\), then \(Y_2=0\) if and only if
  \(\lambda'=-1\). \\

\(\Delta,\Delta'\)
& \eqref{eq:Delta-definition},
  \eqref{eq:Delta-WV-definition}
& The object \(X_1\) is simple if and only if \(\Delta=1\), and
  \(Y_1\) is simple if and only if \(\Delta'=1\).  These two
  conditions are equivalent. \\

\(\Theta_\Phi(g,h)\)
& \eqref{eq:Theta-Phi-definition}
& If \(\Delta=1\) and \(X_2\neq0\), then \(X_2\) is simple if and only if
  \(\lambda^2=\Theta_\Phi(g,h)\).  This condition occurs in the
  \(G_2\) case. \\
\hline
\end{tabular}
\end{center}
\endgroup

\subsection{Projective representations}
\label{subsec:Gamma2-projective-representations}

The support representatives that arise below have the form \(g^m h\)
or \(h^m g\).  By Proposition~\ref{prop:simple-objects}, a simple
object with support \(x^G\) is determined by an irreducible
\(\Phi_x\)-projective representation of \(C_G(x)\).  We therefore
begin by computing the relevant centralizers.  The corollary will
allow us to use the same argument for the projective representations
that occur in the later adjoint objects.

\begin{lemma}
\label{lem:Gamma2-normal-forms-centralizers}
The center of \(G\) is
$
Z(G)=\langle\varepsilon,g^2,h^2\rangle.
$
The centralizers needed below are
\begin{align*}
C_G(g)&=\langle\varepsilon,g,h^2\rangle,\\
C_G(h)&=\langle\varepsilon,h,g^2\rangle,\\
C_G(gh)=C_G(hg)
&=\langle\varepsilon,g^2,g^{-1}h\rangle
 =\langle\varepsilon,g^2,h^2,gh\rangle.
\end{align*}
  More generally, for every
\(m\in\mathbb Z\),
\[
C_G(g^m h)=
\begin{cases}
C_G(h),&m\text{ even},\\
C_G(gh),&m\text{ odd},
\end{cases}
\qquad
C_G(h^m g)=
\begin{cases}
C_G(g),&m\text{ even},\\
C_G(gh),&m\text{ odd}.
\end{cases}
\]
\end{lemma}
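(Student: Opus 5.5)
The plan is to work with the normal form of \(\Gamma_2\) and push it to the quotient \(G\). In \(\Gamma_2\), write \(\varepsilon^i h^j g^k\) with \(0\le i<2\). Since \(\varepsilon^2=1\), the relation \(g\varepsilon=\varepsilon^{-1}g\) just says \(\varepsilon\) is central. The defining relation \(hg=\varepsilon gh\) then gives the basic commutation rules
\[
h^jg^k=\varepsilon^{jk}g^kh^j,
\qquad
g^k h^j g^{-k}=\varepsilon^{jk}h^j,
\qquad
h^j g^k h^{-j}=\varepsilon^{jk}g^k .
\]
These formulas descend to \(G\).

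\textbf{The centre.} The elements \(\varepsilon\), \(g^2\), \(h^2\) commute with \(g\) and \(h\), so they lie in \(Z(G)\). For the reverse inclusion, take a central element \(x=\varepsilon^ih^jg^k\) in \(G\). Conjugating by \(g\) and by \(h\) gives \(\varepsilon^{j}x=x\) and \(\varepsilon^{k}x=x\). Since \(G\) is non-abelian, \(\varepsilon\ne1\) in \(G\); otherwise \(g\) and \(h\) would commute and \(G=\langle g,h,\varepsilon\rangle\) would be abelian. Hence \(\varepsilon^j=\varepsilon^k=1\) in \(G\), so \(j\) and \(k\) may be taken even, and \(x\in\langle\varepsilon,g^2,h^2\rangle\). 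The point to watch is that normal forms need not be unique in \(G\). This is harmless, because the condition is stated in terms of the scalar \(\varepsilon^{j}\): if \(\varepsilon^j=1\) in \(G\) with \(j\) odd, then \(\varepsilon=1\), a contradiction. Only the parity of \(j\) and \(k\) is ever used.

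\textbf{The centralizers.} For \(x=\varepsilon^ih^jg^k\), the same computation gives \(gxg^{-1}=\varepsilon^{j}x\) and \(hxh^{-1}=\varepsilon^{k}x\).
\begin{itemize}
\item \(x\) commutes with \(g\) if and only if \(j\) is even, so \(C_G(g)=\langle\varepsilon,g,h^2\rangle\).
\item Symmetrically, \(C_G(h)=\langle\varepsilon,h,g^2\rangle\).
\item For \(gh\), conjugation by \(x\) multiplies \(gh\) by \(\varepsilon^{j+k}\). So \(x\) centralizes \(gh\) exactly when \(j\equiv k\pmod 2\). This gives \(\langle\varepsilon,g^2,h^2,gh\rangle\), which equals \(\langle\varepsilon,g^2,g^{-1}h\rangle\) because \(g^{-1}h=g^{-2}\cdot gh\).
\item Since \(hg=\varepsilon gh\) with \(\varepsilon\) central, \(C_G(hg)=C_G(gh)\).
\end{itemize}

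\textbf{The general formulas.} Write \(g^mh=(g^2)^{\lfloor m/2\rfloor}g^{m-2\lfloor m/2\rfloor}h\). The factor \((g^2)^{\lfloor m/2\rfloor}\) is central, so \(g^mh\) has the same centralizer as \(h\) when \(m\) is even, and as \(gh\) when \(m\) is odd. Likewise \(h^mg\) is a central multiple of \(g\) or of \(hg\), and \(C_G(hg)=C_G(gh)\).

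The only potential obstacle is the one already noted: avoiding any reliance on uniqueness of normal forms in the quotient, which the parity argument handles.
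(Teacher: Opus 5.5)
Your proof is correct and follows essentially the same route as the paper: you write elements as \(\varepsilon^i h^j g^k\), observe that conjugation by \(g\), \(h\), or \(gh\) multiplies such an element by \(\varepsilon^{j}\), \(\varepsilon^{k}\), or \(\varepsilon^{j+k}\), and use \(\varepsilon\neq1\) to turn commutation into a parity condition. Your reduction of \(g^mh\) and \(h^mg\) to central multiples of \(h,gh,g,hg\) is a harmless variant of the paper's direct reapplication of the criterion.

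One small step is incomplete. The identity \(g^{-1}h=g^{-2}\cdot gh\) gives only one inclusion, together with \(gh\in\langle\varepsilon,g^2,g^{-1}h\rangle\). For the reverse inclusion you also need \(h^2\in\langle\varepsilon,g^2,g^{-1}h\rangle\). This follows from \((g^{-1}h)^2=\varepsilon g^{-2}h^2\), exactly as in the paper.
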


\begin{proof}
Since \(G\) is non-abelian, \(\varepsilon\neq1\), and hence
\(\varepsilon\) has order \(2\).  Every element of \(G\) can be
written as
\(\varepsilon^e g^a h^b\), where \(e\in\{0,1\}\) and
\(a,b\in\mathbb Z\).
Let
$u=\varepsilon^e g^a h^b,\ 
v=\varepsilon^f g^c h^d.$
Since \(h^b g^c=\varepsilon^{bc}g^c h^b\), one has
$
uv=\varepsilon^{e+f+bc}g^{a+c}h^{b+d}$,
$
vu=\varepsilon^{e+f+ad}g^{a+c}h^{b+d}$.
Therefore
$
uv=\varepsilon^{bc-ad}vu.
$
Since \(\varepsilon\neq1\), the elements \(u\) and \(v\) commute if
and only if \(bc-ad\) is even.
Taking \(v=g\) shows that \(u\in C_G(g)\) if and only if \(b\) is
even.  Taking \(v=h\) shows that \(u\in C_G(h)\) if and only if \(a\)
is even.  Thus
\[
C_G(g)=\langle\varepsilon,g,h^2\rangle,
\qquad
C_G(h)=\langle\varepsilon,h,g^2\rangle.
\]
An element is central if and only if both \(a\) and \(b\) are even,
which gives
$
Z(G)=\langle\varepsilon,g^2,h^2\rangle.
$

Taking \(v=gh\) shows that \(u\in C_G(gh)\) if and only if
\(a\equiv b\pmod 2\).  Hence
\[
C_G(gh)
=\langle\varepsilon,g^2,h^2,gh\rangle.
\]
If \(k=g^{-1}h\), then
$
k^2=\varepsilon g^{-2}h^2$,
$
gh=g^2k$,
and therefore
\[
C_G(gh)=\langle\varepsilon,g^2,g^{-1}h\rangle.
\]
The three  centralizers are each generated by central
elements and one additional element, so they are abelian.
Finally, applying the same commutation criterion to \(g^m h\) gives
\[
C_G(g^m h)=
\begin{cases}
C_G(h),&m\text{ even},\\
C_G(gh),&m\text{ odd}.
\end{cases}
\]
Applying it to \(h^m g\) gives the second formula.
\end{proof}

\begin{cor}
\label{cor:Gamma2-change-of-generators}
For every \(m\in\mathbb Z\), each of the pairs
$
(g^m h,g)$,
$
(h^m g,h)
$
generates \(G\).  If \((x,y)\) denotes either pair, then
\[
yx=\varepsilon xy,
\qquad
C_G(x)=\langle\varepsilon,x,y^2\rangle.
\]
Thus \(x,y,\varepsilon\) satisfy the defining relations of
\(\Gamma_2\).
\end{cor}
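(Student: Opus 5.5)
The plan is to obtain everything from the commutation criterion in the proof of Lemma~\ref{lem:Gamma2-normal-forms-centralizers}. That proof writes every element of \(G\) as \(\varepsilon^e g^a h^b\) and shows that \(uv=\varepsilon^{bc-ad}vu\) for \(u=\varepsilon^e g^a h^b\) and \(v=\varepsilon^f g^c h^d\). Here \(\varepsilon\) is central of order \(2\), and \(g^2,h^2\) are central.

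First I prove generation. For the pair \((g^mh,g)\), the subgroup \(\langle g^mh,g\rangle\) contains \(g\) and \(g^{-m}(g^mh)=h\), so it contains \(g,h\), and hence \(\varepsilon=hgh^{-1}g^{-1}\). The same argument works for \((h^mg,h)\). So both pairs generate \(G\).

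Next I check the relations. Write \((x,y)\) for either pair.

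For \(x=g^mh\) and \(y=g\), the exponent vectors are \((a,b)=(m,1)\) and \((c,d)=(1,0)\). Then \(bc-ad=1\), so \(xy=\varepsilon yx\), that is, \(yx=\varepsilon xy\) since \(\varepsilon^2=1\).

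For \(x=h^mg\), I first rewrite \(x=\varepsilon^{m}gh^m\) using \(h^mg=\varepsilon^m gh^m\). This gives \((a,b)=(1,m)\), and with \(y=h\) we have \((c,d)=(0,1)\). Then \(bc-ad=-1\), which is odd, and again \(yx=\varepsilon xy\).

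The remaining relations \(\varepsilon^2=1\) and centrality of \(\varepsilon\) hold in \(G\) already. So \(x,y,\varepsilon\) satisfy the defining relations of \(\Gamma_2\), with \(x\) in the role of \(g\) and \(y\) in the role of \(h\).

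Finally, the centralizer. Since \((x,y)\) generates \(G\) and satisfies the \(\Gamma_2\) relations, there is an epimorphism \(\Gamma_2\to G\) sending \(g\mapsto x\), \(h\mapsto y\), \(\varepsilon\mapsto\varepsilon\). Everything in Lemma~\ref{lem:Gamma2-normal-forms-centralizers} used only that \(G\) is a non-abelian quotient of \(\Gamma_2\) with these images of \(g,h,\varepsilon\). Applying that lemma to the new generators therefore gives \(C_G(x)=\langle\varepsilon,x,y^2\rangle\).

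I expect no real obstacle. The one point to watch is that the normal form \(\varepsilon^e x^a y^b\) and the criterion \(uv=\varepsilon^{bc-ad}vu\) must be recomputed for the new generators. This follows from \(yx=\varepsilon xy\) and the centrality of \(\varepsilon\) by the same computation as in the lemma, so reapplying the lemma is justified rather than circular.
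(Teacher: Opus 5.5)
Your proposal is correct. The generation argument and the check of \(yx=\varepsilon xy\) are the same as in the paper; the paper verifies the relation directly, while you go through the criterion \(uv=\varepsilon^{bc-ad}vu\), which is the same computation. The difference is in how you get the centralizer.

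The paper only cites Lemma~\ref{lem:Gamma2-normal-forms-centralizers}, whose final part gives \(C_G(g^mh)\in\{C_G(h),C_G(gh)\}\) and \(C_G(h^mg)\in\{C_G(g),C_G(gh)\}\), depending on the parity of \(m\). To reach the form \(\langle\varepsilon,x,y^2\rangle\) from there, one still has to match generators, a step the paper leaves implicit. For example, \(\langle\varepsilon,g^mh,g^2\rangle\) equals \(\langle\varepsilon,h,g^2\rangle\) for even \(m\). For odd \(m\) it equals \(\langle\varepsilon,gh,g^2\rangle=C_G(gh)\), using \((gh)^2=\varepsilon g^2h^2\).

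You instead use the new epimorphism \(\Gamma_2\to G\) with \(g\mapsto x\), \(h\mapsto y\), \(\varepsilon\mapsto\varepsilon\). You then read \(C_G(x)=\langle\varepsilon,x,y^2\rangle\) off the lemma's formula \(C_G(g)=\langle\varepsilon,g,h^2\rangle\) for this presentation. This is uniform in \(m\) and avoids both the parity split and the generator bookkeeping. It is the same relabeling device the paper itself uses in the proof of Lemma~\ref{lem:Gamma2-projective-representations-one-dimensional}. Establishing generation and the relations first, as you do, is what makes this reapplication non-circular.
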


\begin{proof}
First let \(x=g^m h\) and \(y=g\).  Then
\(yx=\varepsilon xy\) and \(h=y^{-m}x\).  Next let
\(x=h^m g\) and \(y=h\).  Again \(yx=\varepsilon xy\), and now
\(g=y^{-m}x\).  In either case,
\[
\varepsilon=yxy^{-1}x^{-1},
\]
so \(x\) and \(y\) generate \(G\).  The formula for \(C_G(x)\)
follows from Lemma~\ref{lem:Gamma2-normal-forms-centralizers}.
\end{proof}

The dimensions of the projective representations \(\rho\) and
\(\sigma\) are controlled by the following alternator of \(\Phi\):
\[
f_\Phi(a,b,c):=
\frac{
 \Phi(a,b,c)\Phi(b,c,a)\Phi(c,a,b)
}{
 \Phi(a,c,b)\Phi(c,b,a)\Phi(b,a,c)
}.
\]

\begin{lemma}
\label{lem:alternator}
\label{lem:central-double-slant-character}
Let \(A\leq G\) be an abelian subgroup. Then
\(f_\Phi|_{A\times A\times A}\) is an alternating tricharacter. In
particular,
\[
f_\Phi(ab,c,d)=f_\Phi(a,c,d)f_\Phi(b,c,d),
\]
and similarly in the second and third variables, while
$
f_\Phi(a,a,b)=1,
\
f_\Phi(a,b,c)=f_\Phi(b,a,c)^{-1}.
$
Moreover, if \(a,b\in Z(G)\), then the map
\[ \chi:
G\longrightarrow\mathbb C^\times,
\qquad
x\longmapsto f_\Phi(x,a,b),
\]
is a group character.
\end{lemma}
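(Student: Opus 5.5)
Write \(\partial\) for the group coboundary in the bar complex. For \(a,b,c\in A\), a direct computation gives \(\partial\Phi(a,b,c,d)=1\) and the standard fact that the alternation of a \(3\)-cocycle on an abelian group is multiplicative. The plan is to first prove multiplicativity in the first variable. Once that is done, the other two variables follow from the symmetry properties, so these are handled together.

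\emph{Symmetry properties.} Directly from the definition, swapping the first two arguments inverts \(f_\Phi\): the numerator cyclic terms \(\Phi(a,b,c)\Phi(b,c,a)\Phi(c,a,b)\) and the denominator terms \(\Phi(a,c,b)\Phi(c,b,a)\Phi(b,a,c)\) are exchanged. Hence \(f_\Phi(b,a,c)=f_\Phi(a,b,c)^{-1}\). Similarly \(f_\Phi\) is invariant under cyclic permutation. Therefore \(f_\Phi\) transforms by the sign character of \(S_3\). In particular, \(f_\Phi(a,a,b)=f_\Phi(a,a,b)^{-1}\), but the definition also gives \(f_\Phi(a,a,b)=1\) identically, since numerator and denominator coincide termwise. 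Multiplicativity in the second and third variables follows from multiplicativity in the first by using these permutation rules.

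\emph{Multiplicativity in the first variable.} For fixed \(c,d\in A\), I would write the quotient
\[
\frac{f_\Phi(ab,c,d)}{f_\Phi(a,c,d)f_\Phi(b,c,d)}
\]
as a product of \(\Phi\)-values and cancel it into a product of cocycle identities \(\partial\Phi(x,y,z,w)=1\) with \(x,y,z,w\) drawn from permutations of \((a,b,c,d)\). This uses commutativity to identify products such as \(abc=bac=cab\).

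The cleanest route is the known construction. Define \(\Phi_c(x,y)=\Phi(c,x,y)^{-1}\Phi(x,c,y)\Phi(x,y,c)^{-1}\), restricted to \(A\). Because \(A\) is abelian, this is, up to the normalization in the paper, the restriction \(\Phi_c|_{A}\) from Corollary after Lemma~\ref{lem:local-cocycle-coherence}, so it is a \(2\)-cocycle on \(A\). Its alternation \(\Phi_c(x,y)/\Phi_c(y,x)\) is a bicharacter on \(A\), by the standard fact that alternation of a \(2\)-cocycle on an abelian group is a bicharacter. Expanding shows that this alternation equals \(f_\Phi(c,x,y)^{\pm1}\). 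Hence \(f_\Phi\) is multiplicative in the last two variables, and by cyclic invariance in all three.

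\emph{Main obstacle.} The main difficulty is matching the paper's \(\Phi_x(g,h)\) precisely for \(g,h\in A\cap C_G(x)=A\). On \(A\) all conjugations are trivial, so \(\Phi_x(g,h)=\Phi(g,h,x)\Phi(x,g,h)/\Phi(g,x,h)\). This is a \(2\)-cocycle on \(A\) by that Corollary. Its alternation is
\[
\frac{\Phi(g,h,x)\Phi(x,g,h)\Phi(h,x,g)}{\Phi(h,g,x)\Phi(x,h,g)\Phi(g,x,h)}=f_\Phi(g,h,x).
\]
For the bicharacter fact, the identity \(\beta(xy,z)=\beta(x,z)\beta(y,z)\) follows from the \(2\)-cocycle identity on an abelian group by a short standard check.

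\emph{Central case.} For \(a,b\in Z(G)\), the centralizer of \(a\) and of \(b\) is all of \(G\), and \(\Phi_b|_{C_G(b)}\) is then a \(2\)-cocycle on all of \(G\). Since \(a\) is central, the function \(x\mapsto \Phi_b(x,a)/\Phi_b(a,x)\) is a character of \(G\). This is the standard fact that, for a central element \(a\) and a \(2\)-cocycle \(\omega\), the map \(x\mapsto\omega(x,a)/\omega(a,x)\) is a homomorphism; it follows from the cocycle identity applied to \((x,y,a)\), \((x,a,y)\) and \((a,x,y)\). As computed above, this value equals \(f_\Phi(x,a,b)\) up to orientation, so \(\chi\) is a character.
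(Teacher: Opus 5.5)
Your proof is correct, but the key step is organized differently from the paper's. The paper proves multiplicativity in the \emph{first} variable of \(f_\Phi\), i.e.\ in the subscript, via the identity
\[
\Phi_{xy}(a,b)=\Phi_x(a,b)\Phi_y(a,b)\,\frac{\Phi_a(x,y)\Phi_b(x,y)}{\Phi_{ab}(x,y)}
\qquad(ab=ba,\ x,y\in C_G(a)\cap C_G(b)).
\]
The last factor is symmetric in \(a,b\), so dividing by the same identity with \(a,b\) interchanged gives \(f_\Phi(xy,a,b)=f_\Phi(x,a,b)f_\Phi(y,a,b)\). This one statement yields both the abelian case and, with \(a,b\) central so that \(C_G(a)\cap C_G(b)=G\), the character assertion.

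You instead keep the subscript fixed. You recognize \(f_\Phi(x,g,h)=\Phi_x(g,h)/\Phi_x(h,g)\) as the commutator pairing of the \(2\)-cocycle \(\Phi_x|_{C_G(x)}\) from the Corollary to Lemma~\ref{lem:local-cocycle-coherence}. You then get multiplicativity in the last two variables from the bicharacter property of such pairings on the abelian group \(A\), and move it to the first variable by cyclic invariance. For the central case you use that \(x\mapsto\omega(x,a)/\omega(a,x)\) is a character whenever \(a\) is central and \(\omega\) is a \(2\)-cocycle, with \(\omega=\Phi_b\) on \(G\). You rightly note that \(\Phi_b(x,a)/\Phi_b(a,x)=f_\Phi(x,a,b)\) for all \(x\), since only conjugates of the central element \(b\) occur.

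Your route uses only a result already proved in the paper plus short standard \(2\)-cocycle computations. The paper's route rests on a longer identity among values of \(\Phi\) that it merely asserts as a direct computation. In exchange, that identity gives multiplicativity in the subscript for \(x,y\in C_G(a)\cap C_G(b)\) without requiring \(x,y\) to commute. Your method recovers this generality as well, by restricting \(\Phi_b\) to \(C_G(a)\cap C_G(b)\), in which \(a\) is central.

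One cosmetic remark: the \(\Phi_c\) you first define is the reciprocal of the paper's \(\Phi_c\) on \(A\), so its alternation is \(f_\Phi(c,x,y)^{-1}\). This is harmless, and your later paragraph uses the correct normalization.
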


\begin{proof}
Suppose that \(a,b\in G\) commute and that
\(x,y\in C_G(a)\cap C_G(b)\). Direct computation gives
\[
\Phi_{xy}(a,b)
=
\Phi_x(a,b)\Phi_y(a,b)
\frac{\Phi_a(x,y)\Phi_b(x,y)}
     {\Phi_{ab}(x,y)}.
\]
Interchanging \(a\) and \(b\) leaves the last factor unchanged.
Taking the quotient of the two identities therefore gives
\[
f_\Phi(xy,a,b)
=
f_\Phi(x,a,b)f_\Phi(y,a,b).
\]

Taking \(x,y,a,b\in A\) proves that \(f_\Phi|_{A^3}\) is
multiplicative in its first variable. Its defining formula gives
\[
f_\Phi(a,b,c)=f_\Phi(b,c,a)
             =f_\Phi(b,a,c)^{-1}.
\]
Hence it is multiplicative in all three variables. The same formula
gives \(f_\Phi(a,a,c)=1\), so \(f_\Phi|_{A^3}\) is an alternating
tricharacter.

Finally, if \(a,b\in Z(G)\), then
\(C_G(a)\cap C_G(b)=G\). The multiplicativity proved above, together
with \(f_\Phi(1,a,b)=1\), shows that
\(x\mapsto f_\Phi(x,a,b)\) is a group character.
\end{proof}
\begin{lemma}\label{lem:Gamma2-projective-representations-one-dimensional}
Every irreducible $\Phi_g$-projective representation of $C_G(g)$
is one-dimensional. More generally, for every
\(m\in\mathbb Z\) and
\(x\in\{g^m h,h^m g\}\), every irreducible
\(\Phi_x\)-projective representation of \(C_G(x)\) is
one-dimensional.
\end{lemma}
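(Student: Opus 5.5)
The plan is to reduce the statement to the vanishing of an alternating bicharacter on an abelian group. By Corollary~\ref{cor:Gamma2-change-of-generators}, for each \(x\in\{g,\,g^mh,\,h^mg\}\) there is a \(y\) with \(yx=\varepsilon xy\) and \(C_G(x)=\langle\varepsilon,x,y^2\rangle\). The elements \(\varepsilon\) and \(y^2\) are central by Lemma~\ref{lem:Gamma2-normal-forms-centralizers}, so \(C_G(x)\) is abelian. The pair \((x,y)\) satisfies the defining relations of \(\Gamma_2\), so it suffices to treat \(x=g\) and run the same argument verbatim for the other pairs. For the restriction of the \(2\)-cocycle \(\Phi_x\) to the abelian group \(A:=C_G(x)\), the standard theory of twisted group algebras applies. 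Every irreducible \(\Phi_x\)-projective representation of \(A\) is one-dimensional if and only if the twisted group algebra \(\Bbbk_{\Phi_x}A\) is commutative. This holds if and only if the alternating bicharacter
\[
\omega_x(a,b)=\Phi_x(a,b)\Phi_x(b,a)^{-1},\qquad a,b\in A,
\]
is identically \(1\).

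The first step is a direct computation from the definition of \(\Phi_x\). For pairwise commuting \(a,b,x\), all conjugations in the formula are trivial, and
\[
\Phi_x(a,b)=\frac{\Phi(a,b,x)\Phi(x,a,b)}{\Phi(a,x,b)}.
\]
Hence \(\omega_x(a,b)=f_\Phi(a,b,x)=f_\Phi(x,a,b)\), using the cyclic symmetry of the defining formula of \(f_\Phi\). By Lemma~\ref{lem:alternator}, \(f_\Phi\) is an alternating tricharacter on \(A\). It therefore suffices to check \(f_\Phi(x,a,b)=1\) for \(a,b\) running through the generators \(\varepsilon,x,y^2\). Every pair involving \(x\) gives \(1\) by the alternating property \(f_\Phi(x,x,\cdot)=1\). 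The only remaining value is \(f_\Phi(x,\varepsilon,y^2)\).

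For this value, Lemma~\ref{lem:alternator} applies because \(\varepsilon\) and \(y^2\) are central, so \(\chi(u)=f_\Phi(u,\varepsilon,y^2)\) is a character of \(G\). It kills commutators, in particular \(\chi(\varepsilon)=\chi(yxy^{-1}x^{-1})=1\). On the abelian group \(C_G(y)=\langle\varepsilon,y,x^2\rangle\), multilinearity and alternation give \(\chi(y)=f_\Phi(y,\varepsilon,y)^2=1\). Also \(\chi(x)^2=f_\Phi(x,\varepsilon^2,y^2)=1\), so \(\chi(x)=\pm1\). Since \(G\) is generated by \(x\) and \(y\), the lemma reduces to showing \(\chi(x)=1\).

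The main obstacle is this last sign, because \(x\) and \(y\) do not commute and the tricharacter property cannot be used across them. I would first exploit the non-commutative version of the first-variable identity in the proof of Lemma~\ref{lem:alternator}, which remains valid whenever the two multiplied elements lie in \(C_G(\varepsilon)\cap C_G(y^2)=G\). I would apply it to factorizations such as \(y^2=y\cdot y\) after cyclically rotating, \(f_\Phi(x,\varepsilon,y^2)=f_\Phi(y^2,x,\varepsilon)\). Failing that, I would pull \(\Phi\) back to \(\Gamma_2\) and use the cocycle identities collected in the appendix, computing the slant \(f_\Phi(-,\varepsilon,y^2)\) explicitly. The goal is to show that it factors through the subgroup \(\langle y\rangle[G,G]Z(G)\) or otherwise vanishes on \(x\). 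Once \(\chi(x)=1\), \(\omega_x\equiv1\) and the one-dimensionality follows for all listed \(x\).
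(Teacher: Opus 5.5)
Your reductions are correct and coincide with the paper's own reduction. They give \(\omega_x(a,b)=f_\Phi(x,a,b)\), the alternating tricharacter on the abelian group \(C_G(x)=\langle\varepsilon,x,y^2\rangle\), and hence reduce the lemma to the single sign \(\vartheta:=f_\Phi(x,\varepsilon,y^2)\). You also correctly show that \(\vartheta^2=1\). But you then leave exactly this sign undetermined, and that sign is the entire content of the lemma. As it stands, the proposal has a genuine gap.

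Neither suggested route closes it.

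\emph{First route.} The multiplicativity \(f_\Phi(uv,a,b)=f_\Phi(u,a,b)f_\Phi(v,a,b)\) from the proof of Lemma~\ref{lem:alternator} requires \(a,b\) to commute and \(u,v\in C_G(a)\cap C_G(b)\). After rotating to \(f_\Phi(y^2,x,\varepsilon)\), splitting \(y^2=y\cdot y\) would need \(y\in C_G(x)\). This is false, since \(yx=\varepsilon xy\).

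\emph{Conjugation.} Simultaneous conjugation by \(y\) sends \((x,\varepsilon,y^2)\) to \((\varepsilon x,\varepsilon,y^2)\). Multiplicativity on \(C_G(x)\) then returns the same value, which is a tautology. Identities of \(f_\Phi\) on abelian subgroups simply do not see how \(y\) acts on \(x\), and that action is where the sign lives.

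\emph{Second route.} Saying that \(\chi\) should ``factor through'' \(\langle y\rangle[G,G]Z(G)\) merely restates the problem, since \(\chi\) is already trivial on that subgroup and the question is its value at \(x\). Pulling back to \(\Gamma_2\) would amount to proving that the torus class of the commuting triple \((x,\varepsilon,y^2)\) vanishes in \(H_3(\Gamma_2,\mathbb Z)\). Nothing in the proposal carries this out.

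The missing input comes from the braided structure, and this is how the paper argues. Assume \(\vartheta=-1\), take any irreducible \(\rho\), and set \(V=M(g,\rho)\); irreducible \(\rho\) always exist, so the conclusion is a constraint on \(\Phi\) alone.

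\begin{enumerate}
\item Since \(c_g(g,\cdot)=f_\Phi(g,g,\cdot)=1\), Schur's lemma gives \(g\rhd v=\lambda v\) on all of \(V_g\).
\item Choose an \(\varepsilon\)-eigenvector \(v\in V_g\) with eigenvalue \(\eta\). Then \(v_-=h^2\rhd v\) has \(\varepsilon\)-eigenvalue \(\vartheta\eta=-\eta\).
\item Consequently \(\varepsilon g\) acts on \(v\) and on \(v_-\) by \(\kappa\) and \(-\kappa\), respectively.
\item It follows that \(c_{V,V}\circ c_{V,V}\) acts on \(v\otimes hv\) and on \(hv\otimes v_-\) by nonzero scalars of opposite sign.
\item On the other hand, \(h\rhd(v\otimes hv)\) is a nonzero multiple of \(hv\otimes v_-\).
\item The double braiding is a morphism in \(\GG\), so it commutes with the action of \(h\). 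This contradicts the previous two points.
\end{enumerate}

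Hence \(\vartheta=1\). Corollary~\ref{cor:Gamma2-change-of-generators} then transfers the argument to the pairs \((g^mh,g)\) and \((h^mg,h)\). You need this step, or an equivalent explicit homological computation, to complete the proof.
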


\begin{proof}
Put \(\vartheta_g=f_\Phi(g,\varepsilon,h^2)\) and
\(c_g(a,b)=\Phi_g(a,b)/\Phi_g(b,a)\) for \(a,b\in C_G(g)\).
By Lemma~\ref{lem:Gamma2-normal-forms-centralizers}, this centralizer
is abelian. Writing
\(a=\varepsilon^{x_1}g^{y_1}(h^2)^{z_1}\) and
\(b=\varepsilon^{x_2}g^{y_2}(h^2)^{z_2}\),
Lemma~\ref{lem:alternator} gives
\(c_g(a,b)=f_\Phi(g,a,b)=\vartheta_g^{\,x_1z_2-x_2z_1}\).
Since \(\varepsilon^2=1\), one has
\(\vartheta_g^2=f_\Phi(g,\varepsilon^2,h^2)=1\).

Assume, to the contrary, that \(\vartheta_g=-1\).
For $a \in C_G(g)$,
one has \(c_g(g,a)=f_\Phi(g,g,a)=1\).
The element corresponding to \(g\) is therefore central in the twisted
group algebra \(\mathbb C^{\Phi_g}C_G(g)\).  Schur's lemma gives
$
g\rhd v=\lambda v
$
for every $v\in V_g$, where $\lambda\in\mathbb C^\times$.
Since
\[
\varepsilon \rhd (\varepsilon \rhd v)
=\Phi_g(\varepsilon,\varepsilon)(\varepsilon^2 \rhd v)
=\Phi_g(\varepsilon,\varepsilon)v,
\]
there is a nonzero eigenvector, denoted by \(v\), and a scalar
\(\eta\ne0\) such that \(\varepsilon\rhd v=\eta v\).  Let
\(v_-:=h^2\rhd v\).  Then
\[
\begin{aligned}
\varepsilon\rhd v_-
=
\varepsilon\rhd(h^2\rhd v)
=
\frac{\Phi_g(\varepsilon,h^2)}
     {\Phi_g(h^2,\varepsilon)}
h^2\rhd(\varepsilon\rhd v)
=
\vartheta_g\eta v_-
=
-\eta v_-.
\end{aligned}
\]
Thus \(v\) and \(v_-\) are linearly independent. Set
\(\kappa:=\lambda\eta/\Phi_g(\varepsilon,g)\).
The projective action gives
\((\varepsilon g)\rhd v=\kappa v\) and
\((\varepsilon g)\rhd v_-=-\kappa v_-\). Since
\(gh=h\varepsilon g\), the action of \(g\) and the double braiding
\(c^2_{V,V}:=c_{V,V}\circ c_{V,V}\) satisfy
\begin{align*}
g\rhd(hv)
&=\frac{\Phi_g(g,h)}{\Phi_g(h,\varepsilon g)}\kappa hv,\\
c^2_{V,V}(v\otimes hv)
&=\frac{\Phi_g(g,h)}{\Phi_g(h,\varepsilon g)}
\kappa^2\,v\otimes hv,\\
c^2_{V,V}(hv\otimes v_-)
&=-\frac{\Phi_g(g,h)}{\Phi_g(h,\varepsilon g)}
\kappa^2\,hv\otimes v_-.
\end{align*}
The relation \eqref{eq:tensor-product} gives
\[
\begin{aligned}
h\rhd(v\otimes hv)
=
\Phi^h(g,\varepsilon g)
(hv)\otimes\bigl(h\rhd(hv)\bigr)
=
\Phi^h(g,\varepsilon g)\Phi_g(h,h)
(hv)\otimes v_-.
\end{aligned}
\]
Since the double braiding is a morphism in
\({}_G^G\mathcal{YD}^{\Phi}\), it commutes with the \(G\)-action.
Therefore,
\[
\begin{aligned}
c^2_{V,V}\bigl(h\rhd(v\otimes hv)\bigr)
=
h\rhd c^2_{V,V}(v\otimes hv)
=
\frac{\Phi_g(g,h)}
     {\Phi_g(h,\varepsilon g)}
\kappa^2\,h\rhd(v\otimes hv).
\end{aligned}
\]
The formula for \(c^2_{V,V}(hv\otimes v_-)\) above gives instead
\[
 c^2_{V,V}\bigl(h\rhd(v\otimes hv)\bigr)
=
-\frac{\Phi_g(g,h)}
     {\Phi_g(h,\varepsilon g)}
\kappa^2\,h\rhd(v\otimes hv).
\]
This is impossible because all  scalars and the
vector $h\rhd(v\otimes hv)$ are nonzero.
Thus $\vartheta_g\neq-1$, and hence $\vartheta_g=1$.
It follows that \(c_g(a,b)=1\) for all \(a,b\in C_G(g)\).
Thus \(\mathbb C^{\Phi_g}C_G(g)\) is commutative, and every
irreducible \(\Phi_g\)-projective representation of \(C_G(g)\)
is one-dimensional.

Let \(x\in\{g^m h,h^m g\}\), let \(y\) be \(g\) or \(h\),
respectively, and let \(\tau\) be an irreducible
\(\Phi_x\)-projective representation of \(C_G(x)\).
Corollary~\ref{cor:Gamma2-change-of-generators} shows that
\((x,y,\varepsilon)\) satisfies the defining \(\Gamma_2\) relations
and that \(C_G(x)=\langle\varepsilon,x,y^2\rangle\).
Replacing \((g,h,\rho)\) by \((x,y,\tau)\) in the preceding argument
shows that \(\mathbb C^{\Phi_x}C_G(x)\) is commutative.
Hence \(\dim\tau=1\).
\end{proof}

Interchanging \(g\) and \(h\) gives the same conclusion for
\(\sigma\).  Thus both \(\rho\) and \(\sigma\) are one-dimensional
throughout the remainder of this section.

\subsection{Adjoint objects}
\label{subsec:Gamma2-adjoint-objects}

By the preceding subsection, \(\rho\) and \(\sigma\) are
one-dimensional.  Choose nonzero homogeneous vectors
\(v\in V_g\) and \(w\in W_h\).  Since
\(hgh^{-1}=\varepsilon g\) and
\(ghg^{-1}=\varepsilon h\), one has
\(
hv\in V_{\varepsilon g}
\)
and
\(
gw\in W_{\varepsilon h}.
\)
Hence
$
    V=\Bbbk v\oplus\Bbbk hv,
    W=\Bbbk w\oplus\Bbbk gw
$
as $G$-graded vector spaces.

Define nonzero scalars
\begin{equation}
\begin{aligned}
    \varepsilon\rhd v&=\zeta v,
    &
    g\rhd v&=\lambda v,
    &
    h^2\rhd v&=\mu v,\\
    \varepsilon\rhd w&=\zeta' w,
    &
    h\rhd w&=\lambda' w,
    &
    g^2\rhd w&=\mu' w.
\end{aligned}
\label{eq:Gamma2-parameters}
\end{equation}
Thus $\zeta,\lambda,\mu$ describe the one-dimensional
$\Phi_g$-projective action of $C_G(g)$ on $V_g$, while
$\zeta',\lambda',\mu'$ describe the one-dimensional
$\Phi_h$-projective action of $C_G(h)$ on $W_h$.
Set
\[
X_n=X_n^{V,W}\cong(\operatorname{ad}V)^n(W),
\qquad
Y_n=X_n^{W,V}\cong(\operatorname{ad}W)^n(V).
\]
Here $X_n$ and $Y_n$ are the restricted images defined using
the operators $\varphi_n^\Phi$.

For the higher adjoint objects, the following observation will be used
repeatedly.  Let \(n\geq2\), assume that \(X_{n-1}\) is nonzero
and simple, and choose
\(0\neq x_{n-1}\in(X_{n-1})_{g^{n-1}h}\).
Lemma~\ref{lem:Gamma2-projective-representations-one-dimensional}
shows that this homogeneous component is \(\mathbb Cx_{n-1}\).
The subgroup \(\langle g^{n-1}h\rangle\) preserves this line, and
\((g^{n-1}h)g(g^{n-1}h)^{-1}=\varepsilon g\), so
\(V=\mathbb C\langle g^{n-1}h\rangle\rhd v\).
Since simplicity also gives
\(X_{n-1}=\mathbb CG\rhd x_{n-1}\),
Proposition~\ref{prop:adjoint-orbit-generation} yields
\(X_n=\mathbb CG\rhd x_n\), where
\(x_n=\varphi_n^\Phi(v\otimes x_{n-1})\).
Thus \(X_n=0\) if and only if \(x_n=0\).

\subsubsection{The first adjoint object}

\begin{lemma}
\label{lem:Gamma2-first-adjoint-formula}
Let
$x_1:=\varphi_1^\Phi(v\otimes w).$
Then
\[
x_1
=
v\otimes w
-
\frac{\zeta}{\Phi_g(h,\varepsilon)}
\,hv\otimes gw.
\]
Moreover,
$x_1\neq0, \ 
X_1=\mathbb{C}G\rhd x_1.$
\end{lemma}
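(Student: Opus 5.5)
The plan is to compute \(\varphi_1^\Phi(v\otimes w)=v\otimes w-c_{W,V}c_{V,W}(v\otimes w)\) directly from the braiding formula \(c(u_e\otimes v_f)=e\rhd v_f\otimes u_e\). For \(n=1\) no associator enters the definition of \(\varphi_1^\Phi\), so the computation consists of two braidings followed by one use of the projective action relation \eqref{eq:YD-projective-action}.

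\emph{Computing the double braiding.} Since \(v\in V_g\), the first braiding gives \(c_{V,W}(v\otimes w)=g\rhd w\otimes v=gw\otimes v\), where \(gw\in W_{\varepsilon h}\) because \(ghg^{-1}=\varepsilon h\). The second braiding then gives \(c_{W,V}(gw\otimes v)=(\varepsilon h)\rhd v\otimes gw\). It remains to express \((\varepsilon h)\rhd v\) in terms of \(hv=h\rhd v\).
\begin{itemize}
\item Since \(\varepsilon\) is central, \(\varepsilon h=h\varepsilon\).
\item Relation \eqref{eq:YD-projective-action}, applied with \(e=h\), \(f=\varepsilon\) and \(v\in V_g\), reads \(h\rhd(\varepsilon\rhd v)=\Phi_g(h,\varepsilon)\,(h\varepsilon)\rhd v\). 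Here the coefficient is exactly \(\Phi_g(h,\varepsilon)\) by the definition of \(\Phi_x\).
\item Using \(\varepsilon\rhd v=\zeta v\) from \eqref{eq:Gamma2-parameters}, this yields \((\varepsilon h)\rhd v=\zeta\,\Phi_g(h,\varepsilon)^{-1}\,hv\).
\end{itemize}
Together these give
\[
x_1=v\otimes w-\frac{\zeta}{\Phi_g(h,\varepsilon)}\,hv\otimes gw .
\]
The only point needing care is the order of the arguments in \(\Phi_g\): one must write \(\varepsilon h\) as \(h\varepsilon\) rather than \(\varepsilon h\), so that the coefficient is \(\Phi_g(h,\varepsilon)\) and not \(\Phi_g(\varepsilon,h)\).

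\emph{Nonvanishing.} The summand \(v\otimes w\) has multidegree \((g,h)\), while \(hv\otimes gw\) has multidegree \((\varepsilon g,\varepsilon h)\). These differ because \(\varepsilon\ne1\) (the group \(G\) is non-abelian). Both tensors are nonzero, and the coefficient of \(v\otimes w\) is \(1\), so \(x_1\neq0\).

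\emph{Generation.} We apply Proposition~\ref{prop:adjoint-orbit-generation} with \(n=1\), \(x_0=w\), \(r=g\), \(s=h\), and \(H=\langle h\rangle\). Its hypotheses are checked as follows.
\begin{itemize}
\item \(V=\Bbbk v\oplus\Bbbk hv=\Bbbk H\rhd v\).
\item \(W=\Bbbk G\rhd w\), because \(W\) is simple.
\item \(H\rhd w\subseteq\Bbbk w\), because \(h\rhd w=\lambda' w\); by \eqref{eq:YD-projective-action} the powers of \(h\) then act on \(w\) by nonzero scalars.
\end{itemize}
Since \(\psi_1=\varphi_1^\Phi\) on \(V\otimes X_0=V\otimes W\), the proposition gives \(X_1=\mathbb CG\rhd x_1\).
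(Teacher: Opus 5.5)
Your proposal is correct and follows the paper's proof essentially step for step. The paper computes the double braiding via \(h\rhd(\varepsilon\rhd v)=\Phi_g(h,\varepsilon)(\varepsilon h)\rhd v\), proves \(x_1\neq0\) by the distinct bi-homogeneous components \(V_g\otimes W_h\) and \(V_{\varepsilon g}\otimes W_{\varepsilon h}\), and obtains \(X_1=\mathbb CG\rhd x_1\) from Proposition~\ref{prop:adjoint-orbit-generation} with \(n=1\), \(H=\langle h\rangle\), and \(x_0=w\), exactly as you do.
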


\begin{proof}
Since \(W=\Bbbk G\rhd w\),
\(\langle h\rangle\rhd w\subseteq\Bbbk w\), and
\(V=\Bbbk\langle h\rangle\rhd v\),
Proposition~\ref{prop:adjoint-orbit-generation}, applied with
\(n=1\), \(H=\langle h\rangle\), and \(x_0=w\), gives
\(X_1=\Bbbk G\rhd x_1\).

The calculation of \(x_1\) starts from
$
\varphi_1^\Phi
=
\operatorname{id}_{V\otimes W}
-
c_{W,V}c_{V,W}.
$
Applying the double braiding to \(v\otimes w\) gives
\(c_{W,V}c_{V,W}(v\otimes w)
=(\varepsilon h)\rhd v\otimes gw\).
To calculate \((\varepsilon h)\rhd v\), we have
$
h\rhd(\varepsilon\rhd v)
=
\Phi_g(h,\varepsilon)
(\varepsilon h)\rhd v,
$
because \(h\varepsilon=\varepsilon h\). 
Consequently,
$
(\varepsilon h)\rhd v
=
\frac{\zeta}{\Phi_g(h,\varepsilon)}\,hv.
$
Substitution gives the asserted formula for \(x_1\).

Finally,
$
v\otimes w\in V_g\otimes W_h,\
hv\otimes gw\in
V_{\varepsilon g}\otimes W_{\varepsilon h}.
$
These are distinct bi-homogeneous components. Hence the two
summands cannot cancel, and therefore \(x_1\neq0\).
\end{proof}

For the simplicity of \(X_1\), set
\begin{equation}
\label{eq:Delta-definition}
\Delta
:=
\frac{
\zeta\zeta'\mu\mu'\,
\Phi_h(\varepsilon g,g)\Phi_g(h,h)
}{
\Phi_g(h,\varepsilon)\Phi_h(\varepsilon,g^2)
}.
\end{equation}

\begin{lemma}
\label{lem:Gamma2-X1-simple}
The object \(X_1\) is simple if and only if \(\Delta=1\).
In this case,
$
X_1\simeq M(gh,\sigma_1)
$
for a one-dimensional projective representation
\(\sigma_1\) of \(C_G(gh)\).
\end{lemma}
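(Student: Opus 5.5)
The plan is to reduce simplicity of \(X_1\) to a statement about the single homogeneous component \((X_1)_{gh}\), and then to a single scalar identity. By Lemma~\ref{lem:Gamma2-first-adjoint-formula}, \(X_1=\mathbb CG\rhd x_1\) with \(0\neq x_1\in(V\otimes W)_{gh}\). Since \(\varepsilon\neq1\) and \(ghg^{-1}\cdot(gh)^{-1}\)-type computations give \((gh)^G=\{gh,\varepsilon gh\}\), the support of \(X_1\) is this two-element class. The component \((V\otimes W)_{gh}\) is two-dimensional, spanned by \(v\otimes w\) and \(hv\otimes gw\). Similarly \((V\otimes W)_{\varepsilon gh}\) is spanned by \(v\otimes gw\) and \(hv\otimes w\).

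Only elements of \(C_G(gh)\) map degree \(gh\) into degree \(gh\). Hence \((X_1)_{gh}=\mathbb C C_G(gh)\rhd x_1\), which has dimension \(1\) or \(2\). I will argue as follows.

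\begin{itemize}
\item If \(\dim(X_1)_{gh}=1\), then \((X_1)_{gh}=\mathbb Cx_1\) is an irreducible \(\Phi_{gh}\)-projective representation \(\sigma_1\) of \(C_G(gh)\). Every nonzero subobject of \(X_1\) has a nonzero \(gh\)-component, obtained by transporting along \(g^{-1}\) or \(h^{-1}\). That subobject therefore contains \(x_1\), and so equals \(X_1\). Thus \(X_1\) is simple, and Proposition~\ref{prop:simple-objects} gives \(X_1\simeq M(gh,\sigma_1)\).
\item If \(\dim(X_1)_{gh}=2\), then Lemma~\ref{lem:Gamma2-projective-representations-one-dimensional} (with \(x=gh\), \(m=1\)) shows that this component is a reducible projective representation. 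By the first part of Proposition~\ref{prop:simple-objects}, \(X_1\) is then not simple.
\end{itemize}

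So it suffices to show that \(x_1\) is a common eigenvector of \(C_G(gh)=\langle\varepsilon,g^2,h^2,gh\rangle\) if and only if \(\Delta=1\).

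For the central generators \(a\in\{\varepsilon,g^2,h^2\}\), \eqref{eq:tensor-product} shows that \(a\) acts diagonally on the basis \(v\otimes w,\ hv\otimes gw\). The eigen-condition is that the two diagonal scalars coincide. For \(a=\varepsilon\), the scalar on \(hv\) is \(\frac{\Phi_g(\varepsilon,h)}{\Phi_g(h,\varepsilon)}\zeta\), and similarly for \(gw\). I expect these ratios, together with the \(\Phi^{a}\)-factors, to combine via Lemma~\ref{lem:local-cocycle-coherence} into values of the alternator \(f_\Phi\) on the abelian group \(\langle\varepsilon,g^2,h^2,gh\rangle\). The proof of Lemma~\ref{lem:Gamma2-projective-representations-one-dimensional} shows that the relevant values, such as \(\vartheta_g\), are trivial. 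So I expect the central generators to impose no condition.

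Rather than checking this by hand, I would first try a shortcut. Since \(a\) commutes with \(gh\), the scalar of \(a\) on \(hv\otimes gw\) is obtained from that on \(v\otimes w\) by conjugating with the action of \(gh\). The discrepancy is \(\Phi_{gh}(a,gh)/\Phi_{gh}(gh,a)\), which is trivial because \(\mathbb C^{\Phi_{gh}}C_G(gh)\) is commutative. Commutativity is exactly what the proof of Lemma~\ref{lem:Gamma2-projective-representations-one-dimensional} establishes.

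The genuine condition comes from \(gh\) itself. Since \(gh\,g\,(gh)^{-1}=\varepsilon g\) and \(gh\,h\,(gh)^{-1}=\varepsilon h\), the element \(gh\) swaps the two lines. We have \(gh\rhd(v\otimes w)=\alpha\,hv\otimes gw\) and \(gh\rhd(hv\otimes gw)=\beta\,v\otimes w\) for explicit nonzero \(\alpha,\beta\). These are computed from \eqref{eq:YD-projective-action} by writing \(gh\rhd v=\Phi_g(g,h)^{-1}g\rhd(h\rhd v)\) and using \(g\rhd(hv)\), together with the analogous manipulations for \(w\). Then \(x_1=v\otimes w-c\,hv\otimes gw\), where \(c=\zeta/\Phi_g(h,\varepsilon)\), is an eigenvector of \(gh\) if and only if \(\beta c^2=\alpha\).

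The \(\beta\)-side requires moving \(gh\) past \(h\) on \(v\) and past \(g\) on \(w\). This produces \(h^2\rhd v=\mu v\) and \(g^2\rhd w=\mu' w\) (using \(ghh=\varepsilon h\,gh\cdot\) rearrangements), as well as the factors \(\zeta,\zeta'\). The main obstacle is the cocycle bookkeeping needed to show that \(\beta c^2/\alpha\) equals exactly \(\Delta\) in \eqref{eq:Delta-definition}, with the surviving factors \(\Phi_h(\varepsilon g,g)\), \(\Phi_g(h,h)\), \(\Phi_g(h,\varepsilon)\), and \(\Phi_h(\varepsilon,g^2)\). I would organize it by repeatedly applying Lemma~\ref{lem:local-cocycle-coherence} to cancel the \(\Phi^{gh}\)-factors from \eqref{eq:tensor-product}, and I would defer the full coefficient check to the appendix-style computation.
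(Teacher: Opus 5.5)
Your proposal is correct and follows the paper's strategy. Simplicity of \(X_1\) reduces to one-dimensionality of a homogeneous component via Lemma~\ref{lem:Gamma2-projective-representations-one-dimensional} and Proposition~\ref{prop:simple-objects}. The central generators impose no condition. The one remaining generator of \(C_G(gh)\) swaps two lines, which produces a single scalar condition that a cocycle computation identifies with \(\Delta=1\).

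The difference lies in which generator you test. The paper tests \(g^{-1}h\): it compares \(g\rhd x_1\) and \(h\rhd x_1\) in the \(\varepsilon gh\)-component, which yields the quotient in \eqref{eq:first-proportionality-condition}. It treats stability under \(\varepsilon,g^2\) as immediate; this follows from \(G\)-linearity of \(\varphi_1^\Phi\), since these elements preserve \(\mathbb C(v\otimes w)\). You test \(gh\) itself on the \(gh\)-component, and you handle the central elements through commutativity of \(\mathbb C^{\Phi_{gh}}C_G(gh)\). That argument gives the stronger fact that each central element acts by one scalar on all of \((V\otimes W)_{gh}\).

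The identity you defer, \(\beta c^2/\alpha=\Delta\), is not literally Lemma~\ref{lem:first-coefficient-reduction}, but you need no fresh cocycle bookkeeping for it. In the twisted group algebra, \(gh\) is a nonzero multiple of \(g^2\cdot g^{-1}h\), and \(g^2\) acts by a single scalar on \((V\otimes W)_{gh}\). Hence the two swap coefficients of \(gh\) are a common multiple of those of \(g^{-1}h\). It follows that \(\alpha/\beta=\lambda'\Phi^h(g,h)\Phi^g(g,h)/(A_1B_1)\), with \(A_1,B_1\) as in \eqref{eq:A1-definition} and \eqref{eq:B1-definition}. Since your \(c\) is the paper's \(a\), Lemma~\ref{lem:first-coefficient-reduction} then gives \(\beta c^2/\alpha=\Delta\).

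For the lemma itself even less is needed. Once \(\mathbb Cx_1\) is \(g^2\)-stable, stability under \(gh\) and stability under \(g^{-1}h\) are equivalent, so your criterion and the paper's coincide.
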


\begin{proof}
Let
$
a:=\frac{\zeta}{\Phi_g(h,\varepsilon)}
$
and introduce the vectors
\[
e_0:=v\otimes w,\quad
e_1:=hv\otimes gw, \quad
f_0:=v\otimes gw,\quad
f_1:=hv\otimes w.
\]
Thus
$
x_1=e_0-ae_1.
$
The actions of \(g\) and \(h\) on \(x_1\) are as follows.
Relation \eqref{eq:YD-projective-action} gives
$
(\varepsilon g)\rhd v
=
\frac{\lambda\zeta}{\Phi_g(\varepsilon,g)}\,v.
$
Since
$
gh=h(\varepsilon g),
$
we have
$(gh)\rhd v
=
\frac{1}{\Phi_g(h,\varepsilon g)}
h\rhd\bigl((\varepsilon g)\rhd v\bigr).$
It follows that
\begin{equation}
\label{eq:g-action-on-hv}
g\rhd(hv)
=
\frac{
\Phi_g(g,h)\lambda\zeta
}{
\Phi_g(\varepsilon,g)\Phi_g(h,\varepsilon g)
}\,hv.
\end{equation}
Similarly,
\begin{equation}
\label{eq:h-action-on-gw}
h\rhd(gw)
=
\frac{
\Phi_h(h,g)\lambda'\zeta'
}{
\Phi_h(\varepsilon,h)\Phi_h(g,\varepsilon h)
}\,gw.
\end{equation}
Moreover,
$
h\rhd e_0
=
\lambda'\Phi^h(g,h)f_1.
$
By \eqref{eq:h-action-on-gw} and
$
h\rhd(hv)=\Phi_g(h,h)\mu v,
$
we also have
$h\rhd e_1=A_1f_0
$,
where
\begin{equation}
\label{eq:A1-definition}
A_1
:=
\Phi^h(\varepsilon g,\varepsilon h)
\Phi_g(h,h)\mu
\frac{
\Phi_h(h,g)\lambda'\zeta'
}{
\Phi_h(\varepsilon,h)\Phi_h(g,\varepsilon h)
}.
\end{equation}
Therefore
\begin{equation}
\label{eq:h-action-x1}
h\rhd x_1
=
\lambda'\Phi^h(g,h)f_1-aA_1f_0.
\end{equation}

On the other hand,
$
g\rhd e_0
=
\lambda\Phi^g(g,h)f_0.
$
Using \eqref{eq:g-action-on-hv} and
$
g\rhd(gw)=\Phi_h(g,g)\mu'w,$
we get
$g\rhd e_1=\lambda B_1f_1,$
where
\begin{equation}
\label{eq:B1-definition}
B_1
:=
\Phi^g(\varepsilon g,\varepsilon h)
\frac{
\Phi_g(g,h)\zeta
}{
\Phi_g(\varepsilon,g)\Phi_g(h,\varepsilon g)
}
\Phi_h(g,g)\mu'.
\end{equation}
Consequently,
\begin{equation}
\label{eq:g-action-x1}
g\rhd x_1
=
\lambda\Phi^g(g,h)f_0
-
a\lambda B_1f_1.
\end{equation}

Both \(h\rhd x_1\) and \(g\rhd x_1\) are homogeneous of degree
\(\varepsilon gh\).  {Comparing the coefficients of both \(f_0\) and \(f_1\) in
\eqref{eq:h-action-x1} and \eqref{eq:g-action-x1}}, we see that
$h\rhd x_1 \in \mathbb{C}(g\rhd x_1)$
if and only if
\begin{equation}
\label{eq:first-proportionality-condition}
\frac{
a^2A_1B_1
}{
\lambda'\Phi^h(g,h)\Phi^g(g,h)
}
=1.
\end{equation}
By Lemma~\ref{lem:first-coefficient-reduction} in
Appendix~\ref{app:Gamma2-calculations}, the left-hand side
of \eqref{eq:first-proportionality-condition} is precisely
\(\Delta\). Hence \(h\rhd x_1\in\mathbb C(g\rhd x_1)\) if and only
if \(\Delta=1\).

Suppose first that \(\Delta=1\). By the preceding coefficient
comparison, there exists \(r_1\in\mathbb C^\times\) such that
\(h\rhd x_1=r_1(g\rhd x_1)\).
Since both \(g\) and \(h\) conjugate \(gh\) to \(\varepsilon gh\),
the element \(g^{-1}h\) belongs to \(C_G(gh)\). Applying \(g^{-1}\)
to the preceding equality and using the relation \eqref{eq:YD-projective-action},
we obtain
\[
(g^{-1}h)\rhd x_1
=
r_1
\frac{\Phi_{gh}(g^{-1},g)}{\Phi_{gh}(g^{-1},h)}
x_1,
\]
and hence the line \(\mathbb Cx_1\) is stable under \(g^{-1}h\).

The line \(\mathbb Cx_1\) is also stable under \(\varepsilon\) and
\(g^2\). 
By Lemma~\ref{lem:Gamma2-normal-forms-centralizers},
\(C_G(gh)=\langle\varepsilon,g^2,g^{-1}h\rangle\).  {Thus the line
\(\mathbb Cx_1\) affords a one-dimensional
\(\Phi_{gh}\)-projective representation \(\sigma_1\) of \(C_G(gh)\)}. The assignment
\(1\otimes x_1\mapsto x_1\) induces a surjective morphism
\(M(gh,\sigma_1)\to X_1\).
The induced object \(M(gh,\sigma_1)\) is simple because
\(\sigma_1\) is one-dimensional. The above morphism is nonzero,
and therefore it is an isomorphism. Hence
\(X_1\simeq M(gh,\sigma_1)\) is simple.

Conversely, suppose that \(X_1\) is simple. Then
\(X_1\simeq M(gh,\tau)\) for an irreducible
\(\Phi_{gh}\)-projective representation \(\tau\) of \(C_G(gh)\).
By Lemma~\ref{lem:Gamma2-projective-representations-one-dimensional},
\(\dim\tau=1\).
Therefore every homogeneous component of \(X_1\) is
one-dimensional. In particular,
\(\dim (X_1)_{\varepsilon gh}=1\).
Both \(g\rhd x_1\) and \(h\rhd x_1\) are nonzero vectors in this
homogeneous component.
They must therefore be proportional, that is,
\(h\rhd x_1\in\mathbb C(g\rhd x_1)\).
By the coefficient computation above, this proportionality is
equivalent to
$
\Delta=1.
$
\end{proof}

\subsubsection{The second adjoint object}

Assume  that \(\Delta=1\). Therefore \(X_1\) is simple.

\begin{lemma}
\label{lem:second-adjoint-formula}
Assume that \(\Delta=1\), and let
$x_2:=\varphi_2^\Phi(v\otimes x_1).$
Then
\[
x_2
=
(1+\lambda)
\left[
v\otimes x_1
-
\frac{1}{\Phi_g(h,g)}
\,hv\otimes(g\rhd x_1)
\right].
\]
Moreover,
$X_2=\mathbb{C}G\rhd x_2.$
\end{lemma}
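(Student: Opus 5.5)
The plan is to evaluate \(\varphi_2^\Phi(v\otimes x_1)\) directly from the recursion
\(\varphi_2^\Phi=\operatorname{id}-c_{X,V}c_{V,X}+(\operatorname{id}_V\otimes\varphi_1^\Phi)\circ c_{1,2}^\Phi\).
Here \(x_1=v\otimes w-a\,hv\otimes gw\) with \(a=\zeta/\Phi_g(h,\varepsilon)\), as in Lemma~\ref{lem:Gamma2-first-adjoint-formula}. I would then sort the result by its first tensor factor, which is either \(v\in V_g\) or \(hv\in V_{\varepsilon g}\).

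\textbf{The \(v\otimes(\cdot)\) part.} The identity term contributes \(v\otimes x_1\).

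In \(c_{1,2}^\Phi\) applied to \(v\otimes(v\otimes w)\), the associator \(a_{V,V,W}\) and its inverse contribute the same scalar \(\Phi(g,g,h)^{\mp1}\), so these factors cancel. Since \(c_{V,V}(v\otimes v)=g\rhd v\otimes v=\lambda\,v\otimes v\), this summand gives \(\lambda\,v\otimes\varphi_1^\Phi(v\otimes w)=\lambda\,v\otimes x_1\).

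No other summand produces a first factor in \(V_g\). Indeed, \(c_{V,V}(v\otimes hv)\) lies in \(V_{\varepsilon g}\otimes V_g\), and the double braiding moves \(v\) to \((\varepsilon gh)\rhd v\in V_{\varepsilon g}\). This yields the factor \(1+\lambda\) on \(v\otimes x_1\).

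\textbf{The \(hv\otimes(\cdot)\) part.} Both remaining contributions have the form \(hv\otimes y\) with \(y\in (V\otimes W)_{\varepsilon gh}\).

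\emph{Double braiding.} We have \(c_{V,X_1}(v\otimes x_1)=(g\rhd x_1)\otimes v\). Braiding back gives \((\varepsilon gh)\rhd v\otimes g\rhd x_1\). Using \eqref{eq:YD-projective-action}, the vector \((\varepsilon gh)\rhd v\) is an explicit multiple \(\kappa_1\,hv\), where \(\kappa_1\) involves \(\zeta,\lambda\) and values of \(\Phi_g\). So this term is \(-\kappa_1\,hv\otimes(g\rhd x_1)\).

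\emph{The \(c_{1,2}^\Phi\) term on \(-a\,v\otimes(hv\otimes gw)\).} The braiding \(c_{V,V}(v\otimes hv)\) equals \(g\rhd(hv)\otimes v\), which is a multiple of \(hv\otimes v\) by \eqref{eq:g-action-on-hv}. The associator factors are \(\Phi(g,\varepsilon g,\varepsilon h)^{-1}\) and \(\Phi(\varepsilon g,g,\varepsilon h)\). The output is a multiple \(\kappa_2\,hv\otimes\varphi_1^\Phi(v\otimes gw)\).

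\emph{Use of \(\Delta=1\).} Here the key structural point enters. The vector \(\varphi_1^\Phi(v\otimes gw)\) lies in \(X_1\) and has degree \(\varepsilon gh\). By Lemma~\ref{lem:Gamma2-X1-simple}, when \(\Delta=1\) the object \(X_1\simeq M(gh,\sigma_1)\) has one-dimensional homogeneous components. Hence \(\varphi_1^\Phi(v\otimes gw)=\kappa_3\,g\rhd x_1\). I would determine \(\kappa_3\) by comparing the coefficient of \(f_0=v\otimes gw\). This coefficient is \(1\) on the left and \(\lambda\Phi^g(g,h)\) on the right, by \eqref{eq:g-action-x1}, so \(\kappa_3=1/(\lambda\Phi^g(g,h))\).

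\textbf{Main obstacle.} It then remains to prove the scalar identity
\[
-\kappa_1-a\kappa_2\kappa_3=-\frac{1+\lambda}{\Phi_g(h,g)}.
\]
This splits naturally as \(\kappa_1=1/\Phi_g(h,g)\) and \(a\kappa_2\kappa_3=\lambda/\Phi_g(h,g)\). The \(\zeta\)- and \(\lambda\)-dependence should cancel visibly. The remaining cocycle quotients must be reduced with the normalized \(3\)-cocycle identity \eqref{eq:three-cocycle-identity-local}, Lemma~\ref{lem:local-cocycle-coherence}, and the relations \(gh=h\varepsilon g\), \(\varepsilon\) central. I would carry this out in the style of Lemma~\ref{lem:first-coefficient-reduction} in the appendix.

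This bookkeeping is where I expect the real work to be. If a direct reduction stalls, the fallback is to compare the \(f_1\)-coefficients as well, which is consistent because \(\Delta=1\); that gives a second expression for the constant, which may simplify more easily.

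\textbf{Generation.} Finally, \(X_2=\mathbb CG\rhd x_2\) is the case \(n=2\) of the observation preceding Lemma~\ref{lem:Gamma2-first-adjoint-formula}. That observation applies Proposition~\ref{prop:adjoint-orbit-generation} with \(H=\langle gh\rangle\), using that \(X_1\) is simple with one-dimensional components and that \(V=\mathbb C\langle gh\rangle\rhd v\).
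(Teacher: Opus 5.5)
Your architecture is the paper's. You expand \(\varphi_2^\Phi(T_0)\), with \(T_0=v\otimes x_1\), by the recursion. You obtain \((1+\lambda)T_0\) from the identity and from \(c_{1,2}^\Phi\) on \(v\otimes(v\otimes w)\), and you collect the rest as multiples of \(T_1=hv\otimes(g\rhd x_1)\). The generation statement is handled as in the paper.

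However, the scalar identity you reduce to is set up wrongly, and both sub-identities you plan to prove are false. Since \(\varepsilon gh=hg\) in \(G\), the double-braiding term involves \((hg)\rhd v\). Applying \eqref{eq:YD-projective-action} as \(h\rhd(g\rhd v)=\Phi_g(h,g)\,(hg)\rhd v\) gives at once \(\kappa_1=\lambda/\Phi_g(h,g)\), not \(1/\Phi_g(h,g)\); no cocycle reduction is needed for this term. Correspondingly, the \(c_{1,2}^\Phi\) contribution must satisfy \(a\kappa_2\kappa_3=1/\Phi_g(h,g)\): the \(\lambda\) coming from \eqref{eq:g-action-on-hv} cancels against \(\kappa_3=1/(\lambda\Phi^g(g,h))\). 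Your split \(\kappa_1=1/\Phi_g(h,g)\), \(a\kappa_2\kappa_3=\lambda/\Phi_g(h,g)\) fails whenever \(\lambda\neq1\).

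There is also an associator error. With the convention \(a((u_e\otimes v_f)\otimes w_k)=\Phi(e,f,k)^{-1}u_e\otimes(v_f\otimes w_k)\), the conjugation \(a_{V,V,W}(c_{V,V}\otimes\operatorname{id})a_{V,V,W}^{-1}\) contributes \(\Phi(g,\varepsilon g,\varepsilon h)/\Phi(\varepsilon g,g,\varepsilon h)\). This is the inverse of the factor you wrote. With your factor the target identity is off by the square of this ratio, which is not \(1\) in general.

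The \(\zeta\)-dependence also does not cancel visibly. The product \(a\kappa_2\kappa_3\) contains \(\zeta^2\), which must be replaced by \(\Phi_g(\varepsilon,\varepsilon)\) using \(\varepsilon\rhd(\varepsilon\rhd v)=\Phi_g(\varepsilon,\varepsilon)v\). After this substitution, the whole content of the lemma is the single cocycle identity \(\Xi_V=\Phi_g(h,g)^{-1}\) for the scalar in \eqref{eq:Xi-V-definition}. That is exactly the step you leave as ``bookkeeping'', and your plan aims it at the wrong target.

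The paper proves this identity in Lemma~\ref{lem:Xi-V-Omega-V}. Two applications of Lemma~\ref{lem:local-cocycle-coherence}, at \((x,y,z,w)=(g,h,\varepsilon,g)\) and \((g,\varepsilon,h,g)\), eliminate \(\Phi_g(\varepsilon h,g)\) and give \(\Xi_V=\Omega_V/\Phi_g(h,g)\). The quotient \(\Omega_V\) is then written as a product of eleven instances of the \(3\)-cocycle identity.

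A minor point: \(\Delta=1\) is not needed for \(\varphi_1^\Phi(v\otimes gw)=(\lambda\Phi^g(g,h))^{-1}\,g\rhd x_1\). This follows from \(g\rhd(v\otimes w)=\lambda\Phi^g(g,h)\,v\otimes gw\) and the \(G\)-linearity of \(\varphi_1^\Phi\). The hypothesis \(\Delta=1\) is used only for \(X_2=\mathbb CG\rhd x_2\).
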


\begin{proof}
The  argument at the beginning of this subsection
gives \(X_2=\mathbb CG\rhd x_2\).
For the explicit calculation, let
\(T_0:=v\otimes x_1\) and \(T_1:=hv\otimes(g\rhd x_1)\).
Recall that \(x_1=v\otimes w-a\,hv\otimes gw\).
By the  definition of \(\varphi_2^\Phi\),
\begin{equation}
\label{eq:recursive-varphi-2}
\varphi_2^\Phi
=
\operatorname{id}_{V\otimes X_1}
-
c_{X_1,V}c_{V,X_1}
+
(\operatorname{id}_V\otimes\varphi_1^\Phi)
c_{1,2}^\Phi.
\end{equation}
First, \(c_{V,X_1}(v\otimes x_1)=(g\rhd x_1)\otimes v\), and then
\(c_{X_1,V}((g\rhd x_1)\otimes v)
=(hg)\rhd v\otimes(g\rhd x_1)\). Thus
\((hg)\rhd v=\lambda hv/\Phi_g(h,g)\).
Consequently,
\begin{equation}
\label{eq:double-braiding-T0}
c_{X_1,V}c_{V,X_1}(T_0)
=
\frac{\lambda}{\Phi_g(h,g)}\,T_1.
\end{equation}

Consider
\(
(\operatorname{id}_V\otimes\varphi_1^\Phi)
c_{1,2}^\Phi(T_0).
\)
For the first summand of \(T_0\), the definitions of
\(c_{1,2}^\Phi\) and \(\varphi_1^\Phi\) give
\((\operatorname{id}_V\otimes\varphi_1^\Phi)
c_{1,2}^\Phi(v\otimes(v\otimes w))=\lambda T_0\).
For the second summand, using \eqref{eq:g-action-on-hv} gives
\begin{equation}
\label{eq:second-c12-contribution-before-varphi1}
\begin{aligned}
c_{1,2}^\Phi
\bigl(v\otimes(hv\otimes gw)\bigr)=
\frac{
\Phi(g,\varepsilon g,\varepsilon h)
\Phi_g(g,h)\lambda\zeta
}{
\Phi(\varepsilon g,g,\varepsilon h)
\Phi_g(\varepsilon,g)
\Phi_g(h,\varepsilon g)
}
\,hv\otimes(v\otimes gw).
\end{aligned}
\end{equation}
Since \(\varphi_1^\Phi\) is a morphism in $\GG$,
\(g\rhd x_1=\varphi_1^\Phi(g\rhd(v\otimes w))\).  Moreover,
the relation \eqref{eq:tensor-product} gives 
\(g\rhd(v\otimes w)=
\lambda\Phi^g(g,h)\,v\otimes gw\).  Thus
\begin{equation}
\label{eq:varphi1-v-gw}
\varphi_1^\Phi(v\otimes gw)
=
\frac{1}{\lambda\Phi^g(g,h)}
\,g\rhd x_1.
\end{equation}
Combining
\eqref{eq:second-c12-contribution-before-varphi1} and
\eqref{eq:varphi1-v-gw}, and recalling that
$
T_0
=
v\otimes(v\otimes w)
-
a\,v\otimes(hv\otimes gw),
$
we obtain
\begin{align}
&(\operatorname{id}_V\otimes\varphi_1^\Phi)
c_{1,2}^\Phi(T_0)
=
\lambda T_0-\Xi_VT_1, \nonumber
\\
&\Xi_V
:=
\frac{
\zeta^2
\Phi(g,\varepsilon g,\varepsilon h)
\Phi_g(g,h)
}{
\Phi_g(h,\varepsilon)
\Phi(\varepsilon g,g,\varepsilon h)
\Phi_g(\varepsilon,g)
\Phi_g(h,\varepsilon g)
\Phi^g(g,h)
}.
\label{eq:Xi-V-definition}
\end{align}
Lemma~\ref{lem:Xi-V-Omega-V} gives
\(\Xi_V=\Phi_g(h,g)^{-1}\).  Hence
\begin{equation*}
(\operatorname{id}_V\otimes\varphi_1^\Phi)
c_{1,2}^\Phi(T_0)
=
\lambda T_0-\frac{1}{\Phi_g(h,g)}T_1.
\end{equation*}
Substituting this identity and \eqref{eq:double-braiding-T0} into
\eqref{eq:recursive-varphi-2} gives
\(x_2=(1+\lambda)\bigl(T_0-\Phi_g(h,g)^{-1}T_1\bigr)\),
as required.
\end{proof}

\begin{lemma}
\label{lem:second-adjoint-vanishing}
Assume that \(\Delta=1\). Then \(X_2=0\) if and only if
\(\lambda=-1\).
\end{lemma}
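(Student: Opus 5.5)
By Lemma~\ref{lem:second-adjoint-formula} we have \(X_2=\mathbb CG\rhd x_2\), so \(X_2=0\) if and only if \(x_2=0\). The plan is to show that \(x_2\) is \((1+\lambda)\) times a nonzero vector, which immediately gives the equivalence.

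Lemma~\ref{lem:second-adjoint-formula} gives
\[
x_2=(1+\lambda)\bigl(T_0-\Phi_g(h,g)^{-1}T_1\bigr),
\qquad
T_0=v\otimes x_1,\quad T_1=hv\otimes(g\rhd x_1).
\]
If \(\lambda=-1\), then \(x_2=0\), and hence \(X_2=0\).

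For the converse, it remains to prove that \(T_0-\Phi_g(h,g)^{-1}T_1\neq0\). We argue with the decomposition by degrees.

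\begin{itemize}
\item The first factor of \(T_0\) is \(v\in V_g\), so \(T_0\in V_g\otimes X_1\).
\item The first factor of \(T_1\) is \(hv\in V_{\varepsilon g}\), so \(T_1\in V_{\varepsilon g}\otimes X_1\).
\item Since \(\varepsilon\neq1\), we have \(g\neq\varepsilon g\). Therefore \(V_g\otimes X_1\) and \(V_{\varepsilon g}\otimes X_1\) are distinct direct summands of \(V\otimes X_1=(V_g\oplus V_{\varepsilon g})\otimes X_1\).
\end{itemize}

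Thus \(T_0\) and \(T_1\) cannot cancel, and the combination is nonzero provided \(T_0\neq0\). This holds because \(v\neq0\) and \(x_1\neq0\) by Lemma~\ref{lem:Gamma2-first-adjoint-formula}. The coefficient of \(T_1\) plays no role.

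The only point requiring care is that the tensors here sit inside \(D_2=V\otimes(V\otimes W)\), where \(V\otimes X_1\) is regarded as a subobject. The \(V\)-factor grading is still well defined there, since \(D_2=\bigoplus_{r}V_r\otimes(V\otimes W)\). Under this decomposition \(T_0\) and \(T_1\) lie in different summands, because the first components of their first factors are different. Hence \(x_2=0\) if and only if \(1+\lambda=0\), which proves the lemma.
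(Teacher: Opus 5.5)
Your proposal is correct and follows essentially the same argument as the paper: you use $X_2=\mathbb CG\rhd x_2$ and the factorization $x_2=(1+\lambda)\bigl(T_0-\Phi_g(h,g)^{-1}T_1\bigr)$ from Lemma~\ref{lem:second-adjoint-formula}, and show that the bracket is nonzero because $T_0$ and $T_1$ lie in distinct homogeneous summands. The paper separates the two tensors by their full bi-degrees $V_g\otimes(X_1)_{gh}$ and $V_{\varepsilon g}\otimes(X_1)_{\varepsilon gh}$, whereas you use only the degree of the first factor; both suffice, since $T_0\neq0$.
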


\begin{proof}
By Lemma~\ref{lem:second-adjoint-formula},
$X_2=\mathbb{C}G\rhd x_2.$
Hence
$X_2=0$ is equivalent to $x_2=0.$
Now
$v\otimes x_1
\in
V_g\otimes(X_1)_{gh},$
whereas
$hv\otimes(g\rhd x_1)
\in
V_{\varepsilon g}\otimes(X_1)_{\varepsilon gh}.$
Thus the two tensors inside the brackets in
Lemma~\ref{lem:second-adjoint-formula} belong to distinct
bi-homogeneous components. In particular,
\[
v\otimes x_1
-
\frac{1}{\Phi_g(h,g)}
hv\otimes(g\rhd x_1)
\neq0.
\]
It follows that
\(x_2=0\) if and only if \(1+\lambda=0\),
which proves the assertion.
\end{proof}

For the simplicity criterion of a nonzero \(X_2\), define
\begin{equation}
\label{eq:Theta-Phi-definition}
\begin{aligned}
\Theta_\Phi
:={}&
\frac{
\Phi_h(\varepsilon,\varepsilon)
\Phi_h(\varepsilon g,g)
\Phi^h(g,gh)
\Phi^h(\varepsilon g,\varepsilon h)
\Phi_g(h,g)^2
}{
\Phi_g(h,\varepsilon)
\Phi_h(\varepsilon,g^2)
\Phi_h(\varepsilon,h)
\Phi_h(g,\varepsilon h)
\Phi^g(g,h)
}
\\
&\qquad\times
\frac{
\Phi_h(h,g)
\Phi_g(\varepsilon,g)
\Phi_g(h,\varepsilon g)
}{
\Phi^h(\varepsilon g,\varepsilon gh)
\Phi^g(\varepsilon g,\varepsilon h)
\Phi^h(\varepsilon g,h)
\Phi_g(g,h)
\Phi_h(g,g)
}.
\end{aligned}
\end{equation}
Whenever \(x,y\in G\) satisfy \(yx=\varepsilon xy\), we write
\(\Theta_\Phi(x,y)\) for the expression in
\eqref{eq:Theta-Phi-definition} after the simultaneous substitution
\(g\mapsto x\), \(h\mapsto y\).  Thus
\(\Theta_\Phi=\Theta_\Phi(g,h)\).

\begin{lemma}
\label{lem:Gamma2-X2-simple}
Assume that $X_2 \neq 0$ and $X_1$ is simple. Then \(X_2\) is
simple if and only if \(\lambda^2=\Theta_\Phi\).
\end{lemma}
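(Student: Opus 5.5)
The plan follows the proof of Lemma~\ref{lem:Gamma2-X1-simple} one level up. Since \(X_1\) is simple and \(X_2\neq0\), the observation at the beginning of Subsection~\ref{subsec:Gamma2-adjoint-objects} and Lemma~\ref{lem:second-adjoint-formula} give \(X_2=\mathbb CG\rhd x_2\) with
\[
x_2=(1+\lambda)\bigl(T_0-\Phi_g(h,g)^{-1}T_1\bigr),\qquad 1+\lambda\neq0 .
\]
The degree of \(x_2\) is \(g^2h\). By Lemma~\ref{lem:Gamma2-normal-forms-centralizers}, \(C_G(g^2h)=C_G(h)=\langle\varepsilon,h,g^2\rangle\), and the conjugacy class of \(g^2h\) is \(\{g^2h,\varepsilon g^2h\}\). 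Both \(g\) and \(h\) move \(g^2h\) to \(\varepsilon g^2h\), since \(hg^2h^{-1}=g^2\) and \(hhh^{-1}=h\) up to \(\varepsilon\). This needs checking: \(h(g^2h)h^{-1}=hg^2=g^2h\), so \(h\) centralizes \(g^2h\), while \(g(g^2h)g^{-1}=g^2\varepsilon h\). So the element moving \(g^2h\) is \(g\), and the centralizer contains \(h\).

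Simplicity therefore amounts to the line \(\mathbb Cx_2\) being stable under \(C_G(g^2h)\). If it is, the argument of Lemma~\ref{lem:Gamma2-X1-simple} applies: the line affords a one-dimensional \(\Phi_{g^2h}\)-projective representation \(\sigma_2\), giving a nonzero map \(M(g^2h,\sigma_2)\to X_2\) from a simple object, onto since \(X_2=\mathbb CG\rhd x_2\). Conversely, if \(X_2\) is simple, Lemma~\ref{lem:Gamma2-projective-representations-one-dimensional} with \(x=g^2h\) makes \((X_2)_{g^2h}\) one-dimensional, so the line must be stable.

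Stability under \(\varepsilon\) and \(g^2\) is automatic. Each is central, the line of each summand's degree components is one-dimensional, and the same central element acts on both summands \(T_0\in V_g\otimes(X_1)_{gh}\) and \(T_1\in V_{\varepsilon g}\otimes(X_1)_{\varepsilon gh}\). I would still verify that the two scalars agree. This follows because \(\varepsilon\) and \(g^2\) commute with \(g\), so applying them commutes with the map \(T_0\mapsto T_1\) built from \(h\) and \(g\) actions up to cocycle factors that cancel via Lemma~\ref{lem:local-cocycle-coherence}. If they failed to agree, \(\mathbb CG\rhd x_2\) would be \(4\)-dimensional, with decomposability independent of \(\lambda\), so they must agree; I would verify this by the same coefficient bookkeeping.

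The real condition is stability under \(h\). I would compute \(h\rhd T_0\) and \(h\rhd T_1\) using \eqref{eq:tensor-product}, \(h\rhd v=\) a multiple of \(hv\), \(h\rhd(hv)=\Phi_g(h,h)\mu v\), and \(h\rhd x_1\) from \eqref{eq:h-action-x1} together with \(h\rhd x_1=r_1(g\rhd x_1)\), which holds since \(\Delta=1\). Here \(h\) swaps the two bihomogeneous components: \(h\rhd T_0\in\mathbb C T_1\) and \(h\rhd T_1\in\mathbb CT_0\). Thus \(h\rhd x_2=c\,x_2\) if and only if the product of the two swap coefficients equals \(\Phi_g(h,g)^{-2}\) times the appropriate cocycle factor. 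That product contains \(\lambda\) through \(h\rhd(hv)\)? Actually \(\lambda\) enters through \(r_1\) and \(g\rhd x_1\) (which involves \(\lambda\), see \eqref{eq:g-action-x1}), producing \(\lambda^2\) times a product of \(\Phi\)-values, \(\zeta,\zeta',\mu,\mu',\lambda'\). Using \(\Delta=1\) eliminates \(\zeta\zeta'\mu\mu'\), and \(\lambda'\) cancels against the \(\lambda'\) in \(r_1\).

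The main obstacle is the final reduction showing that the remaining pure cocycle expression is exactly \(\Theta_\Phi\) of \eqref{eq:Theta-Phi-definition}. I would proceed as in Lemma~\ref{lem:first-coefficient-reduction}, repeatedly applying the \(3\)-cocycle identity and Lemma~\ref{lem:local-cocycle-coherence} with \(hg=\varepsilon gh\) and \(\varepsilon\) central. I would relegate this to an appendix computation, checking that every factor listed in \(\Theta_\Phi\) arises from one of the actions above.
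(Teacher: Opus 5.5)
Your plan follows the paper's proof. You reduce simplicity of \(X_2\) to stability of \(\mathbb Cx_2\) under \(C_G(g^2h)=\langle\varepsilon,h,g^2\rangle\), use that \(h\) interchanges the lines of \(T_0\) and \(T_1\), convert the resulting coefficient condition into \(\lambda^2=\Theta_\Phi\), and close both directions with \(M(g^2h,\sigma_2)\) and Lemma~\ref{lem:Gamma2-projective-representations-one-dimensional}. The key step, however, is misstated, and two further points need correcting.

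\textbf{The stability condition.} Write \(h\rhd T_0=\alpha_0T_1\), \(h\rhd T_1=\alpha_1T_0\), and \(p=\Phi_g(h,g)^{-1}\). Then \(h\rhd(T_0-pT_1)=\alpha_0T_1-p\alpha_1T_0\), so stability is exactly \(\alpha_0=p^2\alpha_1\). This is a condition on the \emph{ratio}, with no further cocycle factor.

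The product \(\alpha_0\alpha_1\) is not a stability criterion. It is just the scalar by which \(h\rhd(h\rhd-)\) acts on \(T_0\), and it contains no \(\lambda\). In it, the \(\lambda^{-1}\) from \(h\rhd x_1\in\lambda^{-1}\mathbb C(g\rhd x_1)\) cancels the \(\lambda\) in \(h\rhd(g\rhd x_1)\). Only in the ratio do you get \(\lambda^{-2}\) together with the cancellation of \(\lambda'\) that you describe. For \(h\rhd(g\rhd x_1)\) itself, the paper rewrites \(\Delta=1\) as \(\Phi^g(g,h)D_0=a^2B_1D_1\), which gives \(h\rhd(g\rhd x_1)=-a\lambda B_1D_1x_1\).

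\textbf{Stability under \(\varepsilon\) and \(g^2\).} This needs no coefficient bookkeeping, and your ``four-dimensional'' heuristic is not an argument. Since \(x_2=\varphi_2^\Phi(v\otimes x_1)\) and \(\varphi_2^\Phi\) commutes with the \(G\)-action, it suffices that a central element preserves the lines \(V_g=\mathbb Cv\) and \((X_1)_{gh}=\mathbb Cx_1\). It then preserves \(\mathbb C(v\otimes x_1)\), and hence \(\mathbb Cx_2\).

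\textbf{The final reduction.} After cancelling \(\lambda'\), the parameters left in \(\alpha_0/(p^2\alpha_1)\) are \(\lambda^{-2}\zeta'/(\zeta\mu\mu')\). So besides \(\Delta=1\) you must also use \(\zeta'^2=\Phi_h(\varepsilon,\varepsilon)\). After these two substitutions the right-hand side is literally the expression \eqref{eq:Theta-Phi-definition}; this is the content of Lemma~\ref{lem:Theta-Phi-reduction}. The step you flag as the main obstacle therefore requires no \(3\)-cocycle identities at all.
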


\begin{proof}
Let
$p:=\frac{1}{\Phi_g(h,g)}$
and
$b_2:=T_0-pT_1.$
Since $X_2\neq 0$ is equivalent to  \(\lambda\neq-1\), we have
$x_2=(1+\lambda)b_2,$
so that
$\mathbb Cx_2=\mathbb Cb_2.$
We retain the notation \(a,A_1,B_1\) from
\eqref{eq:A1-definition} and \eqref{eq:B1-definition}.
Since \(\Delta=1\), equations \eqref{eq:h-action-x1},
\eqref{eq:g-action-x1}, and \eqref{eq:first-proportionality-condition}
give
\begin{equation}
\label{eq:h-action-x1-proportional}
h\rhd x_1
=
-\frac{aA_1}{\lambda\Phi^g(g,h)}
\,(g\rhd x_1).
\end{equation}
The action \(h\rhd(g\rhd x_1)\) is determined from
\[
g\rhd x_1
=
\lambda\Phi^g(g,h)f_0-a\lambda B_1f_1.
\]
Using \eqref{eq:h-action-on-gw}, we get
$h\rhd f_0
=
D_0e_1,$
where
$D_0
:=
\Phi^h(g,\varepsilon h)
\frac{
\Phi_h(h,g)\lambda'\zeta'
}{
\Phi_h(\varepsilon,h)\Phi_h(g,\varepsilon h)
}.$
Similarly,
$h\rhd f_1
=
D_1e_0,$
where
$D_1
:=
\Phi^h(\varepsilon g,h)
\Phi_g(h,h)\mu\lambda'.$
It follows that
\[
h\rhd(g\rhd x_1)
=
\lambda\Phi^g(g,h)D_0e_1
-
a\lambda B_1D_1e_0.
\]
The equality \(\Delta=1\) is also equivalent to
$\Phi^g(g,h)D_0=a^2B_1D_1$.
Therefore
\begin{equation}
\label{eq:h-action-gx1}
h\rhd(g\rhd x_1)
=
-a\lambda B_1D_1x_1.
\end{equation}
The action of \(h\) on \(T_0\) and \(T_1\) is determined next.
By \eqref{eq:h-action-x1-proportional},
\[
\begin{aligned}
h\rhd T_0
=
\Phi^h(g,gh)
(hv)\otimes(h\rhd x_1)
=
-a
\frac{
\Phi^h(g,gh)A_1
}{
\lambda\Phi^g(g,h)
}
T_1.
\end{aligned}
\]
On the other hand,
$h\rhd(hv)=\Phi_g(h,h)\mu v$.
Together with \eqref{eq:h-action-gx1}, this gives
\[
\begin{aligned}
h\rhd T_1
&=
\Phi^h(\varepsilon g,\varepsilon gh)
\bigl(h\rhd(hv)\bigr)
\otimes
\bigl(h\rhd(g\rhd x_1)\bigr)
\\
&=
-a\lambda
\Phi^h(\varepsilon g,\varepsilon gh)
\Phi_g(h,h)\mu B_1D_1
T_0.
\end{aligned}
\]
For brevity, let
$r_0
:=
-a
\frac{
\Phi^h(g,gh)A_1
}{
\lambda\Phi^g(g,h)
}$
and
$r_1
:=
-a\lambda
\Phi^h(\varepsilon g,\varepsilon gh)
\Phi_g(h,h)\mu B_1D_1$.
Then
$h\rhd T_0=r_0T_1,\ 
h\rhd T_1=r_1T_0,$
and hence \(h\rhd b_2=r_0T_1-pr_1T_0\).
The condition
$h\rhd b_2\in\mathbb Cb_2$
is equivalent to the equality of the two coefficient ratios.
Explicitly, \(r_0=p^2r_1\).
After cancelling the common nonzero factor \(-a\), this becomes
\[
\lambda^2
=
\frac{
\Phi^h(g,gh)A_1
}{
p^2
\Phi^g(g,h)
\Phi^h(\varepsilon g,\varepsilon gh)
\Phi_g(h,h)\mu B_1D_1
}.
\]
By Lemma~\ref{lem:Theta-Phi-reduction}, the right-hand side is
exactly \(\Theta_\Phi\). We have therefore proved
that \(h\rhd x_2\in\mathbb Cx_2\) if and only if
\(\lambda^2=\Theta_\Phi\).

Finally, \(\deg x_2=g^2h\), and
$
C_G(g^2h)=C_G(h)=\langle\varepsilon,h,g^2\rangle
$
by Lemma~\ref{lem:Gamma2-normal-forms-centralizers}.
The line \(\mathbb Cx_2\) is automatically stable under
\(\varepsilon\) and \(g^2\). The only remaining centralizer condition
is stability under \(h\).
If \(\lambda^2=\Theta_\Phi\), the preceding calculation shows that
\(\mathbb Cx_2\) is stable under \(C_G(g^2h)\). It therefore defines
a one-dimensional \(\Phi_{g^2h}\)-projective representation
\(\sigma_2\) of \(C_G(g^2h)\). Since
\(X_2=\mathbb CG\rhd x_2\), the assignment
\(1\otimes x_2\mapsto x_2\) induces a surjective morphism
\[
M(g^2h,\sigma_2)\longrightarrow X_2.
\]
The source is simple, and the morphism is nonzero. Hence it is an
isomorphism. Thus \(X_2\) is simple.

Conversely, if \(X_2\) is simple, its \(g^2h\)-homogeneous component
is the inducing representation \(\tau\) in an isomorphism
\(X_2\simeq M(g^2h,\tau)\).
Lemma~\ref{lem:Gamma2-projective-representations-one-dimensional}
gives \(\dim\tau=1\), and this component must be
stable under \(h\). The preceding calculation then forces
\(\lambda^2=\Theta_\Phi\).
\end{proof}

\subsubsection{The third adjoint object}

For the calculation of the third adjoint object, define
\begin{equation}
\label{eq:kappa-3-definition}
\kappa_3
:=
\frac{
\zeta\,
\Phi(g,\varepsilon g,\varepsilon gh)
\Phi_g(g,h)
}{
\Phi_g(h,g)
\Phi(\varepsilon g,g,\varepsilon gh)
\Phi_g(\varepsilon,g)
\Phi_g(h,\varepsilon g)
\Phi^g(g,gh)
}.
\end{equation}

\begin{lemma}
\label{lem:Gamma2-X3-formula}
Assume that \(X_1\) and \(X_2\) are simple and that \(X_2\ne0\).
Let \(x_2=\varphi_2^\Phi(v\otimes x_1)\) be as in
Lemma~\ref{lem:second-adjoint-formula}, and set
$
x_3:=\varphi_3^\Phi(v\otimes x_2).
$
Then
\begin{equation}
\label{eq:x3-explicit}
x_3
=
(1+\lambda+\lambda^2)
\left(
v\otimes x_2
-\kappa_3\,hv\otimes(g\rhd x_2)
\right).
\end{equation}
Moreover,
$
X_3=\mathbb CG\rhd x_3.
$
Consequently,
\begin{equation}
\label{eq:X3-vanishing-criterion}
X_3=0
\quad\Longleftrightarrow\quad
1+\lambda+\lambda^2=0.
\end{equation}
\end{lemma}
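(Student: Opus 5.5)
The argument follows the proof of Lemma~\ref{lem:second-adjoint-formula} one level higher. First, the opening observation of Subsection~\ref{subsec:Gamma2-adjoint-objects} with \(n=3\) applies, since \(X_2\) is nonzero and simple with \(x_2\in(X_2)_{g^2h}\). It gives \(X_3=\mathbb CG\rhd x_3\), so \(X_3=0\) if and only if \(x_3=0\). Write \(S_0:=v\otimes x_2\) and \(S_1:=hv\otimes(g\rhd x_2)\). These lie in the distinct bi-homogeneous components \(V_g\otimes(X_2)_{g^2h}\) and \(V_{\varepsilon g}\otimes(X_2)_{\varepsilon g^2h}\). Hence \(S_0-\kappa_3S_1\neq0\), and \eqref{eq:X3-vanishing-criterion} follows once \eqref{eq:x3-explicit} is proved. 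The whole task is therefore the computation of \(\varphi_3^\Phi(S_0)\).

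From the recursion,
\[
\varphi_3^\Phi(S_0)=S_0-c_{X_2,V}c_{V,X_2}(S_0)+(\operatorname{id}_V\otimes\varphi_2^\Phi)c_{1,2}^\Phi(S_0).
\]
The double braiding sends \(S_0\) to \((g^2h)\rhd v\otimes(g\rhd x_2)\). Since \(g^2\) is central and \(g\rhd v=\lambda v\), one gets \((g^2h)\rhd v=c\,\lambda^2 hv\) for an explicit cocycle factor \(c\). For the third term, I expand \(x_2=(1+\lambda)(T_0-pT_1)\), with \(p=\Phi_g(h,g)^{-1}\), so that \(S_0\) becomes a combination of \(v\otimes(v\otimes x_1)\) and \(v\otimes(hv\otimes g\rhd x_1)\).

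On the first summand, \(c_{1,2}^\Phi\) acts by \(\lambda\), up to associator factors that cancel because both \(V\)-factors have degree \(g\). Then \(\varphi_2^\Phi\) reproduces \(x_2\), which gives \(\lambda S_0\). On the second summand, \(c_{1,2}^\Phi\) produces \(hv\otimes(v\otimes g\rhd x_1)\) times a factor built from \eqref{eq:g-action-on-hv} and two associators. Next, \(\varphi_2^\Phi(v\otimes g\rhd x_1)\) is rewritten as a multiple of \(g\rhd x_2\), using that \(\varphi_2^\Phi\) is a morphism and that \(g\rhd(v\otimes x_1)=\lambda\Phi^g(g,gh)\,v\otimes g\rhd x_1\). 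This mirrors \eqref{eq:varphi1-v-gw}. The result is
\[
(\operatorname{id}_V\otimes\varphi_2^\Phi)c_{1,2}^\Phi(S_0)=\lambda S_0-\Xi S_1,
\]
for a scalar \(\Xi\) that contains no \(\lambda\).

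Collecting terms gives \(\varphi_3^\Phi(S_0)=(1+\lambda)S_0-(c\lambda^2+\Xi)S_1\). This is not yet of the claimed form. The main obstacle is the bookkeeping needed to reconcile it with \eqref{eq:x3-explicit}.

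I expect the real structure to be as follows. Since \(X_2\) is simple, \(h\rhd x_2\in\mathbb Cx_2\) by Lemma~\ref{lem:Gamma2-X2-simple}. In the third term, the piece \(\lambda S_0\) should come with an additional \(\lambda\)-weighted contribution from the second \(c_{1,2}\)-type term hidden inside \(\varphi_2^\Phi\), since \(\varphi_2^\Phi\) itself contains \(c_{1,2}^\Phi\). Accounting for this should produce the coefficients \(1+\lambda+\lambda^2\) on \(S_0\) and \((1+\lambda+\lambda^2)\kappa_3\) on \(S_1\).

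Concretely, I would track \(\varphi_3^\Phi\) on \(D_3\) before restricting. Working in the basis \(v\otimes v\otimes\cdot\), \(v\otimes hv\otimes\cdot\), \(hv\otimes v\otimes\cdot\), the proportionality of coefficients reduces to two cocycle identities. The first says that the \(S_1\)-coefficients \(c\) and \(\Xi/\lambda\) coincide with \(\kappa_3\). The second is the analogue of Lemma~\ref{lem:Xi-V-Omega-V}. I would verify both with \eqref{eq:three-cocycle-identity-local} and Lemma~\ref{lem:local-cocycle-coherence}, in the manner of Appendix~\ref{app:Gamma2-calculations}, and I expect these identities to be the hardest part.
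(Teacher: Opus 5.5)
Your overall strategy is the paper's. Orbit generation gives \(X_3=\mathbb CG\rhd x_3\). The tensors \(S_0=v\otimes x_2\) and \(S_1=hv\otimes(g\rhd x_2)\) lie in distinct bi-homogeneous components. One then expands \(x_2\) and uses the \(G\)-linearity of \(\varphi_2^\Phi\). As written, however, your computation does not reach \eqref{eq:x3-explicit}. The repair you sketch (working on \(D_3\) before restricting, a hidden \(c_{1,2}\)-term inside \(\varphi_2^\Phi\), a second analogue of Lemma~\ref{lem:Xi-V-Omega-V}) is not what is needed.

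\textbf{Dropped prefactor.} One has \(S_0=(1+\lambda)\bigl(v\otimes(v\otimes x_1)-p\,v\otimes(hv\otimes(g\rhd x_1))\bigr)\). So the first summand contributes \((1+\lambda)\lambda\,v\otimes\varphi_2^\Phi(v\otimes x_1)=\lambda(1+\lambda)S_0\), not \(\lambda S_0\). The same prefactor multiplies the second summand, which contributes exactly \(-(1+\lambda)\kappa_3S_1\). Indeed, by \eqref{eq:kappa-3-definition}, \(\kappa_3\) is the product of four factors: \(p\), the associator ratio \(\Phi(g,\varepsilon g,\varepsilon gh)/\Phi(\varepsilon g,g,\varepsilon gh)\), the coefficient in \eqref{eq:g-action-on-hv}, and \(1/(\lambda\Phi^g(g,gh))\). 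The \(\lambda\) coming from \(g\rhd hv\) cancels the \(1/\lambda\). So this term needs no cocycle identity at all. Your statements that \(\Xi\) is \(\lambda\)-free and that \(\Xi/\lambda=\kappa_3\) are also mutually incompatible, since \(\kappa_3\) is \(\lambda\)-free.

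\textbf{Wrong degree in the double braiding.} Here \(v\) is acted on by the degree of \(g\rhd x_2\), which is \(g(g^2h)g^{-1}=\varepsilon g^2h\), not \(g^2h\). Hence \(c_{X_2,V}c_{V,X_2}(S_0)=d_3S_1\), with \(d_3=\zeta\lambda^2/\bigl(\Phi_g(\varepsilon,g)\Phi_g(\varepsilon g,g)\Phi_g(h,\varepsilon g^2)\bigr)\). The factor \(\zeta\) matters, because \(\kappa_3\) contains \(\zeta\) as well.

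With both corrections, the recursion gives \(\varphi_3^\Phi(S_0)=(1+\lambda+\lambda^2)S_0-\bigl(d_3+(1+\lambda)\kappa_3\bigr)S_1\). The only identity then required is \(d_3=\lambda^2\kappa_3\), which is Lemma~\ref{lem:x3-cocycle-reduction}. To prove it, apply Lemma~\ref{lem:local-cocycle-coherence} at \((g,h,\varepsilon g,g)\) and at \((g,g,h,g)\). Then expand \(\Phi^g(g,gh)\) and \(\Phi_g(g,hg)\) using \(\varepsilon gh=hg\).
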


\begin{proof}
The same  argument gives
\(X_3=\mathbb CG\rhd x_3\).
Set
$
S_0:=v\otimes x_2,
S_1:=hv\otimes(g\rhd x_2).
$
On \(S_0\), the three terms in the recursive definition of
\(\varphi_3^\Phi\) are as follows. First,
\begin{equation}
\label{eq:x3-double-braiding}
c_{X_2,V}c_{V,X_2}(S_0)=d_3S_1,
\qquad
d_3
=
\frac{
\zeta\lambda^2
}{
\Phi_g(\varepsilon,g)
\Phi_g(\varepsilon g,g)
\Phi_g(h,\varepsilon g^2)
}.
\end{equation}
Write $p=\Phi_g(h,g)^{-1}$ and use the formula for \(x_2\) in
Lemma~\ref{lem:second-adjoint-formula}.
The first summand gives
$
(\operatorname{id}_V\otimes\varphi_2^\Phi)
c_{1,2}^\Phi
\bigl(v\otimes(v\otimes x_1)\bigr)
=
\lambda v\otimes x_2.$
For the second summand, the associator corrections in
$c_{1,2}^\Phi$ give
\[
c_{1,2}^\Phi
\bigl(v\otimes(hv\otimes(g\rhd x_1))\bigr)
=
\frac{
\Phi(g,\varepsilon g,\varepsilon gh)
}{
\Phi(\varepsilon g,g,\varepsilon gh)
}
(g\rhd hv)\otimes
\bigl(v\otimes(g\rhd x_1)\bigr).
\]
Since $\varphi_2^\Phi$ is a morphism and
$
g\rhd(v\otimes x_1)
=
\lambda\Phi^g(g,gh)
v\otimes(g\rhd x_1),
$
we have
\[
\varphi_2^\Phi
\bigl(v\otimes(g\rhd x_1)\bigr)
=
\frac{1}{\lambda\Phi^g(g,gh)}
g\rhd x_2.
\]
Using also
$
g\rhd hv
=
\frac{
\Phi_g(g,h)\lambda\zeta
}{
\Phi_g(\varepsilon,g)\Phi_g(h,\varepsilon g)
}
hv,
$
we obtain
\begin{equation}
\label{eq:x3-c12-term}
(\operatorname{id}_V\otimes\varphi_2^\Phi)
c_{1,2}^\Phi(S_0)
=
\lambda(1+\lambda)S_0
-(1+\lambda)\kappa_3S_1.
\end{equation}
The cocycle identity in
Lemma~\ref{lem:x3-cocycle-reduction} gives
\(d_3=\lambda^2\kappa_3\).
Substituting
\eqref{eq:x3-double-braiding},
\eqref{eq:x3-c12-term}, and this equality into the recursive formula for
$\varphi_3^\Phi$ proves \eqref{eq:x3-explicit}.

Finally, $S_0$ and $S_1$ belong to the distinct 
bi-homogeneous components
$V_g\otimes(X_2)_{g^2h}$
and
$V_{\varepsilon g}\otimes(X_2)_{\varepsilon g^2h}$,
respectively.  Hence the vector in square brackets in
\eqref{eq:x3-explicit} is nonzero.  Equations
\(X_3=\mathbb CG\rhd\varphi_3^\Phi(v\otimes x_2)\) and
\eqref{eq:x3-explicit} now give
\eqref{eq:X3-vanishing-criterion}.
\end{proof}

\subsubsection{Adjoint objects in the opposite order}

Since \(hg=\varepsilon gh\) and \(\varepsilon^2=1\), one also has
\(gh=\varepsilon hg\). The preceding formulas therefore apply to
\((W,V)\), with support representatives \((h,g)\).
Set
\begin{equation}
\label{eq:Delta-WV-definition}
\Delta'
:=
\frac{
\zeta\zeta'\mu\mu'\,
\Phi_g(\varepsilon h,h)\Phi_h(g,g)
}{
\Phi_h(g,\varepsilon)\Phi_g(\varepsilon,h^2)
}.
\end{equation}

\begin{lemma}
\label{lem:opposite-first-adjoint-simple}
\label{lem:opposite-second-adjoint-vanishing}
The object \(Y_1\) is simple if and only if \(\Delta'=1\).
In this case, \(Y_1\simeq M(hg,\tau_1)\) for a one-dimensional
projective representation \(\tau_1\) of \(C_G(hg)\), and
\(Y_2=0\) if and only if \(\lambda'=-1\).
\end{lemma}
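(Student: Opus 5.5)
The approach is to transport the computations for the order \((V,W)\) to the order \((W,V)\) by a symmetry argument, rather than redoing them. Since \(gh=\varepsilon hg\) and \(\varepsilon\) is central of order two, the triple \((h,g,\varepsilon)\) again satisfies the defining relations of \(\Gamma_2\) with the roles of \(g\) and \(h\) exchanged. The centralizer description in Lemma~\ref{lem:Gamma2-normal-forms-centralizers} is symmetric under \(g\leftrightarrow h\), with \(C_G(hg)=C_G(gh)\). Lemma~\ref{lem:Gamma2-projective-representations-one-dimensional} already covers both \(\rho\) and \(\sigma\). Hence every step in Lemmas~\ref{lem:Gamma2-first-adjoint-formula}, \ref{lem:Gamma2-X1-simple}, \ref{lem:second-adjoint-formula} and \ref{lem:second-adjoint-vanishing} uses only these relations, the cocycle identity, and the parameters \eqref{eq:Gamma2-parameters}. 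I would therefore rerun them with \((g,h,v,w,\rho,\sigma)\) replaced by \((h,g,w,v,\sigma,\rho)\).

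The substitution sends \(\zeta\leftrightarrow\zeta'\), \(\lambda\leftrightarrow\lambda'\) and \(\mu\leftrightarrow\mu'\). First, Lemma~\ref{lem:Gamma2-first-adjoint-formula} gives \(y_1=w\otimes v-\frac{\zeta'}{\Phi_h(g,\varepsilon)}\,gw\otimes hv\), and \(Y_1=\mathbb CG\rhd y_1\). Second, in the proof of Lemma~\ref{lem:Gamma2-X1-simple}, the proportionality of \(g\rhd y_1\) and \(h\rhd y_1\) becomes the swapped version of \eqref{eq:first-proportionality-condition}. Its reduction via Lemma~\ref{lem:first-coefficient-reduction}, applied with \(g\) and \(h\) interchanged (that lemma is a formal cocycle identity valid for any pair with \(yx=\varepsilon xy\)), yields the swapped \(\Delta\). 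This is exactly \(\Delta'\) in \eqref{eq:Delta-WV-definition}: the product \(\zeta\zeta'\mu\mu'\) is symmetric, and the \(\Phi\)-factors are the images of those in \eqref{eq:Delta-definition}. The induced-module argument then gives \(Y_1\simeq M(hg,\tau_1)\) with \(\tau_1\) one-dimensional. Third, Lemma~\ref{lem:second-adjoint-formula}, whose auxiliary identity Lemma~\ref{lem:Xi-V-Omega-V} is likewise a formal cocycle identity, gives \(y_2=(1+\lambda')\bigl[w\otimes y_1-\Phi_h(g,h)^{-1}gw\otimes(h\rhd y_1)\bigr]\). The two tensors in the bracket lie in distinct bihomogeneous components, so \(Y_2=0\) if and only if \(\lambda'=-1\), as in Lemma~\ref{lem:second-adjoint-vanishing}.

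The main obstacle is making sure that the appendix reductions (Lemmas~\ref{lem:first-coefficient-reduction} and \ref{lem:Xi-V-Omega-V}) depend only on the relations \(yx=\varepsilon xy\), \(\varepsilon^2=1\) and \(\varepsilon\) central, and not on a particular normalization tied to the ordered pair \((g,h)\). I would check this first by inspecting their proofs, in the same way the paper already uses the substitution for \(\Theta_\Phi(x,y)\) and in Corollary~\ref{cor:Gamma2-change-of-generators}. If some step used an asymmetric choice of representatives, for instance the vectors \(hv\) and \(gw\), I would verify that the swapped choice \(gw\), \(hv\) is exactly what the substituted formulas produce, which it is by construction.
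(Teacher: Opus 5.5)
Your proposal is correct and is essentially the paper's argument: the paper applies Lemmas~\ref{lem:Gamma2-X1-simple} and~\ref{lem:second-adjoint-vanishing} to \((W,V)\) under the substitutions \(g\leftrightarrow h\), \(\zeta\leftrightarrow\zeta'\), \(\lambda\leftrightarrow\lambda'\), \(\mu\leftrightarrow\mu'\), and observes that \(\Delta\) becomes \(\Delta'\). Your extra check, that the appendix cocycle reductions use only \(yx=\varepsilon xy\), \(\varepsilon^2=1\) and the centrality of \(\varepsilon\), is the justification the paper leaves implicit for making that substitution.
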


\begin{proof}
Apply Lemmas~\ref{lem:Gamma2-X1-simple} and
\ref{lem:second-adjoint-vanishing} to the pair \((W,V)\).
Under the simultaneous substitutions
\(g\leftrightarrow h\), \(\zeta\leftrightarrow\zeta'\),
\(\lambda\leftrightarrow\lambda'\), and \(\mu\leftrightarrow\mu'\),
the parameter \(\Delta\) becomes \(\Delta'\).
\end{proof}

\subsection{Duals and reflections}
\label{subsec:Gamma2-duals-reflections}
By Lemma~\ref{lem:dual-projective-representation},
\(c_{V^*,V^*}\) acts by \(\lambda\) on
\((V^*)_{g^{-1}}\otimes(V^*)_{g^{-1}}\), and
\(c_{W^*,W^*}\) acts by \(\lambda'\) on
\((W^*)_{h^{-1}}\otimes(W^*)_{h^{-1}}\).
For the two reflections, let
\begin{equation*}
m_{12}:=\max\{m\geq0\mid X_m\neq0\},
\qquad
m_{21}:=\max\{n\geq0\mid Y_n\neq0\}.
\end{equation*}
Suppose that \(m_{12}\) and \(m_{21}\) are finite.
By Lemma~\ref{lem:reflection-basic-properties}, the last nonzero
objects $X_{m_{12}}$ and $Y_{m_{21}}$ are simple, and the
corresponding reflections are
\begin{equation*}
R_1(V,W)=(V^*,X_{m_{12}}),
\qquad
R_2(V,W)=(Y_{m_{21}},W^*).
\end{equation*}
For brevity, write $m=m_{12}$ and $n=m_{21}$. Since
$X_m\subseteq V^{\otimes m}\otimes W$, its support is contained in
$\{g^mh,\varepsilon g^mh\}$. These two elements are conjugate by
$g$. Since $X_m\ne0$ and its support is stable under conjugation,
its support equals this set. Similarly,
$\operatorname{supp}Y_n=\{h^ng,\varepsilon h^ng\}$.

\begin{lemma}
\label{lem:reflections-remain-Gamma2}
Write \(m=m_{12}\) and \(n=m_{21}\).  For the two reflected tuples,
choose support representatives as follows:
\[
\begin{array}{c|c|c}
\hline
\text{tuple} & x & y\\
\hline
R_1(V,W)=(V^*,X_m) & g^{-1} & g^m h\\
R_2(V,W)=(Y_n,W^*) & h^n g & h^{-1}\\
\hline
\end{array}
\]
For either row, the representatives \(x\) and \(y\) satisfy
\[
yx=\varepsilon xy,
\qquad
G=\langle x,y\rangle.
\]
Moreover, the supports of the first and second entries are
$
\{x,\varepsilon x\},\
\{y,\varepsilon y\},
$
respectively.  Thus both reflected tuples satisfy the same assumptions
on their supports as \((V,W)\).
\end{lemma}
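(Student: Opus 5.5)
The argument is elementwise in \(G\), using only the relation \(hg=\varepsilon gh\) with \(\varepsilon\) central of order \(2\), and Corollary~\ref{cor:Gamma2-change-of-generators}. The two rows are symmetric under \(g\leftrightarrow h\), so I treat the first row and then indicate the changes for the second.

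\emph{Row one.} Take \(x=g^{-1}\) and \(y=g^mh\). Since \(\varepsilon\) is central, \(hg^{-1}=\varepsilon g^{-1}h\). Since \(g\) commutes with \(g^m\), this gives
\[
yx=g^mhg^{-1}=\varepsilon g^{m-1}h=\varepsilon g^{-1}g^mh=\varepsilon xy .
\]
For generation, \(\langle x,y\rangle\) contains \(g=x^{-1}\) and \(h=g^{-m}y\). Since \(G=\langle g,h\rangle\) (indeed \(\varepsilon=hgh^{-1}g^{-1}\)), we get \(G=\langle x,y\rangle\).

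\emph{Supports of row one.} By Lemma~\ref{lem:dual-projective-representation}, \(V^*\simeq M(g^{-1},\chi^*)\), so \(\supp V^*=(g^{-1})^G\). Inverting \(\supp V=\{g,\varepsilon g\}\) gives \(\{g^{-1},\varepsilon g^{-1}\}=\{x,\varepsilon x\}\), using \(\varepsilon^{-1}=\varepsilon\). For \(X_m\), the remark just before the lemma gives \(\supp X_m=\{g^mh,\varepsilon g^mh\}=\{y,\varepsilon y\}\). That remark uses \(X_m\neq0\), the inclusion \(X_m\subseteq V^{\otimes m}\otimes W\), and conjugation by \(g\). I would spell out this inclusion: degrees of \(V^{\otimes m}\otimes W\) are products \(\varepsilon^{e}g^mh\), and conjugation by \(g\) gives \(g(g^mh)g^{-1}=g^m\varepsilon h=\varepsilon g^mh\), so both elements occur.

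\emph{Row two.} Take \(x=h^ng\) and \(y=h^{-1}\). From \(gh^{-1}=\varepsilon h^{-1}g\) (conjugate \(hg=\varepsilon gh\) by \(h^{-1}\), using that \(\varepsilon\) is central),
\[
yx=h^{n-1}g=h^n(h^{-1}g)=h^n\varepsilon gh^{-1}=\varepsilon xy .
\]
Generation follows from \(h=y^{-1}\) and \(g=h^{-n}x\). The supports are \(\supp Y_n=\{h^ng,\varepsilon h^ng\}\), as noted before the lemma, and \(\supp W^*=\{h^{-1},\varepsilon h^{-1}\}\), again by dualizing \(\supp W=\{h,\varepsilon h\}\).

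Once the relation \(yx=\varepsilon xy\) and \(G=\langle x,y\rangle\) hold, Corollary~\ref{cor:Gamma2-change-of-generators} (or just the defining relations) shows that \((x,y,\varepsilon)\) satisfies the defining relations of \(\Gamma_2\). The reflected tuples therefore satisfy the same support assumptions as \((V,W)\).

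The only real care point is the sign of \(\varepsilon\) in the commutation relations with \(g^{-1}\) and \(h^{-1}\). Because \(\varepsilon^2=1\), the sign is harmless, so I expect no genuine obstacle.
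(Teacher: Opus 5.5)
Your proposal is correct and essentially follows the paper's proof: the commutation computations \(yx=\varepsilon xy\) and the generation argument (\(g=x^{-1}\), \(h=g^{-m}y\), and symmetrically for the second row) are exactly the paper's. The only minor difference is in the supports. The paper deduces \(x^G=\{x,\varepsilon x\}\) and \(y^G=\{y,\varepsilon y\}\) from \(yx=\varepsilon xy\), \(\varepsilon^2=1\) and \(G=\langle x,y\rangle\), and then uses that the reflected entries are simple. You instead invert the degrees of \(\supp V\) and \(\supp W\) for the duals and cite the remark preceding the lemma for \(X_m\) and \(Y_n\); both routes are valid.
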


\begin{proof}
For \(R_1(V,W)\), the relation \(hg=\varepsilon gh\) gives
$
(g^m h)g^{-1}
=
\varepsilon g^{-1}(g^m h).
$
Since
$
g=(g^{-1})^{-1},
$
$h=g^{-m}(g^m h),
$
the elements \(g^{-1}\) and \(g^m h\) generate \(G\).

For \(R_2(V,W)\), one similarly obtains
$
h^{-1}(h^n g)
=
\varepsilon(h^n g)h^{-1}.
$
Since
$
h=(h^{-1})^{-1},
$
$g=h^{-n}(h^n g),$
the elements \(h^n g\) and \(h^{-1}\) also generate \(G\).

The relation \(yx=\varepsilon xy\), together with
\(\varepsilon^2=1\), gives
\[
x^G=\{x,\varepsilon x\},
\qquad
y^G=\{y,\varepsilon y\}.
\]
Since \(V^*,X_m,Y_n\), and \(W^*\) are simple, their supports are the
corresponding conjugacy classes.
\end{proof}

\begin{lemma}
\label{lem:Gamma2-X1-Y1-inducing-values}
Whenever the one-dimensional projective representations
\(\sigma_1\) and \(\tau_1\) are defined, one has
\begin{equation}
\label{eq:Gamma2-X1-Y1-support-values}
\sigma_1(gh)=-\lambda\lambda',
\qquad
\tau_1(hg)=-\lambda\lambda'.
\end{equation}
\end{lemma}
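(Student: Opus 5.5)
The plan is to compute \(\sigma_1(gh)\) directly as the scalar by which \(gh\) acts on the line \(\mathbb Cx_1\). Then use \(\Delta=1\) to remove the representation parameters, so that only a cocycle identity remains. Assume \(\Delta=1\), so that \(\sigma_1\) is defined by Lemma~\ref{lem:Gamma2-X1-simple} and \(\mathbb Cx_1\) is stable under \(C_G(gh)\ni gh\). By \eqref{eq:YD-projective-action} applied to \(x_1\) of degree \(gh\),
\[
(gh)\rhd x_1=\Phi_{gh}(g,h)^{-1}\,g\rhd(h\rhd x_1).
\]
We already know \(h\rhd x_1=\lambda'\Phi^h(g,h)f_1-aA_1f_0\) from \eqref{eq:h-action-x1}. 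The vector \((gh)\rhd x_1\) is a multiple \(\sigma_1(gh)x_1=\sigma_1(gh)(e_0-ae_1)\). It therefore suffices to compare the coefficients of \(e_0\), and only \(f_0=v\otimes gw\) contributes to the \(e_0\)-component.

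By \eqref{eq:tensor-product}, together with \(g\rhd v=\lambda v\) and \(g\rhd(gw)=\Phi_h(g,g)\mu'w\), we get
\[
g\rhd f_0=\lambda\,\Phi^g(g,\varepsilon h)\Phi_h(g,g)\mu'\,e_0 .
\]
Hence
\[
\sigma_1(gh)=-\frac{aA_1\lambda\,\Phi^g(g,\varepsilon h)\Phi_h(g,g)\mu'}{\Phi_{gh}(g,h)} .
\]
Substituting \(a=\zeta/\Phi_g(h,\varepsilon)\) and \eqref{eq:A1-definition}, the claim \(\sigma_1(gh)=-\lambda\lambda'\) becomes the following identity. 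The scalar \(\zeta\zeta'\mu\mu'\) times an explicit product of values of \(\Phi\) must equal \(1\). Here \(\lambda'\) cancels, since \(A_1\) is linear in \(\lambda'\).

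Next we use \(\Delta=1\), i.e.\ \eqref{eq:Delta-definition}, to replace \(\zeta\zeta'\mu\mu'\) by \(\Phi_g(h,\varepsilon)\Phi_h(\varepsilon,g^2)/\bigl(\Phi_h(\varepsilon g,g)\Phi_g(h,h)\bigr)\). What remains is a pure identity among values of \(\Phi\). It involves \(\Phi_{gh}(g,h)\), \(\Phi^g(g,\varepsilon h)\), \(\Phi^h(\varepsilon g,\varepsilon h)\), \(\Phi_h(g,g)\), \(\Phi_h(h,g)\), \(\Phi_h(\varepsilon,h)\), \(\Phi_h(g,\varepsilon h)\), \(\Phi_h(\varepsilon,g^2)\), \(\Phi_h(\varepsilon g,g)\), and \(\Phi_g(h,h)\) (the two copies of \(\Phi_g(h,\varepsilon)\) cancel).

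This cocycle identity is the main obstacle. I would prove it as in the appendix lemmas, by expanding \(\Phi_x\) and \(\Phi^x\) into values of \(\Phi\). I would then apply Lemma~\ref{lem:local-cocycle-coherence} to suitable triples, namely \((g,h,\ldots)\) and \((h,\varepsilon,g)\)-type triples adapted to the degrees \(h\) and \(\varepsilon h\). I would also use the raw identity \eqref{eq:three-cocycle-identity-local} together with the relations \(hg=\varepsilon gh\) and the centrality of \(\varepsilon\) and \(g^2\). The goal is to telescope the product to \(1\). If a direct telescoping is hard to find, a fallback is to check the identity on a normal form: modify \(\Phi\) by a coboundary, under which both sides transform compatibly, since \(\sigma_1(gh)\) is intrinsic.

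The statement for \(\tau_1(hg)\) then follows by symmetry. We apply the same computation to \((W,V)\) with \(g\leftrightarrow h\), \(\zeta\leftrightarrow\zeta'\), \(\lambda\leftrightarrow\lambda'\), \(\mu\leftrightarrow\mu'\), as in Lemma~\ref{lem:opposite-first-adjoint-simple}, using \(\Delta'=1\), which is equivalent to \(\Delta=1\). The resulting value \(-\lambda'\lambda\) is symmetric in the two parameters.
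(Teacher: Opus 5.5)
The formula you derive for \(\sigma_1(gh)\) is correct. One small slip: after you substitute \(\Delta=1\), \(\Phi_g(h,h)\) cancels as well.

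However, the proof stops at the step you call the main obstacle. The remaining product of nine values of \(\Phi_x,\Phi^x\) is only promised to telescope. The coboundary ``fallback'' is not an argument either, because there is no finite normal form for \(\Phi\) to check against. So, as written, the lemma is not proved.

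The difficulty comes from your choice of coefficient. Reading off the coefficient of \(e_0=v\otimes w\) forces you to act on the second summand \(-a\,hv\otimes gw\). That brings in \(A_1\) and \(g^2\rhd w\), hence \(\zeta',\mu,\mu'\), and therefore \(\Delta\).

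Compare the coefficient of \(e_1=hv\otimes gw\) instead, so that \(gh\) acts only on the first summand \(v\otimes w\). This is what the paper does.
\begin{itemize}
\item Since \(gh=h(\varepsilon g)\), you get \((gh)\rhd v=\lambda\zeta\,\Phi_g(\varepsilon,g)^{-1}\Phi_g(h,\varepsilon g)^{-1}\,hv\) and \((gh)\rhd w=\lambda'\Phi_h(g,h)^{-1}\,gw\).
\item Multiply by \(\Phi^{gh}(g,h)\) and divide by \(-a=-\zeta/\Phi_g(h,\varepsilon)\). Then \(\zeta\) cancels, and \(\sigma_1(gh)=-\lambda\lambda'K_1\), where \(K_1\) is the pure cocycle quotient in \eqref{eq:K1-reflected-character}.
\item Here \(\Delta=1\) is used only to know that \(\mathbb Cx_1\) is \(gh\)-stable.
\item \(K_1=1\) needs only Lemma~\ref{lem:local-cocycle-coherence} at \((g,h,\varepsilon,g)\) and the single identity \(\mathscr C_\Phi(\varepsilon g,\varepsilon h,g,h)=1\); see Lemma~\ref{lem:reflected-character-cocycle-reduction}.
\end{itemize}

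Your identity is in fact true, but it is harder to prove.
\begin{itemize}
\item Write \((gh)\rhd e_0=\alpha e_1\) and \((gh)\rhd e_1=\beta e_0\). Your value is \(-a\beta\) and the paper's is \(-\alpha/a\).
\item The double braiding commutes with the actions of \(g\), \(h\) and \(gh\). It follows that the ratio \(a^2\beta/\alpha\) equals the proportionality factor of Lemma~\ref{lem:first-coefficient-reduction}, which is \(\Delta\).
\item So completing your route amounts to proving \(K_1=1\) together with the fifteen-factor reduction behind that lemma. Switching to the \(e_1\)-coefficient removes this detour.
\end{itemize}

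The \(\tau_1\) half by symmetry is fine once the \(\sigma_1\) half is done this way. Also, \(\tau_1\) being defined already gives \(\Delta'=1\) by Lemma~\ref{lem:opposite-first-adjoint-simple}, so you do not need the equivalence with \(\Delta=1\).
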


\begin{proof}
Let $a=\zeta/\Phi_g(h,\varepsilon)$, so that
$x_1=v\otimes w-a\,hv\otimes gw$.  Acting by $gh$ on the first
summand and comparing the coefficient of $hv\otimes gw$ gives
\(\sigma_1(gh)=-\lambda\lambda'K_1\),
where
\begin{equation}
\label{eq:K1-reflected-character}
K_1
:=
\frac{
\Phi^{gh}(g,h)\Phi_g(h,\varepsilon)
}{
\Phi_h(g,h)
\Phi_g(h,\varepsilon g)
\Phi_g(\varepsilon,g)
}.
\end{equation}
The opposite calculation gives
\(\tau_1(hg)=-\lambda\lambda'K_1^{\mathrm{op}}\),
with
\begin{equation}
\label{eq:K1-op-reflected-character}
K_1^{\mathrm{op}}
:=
\frac{
\Phi^{hg}(h,g)\Phi_h(g,\varepsilon)
}{
\Phi_h(g,\varepsilon h)
\Phi_h(\varepsilon,h)
\Phi_g(h,g)
}.
\end{equation}
By Lemma~\ref{lem:reflected-character-cocycle-reduction},
$K_1=K_1^{\mathrm{op}}=1$.  This proves
\eqref{eq:Gamma2-X1-Y1-support-values}.
\end{proof}

\subsection{The standard case of type \texorpdfstring{$A_2$}{A2}}
\label{subsec:Gamma2-standard-A2}

If $P=(P_1,P_2)$ is obtained from $(V,W)$ by a sequence of reflections,
we write
\[
m_{ij}^P
=
\max\{r\geq0\mid(\operatorname{ad}P_i)^r(P_j)\neq0\},
\qquad i\neq j.
\]
Thus the generalized Cartan matrix at $[P]$ is
\begin{equation}
\label{eq:Cartan-matrix-at-P}
A^{[P]}
=
\begin{pmatrix}
2&-m_{12}^P\\
-m_{21}^P&2
\end{pmatrix}.
\end{equation}
For any two objects $U,T$, the relation
\[
c_{U,T}(\operatorname{id}_{U\otimes T}-c_{T,U}c_{U,T})
=(\operatorname{id}_{T\otimes U}-c_{U,T}c_{T,U})c_{U,T}
\]
identifies the images of the two first adjoint maps and gives
\begin{equation}
\label{eq:first-adjoint-braiding-symmetry}
(\operatorname{ad}U)(T)
\simeq
(\operatorname{ad}T)(U).
\end{equation}

\begin{prop}
\label{prop:Gamma2-standard-A2}
Assume that $X_1$ and $Y_1$ are simple and that
$X_2=Y_2=0$.  Then $(V,W)$ admits all reflections and
$\mathcal G(V,W)$ is a standard Cartan graph of type $A_2$.
In particular, at every object $[P]$ one has
\begin{equation}
\label{eq:standard-A2-matrix}
A^{[P]}
=
\begin{pmatrix}
2&-1\\
-1&2
\end{pmatrix}.
\end{equation}
\end{prop}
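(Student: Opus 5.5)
The approach is to show that the hypotheses ``\(X_1,Y_1\) simple and \(X_2=Y_2=0\)'' are preserved by both reflections. Then every object of \(\mathcal G(V,W)\) has Cartan matrix \eqref{eq:standard-A2-matrix}, and admitting all reflections follows by induction on the length of a reflection sequence.

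First, at \((V,W)\) itself we have \(X_1\ne0\) by Lemma~\ref{lem:Gamma2-first-adjoint-formula} (braided-indecomposability) and \(X_2=0\), so \(m_{12}=1\). Symmetrically \(Y_1\neq 0\) and \(Y_2=0\), so \(m_{21}=1\). Hence \((V,W)\) admits both reflections, \(A^{[V,W]}\) is as in \eqref{eq:standard-A2-matrix}, and
\[
R_1(V,W)=(V^*,X_1),\qquad R_2(V,W)=(Y_1,W^*).
\]
By Lemma~\ref{lem:Gamma2-X1-simple} and Lemma~\ref{lem:opposite-first-adjoint-simple}, the hypotheses give \(\Delta=\Delta'=1\), \(\lambda=-1\) and \(\lambda'=-1\).

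Next I consider \(P=R_1(V,W)=(V^*,X_1)\); the case \(R_2\) is symmetric. By Lemma~\ref{lem:reflections-remain-Gamma2}, \(P\) again satisfies the standing \(\Gamma_2\) support assumptions with representatives \(x=g^{-1}\) and \(y=gh\). Lemma~\ref{lem:reflection-basic-properties} gives \(m_{12}^P=m_{12}=1\). For \(m_{21}^P\), I use \eqref{eq:first-adjoint-braiding-symmetry}: \((\operatorname{ad}X_1)(V^*)\simeq(\operatorname{ad}V^*)(X_1)\ne0\), and this object is simple by Lemma~\ref{lem:reflection-basic-properties}. So in the new parameters \(\Delta_P=\Delta'_P=1\), and by Lemma~\ref{lem:opposite-first-adjoint-simple} it remains to show \(\lambda'_P=-1\), where \(\lambda'_P\) is the scalar by which \(y=gh\) acts on the line \((X_1)_{gh}\). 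This is exactly \(\sigma_1(gh)\), and Lemma~\ref{lem:Gamma2-X1-Y1-inducing-values} gives \(\sigma_1(gh)=-\lambda\lambda'=-1\). Likewise \(\lambda_P\), the scalar of \(g^{-1}\) on \((V^*)_{g^{-1}}\), equals \(\lambda=-1\) by Lemma~\ref{lem:dual-projective-representation}. Thus \(P\) again satisfies ``\(X_1,Y_1\) simple and \(X_2=Y_2=0\)'', and applying the first step to \(P\) gives \(A^{[P]}\) of type \(A_2\). The case \(R_2(V,W)=(Y_1,W^*)\) is handled in the same way, using \(\tau_1(hg)=-1\) and \(\lambda'\) for \(W^*\).

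Induction on reflection length then shows that every \(Q\in\mathcal F_2(V,W)\) satisfies the hypotheses. Hence \((V,W)\) admits all reflections, and all Cartan matrices equal \eqref{eq:standard-A2-matrix}, so the Cartan graph is standard of type \(A_2\).

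The main obstacle is the bookkeeping in the reflection step. The parameters \(\lambda,\lambda'\) for the reflected tuple must be read off with the correct representatives and in the correct order; for \(R_2\) the order is \((h g,h^{-1})\). One must also check that the scalar in Lemma~\ref{lem:dual-projective-representation} is really the ``\(\lambda\)'' of the new pair, namely the action of the support representative on its own line. Note that simplicity of \(Y_1\) for \(P\) comes for free from \(\Delta_P=\Delta'_P\), which the table states are equivalent conditions; alternatively it follows from \eqref{eq:first-adjoint-braiding-symmetry}.
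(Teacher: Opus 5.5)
Your proposal is correct and follows essentially the same route as the paper: you show that both reflected tuples again satisfy the hypotheses, using \eqref{eq:first-adjoint-braiding-symmetry} for simplicity of the reverse first adjoint and $\sigma_1(gh)=\tau_1(hg)=-\lambda\lambda'=-1$ (Lemma~\ref{lem:Gamma2-X1-Y1-inducing-values}) to make the second adjoint vanish, and then induct on reflection length. The differences are cosmetic. You get $m_{12}^{R_1(V,W)}=1$ and the simplicity of $(\operatorname{ad}V^*)(X_1)$ from Lemma~\ref{lem:reflection-basic-properties}, whereas the paper cites \cite[Lemma~3.8]{reflection3} to identify this object with $W$; also, the initial equality $\lambda=-1$ comes from Lemma~\ref{lem:second-adjoint-vanishing}, not from the two lemmas you cite.
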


\begin{proof}
The vanishing criteria give \(\lambda=\lambda'=-1\).  The two
reflections are \(R_1(V,W)=(V^*,X_1)\) and
\(R_2(V,W)=(Y_1,W^*)\), and their supports satisfy the same assumptions
by Lemma~\ref{lem:reflections-remain-Gamma2}.
For \(R_1(V,W)\), \cite[Lemma~3.8]{reflection3} gives
\((\operatorname{ad}V^*)(X_1)\simeq W\) and
\((\operatorname{ad}V^*)^2(X_1)=0\).
Equation~\eqref{eq:first-adjoint-braiding-symmetry} then gives
\((\operatorname{ad}X_1)(V^*)\simeq W\), so this object is simple.
By Lemma~\ref{lem:Gamma2-X1-Y1-inducing-values},
\(\sigma_1(gh)=-\lambda\lambda'=-1\).
Lemma~\ref{lem:second-adjoint-vanishing}, applied to \((X_1,V^*)\),
therefore gives \((\operatorname{ad}X_1)^2(V^*)=0\).

The same argument applied to \(R_2(V,W)\), using
\(\tau_1(hg)=-\lambda\lambda'=-1\), shows that its first adjoint
objects are isomorphic to \(V\) and its second adjoint objects vanish.
Thus both reflected tuples satisfy the hypotheses of the proposition.
Induction on the length of a reflection sequence proves that all
reflections are defined and that \(m_{12}^P=m_{21}^P=1\) at every
object \([P]\).  Equation~\eqref{eq:Cartan-matrix-at-P} gives
\eqref{eq:standard-A2-matrix}.
\end{proof}

By Lemmas~\ref{lem:Gamma2-X1-simple},
\ref{lem:second-adjoint-vanishing}, and
\ref{lem:opposite-first-adjoint-simple}, the hypotheses of
Proposition~\ref{prop:Gamma2-standard-A2} may equivalently be written as
\begin{equation}
\label{eq:standard-A2-scalar-conditions}
\Delta=1,
\qquad
\Delta'=1,
\qquad
\lambda=\lambda'=-1.
\end{equation}
Thus the conditions in
\eqref{eq:standard-A2-scalar-conditions} give a standard Cartan graph
of type \(A_2\). 

\subsection{Exclusion of type \texorpdfstring{$B_2$}{B2}}
\label{subsec:Gamma2-local-B2-obstruction}

The Sylow decomposition of \(G\) restricts the order of
\(\Theta_\Phi\) and excludes type \(B_2\). For a positive integer \(N\), let
\(\mu_N=\{z\in\mathbb C^\times\mid z^N=1\}\).

\begin{lemma}
\label{lem:Gamma2-primary-cocycle}
Let \(P\) be the Sylow \(2\)-subgroup of \(G\), and let \(A\) be
the product of its Sylow subgroups of odd order. Then
\(G=P\times A\), \(\varepsilon\in P\), and \(A\) is abelian.
Every normalized \(3\)-cocycle on \(G\) is cohomologous to
\begin{equation}
\label{eq:Gamma2-split-cocycle}
\widetilde\Phi=p_P^*(\Phi_P)p_A^*(\Phi_A),
\end{equation}
where \(\Phi_P\) and \(\Phi_A\) are normalized \(3\)-cocycles on
\(P\) and \(A\), respectively, and \(\Phi_P\) takes values in
\(\mu_{|P|}\). Here \(p_P^*\) and \(p_A^*\) denote pullback along
the canonical projections \(p_P:G\to P\) and \(p_A:G\to A\).
\end{lemma}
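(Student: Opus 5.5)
The proof has three parts: establish the group structure \(G=P\times A\), split the cohomology of \(G\) into its \(P\)- and \(A\)-parts, and then normalize the values of the \(P\)-part.

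\emph{Group structure.} I would first show that \(G\) is nilpotent. By Lemma~\ref{lem:Gamma2-normal-forms-centralizers}, \(G/Z(G)\) is a quotient of \(\langle \bar g,\bar h\rangle\) with \(\bar g^2=\bar h^2=1\) and \(\bar g\bar h=\bar h\bar g\), since the commutator \(\varepsilon\) is central. Hence \(G/Z(G)\) is elementary abelian of order at most \(4\), and \(G\) has nilpotency class at most \(2\). A finite nilpotent group is the direct product of its Sylow subgroups, so \(G=P\times A\). The element \(\varepsilon\) has order \(2\), so \(\varepsilon\in P\). The derived subgroup of \(G\) is \(\langle\varepsilon\rangle\), which is a \(2\)-group. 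Projecting to \(A\) kills it, so \(A\cong G/P\) has trivial commutator subgroup, and \(A\) is abelian.

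\emph{Splitting the cohomology.} Since \(\gcd(|P|,|A|)=1\), the Künneth formula for \(H^*(P\times A,\mathbb C^\times)\cong H^{*+1}(P\times A,\mathbb Z)\) has vanishing mixed terms: every \(\operatorname{Tor}\) and tensor term between groups of coprime exponents is zero. The simplest concrete route is transfer–restriction. Let \(\iota_P,\iota_A\) be the inclusions of the factors. The classes
\[
\Phi \quad\text{and}\quad p_P^*\iota_P^*\Phi\cdot p_A^*\iota_A^*\Phi
\]
have the same restriction to \(P\) and the same restriction to \(A\). The primary decomposition of the finite group \(H^3(G,\mathbb C^\times)\) then shows that they agree. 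Indeed, the \(p\)-primary component injects under restriction to a Sylow \(p\)-subgroup, so the \(2\)-part is detected on \(P\) and each odd part is detected on the corresponding Sylow subgroup of \(A\). Consequently
\[
\Phi\sim p_P^*(\Phi_P)\,p_A^*(\Phi_A),
\]
and a cohomologous cocycle can be taken normalized.

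\emph{Normalizing the values of \(\Phi_P\).} The group \(H^3(P,\mathbb C^\times)\) is annihilated by \(|P|\). For a representative \(\Phi_P\), the power \(\Phi_P^{|P|}\) is a coboundary \(\delta\beta\). Since \(\mathbb C^\times\) is divisible, choose a \(|P|\)-th root \(\beta'\) of \(\beta\). Then \(\Phi_P\cdot\delta\beta'^{-1}\) takes values in \(\mu_{|P|}\). Normalizing \(\beta'\), with \(\beta'(1,x)=\beta'(x,1)=1\), keeps the result normalized.

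The main obstacle I expect is justifying the cohomological splitting cleanly, specifically the injectivity of restriction to Sylow subgroups on primary components. I would cite the standard fact that \(\operatorname{cor}\circ\operatorname{res}\) is multiplication by the index, which is coprime to the relevant prime. Once that is in place, the remaining steps are routine.
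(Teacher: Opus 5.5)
Your proposal is correct and follows essentially the same route as the paper: the paper also gets $G=P\times A$ from nilpotency and splits $H^3$ via $\operatorname{cor}\circ\operatorname{res}=[G:H]$, applying it to $P$ and $A$ directly to get $|A|\beta=|P|\beta=0$ (a shorter packaging of your primary-component argument). It then obtains the $\mu_{|P|}$-valued representative from $|P|$-torsion and the divisibility of $\mathbb C^\times$, exactly as in your root-taking step; the one detail to add there is that before choosing a normalized $|P|$-th root $\beta'$, you should first divide $\beta$ by the constant $\beta(1,1)$, which is allowed because $\delta\beta=\Phi_P^{|P|}$ is normalized and so forces $\beta(1,x)=\beta(x,1)=\beta(1,1)$ for all $x$.
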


\begin{proof}
The defining relations give
\([G,G]=\langle\varepsilon\rangle\subseteq Z(G)\).  Thus \(G\) is
nilpotent.  Since \(G\) is finite, it is the direct product of its
Sylow subgroups, and hence \(G=P\times A\).  The group \(A\) has odd
order, whereas \([G,G]\) has order two.  Therefore
\([A,A]\subseteq [G,G]\cap A=1\), so \(A\) is abelian.

Restriction to \(P\) and \(A\), together with inflation along the two
projections, gives
$$H^3(G,\mathbb C^\times)\simeq
H^3(P,\mathbb C^\times)\oplus H^3(A,\mathbb C^\times).$$
To prove this, let \(\alpha\in H^3(G,\mathbb C^\times)\), and write the
cohomology groups additively. Set
\[
\beta
=\alpha
-p_P^*(\operatorname{res}_P^G\alpha)
-p_A^*(\operatorname{res}_A^G\alpha).
\]
Since \(p_P|_P=\operatorname{id}_P\) and \(p_A|_P\) is the
trivial homomorphism, the restrictions to \(P\) of the two
pullback classes are \(\operatorname{res}_P^G\alpha\) and zero,
respectively. Hence \(\operatorname{res}_P^G\beta=0\).
The same argument gives \(\operatorname{res}_A^G\beta=0\).
For every subgroup \(H\leq G\), the composite
\(\operatorname{cor}_H^G\circ\operatorname{res}_H^G\)
is multiplication by \([G:H]\).
Applying this to \(P\) and \(A\), we obtain
\[
|A|\beta
=\operatorname{cor}_P^G(\operatorname{res}_P^G\beta)=0,
\qquad
|P|\beta
=\operatorname{cor}_A^G(\operatorname{res}_A^G\beta)=0.
\]
Since \(|P|\) and \(|A|\) are relatively prime, it follows that
\(\beta=0\). Thus \(\alpha\) is the sum of the two pullback
classes, which proves the decomposition
\eqref{eq:Gamma2-split-cocycle}. 

Finally, \(H^3(P,\mathbb C^\times)\) is annihilated by \(|P|\).
The long exact sequence associated with
\[
1\longrightarrow\mu_{|P|}
\longrightarrow\mathbb C^\times
\xrightarrow{\,z\mapsto z^{|P|}\,}
\mathbb C^\times
\longrightarrow1
\]
therefore shows that every class in \(H^3(P,\mathbb C^\times)\) has a
representative with values in \(\mu_{|P|}\).  Hence \(\Phi_P\) may be
chosen to be \(\mu_{|P|}\)-valued.
\end{proof}

\begin{lemma}
\label{lem:Gamma2-Theta-two-primary}
For the  representative in
\eqref{eq:Gamma2-split-cocycle}, one has
\begin{equation}
\label{eq:Gamma2-Theta-two-primary}
\Theta_{\widetilde\Phi}\in\mu_{|P|}.
\end{equation}
In particular, the order of $\Theta_{\widetilde\Phi}$ is a power of
two.
\end{lemma}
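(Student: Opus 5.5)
Since \(\widetilde\Phi=p_P^*(\Phi_P)\,p_A^*(\Phi_A)\) is a pointwise product, every \(\widetilde\Phi\)-expression built as a ratio of values of \(\widetilde\Phi\) splits as the product of the same ratio for \(p_P^*\Phi_P\) and for \(p_A^*\Phi_A\). The quantities \(\Phi_x(a,b)\) and \(\Phi^x(a,b)\) are monomials in values of \(\Phi\), and hence so is \(\Theta_\Phi\) in \eqref{eq:Theta-Phi-definition}. Therefore
\[
\Theta_{\widetilde\Phi}=\Theta_{p_P^*\Phi_P}\,\Theta_{p_A^*\Phi_A}.
\]
The first factor is a product and quotient of values of \(\Phi_P\), which lie in \(\mu_{|P|}\), so \(\Theta_{p_P^*\Phi_P}\in\mu_{|P|}\). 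The claim therefore reduces to showing \(\Theta_{p_A^*\Phi_A}=1\).

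For the \(A\)-factor, every argument of \(\Phi_A\) occurring in \(\Theta\) is the image under \(p_A\) of an element of \(G\). Since \(A\) is abelian and \(\varepsilon\in P\), we have \(p_A(\varepsilon)=1\). Write \(\bar g=p_A(g)\) and \(\bar h=p_A(h)\); these commute in \(A\). Under \(p_A\) all conjugations become trivial, because \(A\) is abelian. Consequently \(\Phi_x(a,b)\) reduces to \(\Phi_A(a,b,x)\Phi_A(x,a,b)/\Phi_A(a,x,b)\), and similarly for \(\Phi^x\). Every occurrence of \(\varepsilon\) becomes \(1\), and normalization kills the factors \(\Phi_h(\varepsilon,\varepsilon)\), \(\Phi_g(h,\varepsilon)\), \(\Phi_h(\varepsilon,g^2)\), \(\Phi_h(\varepsilon,h)\) and \(\Phi_g(\varepsilon,g)\). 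The remaining factors involve only \(\bar g,\bar h\). Factors such as \(\Phi_h(\varepsilon g,g)\) and \(\Phi_h(g,g)\) now coincide and cancel, as do \(\Phi^h(\varepsilon g,\varepsilon h)\) and \(\Phi^h(\varepsilon g,h)\). This leaves an expression in the values \(\Phi_A\) on \(\langle\bar g,\bar h\rangle\).

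The cleanest way to finish is to avoid tracking the remaining factors by hand. I would instead use the identity \(\Theta_\Phi=\lambda^2\) characterizing simplicity of \(X_2\), as in Lemma~\ref{lem:Gamma2-X2-simple}, in a commutative model. Concretely, I would argue that for an abelian group with \(\varepsilon\mapsto1\), the reduced \(\Theta\) equals an expression of the form
\[
\frac{\Phi_{\bar h}(\bar h,\bar g)\,\Phi_{\bar g}(\bar h,\bar g)^2\cdots}{\Phi_{\bar g}(\bar g,\bar h)\cdots},
\]
which is a product of the alternating quantities \(\Phi_x(a,b)/\Phi_x(b,a)=f_\Phi(x,a,b)\). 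By Lemma~\ref{lem:alternator} on the abelian group \(A\), these satisfy \(f(\bar g,\bar g,\bar h)=1\) and \(f(\bar h,\bar g,\bar h)=1\), since an alternating tricharacter vanishes when two arguments agree.

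The main obstacle is this bookkeeping: after substitution, one must verify that the surviving factors indeed pair up into such alternators (or cancel outright). Here I would use the three-cocycle identity \eqref{eq:three-cocycle-identity-local} on \(A\) where needed, together with the fact that \(\Phi^x(a,b)\) and \(\Phi_x(a,b)\) coincide up to permutations of arguments once conjugation is trivial. If the direct grouping fails, a fallback is available: \(H^3(A,\mathbb C^\times)\) classes restricted to the two-generated subgroup \(\langle\bar g,\bar h\rangle\) can be represented by the standard normalized cocycles on a product of two cyclic groups. Checking \(\Theta=1\) on these explicit representatives then suffices, provided \(\Theta\) is invariant under coboundary changes, and this follows from its origin as a module-theoretic scalar ratio. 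Once \(\Theta_{p_A^*\Phi_A}=1\), we get \(\Theta_{\widetilde\Phi}\in\mu_{|P|}\), whose order divides the \(2\)-power \(|P|\).
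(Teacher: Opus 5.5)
Your argument is correct and is essentially the paper's proof: you split $\Theta$ multiplicatively over $p_P^*\Phi_P$ and $p_A^*\Phi_A$, note that the first factor lies in $\mu_{|P|}$, and reduce the $A$-factor to alternators, using that $\varepsilon\mapsto1$, conjugation is trivial, and in fact $\Phi^x=\Phi_x$ exactly there. The bookkeeping you flag closes directly. After your cancellations, and the further cancellation of $\Phi^h(g,gh)$ against $\Phi^h(\varepsilon g,\varepsilon gh)$, what survives is $\bigl((\Phi_A)_{\bar g}(\bar h,\bar g)/(\Phi_A)_{\bar g}(\bar g,\bar h)\bigr)^3\,(\Phi_A)_{\bar h}(\bar h,\bar g)/(\Phi_A)_{\bar h}(\bar g,\bar h)=f_{\Phi_A}(\bar g,\bar h,\bar g)^3f_{\Phi_A}(\bar h,\bar h,\bar g)=1$. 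So neither the appeal to $\lambda^2=\Theta_\Phi$ (a simplicity criterion, not an identity) nor the cohomological fallback is needed.
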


\begin{proof}
The definitions of $\Phi_x$, $\Phi^x$, and $\Theta_\Phi$ are
multiplicative in the $3$-cocycle.  Hence
\(\Theta_{\widetilde\Phi}
=\Theta_{p_P^*(\Phi_P)}\Theta_{p_A^*(\Phi_A)}\).
The first factor belongs to $\mu_{|P|}$.  For the contribution of
$A$, write $g_A,h_A$ for the projections of $g,h$ to
$A$.
Since $A$ is abelian and the projection of $\varepsilon$ to $A$ is
trivial, direct substitution in the definition of $\Theta_\Phi$
gives
\[
\Theta_{\Phi_A}
=
\left(
\frac{(\Phi_A)_{g_A}(h_A,g_A)}
     {(\Phi_A)_{g_A}(g_A,h_A)}
\right)^3
\frac{(\Phi_A)_{h_A}(h_A,g_A)}
     {(\Phi_A)_{h_A}(g_A,h_A)}.
\]
By Lemma~\ref{lem:alternator}, the right-hand side is
\(f_{\Phi_A}(g_A,h_A,g_A)^3f_{\Phi_A}(h_A,h_A,g_A)=1\).
Thus $\Theta_{\widetilde\Phi}=\Theta_{\Phi_P}$, which proves
\eqref{eq:Gamma2-Theta-two-primary}.
\end{proof}

\begin{lemma}
\label{lem:Gamma2-X3-nonvanishing}
Assume that $X_1$ and $X_2$ are simple and that $X_2\neq0$.  Then
\(X_3\neq0\).
\end{lemma}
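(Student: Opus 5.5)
By \eqref{eq:X3-vanishing-criterion} in Lemma~\ref{lem:Gamma2-X3-formula}, it suffices to show that \(1+\lambda+\lambda^2\neq0\), that is, that \(\lambda\) is not a primitive cube root of unity. The idea is to play the simplicity condition on \(X_2\) against the \(2\)-primary constraint on \(\Theta_\Phi\) coming from the Sylow decomposition.

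Since \(X_2\neq0\) and \(X_1\), \(X_2\) are simple, Lemma~\ref{lem:Gamma2-X2-simple} gives \(\lambda^2=\Theta_\Phi(g,h)\). If \(\lambda\) were a primitive cube root of unity, then \(\Theta_\Phi=\lambda^2\) would also have order exactly \(3\). So it is enough to show that the order of \(\Theta_\Phi\) is a power of \(2\).

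To get this, replace \(\Phi\) by the cohomologous split representative \(\widetilde\Phi\) of Lemma~\ref{lem:Gamma2-primary-cocycle}, where Lemma~\ref{lem:Gamma2-Theta-two-primary} gives \(\Theta_{\widetilde\Phi}\in\mu_{|P|}\). The key step is transferring the conclusion from \(\widetilde\Phi\) back to \(\Phi\). A coboundary twist \(\Phi\mapsto\Phi\cdot\partial J\) gives a braided monoidal equivalence \(\GG\simeq{}^G_G\mathcal{YD}^{\widetilde\Phi}\). This equivalence is the identity on the underlying graded vector spaces and only rescales the actions. By \eqref{eq:Nichols-under-equivalence} and Lemma~\ref{lem:adjoint-object-realization}, it preserves Nichols algebras, adjoint objects, and their simplicity and vanishing. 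The image tuple \((V',W')\) is therefore again of the same \(\Gamma_2\) form, with \(X'_1,X'_2\) simple and \(X'_2\neq0\), and with \(X'_3=0\) if and only if \(X_3=0\).

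We then apply the whole argument to \((V',W')\) over \(\widetilde\Phi\). Its parameter \(\lambda'_{\mathrm{new}}\) (the new \(\lambda\)) satisfies \(\lambda_{\mathrm{new}}^2=\Theta_{\widetilde\Phi}\in\mu_{|P|}\), so \(\lambda_{\mathrm{new}}\) has \(2\)-power order. In particular \(1+\lambda_{\mathrm{new}}+\lambda_{\mathrm{new}}^2\neq0\), which gives \(X'_3\neq0\) and hence \(X_3\neq0\). Working directly with the vanishing criterion for the transported tuple avoids having to track how \(\lambda\) and \(\Theta\) individually change under the twist.

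The main obstacle is justifying this invariance under the twist. One route is to check the twisting equivalence explicitly, as in the standard fact that cohomologous associators give equivalent twisted Yetter--Drinfeld categories via the Drinfeld twist of the coquasi-Hopf algebra. A second route is to verify directly that the ratio \(\lambda^2/\Theta_\Phi\) is invariant under \(\Phi\mapsto\Phi\,\partial J\), with \(\lambda\) rescaled accordingly. I would take the first route, citing the equivalence together with \eqref{eq:Nichols-under-equivalence}.
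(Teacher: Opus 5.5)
Your proposal is correct and follows essentially the same route as the paper: you transport $(V,W)$ along the braided equivalence to the split representative $\widetilde\Phi$, use the $X_2$-simplicity criterion to get $\widetilde\lambda^2=\Theta_{\widetilde\Phi}\in\mu_{|P|}$, and conclude from \eqref{eq:X3-vanishing-criterion} that the transported third adjoint object, and hence $X_3$, is nonzero. The only difference is cosmetic: the paper phrases the argument as a contradiction and cites an external result for the invariance of adjoint objects, simplicity and vanishing under the equivalence.
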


\begin{proof}
Suppose that $X_3=0$.  Replace $\Phi$ by the representative
$\widetilde\Phi$ from Lemma~\ref{lem:Gamma2-primary-cocycle}, and
transport $(V,W)$ through the induced braided monoidal equivalence.
Write \(\widetilde V,\widetilde W\) for the transported objects and
\(\widetilde X_n=(\operatorname{ad}\widetilde V)^n(\widetilde W)\).
Let \(\widetilde\lambda\) be the scalar by which
\(c_{\widetilde V,\widetilde V}\) acts on
\(\widetilde V_g\otimes\widetilde V_g\). The
equivalence preserves adjoint objects, simplicity and vanishing by [\citealp{reflection3}, Proposition 6.3], so
\[
\widetilde X_1,\widetilde X_2\text{ are simple},
\qquad
\widetilde X_2\neq0,
\qquad
\widetilde X_3=0.
\]
Lemma~\ref{lem:Gamma2-X2-simple} and
\eqref{eq:X3-vanishing-criterion} give, respectively,
\[
\widetilde\lambda^2=\Theta_{\widetilde\Phi},
\qquad
1+\widetilde\lambda+\widetilde\lambda^2=0.
\]
Consequently, $\widetilde\lambda^2$ has order three.  This contradicts
Lemma~\ref{lem:Gamma2-Theta-two-primary}.  Hence $X_3\neq0$.
\end{proof}

\begin{prop}
\label{prop:Gamma2-no-local-B2}
If $\mathcal B(V\oplus W)$ is finite-dimensional, then no object of
$\mathcal G(V,W)$ has Cartan matrix of type $B_2$, up to
transposition.
\end{prop}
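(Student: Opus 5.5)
I will argue by contradiction. Suppose that \(\mathcal B(V\oplus W)\) is finite-dimensional and that some object \([P]\in\mathcal X(V,W)\) has Cartan matrix of type \(B_2\) up to transposition. By Theorem~\ref{thm:Nichols-finiteness-criterion}, \((V,W)\) admits all reflections and \(\mathcal G(V,W)\) is finite. Interchanging the two entries if necessary, I may assume that \(m_{12}^P=2\) and \(m_{21}^P=1\). Lemma~\ref{lem:reflections-remain-Gamma2}, applied inductively along a reflection sequence from \((V,W)\) to \(P\), shows that \(P=(P_1,P_2)\) is again a pair of simple objects over a non-abelian quotient of \(\Gamma_2\). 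Their support representatives \(x,y\) satisfy \(yx=\varepsilon xy\) and \(G=\langle x,y\rangle\), and each support is a two-element conjugacy class. By Corollary~\ref{cor:Gamma2-change-of-generators}, the triple \((x,y,\varepsilon)\) satisfies the defining relations of \(\Gamma_2\). Hence every result of Section~\ref{subsec:Gamma2-adjoint-objects} applies verbatim to \(P\) with \((g,h)\) replaced by \((x,y)\), and the interchange of entries is covered by the same substitution. Braided-indecomposability of \(P\) follows from \(m_{12}^P>0\).

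Next I will extract the adjoint information from the \(B_2\) matrix. Since \(m_{12}^P=2\), the objects \((\operatorname{ad}P_1)(P_2)\) and \((\operatorname{ad}P_1)^2(P_2)\) are nonzero, and \((\operatorname{ad}P_1)^3(P_2)=0\). Because \(\mathcal G(V,W)\) is finite, Lemma~\ref{lem:root-string-adjoint-simplicity} (with \(Q=P\)) shows that \((\operatorname{ad}P_1)^t(P_2)\) is simple for \(t=1,2\). In the notation \(X_n=X_n^{P_1,P_2}\) from Lemma~\ref{lem:adjoint-object-realization}, this says:
\begin{itemize}
\item \(X_1\) and \(X_2\) are simple;
\item \(X_2\neq0\);
\item \(X_3=0\).
\end{itemize}
These are exactly the hypotheses of Lemma~\ref{lem:Gamma2-X3-nonvanishing}, applied to the pair \(P\). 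That lemma gives \(X_3\neq0\), a contradiction.

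The only delicate step is checking that the \(\Gamma_2\)-setup really transfers to \(P\). Lemma~\ref{lem:Gamma2-X3-nonvanishing} was proved for \(V=M(g,\rho)\) and \(W=M(h,\sigma)\) with \(g,h\) the images of the \(\Gamma_2\) generators. For \(P\), I will use the substitution \(g\mapsto x\), \(h\mapsto y\). Every ingredient depends only on the following facts, all of which were checked for \((x,y)\) in Lemma~\ref{lem:reflections-remain-Gamma2} and Corollary~\ref{cor:Gamma2-change-of-generators}:
\begin{itemize}
\item the relation \(yx=\varepsilon xy\) and the generation \(G=\langle x,y\rangle\);
\item the centralizer computations of Lemma~\ref{lem:Gamma2-normal-forms-centralizers};
\item the one-dimensionality of projective representations from Lemma~\ref{lem:Gamma2-projective-representations-one-dimensional};
\item the Sylow splitting of Lemma~\ref{lem:Gamma2-primary-cocycle}, which concerns only \(G\) and \(\Phi\).
\end{itemize}
Lemma~\ref{lem:Gamma2-Theta-two-primary} uses only the \(\Gamma_2\) relations, so it applies to \(\Theta_{\widetilde\Phi}(x,y)\) as well. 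Once this is confirmed, the parity argument of Lemma~\ref{lem:Gamma2-X3-nonvanishing} goes through unchanged. There, \(\widetilde\lambda^2\) would have order three while \(\Theta_{\widetilde\Phi}\) has \(2\)-power order. This excludes \(B_2\) and its transpose.
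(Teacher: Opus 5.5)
Your proposal is correct and follows the paper's proof essentially verbatim. You reduce to the ordering with Cartan matrix \(\left(\begin{smallmatrix}2&-2\\-1&2\end{smallmatrix}\right)\), keep the \(\Gamma_2\) support form via Lemma~\ref{lem:reflections-remain-Gamma2}, obtain simplicity of the first two adjoint objects and vanishing of the third from Lemma~\ref{lem:root-string-adjoint-simplicity}, and then contradict Lemma~\ref{lem:Gamma2-X3-nonvanishing} applied to \(P\). Your additional check, that the centralizer computations, one-dimensionality, the Sylow splitting, and the \(2\)-primary bound on \(\Theta\) all transfer to the new generators \((x,y)\), spells out what the paper leaves implicit.
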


\begin{proof}
Suppose that $P=(P_1,P_2)$ is such an object.  After interchanging
the two components if necessary, its Cartan matrix is
\(\left(\begin{smallmatrix}2&-2\\-1&2\end{smallmatrix}\right)\).
By Lemma~\ref{lem:reflections-remain-Gamma2}, the supports of \(P\)
again have the form required in the \(\Gamma _2\) case.
Theorem~\ref{thm:Nichols-finiteness-criterion}, applied to
\((V,W)\), shows that its Cartan graph is finite. Since \(P\)
represents an object of this graph,
Lemma~\ref{lem:root-string-adjoint-simplicity} shows that
$(\operatorname{ad}P_1)(P_2)$ and
$(\operatorname{ad}P_1)^2(P_2)$ are simple and nonzero, whereas
$(\operatorname{ad}P_1)^3(P_2)=0$.
This contradicts
Lemma~\ref{lem:Gamma2-X3-nonvanishing}, applied to $P$.
\end{proof}

\subsection{The standard  type \texorpdfstring{$G_2$}{G2}}
\label{subsec:Gamma2-standard-G2-parameters}

Define
\begin{equation}
\label{eq:Gamma2-kappa4}
\kappa_4
:=
\kappa_3
\frac{
\Phi(g,\varepsilon g,\varepsilon g^2h)
\Phi_g(g,h)\zeta
}{
\Phi(\varepsilon g,g,\varepsilon g^2h)
\Phi_g(\varepsilon,g)
\Phi_g(h,\varepsilon g)
\Phi^g(g,g^2h)
}.
\end{equation}

\begin{lemma}
\label{lem:Gamma2-fourth-adjoint-full}
Assume that \(X_1,X_2\), and \(X_3\) are simple and nonzero.  Let
\(x_3=\varphi_3^\Phi(v\otimes x_2)\) be as in
Lemma~\ref{lem:Gamma2-X3-formula}, and set
\(x_4:=\varphi_4^\Phi(v\otimes x_3)\).  Then
\begin{equation}
\label{eq:Gamma2-x4-full}
x_4
=
(1+\lambda+\lambda^2+\lambda^3)
\left(
v\otimes x_3
-\kappa_4\,hv\otimes(g\rhd x_3)
\right).
\end{equation}
Moreover, \(X_4=\mathbb CG\rhd x_4\).  Therefore \(X_4=0\) if and
only if $\lambda^2=-1$.  
\end{lemma}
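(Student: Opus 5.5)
The proof follows the pattern of Lemma~\ref{lem:second-adjoint-formula} and Lemma~\ref{lem:Gamma2-X3-formula}, one step further. First, since \(X_3\) is nonzero and simple, the observation at the beginning of Subsection~\ref{subsec:Gamma2-adjoint-objects} (with \(n=4\), \(H=\langle g^3h\rangle\), and the line \((X_3)_{g^3h}=\mathbb Cx_3\)) together with Proposition~\ref{prop:adjoint-orbit-generation} gives \(X_4=\mathbb CG\rhd x_4\). It therefore suffices to compute \(x_4\). Set \(U_0:=v\otimes x_3\) and \(U_1:=hv\otimes(g\rhd x_3)\). These lie in the distinct bihomogeneous components \(V_g\otimes(X_3)_{g^3h}\) and \(V_{\varepsilon g}\otimes(X_3)_{\varepsilon g^3h}\).

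Next, I evaluate the three terms of the recursion for \(\varphi_4^\Phi\) on \(U_0\).

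\emph{Double braiding.} We have \(c_{X_3,V}c_{V,X_3}(U_0)=((g^3h)\rhd v)\otimes(g\rhd x_3)\). Writing \(g^3h=h(\varepsilon g)^3\) up to the relation \(gh=h\varepsilon g\), this equals \(d_4U_1\), where \(d_4\) is \(\zeta\lambda^3\) times a ratio of \(\Phi_g\)-values. It is the analogue of \(d_3\) in \eqref{eq:x3-double-braiding}.

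\emph{The \(c_{1,2}^\Phi\) term.} Substitute \(x_3=(1+\lambda+\lambda^2)(S_0-\kappa_3S_1)\) from \eqref{eq:x3-explicit}. The \(S_0\)-summand gives \(\lambda\,v\otimes\varphi_3^\Phi(v\otimes x_2)\), which contributes \(\lambda(1+\lambda+\lambda^2)U_0/(1+\lambda+\lambda^2)\)-type terms. For the \(S_1\)-summand, \(c_{1,2}^\Phi\) produces the associator ratio \(\Phi(g,\varepsilon g,\varepsilon g^2h)/\Phi(\varepsilon g,g,\varepsilon g^2h)\) and the factor \(g\rhd hv\) from \eqref{eq:g-action-on-hv}. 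We then use that \(\varphi_3^\Phi\) is a morphism, which gives
\[
\varphi_3^\Phi\bigl(v\otimes(g\rhd x_2)\bigr)=\frac{1}{\lambda\Phi^g(g,g^2h)}\,g\rhd x_3 .
\]
Collecting, this term equals \(\lambda(1+\lambda+\lambda^2)U_0-(1+\lambda+\lambda^2)\kappa_4U_1\), with \(\kappa_4\) exactly as in \eqref{eq:Gamma2-kappa4}. Adding the identity term \(U_0\), the coefficient of \(U_0\) becomes \(1+\lambda+\lambda^2+\lambda^3\), after normalizing by the factor \(1+\lambda+\lambda^2\) already present in \(x_3\). (Here I keep careful track of the fact that \(x_3\) itself carries that prefactor, exactly as in the passage from \(x_2\) to \(x_3\).)

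\emph{Main obstacle.} The decisive step is the cocycle identity \(d_4=\lambda^3\kappa_4\), the analogue of Lemma~\ref{lem:x3-cocycle-reduction}. Once it holds, the coefficient of \(U_1\) is \(-(1+\lambda+\lambda^2+\lambda^3)\kappa_4\), which yields \eqref{eq:Gamma2-x4-full}. I would try to prove it inductively: write \(\kappa_4=\kappa_3\cdot(\text{new factor})\) and \(d_4=d_3\cdot\lambda\cdot(\text{ratio})\), and reduce to an identity between the new factor and that ratio. This identity should follow from Lemma~\ref{lem:local-cocycle-coherence} applied to triples of the form \((h,\varepsilon g,g^k)\) with \(x=g\), together with the definitions of \(\Phi^g\) and the \(3\)-cocycle identity at \((g,\varepsilon g,\varepsilon g^2h,\cdot)\). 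This is the same bookkeeping as in the appendix, one degree higher, and I would place it there as a separate lemma.

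Finally, \(U_0\) and \(U_1\) lie in distinct components, so the bracket in \eqref{eq:Gamma2-x4-full} is nonzero. Hence \(X_4=0\) if and only if \(1+\lambda+\lambda^2+\lambda^3=(1+\lambda)(1+\lambda^2)=0\). Since \(X_2\neq0\), Lemma~\ref{lem:second-adjoint-vanishing} gives \(\lambda\neq-1\). Therefore \(X_4=0\) if and only if \(\lambda^2=-1\).
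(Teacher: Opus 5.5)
Your overall route is the paper's. You use the same vectors \(U_0,U_1\), the same evaluation of the \(c_{1,2}^\Phi\)-term as \(\lambda(1+\lambda+\lambda^2)U_0-(1+\lambda+\lambda^2)\kappa_4U_1\), and the same reduction to the single identity \(d_4=\lambda^3\kappa_4\), which the paper proves in Lemma~\ref{lem:Gamma2-G2-cocycle-reductions}. However, your double-braiding step uses the wrong group element, and this makes the decisive identity false as you set it up. The braiding \(c_{X_3,V}\) acts on \(v\) by the degree of \(g\rhd x_3\), namely \(g(g^3h)g^{-1}=\varepsilon g^3h=hg^3\), not by \(g^3h=h(\varepsilon g)^3\). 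Hence
\[
d_4=\frac{\lambda^3}{\Phi_g(h,g^3)\Phi_g(g,g)\Phi_g(g^2,g)},
\]
as in \eqref{eq:Gamma2-x4-double-braiding}, with no factor \(\zeta\). In general the acting element at step \(n\) is \(\varepsilon g^{n-1}h=\varepsilon^nhg^{n-1}\). So \(d_4\) is the analogue of the scalar \(\lambda/\Phi_g(h,g)\) in Lemma~\ref{lem:second-adjoint-formula}, not of \(d_3\).

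Your \(d_4\), which is \(\zeta\lambda^3\) times cocycle values, cannot equal \(\lambda^3\kappa_4\) in general. The scalar \(\kappa_4\) contains \(\zeta^2=\Phi_g(\varepsilon,\varepsilon)\) and is therefore a pure cocycle expression, while \(\Phi\) determines only \(\zeta^2\), not \(\zeta\). Concretely, for \(\Phi=1\) one has \(\kappa_4=\zeta^2=1\) and the correct \(d_4=\lambda^3\), whereas yours is \(\zeta\lambda^3\). The choice \(\zeta=\zeta'=-1\), \(\lambda=\mu=\mu'=1\) satisfies all hypotheses of the lemma. There your double-braiding computation would give \(\varphi_4^\Phi(U_0)=4U_0-2U_1\) instead of \(4(U_0-U_1)\).

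With the corrected \(d_4\), your inductive reduction from \(d_3=\lambda^2\kappa_3\) (Lemma~\ref{lem:x3-cocycle-reduction}) targets a true identity \(d_4/(\lambda d_3)=\kappa_4/\kappa_3\), since it is equivalent to the paper's. Note, though, that the left side then contains \(\zeta^{-1}\) while the right side contains \(\zeta\). So besides Lemma~\ref{lem:local-cocycle-coherence} and the \(3\)-cocycle identity, you must use \(\zeta^2=\Phi_g(\varepsilon,\varepsilon)\), exactly as the paper's factorization of \(d_4/(\lambda^3\kappa_4)\) into \(\mathscr C_\Phi\)-factors does.

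A minor point: the \(U_0\)-coefficient needs no normalization. Since \(U_0=(1+\lambda+\lambda^2)(v\otimes S_0-\kappa_3\,v\otimes S_1)\), the \(S_0\)-part contributes exactly \(\lambda(1+\lambda+\lambda^2)U_0\), and \(1+\lambda(1+\lambda+\lambda^2)=1+\lambda+\lambda^2+\lambda^3\). The remaining steps are correct: \(X_4=\mathbb CG\rhd x_4\), the independence of \(U_0\) and \(U_1\), and the factorization \((1+\lambda)(1+\lambda^2)\) with \(\lambda\neq-1\).
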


\begin{proof}
It is direct that
\begin{equation*}
X_4=\mathbb CG\rhd\varphi_4^\Phi(v\otimes x_3).
\end{equation*}
Set \(U_0=v\otimes x_3\) and
\(U_1=hv\otimes(g\rhd x_3)\).
  Applying the  formula $\varphi_4^\Phi$ gives
\begin{align}
&c_{X_3,V}c_{V,X_3}(U_0)=d_4U_1,\quad
d_4=
\frac{\lambda^3}
{\Phi_g(h,g^3)\Phi_g(g,g)\Phi_g(g^2,g)},
\label{eq:Gamma2-x4-double-braiding}
\\
&(\operatorname{id}_V\otimes\varphi_3^\Phi)c_{1,2}^\Phi(U_0)
=
\lambda(1+\lambda+\lambda^2)U_0
-(1+\lambda+\lambda^2)\kappa_4U_1.
\nonumber
\end{align}
The cocycle reduction in
Lemma~\ref{lem:Gamma2-G2-cocycle-reductions} gives
$d_4=\lambda^3\kappa_4$.  Substitution into the recursive formula
yields \eqref{eq:Gamma2-x4-full}, because
\(1+\lambda(1+\lambda+\lambda^2)
=\lambda^3+(1+\lambda+\lambda^2)
=1+\lambda+\lambda^2+\lambda^3\).
The first tensor factors have distinct degrees $g$ and
$\varepsilon g$, so $U_0$ and $U_1$ are linearly independent.
 {Hence \(X_4=0\) if and only if
\((1+\lambda)(1+\lambda^2)=0\).
Since \(X_2\ne0\) implies \(\lambda\ne-1\), this is equivalent to
\(\lambda^2=-1\)}.
\end{proof}

\begin{lemma}
\label{lem:Gamma2-X3-support-value}
Assume that $X_1,X_2$ and $X_3$ are simple and nonzero, and write
$X_3\simeq M(g^3h,\sigma_3)$.  Then
\begin{equation}
\label{eq:Gamma2-sigma3}
\sigma_3(g^3h)=-\lambda^6\lambda'.
\end{equation}
\end{lemma}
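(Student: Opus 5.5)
The plan follows the proof of Lemma~\ref{lem:Gamma2-X1-Y1-inducing-values}, one level higher. By Lemma~\ref{lem:Gamma2-projective-representations-one-dimensional} the component $(X_3)_{g^3h}$ is the line $\mathbb Cx_3$, where $x_3=\varphi_3^\Phi(v\otimes x_2)\neq0$. Hence $(g^3h)\rhd x_3=\sigma_3(g^3h)\,x_3$. It suffices to compare one coefficient on both sides of this identity, with respect to the basis of pure tensors of $V^{\otimes3}\otimes W$ built from $v,hv,w,gw$ (with the bracketing $V\otimes(V\otimes(V\otimes W))$).

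First I expand $x_3$ completely. I use Lemma~\ref{lem:Gamma2-X3-formula}, Lemma~\ref{lem:second-adjoint-formula}, and $x_1=v\otimes w-a\,hv\otimes gw$, together with \eqref{eq:g-action-x1} and the analogous formula for $g\rhd x_2$ obtained by acting with $g$ on $T_0-pT_1$ via \eqref{eq:tensor-product} and \eqref{eq:g-action-on-hv}. Only two basis tensors are needed:
\begin{itemize}
\item the tensor $v\otimes(v\otimes(v\otimes w))$, whose coefficient is $(1+\lambda)(1+\lambda+\lambda^2)$;
\item the tensor $hv\otimes(hv\otimes(hv\otimes gw))$, whose coefficient is $(1+\lambda)(1+\lambda+\lambda^2)$ times an explicit product of $-\kappa_3$, $-p$, $-a$ and the scalars $\lambda\Phi^g(\cdot,\cdot)$, $\lambda B_1$ arising from $g\rhd x_1$ and $g\rhd x_2$.
\end{itemize}
The common factor $(1+\lambda)(1+\lambda+\lambda^2)$ is nonzero because $X_2,X_3\neq0$, so it cancels.

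Since $g^3h$ conjugates $g$ to $\varepsilon g$ and $h$ to $\varepsilon h$, it maps the degree $(g,g,g,h)$ component onto the degree $(\varepsilon g,\varepsilon g,\varepsilon g,\varepsilon h)$ component. So I compute $(g^3h)\rhd\bigl(v\otimes(v\otimes(v\otimes w))\bigr)$ as a multiple of $hv\otimes(hv\otimes(hv\otimes gw))$, in the following steps.
\begin{enumerate}
\item Write $(g^3h)\rhd=\Phi_{g^3h}(g^3,h)^{-1}\,g^3\rhd(h\rhd\cdot)$, using \eqref{eq:YD-projective-action}.
\item Let $h$ act via \eqref{eq:tensor-product}. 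This produces factors $\Phi^h(\cdot,\cdot)$ and $\lambda'$ from $h\rhd w=\lambda'w$, and sends each $v$ to $hv$.
\item Let $g^3$ act, again as $g\rhd(g\rhd(g\rhd\cdot))$ with $\Phi_x$-corrections. The action on each $hv$ is given by \eqref{eq:g-action-on-hv}, and on $w$ through $g\rhd w=gw$ and $g^2\rhd w=\mu'w$.
\end{enumerate}
Equating this with $\sigma_3(g^3h)$ times the coefficient of $hv\otimes(hv\otimes(hv\otimes gw))$ gives $\sigma_3(g^3h)=-\lambda^6\lambda' K_3$. Here $K_3$ is a monomial in values of $\Phi$ and in $\zeta,\mu,\zeta',\mu'$. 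The powers of $\lambda$ should collapse to $\lambda^6$: the $g$-actions on the three $hv$'s contribute $\lambda^9$, and the coefficients of $g\rhd x_1,g\rhd x_2$ divide out $\lambda^3$. The sign comes from the odd number of factors $-a,-p,-\kappa_3$. The parameters $\zeta,\mu,\mu'$ should be eliminated using $\Delta=1$, in the form $a^2A_1B_1=\lambda'\Phi^h(g,h)\Phi^g(g,h)$, and using the definitions of $\kappa_3$ and $p$. After this elimination, $K_3$ is a pure cocycle expression in $g,h,\varepsilon$.

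The main obstacle is proving $K_3=1$. This needs a cocycle reduction in the style of Lemmas~\ref{lem:reflected-character-cocycle-reduction} and \ref{lem:x3-cocycle-reduction}. I would first try to factor $K_3=K_1\cdot K'\cdot K''$, where $K_1$ is as in \eqref{eq:K1-reflected-character} and is already known to equal $1$. The idea is that $K'$ and $K''$ are the analogous correction factors for the passages $X_1\to X_2$ and $X_2\to X_3$. Each of these has the shape ``$\Phi^{g^kh}$ applied to $(g,g^{k-1}h)$, divided by the corresponding $\Phi_x$-factors''. I would then reduce each by repeated use of Lemma~\ref{lem:local-cocycle-coherence} and the $3$-cocycle identity \eqref{eq:three-cocycle-identity-local}, applied to quadruples built from $g,\varepsilon g,h,g^kh$. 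If a direct reduction stalls, the fallback is the one used in Lemma~\ref{lem:Gamma2-X3-nonvanishing}. Both sides of \eqref{eq:Gamma2-sigma3} are invariant under the braided equivalence changing $\Phi$ within its class, so one may pass to the split representative $\widetilde\Phi$. Then $K_3$ factors into a $2$-primary part and an $A$-part, and the $A$-part is trivial by Lemma~\ref{lem:alternator}.
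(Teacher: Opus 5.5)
Your strategy is the same as the paper's. You compute the scalar by which $g^3h$ acts on $\mathbb Cx_3=(X_3)_{g^3h}$ by comparing one coefficient, obtain $-\lambda^6\lambda'$ times a pure cocycle factor, and then need that factor to equal $1$. Expanding all the way to pure tensors of $V^{\otimes 3}\otimes W$, instead of comparing the two components $S_0,S_1$ of \eqref{eq:x3-explicit} as the paper does, is legitimate: each multidegree component is a line, and exactly one monomial of $x_3$ contributes on each side. Your sign and $\lambda^6$ count are also right.

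One correction on the bookkeeping: $\Delta=1$ is not the relation that removes the parameters. On the two sides of your comparison, $\mu$ and $\zeta'$ never occur. The parameter $\mu'$ occurs once on each side (from $g^3\rhd w$, respectively from $B_1$) and cancels. What remains is an even power of $\zeta$, which is removed by $\zeta^2=\Phi_g(\varepsilon,\varepsilon)$. Substituting $a^2A_1B_1=\lambda'\Phi^h(g,h)\Phi^g(g,h)$ would instead introduce $\zeta'\mu$, with nothing to cancel them.

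The genuine gap is the identity $K_3=1$. This is the whole content of the lemma, and you leave it open. Your fallback cannot close it. Passing to the split representative of Lemma~\ref{lem:Gamma2-primary-cocycle} only removes the contribution of $A$ and shows $K_3\in\mu_{|P|}$. Nothing forces the $2$-primary part to be $1$, and $2$-primary cocycle quantities of exactly this kind are nontrivial in this setting: the $G_2$ family has $\Theta_\Phi=-1$. Lemma~\ref{lem:Gamma2-X3-nonvanishing} could use this device only because it had to rule out an element of order three.

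What you need is an explicit factorization of $K_3$ into integer powers of $\mathscr C_\Phi(a,b,c,d)$. This is what Lemma~\ref{lem:Gamma2-G2-cocycle-reductions} supplies for the paper's factor $\mathcal K_3$ in \eqref{eq:Gamma2-K3}, using thirteen factors verified by machine. Your $K_1K'K''$ guess does match the shape of $\mathcal K_3$: it equals $K_2$ from \eqref{eq:Gamma2-K2-support-value} times a short new quotient, and $K_2$ is itself $K_1$ times three $\mathscr C_\Phi$-factors by Lemma~\ref{lem:Gamma2-K2-cocycle-reduction}. However, that shape comes from computing recursively. One acts on $S_0,S_1$ and uses that $q=g^2h$ acts by a scalar on $(X_2)_{q}=\mathbb Cx_2$, which is where the factors $\Phi^{g^3h}(g,q)$ and $\Phi_q(g,q)$ come from. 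A full pure-tensor expansion instead produces a single quotient, involving $\Phi_{g^3h}(g^3,h)$, the $\Phi^h$- and $\Phi^g$-factors of the nested degrees, and $\Phi_g(\varepsilon,\varepsilon)$. That quotient would have to be reduced from scratch.
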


\begin{proof}
Let $q=g^2h$.  Acting by $g^3h$ on the two  components in
\eqref{eq:x3-explicit} and using 
relation \eqref{eq:YD-projective-action} gives
\((g^3h)\rhd x_3=-\lambda^6\lambda'\mathcal K_3x_3\), where
\begin{equation}
\label{eq:Gamma2-K3}
\begin{aligned}
\mathcal K_3
={}&
\frac{
  \Phi_g(h,g)^2\,\Phi_g(h,\varepsilon)\,
  \Phi(\varepsilon g,g,\varepsilon gh)\,
  \Phi^g(g,gh)
}{
  \Phi_h(g,h)\,\Phi_g(h,g^2)\,
  \Phi_g(h,\varepsilon g^3)\,
  \Phi_g(g,h)\,\Phi_g(g,g)\,
  \Phi_g(\varepsilon,g)
}
\\
&\times
\frac{
  \Phi^{gh}(g,h)\,
  \Phi^q(g,gh)\,
  \Phi^{g^3h}(g,q)
}{
  \Phi_g(\varepsilon g,g)\,
  \Phi_g(\varepsilon g^2,g)\,
  \Phi_{gh}(g,gh)\,
  \Phi_q(g,q)\,
  \Phi(g,\varepsilon g,\varepsilon gh)
}.
\end{aligned}
\end{equation}
Lemma~\ref{lem:Gamma2-G2-cocycle-reductions} gives
$\mathcal K_3=1$, proving \eqref{eq:Gamma2-sigma3}.
\end{proof}

For \(\mathfrak s_\Phi(x,y)\) defined in
\eqref{eq:Gamma2-X3-cocycle-factor},
Lemma~\ref{lem:Gamma2-X3-cocycle-factor-identity} gives
\(\mathfrak s_\Phi(x,y)=\Theta_\Phi(x,y)^2\).

\begin{lemma}
\label{lem:Gamma2-X3-simplicity}
Assume \(\Delta=1\) and \(\lambda^2=\Theta_\Phi=-1\). Then
$X_1,X_2,X_3$ are simple and nonzero.
\end{lemma}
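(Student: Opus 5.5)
The plan is to proceed one adjoint object at a time, using the simplicity and vanishing criteria already proved in Subsection~\ref{subsec:Gamma2-adjoint-objects}. \(X_1\) is immediate: Lemma~\ref{lem:Gamma2-first-adjoint-formula} gives \(x_1\neq0\), and since \(\Delta=1\), Lemma~\ref{lem:Gamma2-X1-simple} shows that \(X_1\simeq M(gh,\sigma_1)\) is simple. For \(X_2\), the hypothesis \(\lambda^2=-1\) gives \(\lambda\neq-1\), so Lemma~\ref{lem:second-adjoint-vanishing} gives \(X_2\neq0\). Because \(\lambda^2=\Theta_\Phi\) holds by assumption, Lemma~\ref{lem:Gamma2-X2-simple} then shows that \(X_2\simeq M(g^2h,\sigma_2)\) is simple. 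For the nonvanishing of \(X_3\), Lemma~\ref{lem:Gamma2-X3-formula} gives \(X_3=0\) iff \(1+\lambda+\lambda^2=0\). With \(\lambda^2=-1\) we get \(1+\lambda+\lambda^2=\lambda\neq0\), so \(X_3\neq0\). (Lemma~\ref{lem:Gamma2-X3-nonvanishing} gives this as well.)

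The real content is the simplicity of \(X_3\), and I would mimic the proof of Lemma~\ref{lem:Gamma2-X2-simple}. By \eqref{eq:x3-explicit}, \(x_3\) is a nonzero multiple of \(b_3:=S_0-\kappa_3S_1\), where \(S_0=v\otimes x_2\) and \(S_1=hv\otimes(g\rhd x_2)\). We have \(\deg x_3=g^3h\), and by Lemma~\ref{lem:Gamma2-normal-forms-centralizers}, \(C_G(g^3h)=C_G(gh)=\langle\varepsilon,g^2,g^{-1}h\rangle\). The line \(\mathbb Cx_3\) is automatically stable under \(\varepsilon\) and \(g^2\), since these are central and act by scalars on each one-dimensional homogeneous component of \(V\) and \(X_2\). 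So the only condition to check is stability under \(g^{-1}h\), or equivalently \(h\rhd x_3\in\mathbb C(g\rhd x_3)\), since \(g\) and \(h\) both conjugate \(g^3h\) to \(\varepsilon g^3h\). To compute these actions I would use \(h\rhd x_2=r\,(g\rhd x_2)\), which holds because \(X_2\) is simple with one-dimensional components, together with \(h\rhd(hv)=\Phi_g(h,h)\mu v\) and \eqref{eq:g-action-on-hv}. This writes \(g\rhd b_3\) and \(h\rhd b_3\) in the basis \(\{hv\otimes x_2,\ v\otimes(g\rhd x_2)\}\). Proportionality then becomes a single scalar equation of the form \(\lambda^{2k}\cdot(\text{cocycle expression})=1\). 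The cocycle expression should be the factor \(\mathfrak s_\Phi(g,h)\) of \eqref{eq:Gamma2-X3-cocycle-factor}. Lemma~\ref{lem:Gamma2-X3-cocycle-factor-identity} gives \(\mathfrak s_\Phi(g,h)=\Theta_\Phi(g,h)^2=1\), so the condition should reduce to \(\lambda^4=1\), which holds since \(\lambda^2=-1\). If the power of \(\lambda\) comes out differently, the relation \(\lambda^2=\Theta_\Phi\) should still absorb it; I expect the condition to read \(\lambda^4=\Theta_\Phi^2\) or something equivalent to it.

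Once stability holds, the argument closes exactly as for \(X_2\). The line \(\mathbb Cx_3\) affords a one-dimensional \(\Phi_{g^3h}\)-projective representation \(\sigma_3\). Proposition~\ref{prop:adjoint-orbit-generation} gives \(X_3=\mathbb CG\rhd x_3\), so there is a nonzero surjection \(M(g^3h,\sigma_3)\to X_3\) out of a simple object, and it is therefore an isomorphism. The main obstacle is the second step: tracking the associator factors in \(h\rhd b_3\) and identifying the resulting cocycle product with \(\mathfrak s_\Phi\). I would first try to organise this computation like the coefficient reductions in the appendix, reusing \(r_0,r_1\) and \(D_0,D_1\) from the \(X_2\) calculation with \(g\mapsto g\), \(h\mapsto g^2h\). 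By Corollary~\ref{cor:Gamma2-change-of-generators} this is a legitimate change of generators: \((g,g^2h)\) satisfies the \(\Gamma_2\) relations.
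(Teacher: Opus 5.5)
Your handling of \(X_1\), \(X_2\), and \(X_3\neq0\) matches the paper. So does your overall plan for the simplicity of \(X_3\): reduce to stability of \(\mathbb Cx_3\) under \(g^{-1}h\), i.e.\ \(\mathbb C(g\rhd x_3)=\mathbb C(h\rhd x_3)\), and conclude from \(\mathfrak s_\Phi(g,h)=\Theta_\Phi(g,h)^2=1\). The gap is in the computational input you specify for the key step: the relation \(h\rhd x_2=r\,(g\rhd x_2)\) is false. Since \(g^2\) is central, \(h\) centralizes \(\deg x_2=g^2h\). Hence \(h\rhd x_2\in(X_2)_{g^2h}=\mathbb Cx_2\), that is, \(h\rhd x_2=\sigma_2(h)x_2\), whereas \(g\rhd x_2\in(X_2)_{\varepsilon g^2h}\). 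You carried over \(h\rhd x_1\in\mathbb C(g\rhd x_1)\), which holds because both \(g\) and \(h\) move \(gh\); it does not persist in degree \(g^2h\). With your relation, \(h\rhd S_0\) would be a multiple of \(hv\otimes(g\rhd x_2)\), which has degree \(g^3h\) rather than \(\varepsilon g^3h\). So the computation cannot even be written in the basis you name. The correct inputs are \(h\rhd x_2=\sigma_2(h)x_2\), together with \(h\rhd(g\rhd x_2)\) computed from \(hg=(\varepsilon g)h\), which brings in \(\sigma_2(h)\sigma_2(\varepsilon)\). One then gets \(g\rhd S_0=A_g\,v\otimes(g\rhd x_2)\), \(g\rhd S_1=B_g\,hv\otimes x_2\), \(h\rhd S_0=A_h\,hv\otimes x_2\), and \(h\rhd S_1=B_h\,v\otimes(g\rhd x_2)\). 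Stability is then the equation \(\kappa_3^2B_gB_h=A_gA_h\). For the same reason, the template you suggest (the \(X_2\) computation with \(h\mapsto g^2h\)) is the wrong one. The relevant analogue is the \(\Delta\)-computation of Lemma~\ref{lem:Gamma2-X1-simple} for the pair \((V,X_2)\) with representatives \((g,g^2h)\). Although \(X_3\) is not the first adjoint object of \((V,X_2)\), that transfer works here: the stability test depends only on the square of the coefficient of \(S_1\), and \(d_3=\lambda^2\kappa_3\) by Lemma~\ref{lem:x3-cocycle-reduction}, with \(\lambda^4=1\).

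Second, the identification you describe as ``should be'' \(\mathfrak s_\Phi(g,h)\) is where the content of the lemma lies, and your sketch omits what makes it work. In \(\kappa_3^2B_gB_h/(A_gA_h)\) the value \(\sigma_2(h)\) cancels, but \(\sigma_2(\varepsilon)\) and \(\sigma_2(g^2)\) remain. They must be computed from \(x_2=(1+\lambda)(T_0-pT_1)\), which gives \eqref{eq:Gamma2-sigma2-epsilon-parameter} and \eqref{eq:Gamma2-sigma2-g2-parameter}. The quotient is then \(\zeta^5\zeta'\mu\mu'\lambda^4\) times a scalar depending only on \(\Phi\), not a power of \(\lambda\) times a cocycle expression. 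It becomes \(\mathfrak s_\Phi(g,h)\) only after you use \(\lambda^4=1\), \(\zeta^2=\Phi_g(\varepsilon,\varepsilon)\), and, crucially, \(\Delta=1\) together with \eqref{eq:Delta-definition} to eliminate \(\zeta\zeta'\mu\mu'\); your plan uses \(\Delta=1\) only for \(X_1\). Finally, your argument for stability under \(\varepsilon\) and \(g^2\) is incomplete. Scalar actions on the factors make \(S_0\) and \(S_1\) eigenvectors, but not a priori with equal eigenvalues. Use instead that \(\varepsilon,g^2\in C_G(g)\cap C_G(g^2h)\) preserve \(\mathbb C(v\otimes x_2)\), and that \(\varphi_3^\Phi\) is \(G\)-equivariant.
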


\begin{proof}
Lemma \ref{lem:Gamma2-X1-simple} and \ref{lem:Gamma2-X2-simple}  give the simplicity of $X_1$
and $X_2$.  Since $\lambda^2=-1$, one has
$1+\lambda+\lambda^2=\lambda\neq0$.
Hence the explicit third-adjoint formula gives
\(x_3=(1+\lambda+\lambda^2)(S_0-\kappa_3S_1)\), where
\(S_0=v\otimes x_2\) and
\(S_1=hv\otimes(g\rhd x_2)\). Moreover,
$X_3=\mathbb C G\rhd x_3$.
Write $q=g^2h$.
 {Equations~\eqref{eq:tensor-product} and
\eqref{eq:YD-projective-action} give}
\begin{align}
 g\rhd x_3
 &= (1+\lambda+\lambda^2)
 \bigl(A_g\,v\otimes(g\rhd x_2)-\kappa_3B_g\,hv\otimes x_2\bigr),
 \label{eq:Gamma2-g-action-x3-line}
 \\
 h\rhd x_3
 &= (1+\lambda+\lambda^2)
 \bigl(A_h\,hv\otimes x_2-\kappa_3B_h\,v\otimes(g\rhd x_2)\bigr),
 \label{eq:Gamma2-h-action-x3-line}
\end{align}
where
\begin{align*}
 A_g&=\Phi^g(g,q)\lambda,\quad
 A_h=\Phi^h(g,q)\sigma_2(h),
 \\
 B_g&=
 \Phi^g(\varepsilon g,\varepsilon q)
 \frac{\Phi_g(g,h)\lambda\zeta}
      {\Phi_g(\varepsilon,g)\Phi_g(h,\varepsilon g)}
 \Phi_q(g,g)\sigma_2(g^2),
 \\
 B_h&=
 \Phi^h(\varepsilon g,\varepsilon q)\Phi_g(h,h)\mu
 \frac{
  \Phi_q(h,g)\sigma_2(h)\sigma_2(\varepsilon)
 }{
  \Phi_q(\varepsilon g,h)\Phi_q(g,\varepsilon)
 }.
\end{align*}
Here $X_2\simeq M(q,\sigma_2)$ and
\begin{align}
 \sigma_2(\varepsilon)
 &=
 \Phi^\varepsilon(g,gh)\Phi^\varepsilon(g,h)
 \zeta^2\zeta',
 \label{eq:Gamma2-sigma2-epsilon-parameter}
 \\
 \sigma_2(g^2)
 &=
 \Phi^{g^2}(g,gh)\Phi^{g^2}(g,h)
 \frac{\lambda^4\mu'}{\Phi_g(g,g)^2}.
 \label{eq:Gamma2-sigma2-g2-parameter}
\end{align}

For the degree $g^3h$, one has
$C_G(g^3h)=\langle\varepsilon,g^2,h^2,g^{-1}h\rangle$
by Lemma~\ref{lem:Gamma2-normal-forms-centralizers}.
The elements $\varepsilon,g^2,h^2$ belong to
$C_G(g)\cap C_G(q)$.  The line $\mathbb C(v\otimes x_2)$ is therefore
stable under these three elements.    The map \(\varphi_3^\Phi\) commutes
with the \(G\)-action.  Hence, for
\(a\in\{\varepsilon,g^2,h^2\}\),
\[
a\rhd x_3
=\varphi_3^\Phi\bigl(a\rhd(v\otimes x_2)\bigr)
\in\mathbb Cx_3.
\]
Thus \(\mathbb Cx_3\) is stable under these three elements.  It remains
to check stability under \(g^{-1}h\).  By the invertibility of the projective action, this is
equivalent to
\(\mathbb C(g\rhd x_3)=\mathbb C(h\rhd x_3)\).
Equations \eqref{eq:Gamma2-g-action-x3-line} and
\eqref{eq:Gamma2-h-action-x3-line} show that this holds if and only if
\begin{equation}
\label{eq:Gamma2-X3-determinant-condition}
 \kappa_3^2B_gB_h=A_gA_h.
\end{equation}
Lemma~\ref{lem:Gamma2-projective-representations-one-dimensional} now
shows that the invariance of \(\mathbb Cx_3\) under
\(C_G(g^3h)\) is equivalent to the simplicity of the object
$X_3=\mathbb C G\rhd x_3$.

Substituting \eqref{eq:Gamma2-sigma2-epsilon-parameter} and
\eqref{eq:Gamma2-sigma2-g2-parameter} into
\(\kappa_3^2B_gB_h/(A_gA_h)\), we obtain
\(\zeta^5\zeta'\mu\mu'\lambda^4\) times a scalar depending only
on \(\Phi\).  Using
\(\lambda^4=1\), \(\zeta^2=\Phi_g(\varepsilon,\varepsilon)\),
\(\Delta=1\), and the definition of \(\Delta\), the quotient becomes
$\mathfrak s_\Phi(g,h)$.  Thus
\eqref{eq:Gamma2-X3-determinant-condition} is equivalent to
$\mathfrak s_\Phi(g,h)=1$.
Lemma~\ref{lem:Gamma2-X3-cocycle-factor-identity} gives
\(\mathfrak s_\Phi(g,h)=\Theta_\Phi(g,h)^2=1\). Hence the determinant
condition holds and $X_3$ is simple.
\end{proof}

\begin{lemma}
\label{lem:Gamma2-G2-parameter-identities}
The parameters defined above satisfy
\begin{align}
&\Delta=1
\quad\Longleftrightarrow\quad
\Delta'=1,
\label{eq:Gamma2-Delta-unit-loci}\\
&\Theta_\Phi(hg,h^{-1})
=
\Theta_\Phi(g,h).
\label{eq:Gamma2-R2-Theta-invariance}
\end{align}

\end{lemma}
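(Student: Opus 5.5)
For \eqref{eq:Gamma2-Delta-unit-loci} I would avoid manipulating \(\Delta\) and \(\Delta'\) as cocycle expressions and argue structurally. By \eqref{eq:first-adjoint-braiding-symmetry}, \(X_1\cong(\operatorname{ad}V)(W)\simeq(\operatorname{ad}W)(V)\cong Y_1\); this uses Lemma~\ref{lem:adjoint-object-realization} to pass between the restricted images and the adjoint objects. Hence \(X_1\) is simple if and only if \(Y_1\) is simple. Lemma~\ref{lem:Gamma2-X1-simple} and Lemma~\ref{lem:opposite-first-adjoint-simple} then give \(\Delta=1\Leftrightarrow\Delta'=1\).

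As a consistency check one can also compute directly. The ratio \(\Delta/\Delta'\) is \(\Phi\)-dependent only, since the parameter factor \(\zeta\zeta'\mu\mu'\) cancels. Its triviality is not needed, however, because only the unit loci are asserted.

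For \eqref{eq:Gamma2-R2-Theta-invariance}, first note that the substitution is legitimate. Put \(x=hg\) and \(y=h^{-1}\). From \(hg=\varepsilon gh\) and centrality of \(\varepsilon\) one gets \(h^{-1}(hg)=g\) and \((hg)h^{-1}=\varepsilon g\). Hence \(yx=\varepsilon^{-1}xy=\varepsilon xy\), so \(\Theta_\Phi(x,y)\) is defined.

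My main route would again be structural rather than a brute-force cocycle calculation. Consider a pair \((V_0,W_0)\) over \(G\) with \(\Delta=1\), \(\lambda'=-1\), and \(\lambda\ne-1\). The quantity \(\Theta_\Phi(x,y)\) is the obstruction (Lemma~\ref{lem:Gamma2-X2-simple}) to simplicity of the second adjoint object for the reflected pair \(R_2(V_0,W_0)=(Y_1,W_0^*)\), whose representatives are \((hg,h^{-1})\) by Lemma~\ref{lem:reflections-remain-Gamma2} with \(n=1\). Transport of \(\lambda\)-type parameters through the reflection is governed by Lemma~\ref{lem:Gamma2-X1-Y1-inducing-values} and Lemma~\ref{lem:dual-projective-representation}. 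Combined with the reflection theory, this would relate simplicity of \((\operatorname{ad}Y_1)^2(W_0^*)\) to that of \(X_2\). Because the relation only holds on a specific parameter locus, it gives the identity only when suitable objects exist, so it is not a clean proof.

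The safer plan is the direct cocycle verification, in the style of Appendix~\ref{app:Gamma2-calculations}. I would form the quotient \(\Theta_\Phi(hg,h^{-1})/\Theta_\Phi(g,h)\), which is a product of about forty values of \(\Phi\). Each \(\Phi_x\) and \(\Phi^x\) factor would be expanded into three \(\Phi\)-values. I would then cancel these by repeated use of the \(3\)-cocycle identity \eqref{eq:three-cocycle-identity-local}, using also Lemma~\ref{lem:local-cocycle-coherence} and \eqref{eq:cocycle-a-ainv-a} to handle the factors involving \(h^{-1}\).

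A useful first simplification is to apply Lemma~\ref{lem:Gamma2-primary-cocycle}. Multiplicativity in \(\Phi\) reduces the problem to the summands \(p_P^*\Phi_P\) and \(p_A^*\Phi_A\). The odd part contributes \(1\) on both sides, by the same alternator computation as in Lemma~\ref{lem:Gamma2-Theta-two-primary}. Moreover, \(\Theta\) changes under coboundaries \(\Phi\mapsto\Phi\,\partial\beta\) in a controlled way, and the key check is that this change agrees on both sides. For the \(2\)-part one could reduce further to the universal case \(\Gamma_2\) modulo a suitable central subgroup.

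The main obstacle is the bookkeeping in this cocycle cancellation. In particular, the factors \(\Phi^{h^{-1}}(\cdot,\cdot)\) and \(\Phi_{h^{-1}}\) must be rewritten in terms of \(h\), which requires inverting arguments via the cocycle identity with tuples such as \((h,h^{-1},a,b)\). I would organize this as a chain of intermediate lemmas, each cancelling one group of terms, and check the final identity by machine on small quotients of \(\Gamma_2\) with explicit cocycles, as the paper does with its GAP certificates.
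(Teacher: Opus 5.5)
Your proposal is correct and matches the paper. Part \eqref{eq:Gamma2-Delta-unit-loci} is proved exactly as you do, from \(X_1\simeq Y_1\) via \eqref{eq:first-adjoint-braiding-symmetry} together with Lemmas~\ref{lem:Gamma2-X1-simple} and~\ref{lem:opposite-first-adjoint-simple}, and part \eqref{eq:Gamma2-R2-Theta-invariance} is the direct cocycle identity of Lemma~\ref{lem:Gamma2-R2-Theta-cocycle}, which writes \(\Theta_\Phi(hg,h^{-1})/\Theta_\Phi(g,h)\) as a product of \(94\) integer powers of \(\mathscr C_\Phi\).

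The Sylow reduction you propose is unnecessary, and it would require the coboundary invariance you left unchecked. Also, the paper's GAP certificate verifies a symbolic factorization valid for every group and cocycle, which is stronger than testing explicit cocycles on small quotients of \(\Gamma_2\), so your machine check should aim for the former.
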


\begin{proof}
Equation~\eqref{eq:first-adjoint-braiding-symmetry} gives
\(X_1\simeq Y_1\). Hence Lemmas~\ref{lem:Gamma2-X1-simple}
and~\ref{lem:opposite-first-adjoint-simple} yield
\eqref{eq:Gamma2-Delta-unit-loci}.
Equation \eqref{eq:Gamma2-R2-Theta-invariance} is the cocycle identity
proved in
Lemma~\ref{lem:Gamma2-R2-Theta-cocycle}.
\end{proof}

\begin{prop}
\label{prop:Gamma2-standard-G2-stability-under-reflections}
Assume
\begin{equation}
\label{eq:Gamma2-standard-G2-locus}
 \Delta=1,
 \qquad
 \lambda'=-1,
 \qquad
 \lambda^2=\Theta_\Phi(g,h)=-1.
\end{equation}
Then $(V,W)$ admits all reflections and its Cartan graph is standard
of type $G_2$.
\end{prop}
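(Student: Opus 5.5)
Under \eqref{eq:Gamma2-standard-G2-locus}, the goal is to show that the Cartan matrix at $[V,W]$ is $\left(\begin{smallmatrix}2&-3\\-1&2\end{smallmatrix}\right)$ and that the locus \eqref{eq:Gamma2-standard-G2-locus} is carried, up to the correct order of entries, to itself by both reflections. Induction on the length of a reflection sequence then shows that every object has this matrix, which is the standard $G_2$ claim. The argument follows the pattern of Proposition~\ref{prop:Gamma2-standard-A2}.

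\emph{Local Cartan matrix.} Lemma~\ref{lem:Gamma2-X3-simplicity} makes $X_1,X_2,X_3$ simple and nonzero. Since $\lambda^2=-1$, Lemma~\ref{lem:Gamma2-fourth-adjoint-full} gives $X_4=0$, so $m_{12}=3$. For the other row, \eqref{eq:Gamma2-Delta-unit-loci} gives $\Delta'=1$. Lemma~\ref{lem:opposite-first-adjoint-simple} then makes $Y_1$ simple, and $\lambda'=-1$ gives $Y_2=0$, so $m_{21}=1$. Hence both reflections exist, with
\[
R_1(V,W)=(V^*,X_3),\qquad R_2(V,W)=(Y_1,W^*).
\]
By Lemma~\ref{lem:reflections-remain-Gamma2}, both reflected tuples again have support in the $\Gamma_2$ shape, with the representatives listed there.

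\emph{Reflection $R_2$.} The representatives are $x=hg$ and $y=h^{-1}$. I need new parameters $\Delta^{(2)}=1$, $\lambda'^{(2)}=-1$, and $(\lambda^{(2)})^2=\Theta_\Phi(hg,h^{-1})=-1$. The value $\Theta_\Phi(hg,h^{-1})=\Theta_\Phi(g,h)$ comes from \eqref{eq:Gamma2-R2-Theta-invariance}. Lemma~\ref{lem:Gamma2-X1-Y1-inducing-values} gives $\lambda^{(2)}=\tau_1(hg)=-\lambda\lambda'=\lambda$, whose square is $-1$. Lemma~\ref{lem:dual-projective-representation} gives $\lambda'^{(2)}=\lambda'=-1$. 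For $\Delta^{(2)}=1$, I would invoke Lemma~\ref{lem:reflection-basic-properties}: the second row is preserved, so $(\operatorname{ad}W^*)(Y_1)\simeq V$ is nonzero and simple, and by \eqref{eq:first-adjoint-braiding-symmetry} its first adjoint is simple. Lemma~\ref{lem:Gamma2-X1-simple}, applied to the new pair, then yields $\Delta^{(2)}=1$. So $R_2(V,W)$ lies on the locus in the same order.

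\emph{Reflection $R_1$.} This is the main obstacle. Here the first row $(2,-3)$ is preserved, so $X_3$ plays the role of the second simple object, and the pair $(V^*,X_3)$ must land on the locus in the same order: $V^*$ is the long root vertex with $\lambda$-parameter $\lambda$, since Lemma~\ref{lem:dual-projective-representation} gives $\chi^*(g^{-1})=\lambda$. The value $\lambda'$ for $X_3$ is $\sigma_3(g^3h)=-\lambda^6\lambda'=-\lambda^2\lambda'=-1\cdot(-1)\cdot(-1)$; using $\lambda^6=\lambda^2=-1$ gives $-\lambda^6\lambda'=-(-1)(-1)=-1$, as required. Lemma~\ref{lem:reflection-basic-properties} gives $(\operatorname{ad}V^*)^3(X_3)\neq0$ and $(\operatorname{ad}V^*)^4(X_3)=0$, with $(\operatorname{ad}V^*)(X_3)$ simple by Lemma~\ref{lem:root-string-adjoint-simplicity} or directly via \cite[Lemma~3.8]{reflection3}; hence $\Delta=1$ for the new pair by Lemma~\ref{lem:Gamma2-X1-simple}. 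The remaining point is $\Theta_\Phi(g^{-1},g^3h)=-1$, which is a cocycle identity analogous to Lemma~\ref{lem:Gamma2-R2-Theta-cocycle} and would be proved in the appendix. There may be a shortcut, which I would try first. Simplicity of $(\operatorname{ad}V^*)^2(X_3)$, guaranteed by reflection theory, already forces $(\lambda^{(1)})^2=\Theta_\Phi(g^{-1},g^3h)$ through Lemma~\ref{lem:Gamma2-X2-simple}. Likewise, $(\operatorname{ad}V^*)^4(X_3)=0$ together with $(\operatorname{ad}V^*)^3(X_3)\ne0$ gives $(\lambda^{(1)})^2=-1$ by Lemmas~\ref{lem:Gamma2-X3-formula} and~\ref{lem:Gamma2-fourth-adjoint-full}. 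Either route gives $\Theta=-1$ without any new cocycle computation. The step I expect to require the most care is justifying that the adjoint objects of the reflected pair are simple: this must come from \cite[Lemma~3.8]{reflection3} before finiteness of the graph is known, rather than from Lemma~\ref{lem:root-string-adjoint-simplicity}, which presupposes it.
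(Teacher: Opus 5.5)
Your proposal is correct and follows the paper's proof essentially step for step: the initial matrix comes from Lemmas~\ref{lem:Gamma2-X3-simplicity}, \ref{lem:Gamma2-fourth-adjoint-full} and~\ref{lem:opposite-first-adjoint-simple}; the simplicity of the adjoints of the reflected pairs comes from \cite[Lemma~3.8]{reflection3} (or from $R_i^2(P)\simeq P$) rather than from finiteness; and the parameters of $R_2(V,W)$ come from Lemmas~\ref{lem:Gamma2-X1-Y1-inducing-values} and~\ref{lem:dual-projective-representation} and from~\eqref{eq:Gamma2-R2-Theta-invariance}. Your ``shortcut'' for $\Theta_\Phi(g^{-1},g^3h)$ is exactly the paper's argument. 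The simplicity of $(\operatorname{ad}V^*)^2(X_3)\simeq X_1$ and Lemma~\ref{lem:Gamma2-X2-simple} give $\Theta_{Q_1}=\lambda_{Q_1}^2$, and $\lambda_{Q_1}=\lambda$ already forces this to be $-1$, so no new cocycle identity is needed. Your second route only reproves $\lambda_{Q_1}^2=-1$.
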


\begin{proof}
At the initial object, the first two conditions and
\eqref{eq:Gamma2-Delta-unit-loci} make $X_1$ and $Y_1$ simple and give
$Y_2=0$.  Lemma \ref{lem:Gamma2-X2-simple} and
Lemma~\ref{lem:Gamma2-X3-simplicity} make
$X_2$ and $X_3$ simple and nonzero.  Since
\(1+\lambda+\lambda^2+\lambda^3=0\),
\eqref{eq:Gamma2-x4-full} gives $X_4=0$.  Thus both initial
reflections are defined and the initial Cartan matrix is of type $G_2$.

Let \(Q_1=R_1(V,W)\), so that \(Q_1=(V^*,X_3)\). Use the support
representatives \((g^{-1},g^3h)\) from
Lemma~\ref{lem:reflections-remain-Gamma2}. By
\cite[Lemma~3.8]{reflection3},
\[
 (\operatorname{ad}V^*)(X_3)\simeq X_2,
 \qquad
 (\operatorname{ad}V^*)^2(X_3)\simeq X_1.
\]
Since \(X_2\) is simple,
Lemma~\ref{lem:Gamma2-X1-simple}, applied to \(Q_1\), gives
\(\Delta_{Q_1}=1\).
Lemma~\ref{lem:dual-projective-representation} gives
\(\lambda_{Q_1}=\lambda\). In particular,
\(\lambda_{Q_1}\ne-1\), because \(\lambda^2=-1\). The second
displayed isomorphism and Lemma~\ref{lem:Gamma2-X2-simple} therefore
give \(\Theta_{Q_1}=\lambda_{Q_1}^2=\lambda^2=-1\). On the other
hand, Lemma~\ref{lem:Gamma2-X3-support-value} gives
\(\lambda'_{Q_1}=\sigma_3(g^3h)=-\lambda^6\lambda'=-1\).
Thus \(Q_1\) satisfies all three equations in
\eqref{eq:Gamma2-standard-G2-locus}.

Let $Q_2=R_2(V,W)=(Y_1,W^*)$.
By \cite[Lemma~3.8]{reflection3} and
\eqref{eq:first-adjoint-braiding-symmetry},
\((\operatorname{ad}Y_1)(W^*)\simeq V\), so
\(\Delta_{Q_2}=1\).
Lemma~\ref{lem:Gamma2-X1-Y1-inducing-values} and the cocycle identity
in Lemma~\ref{lem:Gamma2-G2-parameter-identities} give
\(\lambda_{Q_2}=\tau_1(hg)=-\lambda\lambda'=\lambda\),
\(\lambda'_{Q_2}=\lambda'=-1\), and
\(\Theta_{Q_2}=\Theta_\Phi(g,h)=-1\).
Thus \(Q_2\) also satisfies \eqref{eq:Gamma2-standard-G2-locus}.

After either reflection, the support assumptions and all three
parameter equations are preserved.  The first paragraph of the proof
therefore applies to each reflected tuple.  Induction proves that all
reflections are defined and that the Cartan graph is standard of type
\(G_2\).
\end{proof}

\begin{cor}
\label{cor:Gamma2-standard-G2-conditions}
The following conditions are equivalent:

(1)
The objects $X_1,X_2,X_3,Y_1$ are simple and nonzero, while
$X_4=Y_2=0$.

(2)
The equations in \eqref{eq:Gamma2-standard-G2-locus} hold.
\end{cor}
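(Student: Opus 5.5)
The plan is to prove the two implications separately, relying only on the vanishing and simplicity criteria already established for the adjoint objects \(X_n\) and \(Y_n\).

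For (2)\(\Rightarrow\)(1), assume \eqref{eq:Gamma2-standard-G2-locus}. Lemma~\ref{lem:Gamma2-X1-simple} gives that \(X_1\) is simple, and it is nonzero by Lemma~\ref{lem:Gamma2-first-adjoint-formula}. By \eqref{eq:Gamma2-Delta-unit-loci} we have \(\Delta'=1\), so Lemma~\ref{lem:opposite-first-adjoint-simple} makes \(Y_1\) simple. Since \(\lambda'=-1\), the same lemma gives \(Y_2=0\). Because \(\lambda^2=-1\), we have \(\lambda\neq-1\), so Lemma~\ref{lem:second-adjoint-vanishing} gives \(X_2\neq0\). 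Lemma~\ref{lem:Gamma2-X3-simplicity} then makes \(X_1,X_2,X_3\) simple and nonzero. Finally, \(\lambda^2=-1\) gives \(X_4=0\) by Lemma~\ref{lem:Gamma2-fourth-adjoint-full}. This direction is essentially the first paragraph of the proof of Proposition~\ref{prop:Gamma2-standard-G2-stability-under-reflections}.

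For (1)\(\Rightarrow\)(2), read off the conditions step by step.

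1. Simplicity of \(X_1\) gives \(\Delta=1\) by Lemma~\ref{lem:Gamma2-X1-simple}.
2. Then \(\Delta'=1\) by \eqref{eq:Gamma2-Delta-unit-loci}, so \(Y_2=0\) forces \(\lambda'=-1\) by Lemma~\ref{lem:opposite-first-adjoint-simple}.
3. Since \(X_2\neq0\), we get \(\lambda\neq-1\) from Lemma~\ref{lem:second-adjoint-vanishing}.
4. Since \(X_2\) is simple, Lemma~\ref{lem:Gamma2-X2-simple} gives \(\lambda^2=\Theta_\Phi(g,h)\).
5. With \(X_1,X_2,X_3\) simple and nonzero, Lemma~\ref{lem:Gamma2-fourth-adjoint-full} applies, and \(X_4=0\) yields \(\lambda^2=-1\).

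Hence \(\Theta_\Phi(g,h)=\lambda^2=-1\), and all three equations of \eqref{eq:Gamma2-standard-G2-locus} hold.

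I expect no serious obstacle, since every step is a direct application of an earlier lemma. The only point requiring care is that each lemma's hypotheses are available at the moment it is used. Lemma~\ref{lem:Gamma2-X2-simple} needs \(X_1\) simple and \(X_2\neq0\), and Lemma~\ref{lem:Gamma2-fourth-adjoint-full} needs \(X_1,X_2,X_3\) simple and nonzero; both are supplied directly by (1). In the converse direction, the nonvanishing of \(X_3\) and its simplicity come from Lemma~\ref{lem:Gamma2-X3-simplicity}, whose hypotheses \(\Delta=1\) and \(\lambda^2=\Theta_\Phi=-1\) are exactly part of (2).
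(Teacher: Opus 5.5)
Your proposal is correct and follows the paper's proof essentially step for step: (1)\(\Rightarrow\)(2) extracts \(\Delta=1\), \(\lambda'=-1\), \(\lambda\neq-1\), \(\lambda^2=\Theta_\Phi(g,h)\), and \(\lambda^2=-1\) from the same lemmas in the same order, and (2)\(\Rightarrow\)(1) is exactly the first paragraph of the proof of Proposition~\ref{prop:Gamma2-standard-G2-stability-under-reflections}, which the paper simply cites. The only cosmetic difference is that you obtain \(\Delta'=1\) from \(\Delta=1\) via \eqref{eq:Gamma2-Delta-unit-loci}, whereas the paper gets it directly from the simplicity of \(Y_1\) via Lemma~\ref{lem:opposite-first-adjoint-simple}.
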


\begin{proof}
Assume (1).  Lemmas~\ref{lem:Gamma2-X1-simple} and
\ref{lem:opposite-first-adjoint-simple} give
\(\Delta=\Delta'=1\).  Since \(Y_2=0\),
Lemma~\ref{lem:opposite-second-adjoint-vanishing} gives
\(\lambda'=-1\).
Since \(X_2\neq0\), Lemma~\ref{lem:second-adjoint-vanishing} gives
\(\lambda\neq-1\).  The simplicity of \(X_2\) and
Lemma~\ref{lem:Gamma2-X2-simple} then give
\(\lambda^2=\Theta_\Phi(g,h)\).  Finally,
Lemma~\ref{lem:Gamma2-fourth-adjoint-full} and \(X_4=0\) give
\((1+\lambda)(1+\lambda^2)=0\).  Since \(\lambda\neq-1\), it follows
that \(\lambda^2=-1\).  Thus
\(\lambda^2=\Theta_\Phi(g,h)=-1\), and (2) holds.

 {The implication (2)\(\Rightarrow\)(1) was proved in the first
paragraph of the proof of
Proposition~\ref{prop:Gamma2-standard-G2-stability-under-reflections}.}
\end{proof}

\subsection{Finite Cartan graphs}
\label{subsec:Gamma2-finite-Cartan-graphs}

\begin{prop}
\label{prop:Gamma2-reduction-to-A2-or-G2}
Assume that \((V,W)\) is braided-indecomposable and that
\(\dim\mathcal B(V\oplus W)<\infty\).
Then its Cartan graph is standard
of type $A_2$ or standard of type $G_2$, up to interchanging the two
indices.
\end{prop}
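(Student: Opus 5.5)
The argument locates an object of finite type in the Cartan graph, identifies which finite-type matrices can occur, and then uses the stability results of the previous subsections to spread that matrix to every object. By Theorem~\ref{thm:Nichols-finiteness-criterion}, \((V,W)\) admits all reflections and \(\mathcal G(V,W)\) is finite. Braided-indecomposability makes it indecomposable, so Proposition~\ref{prop:pair-with-finite-Cartan-matrix} gives an object \(N=(N_1,N_2)\in\mathcal F_2(V,W)\) with \(1\le a_{12}^{[N]}a_{21}^{[N]}\le 3\). By Lemma~\ref{lem:reflections-remain-Gamma2}, applied inductively along the reflection sequence, \(N\) again satisfies the standing \(\Gamma_2\) support assumptions with respect to some generating pair \((x,y)\) with \(yx=\varepsilon xy\). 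Hence all the lemmas of this section apply to \(N\), with the parameters computed for \(N\).

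We then split according to the finite type of \(A^{[N]}\).
\begin{itemize}
\item If it is \(B_2\) (up to transposition), this contradicts Proposition~\ref{prop:Gamma2-no-local-B2}.
\item If it is \(A_2\), then by Lemma~\ref{lem:root-string-adjoint-simplicity} the objects \((\operatorname{ad}N_1)(N_2)\) and \((\operatorname{ad}N_2)(N_1)\) are simple and nonzero, while the second adjoint objects vanish. So \(N\) satisfies the hypotheses of Proposition~\ref{prop:Gamma2-standard-A2}, and \(\mathcal G(N)\) is standard of type \(A_2\).
\item If it is \(G_2\), we interchange the two components of \(N\) if necessary so that \(m_{12}^{[N]}=3\) and \(m_{21}^{[N]}=1\). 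Lemma~\ref{lem:root-string-adjoint-simplicity} then gives that \(X_1,X_2,X_3,Y_1\) for \(N\) are simple and nonzero, while \(X_4=Y_2=0\). Corollary~\ref{cor:Gamma2-standard-G2-conditions} turns this into the equations \eqref{eq:Gamma2-standard-G2-locus}, and Proposition~\ref{prop:Gamma2-standard-G2-stability-under-reflections} shows that \(\mathcal G(N)\) is standard of type \(G_2\).
\end{itemize}

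It remains to pass from \(N\) back to \((V,W)\). Since \(N\in\mathcal F_2(V,W)\) and \(R_i^2\simeq\mathrm{id}\) by Lemma~\ref{lem:reflection-basic-properties}, the tuple \((V,W)\) is obtained from \(N\) by reversing the reflection sequence. Therefore \(\mathcal X(N)=\mathcal X(V,W)\) and the two Cartan graphs coincide as connected components. A standard Cartan graph has the same matrix at every object, so \(\mathcal G(V,W)\) is standard of the same type. The only adjustment is that, in the \(G_2\) case, the index interchange performed at \(N\) is carried along, which is exactly the ``up to interchanging'' clause in the statement.

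The step I expect to need the most care is the bookkeeping in the \(G_2\) case. One must check that after interchanging the components of \(N\) the parameters \(\Delta,\lambda,\lambda',\Theta_\Phi\) are computed for the correct ordered pair of support representatives, that the hypothesis ``\(X_2\ne0\)'' used in Lemma~\ref{lem:Gamma2-X2-simple} is supplied by \(m_{12}=3\), and that Lemma~\ref{lem:root-string-adjoint-simplicity} gives simplicity of every intermediate adjoint object, not only of the last one. Everything else reduces to results already established.
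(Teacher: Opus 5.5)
Your proposal is correct and follows the paper's proof essentially step for step. You find an object of finite type (through Proposition~\ref{prop:pair-with-finite-Cartan-matrix}, where the paper invokes Lemma~\ref{lem:finite-type-object} directly), exclude $B_2$ by Proposition~\ref{prop:Gamma2-no-local-B2}, and propagate $A_2$ via Proposition~\ref{prop:Gamma2-standard-A2} or $G_2$ via Corollary~\ref{cor:Gamma2-standard-G2-conditions} and Proposition~\ref{prop:Gamma2-standard-G2-stability-under-reflections}; your explicit transport back to $(V,W)$ using involutivity of reflections only spells out what the paper compresses into ``the entire connected graph.''
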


\begin{proof}
By Theorem~\ref{thm:Nichols-finiteness-criterion},
finite-dimensionality gives all reflections and a finite Cartan
graph. It is connected by
Proposition~\ref{prop:associated-Cartan-graph}. By
Lemma~\ref{lem:finite-type-object}, there exists an object $P=(P_1,P_2)$ whose Cartan
matrix is of finite type. The tuple \(P\) again satisfies the standing
\(\Gamma _2\) support assumptions, and all
nonzero objects obtained by the two iterated braided adjoint actions
are simple by
Lemma~\ref{lem:root-string-adjoint-simplicity}. The possible indecomposable
rank-two Cartan matrices of finite type are \(A_2,B_2\), and \(G_2\), up to
transposition.

If the matrix at $P$ is of type $A_2$, Proposition
\ref{prop:Gamma2-standard-A2}, applied to $P$, shows that the entire
connected graph is standard of type $A_2$.  Type $B_2$ is impossible
by Proposition~\ref{prop:Gamma2-no-local-B2}.  Finally, if the matrix
at $P$ is of type $G_2$, interchange the entries of \(P\), if
necessary, so that its matrix is
\(\left(\begin{smallmatrix}2&-3\\-1&2\end{smallmatrix}\right)\).
 {Corollary~\ref{cor:Gamma2-standard-G2-conditions}, applied
to \(P\), gives \eqref{eq:Gamma2-standard-G2-locus}.
Proposition~\ref{prop:Gamma2-standard-G2-stability-under-reflections}
then shows that the  connected Cartan graph is standard of type
\(G_2\).}
\end{proof}

\subsection{ {Nichols algebras corresponding to positive roots of type
\texorpdfstring{$G_2$}{G2}}}
\label{subsec:Gamma2-G2-simple-factors}
The following result is standard.
\begin{lemma}
\label{lem:Gamma4-support-group-restriction}
Let \(H\leq G\), and let \(R\in{}_G^G\mathcal{YD}^{\Phi}\) satisfy
\(\operatorname{supp}R\subseteq H\).  Restricting the action to
\(H\) and viewing the grading as an \(H\)-grading gives an object
\(R|_H\in{}_H^H\mathcal{YD}^{\Phi|_H}\).  After restriction to
\(H\), the identity maps on the tensor powers identify
\(\mathcal B(R)\) with \(\mathcal B(R|_H)\).  In particular, the two
Nichols algebras have the same dimension.
\end{lemma}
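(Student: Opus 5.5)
The plan is to verify directly that restriction to \(H\) is compatible with every piece of braided structure that enters the definition of the Nichols algebra. The only structure of \(\mathcal B(R)\) that matters is the braided Hopf algebra structure on \(T(R)\) and the largest graded Hopf ideal \(I(R)\) in degrees \(\geq2\).

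First I check that \(R|_H\) lies in \({}_H^H\mathcal{YD}^{\Phi|_H}\). The grading takes values in \(H\) because \(\operatorname{supp}R\subseteq H\). For \(e,f\in H\) and \(v\in R_g\) with \(g\in H\), the projective action relation \eqref{eq:YD-projective-action} involves only the values \(\Phi(e,f,g)\), \(\Phi(efgf^{-1}e^{-1},e,f)\) and \(\Phi(e,fgf^{-1},f)\). All arguments here lie in \(H\), so this is exactly the relation for \((\Bbbk H,\Phi|_H)\). The conditions \(1\rhd v=v\) and \(e\rhd R_g\subseteq R_{ege^{-1}}\) are immediate.

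Next I compare tensor powers. Since \(\operatorname{supp}R^{\otimes n}\subseteq H\), formula \eqref{eq:tensor-product} for \(x\in H\) only evaluates \(\Phi\) on triples in \(H\). Hence the identity map \((R^{\otimes n})|_H=(R|_H)^{\otimes n}\) intertwines the restricted \(H\)-actions and gradings. By the same reasoning, the associator \(a\) and the braiding \(c(u_e\otimes v_f)=e\rhd v_f\otimes u_e\) are given by the same formulas in both categories, because \(e\in H\). Consequently the multiplication, the coproduct (built from the braiding and associators), the unit and the counit of \(T(R)\) coincide with those of \(T(R|_H)\) under the identity maps.

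The one point needing care is the ideal. Here I would use the characterization \(I(V)_n=\ker\bigl(\mathfrak S_n\bigr)\) via the quantum symmetrizer built from the braiding and associators. Alternatively, I would argue that a graded subspace of \(\bigoplus_{n\geq2}T^n\) is a Hopf ideal in \(\GG\) (stable under \(G\)) exactly when it is a Hopf ideal for the restricted structure, and is \(H\)-stable. The symmetrizer description avoids this stability subtlety: it shows \(I(R)=I(R|_H)\) as subspaces, because the maps defining them are literally equal. Then \(\mathcal B(R)=T(R)/I(R)=T(R|_H)/I(R|_H)=\mathcal B(R|_H)\) as graded vector spaces, and in particular the two Nichols algebras have equal dimensions.

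I expect the main obstacle to be justifying the kernel description of \(I\) in the twisted setting. If that description is not available to cite, I would instead observe that \(I(R)\) is the radical of the canonical pairing, equivalently the kernel of the iterated reduced coproduct components \(T^n\to T^1{}^{\otimes n}\). These components are expressed purely through the coincident coalgebra structures, so the two kernels agree.
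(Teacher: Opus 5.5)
Your proposal is correct and takes essentially the same route as the paper: since every degree lies in \(H\), the defining identities, gradings, \(H\)-actions, associators and braidings on all tensor powers agree, so the quantum symmetrizers, and hence the Nichols ideals, coincide degreewise. The paper uses the symmetrizer description of the Nichols ideal without comment. Your remark that this description avoids the \(G\)-stability versus \(H\)-stability issue justifies the step it leaves implicit.
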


\begin{proof}
The defining identities for \(R|_H\) are obtained by restricting
those for \(R\), so \(R|_H\) is well defined.  On every tensor power
of \(R\), the grading and the \(H\)-action agree in the two
categories.  Since all homogeneous degrees belong to \(H\), the
associativity constraints and braidings also agree.  Hence the
quantum symmetrizers, and therefore the Nichols ideals, coincide in
every degree.  The identity maps on the tensor powers induce the
asserted isomorphism.
\end{proof}

Let \(R=M(x,\tau)\) be one of the simple objects corresponding
to the positive roots considered below.
 {Choose nonzero vectors \(u_0\in R_x\) and
\(u_1\in R_{\varepsilon x}\), and define \(q_{ij}^R\) by}
\[
 {c_{R,R}(u_i\otimes u_j)=q_{ij}^R u_j\otimes u_i
\qquad(i,j\in\{0,1\}).}
\]
After restriction to the abelian subgroup generated by its support,
\(R\) is of diagonal type. By
\cite[Proposition~2.11]{huang2024classification},
\(\mathcal B(R)\) has the same  {dimension and generalized
Dynkin diagram} as an ordinary Nichols algebra of diagonal type.
We may therefore apply the classification of ordinary Nichols
algebras of diagonal type.
\begin{lemma}
\label{lem:Gamma2-diagonal-braiding}
Let $P=(R,S)$ be an ordered $\Gamma _2$-tuple with chosen support
representatives $(x,y)$, and write $R=M(x,\tau)$.  If
$\Theta_P$ denotes the scalar obtained from
\eqref{eq:Theta-Phi-definition} using the support representatives
\((x,y)\) in this order, then
\begin{equation}
\label{eq:Gamma2-diagonal-braiding}
q_{00}^R=q_{11}^R=\tau(x),
\qquad
q_{01}^Rq_{10}^R=\frac{\tau(x)^2}{\Theta_P}.
\end{equation}
\end{lemma}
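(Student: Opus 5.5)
We compute the three scalars directly from the definition of the braiding \(c(u_e\otimes v_f)=e\rhd v_f\otimes u_e\), and then identify \(q_{01}^Rq_{10}^R\) with a quantity already computed in the proof of Lemma~\ref{lem:Gamma2-X2-simple}.

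Throughout, put \(u_0\in R_x\) and \(u_1=y\rhd u_0\in R_{\varepsilon x}\). This is legitimate because \(yxy^{-1}=\varepsilon x\).

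For the diagonal entries, \(q_{00}^R\) is by definition the scalar by which \(x\) acts on \(R_x\), namely \(\tau(x)\). For \(q_{11}^R\) we must evaluate \((\varepsilon x)\rhd u_1\). We write \(\varepsilon x\cdot y=y\cdot x\) in \(G\) (since \(yx=\varepsilon xy\)) and use the projective relation \eqref{eq:YD-projective-action}. This gives
\[
(\varepsilon x)\rhd(y\rhd u_0)=\frac{\Phi_x(\varepsilon x,y)}{\Phi_x(y,x)}\,\tau(x)\,u_1.
\]
The claim \(q_{11}^R=\tau(x)\) then reduces to the cocycle identity \(\Phi_x(\varepsilon x,y)=\Phi_x(y,x)\). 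We would derive this from Lemma~\ref{lem:local-cocycle-coherence}, applied to the triples \((y,x,\cdot)\) and \((\varepsilon x,y,\cdot)\) with the common product \(yx=\varepsilon xy\). If that is not immediate, the alternative is to transfer to the diagonal setting: \(c_{R,R}\) commutes with the action of \(y\), and \(y\rhd(u_0\otimes u_0)\) is a nonzero multiple of \(u_1\otimes u_1\), so \(q_{11}^R=q_{00}^R\) directly. We prefer this naturality argument since it avoids any cocycle computation.

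For the product \(q_{01}^Rq_{10}^R\), the key observation is that this is the scalar by which the double braiding \(c^2_{R,R}\) acts on \(u_0\otimes u_1\). The plan is to identify this double-braiding eigenvalue with the quantity computed for the pair \((R,S)\) in the proof of Lemma~\ref{lem:Gamma2-X2-simple}.

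Concretely, \(q_{01}^R\) is given by \(x\rhd u_1\), which is formula \eqref{eq:g-action-on-hv} with \((g,h)\) replaced by \((x,y)\) and \((\zeta,\lambda)\) replaced by \((\tau(\varepsilon),\tau(x))\). Similarly, \(q_{10}^R\) is given by \((\varepsilon x)\rhd u_0=\tau(\varepsilon)\tau(x)u_0/\Phi_x(\varepsilon,x)\). Their product is therefore \(\tau(x)^2\tau(\varepsilon)^2\) times an explicit \(\Phi\)-monomial. Using \(\tau(\varepsilon)^2=\Phi_x(\varepsilon,\varepsilon)\), this becomes \(\tau(x)^2\) times a pure cocycle expression \(E_\Phi(x,y)\).

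It then remains to prove \(E_\Phi(x,y)=\Theta_\Phi(x,y)^{-1}\). This is a pure cocycle identity, and it is the main obstacle. We would establish it the way the appendix lemmas do. First, compare with the condition \(\lambda^2=\Theta_\Phi\) from Lemma~\ref{lem:Gamma2-X2-simple}. Then reduce both sides with Lemma~\ref{lem:local-cocycle-coherence}, using \(\varepsilon\in Z(G)\) and \(\varepsilon^2=1\), after first replacing \(\Phi\) by the split representative \(\widetilde\Phi\) of Lemma~\ref{lem:Gamma2-primary-cocycle}. Both sides are invariant under a change of representative, because the braided equivalence preserves braiding scalars up to this normalization.

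If the direct reduction becomes unwieldy, we would instead check it in the bar complex, as in the appendix. There we verify that the quotient \(E_\Phi\Theta_\Phi\) is a coboundary expression evaluated on the given elements.
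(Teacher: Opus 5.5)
Your proof follows the paper's: you get \(q_{11}^R=q_{00}^R=\tau(x)\) because \(c_{R,R}\) commutes with the action of \(y\), you obtain the same two formulas for \(q_{01}^R\) and \(q_{10}^R\), and after using \(\tau(\varepsilon)^2=\Phi_x(\varepsilon,\varepsilon)\) you reduce to the pure cocycle identity \(E_\Phi(x,y)\,\Theta_\Phi(x,y)=1\). That identity is exactly Lemma~\ref{lem:Theta-diagonal-braiding-cocycle}, proved as a product of twenty factors \(\mathscr C_\Phi(a,b,c,d)^{m}\) using only \(yx=\varepsilon xy\), \(\varepsilon^2=1\) and the centrality of \(\varepsilon\), so you can simply cite it.

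Of the routes you sketch for that identity, only the last one works: writing the quotient as a product of cocycle-identity factors, i.e.\ exhibiting the corresponding bar \(3\)-chain as a boundary. Lemma~\ref{lem:Gamma2-X2-simple} is a simplicity criterion for \(X_2\) and contains no computation of \(c_{R,R}^2\) to compare with. Passing to the split representative \(\widetilde\Phi\) gains nothing, since the \(2\)-primary part \(\Phi_P\) is still an arbitrary cocycle. It is also circular: \(q_{01}^Rq_{10}^R\) and \(\tau(x)\) are visibly invariant under the braided equivalence, but the invariance of \(\Theta_\Phi\) under change of representative is not known until the identity itself is proved.
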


\begin{proof}
Choose \(0\neq u_0\in R_x\) and let
\(u_1=y\rhd u_0\in R_{\varepsilon x}\).  Then \(u_0,u_1\) is a
homogeneous basis of \(R\).  Since
\(x\rhd u_0=\tau(x)u_0\), the definition of the braiding gives
\(q_{00}^R=\tau(x)\).
The braiding commutes with the \(G\)-action.  Apply \(y\) to
\(c_{R,R}(u_0\otimes u_0)=\tau(x)u_0\otimes u_0\).
Since \(y\rhd(u_0\otimes u_0)\) is a nonzero scalar multiple of
\(u_1\otimes u_1\), it follows that
\(c_{R,R}(u_1\otimes u_1)=\tau(x)u_1\otimes u_1\).
Thus \(q_{11}^R=q_{00}^R=\tau(x)\).
Then \eqref{eq:YD-projective-action} gives
\[
q_{10}^R
=\frac{\tau(x)\tau(\varepsilon)}{\Phi_x(\varepsilon,x)},
\qquad
q_{01}^R
=\frac{
\tau(x)\tau(\varepsilon)\Phi_x(x,y)
}{
\Phi_x(\varepsilon,x)\Phi_x(y,\varepsilon x)
}.
\]
Since $\tau(\varepsilon)^2=\Phi_x(\varepsilon,\varepsilon)$, their product is
\[
q_{01}^Rq_{10}^R
=\tau(x)^2
\frac{
\Phi_x(\varepsilon,\varepsilon)\Phi_x(x,y)
}{
\Phi_x(\varepsilon,x)^2\Phi_x(y,\varepsilon x)
}.
\]
Lemma~\ref{lem:Theta-diagonal-braiding-cocycle} identifies the last
fraction with $\Theta_P^{-1}$, proving
\eqref{eq:Gamma2-diagonal-braiding}.
\end{proof}

\begin{lemma}
\label{lem:Gamma2-X2-support-value}
Assume that $X_1$ and $X_2$ are simple and nonzero, and write
$X_2\simeq M(g^2h,\sigma_2)$.  Then
\begin{equation}
\label{eq:Gamma2-sigma2-support-value}
\sigma_2(g^2h)=\lambda^3\lambda'.
\end{equation}
\end{lemma}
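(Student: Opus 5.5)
We compute \(\sigma_2(g^2h)\) exactly as in the proofs of Lemma~\ref{lem:Gamma2-X1-Y1-inducing-values} and Lemma~\ref{lem:Gamma2-X3-support-value}. By Lemma~\ref{lem:second-adjoint-formula} and the hypothesis \(X_2\neq0\), we have \(\mathbb Cx_2=\mathbb Cb_2\) with
\[
b_2=T_0-pT_1,\qquad T_0=v\otimes x_1,\quad T_1=hv\otimes(g\rhd x_1),\quad p=\Phi_g(h,g)^{-1}.
\]
The line \(\mathbb Cb_2\) spans \((X_2)_{g^2h}\), since \(X_2\) is simple and its inducing representation is one-dimensional by Lemma~\ref{lem:Gamma2-projective-representations-one-dimensional}. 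Hence \((g^2h)\rhd b_2=\sigma_2(g^2h)b_2\), and it suffices to compute the coefficient of a single component.

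The plan is to track the component \(T_0\in V_g\otimes(X_1)_{gh}\). Since \(g^2h\) conjugates \(g\) to \(\varepsilon g\) and \(gh\) to \(\varepsilon gh\), it sends the \(T_0\)-component to the \(T_1\)-component and vice versa. We therefore compare the coefficient of \(T_0\) in \((g^2h)\rhd(-pT_1)\) with the coefficient \(1\) of \(T_0\) in \(b_2\). We factor the action of \(g^2h\) as \(g^2\) followed by \(h\), up to a factor \(\Phi_{\ast}(g^2,h)\) from \eqref{eq:YD-projective-action}. Here \(g^2\) is central, so it acts on \(T_1\) by the scalar \(\Phi^{g^2}(\varepsilon g,\varepsilon gh)\) times its scalars on \(hv\) and on \(g\rhd x_1\). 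For these two scalars we use \(g^2\rhd hv=\Phi_g\)-factors times \(\lambda^2\zeta\)-type scalars from \eqref{eq:g-action-on-hv}, and \(g^2\rhd(g\rhd x_1)\) computed through \(\sigma_1(g^2)\). The element \(h\) then acts via \(h\rhd(hv)=\Phi_g(h,h)\mu v\) and \eqref{eq:h-action-gx1}, which gives \(h\rhd(g\rhd x_1)=-a\lambda B_1D_1x_1\). Altogether the coefficient has the shape \(-p\cdot(\text{scalar})\cdot\lambda^3\lambda'\cdot(\text{cocycle factor})\). Here \(\lambda^2\) comes from \(g^2\) acting on \(hv\), one \(\lambda\) from \(B_1\), and \(\lambda'\) from \(D_1\). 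The sign \(-1\) from \eqref{eq:h-action-gx1} cancels the sign in \(-p\).

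The remaining parameters must cancel. We eliminate \(\zeta\), \(\mu\), \(\mu'\) and \(\zeta'\) using \(\Delta=1\) through \eqref{eq:Delta-definition}, as in Lemma~\ref{lem:Gamma2-X2-simple}, together with \(\zeta^2=\Phi_g(\varepsilon,\varepsilon)\). For \(\sigma_1(g^2)\) we use the analogue of \eqref{eq:Gamma2-sigma2-g2-parameter} at level one. What is left is \(\lambda^3\lambda'\mathcal K_2\), where \(\mathcal K_2\) is a product of values of \(\Phi\) at words in \(g,h,\varepsilon\).

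The main obstacle is proving \(\mathcal K_2=1\). We would do this as in Lemma~\ref{lem:reflected-character-cocycle-reduction} and Lemma~\ref{lem:Gamma2-G2-cocycle-reductions}: expand each \(\Phi_x\) and \(\Phi^x\) into triples of \(\Phi\), then cancel repeatedly with the \(3\)-cocycle identity \eqref{eq:three-cocycle-identity-local} and with Lemma~\ref{lem:local-cocycle-coherence}, using \(hg=\varepsilon gh\) and the centrality of \(\varepsilon\) and \(g^2\). As a consistency check, the answer should agree with the \(\Phi=1\) case, where \(\sigma_2(g^2h)=\lambda^3\lambda'\) follows from the diagonal formula \(q_{gg}^2q_{gh}q_{hg}\cdot q_{hh}\)-type bookkeeping. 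As a fallback, if the direct reduction becomes unwieldy, we would pass to the cohomologous representative \(\widetilde\Phi\) of Lemma~\ref{lem:Gamma2-primary-cocycle} by transport along the braided equivalence. This is legitimate because \(\sigma_2(g^2h)\) is the self-braiding scalar \(q_{00}^{X_2}\), which is invariant under the equivalence, as are \(\lambda\) and \(\lambda'\).
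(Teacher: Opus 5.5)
Your setup is fine: $\mathbb Cb_2=(X_2)_{g^2h}$, and $g^2h$ swaps the bidegrees of $T_0$ and $T_1$. The gap is in the $\lambda$-count for the direction you chose.

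Tracking the coefficient of $T_0$ in $(g^2h)\rhd(-pT_1)$ forces you through $g^2\rhd(g\rhd x_1)$, and hence through $\sigma_1(g^2)$. The level-one analogue of \eqref{eq:Gamma2-sigma2-g2-parameter} is $\sigma_1(g^2)=\Phi^{g^2}(g,h)\lambda^2\mu'/\Phi_g(g,g)$, which carries a second $\lambda^2$ that your tally omits. (Also, $B_1$ contains no $\lambda$; the single $\lambda$ there is the explicit one in \eqref{eq:h-action-gx1}.) Counted correctly, the contributions are:
$g^2\rhd hv$ gives $\lambda^2\zeta^2$;
$\sigma_1(g^2)$ gives $\lambda^2\mu'$;
$h\rhd hv$ gives $\mu$;
$a\lambda B_1D_1$ gives $\lambda\zeta^2\mu\mu'\lambda'$.
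After using $\Delta=1$, $\zeta^2=\Phi_g(\varepsilon,\varepsilon)$ and $\zeta'^2=\Phi_h(\varepsilon,\varepsilon)$, you obtain $\sigma_2(g^2h)=\lambda^5\lambda'\mathcal K$, with $\mathcal K$ a cocycle quotient.

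This $\mathcal K$ is \emph{not} identically $1$. Comparing with the other component shows $\mathcal K=\lambda^{-2}=\Theta_\Phi^{-1}$ whenever $X_2$ is simple, and this equals $-1$ in the order-$16$ example. So your planned final step, proving $\mathcal K_2=1$ by cocycle identities, targets a false identity. Neither fallback rescues it. Passing to $\widetilde\Phi$ leaves $\lambda$ unchanged. The $\Phi=1$ check is blind to the error, because there $\Theta_1=1$ and $\lambda\neq-1$ force $\lambda=1$. To finish along your route, you would have to invoke $\lambda^2=\Theta_\Phi$ from Lemma~\ref{lem:Gamma2-X2-simple} and then prove an identity $\mathcal K\,\Theta_\Phi=1$ of the same size as $\Theta_\Phi$ itself.

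The paper reads off the opposite component instead: the coefficient of $T_1$ in $(g^2h)\rhd T_0$, compared with the coefficient $-p$ of $T_1$ in $b_2$. Writing $g^2h=hg^2$ gives $(g^2h)\rhd v=\lambda^2\,hv/(\Phi_g(g,g)\Phi_g(h,g^2))$. Also $(g^2h)\rhd x_1=\Phi_{gh}(g,gh)^{-1}\sigma_1(gh)\,(g\rhd x_1)$, with $\sigma_1(gh)=-\lambda\lambda'K_1$ from the proof of Lemma~\ref{lem:Gamma2-X1-Y1-inducing-values}. Together with the tensor factor $\Phi^{g^2h}(g,gh)$, this yields $\sigma_2(g^2h)=\lambda^3\lambda'K_2$ directly, with no $\zeta,\zeta',\mu,\mu'$ to eliminate. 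Lemma~\ref{lem:Gamma2-K2-cocycle-reduction} then reduces $K_2$, via $K_1=1$, to three factors $\mathscr C_\Phi(a,b,c,d)^{\pm1}$.
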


\begin{proof}
Acting by $g^2h$ on the two  bi-homogeneous components in the
formula for $x_2$ gives
\[
\sigma_2(g^2h)=\lambda^3\lambda'K_2,
\]
where
\begin{equation}
\label{eq:Gamma2-K2-support-value}
K_2
:=
\frac{
\Phi_g(h,g)\Phi_g(h,\varepsilon)\Phi^{gh}(g,h)
\Phi^{g^2h}(g,gh)
}{
\Phi_h(g,h)\Phi_g(h,g^2)\Phi_g(h,\varepsilon g)
\Phi_g(g,g)\Phi_g(\varepsilon,g)\Phi_{gh}(g,gh)
}.
\end{equation}
Lemma~\ref{lem:Gamma2-K2-cocycle-reduction} gives $K_2=1$, proving
\eqref{eq:Gamma2-sigma2-support-value}.
\end{proof}

 {The simple objects whose Nichols algebras occur in the tensor
decomposition are given explicitly below.}

\begin{prop}
\label{prop:standard-G2-simple-factors}
Assume that \((V,W)\) admits all reflections and that its Cartan graph
is standard with Cartan matrix
\[
\begin{pmatrix}
2&-3\\
-1&2
\end{pmatrix}.
\]
Set \(H=(\operatorname{ad}Y_1)^3(W^*)\).  The simple objects
corresponding to the six positive roots of \(G_2\) have the following
support representatives and braiding coefficients:
\begin{equation}
\label{eq:standard-G2-root-data}
\begin{array}{c|c|c|c|c}
\textnormal{root}
&\textnormal{object}
&\textnormal{support representative}
&q_{00}=q_{11}
&q_{01}q_{10}
\\ \hline
\alpha _1
&V
&g
&\lambda
&1
\\
\alpha _2
&W
&h
&-1
&-1
\\
\alpha _1+\alpha _2
&X_1
&gh
&\lambda
&1
\\
2\alpha _1+\alpha _2
&X_2
&g^2h
&\lambda
&1
\\
3\alpha _1+\alpha _2
&X_3
&g^3h
&-1
&-1
\\
3\alpha _1+2\alpha _2
&H
&g^3h^2
&-1
&-1
\end{array}
\end{equation}
Consequently,
\begin{equation}
\label{eq:G2-rank-one-dimensions}
\begin{aligned}
\dim\mathcal B(R)&=16
&&\text{for }R\in\{V,X_1,X_2\},\\
\dim\mathcal B(R)&=8
&&\text{for }R\in\{W,X_3,H\}.
\end{aligned}
\end{equation}
\end{prop}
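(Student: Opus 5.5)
The plan is to identify each of the six root objects with an entry of a tuple in \(\mathcal F_2(V,W)\), read off its data, and then apply the diagonal-type classification. First I would fix the positive roots and their objects. With the reduced expression \(w_0=s_1s_2s_1s_2s_1s_2\) at \([(V,W)]\), the roots \(\beta_k\) are \(\alpha_1,\ 3\alpha_1+\alpha_2,\ 2\alpha_1+\alpha_2,\ 3\alpha_1+2\alpha_2,\ \alpha_1+\alpha_2,\ \alpha_2\). Lemma~\ref{lem:root-factors-and-reflected-components} identifies each \(Q_{\beta_k}\) with an entry of a reflected tuple. Lemma~\ref{lem:root-string-adjoint-simplicity} gives directly \(Q_{\alpha_1}=V\), \(Q_{\alpha_2}=W\), and \(Q_{k\alpha_1+\alpha_2}\simeq X_k\) for \(k=1,2,3\). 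For the remaining root \(3\alpha_1+2\alpha_2=s_2(3\alpha_1+\alpha_2)\), the same lemma applied at \(R_2(V,W)=(Y_1,W^*)\) gives \(H=(\operatorname{ad}Y_1)^3(W^*)\). The support representatives \(g,h,gh,g^2h,g^3h\) come from Lemma~\ref{lem:reflections-remain-Gamma2} and the degree bookkeeping in Proposition~\ref{prop:adjoint-orbit-generation}. For \(H\) the degree is \((hg)^3h^{-1}\), which is conjugate to \(g^3h^2\) up to the central factors \(\varepsilon,g^2,h^2\).

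Next I would compute \(q_{00}=q_{11}=\tau(x)\) for each object. By Corollary~\ref{cor:Gamma2-standard-G2-conditions} and Proposition~\ref{prop:Gamma2-standard-G2-stability-under-reflections}, the locus \eqref{eq:Gamma2-standard-G2-locus} holds at every object of the graph. Hence \(\lambda'=-1\) and \(\lambda^2=\Theta_\Phi(g,h)=-1\). The values are then:
\begin{itemize}
\item \(\tau(g)=\lambda\) and \(\tau(h)=\lambda'=-1\);
\item \(\sigma_1(gh)=-\lambda\lambda'=\lambda\), by Lemma~\ref{lem:Gamma2-X1-Y1-inducing-values};
\item \(\sigma_2(g^2h)=\lambda^3\lambda'=-\lambda^3=\lambda\), by Lemma~\ref{lem:Gamma2-X2-support-value};
\item \(\sigma_3(g^3h)=-1\), by Lemma~\ref{lem:Gamma2-X3-support-value}.
\end{itemize}
For \(H\), I would apply the same lemmas to the tuple \(Q_2=(Y_1,W^*)\), which again lies on the locus. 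This gives the analogue of \(\sigma_3\) there, namely \(-\lambda_{Q_2}^6\lambda'_{Q_2}=-1\).

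Then I would compute \(q_{01}q_{10}=\tau(x)^2/\Theta_P\) using Lemma~\ref{lem:Gamma2-diagonal-braiding}. This requires each object to appear as the \emph{first} entry of some \(\Gamma_2\)-tuple \(P\), and requires knowing \(\Theta_P\) for that tuple.
\begin{itemize}
\item For \(V\), take \(P=(V,W)\).
\item For \(X_1,X_2,X_3\), take the tuples \((X_k,V^*)\). These are reflected tuples (after interchange), for instance \(R_1(V,W)=(V^*,X_3)\) interchanged.
\end{itemize}
In each case I must produce \(\Theta\) with the object listed first. For first entries on the locus, \(\Theta=-1\), which gives \(q_{01}q_{10}=\lambda^2/(-1)=1\) for \(V,X_1,X_2\) and \(1/(-1)=-1\) for \(X_3\). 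The same argument at \(Q_2\) handles \(H\).

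The main obstacle is the second-entry objects \(W\) and \(X_3\), where the locus only controls \(\Theta\) computed in the other order. For these I would try first a cocycle identity in the style of Lemma~\ref{lem:Gamma2-R2-Theta-cocycle}, expressing \(\Theta_\Phi(h,g)\) through \(\Theta_\Phi(g,h)\). The expected outcome is \(\Theta_\Phi(h,g)=\pm\Theta_\Phi(g,h)\) modulo terms that are trivial by Lemma~\ref{lem:alternator}. If that fails, I would compute the product \(q_{01}q_{10}\) for \(W\) directly from the formula in the proof of Lemma~\ref{lem:Gamma2-diagonal-braiding}. I would then compare the result with the \(\Delta=1\) condition, or use the decomposition of Lemma~\ref{lem:Gamma2-primary-cocycle} to reduce to a \(2\)-primary cocycle, where the needed value \(-1\) should be forced.

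Finally, restrict to the abelian subgroup generated by the support (Lemma~\ref{lem:Gamma4-support-group-restriction}) and apply \cite[Proposition~2.11]{huang2024classification}. When \(q_{00}=q_{11}=\lambda\) with \(\lambda^2=-1\) and \(q_{01}q_{10}=1\), the diagram is two disconnected points, each contributing a quantum line of dimension \(4\), so the dimension is \(16\). When \(q_{00}=q_{11}=-1\) and \(q_{01}q_{10}=-1\), the diagram is super type \(A_2\) with three odd roots, so the dimension is \(2^3=8\). This gives \eqref{eq:G2-rank-one-dimensions}.
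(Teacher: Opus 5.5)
The gap is in the last column for the long-root objects $W$, $X_3$, and also $H$. Lemma~\ref{lem:Gamma2-diagonal-braiding} needs $\Theta$ computed with the object in the \emph{first} slot. These objects only ever occur in the second slot of a reflected tuple, while \eqref{eq:Gamma2-standard-G2-locus} gives $\Theta=-1$ only with the short entry first. Note that $H$ is the second entry of $R_1(Q_2)=(Y_1^*,H)$, so ``the same argument at $Q_2$'' runs into the same obstruction as $X_3$.

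You do see this obstacle, but your proposed fix $\Theta_\Phi(h,g)=\pm\Theta_\Phi(g,h)$ leaves open exactly the sign that matters. With the minus sign you would get $q_{01}q_{10}=(-1)^2/1=1$ for $W$. That means two disconnected vertices labelled $-1$ and $\dim\mathcal B(W)=4$, not $8$. Neither fallback forces the sign. The $2$-primary reduction (Lemmas~\ref{lem:Gamma2-primary-cocycle} and~\ref{lem:Gamma2-Theta-two-primary}) only shows that $\Theta$ is a $2$-power root of unity. The condition $\Delta=1$ constrains $\zeta\zeta'\mu\mu'$, whereas $q_{01}q_{10}$ for $W$ is a pure cocycle expression once $(\lambda')^2=1$.

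The missing ingredient is the exact symmetry $\Theta_\Phi(a,b)=\Theta_\Phi(b,a)$ for $ba=\varepsilon ab$ (Lemma~\ref{lem:Theta-opposite-symmetry}). The paper proves it by rewriting both sides via Lemma~\ref{lem:Theta-diagonal-braiding-cocycle} and factoring the quotient into fourteen powers of $\mathscr C_\Phi$. With it the argument is uniform. For every reflected $P=(P_s,P_\ell)$ one has $\lambda_P^2=\Theta_P=-1$ and $\lambda'_P=-1$, and $\Theta_P$ is unchanged under interchange. Then \eqref{eq:Gamma2-diagonal-braiding} in the two orders gives $q_{01}q_{10}=1$ for $P_s$ and $-1$ for $P_\ell$.

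There is also a smaller error on the short side. For $k=1,2$, the pair $(X_k,V^*)$ is not a reflected tuple in either order. The second entry of every reflected tuple has $q_{00}=\lambda'_P=-1$, while $V^*$, $X_1$, $X_2$ all have $q_{00}=\lambda\neq-1$ (for $V^*$ by Lemma~\ref{lem:dual-projective-representation}). So the locus says nothing about $\Theta_\Phi(g^kh,g^{-1})$. Use instead $R_2(V,W)=(Y_1,W^*)\simeq(X_1,W^*)$, by \eqref{eq:first-adjoint-braiding-symmetry}. Likewise $R_2(R_1(V,W))\simeq(X_2,X_3^*)$, by \cite[Lemma~3.8]{reflection3} and \eqref{eq:first-adjoint-braiding-symmetry}. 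More simply, Lemma~\ref{lem:root-factors-and-reflected-components} realizes every short-root object as a first entry. Finally, $(hg)^3h^{-1}=g^3h^2$ holds exactly; ``up to the central factors $g^2,h^2$'' would change the conjugacy class.
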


\begin{proof}
Proposition~\ref{prop:positive-root-factorization} gives a tensor
decomposition of \(\mathcal B(V\oplus W)\) indexed by the six positive
roots of \(G_2\). The corresponding simple objects are identified by
Lemmas~\ref{lem:root-factors-and-reflected-components} and
\ref{lem:root-string-adjoint-simplicity}.

For the chosen Cartan matrix, \(X_1,X_2,X_3\), and \(Y_1\) are simple
and nonzero, while \(X_4=Y_2=0\).  Corollary
\ref{cor:Gamma2-standard-G2-conditions} therefore gives
\(\lambda^2=\Theta_\Phi(g,h)=-1\) and \(\lambda'=-1\).
In particular, \(\lambda\) is a primitive fourth root of unity.
We use \(gh\), \(g^2h\), and \(g^3h\) as support representatives of
\(X_1,X_2\), and \(X_3\).  Equations
\eqref{eq:Gamma2-X1-Y1-support-values},
\eqref{eq:Gamma2-sigma2-support-value}, and
\eqref{eq:Gamma2-sigma3} give, respectively,
\(\sigma_1(gh)=-\lambda\lambda'=\lambda\),
\(\sigma_2(g^2h)=\lambda^3\lambda'=\lambda\), and
\(\sigma_3(g^3h)=-\lambda^6\lambda'=-1\).
Lemma~\ref{lem:Gamma2-diagonal-braiding} shows that each of these
values is equal to both diagonal coefficients \(q_{00}\) and
\(q_{11}\).  Together with the values for \(V\) and \(W\), this proves
the fourth column of \eqref{eq:standard-G2-root-data}, except for the
highest root.

Let \(Q=R_2(V,W)=(Y_1,W^*)\).  By
Lemma~\ref{lem:reflections-remain-Gamma2}, we may use
\((hg,h^{-1})\) as the support representatives of \(Q\).  Let
\(\lambda_Q\) and \(\lambda'_Q\) denote the corresponding values of
the two projective representations.  Equation
\eqref{eq:Gamma2-X1-Y1-support-values} and
Lemma~\ref{lem:dual-projective-representation} give
\(\lambda_Q=\tau_1(hg)=\lambda\) and \(\lambda'_Q=-1\).

At \(Q\), the third adjoint object in the first direction is
\(H=(\operatorname{ad}Y_1)^3(W^*)\), and
\((hg)^3h^{-1}=g^3h^2\) is a support representative of \(H\).
Applying \eqref{eq:Gamma2-sigma3} to \(Q\) gives
\(\sigma_3^Q(g^3h^2)=-\lambda_Q^6\lambda'_Q=-1\).
Moreover,
\(s_2(3\alpha_1+\alpha_2)=3\alpha_1+2\alpha_2\).
Thus \(H\) corresponds to the highest root.  Lemma
\ref{lem:Gamma2-diagonal-braiding} gives
\(q_{00}=q_{11}=-1\) for \(H\).

It remains to compute the last column of the table.  Let
\(P=(P_s,P_\ell)\) be any tuple obtained from \((V,W)\) by
reflections, where \(P_s\) and \(P_\ell\) correspond to a short and a
long simple root, respectively.  By
Lemma~\ref{lem:reflections-remain-Gamma2}, the results proved above for
the \(\Gamma_2\) case also apply to \(P\).  Write
\(\lambda_P,\lambda'_P\), and \(\Theta_P\) for its corresponding
parameters.  Corollary~\ref{cor:Gamma2-standard-G2-conditions} gives
\(\lambda_P^2=\Theta_P=-1\) and \(\lambda'_P=-1\).
Lemma~\ref{lem:Theta-opposite-symmetry} shows that \(\Theta_P\) is
still \(-1\) when the two entries are interchanged.  Formula
\eqref{eq:Gamma2-diagonal-braiding}, applied in the two orders, now
gives
\(q_{01}^{P_s}q_{10}^{P_s}=1\) and
\(q_{01}^{P_\ell}q_{10}^{P_\ell}=-1\).
Every object in the table occurs in such a reflected tuple by
Lemmas~\ref{lem:root-factors-and-reflected-components} and
\ref{lem:root-string-adjoint-simplicity}. This proves the last column.

Finally, for
\(V,X_1\), and \(X_2\), the diagonal braiding has two disconnected
vertices labelled by \(\lambda\).  Since \(\lambda\) has order four,
the corresponding Nichols algebra has dimension \(4^2=16\).
For \(W,X_3\), and \(H\), both vertex labels and the product of the
off-diagonal coefficients are \(-1\).  This is the diagonal case of
type \(A_2\). Its three root vectors square to zero, so the dimension
is \(2^3=8\).  This proves
\eqref{eq:G2-rank-one-dimensions}.
\end{proof}
\begin{cor}
\label{cor:standard-G2-Nichols-dimension}
Under the hypotheses of
Proposition~\ref{prop:standard-G2-simple-factors}, one has
\begin{equation*}
\dim\mathcal B(V\oplus W)
=16^3 8^3
=2^{21}.
\end{equation*}
\end{cor}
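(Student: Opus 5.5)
The plan is to combine the positive-root factorization with the rank-one dimensions computed in Proposition~\ref{prop:standard-G2-simple-factors}. Under the hypotheses, \((V,W)\) admits all reflections and its Cartan graph is standard of type \(G_2\). In particular the graph is finite: the real roots at every object are the twelve roots of the \(G_2\) root system, and the positive ones are the six roots listed in \eqref{eq:standard-G2-root-data}.

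First I will show that \(\mathcal B(V\oplus W)\) is finite-dimensional. For this I invoke Theorem~\ref{thm:Nichols-finiteness-criterion}, which requires \(\mathcal B(Q_i)\) to be finite-dimensional for every \(Q\in\mathcal F_2(V,W)\) and every \(i\). By Lemma~\ref{lem:root-factors-and-reflected-components}, each component \(Q_i\) of a reflected tuple is isomorphic to one of the root objects \(Q_\beta\) attached to some object of the graph. The argument in the proof of Proposition~\ref{prop:standard-G2-simple-factors} applies to every reflected tuple, since every reflected tuple satisfies \eqref{eq:Gamma2-standard-G2-locus} by Proposition~\ref{prop:Gamma2-standard-G2-stability-under-reflections}. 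It shows that every such component is of diagonal type, with Nichols algebra of dimension \(16\) or \(8\). Alternatively, Theorem~\ref{thm:tensor-decomposability-criterion} already gives tensor decomposability, so finite-dimensionality follows once each factor is finite-dimensional.

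Next I apply Proposition~\ref{prop:positive-root-factorization} at \(Q=(V,W)\). It gives a graded isomorphism
\[
\mathcal B(Q_{\beta_6})\otimes\bigl(\cdots\otimes\mathcal B(Q_{\beta_1})\bigr)\xrightarrow{\ \sim\ }\mathcal B(V\oplus W),
\]
where \(\beta_1,\ldots,\beta_6\) are the six positive roots. The proof of Proposition~\ref{prop:standard-G2-simple-factors} identifies these factors, via Lemmas~\ref{lem:root-factors-and-reflected-components} and \ref{lem:root-string-adjoint-simplicity}, with \(V,W,X_1,X_2,X_3,H\).

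Taking dimensions and using \eqref{eq:G2-rank-one-dimensions}, three factors have dimension \(16\) and three have dimension \(8\). This gives \(16^3\cdot 8^3=2^{12}\cdot2^9=2^{21}\).

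The only delicate point is that the factor \(Q_{\beta}\) for the highest root \(3\alpha_1+2\alpha_2\) is indeed \(H\), and not another object of the same degree. I would settle this using Lemma~\ref{lem:root-factors-and-reflected-components}, applied with \(S=R_2(V,W)\) and \(w=s_2\), since \(s_2(3\alpha_1+\alpha_2)=3\alpha_1+2\alpha_2\). This identification was already made in the proposition, so the corollary follows directly.
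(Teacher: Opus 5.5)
Your proposal is correct and follows the paper's proof: the standard $G_2$ graph is finite, Proposition~\ref{prop:positive-root-factorization} gives the tensor decomposition into the six factors identified in Proposition~\ref{prop:standard-G2-simple-factors}, and multiplying their dimensions gives $16^3\cdot 8^3=2^{21}$. Your separate finite-dimensionality step via Theorem~\ref{thm:Nichols-finiteness-criterion} is redundant, because a graded tensor decomposition with finite-dimensional factors already gives the dimension. One small correction: Lemma~\ref{lem:root-factors-and-reflected-components} needs $\beta=w(\alpha_i)$ for a \emph{simple} root $\alpha_i$, so the highest-root identification should use $S=R_1R_2(V,W)$, whose second component is $H$, with $w=s_2s_1$, rather than $S=R_2(V,W)$ with $w=s_2$.
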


\begin{proof}
A standard Cartan graph of type \(G_2\) is finite.  Proposition
\ref{prop:positive-root-factorization} therefore gives a tensor
decomposition  {whose factors are the Nichols algebras of the
six simple objects in \eqref{eq:standard-G2-root-data}.
Their dimensions are given by \eqref{eq:G2-rank-one-dimensions}.}  Multiplying these
dimensions gives
\(\dim\mathcal B(V\oplus W)=16^3\,8^3=2^{21}\).
\end{proof}

\begin{rmk}\upshape
\label{rmk:Gamma2-G2-no-Hopf-counterpart}
The \(\Gamma _2\)-\(G_2\) case obtained above does not occur when
\(\Phi=1\).  Indeed, in this case all the cocycle terms in
\eqref{eq:Theta-Phi-definition} are equal to \(1\), and hence
\(\Theta_1(g,h)=1\).  On the other hand, the \(G_2\) conditions require
\(\Theta_\Phi(g,h)=-1\).  Thus this case has no counterpart in the
ordinary Yetter--Drinfeld category
\({}_G^G\mathcal{YD}\) over the Hopf algebra \(\mathbb C G\).
In fact, in the ordinary \(\Gamma _2\) case, a finite-dimensional
Nichols algebra under the standing assumptions has Cartan type
\(A_2\), see \cite[Theorem~4.6]{rank2-1}.
\end{rmk}

\subsection{The \texorpdfstring{$A_2$}{A2} case and completion of the classification}
\label{subsec:Gamma2-classification}

 {For type \(A_2\), it remains to prove that the Nichols algebras
of the three simple objects \(V,X_1,W\) are finite-dimensional.}

\begin{lemma}
\label{lem:Gamma2-A2-simple-factors-finite}
If \(\Delta=\Delta'=1\) and \(\lambda=\lambda'=-1\),
then $\mathcal B(V)$, $\mathcal B(W)$ and $\mathcal B(X_1)$ are
finite-dimensional.
\end{lemma}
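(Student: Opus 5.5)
Each of \(V\), \(W\), \(X_1\) is a two-dimensional simple object with support of the form \(\{x,\varepsilon x\}\), and we use the diagonal-type reduction set up before Lemma~\ref{lem:Gamma2-diagonal-braiding}. The support generates an abelian subgroup, since \(x\) and \(\varepsilon x\) commute. By Lemma~\ref{lem:Gamma4-support-group-restriction} we may therefore restrict to that subgroup. By \cite[Proposition~2.11]{huang2024classification}, \(\mathcal B(R)\) then has the same dimension and generalized Dynkin diagram as an ordinary rank-two Nichols algebra of diagonal type, with braiding matrix \((q_{ij}^R)\). So it suffices to compute the diagonal labels \(q_{00}^R=q_{11}^R\) and the product \(q_{01}^Rq_{10}^R\) for each \(R\in\{V,W,X_1\}\), and to check that each resulting diagram is of finite type.

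For the labels we apply Lemma~\ref{lem:Gamma2-diagonal-braiding} to suitable ordered \(\Gamma_2\)-tuples. For \(V\) we use \((V,W)\) with representatives \((g,h)\), which gives \(q_{00}^V=q_{11}^V=\lambda=-1\). For \(W\) we use \((W,V)\) with representatives \((h,g)\), which gives \(\lambda'=-1\). For \(X_1\) we use the reflected tuple \(R_1(V,W)=(V^*,X_1)\), or its interchange \((X_1,V^*)\), with representatives from Lemma~\ref{lem:reflections-remain-Gamma2}. Lemma~\ref{lem:Gamma2-X1-Y1-inducing-values} then gives \(q_{00}^{X_1}=q_{11}^{X_1}=\sigma_1(gh)=-\lambda\lambda'=-1\). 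All three objects thus carry two vertices labelled \(-1\).

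The products \(q_{01}q_{10}\) equal \(\tau(x)^2/\Theta=1/\Theta\) for the relevant ordered tuple, so they lie in \(\{\pm1\}\) whenever \(\Theta\) does. To get this, Lemma~\ref{lem:Gamma2-Theta-two-primary} reduces \(\Theta\) to a \(2\)-power root of unity. I would attempt a sharper statement, namely \(\Theta_P^2=1\) for every \(\Gamma_2\)-tuple, for instance from \(\mathfrak s_\Phi=\Theta_\Phi^2\) (Lemma~\ref{lem:Gamma2-X3-cocycle-factor-identity}) together with a direct evaluation of \(\mathfrak s_\Phi\). However, I do not think this is needed for finiteness. A rank-two diagonal braiding with both vertices \(-1\) and edge label \(q=q_{01}q_{10}\) is finite-dimensional whenever \(q=1\) (\(\dim=4\)), \(q=-1\) (type \(A_2\) super, \(\dim=8\)), or \(q\) is any root of unity \(\neq1\). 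The last case is Heckenberger's family of super type \(A_2\) with \(q_{11}=q_{22}=-1\), which is finite-dimensional exactly when \(q\) is a root of unity \(\neq1\), and then has dimension \(4N\) with \(N=\operatorname{ord}(q)\). Since \(\Theta\) is a root of unity by Lemma~\ref{lem:Gamma2-Theta-two-primary} after the cohomologous change of \(\Phi\), which preserves Nichols algebra dimensions by \eqref{eq:Nichols-under-equivalence}, every case yields a finite-dimensional \(\mathcal B(R)\). This matches the factors \(N_V,N_{X_1},N_W\) in Table~\ref{tab:complete-classification}.

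The main obstacle is correctly identifying the ordered tuple, and hence the \(\Theta\), attached to \(X_1\). This requires care with the support representatives of \(R_1(V,W)\) and with Lemma~\ref{lem:Theta-opposite-symmetry} when the order is switched. A second, minor check is that the diagonal-type comparison of \cite{huang2024classification} applies to a two-dimensional object whose restricted cocycle may be nontrivial. If it applies only up to the generalized Dynkin diagram, finiteness still follows, because \(-1\)-labelled vertices contribute exterior-type factors and the edge label is a root of unity.
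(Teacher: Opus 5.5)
Your proposal is correct and follows essentially the paper's argument. You restrict each of $V$, $W$, $X_1$ to $\langle\varepsilon,x\rangle$, read off both vertex labels $-1$ from $\lambda=\lambda'=-1$ and $\sigma_1(gh)=-\lambda\lambda'=-1$, show that the edge label $Q_R$ is a root of unity after passing to a cohomologous cocycle with root-of-unity values, and conclude $\dim\mathcal B(R)=4N_R$.

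The only difference is minor. You obtain $Q_R=\Theta_P^{-1}$ from Lemma~\ref{lem:Gamma2-diagonal-braiding} and then invoke Lemma~\ref{lem:Gamma2-Theta-two-primary}, but that lemma is stated only for the pair $(g,h)$. For $W$ and $X_1$ you therefore need to either rerun its proof verbatim for $(h,g)$ and $(gh,g^{-1})$, or, as the paper does, note that after choosing a $\mu_{|G|}$-valued representative every cocycle value is a root of unity.
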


\begin{proof}
Let \(R\in\{V,W,X_1\}\), and write
\(\operatorname{supp}R=\{x,\varepsilon x\}\).
Both homogeneous components of \(R\) are one-dimensional. Since
\(\langle\varepsilon,x\rangle\) is abelian, the reduction in
Subsection~\ref{subsec:Gamma2-G2-simple-factors} gives an ordinary
diagonal Nichols algebra with the same  {diagonal coefficients,
edge label, and dimension}.

The two diagonal coefficients are both \(-1\).  For \(V\) and \(W\),
this follows from \(\lambda=\lambda'=-1\).  For \(X_1\), it follows
from \(\sigma_1(gh)=-\lambda\lambda'=-1\).  Let
\(Q_R=q_{01}^Rq_{10}^R\).
We first show that \(Q_R\) is a root of unity.  Let \(n=|G|\) and
\(\mu_n=\{z\in\mathbb C^\times\mid z^n=1\}\).  Since \(G\) is finite,
\(H^3(G,\mathbb C^\times)\) is annihilated by \(n\).  The exact
sequence
\[
1\longrightarrow\mu_n\longrightarrow\mathbb C^\times
\xrightarrow{\ z\mapsto z^n\ }\mathbb C^\times
\longrightarrow1
\]
therefore shows that the cohomology class of \(\Phi\) has a normalized
representative \(\widetilde\Phi\) with values in \(\mu_n\).
The braided monoidal equivalence associated with the change from
\(\Phi\) to \(\widetilde\Phi\) preserves the eigenvalues of the double
braiding and the dimension of the Nichols algebra.  It is therefore
enough to work with \(\widetilde\Phi\).

Let \(\widetilde\chi\) be the one-dimensional
\((\widetilde\Phi)_x\)-projective representation on the homogeneous
component of degree \(x\).  If \(c\in C_G(x)\) has order \(m\), repeated
use of the equation \eqref{eq:YD-projective-action} gives
\[
\widetilde\chi(c)^m
=\prod_{j=1}^{m-1}(\widetilde\Phi)_x(c,c^j).
\]
Every factor on the right belongs to \(\mu_n\).  Hence
\(\widetilde\chi(c)^{mn}=1\), so \(\widetilde\chi(c)\) is a root of
unity.  The coefficients of the diagonal braiding are products of
values of \(\widetilde\Phi\) and values of these projective
representations.  Thus \(Q_R\) is a root of unity.

If \(Q_R=1\), the diagram is of type \(A_1\times A_1\).  Both
generators square to zero, and hence
\(\dim\mathcal B(R)=2^2=4\).
Suppose that \(Q_R\neq1\), and let
\(N_R=\operatorname{ord}(Q_R)\).  Since both diagonal coefficients are
\(-1\), the generalized Cartan matrix is of type \(A_2\).  The
braiding coefficients corresponding to its three positive roots are
\(-1,-1\), and \(Q_R\).  Their orders are therefore \(2,2\), and
\(N_R\), respectively.  Hence
\(\dim\mathcal B(R)=2\cdot2\cdot N_R=4N_R\).
Thus \(\mathcal B(R)\) is finite-dimensional in both cases.
\end{proof}

\begin{cor}
\label{cor:Gamma2-A2-Nichols-dimension}
Assume \eqref{eq:Gamma2-classification-A2}, and let
\(X_1=(\operatorname{ad}V)(W)\).  For
\(R\in\{V,X_1,W\}\), let \(Q_R\) be the edge label of the
two-dimensional diagonal braiding associated with \(R\), and set
\(N_R=\operatorname{ord}(Q_R)\).  Then
\[
\dim\mathcal B(V\oplus W)
=64N_VN_{X_1}N_W.
\]
\end{cor}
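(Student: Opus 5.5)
The plan is to combine Proposition~\ref{prop:Gamma2-standard-A2} with the positive-root factorization and the rank-one dimension count from Lemma~\ref{lem:Gamma2-A2-simple-factors-finite}.

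\emph{Step 1: structure of the Cartan graph.} Assume \eqref{eq:Gamma2-classification-A2}. By \eqref{eq:Gamma2-Delta-unit-loci}, also \(\Delta'=1\). Hence Lemmas~\ref{lem:Gamma2-X1-simple}, \ref{lem:second-adjoint-vanishing} and \ref{lem:opposite-first-adjoint-simple} give that \(X_1,Y_1\) are simple and \(X_2=Y_2=0\). Proposition~\ref{prop:Gamma2-standard-A2} then shows that \((V,W)\) admits all reflections and that \(\mathcal G(V,W)\) is standard of type \(A_2\). In particular, \(\mathcal G(V,W)\) is finite, with exactly three positive roots \(\alpha_1,\alpha_2,\alpha_1+\alpha_2\).

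\emph{Step 2: tensor decomposition.} Proposition~\ref{prop:positive-root-factorization}, applied at \(Q=(V,W)\) with the reduced expression \(s_1s_2s_1\) of the longest element, gives an isomorphism of graded objects
\[
\mathcal B(V\oplus W)\simeq\mathcal B(Q_{\beta_3})\otimes\mathcal B(Q_{\beta_2})\otimes\mathcal B(Q_{\beta_1}).
\]
Here \(\beta_1=\alpha_1\), \(\beta_2=\alpha_1+\alpha_2\) and \(\beta_3=\alpha_2\). Lemma~\ref{lem:root-factors-and-reflected-components} identifies \(Q_{\alpha_1}\simeq V\) and \(Q_{\alpha_2}\simeq W\). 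Lemma~\ref{lem:root-string-adjoint-simplicity}, with \(t=1\), gives \(Q_{\alpha_1+\alpha_2}\simeq(\operatorname{ad}V)(W)\simeq X_1\). Therefore
\[
\dim\mathcal B(V\oplus W)=\dim\mathcal B(V)\dim\mathcal B(X_1)\dim\mathcal B(W).
\]

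\emph{Step 3: the rank-one factors.} By the proof of Lemma~\ref{lem:Gamma2-A2-simple-factors-finite}, each \(R\in\{V,X_1,W\}\) reduces to a two-dimensional diagonal braiding with both vertices labelled \(-1\) and edge label \(Q_R\). This gives \(\dim\mathcal B(R)=4N_R\), where we adopt the convention \(N_R=1\) when \(Q_R=1\); in that case the type is \(A_1\times A_1\) and the dimension is \(4\). Taking the product gives
\[
\dim\mathcal B(V\oplus W)=4^3N_VN_{X_1}N_W=64N_VN_{X_1}N_W.
\]

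\emph{Main obstacle.} The only delicate point is the identification of the root factors in Step 2, in particular matching the middle factor to \(X_1\) rather than to \(Y_1\). This is harmless, because \eqref{eq:first-adjoint-braiding-symmetry} gives \(X_1\simeq Y_1\). A second point is the case \(Q_R=1\), which requires the convention \(\operatorname{ord}(1)=1\) so that the formula \(4N_R\) remains valid.
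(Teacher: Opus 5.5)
Your proposal is correct and follows the paper's proof essentially verbatim. You obtain \(\Delta'=1\) from \eqref{eq:Gamma2-Delta-unit-loci}, the standard \(A_2\) graph from Proposition~\ref{prop:Gamma2-standard-A2}, and the three-factor decomposition from Proposition~\ref{prop:positive-root-factorization}, with the factors identified by Lemmas~\ref{lem:root-factors-and-reflected-components} and~\ref{lem:root-string-adjoint-simplicity}; then \(\dim\mathcal B(R)=4N_R\) comes from the proof of Lemma~\ref{lem:Gamma2-A2-simple-factors-finite}, exactly as in the paper. The two ``obstacles'' you flag are not real issues: Lemma~\ref{lem:root-string-adjoint-simplicity} with \((i,j)=(1,2)\) yields \(X_1\) directly, and \(\operatorname{ord}(1)=1\) is simply the usual order, so no extra convention is needed.
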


\begin{proof}
The proof of Lemma~\ref{lem:Gamma2-A2-simple-factors-finite} gives
\(\dim\mathcal B(R)=4N_R\) for
\(R\in\{V,X_1,W\}\).  Equation
\eqref{eq:Gamma2-Delta-unit-loci} gives \(\Delta'=1\), so
\eqref{eq:standard-A2-scalar-conditions} holds.  Proposition
\ref{prop:Gamma2-standard-A2} gives the standard \(A_2\) Cartan graph,
while Proposition~\ref{prop:positive-root-factorization} and
Lemma~\ref{lem:root-string-adjoint-simplicity} give the corresponding
tensor decomposition with factors  {\(\mathcal B(V)\),
\(\mathcal B(X_1)\), and \(\mathcal B(W)\)}. Multiplying their
dimensions proves the formula.
\end{proof}

\begin{proof}[Proof of Theorem~\ref{thm:Gamma2-classification}]

Assume that \(\mathcal B(V\oplus W)\) is finite-dimensional.
By Proposition~\ref{prop:Gamma2-reduction-to-A2-or-G2}, after
interchanging the two entries if necessary, its Cartan graph is
standard of type \(A_2\) or \(G_2\).

Suppose first that it is of type \(A_2\).  Then \(X_1\) and \(Y_1\)
are simple, while \(X_2=Y_2=0\).
Lemmas~\ref{lem:Gamma2-X1-simple} and
\ref{lem:opposite-first-adjoint-simple} give
\(\Delta=\Delta'=1\).  Lemmas~\ref{lem:second-adjoint-vanishing} and
\ref{lem:opposite-second-adjoint-vanishing} give
\(\lambda=\lambda'=-1\).  Hence
\eqref{eq:Gamma2-classification-A2} holds.

Suppose next that the Cartan graph is of type \(G_2\).
Interchange \((g,V,\rho)\) and \((h,W,\sigma)\), if necessary, so that
the Cartan matrix is
\(\left(\begin{smallmatrix}2&-3\\-1&2\end{smallmatrix}\right)\).
Then \(X_1,X_2,X_3\), and \(Y_1\) are simple and nonzero, while
\(X_4=Y_2=0\).  Corollary
\ref{cor:Gamma2-standard-G2-conditions} now gives
\eqref{eq:Gamma2-classification-G2}.

Conversely, assume first that \eqref{eq:Gamma2-classification-A2} holds.
Corollary~\ref{cor:Gamma2-A2-Nichols-dimension} shows that
\(\mathcal B(V\oplus W)\) is finite-dimensional.

Finally, assume that \eqref{eq:Gamma2-classification-G2} holds.
Proposition~\ref{prop:Gamma2-standard-G2-stability-under-reflections}
shows that the tuple admits all reflections and that its Cartan graph
is standard of type \(G_2\).  Corollary
\ref{cor:standard-G2-Nichols-dimension} then gives
\(\dim\mathcal B(V\oplus W)=2^{21}\).  In particular,
\(\mathcal B(V\oplus W)\) is finite-dimensional.
\end{proof}

\subsection{An explicit \texorpdfstring{$G_2$}{G2} example over a group of order
\texorpdfstring{$16$}{16}}
\label{subsec:order16-G2-example}

The following data give an explicit finite quotient of \(\Gamma_2\), a normalized
\(3\)-cocycle, and two simple objects satisfying
\eqref{eq:Gamma2-classification-G2}.  The resulting Cartan graph is
standard of type \(G_2\), and the Nichols algebra has dimension
\(2^{21}\).

For \(b\in\mathbb Z/4\mathbb Z\), let
\(\overline b\in\mathbb Z/2\mathbb Z\) denote its reduction modulo
two.  For later use, also let
\(\widehat b=(b-\overline b)/2\in\mathbb Z/2\mathbb Z\), using
the representatives \(b\in\{0,1,2,3\}\) and
\(\overline b\in\{0,1\}\).
Let
\[
G=\{[e,a,b]\mid e,a\in\mathbb Z/2\mathbb Z,\ 
b\in\mathbb Z/4\mathbb Z\}.
\]
Define multiplication by
\begin{equation}
\label{eq:order16-group-law}
[e,a,b][f,c,d]
=
[e+f+\overline b\,c,\ a+c,\ b+d],
\end{equation}
where the first two coordinates are computed modulo two and the third
modulo four.

This multiplication is associative.  Indeed, only the first
coordinate requires verification.  For a third element \([r,u,v]\),
associativity follows from
\(\overline b\,c+\overline{b+d}\,u
=\overline d\,u+\overline b(c+u)\) in \(\mathbb F_2\), since
\(\overline{b+d}=\overline b+\overline d\).
The identity element is \([0,0,0]\), and
\([e,a,b]^{-1}=[e+\overline b\,a,a,-b]\).
Thus \(G\) is a group of order \(2\cdot2\cdot4=16\).

Set
\(\varepsilon=[1,0,0]\), \(g=[0,1,0]\), and \(h=[0,0,1]\).
A direct calculation using \eqref{eq:order16-group-law} gives
\(hg=\varepsilon gh\),
\(\varepsilon^2=g^2=h^4=1\), and
\(\varepsilon\in Z(G)\).
Moreover,
\(\varepsilon=hgh^{-1}g^{-1}\), and every element of \(G\) can be
written as
\([e,a,b]=\varepsilon^e g^a h^b\).
Hence \(g\) and \(h\) generate \(G\).
Since \(\varepsilon\neq1\), the relation
\(hg=\varepsilon gh\) also shows that \(G\) is non-abelian.
Therefore the assignment of the generators of \(\Gamma_2\) to
\(\varepsilon,g,h\) defines a surjective homomorphism
\(\Gamma_2\twoheadrightarrow G\).
For
$x=[e,a,b]$, $y=[f,c,d]$ and $z=[r,u,t]$, define
\begin{equation}
\label{eq:order16-G2-cocycle}
\Phi(x,y,z)
=
(-1)^{\omega(x,y,z)},
\qquad
\omega(x,y,z)
=
c\bigl(au+\overline t(e+a\overline b+a\overline d)
+a\widehat t\bigr).
\end{equation}
All exponents in the definition of $\omega$ are evaluated in
$\mathbb F_2$.

\begin{lemma}
\label{lem:order16-G2-cocycle}
The map $\Phi$ in \eqref{eq:order16-G2-cocycle} is a normalized
$3$-cocycle on $G$.
\end{lemma}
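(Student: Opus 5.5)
The plan is to verify normalization by inspection and the cocycle identity by a direct computation in \(\mathbb F_2\). Write \(\Phi=(-1)^\omega\) as in \eqref{eq:order16-G2-cocycle}. The multiplicative identity \eqref{eq:three-cocycle-identity-local} is then equivalent to the additive identity
\[
\omega(y,z,w)+\omega(x,yz,w)+\omega(x,y,z)=\omega(xy,z,w)+\omega(x,y,zw)
\]
in \(\mathbb F_2\), for all \(x,y,z,w\in G\).

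\emph{Normalization.} If \(y=1\), then \(c=0\) and \(\omega=0\). If \(x=1\), then \(e=a=b=0\), so every summand inside the bracket vanishes. If \(z=1\), then \(u=t=0\), so \(\overline t=\widehat t=0\) and again \(\omega=0\).

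\emph{Cocycle identity.} I would write \(w=[s,p,q]\). The coordinates of products follow from \eqref{eq:order16-group-law}. For the third coordinate I use \(\overline{b+d}=\overline b+\overline d\) together with the carry formula
\[
\widehat{b+d}=\widehat b+\widehat d+\overline b\,\overline d\quad\text{in }\mathbb F_2.
\]
This formula is checked on the sixteen pairs of residues. I would split \(\omega=\omega_1+\omega_2+\omega_3\) with
\[
\omega_1=acu,\qquad \omega_2=ac\,\widehat t,\qquad \omega_3=c\,\overline t\,(e+a\overline b+a\overline d),
\]
and compute the coboundary \(\delta\omega_i\) of each piece separately.

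\begin{itemize}
\item The piece \(\omega_1\) is the pullback, along \(G\to\mathbb Z/2\), \([e,a,b]\mapsto a\), of the standard cubic cocycle on \(\mathbb Z/2\). Hence \(\delta\omega_1=0\).
\item For \(\omega_2\), trilinearity in \((a,c,\widehat t\,)\) leaves only the nonadditivity of \(\widehat{\ }\). By the carry formula, \(\delta\omega_2\) equals a term of the form \(ac\,\overline t\,\overline q\) (up to the analogous terms coming from the other positions).
\item For \(\omega_3\), two sources of nonlinearity contribute to \(\delta\omega_3\). The first coordinate of \(xy\) contains the extra term \(\overline b\,c\). The factor \(a\overline b\) is quadratic, so the mixed terms produced when \(x\) and \(y\) are multiplied do not cancel on their own.
\end{itemize}

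The main obstacle is to show that the leftover terms of \(\delta\omega_2\) and \(\delta\omega_3\) cancel. I would first try to collect all surviving monomials in the variables \(a,c,u,p,e,f,r,\overline b,\overline d,\overline t,\overline q\) and check that they cancel in pairs. If the hand bookkeeping becomes unwieldy, I would fall back on an exhaustive check. There are \(16^4=65536\) quadruples \((x,y,z,w)\), and this is finite and certifiable by machine, in the same way as the GAP certificates used elsewhere in the paper.
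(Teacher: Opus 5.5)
Your plan is the paper's proof: the same normalization check, the same product formulas (including the carry rule \(\widehat{b+d}=\widehat b+\widehat d+\overline b\,\overline d\)), and a splitting of \(\omega\) into pieces whose coboundaries cancel. The paper splits your \(\omega_3\) further into the monomials \(ec\overline t\), \(a\overline b\,c\overline t\), and \(ac\overline d\,\overline t\). With that splitting, the step you flag as the main obstacle closes by hand, and no exhaustive check is needed.

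In your variables,
\[
\delta(ec\overline t)=\overline b\,cu\overline q,\qquad
\delta(a\overline b\,c\overline t)=au\overline d\,\overline q+\overline b\,cu\overline q,\qquad
\delta(ac\overline d\,\overline t)=ac\overline t\,\overline q+au\overline d\,\overline q,
\]
so \(\delta\omega_3=ac\overline t\,\overline q\).

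Your \(\delta\omega_2\) is exactly \(ac\overline t\,\overline q\), with no further terms. The map \(b\mapsto\widehat b\) enters only through the third argument, and among the five terms of the coboundary only \(\omega(x,y,zw)\) has a product in that slot. Hence \(\delta\omega=\delta\omega_2+\delta\omega_3=0\).

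One correction to your description of \(\delta\omega_3\). You name two sources of nonlinearity: the term \(\overline b\,c\) in the first coordinate of \(xy\) and the quadratic factor \(a\overline b\). There is a third: \(c\overline d\) is quadratic in the second argument. This factor is what produces \(ac\overline t\,\overline q\) and \(au\overline d\,\overline q\) from the term \(\omega(x,yz,w)\).
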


\begin{proof}
All exponents in this proof are computed in \(\mathbb F_2\).
If one of the three arguments is the identity element, every term in
the formula for \(\omega\) is zero.  Hence
\(\Phi=(-1)^\omega\) is normalized.

For
\(x=[e_x,a_x,b_x]\) and \(y=[e_y,a_y,b_y]\), write
\(xy=[e_{xy},a_{xy},b_{xy}]\).  The multiplication in
\eqref{eq:order16-group-law} gives
\begin{equation}
\label{eq:order16-coordinate-products}
\begin{aligned}
e_{xy}&=e_x+e_y+\overline b_xa_y,
&
a_{xy}&=a_x+a_y,\\
\overline b_{xy}&=\overline b_x+\overline b_y,
&
\widehat b_{xy}&=\widehat b_x+\widehat b_y
+\overline b_x\overline b_y.
\end{aligned}
\end{equation}
Write \(x_j=[e_j,a_j,b_j]\) for \(0\leq j\leq3\).
By \eqref{eq:order16-G2-cocycle}, one has
\(\omega=\omega_1+\cdots+\omega_5\), where
\[
\begin{aligned}
\omega_1(x_0,x_1,x_2)&=a_0a_1a_2,
&
\omega_2(x_0,x_1,x_2)&=e_0a_1\overline b_2,\\
\omega_3(x_0,x_1,x_2)&=a_0\overline b_0a_1\overline b_2,
&
\omega_4(x_0,x_1,x_2)&=a_0a_1\overline b_1\overline b_2,\\
\omega_5(x_0,x_1,x_2)&=a_0a_1\widehat b_2.
\end{aligned}
\]
Substitution of \eqref{eq:order16-coordinate-products} into the
inhomogeneous coboundary formula gives
\[
\begin{aligned}
\delta\omega_1&=0,\\
\delta\omega_2&=a_1a_2\overline b_0\overline b_3,\\
\delta\omega_3&=a_0a_2\overline b_1\overline b_3
+a_1a_2\overline b_0\overline b_3,\\
\delta\omega_4&=a_0a_1\overline b_2\overline b_3
+a_0a_2\overline b_1\overline b_3,\\
\delta\omega_5&=a_0a_1\overline b_2\overline b_3,
\end{aligned}
\]
where each coboundary is evaluated at \((x_0,x_1,x_2,x_3)\).
Adding these identities gives \(\delta\omega=0\).  Thus
\(\Phi=(-1)^\omega\) is a normalized \(3\)-cocycle.
\end{proof}

The relevant centralizers are
\begin{equation*}
C_G(g)=\{[e,a,2j]\},
\qquad
C_G(h)=\{[e,0,b]\}.
\end{equation*}
Here \(e,a,j\in\mathbb Z/2\mathbb Z\) and
\(b\in\mathbb Z/4\mathbb Z\).
Define
\begin{equation}
\label{eq:order16-projective-representations}
\rho([e,a,2j])=i^a,
\qquad
\sigma([e,0,b])=(-1)^{e+\overline b}.
\end{equation}
Direct substitution into the fixed definition of $\Phi_x$ gives
\begin{equation*}
\begin{aligned}
\Phi_g([e,a,2j],[f,c,2k])&=(-1)^{ac},\\
\Phi_h([e,0,b],[f,0,d])&=1.
\end{aligned}
\end{equation*}
It follows at once that $\rho$ and $\sigma$ are respectively
one-dimensional $\Phi_g$- and $\Phi_h$-projective representations.
Set \(V=M(g,\rho)\) and \(W=M(h,\sigma)\).
The parameters of the initial pair and the resulting scalar values are
\begin{align}
(\zeta,\lambda,\mu)&=(1,i,1),
& (\zeta',\lambda',\mu')&=(-1,-1,1),
\label{eq:order16-initial-parameters}
\\
\Delta&=\Delta'=1,
& \Theta_\Phi&=-1=\lambda^2.
\label{eq:order16-initial-reductions}
\end{align}
Thus the three equations in
\eqref{eq:Gamma2-classification-G2} hold, and the following proposition
applies.

\begin{prop}
\label{prop:order16-standard-G2}
The pair $(V,W)$ defined by
\eqref{eq:order16-G2-cocycle} and
\eqref{eq:order16-projective-representations} admits all reflections and
generates a standard Cartan graph of type $G_2$.
\end{prop}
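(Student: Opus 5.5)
The plan is to reduce the statement to Proposition~\ref{prop:Gamma2-standard-G2-stability-under-reflections}. Once the three equations in \eqref{eq:Gamma2-standard-G2-locus} are verified for $(V,W)$ with support representatives $(g,h)$ in this order, that proposition gives all reflections and a standard Cartan graph of type $G_2$. So the work is to justify \eqref{eq:order16-initial-parameters} and \eqref{eq:order16-initial-reductions} directly from \eqref{eq:order16-G2-cocycle} and \eqref{eq:order16-projective-representations}. By Lemma~\ref{lem:order16-G2-cocycle}, $\Phi$ is a normalized $3$-cocycle, and $G$ is a non-abelian quotient of $\Gamma_2$. Braided-indecomposability is automatic, because $x_1\neq0$ by Lemma~\ref{lem:Gamma2-first-adjoint-formula}.

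First I would confirm the two-cocycle formulas $\Phi_g=(-1)^{ac}$ on $C_G(g)$ and $\Phi_h=1$ on $C_G(h)$. On $C_G(g)$ all third coordinates are even, so $\overline t=0$. On $C_G(h)$ the middle coordinate $c$ vanishes, so $\omega=0$ there and only conjugates by elements of $C_G(h)$ enter. With these formulas, $\rho$ and $\sigma$ are projective characters: $i^ai^c=(-1)^{ac}\,i^{a+c}$ holds modulo the reduction $a+c\in\mathbb Z/2$, using $i^2=-1$ exactly when $a=c=1$. Reading off \eqref{eq:Gamma2-parameters} gives $\zeta=\rho(\varepsilon)=1$, $\lambda=\rho(g)=i$, $\mu=\rho(h^2)=1$, $\zeta'=\sigma(\varepsilon)=-1$, $\lambda'=\sigma(h)=-1$ and $\mu'=\sigma(g^2)=\sigma(1)=1$.

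Next I would evaluate $\Delta$ from \eqref{eq:Delta-definition} and $\Theta_\Phi(g,h)$ from \eqref{eq:Theta-Phi-definition}. Every factor is a value $\Phi(x,y,z)=\pm1$ at explicit triples built from $\varepsilon=[1,0,0]$, $g=[0,1,0]$, $h=[0,0,1]$ and their products, which are computed with \eqref{eq:order16-group-law}. For example, $gh=[0,1,1]$, $hg=[1,1,1]$, $\varepsilon g=[1,1,0]$, and $g^2h=h$ because $g^2=1$. The conjugates $ghg^{-1}$ and $hgh^{-1}$ that appear inside $\Phi_x$ and $\Phi^x$ are also needed. Each factor is then a parity $\omega$ evaluated by the formula. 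For $\Delta$, the prefactor is $\zeta\zeta'\mu\mu'=-1$, so one needs the cocycle part to equal $-1$. For $\Theta_\Phi$, the target is $-1=\lambda^2$.

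The main obstacle is this bookkeeping in $\Theta_\Phi$: about twenty factors, each requiring a product in $G$ and a conjugate before evaluating $\omega$. I would organize it by noting that $\omega$ vanishes whenever the middle argument has $c=0$. This kills many factors immediately, for instance all those of the form $\Phi_h(\varepsilon,\cdot)$ whose inner triples have middle entry in $\langle\varepsilon,h\rangle$. Only the factors whose middle arguments involve $g$ need to be computed. As a cross-check I would use $\Delta'=1$, which must agree with $\Delta=1$ by \eqref{eq:Gamma2-Delta-unit-loci}, and the alternative formula $\Theta_P$ from Lemma~\ref{lem:Gamma2-diagonal-braiding}. That lemma gives $\Theta_P=\tau(x)^2/(q_{01}q_{10})$ for $V$, where $q_{01}q_{10}$ is computed from only four $\Phi_g$-values, and this yields an independent derivation of $\Theta_\Phi=-1$. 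With the three equations established, Proposition~\ref{prop:Gamma2-standard-G2-stability-under-reflections} completes the proof.
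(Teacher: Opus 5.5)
Your proposal is correct and follows the paper's route exactly. The paper's proof observes that \eqref{eq:order16-initial-parameters} and \eqref{eq:order16-initial-reductions} are the three conditions in \eqref{eq:Gamma2-standard-G2-locus} and then invokes Proposition~\ref{prop:Gamma2-standard-G2-stability-under-reflections}. The values you target do check out: $\Phi_g(g,h)=-1$ and $\Phi_g(h,\varepsilon g)=\Phi_g(\varepsilon,\varepsilon)=1$ give $\Theta_\Phi(g,h)=-1$ through Lemma~\ref{lem:Theta-diagonal-braiding-cocycle}, and $\Phi_h(\varepsilon g,g)=-1$ is the only nontrivial cocycle factor in $\Delta$.

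One small point concerns your justification of $\Phi_g=(-1)^{ac}$ on $C_G(g)$. The vanishing of $\overline t$ does not remove the term $a\widehat t$ of $\omega$, since $\widehat{2k}=k$. For $p=[e,a,2j]$ and $q=[f,c,2k]$ this term contributes $(-1)^{ak}$ to both $\Phi(g,p,q)$ and $\Phi(p,g,q)$. These contributions cancel in $\Phi_g(p,q)$, so the formula $\Phi_g(p,q)=(-1)^{ac}$ still holds.
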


\begin{proof}
Equations \eqref{eq:order16-initial-parameters} and
\eqref{eq:order16-initial-reductions} give precisely the three
conditions in \eqref{eq:Gamma2-standard-G2-locus}.  The assertion follows
from Proposition~\ref{prop:Gamma2-standard-G2-stability-under-reflections}.
\end{proof}

Corollary~\ref{cor:standard-G2-Nichols-dimension} now gives
\begin{equation*}
\dim\mathcal B(V\oplus W)
=16^3\,8^3
=2^{21}.
\end{equation*}
Hence this example realizes the \(G_2\) case of
Theorem~\ref{thm:Gamma2-classification}.

\section{The \texorpdfstring{$\Gamma_4$}{Gamma4} case}
\label{sec:classification-Gamma4}

Let
\[
\Gamma _4=
\left\langle g,h,\varepsilon\ \middle|\
hg=\varepsilon gh,\quad
g\varepsilon=\varepsilon^{-1}g,\quad
h\varepsilon=\varepsilon h,\quad
\varepsilon^4=1
\right\rangle,
\]
and let \(G\) be a finite non-abelian quotient of \(\Gamma _4\) in
which the image of \(\varepsilon\) has order four.  The images of the
generators in \(G\) are denoted by the same letters.  Let \(\Phi\) be a
normalized \(3\)-cocycle on \(G\).

Throughout this section, let \(V=M(h,\rho)\) be simple, and let
\(W=M(g,\sigma)\) be simple.  The support of \(V\) is
\(\{h,\varepsilon^{-1}h\}\), and the support of \(W\) is
\(\{g,\varepsilon g,\varepsilon^2g,\varepsilon^3g\}\).  Assume that
\((V,W)\) is braided-indecomposable.
Lemma~\ref{lem:Gamma4-rho-one-dimensional} below shows that \(\rho\)
is one-dimensional under these  assumptions.  When \(\sigma\)
is one-dimensional, we use the scalar \(\Delta_4\) and the seven scalars
\(\Theta_{2,j}\), defined in
\eqref{eq:Gamma4-alpha-beta-Delta} and
\eqref{eq:Gamma4-Y2-H-Theta}, respectively. 
The classification for quotients of \(\Gamma_4\) is as follows.

\begin{thm}
\label{thm:Gamma4-classification}
Under the preceding assumptions,
\(\dim\mathcal B(V\oplus W)<\infty\) if and only if
\begin{equation*}
\tag{B$_2$}
\label{eq:Gamma4-classification-B2}
\begin{aligned}
&\dim\sigma=1,\qquad
 \rho(h)=\sigma(g)=-1,\qquad
 \Delta_4=1,\\[-1mm]
&\Theta_{2,j}=1
 \qquad(1\leq j\leq7).
\end{aligned}
\end{equation*}
In this case, the Cartan graph is standard of type \(B_2\), with
matrix
\(\left(\begin{smallmatrix}2&-1\\-2&2\end{smallmatrix}\right)\)
at every object.
\end{thm}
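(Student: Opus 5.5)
The plan follows the template of the $\Gamma_2$ case. First I would handle the projective representations. As in Lemma~\ref{lem:Gamma2-projective-representations-one-dimensional}, I would show that $\rho$ is one-dimensional (this is the promised Lemma~\ref{lem:Gamma4-rho-one-dimensional}). The tool is the alternator $f_\Phi$ on the abelian centralizer $C_G(h)=\langle\varepsilon,h,g^2\rangle$, together with a double-braiding contradiction against commutation with the $G$-action. For $\sigma$, $C_G(g)$ may admit higher-dimensional $\Phi_g$-projective irreducibles. Here I would show directly that $\dim\sigma\ge2$ forces $(\operatorname{ad}V)(W)$, or some iterate, to be non-simple, or else forces an infinite root string. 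The argument would exhibit two non-proportional vectors in a homogeneous component of $X_1$ on which $C_G(\deg)$ acts by different scalars. This gives necessity of $\dim\sigma=1$.

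Next, with both one-dimensional, I would fix $v\in V_h$ and $w\in W_g$ and compute the adjoint objects via Proposition~\ref{prop:adjoint-orbit-generation}.
\begin{enumerate}
\item $x_1=\varphi_1^\Phi(v\otimes w)$ is always nonzero.
\item $X_1$ is simple iff $\Delta_4=1$, by comparing the actions of the two conjugating elements on $x_1$, exactly as in Lemma~\ref{lem:Gamma2-X1-simple}.
\item $X_2=0$ iff $\rho(h)=-1$, since the factor $(1+\rho(h))$ appears as in Lemma~\ref{lem:second-adjoint-vanishing}.
\end{enumerate}
In the opposite order, $Y_1\simeq X_1$ by \eqref{eq:first-adjoint-braiding-symmetry}. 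I expect $Y_2\ne0$ precisely when $\sigma(g)\ne1$. $Y_3$ vanishes iff $\sigma(g)=-1$, and $Y_2$ is simple iff the seven scalars $\Theta_{2,j}$ equal $1$. The count of seven comes from stability of the line $\mathbb C y_2$ under the generators of $C_G(g^2h)$ beyond those automatically preserved. Thus \eqref{eq:Gamma4-classification-B2} is exactly the condition that the Cartan matrix at $(V,W)$ is $\left(\begin{smallmatrix}2&-1\\-2&2\end{smallmatrix}\right)$ with all root-string objects simple.

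For sufficiency, I would prove that \eqref{eq:Gamma4-classification-B2} is stable under $R_1$ and $R_2$. The reflected tuples are $(V^*,X_1)$ and $(Y_2,W^*)$. I would choose support representatives, as in Lemma~\ref{lem:reflections-remain-Gamma2}, that again satisfy the $\Gamma_4$ relations. Then I would compute the new parameters using Lemma~\ref{lem:dual-projective-representation}, \cite[Lemma~3.8]{reflection3}, and explicit inducing values such as $\sigma_1(hg)$. This yields a standard $B_2$ graph by induction. Finiteness of the factors $\mathcal B(V)$, $\mathcal B(W)$, $\mathcal B(X_1)$, $\mathcal B(Y_1)$ would follow by restricting to abelian subgroups (Lemma~\ref{lem:Gamma4-support-group-restriction}) and using diagonal-type classification, after reducing $\Phi$ to a root-of-unity-valued representative. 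Theorem~\ref{thm:Nichols-finiteness-criterion} then applies.

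For necessity, I would use Proposition~\ref{prop:pair-with-finite-Cartan-matrix}, which gives an object of finite type. I would then exclude $A_2$ and $G_2$ at $\Gamma_4$-type tuples, for instance via $m_{21}\ge2$ forced by the four-element support, and transport the $B_2$ conditions back along reflections.

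The main obstacle is the cocycle bookkeeping. This covers simplicity of $Y_2$ and, above all, the invariance of $\Delta_4$ and of the $\Theta_{2,j}$ under reflection. Each requires an appendix-style cocycle identity that I would attempt to verify by reducing to the bar complex, as was done for $\Theta_\Phi$ in the $\Gamma_2$ case.
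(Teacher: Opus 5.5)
Your overall architecture matches the paper: one-dimensionality, the adjoint computations, stability of \eqref{eq:Gamma4-classification-B2} under \(R_1,R_2\), and transport from a finite-type object. The sufficiency step, however, has a genuine gap.

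\textbf{Sufficiency for \(W\) and \(Y_1\).} The positive roots correspond to \(V\), \(W\), \(Y_1\simeq X_1\), and \(Y_2\); your list omits \(Y_2\) and counts \(X_1\simeq Y_1\) twice. Only \(V\) and \(Y_2\) have supports in abelian subgroups, namely \(\langle\varepsilon,h\rangle\) and \(\langle\varepsilon,\varepsilon hg^2\rangle\). The supports \(\{\varepsilon^ig\}\) of \(W\) and \(\{\varepsilon^ihg\}\) of \(Y_1\) are not: already \((\varepsilon g)g(\varepsilon g)^{-1}=\varepsilon^2g\). So every subgroup containing them is non-abelian, and Lemma~\ref{lem:Gamma4-support-group-restriction} gives no reduction to diagonal type.

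The missing idea is to restrict \(R\in\{W,Y_1\}\) to the non-abelian \(\Gamma_2\)-quotient \(K_R=\langle\varepsilon,x_R\rangle\). There \(R|_{K_R}\) splits into two simple objects with supports \(\{x_R,\varepsilon^2x_R\}\) and \(\{\varepsilon x_R,\varepsilon^3x_R\}\), and one invokes the \(A_2\) case of Theorem~\ref{thm:Gamma2-classification}. That needs the diagonal values \(-1\), which come from the twist. It also needs the local scalar \(\Delta_{01}^R=1\), which is not among the conditions in \eqref{eq:Gamma4-classification-B2}. The paper extracts it from the vanishing of \(Y_3\) (Lemma~\ref{lem:Gamma4-W-local-Deltas}), and for \(Y_1\) from the same lemma applied to \(R_1(V,W)\). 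Without this, nothing in your plan proves \(\mathcal B(W)\) and \(\mathcal B(Y_1)\) finite-dimensional.

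\textbf{Excluding \(\dim\sigma=2\).} The mechanism you describe fails. If \(\eta_g=-1\), then \(c_{hg}(\varepsilon^2,\varepsilon^{-1}h^2)=\eta_g=-1\), because \(x\mapsto f_\Phi(x,\varepsilon^2,\varepsilon^{-1}h^2)\) is a character and \(\eta_h=1\). Hence the irreducible \(\Phi_{hg}\)-projective representations of \(C_G(hg)\) are two-dimensional. Elements acting with different eigenvalues is then expected, and there are no common eigenlines to exhibit.

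What works is a rank count. On \((V\otimes W)_{hg}=V_h\otimes W_g\oplus V_{\varepsilon^{-1}h}\otimes W_{\varepsilon g}\), the double braiding \(T\) swaps the two summands. On \(V_h\otimes W_g\), \(T^2\) is a nonzero multiple of the action of \(\varepsilon^{-1}h^2\) on \(W_g\). That operator anticommutes with the action of \(\varepsilon^2\), so its eigenvalues are \(\pm\mu\). Hence \(\dim\ker(\operatorname{id}-T)\le1\), so \(\dim(X_1)_{hg}\ge3>2\). Thus \(X_1\) is not simple, contradicting Lemma~\ref{lem:root-string-adjoint-simplicity}. This is a legitimate alternative to the paper, which restricts \(W\) to \(\langle\varepsilon,g\rangle\) and uses Theorem~\ref{thm:Gamma2-classification} to get \(\Delta_+=\Delta_-=1\) against \(\Delta_-=-\Delta_+\).

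\textbf{Smaller corrections.} First, \(Y_2\neq0\) whenever \(\Delta_4=1\), since the component \(w_2\otimes(w_1\otimes v)\) of \(z_1\) survives. The role of \(\sigma(g)=-1\) is to kill the second summand generated by \(z_0\), so that \(Y_2\) can be simple. Second, for necessity you must first show that the finite-type object from Proposition~\ref{prop:pair-with-finite-Cartan-matrix} again has \(\Gamma_4\)-type supports. The paper does this by comparing \(|\operatorname{Inn}|=|G/Z(G)|=8\) with the five quandles of Theorem~\ref{thm:support-classification}.
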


\medskip
\noindent\textit{Outline of the proof.}
\begin{itemize}[leftmargin=2em]
\item
Subsection~\ref{subsec:Gamma4-projective-representations} proves that
\(\rho\) is one-dimensional under the  assumptions and that
\(\sigma\) is one-dimensional in every finite-dimensional case.

\item
Subsection~\ref{subsec:Gamma4-adjoint-objects} computes
\(X_1,X_2,Y_1,Y_2\), and \(Y_3\), and obtains the equations in
\eqref{eq:Gamma4-classification-B2}.

\item
Subsection~\ref{subsec:Gamma4-reflections} proves that these equations
are preserved by both reflections.

\item
Subsection~\ref{subsec:Gamma4-finite-Cartan-graphs} shows that every
finite-dimensional case has a standard Cartan graph of type \(B_2\)
and proves the necessity of the equations in
\eqref{eq:Gamma4-classification-B2}.

\item
Subsection~\ref{subsec:Gamma4-sufficiency} proves that the Nichols
algebras of \(V,Y_1,Y_2\), and \(W\) are finite-dimensional and
computes their dimensions.  The final subsection proves the
classification theorem.
\end{itemize}
\medskip
\noindent\textit{Notation used in this section.}
\begingroup
\small
\setlength{\tabcolsep}{4pt}
\renewcommand{\arraystretch}{1.08}
\begin{center}
\begin{tabular}{@{}L{0.19\textwidth}L{0.24\textwidth}L{0.49\textwidth}@{}}
\hline
Notation & Defined in & Used to prove \\
\hline
\(\rho(h),\sigma(g)\)
& Values of the inducing projective representations at the chosen
  support representatives
& The equalities \(\rho(h)=-1\) and \(\sigma(g)=-1\) give
  \(X_2=0\) and \(Y_3=0\), respectively, once the preceding adjoint
  objects are simple. \\
\(\Delta_4\)
& \eqref{eq:Gamma4-alpha-beta-Delta}
& The equality \(\Delta_4=1\) is equivalent to the simplicity of
  \(X_1\). \\
\(\Theta_{2,j}\), \(1\leq j\leq7\)
& \eqref{eq:Gamma4-Y2-H-Theta}
& Once \(\Delta_4=1\) and \(\sigma(g)=-1\), the seven equalities
  \(\Theta_{2,j}=1\) are exactly the remaining conditions for the
  simplicity of \(Y_2\). \\
\hline
\end{tabular}
\end{center}
\endgroup

\subsection{Projective representations}
\label{subsec:Gamma4-projective-representations}

Since \(\rho\) and \(\sigma\) are
projective representations of \(C_G(h)\) and \(C_G(g)\), respectively,
we first determine these two centralizers.

\begin{lemma}
\label{lem:Gamma4-centralizers}
The centralizers \(C_G(h)\) and \(C_G(g)\) are abelian.  More
precisely, \(C_G(h)=\langle\varepsilon,h,g^2\rangle\) and
\(C_G(g)=\langle \varepsilon^2,\varepsilon^{-1}h^2,g\rangle\).
\end{lemma}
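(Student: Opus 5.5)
The plan is to copy the proof of Lemma~\ref{lem:Gamma2-normal-forms-centralizers}: first get a normal form for elements of \(G\), then derive an explicit commutation rule for two such elements. From the relations \(hg=\varepsilon gh\), \(g\varepsilon=\varepsilon^{-1}g\) and \(h\varepsilon=\varepsilon h\), the subgroup \(\langle\varepsilon\rangle\) is normal. So every element of \(G\) can be written as \(\varepsilon^e h^b g^a\), which is the normal form recalled in Subsection~\ref{subsec:enveloping-groups-Gamma} for \(\Gamma_n\), pushed to the quotient. The hypothesis that \(\varepsilon\) has order four in \(G\) is what lets us read off exponents of \(\varepsilon\) modulo \(4\). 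The two basic moves are \(g^a\varepsilon=\varepsilon^{(-1)^a}g^a\) and \(h^bg=\varepsilon^b g h^b\). Together they give \(h g^a=\varepsilon^{s_a}g^ah\), where \(s_a\in\{0,1\}\) is the parity of \(a\): indeed \(hg^2=\varepsilon g h g=\varepsilon g\varepsilon g h=g^2h\), so \(g^2\) commutes with \(h\). In particular both \(g^2\) and \(h\) commute with \(\varepsilon\) and with each other.

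For \(C_G(h)\), take \(u=\varepsilon^e h^b g^a\). Since \(\varepsilon\) and \(h\) commute with \(h\), we get \(huh^{-1}=\varepsilon^e h^b(hg^ah^{-1})=\varepsilon^e h^b\varepsilon^{s_a}g^a\). Hence \(u\in C_G(h)\) if and only if \(\varepsilon^{s_a}=1\), that is, if and only if \(a\) is even. This gives \(C_G(h)=\langle\varepsilon,h,g^2\rangle\), and this group is abelian because its three generators pairwise commute.

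For \(C_G(g)\), compute \(gug^{-1}\). First, \(g\varepsilon^eg^{-1}=\varepsilon^{-e}\). Second, \(gh^bg^{-1}=\varepsilon^{-b}h^b\), obtained from \(h^bg=\varepsilon^bgh^b\) and the fact that \(\varepsilon\) commutes with \(h\). Therefore \(gug^{-1}=\varepsilon^{-e-b}h^bg^a\), and \(u\in C_G(g)\) if and only if \(2e+b\equiv0\pmod 4\). This forces \(b\) to be even, say \(b=2k\), and then \(e\equiv -k\pmod 2\). So \(u=\varepsilon^{e}h^{2k}g^a\) with \(e\equiv k \pmod 2\), which is exactly the element \((\varepsilon^2)^{j}(\varepsilon^{-1}h^2)^k g^a\) for suitable \(j\). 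Hence \(C_G(g)=\langle\varepsilon^2,\varepsilon^{-1}h^2,g\rangle\). This group is abelian for three reasons. First, \(\varepsilon^2\) is central, since \(g\varepsilon^2g^{-1}=\varepsilon^{-2}=\varepsilon^2\). Second, \(\varepsilon^{-1}h^2\) commutes with \(g\): indeed \(g\varepsilon^{-1}h^2g^{-1}=\varepsilon\cdot\varepsilon^{-2}h^2=\varepsilon^{-1}h^2\). Third, \(\varepsilon^2\) and \(\varepsilon^{-1}h^2\) commute with each other.

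The main obstacle is the exponent bookkeeping modulo \(4\), in particular the sign conventions in \(gh^bg^{-1}\) and \(hg^ah^{-1}\). I would check these against the centralizers listed for \(\Gamma_n\) in Subsection~\ref{subsec:enveloping-groups-Gamma} with \(n=4\) and \(z=1\). A second point to handle is that in the quotient \(G\) the normal form may not be unique, since extra relations are allowed. This causes no trouble: the condition for \(u\) to commute is derived only from the relation \(\varepsilon^{c}=1\iff 4\mid c\), which holds because \(\varepsilon\) has order four. Any representative \(u\) satisfying the congruence lies in the stated subgroup, and conversely every element of the stated subgroup satisfies it.
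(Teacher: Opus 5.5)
Your proof is correct, but it takes a different route from the paper's.

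The paper never conjugates a general element. It only observes that \(\langle\varepsilon,h,g^2\rangle\) and \(\langle\varepsilon^2,\varepsilon^{-1}h^2,g\rangle\) are abelian and contained in \(C_G(h)\) and \(C_G(g)\), respectively. It then uses the normal form \(\varepsilon^ih^jg^r\) to bound their indices by \(2\) (cosets of \(1,g\)) and by \(4\) (cosets of \(1,\varepsilon,h,\varepsilon h\)). The containments become equalities by orbit--stabilizer, since \([G:C_G(h)]=|h^G|=2\) and \([G:C_G(g)]=|g^G|=4\).

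You instead follow the pattern of Lemma~\ref{lem:Gamma2-normal-forms-centralizers}. You derive the exact formulas \(huh^{-1}=\varepsilon^{s_a}u\) and \(gug^{-1}=\varepsilon^{-2e-b}u\) and read off the membership criteria.

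What your version buys:
\begin{itemize}
\item It is self-contained, gives an explicit test for membership, and never needs the conjugacy class sizes.
\item Your remark that the criterion does not depend on the chosen representative correctly handles the possible non-uniqueness of normal forms in the quotient \(G\).
\end{itemize}

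What the paper's version buys:
\begin{itemize}
\item It avoids the exponent bookkeeping for \(gh^bg^{-1}\) and \(g\varepsilon^eg^{-1}\).
\item Its ``containment plus index bound plus class size'' template is reused later in the section for \(C_G(hg)\) and \(C_G(\varepsilon hg^2)\).
\end{itemize}

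In both proofs, the full hypothesis that \(\varepsilon\) has order four (rather than merely \(\varepsilon\neq1\)) enters only in the \(C_G(g)\) half. For you it appears as \(\varepsilon^{c}=1\Leftrightarrow 4\mid c\); for the paper it appears as \(|g^G|=4\).
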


\begin{proof}
The defining relations show that the two subgroups in the statement
are abelian and are contained in \(C_G(h)\) and \(C_G(g)\), respectively.
Every element of \(G\) has a representative of the form
\(\varepsilon^ih^jg^r\).  Hence the first subgroup has index at most
two.  The second has index at most four, since \(G\) is the union of
its cosets represented by \(1,\varepsilon,h,\varepsilon h\).  Since \(|h^G|=2\) and \(|g^G|=4\), the preceding inclusions
and index bounds give
\[
\begin{aligned}
2=[G:C_G(h)]
&\leq [G:\langle\varepsilon,h,g^2\rangle]\leq2,\\
4=[G:C_G(g)]
&\leq [G:\langle\varepsilon^2,\varepsilon^{-1}h^2,g\rangle]
\leq4.
\end{aligned}
\]
Thus each subgroup has the same index in \(G\) as the centralizer
containing it, so both inclusions are equalities.
\end{proof}

For commuting \(x,a,b\in G\), let
\(c_x(a,b):=\Phi_x(a,b)/\Phi_x(b,a)\).  By the definition of
\(\Phi_x\) and Lemma~\ref{lem:alternator},
\(c_x(a,b)=f_\Phi(x,a,b)\).
Put \(\eta_h=f_\Phi(h,\varepsilon,g^2)\) and
\(\eta_g=f_\Phi(g,\varepsilon^2,\varepsilon^{-1}h^2)\).
For \(a=\varepsilon^ih^jg^{2r}\) and
\(b=\varepsilon^{i'}h^{j'}g^{2r'}\) in \(C_G(h)\), and for
\(a'=(\varepsilon^2)^i(\varepsilon^{-1}h^2)^jg^r\) and
\(b'=(\varepsilon^2)^{i'}(\varepsilon^{-1}h^2)^{j'}g^{r'}\)
in \(C_G(g)\), Lemma~\ref{lem:alternator} gives
\begin{equation}
\label{eq:Gamma4-commutator-bicharacters}
c_h(a,b)=\eta_h^{\,ir'-i'r},
\qquad
c_g(a',b')=\eta_g^{\,ij'-i'j}.
\end{equation}
Since \(\varepsilon^4=1\), the same lemma gives
\(\eta_g^2=f_\Phi(g,\varepsilon^4,\varepsilon^{-1}h^2)=1\).
Thus \(\eta_g\in\{1,-1\}\).
We first show that \(\rho\) is one-dimensional.

\begin{lemma}
\label{lem:Gamma4-rho-one-dimensional}
One has \(\eta_h=1\), and hence \(\dim\rho=1\).
\end{lemma}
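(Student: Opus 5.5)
The plan is to follow the proof of Lemma~\ref{lem:Gamma2-projective-representations-one-dimensional} closely, with \(C_G(h)=\langle\varepsilon,h,g^2\rangle\) from Lemma~\ref{lem:Gamma4-centralizers} in place of \(C_G(g)=\langle\varepsilon,g,h^2\rangle\). The conclusion \(\dim\rho=1\) follows from \(\eta_h=1\) by the same argument as before. By \eqref{eq:Gamma4-commutator-bicharacters}, \(c_h\equiv1\) on \(C_G(h)\). So the twisted group algebra \(\mathbb C^{\Phi_h}C_G(h)\) is commutative, and every irreducible \(\Phi_h\)-projective representation of it is one-dimensional. The whole task is therefore to rule out \(\eta_h\neq1\). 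Since \(\varepsilon^4=1\), Lemma~\ref{lem:alternator} gives \(\eta_h^4=f_\Phi(h,\varepsilon^4,g^2)=1\). Hence a priori \(\eta_h\in\{\pm1,\pm i\}\).

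Assume \(\eta_h\neq1\). Note that \(c_h(h,a)=f_\Phi(h,h,a)=1\) for all \(a\in C_G(h)\). So \(h\) is central in \(\mathbb C^{\Phi_h}C_G(h)\), and Schur's lemma gives \(h\rhd v=\lambda v\) on \(V_h\). Choose an \(\varepsilon\)-eigenvector \(0\neq v\in V_h\), say \(\varepsilon\rhd v=\eta v\). This is possible because \(\varepsilon^4=1\), so \(\varepsilon\) acts on \(V_h\) by a diagonalizable operator. Set \(v_-:=g^2\rhd v\). The commutation rule \(\varepsilon\rhd(g^2\rhd v)=c_h(\varepsilon,g^2)\,g^2\rhd(\varepsilon\rhd v)\) gives \(\varepsilon\rhd v_-=\eta_h\eta\,v_-\). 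Thus \(v\) and \(v_-\) are eigenvectors of \(\varepsilon\) with different eigenvalues. Consequently the scalars \(\kappa\) and \(\kappa'\) by which \(\varepsilon^{-1}h\) (or the appropriate element of \(C_G(h)\)) acts on \(v\) and on \(v_-\) differ by a nonzero power of \(\eta_h\).

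Next I would use the fact that the double braiding \(c^2_{V,V}\) is a morphism in \(\GG\). We have \(ghg^{-1}=\varepsilon^{-1}h\), so \(gv\in V_{\varepsilon^{-1}h}\). I would compute \(c^2_{V,V}(v\otimes gv)\) and \(c^2_{V,V}(gv\otimes v_-)\). Each is a scalar multiple of its argument. The scalar is a product of cocycle factors \(\Phi_h(\cdot,\cdot)\) and of the eigenvalues of \(h\) and \(\varepsilon^{\pm1}h\) on the relevant vectors. The cocycle factors are identical in the two computations, because the degrees \((h,\varepsilon^{-1}h)\) and \((\varepsilon^{-1}h,h)\) occur symmetrically. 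By \eqref{eq:tensor-product}, \(g\rhd(v\otimes gv)\) is a nonzero multiple of \(gv\otimes v_-\), and \(c^2_{V,V}\) commutes with the action of \(g\). Hence the two eigenvalues must coincide. Comparing them should give \(\eta_h^{k}=1\), where \(k\in\{1,-1\}\) if the \(\varepsilon\)-contributions enter once; this gives the desired contradiction.

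The main obstacle is the bookkeeping in this comparison. In \(\Gamma_4\), \(g\) inverts \(\varepsilon\), so the eigenvalue of \(\varepsilon^{-1}\) on \(g\)-translates enters rather than that of \(\varepsilon\). I must verify that the discrepancy is a single factor \(\eta_h^{\pm1}\) and not \(\eta_h^{2}\). If it turns out to be \(\eta_h^{2}\), the argument only excludes \(\eta_h=\pm i\). In that case I would handle \(\eta_h=-1\) separately, repeating the comparison with \(h\) replaced by \(h\varepsilon\) (equivalently, using the vector \(v\otimes v_-\) of degree \(h^2\)). Alternatively I would pass to the \(2\)-primary cocycle representative as in Lemma~\ref{lem:Gamma2-primary-cocycle} and evaluate \(f_\Phi(h,\varepsilon,g^2)\) using the tricharacter properties on the abelian subgroup \(\langle\varepsilon,h,g^2\rangle\).
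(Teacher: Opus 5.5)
Your proposal is essentially the paper's argument: take \(v\in V_h\) an eigenvector of \(\varepsilon^{-1}h\), put \(v_1=g\rhd v\) and \(v_-=g^2\rhd v\), and compare \(c^2_{V,V}\) on \(v\otimes v_1\) and \(v_1\otimes v_-\) using \(g\)-equivariance and \(g\rhd(v\otimes v_1)\in\mathbb C^\times(v_1\otimes v_-)\). The bookkeeping you flag resolves in your favour, so the fallbacks are unnecessary. Both double braidings contain the same scalar by which \(h\) acts on \(v_1\). The only discrepancy is the ratio of the \(\varepsilon^{-1}h\)-eigenvalues on \(v_-\) and \(v\), namely \(c_h(\varepsilon^{-1}h,g^2)=f_\Phi(h,\varepsilon^{-1}h,g^2)=\eta_h^{-1}\), a single factor. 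This gives \(\eta_h=1\) directly, without the case split on \(\eta_h^4=1\), the Schur step for \(h\), or the detour through an \(\varepsilon\)-eigenvector.
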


\begin{proof}
The defining relations give \(ghg^{-1}=\varepsilon^{-1}h\),
\(g(\varepsilon^{-1}h)g^{-1}=h\), and
\(hg=g(\varepsilon^{-1}h)\), while
\(h,\varepsilon^{-1}h,g^2\in C_G(h)\).
Since \(\varepsilon^{-1}h\in C_G(h)\), its action preserves \(V_h\).
If \(m=|\varepsilon^{-1}h|\), repeated use of
\eqref{eq:YD-projective-action} shows that the \(m\)-th power of this
action is a nonzero scalar multiple of the identity.  Hence it is
diagonalizable.  Choose \(0\neq v\in V_h\) and
\(\alpha\in\mathbb C^\times\) such that
\((\varepsilon^{-1}h)\rhd v=\alpha v\), and let
\(v_-=g^2\rhd v\) and \(v_1=g\rhd v\).

By \eqref{eq:Gamma4-commutator-bicharacters},
\(c_h(\varepsilon^{-1}h,g^2)=\eta_h^{-1}\).  Since
\(\varepsilon^{-1}h\) and \(g^2\) commute,
\eqref{eq:YD-projective-action} gives
\[
(\varepsilon^{-1}h)\rhd v_-
=c_h(\varepsilon^{-1}h,g^2)\alpha v_-
=\eta_h^{-1}\alpha v_-.
\]
To determine the action on \(v_1\), compare
\[
\begin{aligned}
h\rhd(g\rhd v)
 &=\Phi_h(h,g)(hg)\rhd v,\\
g\rhd((\varepsilon^{-1}h)\rhd v)
 &=\Phi_h(g,\varepsilon^{-1}h)
   (g\varepsilon^{-1}h)\rhd v.
\end{aligned}
\]
Since \(hg=g(\varepsilon^{-1}h)\), this gives
\(h\rhd v_1=\alpha'v_1\), where
\(\alpha'=\alpha\Phi_h(h,g)/
\Phi_h(g,\varepsilon^{-1}h)\in\mathbb C^\times\).
Let \(c_{V,V}^2=c_{V,V}\circ c_{V,V}\).  The vectors \(v\) and
\(v_-\) have degree \(h\), whereas \(v_1\) has degree
\(\varepsilon^{-1}h\).  The braiding formula therefore gives
\begin{align*}
c_{V,V}^2(v\otimes v_1)
&=\alpha\alpha'\,v\otimes v_1,\\
c_{V,V}^2(v_1\otimes v_-)
&=\eta_h^{-1}\alpha\alpha'\,v_1\otimes v_-.
\end{align*}
Moreover, \eqref{eq:YD-projective-action} gives
\(g\rhd v_1=\Phi_h(g,g)v_-\).  Hence
\eqref{eq:tensor-product} gives
\(g\rhd(v\otimes v_1)=\kappa v_1\otimes v_-\), where
\(\kappa=\Phi^g(h,\varepsilon^{-1}h)
\Phi_h(g,g)\in\mathbb C^\times\).

Since \(c_{V,V}^2\) is a morphism in
\({}_G^G\mathcal{YD}^{\Phi}\), it commutes with the action of \(g\).
The preceding formulas give
\begin{align*}
c_{V,V}^2\bigl(g\rhd(v\otimes v_1)\bigr)
&=\eta_h^{-1}\kappa\alpha\alpha'\,v_1\otimes v_-,\\
g\rhd c_{V,V}^2(v\otimes v_1)
&=\kappa\alpha\alpha'\,v_1\otimes v_-.
\end{align*}
Since \(v_1,v_-\neq0\) and \(\kappa\alpha\alpha'\neq0\), equality
of the two expressions gives \(\eta_h=1\).  Equation
\eqref{eq:Gamma4-commutator-bicharacters} then shows that
\(\mathbb C^{\Phi_h}C_G(h)\) is commutative.  Its irreducible
representation \(\rho\) is therefore one-dimensional.
\end{proof}

For \(\sigma\), we use the finite-dimensionality of
\(\mathcal B(W)\).

\begin{lemma}
\label{lem:Gamma4-sigma-one-dimensional}
If \(\mathcal B(W)\) is finite-dimensional, then
\(\eta_g=1\), and hence \(\dim\sigma=1\).
\end{lemma}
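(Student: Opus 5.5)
We argue by contradiction. Assume \(\mathcal B(W)\) is finite-dimensional and \(\eta_g=-1\), and show that \(\mathcal B(W)\) then contains an infinite-dimensional Nichols algebra of a braided subspace. As in Lemma~\ref{lem:Gamma2-projective-representations-one-dimensional}, we begin with the twisted group algebra \(\mathbb C^{\Phi_g}C_G(g)\). By \eqref{eq:Gamma4-commutator-bicharacters}, \(c_g(g,a)=1\) for every \(a\in C_G(g)\), so \(g\) is central in this algebra. Schur's lemma then gives \(g\rhd u=\lambda u\) for all \(u\in W_g\), for some \(\lambda\in\mathbb C^\times\).

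If \(\eta_g=-1\), the operators of \(\varepsilon^2\) and \(\varepsilon^{-1}h^2\) on \(W_g\) anticommute up to the nonzero scalars coming from \(\Phi_g\). Hence \(\dim W_g\ge2\). More precisely, choose an eigenvector \(u\) of \(\varepsilon^2\) and set \(u_-:=(\varepsilon^{-1}h^2)\rhd u\). These are eigenvectors of \(\varepsilon^2\) with opposite eigenvalues, so they are linearly independent. On \(W_g\otimes W_g\) the braiding is \(\lambda\) times the flip. Thus \(U=\mathbb Cu\oplus\mathbb Cu_-\) is a braided subspace of diagonal type with all four coefficients equal to \(\lambda\), and \(\mathcal B(U)\hookrightarrow\mathcal B(W)\). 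For a rank-two diagonal braiding with \(q_{ii}=\lambda\) and \(q_{12}q_{21}=\lambda^2\), Heckenberger's list shows that finiteness forces \(\lambda=-1\) (and then \(q_{12}q_{21}=1\)) or \(\lambda\) a primitive cube root of unity. In the latter case \(q_{ii}q_{12}q_{21}=1\), which is a Cartan-type configuration of type \(A_1\times A_1\) twisted; this case requires checking against the list.

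To go beyond \(W_g\), we enlarge to the commuting component \(W_{\varepsilon^2g}\). Since \(g\) and \(\varepsilon^2g\) commute, \(U'=W_g\oplus W_{\varepsilon^2g}\) is a braided subspace, and its Nichols algebra embeds in \(\mathcal B(W)\). By Lemma~\ref{lem:Gamma4-support-group-restriction}, applied to the abelian subgroup \(\langle g,\varepsilon^2\rangle\), this subspace can be treated as an ordinary diagonal braiding via \cite[Proposition~2.11]{huang2024classification}. We will compute the mixed coefficients between \(W_g\) and \(W_{\varepsilon^2 g}\) using the vectors \(u,u_-\), the vectors \(\varepsilon h\rhd u\) and \(\varepsilon h\rhd u_-\) (or whichever element conjugates \(g\) to \(\varepsilon^2g\)), and the eigenvalues of \(\varepsilon^2\). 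The key point is that \(\eta_g=-1\) makes the double braiding of \(\varepsilon^2g\)-vectors with \(u\) and with \(u_-\) differ by a sign. This produces a rank-four diagonal braiding in which the vertices are labelled by \(\lambda\), and in which some edges are trivial while others carry \(-1\) times the trivial value. We then show that no such diagram of rank four appears in Heckenberger's classification, for either \(\lambda=-1\) or \(\lambda\) of order three. This contradiction gives \(\eta_g=1\). Then \(c_g\equiv1\) by \eqref{eq:Gamma4-commutator-bicharacters}, so \(\mathbb C^{\Phi_g}C_G(g)\) is commutative and \(\dim\sigma=1\).

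The main obstacle is the explicit computation of the mixed braiding coefficients on \(W_g\oplus W_{\varepsilon^2g}\): the cocycle factors must be tracked carefully to confirm that the sign \(\eta_g\) really appears in the edge labels. The final check against the diagonal-type list is a further difficulty. If the rank-four diagram turns out to lie in the list, the fallback is to exploit the full four-element support, including \(W_{\varepsilon g}\) and \(W_{\varepsilon^3g}\), which do not commute with \(g\). There we would instead imitate the double-braiding argument of Lemma~\ref{lem:Gamma4-rho-one-dimensional} inside degree-two components of \(\mathcal B(W)\), using the fact that the kernel of the quantum symmetrizer must be \(G\)-stable.
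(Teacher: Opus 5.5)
There is a genuine gap. The subspace \(U'=W_g\oplus W_{\varepsilon^2g}\), on which you plan to derive the contradiction, does not in general force \(\dim\mathcal B(U')=\infty\).

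Put \(y_\pm=\varepsilon\rhd u_\pm\in W_{\varepsilon^2g}\). The twist of the simple object \(W\) is scalar, so \(x\rhd w=\lambda w\) for every homogeneous \(w\in W_x\). Hence all four vertices are \(\lambda\), and the edges \(u_+u_-\) and \(y_+y_-\) are \(\lambda^2\).

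Using \eqref{eq:YD-projective-action}, one finds \(g\rhd y_\pm=c\lambda\zeta_\pm y_\pm\) and \((\varepsilon^2g)\rhd u_\pm=c'\lambda\zeta_\pm u_\pm\). Here \(c,c'\) are cocycle constants independent of the sign, and \(\zeta_-=-\zeta_+\) are the \(\varepsilon^2\)-eigenvalues. So the mixed edges are \(Q\) for \(u_+y_+\) and \(u_-y_-\), and \(-Q\) for \(u_+y_-\) and \(u_-y_+\). Here \(Q\) is the edge label of the \(\Gamma_2\)-type object \(\mathbb Cu_+\oplus\mathbb Cy_+\) over \(\langle\varepsilon,g\rangle\), namely \(Q=\lambda^2/\Theta\) as in Lemma~\ref{lem:Gamma2-diagonal-braiding}. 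In particular \(Q=\lambda^2\) when \(\Phi\) restricted to \(\langle\varepsilon,g\rangle\) is a coboundary.

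This is exactly the pattern you anticipate. But for \(\lambda=-1\) and \(Q=\pm1\) it is a disjoint union of two Cartan diagrams of type \(A_2\) at \(q=-1\), with a \(64\)-dimensional Nichols algebra. So the rank-four check cannot exclude \(\eta_g=-1\) in the case \(\lambda=-1\), which is precisely the case your rank-two analysis leaves open. Incidentally, your cube-root case is Cartan type \(A_2\), not a twisted \(A_1\times A_1\), and it is also finite.

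The fallback does not repair this. The argument of Lemma~\ref{lem:Gamma4-rho-one-dimensional} needs a double braiding that preserves a mixed component, together with an element whose square flips the eigenvalue. Here \(c_{W,W}^2\) maps \(W_g\otimes W_{\varepsilon g}\) into \(W_{\varepsilon^2g}\otimes W_{\varepsilon^{-1}g}\). Moreover, any degree-two naturality argument would never use the hypothesis \(\dim\mathcal B(W)<\infty\).

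The missing idea is to couple \(W_g\) with the noncommuting components and invoke the \(\Gamma_2\) classification already proved; this is how the paper proves the lemma.
The subgroup \(K=\langle\varepsilon,g\rangle=\langle g,\varepsilon g\rangle\) is a finite non-abelian quotient of \(\Gamma_2\), with \(\varepsilon^2\) in the role of \(\varepsilon\). Its relevant conjugacy classes are \(g^K=\{g,\varepsilon^2g\}\) and \((\varepsilon g)^K=\{\varepsilon g,\varepsilon^{-1}g\}\).

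The lines \(\mathbb Cu_\pm\) generate simple \(K\)-subobjects \(A_\pm\); these are your \(\mathbb Cu_\pm\oplus\mathbb Cy_\pm\). An eigenline in \(W_{\varepsilon g}\) of the commutative twisted group algebra of \(C_K(\varepsilon g)\) generates a simple \(B\). By semisimplicity, \(A_\pm\oplus B\) is a direct summand of \(W|_K\). Hence \(\mathcal B(A_\pm\oplus B)\) embeds in \(\mathcal B(W)\) and is finite-dimensional.

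Both pairs \((A_\pm,B)\) are braided-indecomposable, because the double braiding moves \(A_{\pm,g}\otimes B_{\varepsilon g}\) to a different bidegree. Theorem~\ref{thm:Gamma2-classification}, together with \eqref{eq:Gamma2-Delta-unit-loci}, then forces \(\Delta_+=\Delta_-=1\).

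In \eqref{eq:Delta-definition}, however, the only sign-dependent factor is the \(\varepsilon^2\)-eigenvalue \(\zeta_\pm\). The eigenvalue of \((\varepsilon g)^2=g^2\), namely \(\lambda^2/\Phi_g(g,g)\), is the same for both signs. Hence \(\Delta_-=-\Delta_+\), a contradiction. The sign \(\eta_g\) becomes visible to finite-dimensionality only through \(\Delta\), which involves \(W_{\varepsilon g}\).
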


\begin{proof}
Assume, to the contrary, that \(\eta_g=-1\).  Let
\(K=\langle\varepsilon,g\rangle\).  Then
\(\varepsilon^4=1\), \(\varepsilon^2\in Z(K)\),
\((\varepsilon g)g=\varepsilon^2g(\varepsilon g)\),
\(g^2=(\varepsilon g)^2\), and
\(K=\langle g,\varepsilon g\rangle\).  Since
\(\varepsilon^2\neq1\), the group \(K\) is a finite non-abelian quotient of
\(\Gamma_2\).  Moreover,
\begin{align*}
g^K&=\{g,\varepsilon^2g\},
&
C_K(g)&=\langle \varepsilon^2,g\rangle,\\
(\varepsilon g)^K&=\{\varepsilon g,\varepsilon^{-1}g\},
&
C_K(\varepsilon g)&=\langle \varepsilon^2,\varepsilon g\rangle.
\end{align*}

Since \(c_g(\varepsilon^2,\varepsilon^{-1}h^2)=-1\), the projective actions of
\(\varepsilon^2\) and \(\varepsilon^{-1}h^2\) anticommute.  The square of the action of \(\varepsilon^2\) is a
nonzero scalar multiple of the identity, so this action is
diagonalizable.  Choose \(0\neq v_+\in W_g\) such that
\(\varepsilon^2\rhd v_+=\zeta_+v_+\), and let
\(v_-=\varepsilon^{-1}h^2\rhd v_+\).  Direct computation gives
\(\varepsilon^2\rhd v_-=-\zeta_+v_-\).  Thus \(v_+\) and \(v_-\) are
linearly independent.  Set
\(\zeta_-=-\zeta_+\).

For every \(q\in C_G(g)\), one has
\(c_g(g,q)=f_\Phi(g,g,q)=1\).  Hence \(g\) is central in
\(\mathbb C^{\Phi_g}C_G(g)\), and Schur's lemma gives
\(g\rhd v_\pm=\lambda v_\pm\) for some
\(\lambda\in\mathbb C^\times\).
The lines \(\mathbb Cv_+\) and \(\mathbb Cv_-\) generate simple
subobjects \(A_+\) and \(A_-\) of \(W|_K\), respectively,
supported on \(g^K\).  Since
\(C_K(\varepsilon g)=\langle \varepsilon^2,\varepsilon g\rangle\) and
\(c_{\varepsilon g}(\varepsilon^2,\varepsilon g)
=f_\Phi(\varepsilon g,\varepsilon^2,\varepsilon g)=1\), the alternating and
multiplicative properties of \(f_\Phi\) show that the commutator
bicharacter of \(\Phi_{\varepsilon g}\) on
\(C_K(\varepsilon g)\) is trivial.  Therefore
\(\mathbb C^{\Phi_{\varepsilon g}}C_K(\varepsilon g)\) is
commutative and semisimple.  Choose a one-dimensional projective
subrepresentation \(\mathbb Cw\subseteq W_{\varepsilon g}\).
It generates a simple subobject \(B=\mathbb CK\rhd w\) of
\(W|_K\), supported on \((\varepsilon g)^K\).

Write
\[
\varepsilon^2\rhd w=\zeta'w,\qquad g^2\rhd w=\mu'w.
\]
Since \(g^2=(\varepsilon g)^2\),
\eqref{eq:YD-projective-action} gives
\((\varepsilon g)^2\rhd v_\pm
=\lambda^2v_\pm/\Phi_g(g,g)\).
By \eqref{eq:Delta-definition}, the scalar associated with the tuple
\((A_\pm,B)\) is
\[
\Delta_\pm
=
\frac{
\zeta_\pm\zeta'\lambda^2\mu'\,
\Phi_{\varepsilon g}(\varepsilon^2g,g)\Phi_g(\varepsilon g,\varepsilon g)
}{
\Phi_g(g,g)\Phi_g(\varepsilon g,\varepsilon^2)
\Phi_{\varepsilon g}(\varepsilon^2,g^2)
}.
\]
All factors except \(\zeta_\pm\) are independent of the sign.  Since
\(\zeta_-=-\zeta_+\), one has
\begin{equation}
\label{eq:Gamma4-opposite-local-Deltas}
\Delta_-=-\Delta_+.
\end{equation}

Since \(K\) is finite, its twisted Yetter--Drinfeld category is
semisimple, so each \(A_\pm\oplus B\) is a  {direct summand
of \(W|_K\) in this category}.  Restriction to \(K\) leaves the braidings and
associators on tensor powers of \(W\) unchanged.  Hence
\(\mathcal B(A_\pm\oplus B)\) embeds into \(\mathcal B(W)\), so both
Nichols algebras are finite-dimensional.

On \(A_{\pm,g}\otimes B_{\varepsilon g}\), the double braiding has
image in \(A_{\pm,\varepsilon^2g}\otimes B_{\varepsilon^{-1}g}\).  Since \(\varepsilon^2\neq1\),
these are distinct bi-homogeneous components.  Thus both tuples
\((A_\pm,B)\) are braided-indecomposable.
Theorem~\ref{thm:Gamma2-classification}, applied to each tuple, gives
either \(\Delta_\pm=1\) or \(\Delta_\pm'=1\).  In the second case,
\eqref{eq:Gamma2-Delta-unit-loci} also gives \(\Delta_\pm=1\).
Consequently, \(\Delta_+=\Delta_-=1\), contrary to
\eqref{eq:Gamma4-opposite-local-Deltas}.  Hence
\(\eta_g\neq-1\).  Since \(\eta_g\in\{1,-1\}\), it follows that
\(\eta_g=1\).  Equation~\eqref{eq:Gamma4-commutator-bicharacters}
then shows that \(\mathbb C^{\Phi_g}C_G(g)\) is commutative, and
hence \(\dim\sigma=1\).
\end{proof}

\begin{cor}
\label{cor:Gamma4-projective-representations-one-dimensional}
If \(\mathcal B(V\oplus W)\) is finite-dimensional, then
\(\dim\rho=\dim\sigma=1\). Consequently, \(\dim V=2\) and
\(\dim W=4\).
\end{cor}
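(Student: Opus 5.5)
The plan is to reduce to the two preceding lemmas.

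First, I would show that finite-dimensionality of \(\mathcal B(V\oplus W)\) forces \(\mathcal B(V)\) and \(\mathcal B(W)\) to be finite-dimensional. The canonical morphisms \(j_V:\mathcal B(V)\to\mathcal B(V\oplus W)\) and \(j_W:\mathcal B(W)\to\mathcal B(V\oplus W)\) are injective; this is recalled before Lemma~\ref{lem:canonical-nichols-factorization}. Alternatively, Theorem~\ref{thm:Nichols-finiteness-criterion} applied to \(P=(V,W)\in\mathcal F_2(P)\) already yields \(\dim\mathcal B(P_i)<\infty\) for \(i=1,2\). Either way, \(\dim\mathcal B(W)<\infty\).

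Next I would obtain the two dimension claims:
\begin{itemize}
\item Lemma~\ref{lem:Gamma4-rho-one-dimensional} gives \(\dim\rho=1\) unconditionally under the standing assumptions.
\item Lemma~\ref{lem:Gamma4-sigma-one-dimensional}, applied with the finiteness of \(\mathcal B(W)\) just obtained, gives \(\eta_g=1\) and hence \(\dim\sigma=1\).
\end{itemize}

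For the dimensions of \(V\) and \(W\), I would use the induced construction of Proposition~\ref{prop:simple-objects}. This gives \(M(x,\rho)=\bigoplus_{y\in x^G}g_y\otimes\rho\), so \(\dim M(x,\rho)=|x^G|\dim\rho\). The supports are recorded at the start of the section: \(\operatorname{supp}V=\{h,\varepsilon^{-1}h\}\), which has two elements because \(\varepsilon\neq1\). Also \(\operatorname{supp}W=\{\varepsilon^ig\mid 0\le i<4\}\), which has four elements because \(\varepsilon\) has order four in \(G\). Equivalently, Lemma~\ref{lem:Gamma4-centralizers} gives the indices \([G:C_G(h)]=2\) and \([G:C_G(g)]=4\). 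Therefore \(\dim V=2\) and \(\dim W=4\).

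The argument is essentially bookkeeping, and no step is a genuine obstacle. The only point needing care is justifying \(\dim\mathcal B(W)<\infty\). I would cite Theorem~\ref{thm:Nichols-finiteness-criterion} with \(Q=P\) rather than argue injectivity of \(j_W\) separately, since that theorem directly states finiteness of each \(\mathcal B(Q_i)\).
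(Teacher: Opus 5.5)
Your proposal is correct and follows the paper's own proof. The paper gets $\dim\mathcal B(W)<\infty$ from the embedding $\mathcal B(W)\hookrightarrow\mathcal B(V\oplus W)$; your alternative via Theorem~\ref{thm:Nichols-finiteness-criterion} with the empty reflection sequence works equally well. It then applies Lemmas~\ref{lem:Gamma4-rho-one-dimensional} and~\ref{lem:Gamma4-sigma-one-dimensional}, and concludes from $|h^G|=2$ and $|g^G|=4$ that $\dim V=2$ and $\dim W=4$.
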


\begin{proof}
The Nichols algebra \(\mathcal B(W)\) embeds into
\(\mathcal B(V\oplus W)\), and hence it is finite-dimensional.
Lemmas~\ref{lem:Gamma4-rho-one-dimensional} and
\ref{lem:Gamma4-sigma-one-dimensional} give
\(\dim\rho=\dim\sigma=1\).
Finally, \(|h^G|=2\) and \(|g^G|=4\), so \(\dim V=2\) and
\(\dim W=4\).
\end{proof}

\subsection{Adjoint objects}
\label{subsec:Gamma4-adjoint-objects}

Throughout this subsection, assume that \(\rho\) and \(\sigma\) are
one-dimensional.  The elements
\(\varepsilon^2\), \(g^2\), and \(\varepsilon^{-1}h^2\) are central
in \(G\).  Choose
\(0\neq v\in V_h\) and \(0\neq w\in W_g\), and set
\(v_1=g\rhd v\), \(w_1=h\rhd w\), and
\(w_2=\varepsilon\rhd w\).

\subsubsection{The objects \texorpdfstring{$X_1$ and $X_2$}{X1 and X2}}

We first record the centralizer fact needed to decide when \(X_1\) is
simple.

\begin{lemma}
\label{lem:Gamma4-hg-centralizer}
One has \(C_G(hg)=\langle \varepsilon^2,\varepsilon^{-1}h^2,hg\rangle\), and this group is
abelian.  Moreover,  every
irreducible \(\Phi_{hg}\)-projective representation of \(C_G(hg)\)
is one-dimensional.
\end{lemma}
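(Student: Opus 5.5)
The plan is to prove the three assertions in order: the centralizer, then commutativity, then one-dimensionality. For the centralizer I would use the same index argument as in Lemma~\ref{lem:Gamma4-centralizers}. First I check that \(S:=\langle\varepsilon^2,\varepsilon^{-1}h^2,hg\rangle\) lies in \(C_G(hg)\). The element \(\varepsilon^2\) is central, since \(g\varepsilon^2g^{-1}=\varepsilon^{-2}=\varepsilon^2\) and \(h\) commutes with \(\varepsilon\). The element \(\varepsilon^{-1}h^2\) is central by the remark at the start of the subsection. The element \(hg\) trivially commutes with itself.

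Next I bound the index of \(S\). Every element of \(G\) has the form \(\varepsilon^ih^jg^r\). Since \(g=h^{-1}(hg)\), we have \(g\in h^{-1}S\). Because \(\varepsilon^2\) and \(\varepsilon^{-1}h^2\) are central and lie in \(S\), modulo \(S\) one may reduce \(i\) modulo \(2\). Modulo \(S\) one also has \(h^2\equiv\varepsilon\), so the power of \(h\) can likewise be reduced. Hence the cosets of \(S\) are represented by \(\varepsilon^ih^j\) with \(i,j\in\{0,1\}\), and \([G:S]\leq4\). On the other hand, \((hg)^G=\{\varepsilon^mhg\}\) has four elements, because the image of \(\varepsilon\) has order four and conjugation by \(g\) shifts the power of \(\varepsilon\). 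This gives \([G:C_G(hg)]=4\), so \(S=C_G(hg)\). The group \(S\) is generated by two central elements and one further element, so it is abelian.

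For one-dimensionality I would argue as in Lemma~\ref{lem:Gamma4-rho-one-dimensional}. On the abelian group \(C_G(hg)\), the commutator bicharacter is \(c_{hg}(a,b)=\Phi_{hg}(a,b)/\Phi_{hg}(b,a)=f_\Phi(hg,a,b)\). By Lemma~\ref{lem:alternator} it is an alternating tricharacter, and it is trivial whenever one argument is \(hg\). Hence it is determined by the single value \(\theta:=f_\Phi(hg,\varepsilon^2,\varepsilon^{-1}h^2)\).

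The main step is showing \(\theta=1\). Since \(\varepsilon^2\) and \(\varepsilon^{-1}h^2\) are central, Lemma~\ref{lem:alternator} says that \(x\mapsto f_\Phi(x,\varepsilon^2,\varepsilon^{-1}h^2)\) is a group character of \(G\). Therefore
\[
\theta=f_\Phi(h,\varepsilon^2,\varepsilon^{-1}h^2)\,f_\Phi(g,\varepsilon^2,\varepsilon^{-1}h^2).
\]
The elements \(h,\varepsilon,\varepsilon^{-1}h^2\) lie in the abelian group \(\langle\varepsilon,h\rangle\), where \(f_\Phi\) is an alternating tricharacter. Expanding by multilinearity, the first factor equals \(f_\Phi(h,\varepsilon,\varepsilon)^{-2}f_\Phi(h,\varepsilon,h)^{4}=1\). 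The second factor is \(\eta_g\).

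Here I use the standing assumption of this subsection that \(\sigma\) is one-dimensional. A one-dimensional \(\Phi_g\)-projective representation \(\sigma\) satisfies \(\sigma(a)\sigma(b)=\sigma(b)\sigma(a)\). This forces \(c_g\equiv1\) on \(C_G(g)\), and hence \(\eta_g=1\) by \eqref{eq:Gamma4-commutator-bicharacters}. Thus \(\theta=1\), so \(\mathbb C^{\Phi_{hg}}C_G(hg)\) is commutative and semisimple, and every irreducible \(\Phi_{hg}\)-projective representation is one-dimensional.

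The step I expect to be the real obstacle is \(\theta=1\). Without the hypothesis on \(\sigma\) the computation only yields \(\theta=\eta_g\). If one wanted the lemma unconditionally, I would instead try to adapt the double-braiding argument of Lemma~\ref{lem:Gamma4-rho-one-dimensional} to a hypothetical simple object \(M(hg,\tau)\). I am not confident that this works, since \(\eta_g=-1\) is not excluded a priori.
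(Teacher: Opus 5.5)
Your proof is correct and is essentially the paper's argument. You use the same index bound for \(S=\langle\varepsilon^2,\varepsilon^{-1}h^2,hg\rangle\) against \(|(hg)^G|=4\), followed by \(c_{hg}(\varepsilon^2,\varepsilon^{-1}h^2)=c_h(\varepsilon^2,\varepsilon^{-1}h^2)\,c_g(\varepsilon^2,\varepsilon^{-1}h^2)=1\). One step is terse: going from \(g\in h^{-1}S\) to four coset representatives tacitly needs \(g^2\in S\), which the paper states explicitly as \(g^2=\varepsilon h^{-2}(hg)^2\).

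Your closing worry is moot. The paper's equality \(c_g(\varepsilon^2,\varepsilon^{-1}h^2)=\eta_g=1\) likewise rests only on the subsection's standing assumption that \(\sigma\) is one-dimensional, exactly as you used it.
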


\begin{proof}
The defining relations give
\((hg)^G=\{\varepsilon^i hg\mid 0\leq i<4\}\) and show that
\(\langle \varepsilon^2,\varepsilon^{-1}h^2,hg\rangle\) is an abelian subgroup of \(C_G(hg)\).
The identities \(g^2=\varepsilon h^{-2}(hg)^2\),
\(g=h^{-1}(hg)\), and
\(h^2=\varepsilon(\varepsilon^{-1}h^2)\), together with
\(\varepsilon^2\in\langle\varepsilon^2,\varepsilon^{-1}h^2,hg\rangle\),
show that every
normal-form representative lies in one of its four cosets represented
by \(1,\varepsilon,h,\varepsilon h\).  Hence
\([G:\langle \varepsilon^2,\varepsilon^{-1}h^2,hg\rangle]\leq4\), whereas
\([G:C_G(hg)]=|(hg)^G|=4\).  Since
\(\langle \varepsilon^2,\varepsilon^{-1}h^2,hg\rangle\subseteq C_G(hg)\), equality follows, and
therefore \(C_G(hg)=\langle \varepsilon^2,\varepsilon^{-1}h^2,hg\rangle\).

Since \(\varepsilon^2\) and \(\varepsilon^{-1}h^2\) are central,
Lemma~\ref{lem:alternator} and
\eqref{eq:Gamma4-commutator-bicharacters} give
\[
\begin{aligned}
c_{hg}(\varepsilon^2,\varepsilon^{-1}h^2)
&=f_\Phi(hg,\varepsilon^2,\varepsilon^{-1}h^2)
=f_\Phi(h,\varepsilon^2,\varepsilon^{-1}h^2)
  f_\Phi(g,\varepsilon^2,\varepsilon^{-1}h^2)\\
&=c_h(\varepsilon^2,\varepsilon^{-1}h^2)
  c_g(\varepsilon^2,\varepsilon^{-1}h^2)=1.
\end{aligned}
\]
Alternation also gives
\(c_{hg}(hg,\varepsilon^2)=c_{hg}(hg,\varepsilon^{-1}h^2)=1\).  Since \(c_{hg}\) is multiplicative
in both variables and \(\varepsilon^2,\varepsilon^{-1}h^2,hg\) generate \(C_G(hg)\), it is trivial
on \(C_G(hg)\).  Thus
\(\mathbb C^{\Phi_{hg}}C_G(hg)\) is commutative, and every irreducible
\(\Phi_{hg}\)-projective representation of \(C_G(hg)\) is
one-dimensional.
\end{proof}

Define
\begin{equation}
\label{eq:Gamma4-alpha-beta-Delta}
\begin{aligned}
&\alpha
=\frac{\rho(\varepsilon^{-1})}
        {\Phi_h(g,\varepsilon^{-1})},
\quad
\beta
=\alpha\Phi_g(\varepsilon^{-1}h,h)\sigma(\varepsilon^{-1}h^2),\\
&\Delta_4
=\beta\Phi_h(g,g)\rho(g^2)
=\Phi_h(g,\varepsilon g)\Phi_g(\varepsilon^{-1}h,h)
  \rho(\varepsilon^{-1}g^2)\sigma(\varepsilon^{-1}h^2).
\end{aligned}
\end{equation}

\begin{lemma}
\label{lem:Gamma4-first-adjoint}
The vector
\begin{equation}
\label{eq:Gamma4-x1}
x_1:=\varphi_1^\Phi(v\otimes w)
=v\otimes w-\alpha v_1\otimes w_1
\end{equation}
is nonzero, and
\(X_1=(\operatorname{ad}V)(W)=\mathbb CG\rhd x_1\), with
\(\operatorname{supp}X_1=(hg)^G\).  Moreover,
\[
X_1\text{ is simple}
\quad\Longleftrightarrow\quad
\Delta_4=1.
\]
When these conditions hold,
\(X_1\simeq M(hg,\tau_1)\) for a one-dimensional
\(\Phi_{hg}\)-projective representation \(\tau_1\), and
\(\dim X_1=4\).
\end{lemma}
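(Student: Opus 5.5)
I will follow the template of Lemmas~\ref{lem:Gamma2-first-adjoint-formula} and \ref{lem:Gamma2-X1-simple}, adapted to \(\Gamma_4\).

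\emph{Step 1: the formula for \(x_1\) and the orbit statement.} I will apply Proposition~\ref{prop:adjoint-orbit-generation} with \(n=1\), \(x_0=w\) and \(H=\langle g\rangle\). Since \(g\in C_G(g)\), the line \(\mathbb Cw\) is \(H\)-stable. Moreover \(V=\mathbb C v\oplus\mathbb C v_1=\mathbb C\langle g\rangle\rhd v\), because \(ghg^{-1}=\varepsilon^{-1}h\). This gives \(X_1=\mathbb CG\rhd x_1\), with \(x_1=\varphi_1^\Phi(v\otimes w)\) of degree \(hg\), so that \(\operatorname{supp}X_1=(hg)^G\).

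For the explicit formula, note that \(c_{W,V}c_{V,W}(v\otimes w)=(hgh^{-1})\rhd v\otimes h\rhd w\), and \(hgh^{-1}=\varepsilon g\). Rewriting \((\varepsilon g)\rhd v\) through \(g\rhd(\varepsilon\rhd v)\) would give \(\rho(\varepsilon)\) and a \(\Phi_h(g,\varepsilon)\) factor. To land on the stated \(\alpha\) I will instead write \(\varepsilon g=g\varepsilon^{-1}\) and use \eqref{eq:YD-projective-action}:
\[
(g\varepsilon^{-1})\rhd v=\Phi_h(g,\varepsilon^{-1})^{-1}\,g\rhd(\varepsilon^{-1}\rhd v)=\alpha v_1 .
\]
The two summands \(v\otimes w\) and \(v_1\otimes w_1\) lie in the distinct bihomogeneous components \(V_h\otimes W_g\) and \(V_{\varepsilon^{-1}h}\otimes W_{\varepsilon g}\), so \(x_1\ne0\).

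\emph{Step 2: reducing simplicity to stability of a line.} By Lemma~\ref{lem:Gamma4-hg-centralizer}, \(C_G(hg)=\langle\varepsilon^2,\varepsilon^{-1}h^2,hg\rangle\) and all its irreducible projective representations are one-dimensional. So \(X_1\) is simple exactly when \(\mathbb Cx_1\) is \(C_G(hg)\)-stable. The forward direction is as in Lemma~\ref{lem:Gamma2-X1-simple}: a simple \(X_1\) has one-dimensional homogeneous components. The converse goes via the surjection \(M(hg,\tau_1)\to X_1\) from a simple source.

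The generators \(\varepsilon^2\) and \(\varepsilon^{-1}h^2\) are central and fix both \(V_h\) and \(W_g\). Since \(\varphi_1^\Phi\) is a morphism, \(a\rhd x_1=\varphi_1^\Phi(a\rhd(v\otimes w))\in\mathbb Cx_1\) for these \(a\). Only \(hg\) remains, and \((hg)\rhd x_1\) has degree \(hg\). I will compute it directly:
\[
(hg)\rhd(v\otimes w)\propto (hg)\rhd v\otimes (hg)\rhd w .
\]
Here \(hg\) sends \(h\mapsto \varepsilon^{-1}h\) and \(g\mapsto\varepsilon g\), so this term is proportional to \(v_1\otimes w_1\). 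Likewise \((hg)\rhd(v_1\otimes w_1)\) is proportional to \(v\otimes w\), because \((hg)^2\) is central up to powers of \(\varepsilon\) that act within the correct components.

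Writing \((hg)\rhd(v\otimes w)=P\,v_1\otimes w_1\) and \((hg)\rhd(v_1\otimes w_1)=Q\,v\otimes w\), the vector \(x_1\) is an eigenvector exactly when \(P=\alpha^2Q\).

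\emph{Step 3 (main obstacle): identifying the proportionality condition with \(\Delta_4=1\).} This requires evaluating \(P\) and \(Q\) through \eqref{eq:tensor-product} and the projective relations. The values \(\rho(g^2)\) and \(\sigma(\varepsilon^{-1}h^2)\) enter through \(g\rhd v_1=\Phi_h(g,g)\rho(g^2)v\) and the analogous formula for \(h\rhd w_1\). The condition \(P=\alpha^2Q\) should then reduce to
\[
\beta\,\Phi_h(g,g)\rho(g^2)=1
\]
after cancelling a product of \(\Phi\)-factors, using a cocycle reduction of the type in the appendix. I expect this cancellation to be the hard part, and I would first try Lemma~\ref{lem:local-cocycle-coherence} to collapse the \(\Phi^{hg}\) and \(\Phi_{hg}\) terms.

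The alternative expression for \(\Delta_4\) also needs checking. It follows from \(\rho(\varepsilon^{-1})\rho(g^2)=\Phi_h(\varepsilon^{-1},g^2)\rho(\varepsilon^{-1}g^2)\) together with one cocycle identity relating \(\Phi_h(g,\varepsilon^{-1})\), \(\Phi_h(g,g)\) and \(\Phi_h(g,\varepsilon g)\).

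Finally, \(\dim X_1=|(hg)^G|=4\), since \(\tau_1\) is one-dimensional.
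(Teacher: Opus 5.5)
Your Steps 1 and 2 are sound. Using \(H=\langle g\rangle\) in Proposition~\ref{prop:adjoint-orbit-generation} works just as well as the paper's \(C_G(g)\). The reduction of simplicity to \(hg\)-stability of \(\mathbb Cx_1\) via Lemma~\ref{lem:Gamma4-hg-centralizer} is correct, and \(P=\alpha^2Q\) is the right eigenvector condition. The gap is Step 3, which carries the entire content of ``\(X_1\) simple \(\Leftrightarrow\Delta_4=1\)''. You only assert that \(P=\alpha^2Q\) ``should reduce'' to \(\beta\Phi_h(g,g)\rho(g^2)=1\) after an unspecified cancellation of \(\Phi^{hg}\)- and \(\Phi_{hg}\)-factors, and you do not carry this out. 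As written, the main equivalence of the lemma is unproved.

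The missing idea is that no cocycle reduction is needed. Put \(T=c_{W,V}c_{V,W}\), so that \(\varphi_1^\Phi=\operatorname{id}-T\). Since \(\varphi_1^\Phi\) commutes with the \(G\)-action,
\((hg)\rhd x_1=\varphi_1^\Phi\bigl((hg)\rhd(v\otimes w)\bigr)=P\,\varphi_1^\Phi(v_1\otimes w_1)\).
Hence \(\mathbb Cx_1\) is \(hg\)-stable if and only if \(\varphi_1^\Phi(v_1\otimes w_1)\in\mathbb Cx_1\). This holds if and only if \(\operatorname{id}-T\) has rank one on the two-dimensional component \((V\otimes W)_{hg}=\mathbb C(v\otimes w)\oplus\mathbb C(v_1\otimes w_1)\), whose two lines \(T\) interchanges. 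The braiding carries no associator factors, and \eqref{eq:YD-projective-action} gives directly
\(T(v_1\otimes w_1)=\Phi_g(\varepsilon^{-1}h,h)\sigma(\varepsilon^{-1}h^2)\Phi_h(g,g)\rho(g^2)\,v\otimes w\).
Therefore \(T^2=\beta\Phi_h(g,g)\rho(g^2)\operatorname{id}=\Delta_4\operatorname{id}\) on this component, and the rank-one condition is exactly \(\Delta_4=1\). Equivalently, comparing the two computations of \((hg)\rhd x_1\) shows that your ratio is \(P/(\alpha^2Q)=\Delta_4^{-1}\).

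This is essentially the paper's proof, which bypasses \(hg\) altogether. It identifies \((X_1)_{hg}\) with the image of \(\operatorname{id}-T\) on that component and compares its dimension with the one-dimensionality from Lemma~\ref{lem:Gamma4-hg-centralizer}. Finally, the second expression for \(\Delta_4\) in \eqref{eq:Gamma4-alpha-beta-Delta} needs one more input besides the two identities you list. You also need \(\Phi_h(g^2,\varepsilon^{-1})=\Phi_h(\varepsilon^{-1},g^2)\), that is, \(c_h(g^2,\varepsilon^{-1})=\eta_h=1\), which comes from Lemma~\ref{lem:Gamma4-rho-one-dimensional}.
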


\begin{proof}
Let \(T=c_{W,V}c_{V,W}\).  Since
\(hgh^{-1}=\varepsilon g\), the braiding formula gives
$
T(v\otimes w)
=((\varepsilon g)\rhd v)\otimes(h\rhd w).
$
The relation \(g\varepsilon^{-1}=\varepsilon g\), together with
\eqref{eq:YD-projective-action}, gives
\[
(\varepsilon g)\rhd v
=\frac{\rho(\varepsilon^{-1})}
       {\Phi_h(g,\varepsilon^{-1})}v_1.
\]
Thus \(x_1=(\operatorname{id}-T)(v\otimes w)\) is given by
\eqref{eq:Gamma4-x1}.  Its two summands have distinct bidegrees
\((h,g)\) and \((\varepsilon^{-1}h,\varepsilon g)\), and hence
\(x_1\neq0\).
The group \(C_G(g)\) stabilizes \(\mathbb Cw\), while \(g\) sends the
degree \(h\) to \(\varepsilon^{-1}h\).  Since both homogeneous
components of \(V\) are one-dimensional, it follows that
\(V=\mathbb C C_G(g)\rhd v\).  Proposition~\ref{prop:adjoint-orbit-generation}, applied
with \(X_0=W\), now gives
\[
X_1=\mathbb CG\rhd x_1.
\]
Both summands of \(x_1\) have total degree \(hg\).  Therefore
\(\operatorname{supp}X_1=(hg)^G\).

It remains to determine when \(X_1\) is simple.  The component
\((V\otimes W)_{hg}\) has basis
\(v\otimes w,v_1\otimes w_1\), and \(T\) interchanges the two
corresponding basis lines.  Then \eqref{eq:YD-projective-action} and
Lemma~\ref{lem:local-cocycle-coherence} give
\begin{align*}
&\rho(\varepsilon^{-1})\rho(g^2)
=\Phi_h(\varepsilon^{-1},g^2)\rho(\varepsilon^{-1}g^2),\\
&\Phi_h(g,\varepsilon^{-1})\Phi_h(g,\varepsilon g)
=\Phi_h(g,g)\Phi_h(g^2,\varepsilon^{-1}).
\end{align*}
Using also \(c_h(g^2,\varepsilon^{-1})=1\), a second application of
\(T\) gives
\[
\begin{aligned}
T^2(v\otimes w)
=\Phi_h(g,\varepsilon g)
  \Phi_g(\varepsilon^{-1}h,h)
  \rho(\varepsilon^{-1}g^2)\sigma(\varepsilon^{-1}h^2)\,v\otimes w
=\Delta_4\,v\otimes w.
\end{aligned}
\]
Since \(T\) interchanges the two basis lines, the same scalar occurs
on \(v_1\otimes w_1\).  Hence
\(T^2=\Delta_4\operatorname{id}\) on
\((V\otimes W)_{hg}\).
Since \(\varphi_1^\Phi=\operatorname{id}-T\), one has
$
(X_1)_{hg}
=\operatorname{Im}\bigl(
(\operatorname{id}-T)|_{(V\otimes W)_{hg}}\bigr).
$
If \(\Delta_4\neq1\), then
\((\operatorname{id}-T)(\operatorname{id}+T)
=(1-\Delta_4)\operatorname{id}\), so
\(\dim(X_1)_{hg}=2\).  This is incompatible with simplicity, because
Lemma~\ref{lem:Gamma4-hg-centralizer} shows that every irreducible
\(\Phi_{hg}\)-projective representation of \(C_G(hg)\) is
one-dimensional.

Suppose now that \(\Delta_4=1\).  Then \(T^2=\operatorname{id}\),
and, since \(T\) interchanges the two basis lines,
\(\operatorname{id}-T\) has rank one.  Thus
\((X_1)_{hg}=\mathbb Cx_1\).  This line is stable under \(C_G(hg)\)
and affords a one-dimensional projective representation \(\tau_1\).
Since \(X_1=\mathbb CG\rhd x_1\), there is a nonzero surjection
\[
M(hg,\tau_1)\longrightarrow X_1.
\]
The source is simple, so this map is an isomorphism.  Therefore
\(X_1\) is simple and
\(\dim X_1=|(hg)^G|=4\).
\end{proof}

For the second adjoint object, let
\[
\kappa_X=\frac{\rho(\varepsilon^2)}
{\Phi_h(\varepsilon^2,h)\Phi_h(g,\varepsilon^2h)}.
\]

\begin{lemma}
\label{lem:Gamma4-second-adjoint}
Assume that \(\Delta_4=1\).  Then
\begin{equation}
\label{eq:Gamma4-x2}
x_2:=\varphi_2^\Phi(v\otimes x_1)
=(1+\rho(h))
 \bigl(v\otimes x_1
       -\kappa_Xv_1\otimes(h\rhd x_1)\bigr).
\end{equation}
Moreover, \(X_2=(\operatorname{ad}V)^2(W)=\mathbb CG\rhd x_2\),
and hence \(X_2=0\) if and only if \(\rho(h)=-1\).
\end{lemma}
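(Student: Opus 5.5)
I would follow the template of Lemma~\ref{lem:second-adjoint-formula} in the \(\Gamma_2\) case, now with the roles of the generators adapted to \(\Gamma_4\). The first step is the orbit-generation statement \(X_2=\mathbb CG\rhd x_2\). Since \(\Delta_4=1\), Lemma~\ref{lem:Gamma4-first-adjoint} shows that \(X_1\simeq M(hg,\tau_1)\) is simple with one-dimensional homogeneous components, so \((X_1)_{hg}=\mathbb Cx_1\) and \(X_1=\mathbb CG\rhd x_1\). I take \(H=\langle hg\rangle\). It stabilizes \(\mathbb Cx_1\), and \(hg\) conjugates \(h\) to \(hgh^{-1}g^{-1}h\cdot\)\ldots; concretely \((hg)h(hg)^{-1}=h(ghg^{-1})h^{-1}=\varepsilon^{-1}h\). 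Hence \(V=\mathbb CH\rhd v\), because both homogeneous components of \(V\) are one-dimensional. Proposition~\ref{prop:adjoint-orbit-generation} then gives \(X_2=\mathbb CG\rhd x_2\), and shows that \(X_2=0\) if and only if \(x_2=0\).

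Next I expand \(\varphi_2^\Phi=\operatorname{id}-c_{X_1,V}c_{V,X_1}+(\operatorname{id}_V\otimes\varphi_1^\Phi)c_{1,2}^\Phi\) on \(T_0:=v\otimes x_1\), and set \(T_1:=v_1\otimes(h\rhd x_1)\).

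For the double braiding, \(c_{V,X_1}(T_0)=(h\rhd x_1)\otimes v\), which has degree \(\varepsilon hg\) (note \(hghg^{-1}h^{-1}=\varepsilon\cdot hg\) up to the relations). A second braiding gives \(((\varepsilon hg)\cdots)\rhd v\otimes(h\rhd x_1)\). One rewrites the acting element as \(\varepsilon^2 h g\), or another normal form, and uses \eqref{eq:YD-projective-action} to express it as a scalar times \(g\rhd v=v_1\). The factor \(\rho(\varepsilon^2)\rho(h)\) together with the cocycle terms should produce \(\rho(h)\kappa_X\,T_1\).

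For the \(c_{1,2}^\Phi\) term, I split \(x_1=v\otimes w-\alpha\,v_1\otimes w_1\).

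\emph{First summand.} \(c_{V,V}(v\otimes v)=\rho(h)\,v\otimes v\), and the associators cancel, so it contributes \(\rho(h)T_0\).

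\emph{Second summand.} \(c_{1,2}^\Phi(v\otimes(v_1\otimes w_1))\) is an associator ratio times \((h\rhd v_1)\otimes(v\otimes w_1)\). I then express \(\varphi_1^\Phi(v\otimes w_1)\) through equivariance: \(h\rhd(v\otimes w)=\rho(h)\Phi^h(h,g)\,v\otimes w_1\), so \(\varphi_1^\Phi(v\otimes w_1)=(\rho(h)\Phi^h(h,g))^{-1}h\rhd x_1\). This gives \(-\Xi\,T_1\) for an explicit scalar \(\Xi\).

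Collecting the pieces,
\[
x_2=(1+\rho(h))T_0-\bigl(\rho(h)\kappa_X+\Xi\bigr)T_1.
\]
The main obstacle is the cocycle identity \(\Xi=\kappa_X\), the analogue of Lemma~\ref{lem:Xi-V-Omega-V}. It involves \(\alpha\), the action \(h\rhd v_1\) (computed as in \eqref{eq:g-action-on-hv}, via \(hg=g\varepsilon^{-1}h\)), the associator ratio, and \(\Phi^h(h,g)\). I would first simplify \(\alpha\,\rho(\varepsilon^{-1}h)\)-type products using the projective relation \(\rho(\varepsilon^{-1})\rho(h)=\Phi_h(\varepsilon^{-1},h)\rho(\varepsilon^{-1}h)\). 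I would then reduce the remaining pure-\(\Phi\) ratio using Lemma~\ref{lem:local-cocycle-coherence} and the 3-cocycle identity, as in the appendix reductions, and defer it to an appendix lemma if it becomes long.

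Finally, \(T_0\in V_h\otimes(X_1)_{hg}\) and \(T_1\in V_{\varepsilon^{-1}h}\otimes(X_1)_{\varepsilon hg}\) lie in distinct bihomogeneous components, and both are nonzero because \(h\) acts invertibly. Hence the bracket is nonzero, and \(x_2=0\) if and only if \(\rho(h)=-1\).
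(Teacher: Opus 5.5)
Your proposal is correct and is essentially the paper's proof. It uses the same vectors \(T_0,T_1\) and the same three contributions \(T_0\), \(-\rho(h)\kappa_XT_1\), and \(\rho(h)T_0-\Xi_XT_1\), with exactly the scalar \(\Xi_X\) you describe. It relies on the same identity \(\Xi_X=\kappa_X\), which the paper proves in an appendix lemma by the reduction you sketch; note that this reduction also needs \(\rho(\varepsilon^{-1})^2=\Phi_h(\varepsilon^{-1},\varepsilon^{-1})\rho(\varepsilon^2)\) besides the relation you cite, before Lemma~\ref{lem:local-cocycle-coherence} and the \(3\)-cocycle identity finish it. The orbit-generation and bidegree arguments also match, with your \(\langle hg\rangle\) in place of the paper's \(C_G(hg)\).

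One small slip: in the double braiding, the acting element \(\varepsilon hg\) should be rewritten as \(g\varepsilon^2h\) (not \(\varepsilon^2hg\)). This rewriting is what gives \((\varepsilon hg)\rhd v=\rho(\varepsilon^2)\rho(h)\Phi_h(\varepsilon^2,h)^{-1}\Phi_h(g,\varepsilon^2h)^{-1}v_1=\rho(h)\kappa_Xv_1\).
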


\begin{proof}
Let \(T_0=v\otimes x_1\) and
\(T_1=v_1\otimes(h\rhd x_1)\).
The two nonidentity terms in the recursive formula
\eqref{eq:recursive-varphi-2} are
\begin{align*}
&c_{X_1,V}c_{V,X_1}(T_0)
=\rho(h)\kappa_XT_1,\\
&(\operatorname{id}_V\otimes\varphi_1^\Phi)c_{1,2}^\Phi(T_0)
=\rho(h)T_0-\Xi_XT_1,
\end{align*}
where \(\Xi_X\) is the scalar defined in
Lemma~\ref{lem:Gamma4-X2-cocycle-reduction}.  That lemma proves
\(\Xi_X=\kappa_X\).  Therefore
\[
\begin{aligned}
x_2
=T_0-\rho(h)\kappa_XT_1+\rho(h)T_0-\Xi_XT_1
=(1+\rho(h))(T_0-\kappa_XT_1),
\end{aligned}
\]
which is \eqref{eq:Gamma4-x2}.

Since \(\Delta_4=1\), the line \(\mathbb Cx_1\) is stable under
\(C_G(hg)\).  The element \(hg\in C_G(hg)\) conjugates \(h\) to
\(\varepsilon^{-1}h\), and hence
\(V=\mathbb C C_G(hg)\rhd v\).
Proposition~\ref{prop:adjoint-orbit-generation} now gives \(X_2=\mathbb CG\rhd x_2\).
The degrees of the first \(V\)-factors in \(T_0\) and \(T_1\) are
\(h\) and \(\varepsilon^{-1}h\), respectively, so
\(T_0-\kappa_XT_1\neq0\).  The last
assertion follows from \eqref{eq:Gamma4-x2}.
\end{proof}

\subsubsection{The objects \texorpdfstring{$Y_1$ and $Y_2$}{Y1 and Y2}}

\begin{lemma}
\label{lem:Gamma4-opposite-first-adjoint}
The vector
\begin{equation}
\label{eq:Gamma4-y1}
y_1:=c_{V,W}(x_1)
=w_1\otimes v-\beta w\otimes v_1
=\varphi_1^\Phi(w_1\otimes v)
\end{equation}
generates \(Y_1=(\operatorname{ad}W)(V)\).  Moreover,
\(Y_1\simeq X_1\) and \(\operatorname{supp}Y_1=(hg)^G\).  In
particular,
\[
X_1\text{ is simple}
\quad\Longleftrightarrow\quad
Y_1\text{ is simple}
\quad\Longleftrightarrow\quad
\Delta_4=1.
\]
If these equivalent conditions hold, then
\(X_1\simeq Y_1\simeq M(hg,\tau_1)\), for the same
one-dimensional \(\Phi_{hg}\)-projective representation \(\tau_1\),
and \(\dim X_1=\dim Y_1=4\).
\end{lemma}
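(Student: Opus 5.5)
The plan is to transport everything from \(X_1\) to \(Y_1\) through the braiding. First I would verify the explicit formula \eqref{eq:Gamma4-y1}. Since \(c_{V,W}\) is an isomorphism in \(\GG\), the identity
\[
c_{V,W}\bigl(\operatorname{id}_{V\otimes W}-c_{W,V}c_{V,W}\bigr)
=\bigl(\operatorname{id}_{W\otimes V}-c_{V,W}c_{W,V}\bigr)c_{V,W},
\]
already used for \eqref{eq:first-adjoint-braiding-symmetry}, shows that \(c_{V,W}\) maps \(X_1=\operatorname{Im}\varphi_1^\Phi\) isomorphically onto \(\operatorname{Im}\varphi_1^\Phi|_{W\otimes V}\), which is \(Y_1\) by Lemma~\ref{lem:adjoint-object-realization}. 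Thus \(Y_1\simeq X_1\). For \(y_1\) itself, I would compute
\[
c_{V,W}(v\otimes w)=(h\rhd w)\otimes v=w_1\otimes v,
\qquad
c_{V,W}(v_1\otimes w_1)=\bigl((\varepsilon^{-1}h)\rhd w_1\bigr)\otimes v_1 .
\]
Using \(\varepsilon^{-1}h\cdot h=\varepsilon^{-1}h^2\) together with \eqref{eq:YD-projective-action}, one gets \((\varepsilon^{-1}h)\rhd(h\rhd w)=\Phi_g(\varepsilon^{-1}h,h)\,\sigma(\varepsilon^{-1}h^2)\,w\). Multiplying by \(\alpha\) gives exactly \(\beta\), so \(c_{V,W}(x_1)=w_1\otimes v-\beta\,w\otimes v_1\). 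The same identity also gives \(c_{V,W}(x_1)=\varphi_1^\Phi(c_{V,W}(v\otimes w))=\varphi_1^\Phi(w_1\otimes v)\).

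Next I would establish generation. Since \(c_{V,W}\) is \(G\)-equivariant and \(X_1=\mathbb CG\rhd x_1\) by Lemma~\ref{lem:Gamma4-first-adjoint}, we get \(Y_1=c_{V,W}(X_1)=\mathbb CG\rhd y_1\). The braiding preserves total degree, so \(\operatorname{supp}Y_1=\operatorname{supp}X_1=(hg)^G\).

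Finally, because \(Y_1\simeq X_1\), simplicity transfers: \(Y_1\) is simple if and only if \(X_1\) is, which by Lemma~\ref{lem:Gamma4-first-adjoint} holds if and only if \(\Delta_4=1\). In that case \(X_1\simeq M(hg,\tau_1)\), so \(Y_1\simeq M(hg,\tau_1)\) with the same \(\tau_1\). Here the degree-\(hg\) component of \(Y_1\) is \(c_{V,W}((X_1)_{hg})\), and the \(C_G(hg)\)-action on it is carried along by the equivariant map. Both objects then have dimension \(|(hg)^G|=4\).

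The only genuine computation is the coefficient \(\beta\). That is the step I would double-check, making sure the normalization of \(\Phi_g\) on the commuting pair \((\varepsilon^{-1}h,h)\) matches the definition in \eqref{eq:Gamma4-alpha-beta-Delta}.
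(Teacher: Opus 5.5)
Your proposal is correct and matches the paper's proof essentially step for step. The paper also computes \(c_{V,W}(v\otimes w)=w_1\otimes v\) and \(c_{V,W}(v_1\otimes w_1)=\Phi_g(\varepsilon^{-1}h,h)\,\sigma(\varepsilon^{-1}h^2)\,w\otimes v_1\) to obtain the formula for \(y_1\). It then uses the identity preceding \eqref{eq:first-adjoint-braiding-symmetry} to identify \(X_1\) with \(Y_1\) through the graded, \(G\)-equivariant isomorphism \(c_{V,W}\), which carries over generation, support, simplicity, the inducing representation \(\tau_1\), and the dimension. The coefficient you flag for checking is exactly as you wrote it: \(w\in W_g\) and \(\varepsilon^{-1}h^2\) is central, so \(\eqref{eq:YD-projective-action}\) applies directly and gives \(\beta\).
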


\begin{proof}
By \eqref{eq:Gamma4-x1} and the braiding formula,
\(c_{V,W}(v\otimes w)=w_1\otimes v\), while
\[
\begin{aligned}
c_{V,W}(v_1\otimes w_1)
=((\varepsilon^{-1}h)\rhd w_1)\otimes v_1
=\Phi_g(\varepsilon^{-1}h,h)\sigma(\varepsilon^{-1}h^2)
  \,w\otimes v_1.
\end{aligned}
\]
Since
\(\beta=\alpha\Phi_g(\varepsilon^{-1}h,h)\sigma(\varepsilon^{-1}h^2)\), it follows that
$
c_{V,W}(x_1)
=w_1\otimes v-\beta w\otimes v_1.
$
The identity preceding \eqref{eq:first-adjoint-braiding-symmetry}
gives \(c_{V,W}(x_1)=\varphi_1^\Phi(w_1\otimes v)\) and
identifies \(X_1\) with \(Y_1\).  Thus \(y_1\) generates \(Y_1\).
Since this identification is graded and commutes with $G$-action, it preserves
the support and, when \(\Delta_4=1\), the inducing representation
\(\tau_1\).  The remaining assertions follow from
Lemma~\ref{lem:Gamma4-first-adjoint}.
\end{proof}

\begin{lemma}
\label{lem:Gamma4-general-Y2-formula}
For every homogeneous vector \(a\in W_x\), one has
\begin{equation}
\label{eq:Gamma4-general-Y2-formula}
\begin{aligned}
\varphi_2^\Phi(a\otimes y_1)
={}&a\otimes y_1
-\bigl(x(hg)x^{-1}\rhd a\bigr)\otimes(x\rhd y_1)\\
&+\frac{\Phi(x,\varepsilon g,h)}
        {\Phi(x(\varepsilon g)x^{-1},x,h)}
  (x\rhd w_1)\otimes\varphi_1^\Phi(a\otimes v)\\
&-\beta
  \frac{\Phi(x,g,\varepsilon^{-1}h)}
       {\Phi(xgx^{-1},x,\varepsilon^{-1}h)}
  (x\rhd w)\otimes\varphi_1^\Phi(a\otimes v_1).
\end{aligned}
\end{equation}
\end{lemma}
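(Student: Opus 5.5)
The formula will follow by expanding the recursive definition
\[
\varphi_2^\Phi
=\operatorname{id}_{W\otimes Y_1}
-c_{Y_1,W}c_{W,Y_1}
+(\operatorname{id}_W\otimes\varphi_1^\Phi)c_{1,2}^\Phi
\]
on \(a\otimes y_1\), with \(D_1=W\otimes V\) and \(y_1=w_1\otimes v-\beta\,w\otimes v_1\) as in \eqref{eq:Gamma4-y1}. Each of the three terms is computed separately and the results are added.

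\textbf{Identity term.} This term simply contributes \(a\otimes y_1\).

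\textbf{Double braiding term.} The vector \(y_1\) is homogeneous of degree \(hg\), since \(w_1\) has degree \(\varepsilon g\) and \(\varepsilon g\cdot h=hg\). Hence
\[
c_{W,Y_1}(a\otimes y_1)=(x\rhd y_1)\otimes a .
\]
The vector \(x\rhd y_1\) has degree \(x(hg)x^{-1}\). Applying \(c_{Y_1,W}\) therefore gives
\[
c_{Y_1,W}\bigl((x\rhd y_1)\otimes a\bigr)=\bigl(x(hg)x^{-1}\rhd a\bigr)\otimes(x\rhd y_1).
\]
This is the second term of \eqref{eq:Gamma4-general-Y2-formula}. No associator appears here, because \(c_{W,Y_1}\) braids \(W\) past the object \(Y_1\) as a whole.

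\textbf{The \(c_{1,2}^\Phi\) term.} This is the only place where associators enter. We apply \(c_{1,2}^\Phi=a\circ(c_{W,W}\otimes\operatorname{id})\circ a^{-1}\) to each summand of \(a\otimes y_1\).

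For the summand \(a\otimes(w_1\otimes v)\), with degrees \((x,\varepsilon g,h)\):
\begin{itemize}
\item \(a^{-1}\) contributes the factor \(\Phi(x,\varepsilon g,h)\);
\item \(c_{W,W}\) sends \(a\otimes w_1\) to \((x\rhd w_1)\otimes a\);
\item \(a\) contributes \(\Phi(x(\varepsilon g)x^{-1},x,h)^{-1}\).
\end{itemize}
The associativity constraint is \(a((u\otimes v)\otimes w)=\Phi^{-1}u\otimes(v\otimes w)\), so \(a^{-1}\) multiplies by \(\Phi\). After applying \(\operatorname{id}_W\otimes\varphi_1^\Phi\), this produces the third term of the formula.

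The summand \(-\beta\,a\otimes(w\otimes v_1)\), with degrees \((x,g,\varepsilon^{-1}h)\), is handled identically. It yields
\[
-\beta\,\frac{\Phi(x,g,\varepsilon^{-1}h)}{\Phi(xgx^{-1},x,\varepsilon^{-1}h)}\,(x\rhd w)\otimes\varphi_1^\Phi(a\otimes v_1),
\]
which is the fourth term.

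\textbf{Main obstacle.} The only real risk is bookkeeping. Two points need checking: the direction of each associator, and the conjugated degree \(x(\varepsilon g)x^{-1}\) (respectively \(xgx^{-1}\)) that sits in the first slot after braiding. Once these are verified against the conventions fixed in Subsection~2.1, adding the four contributions gives \eqref{eq:Gamma4-general-Y2-formula} exactly. No cocycle identity is needed, since the formula is left unsimplified.
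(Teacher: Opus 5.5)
Your proposal is correct and matches the paper's proof. You expand the recursion for \(\varphi_2^\Phi\) on \(a\otimes y_1\), use that \(y_1\) is homogeneous of degree \(hg\) to obtain the double-braiding term, and apply \(c_{1,2}^\Phi\) with the two associator factors \(\Phi(x,y,\ell)/\Phi(xyx^{-1},x,\ell)\) to the two summands of \(y_1\), exactly as the paper does.
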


\begin{proof}
Here the maps \(\varphi_n^\Phi\) are those associated with the ordered
pair \((W,V)\).  Since
\[
w_1\otimes v\in(W\otimes V)_{(\varepsilon g)h},
\qquad
w\otimes v_1\in(W\otimes V)_{g(\varepsilon^{-1}h)},
\]
and
$
(\varepsilon g)h=g(\varepsilon^{-1}h)=hg,
$
the vector \(y_1\) is homogeneous of degree \(hg\).  Hence, for
\(a\in W_x\), the braiding formula gives
\[
c_{Y_1,W}c_{W,Y_1}(a\otimes y_1)
=
\bigl(x(hg)x^{-1}\rhd a\bigr)\otimes(x\rhd y_1).
\]
Here \(x\rhd y_1\) denotes the tensor-product action on
\(W\otimes V\).

Let \(b\in W_y\) and \(r\in V_\ell\) be homogeneous.  By the
definition of \(c_{1,2}^\Phi\) and the associator convention,
\[
c_{1,2}^\Phi\bigl(a\otimes(b\otimes r)\bigr)
=
\frac{\Phi(x,y,\ell)}
     {\Phi(xyx^{-1},x,\ell)}
(x\rhd b)\otimes(a\otimes r).
\]
Now substitute
$
y_1=w_1\otimes v-\beta w\otimes v_1
$
into the recursive formula
$
\varphi_2^\Phi
=
\operatorname{id}_{W\otimes Y_1}
-c_{Y_1,W}c_{W,Y_1}
+(\operatorname{id}_W\otimes\varphi_1^\Phi)c_{1,2}^\Phi.
$
Applying the preceding formula for \(c_{1,2}^\Phi\) to the two
summands of \(y_1\) gives
\eqref{eq:Gamma4-general-Y2-formula}.
\end{proof}
Recall \(w_2=\varepsilon\rhd w\). Let
$
z_0=\varphi_2^\Phi(w\otimes y_1),
$ and $
z_1=\varphi_2^\Phi(w_2\otimes y_1).
$

\begin{lemma}
\label{lem:Gamma4-Y2-generators}
Assume that \(\Delta_4=1\).  Then
\begin{align}
Y_2
&=\mathbb CG\rhd z_0\oplus\mathbb CG\rhd z_1,
\nonumber\\
z_1&\neq0.
\label{eq:Gamma4-z1-nonzero}
\end{align}
Moreover, \(z_0\in(Y_2)_{\varepsilon^{-1}hg^2}\) and
\(0\neq z_1\in(Y_2)_{\varepsilon hg^2}\).  The conjugacy classes
\((\varepsilon^{-1}hg^2)^G
=\{\varepsilon^{-1}hg^2,hg^2\}\) and
\((\varepsilon hg^2)^G=\{\varepsilon hg^2,\varepsilon^2hg^2\}\)
are disjoint, so the two summands
have disjoint supports.
\end{lemma}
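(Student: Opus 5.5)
The plan is to apply Proposition~\ref{prop:adjoint-orbit-generation}, in the version with several generators from Lemma~\ref{lem:homogeneous-orbit-tensor-generation}, to the ordered pair \((W,V)\), taking \(X_1^{W,V}=Y_1\).

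\textbf{Generation.} Since \(\Delta_4=1\), Lemma~\ref{lem:Gamma4-opposite-first-adjoint} shows that \(Y_1\simeq M(hg,\tau_1)\) is simple. Hence \(Y_1=\mathbb CG\rhd y_1\), and the line \(\mathbb Cy_1=(Y_1)_{hg}\) is stable under \(H:=C_G(hg)=\langle\varepsilon^2,\varepsilon^{-1}h^2,hg\rangle\) (Lemma~\ref{lem:Gamma4-hg-centralizer}). Next I determine the \(H\)-orbits on \(g^G=\{\varepsilon^ig\}\). The elements \(\varepsilon^2\) and \(\varepsilon^{-1}h^2\) are central. A direct computation with \(hg=\varepsilon gh\) and \(g\varepsilon=\varepsilon^{-1}g\) gives
\[
(hg)g(hg)^{-1}=hgh^{-1}=\varepsilon g,
\qquad
(hg)(\varepsilon g)(hg)^{-1}=g.
\]
So the orbits are \(\{g,\varepsilon g\}\) and \(\{\varepsilon^2g,\varepsilon^3g\}\). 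The vector \(w\) has degree \(g\), and \(w_2=\varepsilon\rhd w\) has degree \(\varepsilon g\varepsilon^{-1}=\varepsilon^2 g\). Since every homogeneous component of \(W\) is one-dimensional, \(W=\mathbb CH\rhd w+\mathbb CH\rhd w_2\). Lemma~\ref{lem:homogeneous-orbit-tensor-generation} then gives
\[
W\otimes Y_1=\mathbb CG\rhd(w\otimes y_1)+\mathbb CG\rhd(w_2\otimes y_1).
\]
Applying the morphism \(\varphi_2^\Phi\) (the restricted map \(\psi_2\)) yields \(Y_2=\mathbb CG\rhd z_0+\mathbb CG\rhd z_1\). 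The homogeneous degrees are \(g\cdot hg=\varepsilon^{-1}hg^2\) (using \(gh=\varepsilon^{-1}hg\)) and \(\varepsilon^2g\cdot hg=\varepsilon hg^2\).

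\textbf{Conjugacy classes and directness.} I compute the conjugates of \(\varepsilon^{-1}hg^2\) and of \(\varepsilon hg^2\) under the generators \(g,h,\varepsilon\), using that \(g^2\) is central. Conjugation by \(h\) fixes both elements, since \(h\) commutes with \(\varepsilon\) and \(g^2\). Conjugation by \(g\) sends \(h\mapsto\varepsilon^{-1}h\) and \(\varepsilon\mapsto\varepsilon^{-1}\), hence \(\varepsilon^ihg^2\mapsto\varepsilon^{-i-1}hg^2\). This swaps \(\varepsilon^{-1}hg^2\leftrightarrow hg^2\) and \(\varepsilon hg^2\leftrightarrow\varepsilon^{2}hg^2\). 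Conjugation by \(\varepsilon\) multiplies the \(h\)-factor by \(\varepsilon\varepsilon^{-1}\) only through \(g^2\), which is central, so it acts trivially. This gives the two displayed classes, and they are disjoint because \(\varepsilon\) has order four. Subobjects with disjoint supports intersect trivially, so the sum is direct.

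\textbf{Nonvanishing of \(z_1\) (main obstacle).} I expand \(z_1\) with Lemma~\ref{lem:Gamma4-general-Y2-formula}, taking \(a=w_2\) and \(x=\varepsilon^2g\). I then isolate the multidegree component \((\varepsilon^2g,\varepsilon g,h)\) in \(W\otimes W\otimes V\), which comes from the first summand \(w_2\otimes w_1\otimes v\). The key check is that none of the other summands has this multidegree. For the double-braiding term, the first factor has degree \((x\,hg\,x^{-1})x(x\,hg\,x^{-1})^{-1}\). For the two \(c_{1,2}^\Phi\)-terms, the first factors have degrees \(x(\varepsilon g)x^{-1}\) and \(xgx^{-1}\), and \(\varphi_1^\Phi(w_2\otimes v)\) and \(\varphi_1^\Phi(w_2\otimes v_1)\) contribute the listed bidegrees. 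I would compute each of these degrees explicitly and check that it differs from the target, in the spirit of Proposition~\ref{prop:multidegree-insertion}.

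If a cancellation cannot be excluded by degrees alone, the fallback is to collect the coefficient of that multidegree. It equals \(1\) plus possibly one associator/character term, and I would show this coefficient is nonzero using \(\Delta_4=1\). This degree bookkeeping is the step I expect to need the most care. It is also where the asymmetry between \(z_0\) and \(z_1\) must appear: \(z_0\) may vanish, so the analogous argument must fail for \(a=w\).
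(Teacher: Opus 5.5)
Your proposal is correct and follows the paper's proof essentially step by step. You obtain generation from Lemma~\ref{lem:homogeneous-orbit-tensor-generation} using the two \(C_G(hg)\)-orbits represented by \(g\) and \(\varepsilon^2g\), directness from the disjoint conjugacy classes, and \(z_1\neq0\) by isolating the \((\varepsilon^2g,\varepsilon g,h)\)-component. The degree check you deferred goes through with no fallback needed: with \(x=\varepsilon^2g\), the first \(W\)-factors of the four terms have degrees \(\varepsilon^2g,\varepsilon g,\varepsilon^{-1}g,g\), so that component is exactly \(w_2\otimes(w_1\otimes v)\neq0\).
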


\begin{proof}
Lemma~\ref{lem:Gamma4-opposite-first-adjoint} shows that \(Y_1\) is
simple and that \(C_G(hg)\) stabilizes \(\mathbb Cy_1\).  This
centralizer has two orbits on \(\operatorname{supp}W\), represented
by \(g\) and \(\varepsilon^2g\). The  conjugation by \(hg\) interchanges \(g\) with
\(\varepsilon g\), and \(\varepsilon^2g\) with \(\varepsilon^{-1}g\).  Lemma
\ref{lem:homogeneous-orbit-tensor-generation}, followed by the \(G\)-linearity of
\(\varphi_2^\Phi\), therefore gives
\(Y_2=\mathbb CG\rhd z_0+\mathbb CG\rhd z_1\).
The map \(\varphi_2^\Phi\) preserves the grading, so
\(z_0\in(Y_2)_{\varepsilon^{-1}hg^2}\) and
\(z_1\in(Y_2)_{\varepsilon hg^2}\).  The two displayed conjugacy
classes are disjoint, and hence the sum is direct.

In Lemma~\ref{lem:Gamma4-general-Y2-formula} with \(a=w_2\), the
\(W\)-degrees of the four terms are
\(\varepsilon^2g,\varepsilon g,\varepsilon^{-1}g,g\), respectively.  Thus the
component of
multidegree \((\varepsilon^2g,\varepsilon g,h)\) occurs only in
\(w_2\otimes y_1\), where it is
\(w_2\otimes(w_1\otimes v)\neq0\).  Hence \(z_1\neq0\).
\end{proof}

\subsubsection{Simplicity of \texorpdfstring{$Y_2$}{Y2} and vanishing of
\texorpdfstring{$Y_3$}{Y3}}

\begin{lemma}
\label{lem:Gamma4-z0-criterion}
Assume that \(\Delta_4=1\).  Then
\[
z_0=0
\quad\Longleftrightarrow\quad
\sigma(g)=-1.
\]
\end{lemma}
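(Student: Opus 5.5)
Since \(z_0\) has degree \(\varepsilon^{-1}hg^2\), I would expand it completely using Lemma~\ref{lem:Gamma4-general-Y2-formula} with \(a=w\) and \(x=g\).

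The first step is to record the four terms. By the defining relations, \(g(hg)g^{-1}=\varepsilon^{-1}hg\). So the second term is \(((\varepsilon^{-1}hg)\rhd w)\otimes(g\rhd y_1)\). Its first tensor factor has \(W\)-degree \((\varepsilon^{-1}h)g(\varepsilon^{-1}h)^{-1}\), which is again \(g\) up to the relevant power of \(\varepsilon\). The third and fourth terms involve \(\varphi_1^\Phi(w\otimes v)\) and \(\varphi_1^\Phi(w\otimes v_1)\). I would rewrite each of these in the basis of \(W\otimes V\) using \(\varphi_1^\Phi=\operatorname{id}-c_{V,W}c_{W,V}\). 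Then I would sort every resulting summand by its multidegree in \(W\otimes(W\otimes V)\). The natural candidates are \((g,\varepsilon g,h)\), \((g,g,\varepsilon^{-1}h)\), \((\varepsilon g,g,h)\) and \((\varepsilon g,\varepsilon g,\varepsilon^{-1}h)\), or whatever the degree bookkeeping actually produces. As in Lemma~\ref{lem:Gamma4-Y2-generators}, only a few basis tensors of total degree \(\varepsilon^{-1}hg^2\) can appear, and each has one-dimensional homogeneous pieces.

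The second step is to compute the coefficient of each basis tensor as a product of three kinds of scalars: values of \(\Phi\), the projective characters \(\rho,\sigma\), and the constants \(\alpha,\beta\). I expect the pattern of Lemmas~\ref{lem:second-adjoint-formula} and \ref{lem:Gamma4-second-adjoint} to repeat. That is, the tensor \(w\otimes(w_1\otimes v)\) (or its analogue) should acquire the coefficient \(1+\sigma(g)\). The contribution \(\sigma(g)\) would come from the \(c_{1,2}^\Phi\) term, since \(g\rhd w=\sigma(g)w\). Each of the other surviving tensors should then be a multiple of the same factor \(1+\sigma(g)\). Concretely, I aim to show
\[
z_0=(1+\sigma(g))\,u_0
\]
for an explicit vector \(u_0\). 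Here \(u_0\) would have a nonzero component in some multidegree that occurs in exactly one term, for instance \(w\otimes(w_1\otimes v)\). That gives \(u_0\neq0\), and the equivalence follows at once.

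The main obstacle is checking that the coefficients of the cross terms really factor through \(1+\sigma(g)\). This requires cocycle identities analogous to \(\Xi_V=\Phi_g(h,g)^{-1}\) and \(\Xi_X=\kappa_X\), now with the associator factors from Lemma~\ref{lem:Gamma4-general-Y2-formula} included. I would also use \(\Delta_4=1\), in the form \(\beta\Phi_h(g,g)\rho(g^2)=1\), to cancel the \(\rho\)- and \(\sigma\)-dependence that comes from applying \(c^2\) twice. To reduce each required identity to the 3-cocycle condition \eqref{eq:three-cocycle-identity-local}, I would use Lemma~\ref{lem:local-cocycle-coherence} together with the centrality of \(\varepsilon^2\), \(g^2\) and \(\varepsilon^{-1}h^2\), deferring the verifications to an appendix-style lemma as the paper does elsewhere. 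If direct factorization fails, my fallback is indirect. Since \(Y_2\) is a subquotient with \(\mathbb CG\rhd z_0\) supported on \((\varepsilon^{-1}hg^2)^G\), I would compare \(g\rhd z_0\) with \(z_0\) via the \(C_G(hg)\)-stable line \(\mathbb C y_1\). From this comparison I would extract the scalar relation \((1+\sigma(g))\cdot(\text{nonzero})=0\).
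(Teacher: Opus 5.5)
Your plan is correct and is essentially the paper's argument: with \(a=w\) in Lemma~\ref{lem:Gamma4-general-Y2-formula}, the four components of \(z_0\) have the following coefficients, which you can check directly. At \(w\otimes(w\otimes v_1)\) the coefficient is \(-\beta(1+\sigma(g))\), and at \(w\otimes(w_1\otimes v)\) it is \(1+\sigma(g)\Delta_4\). On the two components whose first factor has degree \(\varepsilon^{-1}g\) (not \(\varepsilon g\)) they are nonzero multiples of \(1+\sigma(g)\Delta_4\) and \(1+\sigma(g)^{-1}\). The required cocycle identities are immediate from the definitions of \(\Phi_x\) and \(\Phi^x\). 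The paper reads off only the first coefficient for necessity and checks the remaining components for sufficiency, whereas your factorization \(z_0=(1+\sigma(g))u_0\) under \(\Delta_4=1\) gives both directions at once.

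One slip: every multidegree receives contributions from exactly two of the four terms. In particular, \(w\otimes(w_1\otimes v)\) also gets \(\sigma(g)\Delta_4\) from the last term. So justify \(u_0\neq0\) by its coefficient \(-\beta\neq0\) at \(w\otimes(w\otimes v_1)\), not by a multidegree occurring in a single term.
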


\begin{proof}
Lemma~\ref{lem:Gamma4-z0-factorization} shows that the coefficient of
\(w\otimes(w\otimes v_1)\) in \(z_0\) is
\(-\beta(1+\sigma(g))\).  Since \(\beta\neq0\), the equality
\(z_0=0\) implies \(\sigma(g)=-1\).  Conversely, if
\(\sigma(g)=-1\), the same lemma gives \(z_0=0\), since
\(\Delta_4=1\).
\end{proof}

\begin{lemma}
\label{lem:Gamma4-epsilon-ht-centralizer}
One has
\[
C_G(\varepsilon hg^2)
=\langle\varepsilon,h,g^2\rangle
=\langle \varepsilon^2,\varepsilon^{-1}h^2,g^2,h\rangle.
\]
Moreover, every irreducible
\(\Phi_{\varepsilon hg^2}\)-projective representation of this
centralizer is one-dimensional.
\end{lemma}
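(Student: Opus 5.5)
The plan is to prove the two assertions separately: first identify the centralizer by an index count, then show that the commutator bicharacter of \(\Phi_{\varepsilon hg^2}\) on it is trivial, using the alternator \(f_\Phi\) together with \(\eta_h=1\) from Lemma~\ref{lem:Gamma4-rho-one-dimensional}.

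\emph{The centralizer.} Put \(x=\varepsilon hg^2\). Since \(g\varepsilon g^{-1}=\varepsilon^{-1}\), \(ghg^{-1}=\varepsilon^{-1}h\), and \(g^2\) is central, we get
\[
gxg^{-1}=\varepsilon^{-2}hg^2=\varepsilon^2hg^2\neq x,
\]
because \(\varepsilon\neq1\). Thus \(x^G=\{\varepsilon hg^2,\varepsilon^2hg^2\}\), in agreement with Lemma~\ref{lem:Gamma4-Y2-generators}, and \([G:C_G(x)]=2\).

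The subgroup \(A=\langle\varepsilon,h,g^2\rangle\) is abelian: \(\varepsilon\) commutes with \(h\), and \(g^2\) is central. It contains \(x\), so \(A\subseteq C_G(x)\). Using the normal form \(\varepsilon^ih^jg^r\), every element of \(G\) lies in \(A\) or in \(Ag\), so \([G:A]\le2\). Comparing indices, as in Lemma~\ref{lem:Gamma4-centralizers}, gives \(C_G(x)=A\).

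For the second description, \(\varepsilon^2\) and \(\varepsilon^{-1}h^2\) lie in \(A\). Conversely, \(\varepsilon=h^2(\varepsilon^{-1}h^2)^{-1}\) lies in \(\langle\varepsilon^2,\varepsilon^{-1}h^2,g^2,h\rangle\). Hence the two generating sets give the same subgroup.

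\emph{One-dimensionality.} The group \(A=C_G(x)\) is abelian and contains \(x\). As in the proofs of Lemmas~\ref{lem:Gamma2-projective-representations-one-dimensional} and~\ref{lem:Gamma4-hg-centralizer}, it suffices to show that the commutator bicharacter
\[
c_x(a,b)=\Phi_x(a,b)/\Phi_x(b,a)=f_\Phi(x,a,b),\qquad a,b\in A,
\]
is trivial. Then the twisted group algebra \(\mathbb C^{\Phi_x}A\) is commutative, and every irreducible \(\Phi_x\)-projective representation of \(A\) is one-dimensional.

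By Lemma~\ref{lem:alternator}, \(f_\Phi|_{A^3}\) is an alternating tricharacter. Since \(A\) is generated by \(\varepsilon,h,g^2\), multilinearity and alternation mean that \(f_\Phi|_{A^3}\) is determined by the single value \(f_\Phi(\varepsilon,h,g^2)\). More precisely, any value on generators is either \(1\) (when a generator is repeated) or \(f_\Phi(\varepsilon,h,g^2)^{\pm1}\). By the alternation rules,
\[
f_\Phi(\varepsilon,h,g^2)=f_\Phi(h,g^2,\varepsilon)=f_\Phi(h,\varepsilon,g^2)^{-1}=\eta_h^{-1}.
\]
Lemma~\ref{lem:Gamma4-rho-one-dimensional} gives \(\eta_h=1\), so \(f_\Phi\) is trivial on \(A^3\). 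In particular \(c_x\) is trivial on \(C_G(x)\), which proves the claim.

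The main point requiring care is this last step: confirming that \(x\) lies in the same abelian subgroup \(A\) on which Lemma~\ref{lem:alternator} applies, so that \(c_x\) is genuinely the restriction of the alternating tricharacter \(f_\Phi|_{A^3}\). Once that is in place, trilinearity reduces everything to the single value \(\eta_h\), and the index count for the centralizer is routine.
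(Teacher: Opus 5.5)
Your proof is correct and follows essentially the same route as the paper: you use the same index comparison (\([G:\langle\varepsilon,h,g^2\rangle]\le 2\) against a nontrivial \(g\)-conjugate of \(\varepsilon hg^2\)) to identify the centralizer, and you reduce the commutator bicharacter via Lemma~\ref{lem:alternator} to \(\eta_h=1\) from Lemma~\ref{lem:Gamma4-rho-one-dimensional}. The only cosmetic difference is that you trivialize the whole alternating tricharacter \(f_\Phi|_{A^3}\) at once. The paper instead evaluates \(c_{\varepsilon hg^2}\) on the three pairs of generators.
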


\begin{proof}
Let \(H=\langle\varepsilon,h,g^2\rangle\).  The defining relations
show that \(g^2\) commutes with both
\(\varepsilon\) and \(h\). Thus \(H\) is abelian and
\(H\subseteq C_G(\varepsilon hg^2)\).
Every element of \(G\) has a representative
\(\varepsilon^ih^jg^r\). According as \(r\) is even or odd, this
representative belongs to \(H\) or \(Hg\). Hence \([G:H]\leq2\).
Moreover, conjugation by \(g\) interchanges \(\varepsilon hg^2\) and
\(\varepsilon^2hg^2\). Since \(\varepsilon\) has order four, these elements are
distinct. Hence \([G:C_G(\varepsilon hg^2)]=2\), and consequently
\(C_G(\varepsilon hg^2)=H\).

Clearly,
\(\langle\varepsilon^2,\varepsilon^{-1}h^2,g^2,h\rangle\subseteq H\).
The reverse inclusion follows from
\(\varepsilon=h^2(\varepsilon^{-1}h^2)^{-1}\).  Hence
\(H=\langle\varepsilon^2,\varepsilon^{-1}h^2,g^2,h\rangle\).
Lemma~\ref{lem:Gamma4-rho-one-dimensional} gives \(\eta_h=1\).
By Lemma~\ref{lem:alternator},
\[
c_{\varepsilon hg^2}(\varepsilon,h)=\eta_h^{-1},\qquad
c_{\varepsilon hg^2}(\varepsilon,g^2)=\eta_h,\qquad
c_{\varepsilon hg^2}(h,g^2)=\eta_h^{-1}.
\]
Since \(c_{\varepsilon hg^2}\) is a bicharacter and
\(\varepsilon,h,g^2\) generate \(H\), it is trivial on \(H\).
Therefore \(\mathbb C^{\Phi_{\varepsilon hg^2}}H\) is commutative,
and every irreducible \(\Phi_{\varepsilon hg^2}\)-projective
representation of \(H\) is one-dimensional.
\end{proof}

\begin{lemma}
\label{lem:Gamma4-Y2-line-stability}
Write \(z_1=\sum_{j=0}^7z_{1,j}\), where \(z_{1,j}\) is  {the component of
\(z_1\) of the multidegree indicated below}:
\[
\begin{array}{c|c@{\qquad}c|c}
j&\text{multidegree of }z_{1,j}
&j&\text{multidegree of }z_{1,j}\\ \hline
0&(g,\varepsilon^2g,\varepsilon^{-1}h)
&4&(g,\varepsilon^{-1}g,h)\\
1&(\varepsilon g,\varepsilon^{-1}g,\varepsilon^{-1}h)
&5&(\varepsilon g,g,h)\\
2&(\varepsilon^2g,g,\varepsilon^{-1}h)
&6&(\varepsilon^2g,\varepsilon g,h)\\
3&(\varepsilon^{-1}g,\varepsilon g,\varepsilon^{-1}h)
&7&(\varepsilon^{-1}g,\varepsilon^2g,h).
\end{array}
\]
All eight components are nonzero.  If
\(\pi=(0\ 1\ 2\ 3)(4\ 5\ 6\ 7)\), then there are unique scalars
\(\lambda_j\in\mathbb C^\times\) such that
\[
h\rhd z_{1,j}=\lambda_jz_{1,\pi(j)}
\qquad(0\leq j\leq7).
\]
For \(1\leq j\leq7\), define
\begin{equation}
\label{eq:Gamma4-Y2-H-Theta}
\Theta_{2,j}=\frac{\lambda_j}{\lambda_0}.
\end{equation}
If \(\Delta_4=1\), then
\[
\mathbb Cz_1\text{ is \(C_G(\varepsilon hg^2)\)-stable}
\quad\Longleftrightarrow\quad
\Theta_{2,j}=1
\quad(1\leq j\leq7).
\]
\end{lemma}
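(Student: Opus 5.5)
Three claims need proof: the eight components of \(z_1\) are nonzero, \(h\) permutes them by \(\pi\) with nonzero scalars, and stability of \(\mathbb Cz_1\) under \(C_G(\varepsilon hg^2)\) is equivalent to \(\Theta_{2,j}=1\) for \(1\leq j\leq7\). I would treat them in that order.

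\emph{Expansion of \(z_1\).} Expand \(z_1=\varphi_2^\Phi(w_2\otimes y_1)\) using Lemma~\ref{lem:Gamma4-general-Y2-formula} with \(a=w_2\) and \(x=\varepsilon g\). The four terms have first \(W\)-degrees \(\varepsilon^2g,\varepsilon g,\varepsilon^{-1}g,g\). The third and fourth terms contain \(\varphi_1^\Phi(w_2\otimes v)\) and \(\varphi_1^\Phi(w_2\otimes v_1)\). Each of these splits, as \(\operatorname{id}-T\) did in Lemma~\ref{lem:Gamma4-first-adjoint}, into two bihomogeneous pieces. This gives at most eight multidegrees in total.

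Match them against the table. The multidegrees whose last factor has degree \(h\) are \(j=4,\dots,7\); those with \(\varepsilon^{-1}h\) are \(j=0,\dots,3\). Every product equals \(\varepsilon hg^2\). Each multidegree is produced by exactly one of the eight summands, since the summands have pairwise distinct multidegrees, so no cancellation occurs. Each summand is a nonzero scalar times a tensor of nonzero homogeneous vectors: the scalars are products of cocycle values, \(\beta\), and values of \(\rho,\sigma\), all nonzero. Hence all eight components are nonzero.

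\emph{Action of \(h\).} Conjugation by \(h\) fixes \(h\) and \(\varepsilon^{-1}h\), and sends \(\varepsilon^kg\) to \(\varepsilon^{k+1}g\). Applied to the multidegrees in the table, this realizes exactly \(\pi\). The action of \(h\) maps each homogeneous component bijectively to the component of conjugated multidegree, and each homogeneous component of \(V\) and \(W\) is one-dimensional, since \(\rho\) and \(\sigma\) are. So \(h\rhd z_{1,j}=\lambda_jz_{1,\pi(j)}\) for a unique \(\lambda_j\neq0\).

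\emph{Reduction to the action of \(h\).} By Lemma~\ref{lem:Gamma4-epsilon-ht-centralizer}, \(C_G(\varepsilon hg^2)=\langle\varepsilon^2,\varepsilon^{-1}h^2,g^2,h\rangle\).
\begin{itemize}
\item The elements \(\varepsilon^2,\varepsilon^{-1}h^2,g^2\) are central in \(G\).
\item They stabilize \(\mathbb Cw_2\); for \(\varepsilon^2\) and \(\varepsilon^{-1}h^2\) this is because \(\sigma\) is one-dimensional.
\item They stabilize \(\mathbb Cy_1\), because \(\Delta_4=1\) makes it a \(C_G(hg)\)-line and these elements lie in \(C_G(hg)\); for \(g^2\) use \(g^2=\varepsilon h^{-2}(hg)^2\).
\end{itemize}
Acting on \(w_2\otimes y_1\) by \eqref{eq:tensor-product} and using the \(G\)-linearity of \(\varphi_2^\Phi\), each of them preserves \(\mathbb Cz_1\). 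So stability reduces to \(h\rhd z_1\in\mathbb Cz_1\).

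\emph{The criterion.} We have \(h\rhd z_1=\sum_j\lambda_jz_{1,\pi(j)}\). Comparing with \(cz_1\) componentwise, proportionality holds exactly when \(\lambda_jz_{1,\pi(j)}=c\,z_{1,\pi(j)}\) for all \(j\). This uses that \(\pi\) permutes the eight components and that they are nonzero and lie in independent multidegree components. Hence \(\lambda_j=c\) for all \(j\), which is \(\lambda_j/\lambda_0=1\), i.e. \(\Theta_{2,j}=1\).

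The main obstacle is the bookkeeping in the expansion step. One must check that the eight summands land in exactly the eight listed multidegrees, with no two summands sharing a multidegree. That rests on \(\varepsilon\) having order four, so that \(\varepsilon^{\pm1},\varepsilon^2\) give distinct degrees.
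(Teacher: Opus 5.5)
Your proof is correct and follows the paper's route. The paper gets the eight nonzero components and the permutation \(\pi\) from the expansion in Lemma~\ref{lem:Gamma4-Y2-coordinate-calculation}, where the four terms of \eqref{eq:Gamma4-general-Y2-formula} with \(a=w_2\) feed the pairs \((E_6,E_2),(E_5,E_1),(E_7,E_3),(E_0,E_4)\). It then reduces stability to \(h\rhd z_1\in\mathbb Cz_1\) via the central generators \(\varepsilon^2,\varepsilon^{-1}h^2,g^2\), exactly as you do.

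One small slip: \(w_2=\varepsilon\rhd w\) has degree \(\varepsilon g\varepsilon^{-1}=\varepsilon^2g\), so the general formula must be applied with \(x=\varepsilon^2g\), not \(x=\varepsilon g\). The first \(W\)-degrees \(\varepsilon^2g,\varepsilon g,\varepsilon^{-1}g,g\) you list are the correct ones for \(x=\varepsilon^2g\), so nothing downstream changes.
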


\begin{proof}
Lemma~\ref{lem:Gamma4-Y2-coordinate-calculation} gives the eight
nonzero  {components} displayed above and shows that \(h\) permutes
their lines according to \(\pi\).  This proves the existence and
uniqueness of the \(\lambda_j\).  Replacing \(v\) and \(w\) by
\(av\) and \(bw\), respectively, multiplies every \(z_{1,j}\) by
\(ab^2\).  Hence the scalars
\(\lambda_j\) and the ratios \(\Theta_{2,j}\) are unchanged.

Assume now that \(\Delta_4=1\).
Then
\(\mathbb Cy_1\) is stable under \(C_G(hg)\).  The elements \(\varepsilon^2,\varepsilon^{-1}h^2,g^2\)
belong to this centralizer and are central in \(G\).  Hence each of
them acts by a scalar on both \(W_{\varepsilon^2g}=\mathbb Cw_2\) and
\((Y_1)_{hg}=\mathbb Cy_1\).  Since \(\varphi_2^\Phi\) is a
morphism, \(\varepsilon^2,\varepsilon^{-1}h^2,g^2\) stabilize \(\mathbb Cz_1\).

By Lemma~\ref{lem:Gamma4-epsilon-ht-centralizer},
\(C_G(\varepsilon hg^2)=\langle \varepsilon^2,\varepsilon^{-1}h^2,g^2,h\rangle\).  It remains only to
decide whether \(h\rhd z_1\) belongs to \(\mathbb Cz_1\).  The eight
multidegrees are distinct, and therefore
\[
h\rhd z_1\in\mathbb Cz_1
\quad\Longleftrightarrow\quad
\lambda_0=\lambda_1=\cdots=\lambda_7.
\]
By \eqref{eq:Gamma4-Y2-H-Theta}, this is equivalent to
\(\Theta_{2,j}=1\) for \(1\leq j\leq7\).
\end{proof}

For the conjugacy classes of \(h\) and \(g\), choose
\(p_h=p_g=1\), \(p_{\varepsilon^{-1}h}=g\), \(p_{\varepsilon g}=h\),
\(p_{\varepsilon^2g}=\varepsilon\), and \(p_{\varepsilon^{-1}g}=\varepsilon h\).  For
\(M(x,\tau)\), set
\[
d_x(a;y)=p_{aya^{-1}}^{-1}ap_y,
\qquad
m_{x,\tau}(a;y)=
\frac{\Phi_x(a,p_y)}
     {\Phi_x(p_{aya^{-1}},d_x(a;y))}\tau(d_x(a;y)).
\]
By \eqref{eq:induced-action}, \(m_{x,\tau}(a;y)\) is the action
coefficient in the standard homogeneous basis. Write
\(\mathsf R(a;y)=m_{h,\rho}(a;y)\) and
\(\mathsf S(a;x)=m_{g,\sigma}(a;x)\).

Let \(v_y\in V_y\) and \(w_x\in W_x\) be the corresponding standard
basis vectors, chosen so that \(v_h=v\), \(w_g=w\),
\(w_{\varepsilon g}=w_1\), and \(w_{\varepsilon^2g}=w_2\).  For
\(0\leq j\leq7\), let \(E_j\) be the standard tensor with the
multidegree assigned to \(z_{1,j}\) in
Lemma~\ref{lem:Gamma4-Y2-line-stability}.  Define
\begin{align}
\mathcal A_0={}&-\frac{\Phi(\varepsilon^2g,g,\varepsilon^{-1}h)}{\Phi(g,\varepsilon^2g,\varepsilon^{-1}h)}
 \mathsf R(\varepsilon g;h)\mathsf S(\varepsilon^2g;g)
 \mathsf S(\varepsilon^{-1}h;\varepsilon g),
\label{eq:Gamma4-Y2-coefficients}\\
\mathcal A_1={}&-\Phi^{\varepsilon^2g}(\varepsilon g,h)\mathsf R(\varepsilon^2g;h)
 \mathsf S(\varepsilon^2g;\varepsilon g)\mathsf S(\varepsilon^{-1}hg;\varepsilon^2g),
\nonumber\\
\mathcal A_2={}&-\mathsf R(\varepsilon g;h)\mathsf S(\varepsilon^{-1}h;\varepsilon g),
\nonumber\\
\mathcal A_3={}&-\frac{\Phi(\varepsilon^2g,\varepsilon g,h)}{\Phi(\varepsilon^{-1}g,\varepsilon^2g,h)}
 \mathsf R(\varepsilon^2g;h)\mathsf S(\varepsilon^2g;\varepsilon g)\mathsf S(\varepsilon^{-1}h;\varepsilon^2g),
\nonumber\\
\mathcal A_4={}&\frac{\Phi(\varepsilon^2g,g,\varepsilon^{-1}h)}{\Phi(g,\varepsilon^2g,\varepsilon^{-1}h)}
 \mathsf R(\varepsilon g;h)\mathsf R(\varepsilon^2g;\varepsilon^{-1}h)
 \mathsf S(h;\varepsilon^2g)\mathsf S(\varepsilon^2g;g)\mathsf S(\varepsilon^{-1}h;\varepsilon g),
\nonumber\\
\mathcal A_5={}&\Phi^{\varepsilon^2g}(g,\varepsilon^{-1}h)
 \mathsf R(\varepsilon g;h)\mathsf R(\varepsilon^2g;\varepsilon^{-1}h)
 \mathsf S(\varepsilon^2g;g)\mathsf S(\varepsilon^{-1}h;\varepsilon g)\mathsf S(\varepsilon^{-1}hg;\varepsilon^2g),
\nonumber\\
\mathcal A_6={}&1,
\nonumber\\
\mathcal A_7={}&\frac{\Phi(\varepsilon^2g,\varepsilon g,h)}{\Phi(\varepsilon^{-1}g,\varepsilon^2g,h)}
 \mathsf S(\varepsilon^2g;\varepsilon g).
\nonumber
\end{align}
If \(E_j=w_{x_j}\otimes(w_{y_j}\otimes v_{\ell_j})\), put
\begin{equation}
\label{eq:Gamma4-Y2-H}
H_j=
 \Phi^h(x_j,y_j\ell_j)\Phi^h(y_j,\ell_j)
 \mathsf S(h;x_j)\mathsf S(h;y_j)\mathsf R(h;\ell_j).
\end{equation}

\begin{lemma}
\label{lem:Gamma4-Y2-coordinate-calculation}
One has \(z_1=\sum_{j=0}^7\mathcal A_jE_j\), with all
\(\mathcal A_j\neq0\), and
\(h\rhd E_j=H_jE_{\pi(j)}\), where
\(\pi=(0\ 1\ 2\ 3)(4\ 5\ 6\ 7)\).
With the notation of Lemma~\ref{lem:Gamma4-Y2-line-stability}, one has
\[
z_{1,j}=\mathcal A_jE_j,
\qquad
\lambda_j=\frac{H_j\mathcal A_j}
                 {\mathcal A_{\pi(j)}}.
\]
Consequently,
\begin{equation}
\label{eq:Gamma4-Y2-H-Theta-explicit}
\Theta_{2,j}
=
\frac{H_j\mathcal A_j\mathcal A_1}
     {H_0\mathcal A_0\mathcal A_{\pi(j)}}
\qquad(1\leq j\leq7).
\end{equation}
\end{lemma}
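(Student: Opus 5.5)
We expand \(z_1=\varphi_2^\Phi(w_2\otimes y_1)\) directly using Lemma~\ref{lem:Gamma4-general-Y2-formula} with \(a=w_2\in W_{\varepsilon^2g}\), so \(x=\varepsilon^2g\). Each of the four terms in \eqref{eq:Gamma4-general-Y2-formula} is expressed in the standard basis \(\{w_x\otimes(w_y\otimes v_\ell)\}\). The second term needs the action of \(x(hg)x^{-1}=\varepsilon^{-1}hg\) on \(w_2\), and the action of \(\varepsilon^2g\) on \(y_1=w_1\otimes v-\beta w\otimes v_1\) via \eqref{eq:tensor-product}. The last two terms need \(\varphi_1^\Phi(w_2\otimes v)\) and \(\varphi_1^\Phi(w_2\otimes v_1)\), each of the form \(\mathrm{id}-c_{V,W}c_{W,V}\), so each contributes two tensors. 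In total this gives at most \(2+2+2+2\) basis tensors. Their multidegrees are listed in Lemma~\ref{lem:Gamma4-Y2-line-stability}, and the first \(W\)-degree is \(\varepsilon^2g,\varepsilon g,\varepsilon^{-1}g,g\) term by term, so no two contributions collide. Every action coefficient is a product of values \(\mathsf R(a;y)\), \(\mathsf S(a;x)\) from \eqref{eq:induced-action}, times associator ratios coming from \(c_{1,2}^\Phi\) and \(\Phi^x\). Collecting these gives exactly the \(\mathcal A_j\) in \eqref{eq:Gamma4-Y2-coefficients}. For instance, \(\mathcal A_6=1\) comes from the identity term applied to \(w_2\otimes(w_1\otimes v)\). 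Nonvanishing of every \(\mathcal A_j\) is immediate, since each is a product of nonzero scalars; no cancellation can occur because the eight multidegrees are distinct.

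Next, \(h\rhd E_j\) is computed with the tensor-product formula \eqref{eq:tensor-product} applied twice along \(w_{x_j}\otimes(w_{y_j}\otimes v_{\ell_j})\). This yields the factor \(\Phi^h(x_j,y_j\ell_j)\Phi^h(y_j,\ell_j)\) times the three single-factor coefficients \(\mathsf S(h;x_j)\mathsf S(h;y_j)\mathsf R(h;\ell_j)\), which is \(H_j\) in \eqref{eq:Gamma4-Y2-H}. Conjugation by \(h\) fixes \(h,\varepsilon^{-1}h\) and sends \(\varepsilon^kg\mapsto\varepsilon^{k+1}g\). Checking the table, this permutes the multidegrees by \(\pi=(0\,1\,2\,3)(4\,5\,6\,7)\). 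Hence \(h\rhd z_{1,j}=\mathcal A_jH_jE_{\pi(j)}=(H_j\mathcal A_j/\mathcal A_{\pi(j)})z_{1,\pi(j)}\), which gives \(\lambda_j\). The expression \eqref{eq:Gamma4-Y2-H-Theta-explicit} follows by dividing, using \(\pi(0)=1\).

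The main obstacle is the bookkeeping in the first step. The associator factors from \(c_{1,2}^\Phi\) and the \(\Phi^x\)-factors of the tensor action must be tracked exactly, and the induced-action coefficients must be expressed using the chosen coset representatives \(p_y\). I would organise the work as a table keyed by the eight multidegrees, recording for each term its origin and its scalar. I would also verify the normalisation \(\mathcal A_6=1\) first, as a consistency check on the sign conventions before filling in the others.
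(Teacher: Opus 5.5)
Your proposal is correct and follows essentially the same route as the paper. You substitute \(a=w_2\) into Lemma~\ref{lem:Gamma4-general-Y2-formula}, read off the eight coefficients term by term (the four terms land in the pairs \((E_6,E_2)\), \((E_5,E_1)\), \((E_7,E_3)\), \((E_0,E_4)\)), then apply \eqref{eq:tensor-product} twice and use that conjugation by \(h\) permutes the multidegrees by \(\pi\). Your identification \(x(hg)x^{-1}=\varepsilon^{-1}hg=gh\) and the check \(\mathcal A_6=1\) agree with the paper's bookkeeping.
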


\begin{proof}
 {Take \(a=w_2\) in
\eqref{eq:Gamma4-general-Y2-formula} and substitute the action
coefficients \(\mathsf R\) and \(\mathsf S\).}
The four terms contribute, in order, to the pairs
\((E_6,E_2)\), \((E_5,E_1)\), \((E_7,E_3)\), and
\((E_0,E_4)\).  Expanding the corresponding action coefficients
gives exactly \eqref{eq:Gamma4-Y2-coefficients}.  Every
\(\mathcal A_j\) is nonzero because it is a product or quotient of
nonzero cocycle values and action coefficients.

The defining relations of \(\Gamma_4\) show that conjugation by \(h\)
permutes the eight displayed multidegrees according to
\(\pi=(0\ 1\ 2\ 3)(4\ 5\ 6\ 7)\).  Applying
\eqref{eq:tensor-product} twice to each \(E_j\) gives
\(h\rhd E_j=H_jE_{\pi(j)}\), with \(H_j\) as in
\eqref{eq:Gamma4-Y2-H}.  Since
\(z_{1,j}=\mathcal A_jE_j\), it follows that
\(\lambda_j=H_j\mathcal A_j/\mathcal A_{\pi(j)}\).
Now \(\pi(0)=1\), so \eqref{eq:Gamma4-Y2-H-Theta} gives
\eqref{eq:Gamma4-Y2-H-Theta-explicit}.

These formulas are also verified in
\gaplink{Gamma_4/reflected_y2/gamma4_reflected_y2_certificate.g}.
\end{proof}

\begin{lemma}
\label{lem:Gamma4-Y2-explicit-simplicity}
Assume that \(\rho\) and \(\sigma\) are one-dimensional and that
\(\Delta_4=1\).  Then
\[
Y_2\text{ is simple}
\quad\Longleftrightarrow\quad
\sigma(g)=-1
\quad\text{and}\quad
\Theta_{2,j}=1
\quad(1\leq j\leq7).
\]
If these conditions hold, then
\(Y_2\simeq M(\varepsilon hg^2,\tau_2)\), where \(\tau_2\) is
one-dimensional, and \(\dim Y_2=2\).
\end{lemma}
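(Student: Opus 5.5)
The plan is to combine the direct-sum decomposition of Lemma~\ref{lem:Gamma4-Y2-generators} with the one-dimensionality of projective representations at the relevant degrees. By that lemma, \(Y_2=\mathbb CG\rhd z_0\oplus\mathbb CG\rhd z_1\), and the two summands have disjoint supports. Moreover \(z_1\neq0\). Hence a simple \(Y_2\) forces \(\mathbb CG\rhd z_0=0\), that is \(z_0=0\). Lemma~\ref{lem:Gamma4-z0-criterion} turns this into \(\sigma(g)=-1\). Conversely, if \(\sigma(g)=-1\), the same lemma gives \(Y_2=\mathbb CG\rhd z_1\). This reduces everything to deciding when the \(G\)-orbit of the single homogeneous vector \(z_1\) is simple.

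For this reduced question, use the same argument as in Lemma~\ref{lem:Gamma4-first-adjoint} and Lemma~\ref{lem:Gamma2-X2-simple}.

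\emph{Sufficiency.} Suppose \(\mathbb Cz_1\) is stable under \(C_G(\varepsilon hg^2)\). Then it affords a one-dimensional \(\Phi_{\varepsilon hg^2}\)-projective representation \(\tau_2\). The assignment \(1\otimes z_1\mapsto z_1\) gives a nonzero surjection \(M(\varepsilon hg^2,\tau_2)\to\mathbb CG\rhd z_1=Y_2\). Its source is simple by Proposition~\ref{prop:simple-objects}, so the map is an isomorphism.

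\emph{Necessity.} Suppose \(Y_2\) is simple. Then \(Y_2\simeq M(\varepsilon hg^2,\tau)\). By Lemma~\ref{lem:Gamma4-epsilon-ht-centralizer}, \(\tau\) is one-dimensional. Hence \((Y_2)_{\varepsilon hg^2}\) is a line, it contains \(z_1\neq0\), and it is \(C_G(\varepsilon hg^2)\)-stable.

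Lemma~\ref{lem:Gamma4-Y2-line-stability} identifies this stability with the seven equalities \(\Theta_{2,j}=1\) under \(\Delta_4=1\), and \(\Delta_4=1\) is a standing hypothesis. This proves both directions of the equivalence.

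For the final assertion, \(\dim Y_2=|(\varepsilon hg^2)^G|\cdot\dim\tau_2=2\). This uses the conjugacy class \(\{\varepsilon hg^2,\varepsilon^2hg^2\}\) recorded in Lemma~\ref{lem:Gamma4-Y2-generators}.

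The only delicate point is the claim that the orbit of \(z_0\) must vanish, rather than merely being isomorphic to something. It rests on the supports being disjoint conjugacy classes: a simple object has a single conjugacy class as support. Since \(\mathbb CG\rhd z_1\neq0\) already occupies \((\varepsilon hg^2)^G\), the summand \(\mathbb CG\rhd z_0\), supported in the different class \((\varepsilon^{-1}hg^2)^G\), must be zero. No further computation is needed beyond the cited lemmas.
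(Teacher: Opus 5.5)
Your proposal is correct and follows the paper's proof essentially step by step. The disjoint-support decomposition of Lemma~\ref{lem:Gamma4-Y2-generators} forces \(z_0=0\), hence \(\sigma(g)=-1\); simplicity of \(Y_2=\mathbb CG\rhd z_1\) is then identified with \(C_G(\varepsilon hg^2)\)-stability of \(\mathbb Cz_1\), using Lemma~\ref{lem:Gamma4-epsilon-ht-centralizer} in one direction and the induced-module surjection in the other, and Lemma~\ref{lem:Gamma4-Y2-line-stability} converts this stability into \(\Theta_{2,j}=1\), with the same count giving \(\dim Y_2=2\).
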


\begin{proof}
By Lemma~\ref{lem:Gamma4-Y2-generators}, \(Y_2\) is the direct sum
of the subobjects generated by \(z_0\) and \(z_1\), whose supports
are disjoint, and \(z_1\neq0\). Hence, if \(Y_2\) is simple, then
\(z_0=0\). By Lemma~\ref{lem:Gamma4-z0-criterion}, this is
equivalent to \(\sigma(g)=-1\).

Assume that \(\sigma(g)=-1\). Then \(z_0=0\), so
\(Y_2=\mathbb CG\rhd z_1\), with
\(0\neq z_1\in(Y_2)_{\varepsilon hg^2}\). If \(Y_2\) is simple,
Proposition~\ref{prop:simple-objects} shows that
\((Y_2)_{\varepsilon hg^2}\) is an irreducible
\(\Phi_{\varepsilon hg^2}\)-projective representation of
\(C_G(\varepsilon hg^2)\). By
Lemma~\ref{lem:Gamma4-epsilon-ht-centralizer}, it is
one-dimensional. Hence
\((Y_2)_{\varepsilon hg^2}=\mathbb Cz_1\), and this line is
\(C_G(\varepsilon hg^2)\)-stable.

Conversely, suppose that \(\mathbb Cz_1\) is
\(C_G(\varepsilon hg^2)\)-stable. It then affords a one-dimensional
\(\Phi_{\varepsilon hg^2}\)-projective representation \(\tau_2\).
The standard induced construction gives a nonzero surjective
morphism \(M(\varepsilon hg^2,\tau_2)\to Y_2\), whose image is
\(\mathbb CG\rhd z_1=Y_2\). Its source is simple by
Proposition~\ref{prop:simple-objects}. Hence its kernel is zero,
and the morphism is an isomorphism.

Lemma~\ref{lem:Gamma4-Y2-line-stability} now gives the stated
conditions on the \(\Theta_{2,j}\). Finally, the conjugacy class
\((\varepsilon hg^2)^G=\{\varepsilon hg^2,\varepsilon^2hg^2\}\) has two elements.
Since \(\tau_2\) is one-dimensional, \(\dim Y_2=2\).
\end{proof}

\begin{lemma}
\label{lem:Gamma4-third-opposite-adjoint}
Assume that \(\Delta_4=1\) and that \(Y_2\) is simple, and let
\(y_3=\varphi_3^\Phi(w\otimes z_1)\).  Then
\[
Y_3=(\operatorname{ad}W)^3(V)=\mathbb CG\rhd y_3=0.
\]
\end{lemma}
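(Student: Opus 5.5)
The plan has two parts. First I reduce $Y_3$ to the single vector $y_3$ via Proposition~\ref{prop:adjoint-orbit-generation}. Then I show $y_3=0$ by an explicit expansion in standard tensors.

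\textbf{Orbit generation.} Since $Y_2$ is simple, Lemma~\ref{lem:Gamma4-Y2-explicit-simplicity} gives $\sigma(g)=-1$. Lemma~\ref{lem:Gamma4-z0-criterion} then gives $z_0=0$, so by Lemma~\ref{lem:Gamma4-Y2-generators}, $Y_2=\mathbb CG\rhd z_1$. By Lemma~\ref{lem:Gamma4-Y2-line-stability}, the line $\mathbb Cz_1$ is stable under $H:=C_G(\varepsilon hg^2)=\langle\varepsilon,h,g^2\rangle$, using Lemma~\ref{lem:Gamma4-epsilon-ht-centralizer}.

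I will check that $W=\mathbb CH\rhd w$. Indeed, $h$ conjugates $g$ to $\varepsilon g$ and $\varepsilon$ conjugates $g$ to $\varepsilon^2 g$. So $H$ acts transitively on $\operatorname{supp}W=\{\varepsilon^ig\}$, and every homogeneous component of $W$ is one-dimensional since $\dim\sigma=1$. Proposition~\ref{prop:adjoint-orbit-generation}, applied to the ordered pair $(W,V)$ with $n=3$ and $x_2=z_1$, then gives $Y_3=\mathbb CG\rhd y_3$ with $y_3=\varphi_3^\Phi(w\otimes z_1)$. It therefore suffices to prove $y_3=0$.

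\textbf{Vanishing of $y_3$.} I will expand
\[
\varphi_3^\Phi=\operatorname{id}-c_{Y_2,W}c_{W,Y_2}+(\operatorname{id}_W\otimes\varphi_2^\Phi)c_{1,2}^\Phi
\]
on $w\otimes z_1$, writing $z_1=\sum_j\mathcal A_jE_j$ as in Lemma~\ref{lem:Gamma4-Y2-coordinate-calculation}.

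\begin{itemize}
\item The double braiding term is $(g\varepsilon hg^2g^{-1}\rhd w)\otimes(g\rhd z_1)$. I will express $g\rhd z_1$ through the action coefficients $\mathsf S,\mathsf R$ applied to the eight $E_j$.
\item For the $c_{1,2}^\Phi$ term, I will push $w$ past the first $W$-factor of each $E_j$. The pieces $w_{x}\otimes(w\otimes v_\ell)$ are then re-expressed via $\varphi_2^\Phi$ using Lemma~\ref{lem:Gamma4-general-Y2-formula}. Here $G$-linearity lets me replace $\varphi_2^\Phi(w_x\otimes y_1)$ by translates of $z_0=0$ and $z_1$.
\end{itemize}

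Collecting everything, $y_3$ becomes a combination of standard tensors $w_a\otimes(w_b\otimes(w_c\otimes v_\ell))$ of total degree $g\varepsilon hg^2$. I will group these by multidegree and aim to show that each coefficient factors as
\[
(1+\sigma(g))\cdot(\text{cocycle expression})
\quad\text{or}\quad
(1-\Theta_{2,j})\cdot(\text{cocycle expression}),
\]
possibly after also using $\Delta_4=1$. This mirrors the factorizations $(1+\lambda+\lambda^2)$ and $(1+\rho(h))$ seen for $x_3$ and $x_2$. Under the hypotheses, all such factors vanish, giving $y_3=0$.

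\textbf{Main obstacle.} I expect the hard step to be the cocycle bookkeeping: showing that the residual $\Phi$-factors in each multidegree reduce exactly to the combinations $\Delta_4$, $\Theta_{2,j}$, and $\sigma(g)$. There are many multidegrees, each carrying nested associator and action coefficients. I would first try to organize the computation by the $h$-orbits on multidegrees, as in the permutation $\pi$ of Lemma~\ref{lem:Gamma4-Y2-line-stability}. Then only one representative per orbit needs an explicit reduction, and the others follow from $h$-equivariance together with $\Theta_{2,j}=1$. The remaining identities would be reduced with Lemma~\ref{lem:local-cocycle-coherence} and the $3$-cocycle identity, and if necessary checked by a GAP certificate, as is done for Lemma~\ref{lem:Gamma4-Y2-coordinate-calculation}.
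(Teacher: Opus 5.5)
\textbf{Where the proposal stops being a proof.} Your reduction to one vector is exactly the paper's argument. From \(\sigma(g)=-1\) and \(z_0=0\) you get \(Y_2=\mathbb CG\rhd z_1\). The line \(\mathbb Cz_1\) is \(C_G(\varepsilon hg^2)\)-stable, and \(h\) cycles the four one-dimensional components of \(W\). Proposition~\ref{prop:adjoint-orbit-generation} then gives \(Y_3=\mathbb CG\rhd y_3\).

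The gap is the vanishing of \(y_3\), which is the real content of the lemma. You only announce that, after a full expansion into elementary tensors, every coefficient should factor through \(1+\sigma(g)\) or \(1-\Theta_{2,j}\). Nothing in the proposal explains why such a factorization would occur.

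The shortcut you suggest for organizing this computation also fails. One has \(\deg y_3=g\cdot\varepsilon hg^2=\varepsilon^2hg^3=(\varepsilon^2g^2)\,hg\). Its centralizer is \(C_G(hg)=\langle\varepsilon^2,\varepsilon^{-1}h^2,hg\rangle\), which does not contain \(h\). So \(h\rhd y_3\) lies in a different homogeneous component, and \(h\)-equivariance relates no two coefficients of \(y_3\).

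\textbf{The missing idea.} Do not expand \(z_1\) into the eight \(E_j\). Keep it in the four-term \(W\otimes Y_1\) form \eqref{eq:Gamma4-z1-expansion} coming from Lemma~\ref{lem:Gamma4-general-Y2-formula}. After \(c_{1,2}^\Phi\), the map \(\varphi_2^\Phi\) is applied to \(w\otimes y_1\), to \(w\otimes((\varepsilon^2g)\rhd y_1)\), to \(w\otimes p_0\) and to \(w\otimes p_1\), where \(p_0=\varphi_1^\Phi(w_2\otimes v)\) and \(p_1=\varphi_1^\Phi(w_2\otimes v_1)\).
\begin{enumerate}
\item The first image is \(z_0=0\).
\item The second is a translate of \(z_0\), since \((\varepsilon^2g)\rhd w\in\mathbb C^\times w\), so it is also zero.
\item The last two lie in the components of \(Y_2\) of degrees \(\varepsilon^2hg^2\) and \(\varepsilon hg^2\). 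Simplicity makes these components one-dimensional, so the images are \(\mu_0(g\rhd z_1)\) and \(\mu_1 z_1\) for scalars \(\mu_0,\mu_1\).
\item The outer \(W\)-factors are likewise scalar multiples, by \(\nu_0\) and \(\nu_1\), of \((\varepsilon^2hg^2)\rhd w\) and \(w\).
\end{enumerate}
Hence \(y_3\) collapses to
\[
y_3=(1-\beta B_1\nu_1\mu_1)\,w\otimes z_1+(\Xi_3-1)\,\bigl((\varepsilon^2hg^2)\rhd w\bigr)\otimes(g\rhd z_1),
\]
where \(\Xi_3\) collects \(A_1\), an associator ratio, \(\nu_0\) and \(\mu_0\).

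Only two scalar identities then remain, both checked by GAP in the paper. The first is \(\beta B_1\nu_1\mu_1=-\sigma(g)=1\). The second is \(\Xi_3=1\), which is the cocycle identity of Lemma~\ref{lem:Gamma4-Xi3-cocycle-reduction} and itself needs \(\sigma(g)=-1\). In this organization, \(\Delta_4=1\) and \(\Theta_{2,j}=1\) enter only through \(z_0=0\) and the one-dimensionality of the components of \(Y_2\). They are not the factors that kill the final two coefficients.
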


\begin{proof}
Lemmas~\ref{lem:Gamma4-Y2-generators} and
\ref{lem:Gamma4-Y2-explicit-simplicity} give
\(Y_2=\mathbb CG\rhd z_1\) and
\((Y_2)_{\varepsilon hg^2}=\mathbb Cz_1\).  This line is stable
under \(C_G(\varepsilon hg^2)=\langle\varepsilon,h,g^2\rangle\).
 {The action of \(h\) cyclically permutes
the four homogeneous components of \(W\)}, so
\(W=\mathbb C C_G(\varepsilon hg^2)\rhd w\).
Proposition~\ref{prop:adjoint-orbit-generation} therefore gives
\(Y_3=\mathbb CG\rhd y_3\).
Lemma~\ref{lem:Gamma4-Y3-coefficient-cancellation} gives \(y_3=0\),
and hence \(Y_3=0\).
\end{proof}

We can now determine the initial Cartan matrix.  Whenever the relevant
reflections are defined, write
\(m_{ij}=m_{ij}^{(V,W)}\) for the initial tuple.

\begin{cor}
\label{cor:Gamma4-B2-adjoint-criterion}
Assume that \(\rho\) and \(\sigma\) are one-dimensional and that
\[
\rho(h)=\sigma(g)=-1,\qquad
\Delta_4=1,\qquad
\Theta_{2,j}=1\quad(1\leq j\leq7).
\]
Then \(X_1,Y_1,Y_2\) are nonzero simple objects,
\(X_2=Y_3=0\), \(\dim X_1=\dim Y_1=4\), and \(\dim Y_2=2\).
In particular, \(m_{12}=1\) and \(m_{21}=2\)
for \((V,W)\).
\end{cor}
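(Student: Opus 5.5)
This corollary is an assembly of the preceding lemmas, and I would prove it by walking through the adjoint objects in the order \(X_1,X_2,Y_1,Y_2,Y_3\), checking each one's hypotheses from the assumed equalities.

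First, since \(\Delta_4=1\), Lemma~\ref{lem:Gamma4-first-adjoint} gives that \(x_1\neq0\) and that \(X_1\simeq M(hg,\tau_1)\) is simple with \(\dim X_1=4\). Lemma~\ref{lem:Gamma4-opposite-first-adjoint} then transfers this to \(Y_1\): it is nonzero and simple, \(Y_1\simeq X_1\), and \(\dim Y_1=4\). Still under \(\Delta_4=1\), Lemma~\ref{lem:Gamma4-second-adjoint} gives \(X_2=\mathbb CG\rhd x_2\) with \(x_2=(1+\rho(h))(\cdots)\). Since \(\rho(h)=-1\), we get \(X_2=0\).

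Next comes \(Y_2\). The hypotheses \(\sigma(g)=-1\) and \(\Theta_{2,j}=1\) for all \(j\) are exactly the conditions of Lemma~\ref{lem:Gamma4-Y2-explicit-simplicity}. That lemma yields \(Y_2\simeq M(\varepsilon hg^2,\tau_2)\) with \(\dim Y_2=2\). In particular \(Y_2\neq0\), which is also visible directly from \(z_1\neq0\) in Lemma~\ref{lem:Gamma4-Y2-generators}. Since \(Y_2\) is simple, Lemma~\ref{lem:Gamma4-third-opposite-adjoint} applies and gives \(Y_3=0\).

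Finally, the Cartan entries. From \(X_1\neq0\) and \(X_2=0\), the definition gives \(m_{12}=1\). From \(Y_2\neq0\) and \(Y_3=0\), it gives \(m_{21}=2\). The only step carrying real content is \(Y_3=0\), but that rests on the coefficient cancellation already packaged in Lemma~\ref{lem:Gamma4-Y3-coefficient-cancellation}, so nothing here should present an obstacle. The one thing to check is that each invoked lemma's hypothesis is met, in particular that \(\rho\) and \(\sigma\) are one-dimensional, which is assumed.
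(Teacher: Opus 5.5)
Your proposal is correct and follows the paper's proof essentially verbatim. The paper also obtains the claims about \(X_1,Y_1\) from Lemmas~\ref{lem:Gamma4-first-adjoint} and~\ref{lem:Gamma4-opposite-first-adjoint}, \(X_2=0\) from Lemma~\ref{lem:Gamma4-second-adjoint}, \(Y_2\) from Lemma~\ref{lem:Gamma4-Y2-explicit-simplicity}, and \(Y_3=0\) from Lemma~\ref{lem:Gamma4-third-opposite-adjoint}, and then reads off \(m_{12}=1\) and \(m_{21}=2\) from the definition of the Cartan entries.
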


\begin{proof}
Lemmas~\ref{lem:Gamma4-first-adjoint} and
\ref{lem:Gamma4-opposite-first-adjoint} give the assertions about
\(X_1\) and \(Y_1\).  Lemma~\ref{lem:Gamma4-second-adjoint} gives
\(X_2=0\), while Lemma~\ref{lem:Gamma4-Y2-explicit-simplicity}
gives the assertion about \(Y_2\).  Finally,
Lemma~\ref{lem:Gamma4-third-opposite-adjoint} gives \(Y_3=0\).
\end{proof}
We first determine the parameters under the additional assumption
that the Cartan matrix is of finite type.
\begin{prop}
\label{prop:Gamma4-local-B2-Cartan-matrix}
Assume that \(\mathcal B(V\oplus W)\) is finite-dimensional and that
the Cartan matrix of \((V,W)\) is indecomposable
and of
finite type.  Then
\[
\rho(h)=\sigma(g)=-1,
\qquad
\Delta_4=1,
\qquad
\Theta_{2,j}=1\quad(1\leq j\leq7).
\]
The Cartan matrix of \((V,W)\) is
\[
\begin{pmatrix}2&-1\\-2&2\end{pmatrix}.
\]
\end{prop}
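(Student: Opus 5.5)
Since \(\mathcal B(V\oplus W)\) is finite-dimensional, Corollary~\ref{cor:Gamma4-projective-representations-one-dimensional} gives \(\dim\rho=\dim\sigma=1\), so all the computations of Subsection~\ref{subsec:Gamma4-adjoint-objects} apply. By Theorem~\ref{thm:Nichols-finiteness-criterion} the tuple admits all reflections and the Cartan graph is finite. Lemma~\ref{lem:root-string-adjoint-simplicity} then says that every nonzero iterated adjoint object \((\operatorname{ad}V)^t(W)\), \(t\le m_{12}\), and \((\operatorname{ad}W)^t(V)\), \(t\le m_{21}\), is simple. Braided-indecomposability gives \(m_{12},m_{21}\ge1\). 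Since the Cartan matrix is indecomposable of finite type, we have \(m_{12}m_{21}\in\{1,2,3\}\).

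\emph{Step 1: \(\Delta_4=1\) and a lower bound on \(m_{21}\).} Because \(X_1\ne0\) and \(X_1\) is simple, Lemma~\ref{lem:Gamma4-first-adjoint} gives \(\Delta_4=1\). Lemma~\ref{lem:Gamma4-Y2-generators} then gives \(z_1\ne0\), so \(Y_2\ne0\). Hence \(m_{21}\ge2\), and therefore \(m_{12}=1\) and \(m_{21}\in\{2,3\}\). From \(m_{12}=1\) we get \(X_2=0\), and Lemma~\ref{lem:Gamma4-second-adjoint} yields \(\rho(h)=-1\).

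\emph{Step 2: the conditions from \(Y_2\).} Since \(m_{21}\ge2\), the object \(Y_2\) is nonzero, and by Lemma~\ref{lem:root-string-adjoint-simplicity} it is simple. Lemma~\ref{lem:Gamma4-Y2-explicit-simplicity} then gives \(\sigma(g)=-1\) and \(\Theta_{2,j}=1\) for \(1\le j\le 7\). With all the stated equalities established, Lemma~\ref{lem:Gamma4-third-opposite-adjoint} gives \(Y_3=0\). This excludes \(m_{21}=3\), so \(m_{21}=2\), and \eqref{eq:Cartan-matrix-at-P} yields the asserted matrix \(\left(\begin{smallmatrix}2&-1\\-2&2\end{smallmatrix}\right)\).

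The main subtlety is the order of the argument. The simplicity of \(Y_2\) must be deduced before we know that \(m_{21}=2\). This is legitimate because Lemma~\ref{lem:root-string-adjoint-simplicity} applies to every \(t\le -a_{21}\), so it covers \(t=2\) whether \(m_{21}\) is \(2\) or \(3\). A second point is that Lemma~\ref{lem:Gamma4-Y2-explicit-simplicity} is stated for the ordered pair \((W,V)\). We therefore need the convention \(a_{21}=-m_{21}\) with \(m_{21}\) defined through \((\operatorname{ad}W)^n(V)\), which matches the indexing \(P_1=V\), \(P_2=W\). Everything else is quotation of earlier lemmas; the real weight rests on the vanishing result \(Y_3=0\) in Lemma~\ref{lem:Gamma4-third-opposite-adjoint}, which we take as given.
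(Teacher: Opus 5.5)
Your proposal is correct and follows the paper's proof essentially step for step. Both arguments run as follows: one-dimensionality via Corollary~\ref{cor:Gamma4-projective-representations-one-dimensional}; then \(\Delta_4=1\) from the simplicity of \(X_1\); then \(z_1\neq0\), which forces \(m_{21}\geq2\), hence \(m_{12}=1\) and \(\rho(h)=-1\); and finally the simplicity of \(Y_2\), which gives \(\sigma(g)=-1\) and \(\Theta_{2,j}=1\). The only cosmetic difference is at the last step: you cite Lemma~\ref{lem:Gamma4-third-opposite-adjoint} directly for \(Y_3=0\), whereas the paper cites Corollary~\ref{cor:Gamma4-B2-adjoint-criterion}, which is itself derived from that lemma.
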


\begin{proof}
Corollary~\ref{cor:Gamma4-projective-representations-one-dimensional}
shows that \(\rho\) and \(\sigma\) are one-dimensional. By
Theorem~\ref{thm:Nichols-finiteness-criterion}, the pair \((V,W)\)
admits all reflections and has a finite Cartan graph.

Lemma~\ref{lem:Gamma4-first-adjoint} gives \(x_1\neq0\), and hence
\(X_1\neq0\) and \(m_{12}\geq1\). Therefore
Lemma~\ref{lem:root-string-adjoint-simplicity} shows that \(X_1\) is
simple.
Lemma~\ref{lem:Gamma4-first-adjoint} then gives \(\Delta_4=1\).
Under this condition, \eqref{eq:Gamma4-z1-nonzero} gives
\(0\neq z_1\in Y_2\).  Consequently, \(Y_2\neq0\) and
\(m_{21}\geq2\). Another application of
Lemma~\ref{lem:root-string-adjoint-simplicity} shows that \(Y_2\) is
simple.

Since the Cartan matrix of \((V,W)\) is an indecomposable rank-two
generalized Cartan matrix of finite type, one has
\(m_{12}m_{21}\in\{1,2,3\}\).  Together with \(m_{12}\geq1\) and
\(m_{21}\geq2\), this forces \(m_{12}=1\).  By the definition of
\(m_{12}\), it follows that \(X_2=0\).
Lemma~\ref{lem:Gamma4-second-adjoint}, which applies because
\(\Delta_4=1\), now gives \(\rho(h)=-1\).

Since \(Y_2\) is simple,
Lemma~\ref{lem:Gamma4-Y2-explicit-simplicity} gives
\(\sigma(g)=-1\) and \(\Theta_{2,j}=1\) for \(1\leq j\leq7\).
Corollary~\ref{cor:Gamma4-B2-adjoint-criterion} now gives
\((m_{12},m_{21})=(1,2)\), as asserted.
\end{proof}

\subsection{Reflections}
\label{subsec:Gamma4-reflections}

The following lemma gives support representatives for the two
reflected tuples.

\begin{lemma}
\label{lem:Gamma4-reflected-supports}
Assume that \(\rho\) and \(\sigma\) are one-dimensional, that
\(m_{12}=1\), \(m_{21}=2\), and that \(X_1\) and \(Y_2\) are
simple, so that both reflections are defined.  Let
$
(\varepsilon_1,h_1,g_1)
=(\varepsilon^{-1},h^{-1},hg),
$
$(\varepsilon_2,h_2,g_2)
=(\varepsilon^{-1},\varepsilon hg^2,g^{-1})$.
Then
\[
R_1(V,W)=(V^*,X_1),\qquad
R_2(V,W)=(Y_2,W^*)
\]
have supports of the same form as \((V,W)\), with support
representatives \(h_1,g_1\) and \(h_2,g_2\), respectively.
 {For \(i=1,2\), the triple
\((\varepsilon_i,h_i,g_i)\) satisfies the defining relations of
\(\Gamma_4\) and generates \(G\).}
\end{lemma}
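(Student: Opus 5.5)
The proof will be a direct verification inside \(G\), in three parts: the reflected tuples, their supports, and the relations with generation for each triple. By Lemma~\ref{lem:reflection-basic-properties} and the definition of reflection, \(R_1(V,W)=(V^*,X_{m_{12}})=(V^*,X_1)\) and \(R_2(V,W)=(Y_{m_{21}},W^*)=(Y_2,W^*)\). All four entries are simple. \(X_1\) and \(Y_2\) are simple by hypothesis, and duals of simple objects are simple.

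For the supports, \(\operatorname{supp}V^*=(\operatorname{supp}V)^{-1}=\{h^{-1},h\varepsilon\}\). Since \(h\) commutes with \(\varepsilon\), this equals \(\{h_1,\varepsilon_1^{-1}h_1\}\) with \(h_1=h^{-1}\) and \(\varepsilon_1=\varepsilon^{-1}\). By Lemma~\ref{lem:Gamma4-first-adjoint}, \(\operatorname{supp}X_1=(hg)^G=\{\varepsilon^ihg\}\), which is \(\{\varepsilon_1^ig_1\}\).

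For \(R_2\), Lemma~\ref{lem:Gamma4-Y2-explicit-simplicity} gives \(\operatorname{supp}Y_2=\{\varepsilon hg^2,\varepsilon^2hg^2\}\), which is \(\{h_2,\varepsilon_2^{-1}h_2\}\). Also \(\operatorname{supp}W^*=\{\varepsilon^ig\}^{-1}=\{g^{-1}\varepsilon^{-i}\}=\{\varepsilon^{i}g^{-1}\}\), using \(g^{-1}\varepsilon=\varepsilon^{-1}g^{-1}\). So the supports have the required shape once the relations are checked.

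The relations reduce to the identities \(ghg^{-1}=\varepsilon^{-1}h\), \(g\varepsilon g^{-1}=\varepsilon^{-1}\), and the centrality of \(\varepsilon^2\), \(g^2\) and \(\varepsilon^{-1}h^2\). For triple 1 I need \(h_1g_1=\varepsilon_1g_1h_1\), that is \(g=\varepsilon^{-1}hgh^{-1}\). This holds because \(hgh^{-1}=\varepsilon g\) by \(hg=\varepsilon gh\). Next, \(g_1\varepsilon_1g_1^{-1}=h(g\varepsilon^{-1}g^{-1})h^{-1}=\varepsilon\), which is \(\varepsilon_1^{-1}\). Also \(h_1\) commutes with \(\varepsilon_1\), and \(\varepsilon_1^4=1\).

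For triple 2 I need \(h_2g_2=\varepsilon_2g_2h_2\), that is \(\varepsilon hg=\varepsilon^{-1}g^{-1}\varepsilon hg^2\). Since \(g^{-1}\varepsilon=\varepsilon^{-1}g^{-1}\), the right side is \(\varepsilon^{-2}g^{-1}hg^2\). Using \(g^{-1}hg=\varepsilon^{-1}h\) (from \(hg=\varepsilon gh\) and \(g\varepsilon=\varepsilon^{-1}g\)), this becomes \(\varepsilon^{-3}hg=\varepsilon hg\). The remaining relations are \(g^{-1}\varepsilon^{-1}g=\varepsilon\), and that \(\varepsilon hg^2\) commutes with \(\varepsilon\), which holds because \(g^2\) is central. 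I will double-check the sign conventions on \(\varepsilon\) in these conjugations; that is the only place an error could slip in.

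Generation is straightforward. For triple 1, \(\varepsilon\) and \(h\) are recovered directly and \(g=h^{-1}g_1\). For triple 2, \(g\) and \(\varepsilon\) are recovered, then \(h=\varepsilon^{-1}h_2g^{-2}\).
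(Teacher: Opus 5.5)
Your proposal is correct and follows the paper's proof: read off the reflected tuples from the definition, take $\operatorname{supp}X_1$ and $\operatorname{supp}Y_2$ from Lemmas~\ref{lem:Gamma4-first-adjoint} and~\ref{lem:Gamma4-Y2-explicit-simplicity}, invert degrees for the duals, and check the $\Gamma_4$ relations and generation directly. Two small fixes are needed: $(\varepsilon^{-1}h)^{-1}=h^{-1}\varepsilon$, not $h\varepsilon$; and you should state that simplicity of $X_1$ gives $\Delta_4=1$ by Lemma~\ref{lem:Gamma4-first-adjoint}, since this is a hypothesis of Lemma~\ref{lem:Gamma4-Y2-explicit-simplicity}.
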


\begin{proof}
Since \(m_{12}=1\), \(m_{21}=2\), and \(X_1\) and \(Y_2\) are
simple, the definition of the reflections gives
\(R_1(V,W)=(V^*,X_1)\) and \(R_2(V,W)=(Y_2,W^*)\).
Since \(X_1\) is simple,
Lemma~\ref{lem:Gamma4-first-adjoint} gives \(\Delta_4=1\) and
\(\operatorname{supp}X_1
=\{\varepsilon^ihg\mid0\leq i<4\}\).
The simplicity of \(Y_2\), together with
Lemma~\ref{lem:Gamma4-Y2-explicit-simplicity}, then gives
\(\operatorname{supp}Y_2=\{\varepsilon hg^2,\varepsilon^2hg^2\}\).
Recall that \(g^2\) is central.  The defining relations of
\(\Gamma_4\) give
\[
\begin{aligned}
h_1g_1
 &=g
  =\varepsilon^{-1}(g_1h_1),
&
g_1\varepsilon^{-1}
 &=\varepsilon g_1,
&
h_1\varepsilon^{-1}
 &=\varepsilon^{-1}h_1,\\
h_2g_2
 &=\varepsilon hg
  =\varepsilon^{-1}(g_2h_2),
&
g_2\varepsilon^{-1}
 &=\varepsilon g_2,
&
h_2\varepsilon^{-1}
 &=\varepsilon^{-1}h_2.
\end{aligned}
\]
For the second row, one uses
\(g_2h_2=g^{-1}\varepsilon hg^2=\varepsilon^2 hg\).
Thus both triples satisfy
\(h_ig_i=\varepsilon_i g_ih_i\),
\(g_i\varepsilon_i=\varepsilon_i^{-1}g_i\), and
\(h_i\varepsilon_i=\varepsilon_i h_i\).
Since \(\varepsilon\) has order four,
\(\varepsilon_1=\varepsilon_2=\varepsilon^{-1}\) also has order four.

For both triples, the first relation gives
\(\varepsilon^{-1}=h_ig_ih_i^{-1}g_i^{-1}\), so
\(\varepsilon^{-1}\in\langle h_i,g_i\rangle\).
For the first triple, \(h=h_1^{-1}\) and \(g=h_1g_1\).
For the second triple, \(g=g_2^{-1}\) and
\(h=\varepsilon^{-1}h_2g_2^2\).
Consequently, both pairs \(h_i,g_i\) generate \(G\).
The grading on a dual object is obtained by taking inverses of the
original degrees.  In particular,
\((\varepsilon^ig)^{-1}
=g^{-1}\varepsilon^{-i}
=\varepsilon^ig^{-1}\).
Therefore
\[
\begin{aligned}
&\operatorname{supp}V^*
 =\{h^{-1},\varepsilon h^{-1}\}
   =\{h_1,\varepsilon_1^{-1}h_1\},\\
&\operatorname{supp}X_1
 =\{\varepsilon^ihg\mid0\leq i<4\}
   =\{\varepsilon_1^ig_1\mid0\leq i<4\},\\
&\operatorname{supp}Y_2
 =\{\varepsilon hg^2,\varepsilon^2hg^2\}
   =\{h_2,\varepsilon_2^{-1}h_2\},\\
&\operatorname{supp}W^*
 =\{\varepsilon^ig^{-1}\mid0\leq i<4\}
   =\{\varepsilon_2^ig_2\mid0\leq i<4\}.
\end{aligned}
\]
Thus both reflected tuples have supports of the required form.
\end{proof}

\begin{lemma}
\label{lem:Gamma4-adjoints-after-reflection}
Under the assumptions of
Lemma~\ref{lem:Gamma4-reflected-supports}, one has
\begin{align*}
(\operatorname{ad}V^*)(X_1)&\simeq W,
&(\operatorname{ad}V^*)^2(X_1)&=0,\\
(\operatorname{ad}W^*)(Y_2)&\simeq Y_1,
&(\operatorname{ad}W^*)^2(Y_2)&\simeq V,
&(\operatorname{ad}W^*)^3(Y_2)&=0.
\end{align*}
Consequently,
\(m_{12}^{R_1(V,W)}=1\) and
\(m_{21}^{R_2(V,W)}=2\).
\end{lemma}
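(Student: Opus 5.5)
The plan is to use the general reflection result \cite[Lemma~3.8]{reflection3}, already invoked in Propositions~\ref{prop:Gamma2-standard-A2} and~\ref{prop:Gamma2-standard-G2-stability-under-reflections}. That result identifies the root string of the reflected tuple with the reversed root string of the original tuple. For $R_1(V,W)=(V^*,X_1)$ we have $m_{12}=1$, so $X_1=(\operatorname{ad}V)^{m_{12}}(W)$. The lemma then gives
\[
(\operatorname{ad}V^*)^k(X_1)\simeq(\operatorname{ad}V)^{m_{12}-k}(W)
\]
for $0\le k\le m_{12}$, and $(\operatorname{ad}V^*)^{m_{12}+1}(X_1)=0$. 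Taking $k=1$ yields $(\operatorname{ad}V^*)(X_1)\simeq W$, and the vanishing gives $(\operatorname{ad}V^*)^2(X_1)=0$.

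For $R_2(V,W)=(Y_2,W^*)$ we have $m_{21}=2$, and the same lemma, with the roles of the indices interchanged, gives
\[
(\operatorname{ad}W^*)(Y_2)\simeq Y_1,\qquad (\operatorname{ad}W^*)^2(Y_2)\simeq Y_0=V,\qquad (\operatorname{ad}W^*)^3(Y_2)=0.
\]
The values $m_{12}^{R_1(V,W)}=1$ and $m_{21}^{R_2(V,W)}=2$ follow from the definition of $m_{ij}$, since the last nonzero adjoint object sits in degree $1$, respectively $2$. This agrees with Lemma~\ref{lem:reflection-basic-properties}, which states directly that $m_{ij}^{R_i(P)}=m_{ij}^P$.

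The hypotheses of Lemma~\ref{lem:Gamma4-reflected-supports} supply exactly what \cite[Lemma~3.8]{reflection3} needs. They guarantee that $X_1$ and $Y_2$ are simple and that the $1$-st and $2$-nd reflections exist, with $m_{12}=1$ and $m_{21}=2$. The intermediate objects $Y_1$ and $V$ are nonzero and simple: $Y_1$ by Lemma~\ref{lem:Gamma4-opposite-first-adjoint}, using $\Delta_4=1$, which follows from the simplicity of $X_1$. The one step to check carefully is the orientation, namely that the lemma in \cite{reflection3} is stated for the $i$-th reflection with $P_i$ replaced by $P_i^*$ and the string reversed. The degree bookkeeping confirms this. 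For instance, $X_1$ has degree $\alpha_1+\alpha_2$ and $V^*$ has degree $-\alpha_1$, so $(\operatorname{ad}V^*)(X_1)$ has degree $\alpha_2$, the degree of $W$. I expect no further obstacle, since no $\Gamma_4$-specific computation is needed.
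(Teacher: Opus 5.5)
Your proposal is correct and follows the paper's proof: the paper also applies \cite[Lemma~3.8]{reflection3} with $m_{12}=1$ for $R_1(V,W)$ and with $m_{21}=2$ for $R_2(V,W)$. It then reads off the isomorphisms and the vanishing from the corresponding graded components, as your degree bookkeeping does. Your additional remarks on the simplicity of $Y_1$ and $V$, and the cross-check against Lemma~\ref{lem:reflection-basic-properties}, are consistent with the paper but not needed.
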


\begin{proof}
Apply \cite[Lemma~3.8]{reflection3} with \(m_{12}=1\) for the first
reflection and with \(m_{21}=2\) for the second. Taking the
corresponding \(\mathbb Z\)-graded components gives the asserted
isomorphisms and vanishing.
\end{proof}

The next two lemmas determine the values of the inducing projective
representations needed for the two reflected tuples.

\begin{lemma}
\label{lem:Gamma4-tau1-support-value}
Assume that \(\Delta_4=1\).  Then
\[
\tau_1(hg)=-\rho(h)\sigma(g).
\]
\end{lemma}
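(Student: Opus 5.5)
We compute directly the action of \(hg\) on the generator \(x_1=v\otimes w-\alpha v_1\otimes w_1\) of \((X_1)_{hg}\), in the same way as Lemma~\ref{lem:Gamma2-X1-Y1-inducing-values} in the \(\Gamma_2\) case. By Lemma~\ref{lem:Gamma4-first-adjoint}, the assumption \(\Delta_4=1\) gives \((X_1)_{hg}=\mathbb Cx_1\), and this line affords \(\tau_1\). Hence \((hg)\rhd x_1=\tau_1(hg)x_1\). We only need to compare one coefficient. Conjugation by \(hg\) sends the bidegree \((h,g)\) to \((\varepsilon^{-1}h,\varepsilon g)\), so \((hg)\rhd(v\otimes w)\) is a multiple of \(v_1\otimes w_1\). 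Comparing with the coefficient \(-\alpha\) of \(v_1\otimes w_1\) in \(x_1\) gives
\[
\tau_1(hg)=-\alpha^{-1}\,c,
\qquad (hg)\rhd(v\otimes w)=c\,v_1\otimes w_1 .
\]

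To compute \(c\), we apply \eqref{eq:tensor-product}:
\[
(hg)\rhd(v\otimes w)=\Phi^{hg}(h,g)\,\bigl((hg)\rhd v\bigr)\otimes\bigl((hg)\rhd w\bigr).
\]
For the second factor, \eqref{eq:YD-projective-action} gives \(h\rhd(g\rhd w)=\Phi_g(h,g)(hg)\rhd w\). Hence \((hg)\rhd w=\sigma(g)\Phi_g(h,g)^{-1}w_1\). For the first factor, we write \(hg=g(\varepsilon^{-1}h)\) and use \(g\rhd((\varepsilon^{-1}h)\rhd v)=\Phi_h(g,\varepsilon^{-1}h)(hg)\rhd v\). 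We also expand \((\varepsilon^{-1}h)\rhd v\) through \(\varepsilon^{-1}\rhd(h\rhd v)=\Phi_h(\varepsilon^{-1},h)(\varepsilon^{-1}h)\rhd v\). This gives
\[
(hg)\rhd v=\frac{\rho(\varepsilon^{-1})\rho(h)}{\Phi_h(\varepsilon^{-1},h)\Phi_h(g,\varepsilon^{-1}h)}\,v_1 .
\]
Substituting \(\alpha=\rho(\varepsilon^{-1})/\Phi_h(g,\varepsilon^{-1})\), the characters \(\rho(\varepsilon^{-1})\) cancel. We obtain \(\tau_1(hg)=-\rho(h)\sigma(g)K\), where
\[
K=\frac{\Phi^{hg}(h,g)\,\Phi_h(g,\varepsilon^{-1})}{\Phi_g(h,g)\,\Phi_h(\varepsilon^{-1},h)\,\Phi_h(g,\varepsilon^{-1}h)} .
\]

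The main obstacle is proving \(K=1\), which is a pure cocycle identity on \(G\). The plan is to expand every \(\Phi_x\) and \(\Phi^x\) into values of \(\Phi\). We then apply the \(3\)-cocycle identity \eqref{eq:three-cocycle-identity-local} to the quadruples \((h,g,\varepsilon^{-1},h)\) and \((g,\varepsilon^{-1},h,h)\). We also use Lemma~\ref{lem:local-cocycle-coherence} with \(x=h\) to merge \(\Phi_h(g,\varepsilon^{-1})\Phi_h(g\varepsilon^{-1},h)\) against \(\Phi_h(\varepsilon^{-1},h)\Phi_h(g,\varepsilon^{-1}h)\). Together with the relations \(hg=\varepsilon gh\), \(g\varepsilon^{-1}=\varepsilon g\) and \(ghg^{-1}=\varepsilon^{-1}h\), this should cancel all terms, in analogy with Lemma~\ref{lem:reflected-character-cocycle-reduction}.

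If the reduction by hand becomes unwieldy, we would record it as an appendix lemma, verified by the same bar-complex computation as the other coefficient reductions. This gives \(K=1\) and hence \(\tau_1(hg)=-\rho(h)\sigma(g)\).
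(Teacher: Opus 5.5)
Your setup and computation are correct up to the scalar $K$. With $\Delta_4=1$, the line $(X_1)_{hg}=\mathbb Cx_1$ affords $\tau_1$, and comparing $v_1\otimes w_1$-coefficients gives $\tau_1(hg)=-c/\alpha$. Your formulas for $(hg)\rhd v$, $(hg)\rhd w$ and the resulting $K$ all check out.

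The last step, however, is only sketched, and the quadruples you name do not close it. Instances of the cocycle identity at $(h,g,\varepsilon^{-1},h)$ and $(g,\varepsilon^{-1},h,h)$ never contain the values $\Phi(hg,h,g)$, $\Phi(\varepsilon^{-1}h,\varepsilon g,hg)$, $\Phi(\varepsilon^{-1}h,hg,g)$ and $\Phi(\varepsilon g,h,g)$. These are exactly the values that appear when $\Phi^{hg}(h,g)$ and $\Phi_g(h,g)$ are expanded, so those instances cannot cancel them.

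The identity $K=1$ is nevertheless true and short, and no machine check is needed.
\begin{itemize}
\item Lemma~\ref{lem:local-cocycle-coherence} at $(x,y,z,w)=(h,g,\varepsilon^{-1},h)$, together with $g\varepsilon^{-1}=\varepsilon g$, gives $\Phi_h(\varepsilon^{-1},h)\Phi_h(g,\varepsilon^{-1}h)=\Phi_h(g,\varepsilon^{-1})\Phi_h(\varepsilon g,h)$. This is the first identity in the proof of Lemma~\ref{lem:Gamma4-X2-cocycle-reduction}. Hence $K=\Phi^{hg}(h,g)/\bigl(\Phi_g(h,g)\Phi_h(\varepsilon g,h)\bigr)$.
\item Conjugation by $hg$ sends $h\mapsto\varepsilon^{-1}h$ and $g\mapsto\varepsilon g$. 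Moreover $\varepsilon g\cdot h=\varepsilon^{-1}h\cdot\varepsilon g=hg$. The definitions then give
\[
\Phi_g(h,g)=\Phi(\varepsilon g,h,g),\qquad
\Phi_h(\varepsilon g,h)=\Phi(\varepsilon^{-1}h,\varepsilon g,h),\qquad
\Phi^{hg}(h,g)=\frac{\Phi(hg,h,g)\,\Phi(\varepsilon^{-1}h,\varepsilon g,hg)}{\Phi(\varepsilon^{-1}h,hg,g)}.
\]
\item Therefore $K=\mathscr C_\Phi(\varepsilon^{-1}h,\varepsilon g,h,g)^{-1}=1$.
\end{itemize}
This is exactly parallel to the reduction of $K_1$ in Lemma~\ref{lem:reflected-character-cocycle-reduction}.

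The paper's proof avoids coordinates altogether. The proof of Lemma~\ref{lem:Gamma4-first-adjoint} shows that $T=c_{W,V}c_{V,W}$ satisfies $T^2=\Delta_4\operatorname{id}$ on $(V\otimes W)_{hg}$. So $\Delta_4=1$ gives $Tx_1=-x_1$. The canonical twist $\theta$ acts on each homogeneous vector by its degree, so it is the scalar $\rho(h)$ on $V$ and $\sigma(g)$ on $W$. The balancing identity $\theta_{V\otimes W}=c_{W,V}c_{V,W}(\theta_V\otimes\theta_W)$ then yields $(hg)\rhd x_1=-\rho(h)\sigma(g)x_1$ immediately.

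The two arguments carry the same cocycle content. The identity $\Phi^{hg}(h,g)=\Phi_g(h,g)\Phi_h(\varepsilon g,h)$ above is precisely the balancing axiom evaluated on $V_h\otimes W_g$. The paper's route packages this bookkeeping conceptually and uses $\Delta_4=1$ a second time, to get $Tx_1=-x_1$. Yours is fully elementary and uses $\Delta_4=1$ only to identify $\mathbb Cx_1$ with $\tau_1$, at the cost of the explicit two-step check above.
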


\begin{proof}
The computation in the proof of
Lemma~\ref{lem:Gamma4-first-adjoint} gives
\((c_{W,V}c_{V,W})^2=\Delta_4\operatorname{id}\) on
\((V\otimes W)_{hg}\).  Since
\(x_1=(\operatorname{id}-c_{W,V}c_{V,W})(v\otimes w)\) and
\(v\otimes w\in(V\otimes W)_{hg}\), the assumption
\(\Delta_4=1\) implies
\(c_{W,V}c_{V,W}(x_1)=-x_1\).

For a homogeneous vector, the canonical twist of
\({}_G^G\mathcal{YD}^{\Phi}\simeq
\mathcal Z(\operatorname{Vec}_G^\Phi)\) is given by the action of
its degree.  Since \(V\) and \(W\) are simple, the naturality of the
twist and Schur's lemma show that their twists are scalar.
Evaluating them on \(v\in V_h\) and \(w\in W_g\) gives
\(\theta_V=\rho(h)\operatorname{id}_V\) and
\(\theta_W=\sigma(g)\operatorname{id}_W\).  The balancing identity
\(\theta_{V\otimes W}
=c_{W,V}c_{V,W}(\theta_V\otimes\theta_W)\) therefore gives
\[
(hg)\rhd x_1
=\theta_{V\otimes W}(x_1)
=-\rho(h)\sigma(g)x_1.
\]
By Lemma~\ref{lem:Gamma4-first-adjoint},
\((X_1)_{hg}=\mathbb Cx_1\), and this line affords \(\tau_1\).
Since \(x_1\neq0\), it follows that
\(\tau_1(hg)=-\rho(h)\sigma(g)\).

\end{proof}

\begin{lemma}
\label{lem:Gamma4-tau2-support-value}
Assume that \(\rho\) and \(\sigma\) are one-dimensional,
\(\Delta_4=1\), \(\rho(h)=\sigma(g)=-1\), and that \(Y_2\) is
simple.  Write \(Y_2\simeq M(\varepsilon hg^2,\tau_2)\).  Then
\(\tau_2(\varepsilon hg^2)=-1\).
\end{lemma}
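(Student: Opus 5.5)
Our approach is to mirror the twist argument used for \(\tau_1(hg)\) in Lemma~\ref{lem:Gamma4-tau1-support-value}. For a homogeneous vector of degree \(x\) in any object, the canonical ribbon twist of \({}_G^G\mathcal{YD}^{\Phi}\simeq\mathcal Z(\operatorname{Vec}_G^\Phi)\) acts as \(x\rhd(-)\). Since \(z_1\) spans \((Y_2)_{\varepsilon hg^2}\) and has degree \(\varepsilon hg^2\), we have \((\varepsilon hg^2)\rhd z_1=\tau_2(\varepsilon hg^2)z_1\), which equals \(\theta_{W\otimes Y_1}\) evaluated on an appropriate preimage. It therefore suffices to evaluate \(\theta\) on the element \(w_2\otimes y_1\in W\otimes Y_1\), which maps to \(z_1=\varphi_2^\Phi(w_2\otimes y_1)\). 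Naturality of \(\theta\) and the fact that \(\varphi_2^\Phi\) is a morphism give \(\theta_{Y_2}(z_1)=\varphi_2^\Phi(\theta_{W\otimes Y_1}(w_2\otimes y_1))\).

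The balancing identity gives
\[
\theta_{W\otimes Y_1}=c_{Y_1,W}c_{W,Y_1}(\theta_W\otimes\theta_{Y_1}).
\]
By Schur's lemma, \(\theta_W=\sigma(g)\operatorname{id}=-\operatorname{id}\). Moreover, \(\theta_{Y_1}=\tau_1(hg)\operatorname{id}\), and Lemma~\ref{lem:Gamma4-tau1-support-value} gives \(\tau_1(hg)=-\rho(h)\sigma(g)=-1\). Hence \(\theta_W\otimes\theta_{Y_1}=\operatorname{id}\), and
\[
\tau_2(\varepsilon hg^2)z_1=\varphi_2^\Phi\bigl(c_{Y_1,W}c_{W,Y_1}(w_2\otimes y_1)\bigr).
\]
It therefore remains to show that \(\varphi_2^\Phi\circ c^2=-\varphi_2^\Phi\) on the relevant vector, where \(c^2:=c_{Y_1,W}c_{W,Y_1}\). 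One expects this to follow because \(c^2\) commutes with \(\varphi_2^\Phi\), which is a morphism, so the left side equals \(c^2\) applied to \(z_1\) inside \(W\otimes Y_1\) only after reinterpretation. A cleaner route is to use the recursive definition
\[
\varphi_2^\Phi=\operatorname{id}-c^2+(\operatorname{id}\otimes\varphi_1^\Phi)c_{1,2}^\Phi
\]
together with the quantum-symmetrizer structure. In the strict model of Lemma~\ref{lem:adjoint-object-realization}, the image of \(\varphi_2\) is the adjoint image \(\operatorname{ad}_W(\operatorname{ad}_W V)\). On that image, the degree of \(z_1\) acts via the twist, which one can compute directly as \(\theta_{W^{\otimes2}\otimes V}\) restricted to the tensor \(E_6=w_2\otimes(w_1\otimes v)\) component.

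Concretely, we will use the alternative realization that \(Y_2\) is a quotient of \(W\otimes W\otimes V\), with \(z_1\) the image of \(w_2\otimes w_1\otimes v\), whose coefficient is \(\mathcal A_6=1\). The twist on a triple tensor is
\[
\theta_{W\otimes W\otimes V}=c^2_{W,W\otimes V}\,(\operatorname{id}_W\otimes c^2_{W,V})\,(\theta_W\otimes\theta_W\otimes\theta_V).
\]
The factor \(\theta_W\otimes\theta_W\otimes\theta_V\) gives \(\sigma(g)^2\rho(h)=-1\). Since the full double braidings act on the image of the adjoint morphism compatibly with the twist, the resulting scalar is determined by evaluating \((\varepsilon hg^2)\rhd z_1\) on its \(E_6\)-coefficient. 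This last evaluation reduces to \(-1\) times a product of cocycle factors and values \(\mathsf R,\mathsf S\), and that product must be shown to equal \(1\) using \(\Delta_4=1\) and \(\Theta_{2,j}=1\).

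The main obstacle is this final cocycle bookkeeping, namely proving that the residual factor is trivial. Our first attempt will be to avoid it entirely through the twist argument of the first two paragraphs. The residual claim is that \(c^2_{Y_1,W}\) acts on the \(W_{\varepsilon^2g}\otimes(Y_1)_{hg}\) line, modulo \(\ker\varphi_2^\Phi\), by \(-1\). This should hold because \(Y_2=\operatorname{Im}\varphi_2^\Phi\) is a summand of the image of \(\operatorname{id}-c^2\) twisted by \(c_{1,2}\). If that fails, we fall back on the explicit coefficient computation (verifiable in GAP as for Lemma~\ref{lem:Gamma4-Y2-coordinate-calculation}).
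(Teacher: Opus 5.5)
\textbf{There is a genuine gap.} Your reduction in the first two paragraphs is sound. Naturality of the twist, the balancing identity, and $\theta_W\theta_{Y_1}=(-1)(-1)=1$ do give $\tau_2(\varepsilon hg^2)z_1=\varphi_2^\Phi\bigl(c_{Y_1,W}c_{W,Y_1}(w_2\otimes y_1)\bigr)$. But this only reformulates the lemma. The identity you then need, $\varphi_2^\Phi\bigl(c_{Y_1,W}c_{W,Y_1}(w_2\otimes y_1)\bigr)=-z_1$, is equivalent to the statement itself, and the proposal never proves it.

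The justification you sketch does not work. For $\tau_1$ the eigenvalue argument succeeds because $\varphi_1^\Phi=\operatorname{id}-T$ with $T^2=\Delta_4\operatorname{id}$ on $(V\otimes W)_{hg}$, so the image is the $(-1)$-eigenspace of $T$. Here $\varphi_2^\Phi$ contains the extra summand $(\operatorname{id}\otimes\varphi_1^\Phi)c_{1,2}^\Phi$, so $Y_2$ is not an eigenspace of $c_{Y_1,W}c_{W,Y_1}$. In fact this operator carries $W_{\varepsilon^2g}\otimes(Y_1)_{hg}$ to the different bihomogeneous component $W_{\varepsilon g}\otimes(Y_1)_{\varepsilon^{-1}hg}$, the second term of \eqref{eq:Gamma4-z1-expansion}. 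Your claim that it ``acts by $-1$ modulo $\ker\varphi_2^\Phi$'' is exactly the unproved statement.

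The triple-tensor variant ends the same way. Computing the full-twist scalar through the adjoint map amounts to computing $(\varepsilon hg^2)\rhd z_1$ directly, which is the cocycle bookkeeping you defer. The paper carries this out in Lemma~\ref{lem:Gamma4-Y2-support-value}. It uses the $h$-stability of $\mathbb Cz_1$ to get $\lambda_1=\lambda_2$, and a GAP-certified reduction $\mathcal K_{Y_2}=1$, to obtain $(\varepsilon hg^2)\rhd(z_{1,3}+z_{1,7})=-(z_{1,2}+z_{1,6})$. The lemma then follows by comparing the part of $(\varepsilon hg^2)\rhd z_1$ with first $W$-degree $\varepsilon^2g$.

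\textbf{How to rescue your idea.} The twist argument works if you apply it to a \emph{first} adjoint object rather than a second one.
\begin{enumerate}
\item Under the hypotheses, $Y_3=0$ by Lemma~\ref{lem:Gamma4-third-opposite-adjoint}, and $X_2=0$ since $\rho(h)=-1$. So Lemma~\ref{lem:Gamma4-adjoints-after-reflection} gives $(\operatorname{ad}W^*)(Y_2)\simeq Y_1$, realized as $\operatorname{Im}(\operatorname{id}-S)$ with $S=c_{Y_2,W^*}c_{W^*,Y_2}$.
\item Conjugation by each element $\varepsilon^ig^{-1}$ of $\operatorname{supp}W^*$ interchanges $\varepsilon hg^2$ and $\varepsilon^2hg^2$. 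Hence every total-degree component of $W^*\otimes Y_2$ is spanned by two bihomogeneous lines that $S$ swaps, and $S^2$ acts there by a scalar $\delta$.
\item The homogeneous components of $Y_1$ are one-dimensional, which forces $\delta=1$. Therefore $S=-\operatorname{id}$ on the image.
\item Balancing with $\theta_{W^*}=\sigma(g)=-1$ (Lemma~\ref{lem:dual-projective-representation}) gives $\theta_{Y_1}=(-1)\cdot\sigma(g)\cdot\tau_2(\varepsilon hg^2)=\tau_2(\varepsilon hg^2)$.
\item Lemma~\ref{lem:Gamma4-tau1-support-value} gives $\theta_{Y_1}=-\rho(h)\sigma(g)=-1$, so $\tau_2(\varepsilon hg^2)=-1$.
\end{enumerate}
This route avoids the computation of $\mathcal K_{Y_2}$. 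The price is relying on $Y_3=0$ and the reflection isomorphism.
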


\begin{proof}
The line \(\mathbb Cz_1=(Y_2)_{\varepsilon hg^2}\) is stable under
\(C_G(\varepsilon hg^2)\).  Lemma~\ref{lem:Gamma4-Y2-support-value}
gives
\[
(\varepsilon hg^2)\rhd(z_{1,3}+z_{1,7})
=-(z_{1,2}+z_{1,6}).
\]
The first \(W\)-factors of \(z_{1,i}\) and \(z_{1,i+4}\) have
degree \(\varepsilon^ig\), and
\((\varepsilon hg^2)(\varepsilon^ig)(\varepsilon hg^2)^{-1}
=\varepsilon^{i-1}g\).  Thus the left-hand side is the entire part
of \((\varepsilon hg^2)\rhd z_1\) whose first \(W\)-factor has degree
\(\varepsilon^2g\).  The corresponding part of
\(\tau_2(\varepsilon hg^2)z_1\) is
\(\tau_2(\varepsilon hg^2)(z_{1,2}+z_{1,6})\).  By
Lemma~\ref{lem:Gamma4-Y2-line-stability}, the two summands are nonzero
and have distinct multidegrees, so their sum is nonzero.  Comparing with
\((\varepsilon hg^2)\rhd z_1=\tau_2(\varepsilon hg^2)z_1\) gives
\(\tau_2(\varepsilon hg^2)=-1\).
\end{proof}

\begin{prop}
\label{prop:Gamma4-explicit-B2-stability-under-reflections}
Assume \eqref{eq:Gamma4-classification-B2}.  Then \((V,W)\) admits
all reflections. Furthermore, with support representatives chosen successively as in
Lemma~\ref{lem:Gamma4-reflected-supports}, every reflected tuple
satisfies the parameter conditions. The Cartan
graph of \((V,W)\) is standard of type \(B_2\), with
\[
A^{[P]}=
\begin{pmatrix}2&-1\\-2&2\end{pmatrix}
\]
at every object \([P]\).
\end{prop}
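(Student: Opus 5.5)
The argument is an induction on the length of a reflection sequence, modelled on Propositions~\ref{prop:Gamma2-standard-A2} and~\ref{prop:Gamma2-standard-G2-stability-under-reflections}. It suffices to show two things. First, under \eqref{eq:Gamma4-classification-B2} both reflections are defined and the initial Cartan matrix is $\left(\begin{smallmatrix}2&-1\\-2&2\end{smallmatrix}\right)$. Second, each reflected tuple $R_1(V,W)$ and $R_2(V,W)$ again satisfies the hypotheses of the section, with the support representatives $(\varepsilon_i,h_i,g_i)$ of Lemma~\ref{lem:Gamma4-reflected-supports}, and satisfies all conditions in \eqref{eq:Gamma4-classification-B2}. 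Given these, every object of $\mathcal G(V,W)$ has Cartan matrix $\left(\begin{smallmatrix}2&-1\\-2&2\end{smallmatrix}\right)$.

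The first point is Corollary~\ref{cor:Gamma4-B2-adjoint-criterion}. It gives $X_1$ and $Y_2$ simple and nonzero and $X_2=Y_3=0$, so $m_{12}=1$ and $m_{21}=2$. By Lemma~\ref{lem:Gamma4-reflected-supports}, $R_1(V,W)=(V^*,X_1)$ and $R_2(V,W)=(Y_2,W^*)$ are again $\Gamma_4$-tuples whose first entry is supported on a class of size two and whose second entry is supported on a class of size four. Their inducing projective representations are one-dimensional:
\begin{itemize}
\item for $V^*$ and $W^*$, by Lemma~\ref{lem:dual-projective-representation};
\item for $X_1$, by Lemma~\ref{lem:Gamma4-first-adjoint};
\item for $Y_2$, by Lemma~\ref{lem:Gamma4-Y2-explicit-simplicity}.
\end{itemize}

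Next I check the parameter conditions for each reflected tuple.

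For $R_1$, Lemma~\ref{lem:dual-projective-representation} gives $\rho^*(h^{-1})=\rho(h)=-1$. Lemma~\ref{lem:Gamma4-tau1-support-value} gives $\tau_1(hg)=-\rho(h)\sigma(g)=-1$. Lemma~\ref{lem:Gamma4-adjoints-after-reflection} gives $(\operatorname{ad}V^*)(X_1)\simeq W$, which is simple. By Lemma~\ref{lem:Gamma4-first-adjoint} applied to $R_1$, this yields $\Delta_4^{R_1}=1$.

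For $R_2$, Lemma~\ref{lem:Gamma4-tau2-support-value} gives $\tau_2(\varepsilon hg^2)=-1$, and Lemma~\ref{lem:dual-projective-representation} gives $\sigma^*(g^{-1})=\sigma(g)=-1$. Lemma~\ref{lem:Gamma4-adjoints-after-reflection} gives $(\operatorname{ad}W^*)(Y_2)\simeq Y_1$, which is simple, so $\Delta_4^{R_2}=1$.

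It remains to show $\Theta_{2,j}=1$ for each reflected tuple. I avoid recomputing these scalars by going back through Lemma~\ref{lem:Gamma4-Y2-explicit-simplicity}. In the reflected tuple the relevant object is $(\operatorname{ad}P_2)^2(P_1)$, and the conditions $\Theta_{2,j}=1$ hold if and only if it is simple, given $\Delta_4=1$ and $\sigma(g)=-1$ for that tuple.
\begin{itemize}
\item For $R_2$, Lemma~\ref{lem:Gamma4-adjoints-after-reflection} gives $(\operatorname{ad}W^*)^2(Y_2)\simeq V$, which is simple. Hence $\Theta^{R_2}_{2,j}=1$.
\item For $R_1$, Lemma~\ref{lem:Gamma4-Y2-generators} already shows $(\operatorname{ad}X_1)^2(V^*)\neq0$. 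I must show this object is simple.
\end{itemize}

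The $R_1$ case is where I expect the main obstacle. First, the lemmas used above do not need finite-dimensionality, so everything is legitimate before finiteness is known. Now $X_1\simeq Y_1$ with one-dimensional inducing representation, and $(\operatorname{ad}X_1)(V^*)\simeq(\operatorname{ad}V^*)(X_1)\simeq W$ by \eqref{eq:first-adjoint-braiding-symmetry}. My first attempt would be a symmetry argument: apply $R_1$ again and use $R_1^2(V,W)\simeq(V,W)$ from Lemma~\ref{lem:reflection-basic-properties}. That only controls the first row of the Cartan matrix, however. So I would instead identify $(\operatorname{ad}X_1)^2(V^*)$ with the root vector of degree $2\alpha_2+\alpha_1$ at $R_1$. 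This is the image under $s_1$ of $2\alpha_1+\alpha_2$, via $s_1^{R_1}$, which corresponds to $Y_2$-type data at $(V,W)$ transported by \cite[Lemma~3.8]{reflection3}.

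If that identification is not available, I would fall back on direct computation. Lemma~\ref{lem:Gamma4-Y2-coordinate-calculation} expresses $\Theta_{2,j}$ as explicit cocycle expressions. I would substitute the reflected data $(\varepsilon^{-1},h^{-1},hg)$ together with the values of $\rho^*$ and $\tau_1$, and reduce with Lemma~\ref{lem:local-cocycle-coherence}. The goal is to show $\Theta^{R_1}_{2,j}=\Theta_{2,j}$, or at least that these equal $1$ whenever $\Delta_4=1$ and the original $\Theta_{2,j}=1$. I would verify this cocycle reduction in the same computational framework as the GAP certificate.

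Once both reflected tuples are shown to satisfy \eqref{eq:Gamma4-classification-B2}, induction gives that all reflections exist and that the Cartan graph is standard of type $B_2$.
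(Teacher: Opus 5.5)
Apart from one step, your argument is the paper's proof. The paper uses the same ingredients in the same order:
\begin{itemize}
\item Corollary~\ref{cor:Gamma4-B2-adjoint-criterion} for the initial reflections;
\item Lemmas~\ref{lem:dual-projective-representation}, \ref{lem:Gamma4-tau1-support-value} and~\ref{lem:Gamma4-tau2-support-value} for the values $-1$ at the new support representatives;
\item Lemma~\ref{lem:Gamma4-adjoints-after-reflection} for $\Delta_4^{(i)}=1$;
\item the simplicity of $(\operatorname{ad}W^*)^2(Y_2)\simeq V$, combined with Lemma~\ref{lem:Gamma4-Y2-explicit-simplicity}, for $\Theta^{(2)}_{2,j}=1$;
\item induction on the length of a reflection sequence.
\end{itemize}
One small point for $R_2$: $(\operatorname{ad}W^*)(Y_2)$ is the opposite first adjoint of $(Y_2,W^*)$. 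So you need \eqref{eq:first-adjoint-braiding-symmetry}, or Lemma~\ref{lem:Gamma4-opposite-first-adjoint}, before concluding $\Delta_4^{(2)}=1$.

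The exception is $\Theta^{(1)}_{2,j}=1$ for $R_1(V,W)=(V^*,X_1)$. Here your first route does not work.

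Identifying $(\operatorname{ad}X_1)^2(V^*)$ with a root-vector object needs Lemmas~\ref{lem:root-factors-and-reflected-components} and~\ref{lem:root-string-adjoint-simplicity}. Their hypotheses are that all reflections are defined and the Cartan graph is finite, which is exactly what is being proved, so the argument is circular. Reflection across $R_1$ controls only the first row, $(\operatorname{ad}V^*)^m(X_1)$. It says nothing about $(\operatorname{ad}X_1)^2(V^*)$, which governs the second row at $R_1(V,W)$. The root bookkeeping is also off. For this Cartan matrix $2\alpha_1+\alpha_2$ is not a root, and $s_1(2\alpha_1+\alpha_2)=\alpha_2-\alpha_1$; the relevant relation would be $s_1(\alpha_1+2\alpha_2)=\alpha_1+2\alpha_2$.

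Your fallback is what the paper actually does. Lemma~\ref{lem:Gamma4-R1-reflected-parameter-reduction} computes the seven parameters $\Theta^{(1)}_{2,j}$ for $(V^*,X_1)$ directly. Assuming \eqref{eq:Gamma4-classification-B2}, $\Delta_4^{(1)}=1$ and $\rho_1(h_1)=\sigma_1(g_1)=-1$, a GAP certificate writes each one as a product of integer powers of $\mathscr C_\Phi(a,b,c,d)$, hence equal to $1$.

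Two points matter if you carry this out yourself:
\begin{itemize}
\item The coefficients $\mathsf R$, $\mathsf S$ for the reflected tuple require the whole one-dimensional representation $\sigma_1$ of $C_G(hg)$, not just $\tau_1(hg)$. The paper reads it off from the action of $C_G(hg)$ on $\mathbb C x_1$, where $x_1=v\otimes w-\alpha v_1\otimes w_1$.
\item The needed statement is only that each $\Theta^{(1)}_{2,j}$ equals $1$ under these hypotheses, not an identity $\Theta^{(1)}_{2,j}=\Theta_{2,j}$.
\end{itemize}
With that lemma replacing the root-vector argument, your proof is complete.
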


\begin{proof}
Corollary~\ref{cor:Gamma4-B2-adjoint-criterion} gives
\(m_{12}=1\), \(m_{21}=2\), and the simplicity of \(X_1\) and
\(Y_2\), so both initial reflections are defined.
 {Lemma~\ref{lem:Gamma4-reflected-supports} provides the
generators \((\varepsilon_i,h_i,g_i)\) for
\(P^{(i)}=R_i(V,W)\), \(i=1,2\).}

For \(P^{(1)}=(V^*,X_1)\), write
\(V^*\simeq M(h_1,\rho_1)\) and
\(X_1\simeq M(g_1,\sigma_1)\).
The inducing representations are one-dimensional by
Lemmas~\ref{lem:dual-projective-representation} and
\ref{lem:Gamma4-first-adjoint}.
Lemma~\ref{lem:Gamma4-adjoints-after-reflection} gives
\((\operatorname{ad}V^*)(X_1)\simeq W\). Since \(W\) is simple,
Lemma~\ref{lem:Gamma4-first-adjoint}, applied to \(P^{(1)}\), gives
\(\Delta_4^{(1)}=1\).
Lemma~\ref{lem:dual-projective-representation} gives
\(\rho_1(h_1)=\rho(h)=-1\), while
Lemma~\ref{lem:Gamma4-tau1-support-value} gives
\(\sigma_1(g_1)=\tau_1(hg)=-\rho(h)\sigma(g)=-1\).
The hypotheses of
Lemma~\ref{lem:Gamma4-R1-reflected-parameter-reduction} are therefore
satisfied, and it gives
\(\Theta^{(1)}_{2,j}=1\) for \(1\leq j\leq7\).
Thus \(P^{(1)}\) satisfies \eqref{eq:Gamma4-classification-B2}.

For \(P^{(2)}=(Y_2,W^*)\), both inducing representations are
one-dimensional by
Lemmas~\ref{lem:Gamma4-Y2-explicit-simplicity} and
\ref{lem:dual-projective-representation}.
Lemma~\ref{lem:Gamma4-adjoints-after-reflection} gives
\((\operatorname{ad}W^*)(Y_2)\simeq Y_1\) and
\((\operatorname{ad}W^*)^2(Y_2)\simeq V\).
By \eqref{eq:first-adjoint-braiding-symmetry}, the first of these
isomorphisms also gives \(X_1^{P^{(2)}}\simeq Y_1\).
Hence Lemma~\ref{lem:Gamma4-first-adjoint} gives
\(\Delta_4^{(2)}=1\).
Lemmas~\ref{lem:Gamma4-tau2-support-value} and
\ref{lem:dual-projective-representation} show that these
representations take the value \(-1\) at \(h_2\) and \(g_2\),
respectively.
Since \(Y_2^{P^{(2)}}\simeq V\) is simple,
Lemma~\ref{lem:Gamma4-Y2-explicit-simplicity} gives
\(\Theta^{(2)}_{2,j}=1\) for \(1\leq j\leq7\).
Thus \(P^{(2)}\) also satisfies \eqref{eq:Gamma4-classification-B2}.

 {For \(i=1,2\), the generators
\(\varepsilon_i,h_i,g_i\) satisfy the defining relations of
\(\Gamma_4\) and generate \(G\) by
Lemma~\ref{lem:Gamma4-reflected-supports}.}
The preceding argument therefore applies to every tuple satisfying
\eqref{eq:Gamma4-classification-B2}.
Corollary~\ref{cor:Gamma4-B2-adjoint-criterion} gives both reflections
and the same Cartan matrix for each such tuple. Induction on the
length of a reflection sequence proves that all reflections are
defined, that the parameter conditions are preserved, and that the
Cartan graph is standard of type \(B_2\).
\end{proof}

\subsection{Finite Cartan graphs}
\label{subsec:Gamma4-finite-Cartan-graphs}

\begin{prop}
\label{prop:Gamma4-pair-with-finite-Cartan-matrix}
Assume that \((V,W)\) admits all reflections and that its Cartan
graph is finite.  Let
\(P=(P_1,P_2)\in\mathcal F_2(V,W)\) have an indecomposable Cartan
matrix of finite type.  After interchanging \(P_1\) and \(P_2\) if
necessary, there exist
\(\varepsilon_P,h_P,g_P\in G\) such that
\begin{align*}
h_Pg_P&=\varepsilon_Pg_Ph_P,&
g_P\varepsilon_P&=\varepsilon_P^{-1}g_P,&
h_P\varepsilon_P&=\varepsilon_Ph_P,&
\operatorname{ord}(\varepsilon_P)&=4,
\end{align*}
and
\[
\operatorname{supp}P_1
=\{h_P,\varepsilon_P^{-1}h_P\},
\qquad
\operatorname{supp}P_2
=\{\varepsilon_P^ig_P\mid0\leq i<4\}.
\]
Moreover, \(G=\langle \varepsilon_P, h_P,g_P\rangle\).
\end{prop}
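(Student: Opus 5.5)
The plan is to move from $(V,W)$ to $P$ along a reflection sequence and carry the $\Gamma_4$ support data with us. The key inputs are Lemma~\ref{lem:reflections-preserve-support-group}, which gives $G_P=G$, and a structural analysis of supports. Write $P\in\mathcal F_2(V,W)$ as $R_{i_l}\cdots R_{i_1}(V,W)$ and argue by induction on $l$.

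I would not rely on Lemma~\ref{lem:Gamma4-reflected-supports}, since it needs the local $B_2$ data. Instead I would prove the following support-level statement directly. Suppose a tuple $Q=(Q_1,Q_2)$ of simple objects has supports $\{h_Q,\varepsilon_Q^{-1}h_Q\}$ and $\{\varepsilon_Q^i g_Q\}$, with $(\varepsilon_Q,h_Q,g_Q)$ satisfying the displayed relations, $\operatorname{ord}\varepsilon_Q=4$, and generating $G$. Then each reflection $R_k(Q)$ has the same form after possibly interchanging its entries.

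For $R_1$ (reflecting at the two-element class), set $m=m_{12}^Q$. Then $R_1(Q)=(Q_1^*,(\operatorname{ad}Q_1)^m(Q_2))$, whose second support is a single conjugacy class containing $h_Q^m g_Q$. I would take new generators $(\varepsilon_Q^{-1},h_Q^{-1},h_Q^m g_Q)$, as in Lemma~\ref{lem:Gamma4-reflected-supports} for $m=1$. The relations follow from $g\varepsilon=\varepsilon^{-1}g$, $h\varepsilon=\varepsilon h$ and $hg=\varepsilon gh$, exactly as in that lemma. The class of $h_Q^m g_Q$ is $\{\varepsilon_Q^i h_Q^m g_Q\}$, because conjugation by $h$ multiplies by $\varepsilon$.

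For $R_2$, set $n=m_{21}^Q$. The new first entry has support the class of $g_Q^n h_Q$, up to a power of $\varepsilon$ fixed by the chosen homogeneous component. For $n$ even, $g^n$ is central and this class has size two. For $n$ odd, it has the form $\varepsilon^j g^{\text{odd}}\cdots$, which is a class of size four, so the entries must be interchanged. Parity bookkeeping then produces a triple $(\varepsilon_Q^{\pm1},h',g')$ as in the second row of Lemma~\ref{lem:Gamma4-reflected-supports}, with $h'=\varepsilon^{j}g^nh$ or $g'=\cdots$ and $g_Q^{-1}$. In each case I would check the three relations and recover $g_Q,h_Q$ from the new pair, which gives generation.

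The main obstacle is this case analysis for general $m,n$, in particular determining which conjugacy class the surviving adjoint object lives in. A nonzero simple $(\operatorname{ad}Q_2)^n(Q_1)$ is supported on a single class inside $(\operatorname{supp}Q_2)^n\operatorname{supp}Q_1$. I would use Proposition~\ref{prop:multidegree-insertion}-type degree tracking, or simply the fact that the support is one class contained in $\{\varepsilon^i g^n h\}$, and then identify its size through the commutation criterion as in Lemma~\ref{lem:Gamma4-centralizers}. Throughout, I would use that the $G$-conjugacy classes in $\{\varepsilon^ig^nh\}$ are determined by $\operatorname{ord}\varepsilon=4$.

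Once the invariant is preserved under both reflections, the interchange step only swaps which entry carries the two-element class. The $h$-type entry is always the one with support of size two. So after possibly interchanging $P_1$ and $P_2$, we obtain $(\varepsilon_P,h_P,g_P)$. Since $G_P=G$ and $\varepsilon_P=h_Pg_Ph_P^{-1}g_P^{-1}$, the triple generates $G$.
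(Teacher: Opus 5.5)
Your $R_1$ step is correct for every $m$, and your $R_2$ step is correct for even $n=m_{21}^Q$. The induction breaks for odd $n$, however. Then $(\operatorname{ad}Q_2)^n(Q_1)$ is supported on $\{\varepsilon_Q^jg_Q^nh_Q\mid 0\le j<4\}$. This class has four elements, because $(\varepsilon_Q g_Q)^2=g_Q^2$ is central, so conjugation by $h_Q$ multiplies $g_Q^nh_Q$ by $\varepsilon_Q$. Meanwhile $\operatorname{supp}Q_2^*=\{\varepsilon_Q^jg_Q^{-1}\}$ also has four elements.

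So $R_2(Q)$ has two four-element supports, and interchanging the entries does not restore the form. Your candidate $h'$ built from $g_Q^{-1}$ lies in a four-element class, not a two-element one. Support bookkeeping cannot exclude this case. Already $n=1$ at the initial pair would give $(Y_1,W^*)$, whose two classes $\{\varepsilon^ihg\}$ and $\{\varepsilon^ig^{-1}\}$ still generate $G$. Ruling out odd $n$ needs genuine adjoint information. An example is the nonvanishing $z_1\neq0$ in Lemma~\ref{lem:Gamma4-Y2-generators}, but that is proved only when $\dim\sigma=1$ and $\Delta_4=1$, which are not available under the present hypotheses.

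A telling symptom is that you never use the hypothesis that $A^{[P]}$ is indecomposable of finite type. Your argument would therefore show that every object of $\mathcal F_2(V,W)$ has $\Gamma_4$-form. That is more than the statement claims, and more than support tracking can deliver.

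The paper uses that hypothesis directly at $P$ and needs no induction. Order $P$ so that $-a_{12}^{[P]}=1$ and $1\le -a_{21}^{[P]}\le 3$. This gives $(\operatorname{ad}P_1)^2(P_2)=0$, $(\operatorname{ad}P_2)^4(P_1)=0$, and braided-indecomposability. Together with $G_P=G$ from Lemma~\ref{lem:reflections-preserve-support-group}, Theorem~\ref{thm:support-classification} applies to $P$.

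Any union of conjugacy classes generating $G$ has inner automorphism group $G/Z(G)$. The initial support $Z_4^{4,2}$ shows $|G/Z(G)|=8$. Among the five quandles, only $Z_4^{4,2}$ has an inner automorphism group of order $8$, so that is the support quandle of $P$. The resulting quandle isomorphism induces $\Gamma_4\twoheadrightarrow G$, and the images of the standard generators give $(\varepsilon_P,h_P,g_P)$. Finally, $\operatorname{ord}(\varepsilon_P)=4$ because the four elements of $\operatorname{supp}P_2$ are distinct.
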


\begin{proof}
The relation \(hg=\varepsilon gh\) gives
\(\varepsilon=hgh^{-1}g^{-1}\), so \(G=\langle h,g\rangle\).
Since \(h\) and \(g\) belong to the initial supports,
Lemma~\ref{lem:reflections-preserve-support-group} gives
\(G=\langle\operatorname{supp}P_1\cup
\operatorname{supp}P_2\rangle\).

An indecomposable Cartan matrix of rank two and finite type is of
type \(A_2\), \(B_2\), or \(G_2\), possibly with the two indices
interchanged.  Thus, in one of the two orderings,
\(-a_{12}^{[P]}=1\) and
\(1\leq-a_{21}^{[P]}\leq3\).  In this ordering,
\[
(\operatorname{ad}P_1)^2(P_2)=0,
\qquad
(\operatorname{ad}P_2)^4(P_1)=0.
\]
Moreover, \((\operatorname{ad}P_1)(P_2)\neq0\), so \(P\) is
braided-indecomposable.  Therefore
Theorem~\ref{thm:support-classification} applies.

If a union of conjugacy classes generates \(G\), then its inner
automorphism group is isomorphic to \(G/Z(G)\).  The initial support
is \(Z_4^{4,2}\) and generates \(G\). Hence \(G/Z(G)\) is isomorphic
to \(\operatorname{Inn}(Z_4^{4,2})\), which has order eight.
The support of \(P_1\oplus P_2\) also generates \(G\), so its inner
automorphism group has order eight as well.

The permutation descriptions in
Section~\ref{sec:support-quandles} show that the inner automorphism
groups of
\(Z_T^{4,1}\), \(Z_2^{2,2}\), \(Z_3^{3,1}\),
\(Z_3^{3,2}\), and \(Z_4^{4,2}\) have orders
\(12,4,6,6,8\), respectively.  Hence
Theorem~\ref{thm:support-classification} implies that
\(\operatorname{supp}(P_1\oplus P_2)\) is isomorphic to
\(Z_4^{4,2}\).

The two supports are the two  orbits of \(Z_4^{4,2}\).
After interchanging \(P_1\) and \(P_2\) if necessary, assume that
\(\operatorname{supp}P_1\) is the two-element orbit and
\(\operatorname{supp}P_2\) is the four-element orbit.  The
corresponding quandle isomorphism induces an epimorphism
\(\Gamma_4\twoheadrightarrow G\).  Let
\(\varepsilon_P,h_P,g_P\) be the images of the standard generators.
The defining relations and conjugacy classes of \(\Gamma_4\) give
the relations and supports in the statement.

The four elements of \(\operatorname{supp}P_2\) are distinct, while
\(\varepsilon_P^4=1\).  Therefore
\(\operatorname{ord}(\varepsilon_P)=4\).
Finally,
\(\varepsilon_P=h_Pg_Ph_P^{-1}g_P^{-1}\), and the epimorphism above
shows that \(G=\langle\varepsilon_P,h_P,g_P\rangle\).
\end{proof}

\begin{prop}
\label{prop:Gamma4-necessity}
Assume that \(\mathcal B(V\oplus W)\) is finite-dimensional.  Then
the Cartan graph of \((V,W)\) is standard of type \(B_2\).  Moreover,
\[ \dim\rho=\dim\sigma=1, \quad
\rho(h)=\sigma(g)=-1,
\quad
\Delta_4=1,
\quad
\Theta_{2,j}=1\quad(1\leq j\leq7).
\]
With the original ordering, the Cartan matrix at every object \([Q]\)
is
\[
A^{[Q]}=
\begin{pmatrix}2&-1\\-2&2\end{pmatrix}.
\]
\end{prop}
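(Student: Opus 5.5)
The plan is to locate an object of finite type in the Cartan graph, show that the whole graph is standard of type \(B_2\), and then return to \((V,W)\) itself. Assume \(\mathcal B(V\oplus W)\) is finite-dimensional. Theorem~\ref{thm:Nichols-finiteness-criterion} shows that \((V,W)\) admits all reflections and that \(\mathcal G(V,W)\) is finite. It is connected by Proposition~\ref{prop:associated-Cartan-graph}. Since \((V,W)\) is braided-indecomposable, the discussion before Lemma~\ref{lem:finite-type-object} shows that \(\mathcal G(V,W)\) is indecomposable. Lemma~\ref{lem:finite-type-object} then gives \(P\in\mathcal F_2(V,W)\) whose Cartan matrix is indecomposable and of finite type. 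By Proposition~\ref{prop:Gamma4-pair-with-finite-Cartan-matrix}, after possibly interchanging its entries, \(P\) satisfies all the standing \(\Gamma_4\) assumptions of this section, with generators \(\varepsilon_P,h_P,g_P\). In that ordering \(P_1\) has the two-element support and \(P_2\) the four-element support.

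Next I apply the local results to \(P\). Lemma~\ref{lem:reflection-dimension-invariance}, used along the reflection sequence, shows that \(\mathcal B(P_1\oplus P_2)\) is finite-dimensional with the same dimension. Interchanging the entries does not change \(\mathcal B(P_1\oplus P_2)\). Proposition~\ref{prop:Gamma4-local-B2-Cartan-matrix}, applied to \(P\) in the ordering above, gives \(\dim\rho_P=\dim\sigma_P=1\), the equations \(\rho_P(h_P)=\sigma_P(g_P)=-1\), \(\Delta_4^P=1\) and \(\Theta^P_{2,j}=1\), and the Cartan matrix \(\left(\begin{smallmatrix}2&-1\\-2&2\end{smallmatrix}\right)\). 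Proposition~\ref{prop:Gamma4-explicit-B2-stability-under-reflections} then shows that every tuple in \(\mathcal F_2(P)\) satisfies \eqref{eq:Gamma4-classification-B2}. It also shows that the graph of \(P\) is standard with this matrix at every object, in the ordering where the first entry has two-element support. Lemma~\ref{lem:Gamma4-reflected-supports} guarantees that this ordering is preserved along every reflection.

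The step I expect to need the most care is passing back to \((V,W)\) without losing track of the ordering. Since \(R_i^2(Q)\simeq Q\) by Lemma~\ref{lem:reflection-basic-properties}, undoing the reflection sequence shows that \((V,W)\), or its interchange \((W,V)\) if \(P\) was interchanged, is isomorphic to a tuple in \(\mathcal F_2(P)\). Every tuple in \(\mathcal F_2(P)\) has first entry with two-element support, whereas \(W\) has four-element support. Hence it is \((V,W)\), in its original order, that occurs, and its Cartan matrix is \(\left(\begin{smallmatrix}2&-1\\-2&2\end{smallmatrix}\right)\). Proposition~\ref{prop:Gamma4-local-B2-Cartan-matrix}, applied directly to \((V,W)\), now yields the asserted equations \(\dim\rho=\dim\sigma=1\), \(\rho(h)=\sigma(g)=-1\), \(\Delta_4=1\) and \(\Theta_{2,j}=1\) for the original parameters. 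This avoids having to transport parameters back along reflections.

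Finally, Proposition~\ref{prop:Gamma4-explicit-B2-stability-under-reflections}, applied to \((V,W)\), shows that the Cartan graph is standard of type \(B_2\) with \(A^{[Q]}=\left(\begin{smallmatrix}2&-1\\-2&2\end{smallmatrix}\right)\) at every object \([Q]\). The one point that needs checking here is that \(\mathcal X(V,W)=\mathcal X(P)\). This holds because the two tuples are connected by reflections, each reflection is invertible, and the graph is connected.
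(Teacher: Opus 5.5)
Your proposal is correct and follows essentially the same route as the paper. You find a finite-type object, normalize it via Proposition~\ref{prop:Gamma4-pair-with-finite-Cartan-matrix}, apply Propositions~\ref{prop:Gamma4-local-B2-Cartan-matrix} and~\ref{prop:Gamma4-explicit-B2-stability-under-reflections} there, and finish by applying Proposition~\ref{prop:Gamma4-local-B2-Cartan-matrix} directly to \((V,W)\). The one difference is that you fix the ordering explicitly by comparing support sizes, as the paper does in the \(T\) case. In the \(\Gamma_4\) case the paper instead notes that the initial Cartan matrix is of finite type up to interchange, and lets Proposition~\ref{prop:Gamma4-local-B2-Cartan-matrix} produce the matrix in the original ordering; both arguments work.
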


\begin{proof}
By Theorem~\ref{thm:Nichols-finiteness-criterion}, \((V,W)\)
admits all reflections and has a finite Cartan graph.
Proposition~\ref{prop:pair-with-finite-Cartan-matrix} provides
\(P\in\mathcal F_2(V,W)\) whose Cartan matrix is indecomposable
and of finite type. By
Proposition~\ref{prop:Gamma4-pair-with-finite-Cartan-matrix},
after interchanging its components if necessary, the resulting
tuple \(\widetilde P\) satisfies the standing assumptions of this
section. Its Nichols algebra is finite-dimensional by
Lemma~\ref{lem:reflection-dimension-invariance}.

Corollary~\ref{cor:Gamma4-projective-representations-one-dimensional}
shows that the inducing representations of both \((V,W)\) and
\(\widetilde P\) are one-dimensional.
Proposition~\ref{prop:Gamma4-local-B2-Cartan-matrix}, applied to
\(\widetilde P\), gives
\eqref{eq:Gamma4-classification-B2}. Hence
Proposition~\ref{prop:Gamma4-explicit-B2-stability-under-reflections}
shows that \(\mathcal G(V,W)\) is standard of type \(B_2\),
up to interchanging the two indices.

In particular, the initial Cartan matrix is of finite type.
Applying Proposition~\ref{prop:Gamma4-local-B2-Cartan-matrix}
to \((V,W)\) gives the asserted parameter equalities and the
displayed matrix in the original ordering. Since the Cartan
graph is standard, this matrix occurs at every object.
\end{proof}

\subsection{Nichols algebras  {corresponding to positive roots} of type
\texorpdfstring{$B_2$}{B2}}
\label{subsec:Gamma4-sufficiency}

The positive real roots correspond to \(V,Y_1,Y_2\), and \(W\).
We shall prove that the Nichols algebra of each of these simple objects
is finite-dimensional.
The supports of \(V\) and \(Y_2\) are contained in abelian subgroups
of \(G\).

\begin{lemma}
\label{lem:Gamma4-diagonal-simple-factors}
Assume that \(\rho\) and \(\sigma\) are one-dimensional and that
\[
\rho(h)=\sigma(g)=-1,\qquad
\Delta_4=1,\qquad
\Theta_{2,j}=1\quad(1\leq j\leq7).
\]
Write \(Y_2\simeq M(\varepsilon hg^2,\tau_2)\).  Then
\(\mathcal B(V)\) and \(\mathcal B(Y_2)\) are finite-dimensional.
More precisely, set
\[
\kappa_V=
\frac{\Phi_h(h,g)}{\Phi_h(g,\varepsilon^{-1}h)}
\rho(\varepsilon^{-1}h)^2,
\qquad
\kappa_{Y_2}=
\frac{\Phi_{\varepsilon hg^2}(\varepsilon hg^2,g^{-1})}
     {\Phi_{\varepsilon hg^2}(g^{-1},\varepsilon^2hg^2)}
\tau_2(\varepsilon^2hg^2)^2.
\]
For \(R\in\{V,Y_2\}\), let
\(N_R=\operatorname{ord}(\kappa_R)\). One has
\begin{equation}
\label{eq:Gamma4-diagonal-root-dimension}
\dim\mathcal B(R)=4N_R.
\end{equation}

\end{lemma}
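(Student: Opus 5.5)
The plan is to copy the pattern of Lemma~\ref{lem:Gamma2-A2-simple-factors-finite} and Lemma~\ref{lem:Gamma2-diagonal-braiding}. Each \(R\in\{V,Y_2\}\) is two-dimensional and has two one-dimensional homogeneous components. Their supports are \(\{h,\varepsilon^{-1}h\}\) and \(\{\varepsilon hg^2,\varepsilon^2hg^2\}\), and each of these lies in an abelian subgroup: \(\langle\varepsilon,h\rangle\) for \(V\), and \(\langle\varepsilon,h,g^2\rangle\) for \(Y_2\), using that \(g^2\) is central. By Lemma~\ref{lem:Gamma4-support-group-restriction}, \(\mathcal B(R)\) equals the Nichols algebra of its restriction to that abelian subgroup. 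That restriction is of diagonal type, so by \cite[Proposition~2.11]{huang2024classification} it has the dimension and generalized Dynkin diagram of an ordinary diagonal braiding. It then suffices to compute the braiding matrix \((q_{ij}^R)\) in the basis \(u_0\in R_x\), \(u_1=p\rhd u_0\in R_{x'}\). For \(V\) we take \(x=h\) and \(p=g\); for \(Y_2\) we take \(x=\varepsilon hg^2\) and \(p=g^{-1}\), so that \(p\) conjugates \(x\) to the other support element.

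First, the diagonal entries. We have \(q_{00}^R=\tau(x)\), which is \(\rho(h)=-1\) for \(V\) and \(\tau_2(\varepsilon hg^2)=-1\) for \(Y_2\) by Lemma~\ref{lem:Gamma4-tau2-support-value}. Applying \(p\) to \(c_{R,R}(u_0\otimes u_0)\) and using that the braiding is a morphism gives \(q_{11}^R=q_{00}^R=-1\), exactly as in Lemma~\ref{lem:Gamma2-diagonal-braiding}.

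Second, the edge label. The coefficient \(q_{10}^R\) comes from the action of \(x'\) on \(u_0\); for \(V\), \(x'=\varepsilon^{-1}h\in C_G(h)\), so \(q_{10}^V=\rho(\varepsilon^{-1}h)\). The coefficient \(q_{01}^R\) comes from \(x\rhd u_1=x\rhd(p\rhd u_0)\). Using \(hg=g(\varepsilon^{-1}h)\) and \eqref{eq:YD-projective-action}, this is \(\frac{\Phi_h(h,g)}{\Phi_h(g,\varepsilon^{-1}h)}\rho(\varepsilon^{-1}h)\,u_1\), as in the proof of Lemma~\ref{lem:Gamma4-rho-one-dimensional}. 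Hence \(q_{01}^Vq_{10}^V=\kappa_V\). The same computation for \(Y_2\), using \(x\,g^{-1}=g^{-1}(\varepsilon^2hg^2)\), gives \(q_{01}q_{10}=\kappa_{Y_2}\). The step that needs the most care is matching the conventions so that the cocycle factors come out exactly as in the definitions of \(\kappa_V\) and \(\kappa_{Y_2}\), in particular the ordering inside \(\Phi_x\) and which element acts on which vector. I would settle this by writing out \(x\rhd(p\rhd u_0)\) via \eqref{eq:YD-projective-action} once, and then absorbing the normalization \(\tau(x')\) through \(x'=p x p^{-1}\in C_G(x)\).

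Third, finiteness and dimension. As in Lemma~\ref{lem:Gamma2-A2-simple-factors-finite}, pass to a cohomologous cocycle with values in \(\mu_{|G|}\). This shows that every value of a one-dimensional projective representation is a root of unity, so \(\kappa_R\) is a root of unity and \(N_R\) is finite. The diagonal braiding has vertices \(-1,-1\) and edge \(\kappa_R\), and we split into two cases.
\begin{itemize}
\item If \(\kappa_R=1\), the diagram is \(A_1\times A_1\) and \(\dim\mathcal B(R)=4=4N_R\).
\item If \(\kappa_R\ne1\), the diagram is of Cartan type \(A_2\) with positive-root labels \(-1,-1,\kappa_R\), so \(\dim\mathcal B(R)=2\cdot2\cdot N_R\).
\end{itemize}
This gives \eqref{eq:Gamma4-diagonal-root-dimension}.
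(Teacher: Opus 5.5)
Your proposal is correct and follows the paper's proof essentially step by step: you restrict to an abelian subgroup via Lemma~\ref{lem:Gamma4-support-group-restriction}; you compute the braiding matrices in the bases \((v,g\rhd v)\) and \((z_1,g^{-1}\rhd z_1)\) from \(hg=g(\varepsilon^{-1}h)\) and \((\varepsilon hg^2)g^{-1}=g^{-1}(\varepsilon^2hg^2)\); you take \(\tau_2(\varepsilon hg^2)=-1\) from Lemma~\ref{lem:Gamma4-tau2-support-value}; and you finish with the root-of-unity argument and the \(A_1\times A_1\)/\(A_2\) dimension count from Lemma~\ref{lem:Gamma2-A2-simple-factors-finite}. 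There are two harmless cosmetic differences: the paper gets \(q_{11}=q_{00}\) because the canonical twist is scalar on the simple object, whereas you transport \(c_{R,R}(u_0\otimes u_0)\) by \(p\) as in Lemma~\ref{lem:Gamma2-diagonal-braiding}; and the paper restricts \(Y_2\) to \(\langle\varepsilon,\varepsilon hg^2\rangle\) where you use \(\langle\varepsilon,h,g^2\rangle\).
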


\begin{proof}
Lemma~\ref{lem:Gamma4-Y2-explicit-simplicity} shows that \(Y_2\) is
simple, and Lemma~\ref{lem:Gamma4-tau2-support-value} gives
\(\tau_2(\varepsilon hg^2)=-1\).
The supports of \(V\) and \(Y_2\) are contained in the abelian
subgroups \(\langle\varepsilon,h\rangle\) and
\(\langle\varepsilon,\varepsilon hg^2\rangle\), respectively.  The
bases \((v,v_1)\) and \((z_1,g^{-1}\rhd z_1)\) have degrees
\(h,\varepsilon^{-1}h\) and
\(\varepsilon hg^2,\varepsilon^2hg^2\), respectively.  Using
\(hg=g(\varepsilon^{-1}h)\),
\((\varepsilon hg^2)g^{-1}=g^{-1}(\varepsilon^2hg^2)\), and equation \eqref{eq:YD-projective-action}, their braiding matrices are
\[
\begin{pmatrix}
-1&
\dfrac{\Phi_h(h,g)}{\Phi_h(g,\varepsilon^{-1}h)}
\rho(\varepsilon^{-1}h)\\[3mm]
\rho(\varepsilon^{-1}h)&-1
\end{pmatrix},
\qquad
\begin{pmatrix}
-1&
\dfrac{\Phi_{\varepsilon hg^2}(\varepsilon hg^2,g^{-1})}
      {\Phi_{\varepsilon hg^2}(g^{-1},\varepsilon^2hg^2)}
\tau_2(\varepsilon^2hg^2)\\[3mm]
\tau_2(\varepsilon^2hg^2)&-1
\end{pmatrix}.
\]
The first diagonal entries are
\(\rho(h)=\tau_2(\varepsilon hg^2)=-1\).
For \(R\in\{V,Y_2\}\) and a homogeneous vector \(u\in R\),
the canonical twist satisfies
\(c_{R,R}(u\otimes u)=\theta_R(u)\otimes u\).
Since \(R\) is simple, \(\theta_R\) is scalar by Schur's lemma.
Thus the two diagonal entries of each braiding matrix coincide,
and both are \(-1\).  The two edge labels are therefore
\(\kappa_V\) and \(\kappa_{Y_2}\).

By Lemma~\ref{lem:Gamma4-support-group-restriction}, restriction
to the two abelian subgroups above preserves the Nichols algebras.
For \(R\in\{V,Y_2\}\), the restricted object is of diagonal type,
with diagonal coefficients \(-1,-1\) and edge label \(\kappa_R\).
Applying the argument in the proof of
Lemma~\ref{lem:Gamma2-A2-simple-factors-finite} shows that
\(\kappa_R\) has finite order and
\(\dim\mathcal B(R)=4N_R\).
\end{proof}

It remains to consider \(W\) and \(Y_1\), whose supports have four
elements.  For \(R\in\{W,Y_1\}\), let
\[
(x_R,\tau_R)=
\begin{cases}
(g,\sigma),&R=W,\\
(hg,\tau_1),&R=Y_1,
\end{cases}
\qquad
K_R=\langle\varepsilon,x_R\rangle,
\qquad
b_R=\varepsilon x_R.
\]

\begin{lemma}
\label{lem:Gamma4-local-Gamma2-data}
Assume that \(\rho\) and \(\sigma\) are one-dimensional, that
\(\Delta_4=1\), and that
\(\rho(h)=\sigma(g)=-1\).  For \(R\in\{W,Y_1\}\), restriction to
\(K_R\) gives
\(R|_{K_R}=R^{(0)}\oplus R^{(1)}\), where
\(R^{(0)}=R_{x_R}\oplus R_{\varepsilon^2x_R}\) and
\(R^{(1)}=R_{b_R}\oplus R_{\varepsilon^2b_R}\).
The two summands are simple, and the tuple
\((R^{(0)},R^{(1)})\) is braided-indecomposable.  If \(\chi_R\) and
\(\psi_R\) denote the one-dimensional projective representations
inducing \(R^{(0)}\) and \(R^{(1)}\), respectively, then
\[
\chi_R(x_R)=\psi_R(b_R)=-1.
\]
\end{lemma}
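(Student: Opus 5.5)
The argument splits into three steps: the group-theoretic structure of \(K_R\) and its conjugacy classes, the simplicity and braided-indecomposability of the two summands, and the values of \(\chi_R\) and \(\psi_R\) at \(x_R\) and \(b_R\).

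\emph{Structure of \(K_R\).} For \(R=W\) this is the same as in the proof of Lemma~\ref{lem:Gamma4-sigma-one-dimensional}: \(K=\langle g,\varepsilon g\rangle\) is a non-abelian quotient of \(\Gamma_2\), with \(\varepsilon^2\) central, and
\[
g^K=\{g,\varepsilon^2g\},\qquad (\varepsilon g)^K=\{\varepsilon g,\varepsilon^{-1}g\}.
\]
For \(R=Y_1\), I use the generators \((\varepsilon_1,h_1,g_1)\) of Lemma~\ref{lem:Gamma4-reflected-supports} with \(g_1=hg\). This gives \(hg\,\varepsilon=\varepsilon^{-1}hg\), so \(x_R=hg\) plays the role of \(g\) and the same structure holds, with \(\operatorname{supp}Y_1=\{\varepsilon^ihg\}\) by Lemma~\ref{lem:Gamma4-opposite-first-adjoint}.

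\emph{Simplicity and indecomposability.} By Lemma~\ref{lem:Gamma4-support-group-restriction}, restriction to \(K_R\) is compatible with the braided structure. The two displayed subspaces are \(K_R\)-stable because they are unions of homogeneous components over \(K_R\)-conjugacy classes, so \(R|_{K_R}=R^{(0)}\oplus R^{(1)}\). Each homogeneous component of \(R\) is one-dimensional, since \(\sigma\) is one-dimensional and \(\tau_1\) is one-dimensional when \(\Delta_4=1\). Hence each summand is generated by one homogeneous line, which is stable under the centralizer in \(K_R\); it is therefore of the form \(M(x,\chi)\) with \(\chi\) one-dimensional, and so simple.

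For braided-indecomposability I argue exactly as in the proof of Lemma~\ref{lem:Gamma4-sigma-one-dimensional}. On \(R_{x_R}\otimes R_{b_R}\) the double braiding lands in the component of bidegree
\[
\bigl(x_Rb_Rx_R\,b_R^{-1}x_R^{-1},\,x_Rb_Rx_R^{-1}\bigr)=(\varepsilon^2x_R,\varepsilon^{-1}x_R),
\]
which is distinct from \((x_R,b_R)\) because \(\varepsilon^2\ne1\). A nonzero map into a different component cannot be the identity, so \(c^2\ne\operatorname{id}\).

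\emph{Values at \(x_R\) and \(b_R\).} Since \(x_R\in C(x_R)\), we have \(\chi_R(x_R)=\tau_R(x_R)\). For \(R=W\) this equals \(\sigma(g)=-1\). For \(R=Y_1\), Lemma~\ref{lem:Gamma4-tau1-support-value} gives
\[
\tau_1(hg)=-\rho(h)\sigma(g)=-1.
\]
For \(b_R\) I would not compute cocycle factors directly. Instead I use the twist: the ribbon twist \(\theta_R\) acts on a homogeneous vector by its degree, and \(\theta_R\) is scalar on the simple object \(R\) by Schur's lemma, as in the proof of Lemma~\ref{lem:Gamma4-diagonal-simple-factors}. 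So \(b_R\) acts on \(R_{b_R}\) by the same scalar as \(x_R\) acts on \(R_{x_R}\), namely \(-1\). The induced action on \(R^{(1)}\) is the restricted action, hence \(\psi_R(b_R)=-1\).

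The main point needing care is that the twist, computed in the category over \(G\), agrees with the value of the inducing representation of the restricted object over \(K_R\). This holds because, by Lemma~\ref{lem:Gamma4-support-group-restriction}, restriction does not change the action of elements of \(K_R\).
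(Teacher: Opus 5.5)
Your proposal is correct and follows essentially the same route as the paper. The $K_R$-conjugacy classes $\{x_R,\varepsilon^2x_R\}$ and $\{b_R,\varepsilon^2b_R\}$ split the support, one-dimensional homogeneous components give simplicity, the double braiding sends $R_{x_R}\otimes R_{b_R}$ to a different bihomogeneous component, and the scalar canonical twist carries $\tau_R(x_R)=-1$ over to $b_R$. One citation should be dropped: the relation $hg\,\varepsilon=\varepsilon^{-1}hg$ follows directly from the defining relations, so you do not need Lemma~\ref{lem:Gamma4-reflected-supports}, whose hypotheses (for instance, simplicity of $Y_2$) are not assumed here.
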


\begin{proof}
For both choices of \(R\), one has
\(x_R\varepsilon=\varepsilon^{-1}x_R\).  Since
\(b_R=\varepsilon x_R\), it follows that
\[
K_R=\langle x_R,b_R\rangle,\qquad
b_Rx_R=\varepsilon^2x_Rb_R,\qquad
b_R^2=x_R^2.
\]
Thus \(K_R\) is a quotient of \(\Gamma_2\).  Since \(K_R\leq G\), it
is finite. Since \(\varepsilon^2\neq1\), it is non-abelian.
The same relations give
\[
b_Rx_Rb_R^{-1}=\varepsilon^2x_R,
\qquad
x_Rb_Rx_R^{-1}=\varepsilon^2b_R.
\]
Hence the two \(K_R\)-conjugacy classes in
\(\operatorname{supp}R\) are
\(\{x_R,\varepsilon^2x_R\}\) and \(\{b_R,\varepsilon^2b_R\}\).

These two classes partition \(\operatorname{supp}R\), and all
homogeneous components are one-dimensional. For \(R=Y_1\), this
follows from Lemma~\ref{lem:Gamma4-opposite-first-adjoint}.
Thus \(R^{(0)}\) and \(R^{(1)}\) are simple \(K_R\)-subobjects
by Proposition~\ref{prop:simple-objects}.
The double braiding maps
\(R_{x_R}\otimes R_{b_R}\) onto
\(R_{\varepsilon^2x_R}\otimes R_{\varepsilon^2b_R}\),
a distinct bihomogeneous component. Hence
\((R^{(0)},R^{(1)})\) is braided-indecomposable.

The canonical twist restricts to the action of \(a\) on each
homogeneous component \(R_a\). Since it is scalar on the simple
object \(R\), we obtain
\(\chi_R(x_R)=\psi_R(b_R)=\tau_R(x_R)\).
This common value is \(-1\), by \(\sigma(g)=-1\) for \(R=W\)
and Lemma~\ref{lem:Gamma4-tau1-support-value} for \(R=Y_1\).
\end{proof}

For the ordered tuple \((R^{(0)},R^{(1)})\), let
\(\Delta_{01}^{R}\) be the scalar from
\eqref{eq:Delta-definition}. Its explicit expression is recorded in
\eqref{eq:Gamma4-local-Delta-01}.

\begin{lemma}
\label{lem:Gamma4-global-to-local-Deltas}
Assume the conditions in
\eqref{eq:Gamma4-classification-B2}. Then
\[
\Delta_{01}^{W}=\Delta_{01}^{Y_1}=1.
\]
\end{lemma}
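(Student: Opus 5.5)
The cleanest route avoids computing \(\Delta_{01}^R\) directly from cocycle values. Instead I would read it off from simplicity of a first adjoint object inside \(R\) itself, via Lemma~\ref{lem:Gamma2-X1-simple}. That lemma, applied to the \(\Gamma_2\)-tuple \((R^{(0)},R^{(1)})\) over \(K_R\) with representatives \((x_R,b_R)\), says that \(\Delta_{01}^R=1\) if and only if \((\operatorname{ad}R^{(0)})(R^{(1)})\) is simple. So it suffices to show that this adjoint object is simple (or zero-free and simple) in \({}_{K_R}^{K_R}\mathcal{YD}^{\Phi|_{K_R}}\).

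Concretely, \((\operatorname{ad}R^{(0)})(R^{(1)})\) is the image of \(\operatorname{id}-c^2\) on \(R^{(0)}\otimes R^{(1)}\). Its \(x_Rb_R\)-component is the image of \(\operatorname{id}-T\), where \(T\) is the double braiding restricted to the two-dimensional space spanned by \(R_{x_R}\otimes R_{b_R}\) and \(R_{\varepsilon^2x_R}\otimes R_{\varepsilon^2b_R}\), and \(T\) interchanges these two lines. As in the proof of Lemma~\ref{lem:Gamma4-first-adjoint}, \(T^2\) is the scalar \(\Delta_{01}^R\) on this component. So simplicity is equivalent to \(T\) having eigenvalue \(1\) with multiplicity one, and it suffices to exhibit a nonzero vector of this component killed by \(\operatorname{id}-T\), i.e.\ fixed by the double braiding.

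\emph{Step 1: computing \(T\) through the twist.} The double braiding of the \(G\)-object \(R\otimes R\) agrees with that of its \(K_R\)-restriction by Lemma~\ref{lem:Gamma4-support-group-restriction}. By the balancing identity used in Lemma~\ref{lem:Gamma4-tau1-support-value}, \(c_{R,R}^2=\theta_{R\otimes R}(\theta_R^{-1}\otimes\theta_R^{-1})\), and \(\theta_R=\tau_R(x_R)=-1\) by Lemma~\ref{lem:Gamma4-local-Gamma2-data}. Hence \(c^2_{R,R}\) acts on \((R\otimes R)_{x_Rb_R}\) as the action of the degree \(x_Rb_R\).

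\emph{Step 2: the key input, the vanishing of \((\operatorname{ad}R)^2\)-type data.} For \(R=W\) I would use \(\sigma(g)=-1\), so that \(\mathcal B(W)\) has the relation \(w\otimes w+\cdots\), together with the equality \(z_0=0\) from Lemma~\ref{lem:Gamma4-z0-criterion}. For \(R=Y_1\) I would use the reflection \(R_1(V,W)=(V^*,X_1)\). That tuple satisfies \eqref{eq:Gamma4-classification-B2} by Proposition~\ref{prop:Gamma4-explicit-B2-stability-under-reflections}, and \(X_1\simeq Y_1\) plays the role of \(W\) there. This reduces the \(Y_1\) case to the \(W\) case, so only \(R=W\) needs a direct argument. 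For \(R=W\) the cleanest input may be a component of \(z_0\): the vanishing \(z_0=0\) contains a term in which the \(W\otimes W\)-part is \((\operatorname{id}-c^2)\) applied to \(w\otimes w_1\)-type tensors. Extracting the coefficient of the multidegree \((g,\varepsilon g,\cdot)\) via Lemma~\ref{lem:Gamma4-z0-factorization} should give exactly the relation \(\Delta_{01}^W=1\) after using \(\Delta_4=1\) and \(\sigma(g)=-1\).

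The main obstacle is this last extraction. It requires tracking the associator factors in \eqref{eq:Gamma4-general-Y2-formula} and identifying the resulting scalar with \eqref{eq:Gamma4-local-Delta-01}, which is a cocycle reduction of the kind done in the appendix. If that fails, the fallback is a direct computation. Write \(\Delta_{01}^W\) as \(\sigma\)-values times \(\Phi\)-values via \eqref{eq:Delta-definition}, express \(\sigma(\varepsilon^2)\) and \(\sigma(g^2)\) through \(\Delta_4\), \(\Theta_{2,j}\), and \(\sigma(g)\), and verify the identity with the GAP certificate approach used for Lemma~\ref{lem:Gamma4-Y2-coordinate-calculation}.
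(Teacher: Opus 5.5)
Your reformulation through Lemma~\ref{lem:Gamma2-X1-simple} is sound, and so is the twist computation in Step~1. The reduction of $R=Y_1$ to $R=W$ via $R_1(V,W)=(V^*,X_1)$ and $X_1\simeq Y_1$ is also sound; it is exactly what the paper does. The gap is Step~2 for $R=W$, and that step carries the entire content of the lemma.

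Step~1 only restates the goal: saying that the double braiding is the action of $x_Rb_R$ and has eigenvalue $1$ is again equivalent to $\Delta_{01}^R=1$. In fact one can compute $\Delta_{01}^W$ directly from three facts:
\begin{itemize}
\item $\chi_W(g)=\psi_W(\varepsilon g)=\sigma(g)$, by the twist argument in Lemma~\ref{lem:Gamma4-local-Gamma2-data};
\item $\chi_W(\varepsilon^2)^2=\Phi_g(\varepsilon^2,\varepsilon^2)$;
\item $\psi_W(\varepsilon^2)=\chi_W(\varepsilon^2)\Phi_g(\varepsilon^2,h)/\Phi_g(h,\varepsilon^2)$, because $W_{\varepsilon g}=\mathbb C(h\rhd w)$.
\end{itemize}
Substituting these into \eqref{eq:Gamma4-local-Delta-01} gives $\Delta_{01}^W=\sigma(g)^4c(\Phi)$, where $c(\Phi)$ is a product of values of $\Phi$ alone. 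So once $\sigma(g)=-1$, what must be proved is the cocycle identity $c(\Phi)=1$.

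Your proposed source for this identity does not work. First, the recursion $\varphi_2^\Phi=\operatorname{id}-c_{Y_1,W}c_{W,Y_1}+(\operatorname{id}_W\otimes\varphi_1^\Phi)c_{1,2}^\Phi$ contains no factor $\operatorname{id}-c_{W,W}^2$. The only $\operatorname{id}-c^2$ inside it is $\varphi_1^\Phi$, which acts on $W\otimes V$. More decisively, Lemma~\ref{lem:Gamma4-z0-criterion} shows that, under $\Delta_4=1$, the vanishing of $z_0$ is equivalent to $\sigma(g)=-1$. Hence every relation you can extract from $z_0=0$ already follows from $\Delta_4=1$ and $\sigma(g)=-1$. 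The latter only reduces $\Delta_{01}^W$ to $c(\Phi)$, and you give no argument that these two conditions force $c(\Phi)=1$. Your fallback (express $\Delta_{01}^W$ through $\Delta_4$, the $\Theta_{2,j}$ and $\sigma(g)$, then verify by computer) does not say which identity to check or why one should exist, so it is not yet a proof.

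The missing input is the vanishing of the \emph{third} adjoint object $Y_3=(\operatorname{ad}W)^3(V)$. It holds under \eqref{eq:Gamma4-classification-B2} by Corollary~\ref{cor:Gamma4-B2-adjoint-criterion}. This is where the conditions $\Theta_{2,j}=1$ enter, through the simplicity of $Y_2$ and Lemma~\ref{lem:Gamma4-third-opposite-adjoint}.

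In Lemma~\ref{lem:Gamma4-W-local-Deltas} the paper proceeds as follows:
\begin{itemize}
\item it expands selected coordinates of $\varphi_3^\Phi(w_i\otimes z_1)$ and $\varphi_3^\Phi(w_i\otimes(g\rhd z_1))$, each a sum of two nonzero terms;
\item since $Y_3=0$, each coordinate vanishes, so each ratio $E_\ell$ of the two terms equals $-1$;
\item a verified identity $(\Delta_{01}^W)^{-1}=\prod_\ell E_\ell^{m_\ell}$ with $\sum_\ell m_\ell=0$ then gives $\Delta_{01}^W=1$.
\end{itemize}

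Once the $W$ case is established this way, your transfer to $Y_1$ goes through. The reflected tuple satisfies \eqref{eq:Gamma4-classification-B2} with $\varepsilon_1=\varepsilon^{-1}$. One therefore transports the \emph{simplicity} of the first adjoint of the two summands along $X_1\simeq Y_1$. Then one applies Lemma~\ref{lem:Gamma2-X1-simple} with representatives $hg,\varepsilon hg$. Your phrasing of the reduction through simplicity already accommodates this.
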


\begin{proof}
Corollary~\ref{cor:Gamma4-B2-adjoint-criterion} shows that
\(Y_2\) is nonzero and simple and that \(Y_3=0\). Together with
\(\Delta_4=1\), Lemma~\ref{lem:Gamma4-W-local-Deltas} gives
\(\Delta_{01}^{W}=1\).

For \(R_1(V,W)=(V^*,X_1)\),
Proposition~\ref{prop:Gamma4-explicit-B2-stability-under-reflections}
gives the same parameter conditions with support representatives
\((\varepsilon^{-1},h^{-1},hg)\). Applying the same corollary and
Lemma~\ref{lem:Gamma4-W-local-Deltas} to this tuple, followed by
Lemma~\ref{lem:Gamma2-X1-simple}, shows that the first adjoint of
 {the two summands of
\(X_1|_{K_{Y_1}}\) is simple, where
\(K_{Y_1}=\langle\varepsilon^{-1},hg\rangle\).
Their supports are \(\{hg,\varepsilon^2hg\}\) and
\(\{\varepsilon^{-1}hg,\varepsilon hg\}\)}.
The graded \(G\)-isomorphism \(X_1\simeq Y_1\) from
Lemma~\ref{lem:Gamma4-opposite-first-adjoint} identifies these
summands with \(Y_1^{(0)}\) and \(Y_1^{(1)}\), respectively, and
hence identifies their first adjoints. Applying
Lemma~\ref{lem:Gamma2-X1-simple} with the representatives
\(hg,\varepsilon hg\) now gives \(\Delta_{01}^{Y_1}=1\).
\end{proof}

\begin{lemma}
\label{lem:Gamma4-four-simple-factors-finite}
Under the conditions in
\eqref{eq:Gamma4-classification-B2}, the Nichols algebras
of \(V\), \(Y_1\simeq X_1\), \(Y_2\), and \(W\) are
finite-dimensional.
\end{lemma}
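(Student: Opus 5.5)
The plan is to treat the four objects in two groups, following the structure already prepared in this subsection.

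For \(V\) and \(Y_2\) the work is done. Lemma~\ref{lem:Gamma4-diagonal-simple-factors} applies under \eqref{eq:Gamma4-classification-B2} and gives \(\dim\mathcal B(V)=4N_V\) and \(\dim\mathcal B(Y_2)=4N_{Y_2}\), both finite.

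For \(R\in\{W,Y_1\}\) I will pass to the subgroup \(K_R=\langle\varepsilon,x_R\rangle\). By Lemma~\ref{lem:Gamma4-support-group-restriction}, \(\mathcal B(R)\simeq\mathcal B(R|_{K_R})\). By Lemma~\ref{lem:Gamma4-local-Gamma2-data}, \(R|_{K_R}=R^{(0)}\oplus R^{(1)}\) is a braided-indecomposable tuple of simple objects in \({}^{K_R}_{K_R}\mathcal{YD}^{\Phi|_{K_R}}\), where \(K_R\) is a finite non-abelian quotient of \(\Gamma_2\). The quotient is presented through the generators \(x_R,b_R\) with central element \(\varepsilon^2\), and the supports of the two summands generate \(K_R\). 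So \(\mathcal B(R)\) is finite-dimensional exactly when one of the two systems of Theorem~\ref{thm:Gamma2-classification} holds for \((R^{(0)},R^{(1)})\) in some order. I will aim for the \((\mathrm A_2)\) system:
\[
\Delta_{01}^R=1,\qquad \chi_R(x_R)=\psi_R(b_R)=-1 .
\]
The diagonal values equal \(-1\) by Lemma~\ref{lem:Gamma4-local-Gamma2-data}. The condition \(\Delta_{01}^R=1\) is Lemma~\ref{lem:Gamma4-global-to-local-Deltas}.

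The one point to check is that the \(\Gamma_2\) parameters \(\lambda,\lambda'\) of the local tuple really are \(\chi_R(x_R)\) and \(\psi_R(b_R)\). The local tuple uses \(g\mapsto x_R\) and \(h\mapsto b_R\), and one must confirm that \(b_Rx_R=\varepsilon^2x_Rb_R\) matches the relation \(hg=\varepsilon gh\) with \(\varepsilon\mapsto\varepsilon^2\). This matching holds by the relations listed in the proof of Lemma~\ref{lem:Gamma4-local-Gamma2-data}. Once it is in place, \eqref{eq:Gamma2-classification-A2} is satisfied, and Theorem~\ref{thm:Gamma2-classification} (in the form of Corollary~\ref{cor:Gamma2-A2-Nichols-dimension}) gives
\[
\dim\mathcal B(R)=64\,N_{R^{(0)}}N_{R^{(0)},R^{(1)}}N_{R^{(1)}}<\infty .
\]

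Since \(Y_1\simeq X_1\) by Lemma~\ref{lem:Gamma4-opposite-first-adjoint}, this also covers \(X_1\). I expect no serious obstacle here. The only potential subtlety is the normalization of the local parameters, and specifically whether \(\Delta_{01}^R\) in \eqref{eq:Gamma4-local-Delta-01} is computed for the same ordered tuple and representatives that Theorem~\ref{thm:Gamma2-classification} expects. The statement of Lemma~\ref{lem:Gamma4-global-to-local-Deltas} fixes this ordering as \((R^{(0)},R^{(1)})\), which matches the ordering used here.
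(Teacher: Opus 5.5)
Your proposal is correct and follows the paper's proof essentially step for step. Lemma~\ref{lem:Gamma4-diagonal-simple-factors} handles \(V\) and \(Y_2\). For \(R\in\{W,Y_1\}\) you restrict to \(K_R\), read \((R^{(0)},R^{(1)})\) as a \(\Gamma_2\) tuple with \(x_R,b_R,\varepsilon^2\) in place of \(g,h,\varepsilon\), check the \((\mathrm A_2)\) conditions via Lemmas~\ref{lem:Gamma4-local-Gamma2-data} and~\ref{lem:Gamma4-global-to-local-Deltas}, and transfer finite-dimensionality back with Lemma~\ref{lem:Gamma4-support-group-restriction} and \(Y_1\simeq X_1\). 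The explicit dimension \(64N\cdots\) is not needed for this lemma, since it is the content of Corollary~\ref{cor:Gamma4-Nichols-dimension}, but including it does no harm.
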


\begin{proof}
Lemma~\ref{lem:Gamma4-diagonal-simple-factors} proves the assertion for
\(V\) and \(Y_2\).
Let \(R\in\{W,Y_1\}\).  By
Lemma~\ref{lem:Gamma4-local-Gamma2-data}, one has
\(R|_{K_R}=R^{(0)}\oplus R^{(1)}\), where the two summands are simple,
their inducing projective representations are one-dimensional, and
the ordered tuple \((R^{(0)},R^{(1)})\) is
braided-indecomposable.  Moreover, \(K_R\) is a finite non-abelian
quotient of \(\Gamma_2\).  Since \(b_R=\varepsilon x_R\), one has
\(K_R=\langle x_R,b_R\rangle\), so the two supports generate \(K_R\).

Apply Theorem~\ref{thm:Gamma2-classification} to
\((R^{(0)},R^{(1)})\), with \(x_R,b_R,\varepsilon^2\) in place of
\(g,h,\varepsilon\), respectively.  {The parameters
\(\lambda,\lambda',\Delta\) are then
\(\chi_R(x_R),\psi_R(b_R),\Delta_{01}^R\), respectively}.
Lemmas~\ref{lem:Gamma4-local-Gamma2-data} and
\ref{lem:Gamma4-global-to-local-Deltas} give
\(\chi_R(x_R)=\psi_R(b_R)=-1\) and
\(\Delta_{01}^R=1\).  Hence the ordered tuple satisfies the
\(A_2\) conditions in Theorem~\ref{thm:Gamma2-classification}.
Its Nichols algebra is therefore finite-dimensional.

Since \(R|_{K_R}=R^{(0)}\oplus R^{(1)}\),
Lemma~\ref{lem:Gamma4-support-group-restriction} now gives
\(\dim\mathcal B(R)<\infty\).  This proves the assertion for
\(W\) and \(Y_1\).  Finally,
Lemma~\ref{lem:Gamma4-opposite-first-adjoint} gives
\(Y_1\simeq X_1\), so the same conclusion holds for \(X_1\).
\end{proof}

\begin{prop}
\label{prop:Gamma4-sufficiency}
Assume \eqref{eq:Gamma4-classification-B2}. Then
\(\dim\mathcal B(V\oplus W)<\infty\). Moreover, there is an
isomorphism of \(\mathbb N_0^2\)-graded objects in \(\GG\),
\begin{equation}
\label{eq:Gamma4-B2-tensor-decomposition}
\mathcal B(V\oplus W)
\simeq
\mathcal B(V)\otimes
\bigl(
\mathcal B(Y_1)\otimes
\bigl(\mathcal B(Y_2)\otimes\mathcal B(W)\bigr)
\bigr).
\end{equation}
\end{prop}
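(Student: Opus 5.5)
Under \eqref{eq:Gamma4-classification-B2}, Proposition~\ref{prop:Gamma4-explicit-B2-stability-under-reflections} shows that \((V,W)\) admits all reflections. It also shows that \(\mathcal G(V,W)\) is standard of type \(B_2\), with matrix \(\left(\begin{smallmatrix}2&-1\\-2&2\end{smallmatrix}\right)\) at every object. A standard Cartan graph of finite type is finite, since its real roots at each object are the four positive roots \(\alpha_1,\alpha_2,\alpha_1+\alpha_2,\alpha_1+2\alpha_2\) and their negatives. So Theorem~\ref{thm:tensor-decomposability-criterion} applies, and Proposition~\ref{prop:positive-root-factorization} gives a tensor decomposition indexed by these four positive roots.

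To identify the factors, I fix the reduced expression \(w_0=s_1s_2s_1s_2\) at \([V,W]\). With \(a_{12}=-1\) and \(a_{21}=-2\), the roots \(\beta_k\) come out in the order
\[
\alpha_1,\ \alpha_1+\alpha_2,\ \alpha_1+2\alpha_2,\ \alpha_2 .
\]
I then name the simple object attached to each root.
\begin{itemize}
\item The factor for \(\alpha_1\) is \(V\) and the factor for \(\alpha_2\) is \(W\). Taking \(w=\operatorname{id}\), this follows from Lemma~\ref{lem:root-factors-and-reflected-components}.
\item The factor for \(\alpha_2+t\alpha_1\) is \((\operatorname{ad}W)^t(V)\) with \(0\le t\le 2\). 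This follows from Lemma~\ref{lem:root-string-adjoint-simplicity} applied with \(i=2\), \(j=1\), \(-a_{21}=2\). The wording above that lemma needs care here: its root is \(\alpha_j+t\alpha_i=\alpha_1+t\alpha_2\).
\end{itemize}
This gives \(Y_1\) for \(\alpha_1+\alpha_2\) and \(Y_2\) for \(\alpha_1+2\alpha_2\). Feeding these identifications into the factorization of Proposition~\ref{prop:positive-root-factorization}, nested as \(\mathcal B(Q_{\beta_4})\otimes(\mathcal B(Q_{\beta_3})\otimes(\mathcal B(Q_{\beta_2})\otimes\mathcal B(Q_{\beta_1})))\), produces the nested form \eqref{eq:Gamma4-B2-tensor-decomposition}. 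If the ordering conventions give the reverse order, I use the other reduced expression \(s_2s_1s_2s_1\). That expression lists the roots as \(\alpha_2,\alpha_1+2\alpha_2,\alpha_1+\alpha_2,\alpha_1\), so the outermost factor becomes \(\mathcal B(V)\), exactly as displayed.

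Finite-dimensionality then follows directly from Lemma~\ref{lem:Gamma4-four-simple-factors-finite}, which says that \(\mathcal B(V)\), \(\mathcal B(Y_1)\), \(\mathcal B(Y_2)\) and \(\mathcal B(W)\) are all finite-dimensional. As a cross-check I can instead invoke Theorem~\ref{thm:Nichols-finiteness-criterion}. That requires every \(Q\in\mathcal F_2(V,W)\) to have finite-dimensional components. This holds because each reflected tuple again satisfies \eqref{eq:Gamma4-classification-B2} by Proposition~\ref{prop:Gamma4-explicit-B2-stability-under-reflections}, and then Lemma~\ref{lem:Gamma4-four-simple-factors-finite} applies to that tuple.

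The main obstacle is bookkeeping rather than any new computation. The expected snag is matching the reduced-word ordering so that the factors land in the displayed nesting.
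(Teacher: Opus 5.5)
Your argument is correct and follows the paper's proof: the standard $B_2$ Cartan graph from Proposition~\ref{prop:Gamma4-explicit-B2-stability-under-reflections}, the identification of the root factors $W,Y_2,Y_1,V$ via Lemma~\ref{lem:root-string-adjoint-simplicity}, the factorization of Proposition~\ref{prop:positive-root-factorization}, and finiteness from Lemma~\ref{lem:Gamma4-four-simple-factors-finite}. One correction on ordering: the word $s_1s_2s_1s_2$ always gives the reversed nesting $\mathcal B(W)\otimes(\mathcal B(Y_2)\otimes(\mathcal B(Y_1)\otimes\mathcal B(V)))$, not only under some conventions, so start directly with $s_2s_1s_2s_1$, which gives the paper's convex order $\alpha_2,\alpha_1+2\alpha_2,\alpha_1+\alpha_2,\alpha_1$; also, your bullet should read $\alpha_1+t\alpha_2$ rather than $\alpha_2+t\alpha_1$.
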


\begin{proof}
Proposition~\ref{prop:Gamma4-explicit-B2-stability-under-reflections}
shows that \((V,W)\) admits all reflections and has a standard
Cartan graph of type \(B_2\). The positive roots in the associated
convex order are
\(\alpha_2,\alpha_1+2\alpha_2,\alpha_1+\alpha_2,\alpha_1\), with
corresponding objects \(W,Y_2,Y_1,V\) by
Lemma~\ref{lem:root-string-adjoint-simplicity}.
Proposition~\ref{prop:positive-root-factorization} therefore gives
\eqref{eq:Gamma4-B2-tensor-decomposition}. Each factor is
finite-dimensional by
Lemma~\ref{lem:Gamma4-four-simple-factors-finite}, so
\(\mathcal B(V\oplus W)\) is finite-dimensional.
\end{proof}

For \(R\in\{V,Y_2\}\), retain the notation \(N_R\) from
Lemma~\ref{lem:Gamma4-diagonal-simple-factors}.
For \(R\in\{W,Y_1\}\), the tuple \((R^{(0)},R^{(1)})\) has Cartan type
\(A_2\).  The indices \(0,01,1\) correspond, respectively, to
\[
R^{(0)},\qquad
(\operatorname{ad}R^{(0)})(R^{(1)}),\qquad
R^{(1)}.
\]
For \(\gamma\in\{0,01,1\}\), let \(N_{R,\gamma}\) be the order of the
edge label of the diagonal braiding of the corresponding simple
object, with order one when that label is one.

\begin{cor}
\label{cor:Gamma4-Nichols-dimension}
Under the hypotheses of Proposition~\ref{prop:Gamma4-sufficiency},
\[
\dim\mathcal B(V\oplus W)
=2^{16}N_VN_{Y_2}
\prod_{\gamma\in\{0,01,1\}}N_{W,\gamma}
\prod_{\gamma\in\{0,01,1\}}N_{Y_1,\gamma}.
\]
\end{cor}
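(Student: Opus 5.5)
The dimension formula follows by multiplying the dimensions of the four factors in the tensor decomposition \eqref{eq:Gamma4-B2-tensor-decomposition} given by Proposition~\ref{prop:Gamma4-sufficiency}:
\[
\dim\mathcal B(V\oplus W)=\dim\mathcal B(V)\,\dim\mathcal B(Y_1)\,\dim\mathcal B(Y_2)\,\dim\mathcal B(W).
\]
For the two factors of diagonal type, Lemma~\ref{lem:Gamma4-diagonal-simple-factors} already gives \(\dim\mathcal B(V)=4N_V\) and \(\dim\mathcal B(Y_2)=4N_{Y_2}\). Together these contribute \(2^4N_VN_{Y_2}\).

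For \(R\in\{W,Y_1\}\), we use Lemma~\ref{lem:Gamma4-support-group-restriction}. Restricting to \(K_R\) identifies \(\mathcal B(R)\) with \(\mathcal B(R^{(0)}\oplus R^{(1)})\). The proof of Lemma~\ref{lem:Gamma4-four-simple-factors-finite} shows that the ordered tuple \((R^{(0)},R^{(1)})\) satisfies the \(A_2\) conditions of Theorem~\ref{thm:Gamma2-classification}, with \(\lambda=\chi_R(x_R)=-1\), \(\lambda'=\psi_R(b_R)=-1\) and \(\Delta=\Delta_{01}^R=1\). By \eqref{eq:Gamma2-Delta-unit-loci} we also have \(\Delta'=1\). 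Corollary~\ref{cor:Gamma2-A2-Nichols-dimension}, applied over the finite non-abelian quotient \(K_R\) of \(\Gamma_2\), then gives
\[
\dim\mathcal B(R)=64\,N_{R^{(0)}}N_{(\operatorname{ad}R^{(0)})(R^{(1)})}N_{R^{(1)}} .
\]
Here each \(N\) is the order of the edge label of the corresponding two-dimensional diagonal braiding. In the notation of the statement, these three orders are exactly \(N_{R,0},N_{R,01},N_{R,1}\). This matches the convention that the order is \(1\) when the label is \(1\), since the proof of Lemma~\ref{lem:Gamma2-A2-simple-factors-finite} gives dimension \(4=4\cdot1\) in that case.

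Multiplying the four factors gives
\[
2^4\cdot 2^6\cdot 2^6=2^{16},
\]
which yields the stated formula.

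The main point to verify is that the corollary from the \(\Gamma_2\) section applies verbatim to the restricted tuple. Specifically, one needs:
\begin{itemize}
\item \(K_R\) is generated by the two supports;
\item the pair is braided-indecomposable;
\item the simple objects attached to the positive roots are \(R^{(0)}\), \((\operatorname{ad}R^{(0)})(R^{(1)})\) and \(R^{(1)}\).
\end{itemize}
All of these are recorded in Lemma~\ref{lem:Gamma4-local-Gamma2-data} and in the proof of Lemma~\ref{lem:Gamma4-four-simple-factors-finite}.

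One must also check that the edge labels are computed in \({}_{K_R}^{K_R}\mathcal{YD}^{\Phi|_{K_R}}\) and that they agree with the labels computed in \(G\). This holds because restriction does not change braidings on tensor powers, again by Lemma~\ref{lem:Gamma4-support-group-restriction}.
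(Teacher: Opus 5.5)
Your proof is correct and follows the paper's argument. The paper also multiplies the four factors of the tensor decomposition in Proposition~\ref{prop:Gamma4-sufficiency}, takes \(4N_V\) and \(4N_{Y_2}\) from Lemma~\ref{lem:Gamma4-diagonal-simple-factors}, and for \(R\in\{W,Y_1\}\) applies Corollary~\ref{cor:Gamma2-A2-Nichols-dimension} over \(K_R\) together with Lemma~\ref{lem:Gamma4-support-group-restriction} to get \(64N_{R,0}N_{R,01}N_{R,1}\).
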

\begin{proof}
Equation~\eqref{eq:Gamma4-diagonal-root-dimension} gives
\(\dim\mathcal B(R)=4N_R\) for \(R\in\{V,Y_2\}\).
Let \(R\in\{W,Y_1\}\).  Lemmas
\ref{lem:Gamma4-local-Gamma2-data} and
\ref{lem:Gamma4-global-to-local-Deltas} show that
\((R^{(0)},R^{(1)})\) is in the \(A_2\) case of
Theorem~\ref{thm:Gamma2-classification}.  Applying
Corollary~\ref{cor:Gamma2-A2-Nichols-dimension} over \(K_R\), and
then Lemma~\ref{lem:Gamma4-support-group-restriction}, gives
\[
\dim\mathcal B(R)
=64N_{R,0}N_{R,01}N_{R,1}.
\]
Taking dimensions in
\eqref{eq:Gamma4-B2-tensor-decomposition}  proves the formula.
\end{proof}

\subsection{Proof of the classification theorem}

\begin{proof}[Proof of Theorem~\ref{thm:Gamma4-classification}]
If \(\mathcal B(V\oplus W)\) is finite-dimensional, then
Proposition~\ref{prop:Gamma4-necessity} gives
\eqref{eq:Gamma4-classification-B2} and the asserted standard Cartan
graph of type \(B_2\).

Conversely, assume \eqref{eq:Gamma4-classification-B2}.
Proposition
\ref{prop:Gamma4-explicit-B2-stability-under-reflections} shows that
the Cartan graph is standard of type \(B_2\), with the matrix in the
statement at every object.  Proposition
\ref{prop:Gamma4-sufficiency} gives
\(\dim\mathcal B(V\oplus W)<\infty\).
\end{proof}
\begin{rmk}\upshape
When \(\Phi=1\), the category
\({}_G^G\mathcal{YD}^{\Phi}\) is the ordinary Yetter--Drinfeld
category over the Hopf algebra \(\mathbb CG\).  In this case,
Theorem~\ref{thm:Gamma4-classification} specializes to the
\(\Gamma_4\) case of \cite[Theorem~5.5]{rank2-3}.  In particular,
the Cartan graph is standard of type \(B_2\), with Cartan matrix
\(\left(\begin{smallmatrix}2&-1\\-2&2\end{smallmatrix}\right)\)
at every object.  Moreover, over \(\mathbb C\), the corresponding
Nichols algebra has dimension \(2^{18}\), in agreement with the
ordinary Hopf algebra case.
\end{rmk}
\section{The \texorpdfstring{$T$}{T} case}
\label{sec:classification-T}

Let \(G\) be a finite non-abelian quotient of
\[
T=\left\langle z,x_1,x_2\ \middle|\
zx_1=x_1z,\quad zx_2=x_2z,\quad
x_1x_2x_1=x_2x_1x_2,\quad x_1^3=x_2^3
\right\rangle .
\]
Write \(\pi:T\twoheadrightarrow G\) for the quotient map, and denote
the images of the generators by the same letters. The defining
relations imply that \(z\in Z(G)\).
Let \(x_3=x_2x_1x_2^{-1}\) and \(x_4=x_1x_2x_1^{-1}\).
Throughout this section, the elements \(x_1,x_2,x_3,x_4\) are distinct,
\(x_1^G=\{x_1,x_2,x_3,x_4\}\), and
\(G=\langle z,x_1^G\rangle\).
Let \(\Phi\) be a normalized \(3\)-cocycle on \(G\).  Let
\(V=M(z,\rho)\) and \(W=M(x_1,\sigma)\) be simple, and assume that
\((V,W)\) is braided-indecomposable.

The rack table gives
\(2\brhd3=4\), \(4\brhd2=3\), and \(3\brhd4=2\).  Hence
\(c_{W,W}^{3}\) preserves \(W_{x_2}\otimes W_{x_3}\).  By Proposition~\ref{prop:T-sigma-one-dimensional},
one has \(\dim\sigma=1\).
We define
\(\varepsilon_\Phi(W)\) by
\begin{equation}
\label{eq:T-epsilon-definition}
\left.c_{W,W}^{3}\right|_{W_{x_2}\otimes W_{x_3}}
=\sigma(x_1)^{-1}\varepsilon_\Phi(W)\operatorname{id}.
\end{equation}
This definition is independent of all basis choices.
 {A gauge transformation conjugates the restrictions of
\(c_{W,W}\) to \(W_{x_1}\otimes W_{x_1}\) and of
\(c_{W,W}^3\) to \(W_{x_2}\otimes W_{x_3}\).
Their scalars \(\sigma(x_1)\) and
\(\sigma(x_1)^{-1}\varepsilon_\Phi(W)\) are therefore unchanged,
and so is \(\varepsilon_\Phi(W)\).}

The classification for quotients of \(T\) is as follows.

\begin{thm}
\label{thm:T-classification}
Under the preceding assumptions,
\(\dim\mathcal B(V\oplus W)<\infty\) if and only if
\begin{equation*}
\tag{T-G$_2$}
\label{eq:T-classification-G2}
\begin{aligned}
&\dim\rho=1,\qquad \qquad \qquad \qquad \sigma(x_1)=-1,\\
&\bigl(\rho(x_1)\sigma(z)\bigr)^2
 -\rho(x_1)\sigma(z)+1=0,\\
&\rho(x_1)\rho(z)\sigma(z)=1,
\qquad \qquad \qquad \qquad \varepsilon_\Phi(W)=1,\\
&\Phi(x_1x_4^{-1},x_1x_3^{-1},x_1x_2^{-1})
 \Phi(x_1x_3^{-1},x_1x_2^{-1},x_1x_4^{-1})
 \Phi(x_1x_2^{-1},x_1x_4^{-1},x_1x_3^{-1})=1.
\end{aligned}
\end{equation*}
Whenever these conditions hold, \((V,W)\) admits all reflections and
its Cartan graph is standard of type \(G_2\), with matrix
\(\left(\begin{smallmatrix}2&-1\\-3&2\end{smallmatrix}\right)\)
at every object.  Moreover,
\(\dim\mathcal B(V\oplus W)=6^3\,72^3=80\,621\,568\).
\end{thm}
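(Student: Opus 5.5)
The plan follows the $\Gamma_2$ and $\Gamma_4$ sections: a local adjoint analysis at the initial pair, a check that the parameter locus is stable under both reflections, and a tensor decomposition for the dimension.

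\textbf{Step 1: projective representations and the first adjoint.} Since $z$ is central, $C_G(z)=G$, and $\rho$ is an irreducible $\Phi_z$-projective representation of $G$. Proposition~\ref{prop:T-sigma-one-dimensional} gives $\dim\sigma=1$. I will show that $\dim\rho=1$ is forced by finite-dimensionality. If $\dim\rho>1$, then $(\operatorname{ad}V)(W)$ contains $V\otimes W_{x_1}$ modulo the double-braiding image. Its restriction to the degree $zx_1$ would be a projective representation of $C_G(x_1)$ that is not simple, or the first adjoint would fail to be simple. This would contradict Lemma~\ref{lem:root-string-adjoint-simplicity}, as in the argument for Lemma~\ref{lem:Gamma4-sigma-one-dimensional}.

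With $v\in V_z$ and $w\in W_{x_1}$, the double braiding on $v\otimes w$ is the scalar $\rho(x_1)\sigma(z)$ times a cocycle factor, which can be absorbed into the normalization. Hence $(\operatorname{ad}V)(W)\neq0$ if and only if $\rho(x_1)\sigma(z)\neq1$. In that case $X_1\simeq M(zx_1,\sigma_1)$ is simple, and $X_2=0$ if and only if $\rho(z)\rho(x_1)\sigma(z)=1$. This is the usual $q_{11}q_{12}q_{21}=1$ condition for $m_{12}=1$.

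\textbf{Step 2: the opposite direction.} I will compute $Y_n=(\operatorname{ad}W)^n(V)$ for $n\le4$ using Proposition~\ref{prop:adjoint-orbit-generation}, exactly as in the $\Gamma_2$ computations. The vanishing of $Y_4$ together with the simplicity of $Y_1,Y_2,Y_3$ should reduce to the remaining equations:
\begin{itemize}
\item $\sigma(x_1)=-1$;
\item $q^2-q+1=0$, where $q=\rho(x_1)\sigma(z)$;
\item $\varepsilon_\Phi(W)=1$ and the triple-product condition on $\Phi$, both needed for simplicity of $Y_2$ and $Y_3$.
\end{itemize}
Here $\varepsilon_\Phi(W)$ enters because $c^3_{W,W}$ on $W_{x_2}\otimes W_{x_3}$ governs the $C_G(\cdot)$-stability of the relevant lines. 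For necessity, I will argue as in Proposition~\ref{prop:Gamma4-necessity}. By Theorem~\ref{thm:Nichols-finiteness-criterion} and Proposition~\ref{prop:pair-with-finite-Cartan-matrix} there is an object whose Cartan matrix is of finite type. Because inner automorphism groups of order $12$ distinguish $Z_T^{4,1}$, Theorem~\ref{thm:support-classification} keeps the support of type $T$. The local computation then forces $G_2$; the types $A_2$ and $B_2$ are excluded by comparing the possible $m_{21}$ with the values forced by $\sigma(x_1)=-1$.

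\textbf{Step 3: the cohomological obstruction (main obstacle).} The last condition is the hard step. As the introduction indicates, the adjoint vanishings alone leave one nontrivial cohomology class of the pulled-back $\Phi$ on $T$. To handle it I will first pull $\Phi$ back to $T$ and use the bar-complex computations to show the pullback is a coboundary up to this one class. Then I will exclude that class by running an explicit reflection sequence, alternating $R_1$ and $R_2$ and tracking $\sigma$-values via twist and balancing arguments as in Lemma~\ref{lem:Gamma4-tau1-support-value}. The goal is to show the parameters drift so that the real roots are infinite, contradicting finiteness. The triple product of $\Phi$ is precisely the invariant separating this class.

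\textbf{Step 4: sufficiency and dimension.} Assuming the conditions, I will prove stability of the locus under $R_1$ and $R_2$, using \cite[Lemma~3.8]{reflection3} and dual representations, and conclude a standard $G_2$ graph by induction. For the dimension, the six positive roots split as:
\begin{itemize}
\item three short-root objects with support in $z^G$-like central classes, each one-dimensional with braiding a primitive $6$th root of unity, so each Nichols algebra has dimension $6$;
\item three long-root objects supported on $4$-element classes, which reduce after pullback-trivialization to the ordinary $\mathcal B(\mathcal O^4_{\rm tetra},-1)$ of dimension $72$.
\end{itemize}
Proposition~\ref{prop:positive-root-factorization} then gives $6^3\,72^3$.
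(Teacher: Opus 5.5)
\textbf{Gap 1: excluding $\dim\rho>1$.} Your Step~1 does not rule out $\dim\rho=2$. The component $(X_1)_{zx_1}$ is the image of $\operatorname{id}-T_0$ on $V_z\otimes W_{x_1}\cong V_z$, where $T_0=\sigma(z)\rho(x_1)$. Simplicity of $X_1$ only forces $\operatorname{rank}(\operatorname{id}-T_0)=1$. For $\dim\rho=2$ this holds as soon as $T_0$ has eigenvalues $1$ and a primitive cube root of unity $r$. In that case $X_1$ is simple, and Lemma~\ref{lem:root-string-adjoint-simplicity} gives no contradiction.

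\textbf{Gap 2: the criterion for $X_2$.} Your criterion is wrong. Lemma~\ref{lem:T-central-support-adjoints} gives $X_2=0$ if and only if $(1+\rho(z))(1-\rho(x_1)\rho(z)\sigma(z))=0$. So $m_{12}=1$ is compatible with $\rho(z)=-1$ and $\rho(x_1)\rho(z)\sigma(z)=-q\neq1$, and $X_2=0$ alone does not give the product relation.

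\textbf{The missing idea that closes both gaps.} Work at an object with Cartan matrix $\left(\begin{smallmatrix}2&-1\\-3&2\end{smallmatrix}\right)$ and pass to $R_1(V,W)=(V^*,X_1)$. There $(\operatorname{ad}X_1)^3(V^*)\neq0$ by two applications of Proposition~\ref{prop:multidegree-insertion}, and it is simple because the Cartan graph is finite. Lemma~\ref{lem:T-x1-action-minus-one}, applied to the reflected pair, then shows that $zx_1$ acts by $-1$ on $(X_1)_{zx_1}$.
\begin{itemize}
\item When $\dim\rho=1$, this action equals $-\rho(x_1)\rho(z)\sigma(z)$, which yields $\rho(x_1)\rho(z)\sigma(z)=1$.
\item When $\dim\rho=2$, the vanishing $X_2=0$ forces $\rho(z)=-1$. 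The action is then by $r\neq-1$, a contradiction.
\item The case $\dim\rho=3$ needs a separate argument. One first gets $\dim\rho\le3$ from $G/Z(G)\cong A_4$. Then a three-cycle has three distinct eigenvalues in the three-dimensional representation of $A_4$, which is incompatible with the rank-one condition.
\end{itemize}

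\textbf{Excluding $A_2$ and $B_2$.} Your argument through $\sigma(x_1)=-1$ is circular. That value comes from Lemma~\ref{lem:T-x1-action-minus-one}, which presupposes that $Y_3$ is nonzero and simple. The paper instead shows $(\operatorname{ad}P_2)^3(P_1)\neq0$ unconditionally by multidegree insertion, so $-a_{21}\geq3$.

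\textbf{The cocycle condition.} Your Step~2 claim that the triple-product condition is needed for simplicity of $Y_2$ and $Y_3$ is not right, and it contradicts your Step~3. $Y_2$ is simple without it (Lemma~\ref{lem:T-second-adjoint}). In the branch $h=4$ of Lemma~\ref{lem:T-cocycle-two-branches}, the initial pair already shows the local $G_2$ pattern: $X_2=0$, $Y_3$ is two-dimensional with central support, and $Y_4=0$.

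Step~3 has the right target, but tracking one-dimensional $\sigma$-values by twist and balancing cannot carry it out.
\begin{itemize}
\item After the first reflection $R_2$, the first component is two-dimensional and the Cartan matrix becomes $\left(\begin{smallmatrix}2&-2\\-3&2\end{smallmatrix}\right)$. The adjoint dimensions must therefore be computed outright. The paper does this in a finite model over $\mathrm{SL}_2(3)\times C_6$, with a cocycle pulling back to $\Omega^4$.
\item The infinitely many real roots do not come from a ``drift''. They come from four-periodicity of the Cartan matrices up to a $2$-cocycle twist, with the composite of the four reflections unipotent of infinite order.
\item You also need $H^3(T,\mathbb C^\times)\cong\mathbb Z/8$, detected by the triple product, and the reduction to $h\in\{0,4\}$ via $\varepsilon_\Phi(W)$.
\end{itemize}

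\textbf{Step 4.} This matches the paper, except that the one-dimensional central factors correspond to the long roots, not the short ones.
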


\medskip
\noindent\textit{Outline of the proof.}
\begin{itemize}[leftmargin=2em]
\item
Subsection~\ref{subsec:T-cocycle-normalization} computes \(H^3(T,\mathbb C^\times)\) and reduces the pullback of \(\Phi\) to eight classes. 

\item
Subsection~\ref{subsec:T-projective-representations} describes the
centralizers and the projective representations that occur.

\item
Subsection~\ref{subsec:T-adjoint-objects} proves \(\dim\rho=1\)
 {when \(\mathcal B(V\oplus W)\) is finite-dimensional
and the Cartan matrix is of type \(G_2\)}, and computes the adjoint
objects.  It gives \(\sigma(x_1)=-1\), the quadratic condition in
\eqref{eq:T-classification-G2}, and \(\varepsilon_\Phi(W)=1\).
Under the last equality in \eqref{eq:T-classification-G2}, it also
gives \(\dim Y_3=1\) and \(Y_4=0\).

\item
Subsections~\ref{subsec:T-G2-parameters} and~\ref{subsec:T-reflections}
derive \(\rho(x_1)\rho(z)\sigma(z)=1\), exclude the remaining
nontrivial cocycle class, and prove that all parameter conditions are
preserved by both reflections.

\item
Subsection~\ref{subsec:T-finite-Cartan-graphs} proves necessity and
shows that the Cartan graph is standard of type \(G_2\).

\item
Subsections~\ref{subsec:T-G2-simple-factors}
and~\ref{subsec:T-proof-classification} prove sufficiency. There are three simple  {objects} of dimension \(1\) and three of
dimension \(4\). Their Nichols algebras have dimensions \(6\)
and \(72\), respectively.
\end{itemize}

\subsection{The pullback of the cocycle to \texorpdfstring{$T$}{T}}
\label{subsec:T-cocycle-normalization}

We first relate the last equality in
\eqref{eq:T-classification-G2} to the pullback of \(\Phi\).

\begin{lemma}
\label{lem:T-pullback-cocycle}
One has \(H^3(T,\mathbb C^\times)\simeq\mathbb{Z}_8\).
The class of \(\pi^*\Phi\) is trivial if and only if
the last equality in \eqref{eq:T-classification-G2} holds.
The left-hand side of this equality is unchanged when \(\Phi\)
is replaced by a cohomologous normalized cocycle.
\end{lemma}

\begin{proof}
In \(T\), put
\[
i=x_1x_4^{-1},\qquad j=x_1x_3^{-1},\qquad k=x_1x_2^{-1}.
\]
It is direct to see that \(j=x_2^{-1}x_1\). Since the common cube
\(x_1^3=x_2^3\) is central,
\[
x_1ix_1^{-1}=j,\qquad
x_1jx_1^{-1}=k,\qquad
x_1kx_1^{-1}=i.
\]
Substituting \(x_2=k^{-1}x_1\) into the braid relation gives
\(i^{-1}=k^{-1}j^{-1}\). Its cyclic conjugates therefore give
\(ij=k\), \(jk=i\), and \(ki=j\). These relations present \(Q_8\):
eliminating \(k\) gives \(i^4=1\), \(i^2=j^2\), and
\(jij^{-1}=i^{-1}\).
Conversely, in \(Q_8\rtimes\langle x_1\rangle\), one has
\((k^{-1}x_1)^3=k^{-1}i^{-1}j^{-1}x_1^3=x_1^3\) and
\(x_1(k^{-1}x_1)x_1=i^{-1}x_1^3
=(k^{-1}x_1)x_1(k^{-1}x_1)\).
Thus the substitutions \(k=x_1x_2^{-1}\) and \(x_2=k^{-1}x_1\)
give mutually inverse homomorphisms. The central generator \(z\)
gives the direct factor, so
\(T\simeq(Q_8\rtimes\langle x_1\rangle)\times\langle z\rangle\),
where both displayed cyclic groups are infinite.

The periodic free resolution in
\cite[Proposition~3.2]{TomodaZvengrowski2008}, with trivial
coefficients \(\mathbb C^\times\), gives
\[
H^1(Q_8,\mathbb C^\times)\simeq(\mathbb Z/2)^2,\qquad
H^2(Q_8,\mathbb C^\times)=0,\qquad
H^3(Q_8,\mathbb C^\times)\simeq\mathbb{Z}_8.
\]
Conjugation by \(x_1\) cyclically permutes the three nontrivial
characters of \(Q_8\). Thus the invariant and coinvariant groups
of \(H^1(Q_8,\mathbb C^\times)\) under this action are zero.
The action on \(H^3(Q_8,\mathbb C^\times)\) is trivial, since
its order divides three whereas \(\operatorname{Aut}(\mathbb{Z}_8)\)
has order four.

Put \(S=Q_8\rtimes\langle x_1\rangle\). The
Lyndon--Hochschild--Serre spectral sequence for
\(1\to Q_8\to S\to\langle x_1\rangle\to1\) is
\[
E_2^{p,q}
=H^p\bigl(\langle x_1\rangle,H^q(Q_8,\mathbb C^\times)\bigr)
\Longrightarrow H^{p+q}(S,\mathbb C^\times);
\]
see \cite[Chapter~VII, Section~6]{Brown1982}.
Since \(\langle x_1\rangle\) is infinite cyclic, only columns
\(p=0,1\) can be nonzero. These columns consist of the invariant
and coinvariant groups, respectively, and all differentials
from the second page onward vanish.
In total degree two, both terms
\(E_2^{0,2}\) and \(E_2^{1,1}\) are zero, so
\(H^2(S,\mathbb C^\times)=0\).
In total degree three, \(E_2^{1,2}=0\), whereas
\(E_2^{0,3}=H^3(Q_8,\mathbb C^\times)\).
The edge homomorphism, which is restriction to \(Q_8\),
therefore gives an isomorphism
\(H^3(S,\mathbb C^\times)\simeq H^3(Q_8,\mathbb C^\times)\).

Apply the same spectral sequence to
\(1\to S\to T\to\langle z\rangle\to1\).
The action of \(\langle z\rangle\) on \(H^q(S,\mathbb C^\times)\)
is trivial because \(z\) is central.
The two possible terms in total degree three are
\(H^3(S,\mathbb C^\times)\) and
\(H^1(\langle z\rangle,H^2(S,\mathbb C^\times))\).
The latter is zero. Hence restriction from \(T\) to \(S\)
is also an isomorphism in degree three. Composing the two
restriction maps gives
\(H^3(T,\mathbb C^\times)\simeq
H^3(Q_8,\mathbb C^\times)\simeq\mathbb{Z}_8\).

For any two-cochain \(J\) on \(Q_8\), the relations
\(ij=k\), \(jk=i\), and \(ki=j\) give
\(\delta J(i,j,k)=J(j,k)J(i,i)/(J(k,k)J(i,j))\).
Multiplying this expression and its two cyclic permutations
yields
\(\delta J(i,j,k)\delta J(j,k,i)\delta J(k,i,j)=1\).
Consequently,
\[
[\Psi]\longmapsto
\Psi(i,j,k)\Psi(j,k,i)\Psi(k,i,j)
\]
defines a homomorphism
\(H^3(Q_8,\mathbb C^\times)\to\mathbb C^\times\).
The cocycle obtained by exponentiating the additive cocycle in
Lemma~\ref{lem:T-fixed-quaternion-cocycle} has product
\(e^{(3+3+3)\pi\mathrm i/4}=e^{\pi\mathrm i/4}\).
Since \(H^3(Q_8,\mathbb C^\times)\) has order eight,
this homomorphism is an isomorphism onto the group of eighth
roots of unity.

Together with the restriction isomorphism above, this shows that
\([\pi^*\Phi]\) is trivial precisely when
\((\pi^*\Phi)(i,j,k)(\pi^*\Phi)(j,k,i)
(\pi^*\Phi)(k,i,j)=1\).
This is the last equality in \eqref{eq:T-classification-G2}.
Replacing \(\Phi\) by a cohomologous normalized cocycle changes
its pullback by a coboundary, whose three values have product
one by the preceding calculation. This proves the final assertion.
\end{proof}

For the rest of this subsection, retain \(i,j,k\) from the preceding
proof, and let \(A\) be the automorphism of \(Q_8\) cycling them.
Let \(F:Q_8^3\to\mathbb Z_8\) be the normalized
\(A\)-invariant additive cocycle of
Lemma~\ref{lem:T-fixed-quaternion-cocycle}.  Define
\begin{equation}
\label{eq:T-cocycle-representative}
\Omega(ux_1^mz^a,vx_1^nz^b,wx_1^rz^c)
=\exp\!\left(\frac{\pi\mathrm i}{4}
 F(u,A^mv,A^{m+n}w)\right).
\end{equation}
The cocycle identity for \(F\), applied to
\(u,A^mv,A^{m+n}w,A^{m+n+r}t\), and its \(A\)-invariance show
that \(\Omega\) is a normalized cocycle on \(T\). Moreover,
\(\Omega(i,j,k)\Omega(j,k,i)\Omega(k,i,j)
=e^{\pi\mathrm i/4}\).
Thus there are a unique \(h\in\{0,\ldots,7\}\) and a normalized
two-cochain \(J\) on \(T\) such that
\begin{equation}
\label{eq:T-cocycle-normalization}
\pi^*\Phi=\Omega^h\delta J,
\qquad
\delta J(a,b,c)=\frac{J(b,c)J(a,bc)}{J(ab,c)J(a,b)}.
\end{equation}
In particular, the last equality in \eqref{eq:T-classification-G2}
is equivalent to \(h=0\).

\begin{lemma}
\label{lem:T-compatible-normalization}
 The pair \((V,W)\) can be replaced
by a pair in \({}_T^T\mathcal{YD}^{\Omega^h}\), with \(h\) as in
\eqref{eq:T-cocycle-normalization}, preserving the dimensions of
its Nichols algebra and all its iterated adjoint objects.  The scalars \(\rho(z)\) and \(\sigma(x_1)\) are preserved.
When \(\dim\sigma=1\), so is \(\varepsilon_\Phi(W)\).
When \(\dim\rho=\dim\sigma=1\), so is
\(\rho(x_1)\sigma(z)\).
\end{lemma}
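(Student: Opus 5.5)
The plan is to factor the replacement into two functorial steps: first \emph{lift} \((V,W)\) from \(G\) to \(T\), regarding them as objects of \({}_T^T\mathcal{YD}^{\pi^*\Phi}\); then \emph{gauge} by the two-cochain \(J\) of \eqref{eq:T-cocycle-normalization} to pass to \({}_T^T\mathcal{YD}^{\Omega^h}\). In both steps the key point is that every structure constant involved in \(\mathcal B(V\oplus W)\) and in the maps \(\varphi_n^\Phi\) is either unchanged or conjugated by a nonzero diagonal scalar. Also, each scalar in the statement is an eigenvalue of a braiding or double braiding on a fixed homogeneous component.

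\emph{Step 1 (lifting).} The support \(\{z\}\cup x_1^G\) is the image of \(\{z\}\cup x_1^T\subseteq T\). By Lemma~\ref{lem:ZT-enveloping-group} and the standing assumption that \(x_1,\dots,x_4\) are distinct in \(G\), \(\pi\) restricts to a quandle isomorphism \(\{z\}\cup x_1^T\to\{z\}\cup x_1^G\). Define \(\widetilde V,\widetilde W\) on the same underlying spaces. We regrade each homogeneous component \(V_s\) by the unique lift \(\tilde s\), and let \(T\) act through \(\pi\). The Yetter--Drinfeld condition \(t\rhd\widetilde V_{\tilde s}\subseteq\widetilde V_{t\tilde st^{-1}}\) holds because the lifted quandle is closed under \(T\)-conjugation and \(\pi\) is injective on it. The projective relation \eqref{eq:YD-projective-action} holds for \(\pi^*\Phi\) because all its coefficients are values of \(\Phi\) at \(\pi\)-images. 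On tensor powers, a homogeneous tensor of multidegree \((\tilde s_1,\dots,\tilde s_n)\) has \(T\)-degree \(\tilde s_1\cdots\tilde s_n\). Its image under \(\pi\) is \(s_1\cdots s_n\). The associators \eqref{eq:tensor-product}, the actions, and the braidings in \({}_T^T\mathcal{YD}^{\pi^*\Phi}\) are therefore given by exactly the same scalars as in \(\GG\). Hence the quantum symmetrizers, the Nichols ideals, the operators \(c^\Phi_{1,2}\) and \(\varphi_n^\Phi\), and the objects \(X_n,Y_n\) all coincide as linear data. This gives equal dimensions. The scalars \(\rho(z)\), \(\sigma(x_1)\), \(\rho(x_1)\sigma(z)\) and the scalar \(\varepsilon_\Phi(W)\) of \eqref{eq:T-epsilon-definition} are defined by braidings on fixed homogeneous components, so they are literally unchanged.

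\emph{Step 2 (gauge).} By \eqref{eq:T-cocycle-normalization}, \(\pi^*\Phi=\Omega^h\delta J\). The twist by \(J\) gives a \(\Bbbk\)-linear braided monoidal equivalence \(F_J:{}_T^T\mathcal{YD}^{\pi^*\Phi}\to{}_T^T\mathcal{YD}^{\Omega^h}\). It is the identity on graded spaces, and the action is rescaled by values of \(J\). Its tensor structure \(F(M)\otimes F(N)\to F(M\otimes N)\) is multiplication by \(J(\deg m,\deg n)^{\pm1}\) on homogeneous tensors. By \eqref{eq:Nichols-under-equivalence} and \cite[Proposition~6.3]{reflection3}, as used in Lemma~\ref{lem:Gamma2-X3-nonvanishing}, \(F_J\) preserves Nichols algebras, adjoint objects, simplicity and vanishing. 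In particular it preserves all dimensions. For the scalars, braidedness of \(F_J\) means that \(c_{F V,F V}\) on \(V_z\otimes V_z\) equals \(c_{V,V}\) conjugated by the scalars \(J(z,z)\) on both sides. The eigenvalue \(\rho(z)\) is therefore unchanged, and likewise \(\sigma(x_1)\) on \(W_{x_1}\otimes W_{x_1}\). The invariance of \(\varepsilon_\Phi(W)\) under a gauge transformation is exactly the argument recorded after \eqref{eq:T-epsilon-definition}. Finally, when \(\dim\rho=\dim\sigma=1\), the element \(z\) is central, so the double braiding \(c_{W,V}c_{V,W}\) maps the line \(V_z\otimes W_{x_1}\) to itself. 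It acts there by \(\rho(x_1)\sigma(z)\), up to the cocycle factor \(\Phi_z\)/\(\Phi_{x_1}\) normalized into the definition. Being a composite of braidings, it is conjugated by the nonzero scalar \(J(z,x_1)\), so this eigenvalue is also preserved.

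The main obstacle I expect is Step 1, namely checking that the lifted grading is well defined on all tensor powers and compatible with the coefficient formulas. The concern is that distinct multidegrees in \(T\) might collapse under \(\pi\), or that the \(T\)-centralizers are larger than the \(G\)-centralizers. I would handle this by noting two things. First, everything is computed on multidegrees of tensors in the fixed lifted quandle, where \(\pi\) is a quandle isomorphism. Second, no step in the definition of \(\mathcal B\) or of \(\varphi_n^\Phi\) uses simplicity or centralizers, only the coefficients \(\Phi(\cdot,\cdot,\cdot)\), \(\Phi_x\), \(\Phi^x\) at \(\pi\)-images and the action through \(\pi\). Simplicity of \(\widetilde V,\widetilde W\) is not needed for the statement.
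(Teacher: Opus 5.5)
Your proposal is correct and follows the paper's proof. You first lift \((V,W)\) to \({}_T^T\mathcal{YD}^{\pi^*\Phi}\) by regrading along the bijection of supports and letting \(T\) act through \(\pi\), and you then apply the gauge equivalence defined by \(J\). The scalar invariances come from the fact that the relevant braidings are conjugated only by nonzero \(J\)-scalars; the paper phrases this through the twisted action formula, and your remark that \(c^3\) on \(W_{x_2}\otimes W_{x_3}\) is conjugated by \(J(x_2,x_3)\) is the invariance argument stated after \eqref{eq:T-epsilon-definition}.

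One small correction: on \(V_z\otimes W_{x_1}\) the double braiding is exactly \(\rho(x_1)\sigma(z)\), with no cocycle factor to normalize away.
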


\begin{proof}
The map \(\pi\) is bijective from \(x_1^T\) onto \(x_1^G\).
Lifting the support degrees and letting \(T\) act through \(\pi\)
therefore gives simple objects in
\({}_T^T\mathcal{YD}^{\pi^*\Phi}\).
On every tensor word, the associators and braidings agree with
the original ones.

By \eqref{eq:T-cocycle-normalization}, the two-cochain \(J\)
induces a braided monoidal equivalence to
\({}_T^T\mathcal{YD}^{\Omega^h}\), with tensor structure and action
\begin{equation}
\label{eq:T-cochain-tensor-structure}
u_a\otimes v_b\longmapsto J(a,b)^{-1}u_a\otimes v_b,
\qquad
g\rhd_Jv_x=
\frac{J(gxg^{-1},g)}{J(g,x)}\,g\rhd v_x.
\end{equation}
This equivalence preserves the dimensions of the homogeneous
spaces, Nichols algebras, and iterated adjoint objects.
The scalar assertions follow from the action formula and
the invariance of \(\varepsilon_\Phi(W)\) established after
\eqref{eq:T-epsilon-definition}.

Retain \(\rho,\sigma\) for the inducing representations after this
replacement. By \eqref{eq:T-cocycle-representative}, the multipliers
at \(z\) and \(x_1\) are trivial on \(T\) and
\(C_T(x_1)=\langle-1,x_1,z\rangle\simeq C_2\times\mathbb Z^2\),
respectively. Thus \(\rho\) and \(\sigma\) are ordinary irreducible
representations.
\end{proof}

\begin{lemma}
\label{lem:T-cocycle-two-branches}
Assume that \(\dim\sigma=1\), \(\sigma(x_1)=-1\), and
\(\varepsilon_\Phi(W)=1\).
After the replacement in
Lemma~\ref{lem:T-compatible-normalization}, denote the inducing
character of \(C_T(x_1)\) again by \(\sigma\).
Then
\[
\bigl(h,\sigma((x_1x_2^{-1})^2)\bigr)
=(0,1)\qquad\text{or}\qquad(4,-1).
\]
\end{lemma}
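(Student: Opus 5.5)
Work in \({}_T^T\mathcal{YD}^{\Omega^h}\) after the replacement of Lemma~\ref{lem:T-compatible-normalization}. By construction the multiplier \(\Omega^h_{x_1}\) is trivial on \(C_T(x_1)=\langle -1,x_1,z\rangle\), where \(-1=i^2\) is the central element of \(Q_8\). Hence \(\sigma\) is an ordinary character, and \(\sigma(-1)=\sigma(k^2)\in\{\pm1\}\). It therefore suffices to show \(\varepsilon_{\Omega^h}(W)=\omega^{h}\,\sigma(k^2)\) for some fixed eighth root of unity \(\omega\) with \(\omega^4=-1\), and that this equals \(1\) only in the two listed cases. My plan is to compute \(c_{W,W}^3\) on \(W_{x_2}\otimes W_{x_3}\) explicitly.

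Choose the standard basis \(w_{x_l}=p_l\rhd w\) of \(W\), with \(p_1=1\) and \(p_l\) coset representatives (for instance \(p_2=k^{-1}\), or powers of \(i,j,k\), so that \(x_l=p_lx_1p_l^{-1}\)). Then compute the coefficients \(m(a;y)\) of the action as in \eqref{eq:induced-action}. Because \(\Omega\) in \eqref{eq:T-cocycle-representative} depends only on \(Q_8\)-components twisted by \(A\), every coefficient becomes \(\sigma(\text{element of }\langle -1,x_1,z\rangle)\) times values of \(F\).

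The key step is to track the three braidings. They take \(x_2\otimes x_3\to x_4\otimes x_2\to x_3\otimes x_4\to x_2\otimes x_3\), and the scalar is the product of \(x_2\rhd w_{x_3}\), \(x_4\rhd w_{x_2}\) and \(x_3\rhd w_{x_4}\). The \(C_T(x_1)\)-components \(d(a;y)\) that occur multiply to \(x_1^3\) times a power of \(-1\). So the \(\sigma\)-part is \(\sigma(x_1)^{3}\sigma(-1)^{\epsilon}=-\sigma(k^2)^\epsilon\), which matches the normalization \(\sigma(x_1)^{-1}\) in \eqref{eq:T-epsilon-definition} for \(\sigma(x_1)=-1\). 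The parity \(\epsilon\) must be checked; I expect \(\epsilon=1\). The cocycle part is a product of \(\Omega^h\)-values, and I expect it to assemble into a power of the cyclic product \(\Omega(i,j,k)\Omega(j,k,i)\Omega(k,i,j)=e^{\pi\mathrm i/4}\), raised to an odd multiple \(r\) of \(h\).

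Granting this, \(\varepsilon=e^{\pi\mathrm i rh/4}\sigma(k^2)\) with \(r\) odd. Then \(\varepsilon=1\) forces \(e^{\pi\mathrm i rh/4}=\pm1\), so \(h\in\{0,4\}\), and the sign \(\sigma(k^2)\) is determined accordingly, giving \((0,1)\) or \((4,-1)\).

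The main obstacle is this bookkeeping of \(\Omega\)-factors. If the direct evaluation of \(F\) is messy, I would instead note two facts. First, \(\varepsilon_{\Omega^h}(W)\sigma(k^2)^{-1}\) is multiplicative in \(h\), since \(\Omega^h\) is a power and the scalars are products of cocycle values for a fixed choice of \(p_l\). Second, this reduces to a single GAP computation at \(h=1\), checking that the factor is a primitive eighth root of unity times \(\sigma(k^2)^{\pm1}\).
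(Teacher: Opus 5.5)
Your strategy is the paper's. You work over \(T\) with \(\Omega^h\), use that \(\sigma\) is an ordinary character so that \(\eta:=\sigma((x_1x_2^{-1})^2)=\sigma(-1)=\pm1\), and evaluate the coefficients \(\beta_{23},\beta_{42},\beta_{34}\) of \(c_{W,W}^3\) on \(W_{x_2}\otimes W_{x_3}\) from \eqref{eq:induced-action}. As written, however, the argument stops exactly where the content of the lemma lies: both \(\epsilon=1\) and the oddness of the \(\Omega\)-exponent are only expected. The second point is not a formality. If the cocycle factor were, say, \(\mathrm i^{h}\), then \(h=2,6\) would also survive, so it must actually be computed.

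The parity needs no computation. With \(d(a;y)=p_{a\brhd y}^{-1}ap_y\), the three centralizer components telescope:
\[
d(x_4;x_2)\,d(x_3;x_4)\,d(x_2;x_3)=p_{x_3}^{-1}(x_4x_3x_2)p_{x_3},
\qquad
x_4x_3x_2=(jki)^{-1}x_1^3=-x_1^3\in Z(T).
\]
Here one writes \(x_2=k^{-1}x_1\), \(x_3=j^{-1}x_1\), \(x_4=i^{-1}x_1\) and moves \(x_1\) to the right using \(A\). Since \(\sigma\) is a character of \(C_T(x_1)\), the \(\sigma\)-contribution to \(\varepsilon_{\Omega^h}(W)\) is \(\sigma(x_1)\sigma(-x_1^3)=\sigma(x_1)^4\eta=\eta\), independently of the representatives.

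For the cocycle part, the paper takes representatives \(1,i,k,j\). Then each \(d\) equals \(-x_1\), and the tables in Lemma~\ref{lem:T-fixed-quaternion-cocycle} give the multiplier quotient \(\exp(7h\pi\mathrm i/4)\) for each of the three entries. For instance, for \(\beta_{23}\) one finds
\[
(\Omega^h)_{x_1}(x_2,k)=\exp\bigl(\tfrac{h\pi\mathrm i}{4}(F(-i,-i,j)-F(-k,-k,j))\bigr)=\exp(-5h\pi\mathrm i/4),
\]
\[
(\Omega^h)_{x_1}(j,-x_1)=\exp\bigl(\tfrac{h\pi\mathrm i}{4}F(-i,k,-1)\bigr)=\exp(h\pi\mathrm i).
\]
Hence
\[
\varepsilon_{\Omega^h}(W)=\sigma(x_1)\beta_{23}\beta_{42}\beta_{34}=\eta\exp(5h\pi\mathrm i/4),
\]
so your \(\omega\) is \(\exp(5\pi\mathrm i/4)\), and your final step yields \((0,1)\) or \((4,-1)\).

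Your fallback is also sound. Because \(\sigma\) enters only through \(\sigma(-x_1^3)\), the ratio \(\varepsilon_{\Omega^h}(W)/\eta\) is the \(h\)-th power of its value at \(h=1\). A single evaluation at \(h=1\), by hand or by machine, therefore completes the proof.
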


\begin{proof}
Work over \(T\) with cocycle \(\Omega^h\).
Choose the induced basis \(w_1,w_2,w_3,w_4\) corresponding to the
representatives \(1,i,k,j\) and the ordered support
\(x_1,x_2,x_3,x_4\). Write
\(x_a\rhd w_b=\beta_{ab}w_{a\brhd b}\),
\(q=\sigma(x_1)\), and
\(\eta=\sigma((x_1x_2^{-1})^2)\).
Since \(\sigma\) is an ordinary character after the replacement
and \((x_1x_2^{-1})^2\) has order two, one has \(\eta^2=1\).
Substituting the values of \(F\) from
Lemma~\ref{lem:T-fixed-quaternion-cocycle} into
\eqref{eq:induced-action} gives
\[
\beta_{23}=\beta_{42}=\beta_{34}
=q\eta\exp(7h\pi\mathrm i/4).
\]
For each of these three entries the remaining centralizer element
is \(-x_1\), and the quotient of the two multiplier factors in
\eqref{eq:induced-action} is \(\exp(7h\pi\mathrm i/4)\).
Equation~\eqref{eq:T-epsilon-definition} therefore gives
\[
\varepsilon_{\Omega^h}(W)
=q\beta_{23}\beta_{42}\beta_{34}
=q^4\eta\exp(5h\pi\mathrm i/4).
\]
Since \(q=-1\), its value is one exactly when
\((h,\eta)=(0,1)\) or \((4,-1)\).
\end{proof}

\subsection{Projective representations}
\label{subsec:T-projective-representations}

\begin{lemma}
\label{lem:T-group-data}
One has
\[
Z(G)=\langle z,x_1^3,x_1x_2x_3\rangle,\qquad
G/Z(G)\simeq A_4,\qquad
C_G(x_1)=\langle x_1,x_2x_3,z\rangle.
\]
The group \(C_G(x_1)\) is abelian.
\end{lemma}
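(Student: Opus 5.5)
The plan is to work in the quotient \(\overline G=G/\langle z\rangle\) only at the end, and to get all three claims from the structure of \(T\) found in the proof of Lemma~\ref{lem:T-pullback-cocycle}. That proof gives
\[
T\simeq(Q_8\rtimes\langle x_1\rangle)\times\langle z\rangle,
\]
with \(i=x_1x_4^{-1}\), \(j=x_1x_3^{-1}\), \(k=x_1x_2^{-1}\), and conjugation by \(x_1\) cycling \(i\mapsto j\mapsto k\mapsto i\). Since \(G\) is a quotient of \(T\), every element of \(G\) has the form \(q\,x_1^m z^a\) with \(q\) in the image \(\bar Q\) of \(Q_8\).

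\textbf{Step 1: the centre.} First I check that \(z\), \(x_1^3\) and \(x_1x_2x_3\) are central. The element \(z\) is central by definition. The element \(x_1^3=x_2^3\) commutes with \(x_1\) and \(x_2\), and these generate \(x_1^G\). For the third element, I rewrite \(x_1x_2x_3\) in quaternion coordinates. Using \(x_2=k^{-1}x_1\) and \(x_3=j^{-1}x_1\), it becomes a product of \(-1=k^2\) (or a similar central element) and \(x_1^3\). I will do this rewriting explicitly, and conclude that the subgroup \(\langle z,x_1^3,x_1x_2x_3\rangle\) is central.

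Next I show that the quotient by this subgroup is \(A_4\). Modulo it, \(\bar Q\) becomes \(\bar Q/\langle -1\rangle\simeq V_4\) and \(x_1\) has order three acting by the 3-cycle, so the quotient is a quotient of \(V_4\rtimes C_3\simeq A_4\).

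For the reverse inclusion and for exactness, I use the standing assumption that \(x_1,\dots,x_4\) are distinct. Conjugation then gives a homomorphism \(G\to\operatorname{Sym}(x_1^G)\). Its image is \(\operatorname{Inn}(Z_T^{4,1})\simeq A_4\), of order \(12\) as noted in the proof of Proposition~\ref{prop:Gamma4-pair-with-finite-Cartan-matrix}. Its kernel is exactly \(Z(G)\), because \(G=\langle z,x_1^G\rangle\) and \(z\) is central. So \(G/Z(G)\simeq A_4\), and comparing with the surjection \(A_4\to G/\langle z,x_1^3,x_1x_2x_3\rangle\) forces equality of the two subgroups.

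\textbf{Step 2: the centralizer.} The orbit \(x_1^G\) has four elements, so \(C_G(x_1)\) is the preimage of a point stabilizer in \(A_4\), namely a \(C_3\). This \(C_3\) is generated by the image of \(x_1\), since \(f_{x_1}\) fixes \(x_1\) and acts as a 3-cycle on the other three points. Hence
\[
C_G(x_1)=\langle x_1\rangle Z(G)=\langle x_1,z,x_1^3,x_1x_2x_3\rangle .
\]
Since \(x_1\in C_G(x_1)\), the generator \(x_1x_2x_3\) may be replaced by \(x_2x_3\), which gives \(\langle x_1,x_2x_3,z\rangle\) (here \(x_1^3\) is a power of \(x_1\)). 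This group is generated by \(x_1\) together with central elements, so it is abelian.

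\textbf{Main obstacle.} The hardest step is the explicit identification of \(x_1x_2x_3\) inside \(Q_8\rtimes\langle x_1\rangle\), which shows it is central. I would do it by direct substitution. Assuming the order of factors works out as \(x_1x_2x_3=x_1k^{-1}x_1j^{-1}x_1\), moving the \(x_1\)'s to the right using the cyclic action gives
\[
x_1k^{-1}x_1j^{-1}x_1 = i^{-1}k^{-1}x_1^3 .
\]
Then \(i^{-1}k^{-1}=(ki)^{-1}=j^{-1}\). Since \(j\) itself is not central, this indicates I have the ordering slightly off. So the first thing I would do is recheck the substitutions \(x_2=k^{-1}x_1\) and \(x_3=j^{-1}x_1\) against the definitions \(x_3=x_2x_1x_2^{-1}\) and \(j=x_2^{-1}x_1\), and recompute until I obtain a product of the form \(\pm x_1^3\).

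If that identification fails, I have a fallback for the centre that avoids it: use only the kernel description from the quandle action, and verify that \(x_1x_2x_3\) acts trivially on \(\{x_1,\dots,x_4\}\) by composing the permutations \((243)(134)(142)\) from the rack table.
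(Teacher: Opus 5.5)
Your plan is the paper's argument. The conjugation action on \(x_1^G\) has image \(A_4\) and kernel \(Z(G)\). The subgroup \(N=\langle z,x_1^3,x_1x_2x_3\rangle\) is central and \(G/N\) is a quotient of \(A_4\), so \(N=Z(G)\) by counting. Finally, \(C_G(x_1)\) is the preimage of the stabilizer \(\langle(243)\rangle\), namely \(\langle x_1\rangle Z(G)\).

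The one thing to repair is the computation you flagged. Your substitutions \(x_2=k^{-1}x_1\) and \(x_3=j^{-1}x_1\) are correct, so the ordering is not the problem. The slip is that after moving the first \(x_1\) past \(k^{-1}\), the factor \(x_1^2\) conjugates \(j^{-1}\) twice, which gives \(i^{-1}\) rather than \(k^{-1}\). Equivalently, using \(x_1j^{-1}=k^{-1}x_1\),
\[
x_1x_2x_3=x_1k^{-1}\,k^{-1}x_1^{2}=(x_1k^{-2}x_1^{-1})\,x_1^{3}=i^{-2}x_1^{3}=i^{2}x_1^{3},
\]
and \(i^2=j^2=k^2\) is central.

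Keep this identity rather than relying only on your fallback. The check \(f_{x_1}f_{x_2}f_{x_3}=(243)(134)(142)=\operatorname{id}\) does show \(x_1x_2x_3\in Z(G)\). However, your bound \(|G/N|\le 12\) uses \(i^2\in N\) in order to pass to \(\bar Q/\langle i^2\rangle\). That membership is exactly the relation \(i^2=x_1x_2x_3\,x_1^{-3}\).
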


\begin{proof}
The conjugation action of \(x_1,x_2,x_3,x_4\) on their common
conjugacy class is
\[
(243),\qquad(134),\qquad(142),\qquad(123).
\]
These permutations lie in \(A_4\).  The subgroup generated by the
first two is transitive.  Its order is divisible by both \(4\) and
\(3\), so it is \(A_4\).
An element lies in the kernel of this action exactly when it commutes
with all four \(x_i\).  Since \(z\) is central and \(G\) is generated
by \(z,x_1,x_2,x_3,x_4\), the kernel is \(Z(G)\).  Hence
\(G/Z(G)\simeq A_4\).

It is direct that the elements
\(z,x_1^3,x_1x_2x_3\) are central, and
\(G/\langle z,x_1^3,x_1x_2x_3\rangle\) is a quotient of \(A_4\).
This quotient maps onto \(G/Z(G)\simeq A_4\).  Its order is therefore
at most \(12\) and at least \(12\).  Thus
\(Z(G)=\langle z,x_1^3,x_1x_2x_3\rangle\).

The centralizer of \((243)\) in \(A_4\) is
\(\langle(243)\rangle\).  Its inverse image in \(G\) is
\(\langle x_1,Z(G)\rangle=\langle x_1,x_2x_3,z\rangle\).
Every element of \(C_G(x_1)\) lies in this inverse image, while its
generators commute with \(x_1\).  Hence it is \(C_G(x_1)\).
The group is abelian: \(z\) and \(x_1x_2x_3\) are central, and
\(x_2x_3=x_1^{-1}(x_1x_2x_3)\).

\end{proof}

\begin{prop}
\label{prop:T-sigma-one-dimensional}
One has \(\dim\sigma=1\).
\end{prop}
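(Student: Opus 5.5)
The plan is to follow the template of Lemma~\ref{lem:Gamma2-projective-representations-one-dimensional} and Lemma~\ref{lem:Gamma4-rho-one-dimensional}. First, reduce to a commutator computation in the twisted group algebra \(\mathbb C^{\Phi_{x_1}}C_G(x_1)\). By Lemma~\ref{lem:T-group-data}, \(C_G(x_1)=\langle x_1,x_2x_3,z\rangle\) is abelian. Hence for \(a,b\in C_G(x_1)\) we have \(c_{x_1}(a,b):=\Phi_{x_1}(a,b)/\Phi_{x_1}(b,a)=f_\Phi(x_1,a,b)\), and Lemma~\ref{lem:alternator} makes \(c_{x_1}\) an alternating bicharacter on \(C_G(x_1)\). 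Since \(c_{x_1}(x_1,\cdot)=f_\Phi(x_1,x_1,\cdot)=1\), only the pairs \((x_2x_3,z)\) matter.

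Next, use centrality. Write \(x_2x_3=x_1^{-1}c\) with \(c=x_1x_2x_3\in Z(G)\). Then
\[
c_{x_1}(x_2x_3,z)=f_\Phi(x_1,x_1^{-1},z)\,f_\Phi(x_1,c,z)=f_\Phi(x_1,c,z).
\]
Because \(c\) and \(z\) are both central, Lemma~\ref{lem:alternator} shows that \(\chi:x\mapsto f_\Phi(x,c,z)\) is a group character of \(G\). Since \(x_1,x_2,x_3,x_4\) are conjugate, \(\chi\) takes a common value \(\theta\) on them, and \(c_{x_1}\) is trivial precisely when \(\theta=1\).

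The key step is to show \(\theta=1\), and there are two routes. The first route uses \(x_1^3=x_2^3\) together with the relation \(c=x_1x_2x_3\). This gives \(\chi(c)=\theta^3\); alternation gives \(\chi(c)=f_\Phi(c,c,z)=1\), so \(\theta^3=1\). Similarly \(\chi(z)=1\). To rule out a primitive cube root of unity, I would imitate the double-braiding argument of Lemma~\ref{lem:Gamma2-projective-representations-one-dimensional}. Assume \(\theta\ne1\) and choose a common eigenvector \(w\in W_{x_1}\) of the commuting operators \(x_1\) (central by Schur's lemma) and \(z\). Then \(w_-:=(x_2x_3)\rhd w\) has a \(z\)-eigenvalue changed by \(\theta\). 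Next compare \(c^2_{W,W}\) on \(w\otimes(x_2\rhd w)\) with its transport under \(x_2x_3\) or another suitable group element. Naturality of the double braiding, together with the fact that \(z\) and \(x_1\) act by scalars along degrees, should force \(\theta=1\).

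The second route avoids the braiding argument entirely. Since \(\chi\) is a character, \(\chi\) factors through \(G/[G,G]\). The abelianization of \(T\) identifies all \(x_i\), so this by itself does not force \(\theta=1\); that is why I would fall back on the first route.

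The main obstacle is exactly this step: exhibiting an element and a tensor where \(c^2_{W,W}\) eigenvalues differ by \(\theta\). I would first try the pair \(w\otimes w\) versus \((x_2x_3)\rhd(w\otimes w)\). On \(W_{x_1}\otimes W_{x_1}\) the double braiding is \(x_1\otimes x_1\), and it commutes with the action of \(x_2x_3\) up to the factor \(c_{x_1}(x_2x_3,x_1)^2=1\), which gives no contradiction. So I would instead use the degree-\((x_1,x_2)\) component, where the double braiding involves the action of \(x_1x_2x_1^{-1}=x_4\) and of \(x_1\). Conjugating by \(z\) multiplies one factor by \(\theta\) and not the other. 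Once \(\theta=1\), \(\mathbb C^{\Phi_{x_1}}C_G(x_1)\) is commutative, and the irreducible \(\sigma\) is one-dimensional.
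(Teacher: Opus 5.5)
Your reduction is sound. On the abelian group $C_G(x_1)=\langle x_1,x_2x_3,z\rangle$ the commutator form of $\Phi_{x_1}$ is $f_\Phi(x_1,-,-)$, so everything comes down to $\theta=f_\Phi(x_1,x_2x_3,z)=f_\Phi(x_1,c,z)$ with $c=x_1x_2x_3$. Your derivation of $\theta^3=1$ through the character $\chi(x)=f_\Phi(x,c,z)$ is also correct.

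There is, however, a genuine gap: the decisive step $\theta=1$ is never established, so as written the proposal only shows $\theta^3=1$. The braiding sketch meant to exclude a primitive cube root of unity does not work as described:
\begin{itemize}
\item $c_{W,W}^2$ does not preserve $W_{x_1}\otimes W_{x_2}$. Since $x_1\brhd x_2=x_4$ and $x_4\brhd x_1=x_2$, it maps this component onto $W_{x_2}\otimes W_{x_4}$, and only $c_{W,W}^3$ returns to it. So there are no eigenvalues of $c^2_{W,W}$ to compare there.
\item $z$ is central, so conjugating by $z$ moves no degree. Acting by $z$ commutes with $c^2_{W,W}$ by naturality, so it cannot by itself separate eigenvalues.
\end{itemize}
Your remark that characters of $G$ alone give no more than $\theta^3=1$ is right; the missing constraint has to come from elsewhere.

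The paper supplies it cohomologically. It writes $\pi^*\Phi=\Omega^h\delta J$ as in \eqref{eq:T-cocycle-normalization}. It then notes that $f_{\delta J}$ is trivial on commuting triples, and that $\Omega$ equals $1$ whenever one argument is $z$ (by \eqref{eq:T-cocycle-representative} and the normalization of $F$). Hence $f_\Phi(x_1,x_2x_3,z)=f_{\Omega^h}(x_1,x_2x_3,z)\,f_{\delta J}(x_1,x_2x_3,z)=1$.

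Alternatively, you can close your own argument elementarily by using the second variable rather than $\chi$:
\begin{itemize}
\item In $T\simeq(Q_8\rtimes\langle x_1\rangle)\times\langle z\rangle$, with $i=x_1x_4^{-1}$ as in the proof of Lemma~\ref{lem:T-pullback-cocycle}, one computes $x_1x_2x_3=i^{-2}x_1^3$.
\item Since $i^{-2}$ is central of order two, $(x_1x_2x_3)^2=x_1^6$ in $T$, hence in $G$.
\item Multiplicativity of $f_\Phi$ in the second variable on the abelian group $C_G(x_1)\ni c,x_1$ gives $\theta^2=f_\Phi(x_1,c^2,z)=f_\Phi(x_1,x_1,z)^6=1$.
\end{itemize}
Together with $\theta^3=1$, this forces $\theta=1$.
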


\begin{proof}
The defining relations of \(T\) give
\(x_1x_2x_3=x_2x_3x_1\), and \(z\) is central.
Thus \(x_1,x_2x_3,z\) commute in \(T\).
As before, the same symbols denote their images in \(G\).
For any commuting triple \(a,b,c\), substituting the formula for
\(\delta J\) in \eqref{eq:T-cocycle-normalization} into the
definition of the alternator gives \(f_{\delta J}(a,b,c)=1\).
Indeed, all two-cochain factors cancel because
\(ab=ba\), \(bc=cb\), and \(ca=ac\).
Moreover, \eqref{eq:T-cocycle-representative} and the normalization
of \(F\) give
\(\Omega(a,b,z)=\Omega(a,z,b)=\Omega(z,a,b)=1\)
for all \(a,b\in T\).
Hence all six cocycle values occurring in
\(f_{\Omega^h}(x_1,x_2x_3,z)\) are one.
Using \(\pi^*\Phi=\Omega^h\delta J\), we therefore obtain
\[
\begin{aligned}
f_\Phi(x_1,x_2x_3,z)
&=f_{\pi^*\Phi}(x_1,x_2x_3,z)\\
&=f_{\Omega^h}(x_1,x_2x_3,z)
  f_{\delta J}(x_1,x_2x_3,z)
=1.
\end{aligned}
\]
By Lemma~\ref{lem:T-group-data},
\(C_G(x_1)=\langle x_1,x_2x_3,z\rangle\) is abelian.
Lemma~\ref{lem:alternator} shows that the restriction of
\(f_\Phi\) to this group is an alternating tricharacter,
determined by its value on \((x_1,x_2x_3,z)\).
It is therefore trivial. Thus
\(\Phi_{x_1}(a,b)/\Phi_{x_1}(b,a)=f_\Phi(x_1,a,b)=1\)
for all \(a,b\in C_G(x_1)\).
The corresponding twisted group algebra is commutative,
so its irreducible module \(W_{x_1}\) is one-dimensional.
\end{proof}
\begin{lemma}
\label{lem:T-central-projective-reduction}
The homomorphism
\(g\mapsto[\rho(g)]\in\operatorname{PGL}(V_z)\) factors through
\(G/Z(G)\simeq A_4\), and the resulting projective representation is
irreducible.  In particular, \(\dim\rho\in\{1,2,3\}\).
\end{lemma}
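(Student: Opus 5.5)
We first note that \(z\) is central in \(G\), so \(C_G(z)=G\) and \(V=V_z\) is a single homogeneous component. By Proposition~\ref{prop:simple-objects}, \(\rho\) is an irreducible \(\Phi_z\)-projective representation of \(G\). Consequently the map \(g\mapsto[\rho(g)]\in\operatorname{PGL}(V_z)\) is a group homomorphism: the relation \(\rho(a)\rho(b)=\Phi_z(a,b)\rho(ab)\) becomes equality of classes modulo scalars.

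To show that it factors through \(G/Z(G)\), fix \(c\in Z(G)\). By the Corollary to Lemma~\ref{lem:local-cocycle-coherence}, \(\Phi_z\) is a normalized \(2\)-cocycle on \(G\). For \(a\in G\) we have
\[
\rho(c)\rho(a)\rho(c)^{-1}
=\frac{\Phi_z(c,a)}{\Phi_z(a,c)}\,\rho(a)=f_\Phi(z,c,a)\,\rho(a),
\]
where the second equality uses the identity \(c_x(a,b)=f_\Phi(x,a,b)\) recorded earlier, with \(x=z\). By Lemma~\ref{lem:alternator}, since \(z,c\in Z(G)\), the map \(a\mapsto f_\Phi(a,z,c)\) is a character of \(G\). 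By the cyclic invariance of the alternator, \(a\mapsto f_\Phi(z,c,a)=f_\Phi(a,z,c)\) is then also a character. Every character of \(G\) is trivial on \([G,G]\).

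The key step is to show that \(\rho(c)\) is a scalar for every \(c\in Z(G)\); I expect this to be the main obstacle. First, the character \(a\mapsto f_\Phi(z,c,a)\) is determined by its values on the generators \(z\) and \(x_1^G\). All \(x_i\) are conjugate, so the character takes a common value \(t\) on them. On \(z\) it equals \(f_\Phi(z,c,z)=1\) by alternation.

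Next, by Lemma~\ref{lem:T-group-data}, \(x_2x_3=x_1^{-1}(x_1x_2x_3)\) with \(x_1x_2x_3\) central, so \(x_1x_2x_3\) commutes with \(x_1\). The group \(A=\langle x_1,x_2x_3,z\rangle\) is abelian and contains \(c\) and \(z\). Hence \(t=f_\Phi(z,c,x_1)\) can be evaluated with the alternating tricharacter \(f_\Phi|_{A^3}\) of Lemma~\ref{lem:alternator}, and we can use the relations in \(G\). Since \(x_1\) and \(x_2\) are conjugate, we have \(x_2=yx_1y^{-1}\). Moreover, \(x_1x_2^{-1}\) lies in \([G,G]\), and so does \(x_1^3\) modulo \(z\)-terms: indeed \(G/[G,G]\) is generated by the images of \(z\) and \(x_1\), and \(x_1^3=x_2^3\) gives no new information there. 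So we argue as follows instead.

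Because \(c\in Z(G)=\langle z,x_1^3,x_1x_2x_3\rangle\) and \(f_\Phi(z,\cdot,\cdot)\) is multiplicative on commuting arguments, it suffices to treat \(c=z\), \(c=x_1^3\) and \(c=x_1x_2x_3\). For \(c=z\) we get \(f_\Phi(z,z,a)=1\) directly. For \(c=x_1^3\) we have \(t=f_\Phi(z,x_1^3,x_1)=f_\Phi(z,x_1,x_1)^3=1\). For \(c=x_1x_2x_3\), the product of conjugates \(x_1x_2x_3\) is fixed by conjugation by \(x_1\), and \(f_\Phi(z,x_1x_2x_3,x_1)=f_\Phi(z,x_2x_3,x_1)\). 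This can be compared via conjugation invariance, \(f_\Phi(z,b,a)=f_\Phi(z,{}^yb,{}^ya)\) for central \(z\), which is the step needing care. If this route fails, I would instead use the normalization \(\pi^*\Phi=\Omega^h\delta J\) from the previous subsection: \(f_{\delta J}=1\) on commuting triples, and \(\Omega\) is trivial whenever one argument is \(z\). Then \(f_\Phi(z,c,a)=1\) outright for all commuting \(c,a\) with \(c\) central. This second argument is cleaner, and I would lead with it.

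Given this, \(\rho(c)\) commutes with the irreducible family \(\rho(G)\), so it is a scalar by Schur's lemma. Hence the projective representation descends to \(G/Z(G)\simeq A_4\). The descended representation has the same image in \(\operatorname{PGL}(V_z)\), so it is irreducible. Finally, it lifts to an ordinary irreducible representation of the Schur cover \(2.A_4\simeq\mathrm{SL}_2(\mathbb F_3)\). The irreducible representations of that group have dimensions in \(\{1,2,3\}\), so \(\dim\rho\in\{1,2,3\}\).
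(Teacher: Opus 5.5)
Your final argument is correct, but it differs from the paper's in two places.

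For the key step, that \(\rho(c)\) is scalar for \(c\in Z(G)\), the paper does not apply \(\pi^*\Phi=\Omega^h\delta J\) to the triples \((z,c,a)\). It uses only the single value \(f_\Phi(x_1,x_2x_3,z)=1\) from the proof of Proposition~\ref{prop:T-sigma-one-dimensional}, and then argues with characters. For \(c\in\{x_1^3,x_1x_2x_3\}\), the character \(g\mapsto f_\Phi(g,z,c)\) equals \(1\) at \(z\) and at \(x_1\). It is therefore trivial, because it is a class function and \(G=\langle z,x_1^G\rangle\). This is precisely your first route. Your case \(c=x_1x_2x_3\) closes without any conjugation invariance: cyclicity and alternation on the abelian group \(C_G(x_1)=\langle x_1,x_2x_3,z\rangle\) give \(f_\Phi(z,x_2x_3,x_1)=f_\Phi(x_1,x_2x_3,z)^{-1}=1\).

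Your direct route treats all of \(Z(G)\) at once and bypasses the character argument, at the price of invoking the cohomological normalization for every triple. To make it rigorous, note that the cancellation \(f_{\delta J}(z,\tilde c,\tilde a)=1\) requires the lifts to commute in \(T\), not merely their images in \(G\). So take \(\tilde c\) central in \(T\). This is possible because \(z,x_1^3,x_1x_2x_3\in Z(T)\) and their images generate \(Z(G)\). Then each of the six \(\Omega\)-values in \(f_{\Omega^h}(z,\tilde c,\tilde a)\) has \(z\) as an argument and equals \(1\).

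For the dimension bound, the paper uses that the twisted group algebra of \(A_4\) is semisimple of dimension \(12\), so the block of \(V_z\) gives \((\dim\rho)^2\le 12\). Your lift to the Schur cover \(\mathrm{SL}_2(\mathbb F_3)\) is also valid and gives sharper information. It does, however, depend on knowing \(H^2(A_4,\mathbb C^\times)\) and the character degrees of \(\mathrm{SL}_2(\mathbb F_3)\), whereas the paper's count is self-contained.

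Your discarded remark that \(x_1^3\) lies in \([G,G]\) modulo \(z\) is false in general, but nothing in your final argument depends on it.
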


\begin{proof}
The proof of Proposition~\ref{prop:T-sigma-one-dimensional} gives
\(f_\Phi(x_1,x_2x_3,z)=1\).
For each \(c\in\{x_1^3,x_1x_2x_3\}\),
Lemma~\ref{lem:alternator} shows that
\(g\mapsto f_\Phi(g,z,c)\) is a character of \(G\).
Its value at \(z\) is \(1\) by alternation.
For \(c=x_1^3\), its value at \(x_1\) is
\(f_\Phi(x_1,z,x_1)^3=1\).
For \(c=x_1x_2x_3\), its value at \(x_1\) is
\(f_\Phi(x_1,x_2x_3,z)^{-1}=1\).
A character has the same value on conjugate elements.  Since
\(G=\langle z,x_1^G\rangle\), both characters are trivial.
For \(c=z\), alternation gives \(f_\Phi(g,z,z)=1\) directly.

Let \(c\in\{z,x_1^3,x_1x_2x_3\}\) and \(g\in G\).
Since \(z\) and \(c\) are central,
Lemma~\ref{lem:alternator} gives
\( \frac{\Phi_z(c,g)}{\Phi_z(g,c)}
=f_\Phi(z,c,g)=f_\Phi(g,z,c)=1\).
The equation \eqref{eq:YD-projective-action} therefore yields
\[
\rho(c)\rho(g)
=\Phi_z(c,g)\rho(cg)
=\Phi_z(g,c)\rho(gc)
=\rho(g)\rho(c).
\]
By Schur's lemma, \(\rho(c)\) is a scalar.
These three elements generate \(Z(G)\), so
 {\eqref{eq:YD-projective-action}} shows that every element
of \(Z(G)\) acts by a scalar.
 {The same identity} also shows that
\(g\mapsto[\rho(g)]\) is a homomorphism from \(G\) to
\(\operatorname{PGL}(V_z)\).  It is trivial on \(Z(G)\), so it
factors through \(G/Z(G)\simeq A_4\).  The resulting projective
representation is irreducible.

The corresponding twisted group algebra of \(A_4\) is semisimple and
has dimension \(12\).  Hence the matrix block belonging to this
irreducible representation has dimension
\((\dim\rho)^2\leq12\).  Therefore
\(\dim\rho\in\{1,2,3\}\).
\end{proof}

\subsection{Adjoint objects}
\label{subsec:T-adjoint-objects}

Write \(Y_m=(\operatorname{ad}W)^m(V)\).  Let
\(\varphi_m^{Y,\Phi}\) be the   recursive
adjoint map from Lemma~\ref{lem:adjoint-object-realization}, with
\(V\) and \(W\) interchanged.  Let
\(Q_m^Y(r_1,\ldots,r_m;z)\) denote the spaces \(Q_m\) from
Subsection~\ref{subsec:support-restrictions}, with \(V\) and \(W\)
interchanged.  We retain the notation \(\supp Q\) from that subsection
for multidegree support.  By
Lemma~\ref{lem:adjoint-object-realization}, the sum of the spaces
\(Q_m^Y(r_1,\ldots,r_m;z)\), over all
\(r_1,\ldots,r_m\in\supp W\), is isomorphic to \(Y_m\).

\subsubsection{Dimension constraints from the adjoint objects}

\begin{lemma}
\label{lem:T-x1-action-minus-one}
Assume that
\(Y_3=(\operatorname{ad}W)^3(V)\) is nonzero and simple.  Then
\[
x_1\rhd w=-w\qquad(w\in W_{x_1}).
\]
\end{lemma}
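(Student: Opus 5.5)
We must show \(\sigma(x_1)=-1\) whenever \(Y_3\neq0\) is simple. By Proposition~\ref{prop:T-sigma-one-dimensional}, \(W_{x_1}=\mathbb Cw\) and \(x_1\rhd w=qw\) with \(q=\sigma(x_1)\), so the claim is \(q=-1\). The plan is to locate one multidegree of \(Y_3\) whose coefficient carries the factor \((1+q)\), and use simplicity to force that component to be proportional to a component that is certainly nonzero.

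\textbf{Step 1: degrees of \(Y_3\).} The tensors in \(Y_3\) have multidegrees \((r_1,r_2,r_3,z)\) with \(r_i\in x_1^G\), so the total degree \(r_1r_2r_3z\) has the form \(x_1^3z\cdot(\text{element of }Q_8\text{-part})\). Simplicity makes \(\supp Y_3\) a single conjugacy class, and Lemma~\ref{lem:T-group-data} then controls the homogeneous components. Since \(z\) is central, the double braiding \(c_{V,W}c_{W,V}\) on \(W_{r}\otimes V_z\) is \(r\rhd(\cdot)\otimes z\rhd(\cdot)\). By Proposition~\ref{prop:multidegree-insertion}, \(Q_3^Y\) contains components with repeated entries \((x_1,x_1,x_1,z)\) as well as components with distinct entries such as \((x_1,x_2,x_3,z)\); the latter occur because \(x_1\) does not fix \(x_2\) and the insertion conditions hold.

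\textbf{Step 2: the \(q\)-factor.} Compute the component of \(\varphi_3^{Y,\Phi}(w\otimes\varphi_2(w\otimes\varphi_1(w\otimes v)))\) of multidegree \((x_1,x_1,x_1,z)\). On \(W_{x_1}^{\otimes k}\) the braiding \(c_{W,W}\) acts by \(q\) up to associator factors, and these factors cancel exactly as in Lemma~\ref{lem:Gamma2-X3-formula}. The recursion for \(\varphi_n\) should therefore give the quantum factorial \((1+q)(1+q+q^2)\), times a nonzero \(\varphi_1\)-factor, times a cocycle scalar. Next compare total degrees: \(x_1^3z\) is central in \(G\), whereas a component such as \(x_1x_2x_3z\) has total degree \(x_1x_2x_3z\), which is also central. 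I would instead pick a noncentral total degree \(d\) that occurs in \(Y_3\) via both a repeated-entry word and a distinct-entry word.

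\textbf{Main obstacle.} A cleaner route is available: \(Y_3\) simple means the top adjoint is simple, so \(\theta_{Y_3}\) is a scalar. On the degree-\(d\) component, the twist identity
\[
\theta_{W^{\otimes3}\otimes V}=\text{(full double braidings)}\circ(\theta_W^{\otimes3}\otimes\theta_V)
\]
expresses this scalar through \(q^3\rho(z)\) and the monodromy. Comparing the action of the degree on two components whose monodromy eigenvalues differ by a factor \(q\) forces \(q^2=1\) at best, so I am unsure this route closes. I would fall back on direct coefficient extraction. Choose a component that appears only via the \(c_{1,2}\)-term with coefficient \(1+q\), and another component, produced by Proposition~\ref{prop:multidegree-insertion}, whose coefficient does not vanish. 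Then use \(C_G(d)\)-stability of the one-dimensional line, one-dimensional because of Proposition~\ref{prop:T-sigma-one-dimensional}, to show that the ratio of these coefficients equals the ratio of action scalars. If that cannot hold with \(1+q\neq0\), it rules out \(q\ne-1\). Verifying the cocycle cancellations, presumably via Lemma~\ref{lem:T-compatible-normalization} to pass to \(\Omega^h\), is the expected hard computation.
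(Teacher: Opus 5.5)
Your proposal has a genuine gap: the step that actually forces \(q=-1\) is missing, and none of the routes you sketch can supply it.

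\textbf{The missing idea.} You first need to pin down \(\supp Y_3\).
\begin{enumerate}
\item Since \(Y_1\neq0\), \(G\)-equivariance gives \(Q_1^Y(x_4;z)\neq0\).
\item Two applications of Proposition~\ref{prop:multidegree-insertion}, inserting \(x_1\) in the first slot and then \(x_2\) in the second, give \((x_1,x_4,z)\in\supp Q_2^Y(x_1,x_4;z)\) and \((x_3,x_2,x_4,z)\in\supp Q_3^Y(x_2,x_1,x_4;z)\).
\item The total degree \(x_3x_2x_4z=x_1x_2x_3z\) is central, so simplicity forces \(\supp Y_3=\{x_1x_2x_3z\}\).
\item Consequently every component of noncentral degree vanishes. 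In particular \(Q_3^Y(x_1,x_1,x_4;z)=0\), because \(x_1^2x_4z\) is not central.
\item In the \((x_1,x_1,x_4,z)\)-component of \(\varphi_3^{Y,\Phi}\) on \(W_{x_1}\otimes Q_2^Y(x_1,x_4;z)\), only the identity term and the \(c_{1,2}^\Phi\)-term contribute, and the associator factors cancel.
\item This yields \(w\otimes w_0+(x_1\rhd w_0)\otimes w=0\), hence \(q=-1\).
\end{enumerate}
The decisive factor is \(1+q\), coming from a single adjacent repeated pair. The argument uses vanishing of a component, not proportionality of two nonzero coefficients.

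\textbf{Why your steps fail.}
\begin{itemize}
\item Proposition~\ref{prop:multidegree-insertion} cannot produce \((x_1,x_1,x_1,z)\). Its hypothesis \(p_i\brhd p\neq p\) fails when \(x_1\) is inserted in front of \(x_1\) or \(z\).
\item Even granting a coefficient \((1+q)(1+q+q^2)\) there, its vanishing would only give \(q=-1\) or \(q^2+q+1=0\).
\item That vanishing is not available anyway. In \(T\) one has \(x_1x_2x_3=(x_1x_2^{-1})^2x_1^3\), and \((x_1x_2^{-1})^2\) may become trivial in \(G\), for example when the image of \(Q_8\) is a Klein four-group. In that case \(x_1^3z\) is precisely the degree supporting \(Y_3\).
\item Your fallback is to choose a noncentral degree \(d\) occurring in \(Y_3\) and compare two nonzero coefficients via \(C_G(d)\)-stability. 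This is incompatible with simplicity: once a central degree is known to occur, \((Y_3)_d=0\) for every noncentral \(d\).
\item One-dimensionality of a homogeneous component of \(Y_3\) does not follow from Proposition~\ref{prop:T-sigma-one-dimensional}, which concerns only \(W_{x_1}\).
\item The twist-based route, as you yourself note, does not close.
\end{itemize}
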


\begin{proof}
If \(Y_1=0\), then the recursive definition gives \(Y_3=0\).
Hence \(Y_1\neq0\). By Lemma~\ref{lem:adjoint-object-realization}, at least one of
the spaces \(Q_1^Y(x_i;z)\) is nonzero.
The first adjoint map commutes with the \(G\)-action, so
\(g\rhd Q_1^Y(x_i;z)=Q_1^Y(gx_ig^{-1};z)\).
Since the \(x_i\) form a single conjugacy class, it follows that
\(Q_1^Y(x_4;z)\neq0\).
Moreover, centrality of \(z\) implies that the double braiding
preserves \(W_{x_4}\otimes V_z\).
Thus \(Q_1^Y(x_4;z)\subseteq W_{x_4}\otimes V_z\), and its
multidegree is \((x_4,z)\).
Since
\[
x_4\brhd x_1=x_2\neq x_1,\qquad
z\brhd x_1=x_1,\qquad x_1\neq x_4,
\]
Proposition~\ref{prop:multidegree-insertion}, applied by inserting
\(x_1\) in the first position, gives
\begin{equation*}
(x_1,x_4,z)
\in\supp Q_2^Y(x_1,x_4;z).
\end{equation*}
Insert \(x_2\) in the second position.  Here
\(x_2\brhd x_1=x_3\), \(x_4\brhd x_2=x_3\neq x_2\), and
\(z\brhd x_2=x_2\), while the exclusion set in
\eqref{eq:multidegree-insertion-2} is
\(\{x_1,x_4,x_3\}\).  Therefore
\begin{equation}
\label{eq:T-x1-action-second-tuple}
(x_3,x_2,x_4,z)
\in\supp Q_3^Y(x_2,x_1,x_4;z).
\end{equation}

The relations
\(x_3x_2=x_1x_3\) and \(x_3x_4=x_2x_3\) give
\(x_3x_2x_4=x_1x_2x_3\).  Hence the component in
\eqref{eq:T-x1-action-second-tuple} has degree
\(x_1x_2x_3z\), which is central by
Lemma~\ref{lem:T-group-data}.  Since \(Y_3\) is simple, its support
is one conjugacy class, and therefore
\(\supp Y_3=\{x_1x_2x_3z\}\).
On the other hand, every vector in
\(Q_3^Y(x_1,x_1,x_4;z)\) has degree \(x_1^2x_4z\).  This degree is
not central: since
\(x_1^2x_4=x_1^3x_2x_1^{-1}\) and \(x_1^3\) is central,
\[
(x_1^2x_4)\brhd x_1=x_2x_1x_2^{-1}=x_3\neq x_1.
\]
Consequently,
\begin{equation}
\label{eq:T-x1-action-repeated-vanishing}
Q_3^Y(x_1,x_1,x_4;z)=0.
\end{equation}

The degree formulas in the proof of
Proposition~\ref{prop:multidegree-insertion} show that the possible
multidegrees of \(Q_2^Y(x_1,x_4;z)\) are
$
(x_1,x_4,z),\ (x_3,x_1,z),\ (x_4,x_3,z).
$
Consider the \((x_1,x_1,x_4,z)\)-component of
\(\varphi_3^{Y,\Phi}\) on
\(W_{x_1}\otimes Q_2^Y(x_1,x_4;z)\).
The identity term contributes only from \((x_1,x_4,z)\).
The middle double-braiding term begins in degree
\((x_1^2x_4z)\brhd x_1=x_3\), and the other two multidegrees begin,
after \(c_{1,2}^{\Phi}\), in degrees \(x_2\) and \(x_3\).
Thus the required component has exactly two contributions, acting
on the first two factors as
\(\operatorname{id}_{W\otimes W}\) and \(c_{W,W}\). The two
associativity factors in \(c_{1,2}^{\Phi}\) cancel.

Choose an element of \(Q_2^Y(x_1,x_4;z)\) with a nonzero
\((x_1,x_4,z)\)-component.  Applying a suitable linear functional
on \(W_{x_4}\otimes V_z\) gives \(0\neq w_0\in W_{x_1}\).
Equation~\eqref{eq:T-x1-action-repeated-vanishing} then yields
\[
w\otimes w_0+(x_1\rhd w_0)\otimes w=0
\qquad(w\in W_{x_1}).
\]
Taking \(w=w_0\) gives \(x_1\rhd w_0=-w_0\).
By Proposition~\ref{prop:T-sigma-one-dimensional},
\(W_{x_1}\) is one-dimensional, so \(x_1\) acts by \(-1\)
on all of \(W_{x_1}\).
\end{proof}

\begin{prop}
\label{prop:T-rho-one-dimensional}
Assume that \(\mathcal B(V\oplus W)\) is finite-dimensional and that
\((V,W)\) has Cartan matrix
\[
\begin{pmatrix}
2&-1\\
-3&2
\end{pmatrix}.
\]
Then \(\dim\rho=1\).
\end{prop}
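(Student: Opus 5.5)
With \(V=M(z,\rho)\) and \(W=M(x_1,\sigma)\), the Cartan matrix gives \(m_{12}=1\) and \(m_{21}=3\). So \(X_1=(\operatorname{ad}V)(W)\) is nonzero and \(X_2=0\), while \(Y_1,Y_2,Y_3\) are nonzero and \(Y_4=0\). By Lemma~\ref{lem:root-string-adjoint-simplicity}, all of \(X_1,Y_1,Y_2,Y_3\) are simple. The proof of Lemma~\ref{lem:T-x1-action-minus-one} already shows that \(\supp Y_3=\{x_1x_2x_3z\}\) is a single central element. I will exploit this centrality, together with Lemma~\ref{lem:T-central-projective-reduction}, which gives \(\dim\rho\in\{1,2,3\}\) and identifies \(\rho\) with an irreducible projective representation of \(A_4\).

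\textbf{Step 1: dimension of \(Y_3\).} Since \(Y_3\) is simple with central support \(c=x_1x_2x_3z\), it equals \(M(c,\tau)\) for an irreducible \(\Phi_c\)-projective representation \(\tau\) of all of \(G\). The argument of Lemma~\ref{lem:T-central-projective-reduction} shows \(\tau\) factors projectively through \(A_4\), since \(f_\Phi(g,c,c')\) is trivial for central \(c'\). Hence \(\dim Y_3=\dim\tau\le 3\).

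\textbf{Step 2: a lower bound for \(\dim Y_3\) in terms of \(\dim\rho\).} I use the multidegree insertion of Proposition~\ref{prop:multidegree-insertion}. The component \((x_3,x_2,x_4,z)\) of \(Q_3^Y(x_2,x_1,x_4;z)\) is the image of \(V_z\) under a composite of invertible maps (braidings and associators) tensored with fixed nonzero vectors of the one-dimensional spaces \(W_{x_i}\). Following the proof of that proposition, this projection \(W_{x_2}\otimes Q_2^Y\to W_{x_3}\otimes W_{x_2}\otimes W_{x_4}\otimes V_z\) is injective on \(W_{x_2}\otimes(W_{x_1}\otimes W_{x_4}\otimes V_z)\). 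Therefore \((Y_3)_c\) has dimension at least \(\dim V_z=\dim\rho\). Applying the same argument with the other cyclic insertion orders, \((x_1,x_3,x_2,z)\) and so on, should give several independent components. I then aim to show that the \(A_4\)-module structure on \((Y_3)_c\) contains \(\rho\) twisted by a one-dimensional character. Concretely, \(x_1x_2x_3\) and \(z\) act by scalars, and the map \(V_z\to (Y_3)_c\), \(v\mapsto \varphi_3(w_{x_3}\otimes w_{x_2}\otimes w_{x_4}\otimes v)\)-component, intertwines \(\rho\) with \(\tau\) up to a projective character. I expect this step to be the main obstacle.

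\textbf{Step 3: excluding \(\dim\rho=2,3\).} If \(\dim\rho=3\), then the projective representation is the unique 3-dimensional linear one of \(A_4\). If \(\dim\rho=2\), it is a genuine projective representation with nontrivial class in \(H^2(A_4,\mathbb C^\times)=\mathbb Z/2\). In both cases I would derive a contradiction from \(Y_4=0\), as follows. Apply \(\varphi_4\) to \(W\otimes Y_3\). Because \(c\) is central, the double braiding \(c_{Y_3,W}c_{W,Y_3}\) on \(W_{x_1}\otimes (Y_3)_c\) is \((c\rhd w)\otimes (x_1\rhd y)\). The vanishing of \(Y_4\) then forces \(\mathrm{id}-c^2\) to vanish on \(W\otimes Y_3\), i.e.\ \(x_1\) acts on \((Y_3)_c\) by the scalar \(\sigma(c)^{-1}\). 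By the intertwining of Step 2, \(\rho(x_1)\) would then be a scalar. But in an irreducible projective representation of \(A_4\) of dimension \(2\) or \(3\), the image of an element of order 3 is not scalar, since it generates \(A_4\) together with its conjugates. This contradiction gives \(\dim\rho=1\).

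If the intertwining in Step 2 proves too delicate, the fallback is the dimension count alone. Since \((Y_3)_c\) contains the images of \(V_z\) along at least two independent multidegrees, we get \(2\dim\rho\le\dim Y_3\le 3\), which already forces \(\dim\rho=1\).
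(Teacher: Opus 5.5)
Your Step~2 is not merely delicate; it is false, and the fallback count has the same flaw. Proposition~\ref{prop:multidegree-insertion} transports, by isomorphisms, a component of the previous space $Q_m^Y$, not of the full tensor power. Along $(x_4,z)\mapsto(x_1,x_4,z)\mapsto(x_3,x_2,x_4,z)$ the starting space is $Q_1^Y(x_4;z)=(\operatorname{id}-c_{V,W}c_{W,V})(W_{x_4}\otimes V_z)$, not $W_{x_4}\otimes V_z$.

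By \eqref{eq:first-adjoint-braiding-symmetry}, $Y_1\simeq X_1$, and $X_1$ is simple with support $\{zx_1,\dots,zx_4\}$. The centralizer is $C_G(zx_1)=C_G(x_1)=\langle x_1,x_2x_3,z\rangle$, and $f_\Phi$ is trivial on it (proof of Proposition~\ref{prop:T-sigma-one-dimensional}). So its $\Phi_{zx_1}$-twisted group algebra is commutative, and every homogeneous component of $Y_1$ is a line. Hence $Q_1^Y(x_4;z)$ is at most one-dimensional whatever $\dim\rho$ is, and your chain only gives $\dim(Y_3)_c\geq 1$.

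The same applies to $Y_2$: all degrees of $W\otimes W\otimes V$ have the form $c'x_k^{-1}$ with $c'$ central, so their centralizers are conjugates of $C_G(x_1)$. The whole $Y$-string is therefore blind to $\dim\rho$. There is no injection $V_z\to(Y_3)_c$ that could intertwine $\rho$ with $\tau$. Projections of one subspace onto several multidegrees also do not add up to $2\dim\rho$: when $\dim\rho=1$, $Y_3$ is a single line whose generator has many nonzero multidegree components.

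Step~3 has a separate error: it drops the recursive term $(\operatorname{id}_W\otimes\varphi_3)c_{1,2}$ of $\varphi_4$. In Lemma~\ref{lem:T-third-fourth-adjoints} this term contributes $-p\,w_1\otimes y$, while $\operatorname{id}-c_{Y_3,W}c_{W,Y_3}$ contributes $(1+p^2)\,w_1\otimes y\neq0$. So $Y_4=0$ does not make $\operatorname{id}-c_{Y_3,W}c_{W,Y_3}$ vanish, and it does not force $x_1$ to act on $(Y_3)_c$ by a prescribed scalar.

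The missing idea is to read off $\dim\rho$ before this collapse happens, on the $X$-side. Put $T_0=\sigma(z)\rho(x_1)$. Then $\varphi_1^\Phi(v\otimes w)=(\operatorname{id}_{V_z}-T_0)v\otimes w$ for $0\neq w\in W_{x_1}$, and $(X_1)_{zx_1}$ is a line, so $\operatorname{rank}(\operatorname{id}_{V_z}-T_0)=1$.

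\emph{Case $\dim\rho=3$.} Here $\rho$ rescales to the ordinary $3$-dimensional representation of $A_4$. In it a $3$-cycle has three distinct eigenvalues, which contradicts $T_0$ having a $2$-dimensional eigenspace for the eigenvalue $1$.

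\emph{Case $\dim\rho=2$.} Here $T_0$ has eigenvalues $1$ and $r$ with $r^2+r+1=0$, and $X_2=0$ forces $\rho(z)=-1$. Now apply Lemma~\ref{lem:T-x1-action-minus-one} to the reflected pair $(V^*,X_1)$. Its third adjoint $(\operatorname{ad}X_1)^3(V^*)$ is nonzero by the same insertions and simple by Lemma~\ref{lem:root-string-adjoint-simplicity}. The lemma then says $zx_1$ acts by $-1$ on $(X_1)_{zx_1}$, whereas a direct computation gives $r$, a contradiction.
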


\begin{proof}
Proposition~\ref{prop:T-sigma-one-dimensional} gives
\(\dim\sigma=1\).
By Theorem~\ref{thm:Nichols-finiteness-criterion}, the tuple \((V,W)\)
admits all reflections and has a finite Cartan graph. The Cartan matrix gives
\(X_1\neq0\), \(X_2=0\), and \(Y_3\neq0\).  Since
\(-a_{12}=1\) and \(-a_{21}=3\),
Lemma~\ref{lem:root-string-adjoint-simplicity}, applied with \(t=1\)
and \(t=3\), shows that \(X_1\) and \(Y_3\) are simple.
Lemma~\ref{lem:T-x1-action-minus-one} therefore gives
\(\sigma(x_1)=-1\).
Lemma~\ref{lem:T-central-projective-reduction} gives
\(\dim\rho\in\{1,2,3\}\).  Its proof also shows that every element of
\(Z(G)\) acts on \(V_z\) by a scalar and that the projective class of
\(\rho\) gives an irreducible projective representation of
\(G/Z(G)\simeq A_4\).  In particular, we regard \(\rho(z)\) as a
nonzero scalar.

Let \(T_0=\sigma(z)\rho(x_1)\in\operatorname{End}(V_z)\).
By the proof of Proposition~\ref{prop:T-sigma-one-dimensional},
\(f_\Phi(x_1,x_2x_3,z)=1\).
Now
\(C_G(zx_1)=C_G(x_1)=\langle x_1,x_2x_3,z\rangle\).
By Lemma~\ref{lem:alternator}, the restriction of \(f_\Phi\) to this
abelian group is an alternating tricharacter.  It is determined by
its value on \(x_1,x_2x_3,z\), and is therefore trivial.  Thus, for
\(a,b\in C_G(x_1)\),
\[
\frac{\Phi_{zx_1}(a,b)}{\Phi_{zx_1}(b,a)}
=f_\Phi(zx_1,a,b)=1.
\]
The corresponding twisted group algebra of \(C_G(x_1)\) is therefore
commutative.

The support of the nonzero simple object \(X_1\) is
\(\{zx_1,zx_2,zx_3,zx_4\}\).  By
Proposition~\ref{prop:simple-objects}, its component
\((X_1)_{zx_1}\) is an irreducible
\(\Phi_{zx_1}\)-projective representation of \(C_G(x_1)\).
Consequently, \(\dim(X_1)_{zx_1}=1\).
For \(v\in V_z\) and \(0\neq w\in W_{x_1}\), the double braiding
 {in \({}_G^G\mathcal{YD}^{\Phi}\)} gives
\(\varphi_1^\Phi(v\otimes w)
=(\operatorname{id}_{V_z}-T_0)v\otimes w\).
Lemma~\ref{lem:adjoint-object-realization} identifies its image with
\((X_1)_{zx_1}\).  Since \(W_{x_1}\) is one-dimensional, it follows
that
\begin{equation}
\label{eq:T-rank-one-T0}
\operatorname{rank}(\operatorname{id}_{V_z}-T_0)=1.
\end{equation}

Suppose first that \(\dim\rho=3\).
The associated twisted group algebra of \(A_4\) is semisimple
of dimension \(12\), and the simple module \(V_z\) contributes
a matrix block of dimension \(9\).
The remaining blocks have total dimension \(3\).
Since every matrix block has square dimension, these must be
three one-dimensional blocks.
In particular, a one-dimensional projective representation exists.
Its multiplication rule expresses the defining \(2\)-cocycle
as a coboundary. Hence the quotient projective representation
on \(V_z\) can be rescaled to an ordinary irreducible
representation of \(A_4\).

The three-dimensional irreducible representation of \(A_4\)
is realized on the sum-zero subspace of its permutation
representation on four letters.
On this subspace, a three-cycle has eigenvalues
\(1,e^{2\pi\mathrm i/3},e^{4\pi\mathrm i/3}\).
Since \(x_1Z(G)\) is a three-cycle and \(T_0\) is a nonzero
scalar multiple of its operator, every eigenspace of \(T_0\)
is one-dimensional.
However, \eqref{eq:T-rank-one-T0} gives a two-dimensional
eigenspace for the eigenvalue \(1\), a contradiction.

Assume now that \(\dim\rho=2\).  Since \(x_1^3\in Z(G)\), projective
multiplication shows that \(\rho(x_1)^3\), and hence \(T_0^3\), is a
nonzero scalar.  Thus \(T_0\) is diagonalizable.
Equation~\eqref{eq:T-rank-one-T0} gives a one-dimensional eigenspace
for the eigenvalue \(1\).  Therefore the scalar \(T_0^3\) is the
identity.  We may choose a basis \(e_0,e_r\) of \(V_z\) such that
\[
T_0e_0=e_0,\qquad
T_0e_r=re_r,\qquad
r^2+r+1=0.
\]
Choose \(0\neq w\in W_{x_1}\) and let
\(\xi=(1-r)e_r\otimes w\).  Then
\(\xi=\varphi_1^\Phi(e_r\otimes w)\), so \(\xi\) spans
\((X_1)_{zx_1}\).
We compute \(\varphi_2^\Phi\) directly in the original category.
For \(v\in V_z\),we have 
\(z\rhd\xi=\Phi^z(z,x_1)\rho(z)\sigma(z)\xi\), while 
$$(zx_1)\rhd v
=\Phi_z(z,x_1)^{-1}\rho(z)\rho(x_1)(v).$$
Since \(z\) is central,
\(\Phi^z(z,x_1)=\Phi_z(z,x_1)=\Phi(z,x_1,z)\).
The two factors therefore cancel, and
\(c_{X_1,V}c_{V,X_1}(v\otimes\xi)
=\rho(z)^2T_0(v)\otimes\xi\).

Moreover,
\(c_{1,2}^\Phi
=a_{V,V,W}(c_{V,V}\otimes\operatorname{id}_W)a_{V,V,W}^{-1}\).
The two associativity constraints contribute
\(\Phi(z,z,x_1)\) and its inverse, while \(c_{V,V}\) contributes
\(\rho(z)\).  Thus the last term in the recursive formula contributes
\(\rho(z)(\operatorname{id}_{V_z}-T_0)e_r
\otimes((\operatorname{id}_{V_z}-T_0)v\otimes w)\).
Together with the identity and double-braiding terms, this gives
\[
\begin{aligned}
\varphi_2^\Phi(e_0\otimes\xi)
&=(1-\rho(z)^2)e_0\otimes\xi,\\
\varphi_2^\Phi(e_r\otimes\xi)
&=\bigl(1-\rho(z)^2r+\rho(z)(1-r)\bigr)e_r\otimes\xi.
\end{aligned}
\]
By Lemma~\ref{lem:adjoint-object-realization}, \(X_2=0\) forces both
expressions to vanish.  The first gives \(\rho(z)^2=1\).  If
\(\rho(z)=1\), the coefficient in the second expression is
\(2(1-r)\neq0\).  Hence \(\rho(z)=-1\).

Consider the reflected tuple \(R_1(V,W)=(V^*,X_1)\).
Lemma~\ref{lem:reflection-dimension-invariance} shows that its
Nichols algebra is finite-dimensional.  Since \(m_{12}=1\),
\cite[Lemma~3.8]{reflection3} gives
\((\operatorname{ad}V^*)(X_1)\simeq W\).
Equation~\eqref{eq:first-adjoint-braiding-symmetry} then gives
\((\operatorname{ad}X_1)(V^*)\simeq W\neq0\).
In particular, the reflected tuple is braided-indecomposable.
Its supports are
\(\{z^{-1}\}\) and
\(\{zx_1,zx_2,zx_3,zx_4\}\).
The element \(z^{-1}\) is central.  The elements
\(z^{-1},zx_1,zx_2\) satisfy the defining relations of \(T\).
They generate \(G\), and the four elements \(zx_i\) are distinct and
have the same conjugation relations as the \(x_i\).

The degree-\(x_4\) component of
\((\operatorname{ad}X_1)(V^*)\) is nonzero and comes from the ordered
degree \((zx_4,z^{-1})\).  The two applications of
Proposition~\ref{prop:multidegree-insertion} used in the proof of
Lemma~\ref{lem:T-x1-action-minus-one} give successively
\((zx_1,zx_4,z^{-1})\) and
\((zx_3,zx_2,zx_4,z^{-1})\).
Hence the corresponding third multidegree space is nonzero, and
Lemma~\ref{lem:adjoint-object-realization} gives
\((\operatorname{ad}X_1)^3(V^*)\neq0\).
Thus the corresponding reflected Cartan entry satisfies
\(a_{21}\leq -3\).  The reflected tuple lies in the same finite Cartan
graph, so Lemma~\ref{lem:root-string-adjoint-simplicity}, applied with
\(t=3\), shows that \((\operatorname{ad}X_1)^3(V^*)\) is simple.
Lemma~\ref{lem:T-x1-action-minus-one}, applied to the reflected pair,
therefore gives \((zx_1)\rhd\xi=-\xi\).

Finally, note that
\(\rho(z)\rho(x_1)=\Phi_z(z,x_1)\rho(zx_1)\) and
\(\sigma(z)\sigma(x_1)
=\Phi_{x_1}(z,x_1)\sigma(zx_1)\).
The \(3\)-cocycle identity applied to
\((z,x_1,z,x_1)\) gives
\(\Phi^{zx_1}(z,x_1)
=\Phi_z(z,x_1)\Phi_{x_1}(z,x_1)\).
Using these identities, \(T_0e_r=re_r\), we obtain
\[
(zx_1)\rhd\xi
=
\frac{\Phi^{zx_1}(z,x_1)}
     {\Phi_z(z,x_1)\Phi_{x_1}(z,x_1)}
\,\rho(z)\sigma(x_1)r\,\xi
=r\xi,
\]
because \(\rho(z)=\sigma(x_1)=-1\).
Comparison with \((zx_1)\rhd\xi=-\xi\) gives \(r=-1\), contrary to
\(r^2+r+1=0\).

Thus neither \(\dim\rho=2\) nor \(\dim\rho=3\) is possible.
Therefore \(\dim\rho=1\).
\end{proof}

\subsubsection{The first and second adjoint objects}

\begin{lemma}
\label{lem:T-central-support-adjoints}
Assume \(\dim\rho=1\).  Then
\(\rho(x_1)\sigma(z)\neq1\), and
\(X_1=(\operatorname{ad}V)(W)\) is simple of dimension four.
Moreover,
\begin{equation}
\label{eq:T-X2-condition}
X_2=0
\quad\Longleftrightarrow\quad
\bigl(1+\rho(z)\bigr)
\bigl(1-\rho(x_1)\rho(z)\sigma(z)\bigr)=0.
\end{equation}
\end{lemma}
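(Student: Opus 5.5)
Since \(\dim\rho=1\) and \(\dim\sigma=1\) (Proposition~\ref{prop:T-sigma-one-dimensional}), \(V=\mathbb Cv\) with \(v\in V_z\), and \(W\) has basis \(w_i\in W_{x_i}\), \(i=1,\dots,4\). Because \(z\) is central, the double braiding preserves each line \(V_z\otimes W_{x_i}\). As in the proof of Proposition~\ref{prop:T-rho-one-dimensional}, on \(v\otimes w_1\) it acts by the scalar \(T_0=\rho(x_1)\sigma(z)\). Hence \(x_1:=\varphi_1^\Phi(v\otimes w_1)=(1-\rho(x_1)\sigma(z))\,v\otimes w_1\). By \eqref{eq:indecomposable-first-adjoint}, braided-indecomposability means \(X_1\ne0\). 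Since \(c_{W,V}c_{V,W}\) is a morphism of Yetter--Drinfeld modules and \(G\) permutes the lines \(V_z\otimes W_{x_i}\) transitively, the scalars on all four lines coincide. So \(X_1\ne0\) forces \(\rho(x_1)\sigma(z)\ne1\). Then \(\varphi_1^\Phi\) is invertible on \(V\otimes W\), and \(X_1=V\otimes W\). This object is \(M(zx_1,\rho\otimes\sigma)\) up to the cocycle twist: it has one-dimensional homogeneous components and support \(\{zx_1,\dots,zx_4\}\), which is a single conjugacy class. It is therefore simple by Proposition~\ref{prop:simple-objects}, and has dimension four.

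For \(X_2\), I would use the single generator \(\xi=v\otimes w_1\in(X_1)_{zx_1}\). Proposition~\ref{prop:adjoint-orbit-generation} applies with \(H=1\), since \(V=\mathbb Cv\) and \(X_1=\mathbb CG\rhd\xi\) is simple. Thus \(X_2=0\) if and only if \(\varphi_2^\Phi(v\otimes\xi)=0\). The formula is the one-dimensional specialization of the computation already carried out in the proof of Proposition~\ref{prop:T-rho-one-dimensional}, where \(T_0\) is now the scalar \(t=\rho(x_1)\sigma(z)\):
\[
\varphi_2^\Phi(v\otimes\xi)=\bigl(1-\rho(z)^2t+\rho(z)(1-t)\bigr)v\otimes\xi .
\]
The three terms come from the following sources. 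The identity contributes \(1\). The double braiding \(c_{X_1,V}c_{V,X_1}\) contributes \(\rho(z)^2t\); here the cocycle factors \(\Phi^z(z,x_1)\) and \(\Phi_z(z,x_1)\) cancel because \(z\) is central. The term \((\mathrm{id}\otimes\varphi_1^\Phi)c_{1,2}^\Phi\) contributes \(\rho(z)(1-t)\); here the two associators \(\Phi(z,z,x_1)^{\pm1}\) cancel, and \(c_{V,V}\) gives \(\rho(z)\).

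Expanding the coefficient gives
\[
1+\rho(z)-\rho(z)t-\rho(z)^2t=(1+\rho(z))(1-\rho(z)t)=(1+\rho(z))\bigl(1-\rho(x_1)\rho(z)\sigma(z)\bigr),
\]
which is \eqref{eq:T-X2-condition}.

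The main obstacle is the careful bookkeeping of cocycle factors in \(\varphi_2^\Phi\). Those factors are exactly the ones already checked in the \(\dim\rho=2\) computation above, so I would cite that computation and specialize it rather than redo it.
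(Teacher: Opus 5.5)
Your proof is correct and follows the paper's argument. The double braiding acts on all of $V\otimes W$ by the single scalar $\rho(x_1)\sigma(z)$, so braided indecomposability forces this scalar to differ from $1$ and gives $X_1=V\otimes W$; then the same three-term evaluation of $\varphi_2^\Phi$ on $v\otimes(v\otimes w_1)$, where the associators and $\Phi^z(z,x_1)=\Phi_z(z,x_1)$ cancel, yields $(1+\rho(z))(1-\rho(x_1)\rho(z)\sigma(z))$.

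The only differences are cosmetic. You reduce $X_2=0$ to the vanishing of one vector via Proposition~\ref{prop:adjoint-orbit-generation} with $H=1$, whereas the paper uses transitivity of the $G$-action on the four homogeneous lines of $V\otimes X_1$ directly. Also, avoid calling $\varphi_1^\Phi(v\otimes w_1)$ by the name $x_1$, which clashes with the group element $x_1$.
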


\begin{proof}
By Lemma~\ref{lem:adjoint-object-realization}, we identify
\(X_1\) and \(X_2\) with the images of the recursive maps.
Choose \(0\neq v\in V_z\) and \(0\neq w_1\in W_{x_1}\). Then
\[
c_{W,V}c_{V,W}(v\otimes w_1)
=\rho(x_1)\sigma(z)v\otimes w_1.
\]
The double braiding commutes with the \(G\)-action, which
permutes the four homogeneous lines of \(V\otimes W\)
transitively. Hence it acts by the same scalar on all of
\(V\otimes W\).
Braided indecomposability gives \(\rho(x_1)\sigma(z)\neq1\), so
\(\varphi_1^\Phi=(1-\rho(x_1)\sigma(z))
\operatorname{id}_{V\otimes W}\) and \(X_1=V\otimes W\).
Its homogeneous components are one-dimensional, with degrees
\(zx_1,zx_2,zx_3,zx_4\) forming one conjugacy class.
Thus \(X_1\) is simple of dimension four.

Put \(e=v\otimes(v\otimes w_1)\).
The two associator factors in \(c_{1,2}^\Phi\) cancel, giving
\(c_{1,2}^\Phi(e)=\rho(z)e\).
Since \(z\) is central,
\(\Phi^z(z,x_1)=\Phi_z(z,x_1)=\Phi(z,x_1,z)\).
Therefore 
\[
\begin{aligned}
c_{V\otimes W,V}c_{V,V\otimes W}(e)
&=\Phi^z(z,x_1)\rho(z)\sigma(z)\rho(zx_1)e\\
&=\frac{\Phi^z(z,x_1)}{\Phi_z(z,x_1)}
  \rho(z)^2\rho(x_1)\sigma(z)e\\
&=\rho(z)^2\rho(x_1)\sigma(z)e.
\end{aligned}
\]
Substituting into the defining recursion yields
\[
\begin{aligned}
\varphi_2^\Phi(e)
&=\bigl(1-\rho(z)^2\rho(x_1)\sigma(z)
  +\rho(z)(1-\rho(x_1)\sigma(z))\bigr)e\\
&=(1+\rho(z))
  (1-\rho(x_1)\rho(z)\sigma(z))e.
\end{aligned}
\]
The map \(\varphi_2^\Phi\) commutes with the \(G\)-action, which
permutes the four homogeneous lines of \(V\otimes X_1\)
transitively. Hence \(X_2=0\) if and only if
\(\varphi_2^\Phi(e)=0\), proving \eqref{eq:T-X2-condition}.
\end{proof}

\begin{lemma}
\label{lem:T-Y2-cocycle-reduction}
Assume \(\dim\rho=1\) and \(\sigma(x_1)=-1\).  Then the
homogeneous component \((Y_2)_{x_2x_3z}\) is one-dimensional if and
only if
\(\bigl(\rho(x_1)\sigma(z)\bigr)^2
-\rho(x_1)\sigma(z)+1=0\) and \(\varepsilon_\Phi(W)=1\).
\end{lemma}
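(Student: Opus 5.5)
Throughout, $Y_m=(\operatorname{ad}W)^m(V)$. The plan is to compute the degree-$x_2x_3z$ component of $Y_2$ directly from the recursive maps, using that $V$ is one-dimensional and $z$ is central.

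\textbf{Setup.} Since $\dim\rho=1$ and $z$ is central, $V=\mathbb C v$ with $v\in V_z$, and $W$ has the four lines $W_{x_a}=\mathbb C w_a$. By Lemma~\ref{lem:T-central-support-adjoints}, the double braiding on $W\otimes V$ is the scalar $q:=\rho(x_1)\sigma(z)\neq1$. Hence $\varphi_1^{Y,\Phi}=(1-q)\operatorname{id}$, so
\[
Y_1=W\otimes V\cong X_1 .
\]
Consequently $Y_2$ is the image of $\varphi_2^{Y,\Phi}$ on all of $W\otimes(W\otimes V)$.

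\textbf{Isolating the relevant block.} The degree-$x_2x_3z$ part of $W\otimes W\otimes V$ consists of the pairs $(x_a,x_b)$ with $x_ax_b=x_2x_3$. Using the rack table ($2\brhd3=4$, $4\brhd2=3$, $3\brhd4=2$), these are exactly
\[
(x_2,x_3),\qquad (x_4,x_2),\qquad (x_3,x_4).
\]
So the block is three-dimensional and cyclically permuted by $c_{W,W}$. Its basis is $e_1=w_2\otimes w_3\otimes v$, $e_2=w_4\otimes w_2\otimes v$, $e_3=w_3\otimes w_4\otimes v$.

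On this block, $\varphi_2^{Y,\Phi}$ is a sum of three terms:
\begin{itemize}
\item the identity;
\item minus the double braiding of $W$ with $Y_1$, which equals $q$ times $c_{W,W}^2$ conjugated by associators, since the $V$-factor contributes the scalar $q$ through $z$;
\item $(1-q)\,c_{1,2}^\Phi$.
\end{itemize}
Thus the restriction is $\Psi=\operatorname{id}+(1-q)C-qC^2$ up to associator scalars, where $C$ is the cyclic operator induced by $c_{W,W}$ together with the associators from $c_{1,2}^\Phi$. In the basis $e_1,e_2,e_3$, $C$ is a weighted $3$-cycle, so $C^3=\kappa\operatorname{id}$ for a scalar $\kappa$.

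I expect $\kappa=\sigma(x_1)^{-1}\varepsilon_\Phi(W)$ by \eqref{eq:T-epsilon-definition}, once the $V$-factor associators are checked to cancel. This uses centrality of $z$, as in the proof of Lemma~\ref{lem:T-central-support-adjoints}. The double-braiding term should likewise become $qC^2$ after matching associators, possibly via Lemma~\ref{lem:local-cocycle-coherence}. This cocycle bookkeeping is the main obstacle. I would first pass to the normalized cocycle $\Omega^h\delta J$ on $T$ via Lemma~\ref{lem:T-compatible-normalization}; there $\Omega(\cdot,\cdot,z)$-type factors vanish, and the check becomes a finite verification.

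\textbf{Rank count.} The block of $Y_2$ is the image of $p(C)=1+(1-q)C-qC^2=(1+C)(1-qC)$. Since $C^3=\kappa$ with $\kappa\ne0$, $C$ is diagonalizable. Its eigenvalues are the three cube roots $t,t\omega,t\omega^2$ of $\kappa$, with $\omega=e^{2\pi\mathrm i/3}$. So the rank of $p(C)$ is $3$ minus the number of cube roots $t$ with $t=-1$ or $t=q^{-1}$.

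Rank one requires two distinct eigenvalues killed, namely $-1$ and $q^{-1}$. Both are cube roots of $\kappa$, so $\kappa=-1$ and $q^{-3}=-1$ with $q\ne-1$. With $\sigma(x_1)=-1$, $\kappa=-1$ is exactly $\varepsilon_\Phi(W)=1$. The conditions $q^3=-1$ and $q\neq-1$ are exactly $q^2-q+1=0$.

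Conversely, these conditions kill exactly two eigenvalues, giving rank one. The rank can never be zero, since at most two of the three eigenvalues can be killed.

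Finally, the component $(Y_2)_{x_2x_3z}$ is the image of this block, because $Y_1=W\otimes V$ and $\varphi_2^{Y,\Phi}$ preserves degrees. So its dimension equals $\operatorname{rank}p(C)$, which proves the equivalence.
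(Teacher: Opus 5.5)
Your argument is correct, and its final step takes a different and cleaner route than the paper's.

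\textbf{How the two proofs differ.} The paper works with the normalized tensors $E_{ij}=\Phi(x_i,x_j,z)^{-1}w_i\otimes(w_j\otimes v)$. It checks by an explicit cocycle identity that the middle double braiding sends $E_{ij}$ to $\rho(x_1)\sigma(z)\beta_{ij}\beta_{i\brhd j,i}E_{(i\brhd j)\brhd i,\,i\brhd j}$. It then writes out the $3\times3$ matrix $M$ on $E_{23},E_{34},E_{42}$ and argues by hand. For rank one, two particular $2\times2$ minors must vanish; conversely, the columns are shown to be proportional.

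You instead recognize the restriction as the polynomial $(1+C)(1-qC)$ in the weighted $3$-cycle $C=c_{1,2}^\Phi$. Its eigenvalues are the three distinct cube roots of $\kappa=C^3$, and you count which of them are roots of the polynomial. This settles both directions at once and shows the rank is never zero. It also explains why exactly the two conditions appear: $-1$ must be a cube root of $\kappa$, and $q^{-1}$ must be a second, different one. The paper's coordinate formula has the advantage of producing explicit image vectors, which it uses again in Lemma~\ref{lem:T-second-adjoint}.

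\textbf{The step you call the main obstacle is immediate.} It needs neither cocycle bookkeeping nor the passage to $\Omega^h$. Since $c_{W,W}^3$ commutes with $c_{W,W}$, which permutes the three lines of the block cyclically, the operator $C^3=a_{W,W,V}(c_{W,W}^3\otimes\operatorname{id}_V)a_{W,W,V}^{-1}$ acts on the whole block by the scalar $\sigma(x_1)^{-1}\varepsilon_\Phi(W)$ of \eqref{eq:T-epsilon-definition}. For the double braiding, the two hexagon axioms give
\[
c_{W\otimes V,W}\,c_{W,W\otimes V}
=a_{W,W,V}(c_{W,W}\otimes\operatorname{id}_V)a_{W,W,V}^{-1}
(\operatorname{id}_W\otimes c_{V,W}c_{W,V})
a_{W,W,V}(c_{W,W}\otimes\operatorname{id}_V)a_{W,W,V}^{-1}.
\]
This equals $q\,(c_{1,2}^\Phi)^2$ because $c_{V,W}c_{W,V}=q$ on $W\otimes V$.

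\textbf{A small correction.} The degree-$x_2x_3z$ part of $W\otimes(W\otimes V)$ is not always spanned by the three off-diagonal tensors. In any quotient $G$ with $x_1^3=x_1x_2x_3$ one has $x_1^2=x_2x_3$, so $w_1\otimes(w_1\otimes v)$ also lies in this degree. This does not affect your conclusion, since $C$ acts on that line by $\sigma(x_1)=-1$ and $1+C$ kills it. The paper records this explicitly: diagonal inputs of this degree map to zero. You should replace ``exactly'' by this observation.
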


\begin{proof}
Lemma~\ref{lem:adjoint-object-realization} identifies \(Y_1\) and
\(Y_2\) with the images of the corresponding recursive maps.
Choose \(0\neq w_i\in W_{x_i}\) and \(0\neq v\in V_z\).
On \(W_{x_1}\otimes V_z\), the double braiding sends
\(w_1\otimes v\) to
\((z\rhd w_1)\otimes(x_1\rhd v)
=\rho(x_1)\sigma(z)w_1\otimes v\).
It  commutes with $G$-action, so it acts by \(\rho(x_1)\sigma(z)\) on all four
homogeneous components of \(W\otimes V\).  Braided indecomposability
gives \(\rho(x_1)\sigma(z)\neq1\).  Hence
\(\varphi_1^{Y,\Phi}
=\bigl(1-\rho(x_1)\sigma(z)\bigr)
\operatorname{id}_{W\otimes V}\), and we identify \(Y_1\) with
\(W\otimes V\).
Define \(\beta_{ij}\in\mathbb C^\times\) by
\(c_{W,W}(w_i\otimes w_j)
=\beta_{ij}w_{i\brhd j}\otimes w_i\), and let
\(E_{ij}=\Phi(x_i,x_j,z)^{-1}w_i\otimes(w_j\otimes v)\).

Using the definition of \(c_{1,2}^{\Phi}\) and the associativity
constraint, we obtain
\[
\begin{aligned}
c_{1,2}^{\Phi}(E_{ij})
&=
\Phi(x_i,x_j,z)^{-1}
a_{W,W,V}
(c_{W,W}\otimes\operatorname{id}_V)
a_{W,W,V}^{-1}
\bigl(w_i\otimes(w_j\otimes v)\bigr)\\
&=
\Phi(x_i,x_j,z)^{-1}
\Phi(x_i,x_j,z)\,
\beta_{ij}\,
\Phi(x_{i\brhd j},x_i,z)^{-1}
w_{i\brhd j}\otimes(w_i\otimes v)\\
&=
\beta_{ij}E_{i\brhd j,i}.
\end{aligned}
\]
For the middle double braiding, the \(W\)-\(V\) double braiding
contributes \(\rho(x_1)\sigma(z)\), and the two \(W\)-\(W\)
braidings contribute
\(\beta_{ij}\beta_{i\brhd j,i}\).
Since \(z\) is central, the definitions of
\(\Phi^{x_i}\) and \(\Phi_{x_i}\) give
\[
\frac{\Phi^{x_i}(x_j,z)}
     {\Phi_{x_i}(x_{i\brhd j},z)}
\Phi(x_i,x_j,z)^{-1}
=
\Phi(x_{(i\brhd j)\brhd i},x_{i\brhd j},z)^{-1}.
\]
Hence
\[
\begin{aligned}
c_{W\otimes V,W}c_{W,W\otimes V}(E_{ij})=
\rho(x_1)\sigma(z)\,
\beta_{ij}\beta_{i\brhd j,i}\,
E_{(i\brhd j)\brhd i,i\brhd j}.
\end{aligned}
\]
The recursive formula now gives
\begin{equation}
\label{eq:T-Y2-general}
\begin{aligned}
\varphi_2^{Y,\Phi}(E_{ij})
={}E_{ij}
-\rho(x_1)\sigma(z)\,\beta_{ij}\beta_{i\brhd j,\,i}\,
E_{(i\brhd j)\brhd i,\,i\brhd j}
+\bigl(1-\rho(x_1)\sigma(z)\bigr)
\beta_{ij}E_{i\brhd j,\,i}.
\end{aligned}
\end{equation}
 {Since \(c_{W,W}\) commutes with the \(G\)-action,
which transitively permutes the lines \(W_{x_i}\otimes W_{x_i}\),}
\(\beta_{ii}=\beta_{11}=\sigma(x_1)=-1\) for every \(i\).
Thus \eqref{eq:T-Y2-general} gives
\(\varphi_2^{Y,\Phi}(E_{ii})=0\).
The relations of \(T\) give
\(x_2x_3z=x_3x_4z=x_4x_2z\), and the non-diagonal input tensors of
this degree are \(E_{23},E_{34},E_{42}\).
With rows and columns ordered by these tensors, their images are the
columns of
\begin{equation*}
M=
\begin{pmatrix}
1&\bigl(1-\rho(x_1)\sigma(z)\bigr)\beta_{34}
&-\rho(x_1)\sigma(z)\beta_{42}\beta_{34}\\
-\rho(x_1)\sigma(z)\beta_{23}\beta_{42}
&1&\bigl(1-\rho(x_1)\sigma(z)\bigr)\beta_{42}\\
\bigl(1-\rho(x_1)\sigma(z)\bigr)\beta_{23}
&-\rho(x_1)\sigma(z)\beta_{34}\beta_{23}&1
\end{pmatrix}.
\end{equation*}The vectors \(E_{23},E_{34},E_{42}\) form a basis of the off-diagonal
subspace of this degree, and \eqref{eq:T-Y2-general} maps this subspace
into itself.  Any diagonal input of this degree maps to zero.
Lemma~\ref{lem:adjoint-object-realization} therefore gives
\(\dim(Y_2)_{x_2x_3z}=\operatorname{rank}M\).
Since the diagonal entries of \(M\) are one, this component is
one-dimensional if and only if \(\operatorname{rank}M=1\).

Suppose that \(\operatorname{rank}M=1\).  The following two minors
must vanish:
\begin{align}
\det M_{\{1,3\},\{1,2\}}
&=
-\beta_{23}\beta_{34}
\Bigl(\rho(x_1)\sigma(z)
+\bigl(1-\rho(x_1)\sigma(z)\bigr)^2\Bigr),
\label{eq:T-Y2-first-minor}\\
\det M_{\{1,2\},\{1,2\}}
&=
1+\rho(x_1)\sigma(z)
\bigl(1-\rho(x_1)\sigma(z)\bigr)
\beta_{23}\beta_{34}\beta_{42}\notag\\
&=
1-\rho(x_1)\sigma(z)
\bigl(1-\rho(x_1)\sigma(z)\bigr)\varepsilon_\Phi(W).
\label{eq:T-Y2-second-minor}
\end{align}
Indeed, three successive braidings on
\(W_{x_2}\otimes W_{x_3}\) contribute
\(\beta_{23}\beta_{42}\beta_{34}\).  Since \(\sigma(x_1)=-1\),
\eqref{eq:T-epsilon-definition} gives
\(\beta_{23}\beta_{34}\beta_{42}=-\varepsilon_\Phi(W)\).
As all \(\beta_{ij}\) are nonzero,
\eqref{eq:T-Y2-first-minor} gives
\(\bigl(\rho(x_1)\sigma(z)\bigr)^2
-\rho(x_1)\sigma(z)+1=0\), and hence
\(\rho(x_1)\sigma(z)
\bigl(1-\rho(x_1)\sigma(z)\bigr)=1\).
Equation~\eqref{eq:T-Y2-second-minor} then gives
\(\varepsilon_\Phi(W)=1\).

Conversely, assume
\(\bigl(\rho(x_1)\sigma(z)\bigr)^2
-\rho(x_1)\sigma(z)+1=0\) and \(\varepsilon_\Phi(W)=1\).  Then
the quadratic equality and \eqref{eq:T-epsilon-definition} give
\(\beta_{23}\beta_{34}\beta_{42}=-1\).  Substitution in \(M\) shows
that its second column is
\(\bigl(1-\rho(x_1)\sigma(z)\bigr)\beta_{34}\) times its first.
Its third column is
\(-\rho(x_1)\sigma(z)\beta_{42}\beta_{34}\) times its first.
Since the first column is nonzero, \(\operatorname{rank}M=1\).
Thus \((Y_2)_{x_2x_3z}\) is one-dimensional.
\end{proof}

\begin{lemma}
\label{lem:T-second-adjoint}
Assume \(\dim\rho=1\) and \(\sigma(x_1)=-1\).  Then
\[
\supp Y_2
=
\{(x_1x_2x_3z)x_i^{-1}\mid 1\leq i\leq4\}.
\]
The four corresponding homogeneous components are one-dimensional
if and only if
\begin{equation}
\label{eq:T-Y2-conditions}
\bigl(\rho(x_1)\sigma(z)\bigr)^2
-\rho(x_1)\sigma(z)+1=0,
\qquad \varepsilon_\Phi(W)=1.
\end{equation}
In this case, \(\dim Y_2=4\) and  $Y_2$ is simple.
\end{lemma}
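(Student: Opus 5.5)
Since \(Y_1\) is identified with \(W\otimes V\) via \(\varphi_1^{Y,\Phi}=(1-\rho(x_1)\sigma(z))\operatorname{id}\) (proof of Lemma~\ref{lem:T-Y2-cocycle-reduction}), the plan is to decompose \(Y_2=\operatorname{Im}\varphi_2^{Y,\Phi}|_{W\otimes Y_1}\) into \(G\)-degrees and read everything off from formula \eqref{eq:T-Y2-general}. The tensors \(E_{ij}\) span \(W\otimes W\otimes V\), and \(E_{ij}\) has degree \(x_ix_jz\). By \eqref{eq:T-Y2-general}, \(\varphi_2^{Y,\Phi}(E_{ii})=0\), because \(\beta_{ii}=\sigma(x_1)=-1\). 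So \(Y_2\) is the image of the off-diagonal span. For \(i\neq j\), the rack table shows that the off-diagonal pairs \((i,j)\) fall into four orbits of size three under \((i,j)\mapsto(i\brhd j,i)\). These are \(\{23,34,42\}\) and its three conjugates under \(x_1\)-, \(x_2\)-conjugation. Each orbit has a constant product: \(x_2x_3=x_3x_4=x_4x_2\), and its conjugates. Formula \eqref{eq:T-Y2-general} preserves each orbit span.

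The first step is to identify the four degrees. Using \(x_1x_2x_3\in Z(G)\) from Lemma~\ref{lem:T-group-data}, I would verify that \(x_2x_3=(x_1x_2x_3)x_1^{-1}\). Conjugating by \(G\), which fixes the central factor and permutes \(x_i^{-1}\) transitively, then shows that the orbit products are exactly \((x_1x_2x_3)x_i^{-1}\), \(1\le i\le4\), and that these are distinct. Multiplying by the central \(z\) gives the four candidate degrees \((x_1x_2x_3z)x_i^{-1}\), forming a single conjugacy class. Each orbit span maps by the matrix \(M\) or a conjugate of it. The diagonal entries of \(M\) are \(1\), so its image is nonzero. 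Hence each degree actually occurs in \(\supp Y_2\), which proves the support statement.

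For the dimension statement, \(\varphi_2^{Y,\Phi}\) is a morphism, so acting by \(g\in G\) carries the degree-\(x_2x_3z\) component isomorphically onto the component of the conjugate degree. All four components therefore have the same dimension as \((Y_2)_{x_2x_3z}\). Lemma~\ref{lem:T-Y2-cocycle-reduction} then says this common dimension is one exactly under \eqref{eq:T-Y2-conditions}. In that case \(\dim Y_2=4\).

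For simplicity, the support is a single conjugacy class and the \(x_2x_3z\)-component is a line. That line is stable under the centralizer, so it is an irreducible, one-dimensional projective representation \(\tau\). Since \(Y_2\) is generated by this component (every component is its translate), the standard construction gives a nonzero surjection \(M(x_2x_3z,\tau)\to Y_2\). Proposition~\ref{prop:simple-objects} shows that the source is simple, so the map is an isomorphism. The main obstacle is the bookkeeping of the orbit and degree identification; I expect it to be routine from the rack table.
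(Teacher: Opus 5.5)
Your proposal is correct and follows essentially the paper's route: you kill the diagonal $E_{ii}$ via $\beta_{ii}=-1$, group the off-diagonal tensors by the four degrees $(x_1x_2x_3z)x_i^{-1}$ using the centrality of $x_1x_2x_3$, transport dimensions by the $G$-action, and invoke Lemma~\ref{lem:T-Y2-cocycle-reduction}. The differences are cosmetic. You prove nonvanishing orbit by orbit and obtain simplicity from the surjection $M(x_2x_3z,\tau)\to Y_2$, whereas the paper uses the single nonzero vector $\varphi_2^{Y,\Phi}(E_{23})$ and observes directly that a nonzero subobject must contain every one-dimensional component. (Note also that conjugation by $x_1$ preserves the orbit $\{23,34,42\}$, so the other three orbits are its images under conjugation by $x_2,x_3,x_4$.)
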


\begin{proof}
Retain the notation \(E_{ij}\) and \(\beta_{ij}\) from the proof
of Lemma~\ref{lem:T-Y2-cocycle-reduction}.
Since \(Y_1=W\otimes V\), Lemma~\ref{lem:adjoint-object-realization}
identifies \(Y_2\) with the image of \(\varphi_2^{Y,\Phi}\).
By \(\beta_{ii}=-1\), equation~\eqref{eq:T-Y2-general} gives
\(\varphi_2^{Y,\Phi}(E_{ii})=0\), while
\[
\begin{aligned}
\varphi_2^{Y,\Phi}(E_{23})
={}E_{23}
-\rho(x_1)\sigma(z)\beta_{23}\beta_{42}E_{34}
+\bigl(1-\rho(x_1)\sigma(z)\bigr)\beta_{23}E_{42}
\neq0.
\end{aligned}
\]
The nonvanishing follows from the linear independence of
\(E_{23},E_{34},E_{42}\).

The relations of \(T\) give
\[
\begin{aligned}
x_2x_3=x_3x_4=x_4x_2
  &=(x_1x_2x_3)x_1^{-1},\\
x_1x_4=x_4x_3=x_3x_1
  &=(x_1x_2x_3)x_2^{-1},\\
x_1x_2=x_2x_4=x_4x_1
  &=(x_1x_2x_3)x_3^{-1},\\
x_1x_3=x_3x_2=x_2x_1
  &=(x_1x_2x_3)x_4^{-1}.
\end{aligned}
\]
Thus \(\supp Y_2\) is contained in the four degrees
\((x_1x_2x_3z)x_i^{-1}\).
By Lemma~\ref{lem:T-group-data}, \(x_1x_2x_3z\) is central,
so these degrees form one conjugacy class.
Since \((Y_2)_{x_2x_3z}\neq0\), all four degrees occur,
and their homogeneous components have equal dimension.

Lemma~\ref{lem:T-Y2-cocycle-reduction} now gives
\eqref{eq:T-Y2-conditions}.
Under these conditions, the four homogeneous components are
one-dimensional, so \(\dim Y_2=4\).
Their degrees form one conjugacy class, hence every nonzero
subobject contains all four components. Thus \(Y_2\) is simple.
\end{proof}

\subsubsection{Higher adjoint objects}

We prove that \(Y_3\) is one-dimensional with central support and
that \(Y_4=0\).  Under the last equality in
\eqref{eq:T-classification-G2},  {Lemma~\ref{lem:T-compatible-normalization}}
reduces these calculations to the ordinary Yetter--Drinfeld category
over \(T\).

\begin{lemma}
\label{lem:T-tetrahedral-braiding}
Assume that \(\sigma(x_1)=-1\), \(\varepsilon_\Phi(W)=1\),
and the last equality in \eqref{eq:T-classification-G2} holds.
After the replacement in
Lemma~\ref{lem:T-compatible-normalization}, denote the ordinary
object again by \(W\).  It has a homogeneous basis \(w_i\) of
degree \(x_i\), \(1\leq i\leq4\), such that
\begin{equation}
\label{eq:T-constant-minus-one-braiding}
c_{W,W}(w_i\otimes w_j)
=-w_{i\brhd j}\otimes w_i.
\end{equation}
\end{lemma}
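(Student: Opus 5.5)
Since the last equality in \eqref{eq:T-classification-G2} holds, Lemma~\ref{lem:T-pullback-cocycle} gives \(h=0\) in \eqref{eq:T-cocycle-normalization}. The plan is therefore to pass to the ordinary category \({}_T^T\mathcal{YD}\) by Lemma~\ref{lem:T-compatible-normalization}, where the multiplier at \(x_1\) is trivial and \(\sigma\) is an ordinary character of \(C_T(x_1)=\langle -1,x_1,z\rangle\). Lemma~\ref{lem:T-cocycle-two-branches} then gives \(\eta=\sigma((x_1x_2^{-1})^2)=1\), since the branch \((4,-1)\) requires \(h=4\). Write \(-1=(x_1x_2^{-1})^2\in Q_8\). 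Then \(\sigma(-1)=1\), \(\sigma(x_1)=-1\), and \(\sigma(z)\) is some scalar, which does not enter the braiding of \(W\) with itself.

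Next I would build the induced basis of \(W=M(x_1,\sigma)\) with \(\Phi=1\). Take \(w_1\) spanning \(W_{x_1}\) and set \(w_i=g_i\rhd w_1\) for chosen coset representatives \(g_i\) with \(g_ix_1g_i^{-1}=x_i\). With trivial cocycle the action is \(a\rhd w_i=\sigma(c)\,w_{a\brhd i}\), where \(ag_i=g_{a\brhd i}c\) and \(c\in C_T(x_1)\). The braiding coefficient \(\beta_{ij}\) is the coefficient of \(x_i\rhd w_j\). I would choose the representatives inside \(Q_8\), namely \(1,i,k,j\) as in the proof of Lemma~\ref{lem:T-cocycle-two-branches}, and then, if necessary, rescale the \(w_i\) so that every \(\beta_{ij}=-1\).

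The key computation is that each correction element \(c=g_{x_i\brhd j}^{-1}x_ig_j\) lies in \(x_1\langle -1\rangle\). It has the form \(\pm x_1\) times an element that acts trivially, because \(\sigma(-1)=1\) and conjugation by \(x_1\) permutes \(i,j,k\). Granting this, \(\beta_{ij}=\sigma(\pm x_1)=-1\) for all pairs. This is consistent with the computation in Lemma~\ref{lem:T-cocycle-two-branches}, where \(\beta_{23}=\beta_{42}=\beta_{34}=q\eta=-1\) at \(h=0\). It is also consistent with \(\beta_{ii}=\sigma(x_1)=-1\), established in the proof of Lemma~\ref{lem:T-Y2-cocycle-reduction}.

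The main obstacle is to check all twelve off-diagonal \(\beta_{ij}\), not just the three recorded in that proof. To reduce the work, I would note that the action of \(x_1\) cyclically permutes \(x_2,x_3,x_4\), and after adjusting representatives it also permutes the basis up to sign. This cyclic symmetry lets one verify \(\beta_{1j}\) and one orbit representative each of \(\beta_{j1}\) and \(\beta_{jk}\), then transport the rest by \(x_1\)-equivariance of \(c_{W,W}\).

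If some sign is not already \(-1\), I would rescale \(w_i\) by signs \(s_i\), which replaces \(\beta_{ij}\) by \(\beta_{ij}s_j/s_{i\brhd j}\). The defect is then a sign cocycle on the rack, and the cube identity \(\varepsilon_\Phi(W)=1\) is exactly the obstruction that must vanish for such signs to exist.
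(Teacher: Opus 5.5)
Your argument is correct and is essentially the paper's. The paper packages the same computation by extending \(\widetilde\sigma\) to the character \(\chi(ux_1^mz^n)=(-1)^m\widetilde\sigma(z)^n\) of \(T\). With representatives \(t_i\in Q_8\), this gives \(g\rhd w_i=\chi(g)\,w_j\) whenever \(gx_ig^{-1}=x_j\), which is equivalent to your claim that every correction element is \(\pm x_1\).

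Your claim holds uniformly, so you can drop both the case-by-case check of the twelve \(\beta_{ij}\) and the sign-rescaling fallback. Indeed, \(t_{i\brhd j}^{-1}x_it_j\in Q_8x_1\cap C_T(x_1)=\{\pm x_1\}\), because \(Q_8\) is normal, each \(x_i\) lies in \(Q_8x_1\), and the automorphism \(A\) fixes only \(\pm1\) in \(Q_8\).
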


\begin{proof}
By Lemma~\ref{lem:T-pullback-cocycle}, one has
\(\pi^*\Phi=\delta J\).
Write \(\widetilde\sigma\) for the inducing character after the
replacement in Lemma~\ref{lem:T-compatible-normalization}.
Lemma~\ref{lem:T-cocycle-two-branches} gives
\(\widetilde\sigma((x_1x_2^{-1})^2)=1\).
Using
\(T=(Q_8\rtimes\langle x_1\rangle)\times\langle z\rangle\),
define a character of \(T\) by
\[
\chi(ux_1^mz^n)=(-1)^m\widetilde\sigma(z)^n
\qquad u\in Q_8,\ m,n\in\mathbb Z.
\]
The characters \(\chi|_{C_T(x_1)}\) and \(\widetilde\sigma\)
agree on the generators \((x_1x_2^{-1})^2,x_1,z\), and hence
\(\chi|_{C_T(x_1)}=\widetilde\sigma\).

Since \(T=Q_8C_T(x_1)\), choose \(t_i\in Q_8\) with
\(t_ix_1t_i^{-1}=x_i\), and put \(w_i=t_i\otimes1\).
For \(gt_i=t_jc\), with \(c\in C_T(x_1)\), one has
\[
g\rhd w_i
=\widetilde\sigma(c)w_j
=\chi(t_j^{-1}gt_i)w_j
=\chi(g)w_j.
\]
Since \(\chi(x_i)=-1\), this proves
\eqref{eq:T-constant-minus-one-braiding}.
\end{proof}

For the following lemma, assume that
\(\dim\rho=1\), \(\sigma(x_1)=-1\),
\(\varepsilon_\Phi(W)=1\), and the last equality in
\eqref{eq:T-classification-G2} holds.
Put \(p=\rho(x_1)\sigma(z)\), and assume \(p^2-p+1=0\).
After the replacement in
Lemma~\ref{lem:T-compatible-normalization}, retain
\(V,W,Y_m\) for the ordinary objects over \(T\), and write
\(\varphi_m^{Y,1}\) for their recursive adjoint maps.
Choose \(w_i\) as in
Lemma~\ref{lem:T-tetrahedral-braiding} and \(0\neq v\in V_z\),
and put \(e_{ij}=w_i\otimes w_j\otimes v\).

\begin{lemma}
\label{lem:T-Y2-homogeneous-components}
The object \(Y_2\) has basis \(u_1,u_2,u_3,u_4\), where
\begin{equation}
\label{eq:T-Y2-basis}
\begin{aligned}
u_1&=e_{23}-p e_{34}+(p-1)e_{42},\\
u_2&=e_{14}-p e_{43}+(p-1)e_{31},\\
u_3&=e_{12}-p e_{24}+(p-1)e_{41},\\
u_4&=e_{13}-p e_{32}+(p-1)e_{21}.
\end{aligned}
\end{equation}
\end{lemma}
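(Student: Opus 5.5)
We work in the ordinary category over \(T\), where associators are trivial and, by Lemma~\ref{lem:T-tetrahedral-braiding}, \(c_{W,W}(w_i\otimes w_j)=-w_{i\brhd j}\otimes w_i\). Every \(\beta_{ij}\) therefore equals \(-1\). Since \(\Phi\) is now trivial, the tensors \(E_{ij}\) of Lemma~\ref{lem:T-Y2-cocycle-reduction} are just \(e_{ij}\). The plan is to specialize formula~\eqref{eq:T-Y2-general} to this setting and read off the images directly.

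\textbf{Image of each tensor.} With \(\beta=-1\) and \(p=\rho(x_1)\sigma(z)\), formula~\eqref{eq:T-Y2-general} becomes
\[
\varphi_2^{Y,1}(e_{ij})=e_{ij}-p\,e_{(i\brhd j)\brhd i,\,i\brhd j}-(1-p)\,e_{i\brhd j,\,i}.
\]
The off-diagonal pairs \((i,j)\) fall into the four degree classes listed in the proof of Lemma~\ref{lem:T-second-adjoint}. For the degree \(x_2x_3z\) we use the rack table entries \(2\brhd3=4\) and \(4\brhd2=3\). These give
\[
\varphi_2^{Y,1}(e_{23})=e_{23}-p\,e_{34}+(p-1)\,e_{42}=u_1.
\]
The other three classes are handled by the same computation, now using the triples \((14,43,31)\), \((12,24,41)\) and \((13,32,21)\), read off from \(x_1x_4=x_4x_3=x_3x_1\) and its analogues. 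They give \(u_2,u_3,u_4\) as the images of \(e_{14},e_{12},e_{13}\). All of this is a check against the rack table \((243)\,(134)\,(142)\,(123)\).

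\textbf{Rank of each component.} The diagonal tensors \(e_{ii}\) map to zero because \(\beta_{ii}=-1\). In the proof of Lemma~\ref{lem:T-Y2-cocycle-reduction}, under \(p^2-p+1=0\) and \(\beta_{23}\beta_{34}\beta_{42}=-1\), the matrix \(M\) was shown to have rank one with image spanned by its first column. That column is \(u_1\). Hence \((Y_2)_{x_2x_3z}=\mathbb C u_1\).

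\textbf{Remaining components.} Since the other three degrees are conjugate to \(x_2x_3z\), Lemma~\ref{lem:T-second-adjoint} says each of their components is also one-dimensional. Each such component contains the nonzero vector \(u_i\), so it is spanned by \(u_i\). The four vectors \(u_i\) lie in distinct degrees, so they are linearly independent and form a basis of \(Y_2\).

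\textbf{Main obstacle.} The only real work is the bookkeeping. For each class we must check that \((i\brhd j)\brhd i\) and \(i\brhd j\) produce exactly the indices shown in~\eqref{eq:T-Y2-basis}, with the coefficients \(-p\) and \(p-1\) attached to the correct terms. We also need to recall that after the replacement all associator factors are trivial, so \(E_{ij}=e_{ij}\) exactly.
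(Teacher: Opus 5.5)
Your proposal is correct and follows essentially the paper's route. The paper also obtains $u_1,u_2,u_3,u_4$ as $\varphi_2^{Y,1}(e_{23}),\varphi_2^{Y,1}(e_{14}),\varphi_2^{Y,1}(e_{12}),\varphi_2^{Y,1}(e_{13})$ from the recursion with all $\beta_{ij}=-1$, notes that they are nonzero and linearly independent, and concludes using $\dim Y_2=4$ from Lemma~\ref{lem:T-second-adjoint}. Your extra appeal to the rank-one matrix $M$ is harmless but not needed. You should, however, say explicitly that $p\neq1$ gives $Y_1=W\otimes V$, since that is what puts the $u_i$ inside $Y_2$.
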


\begin{proof}
Equation~\eqref{eq:T-adjoint-recursion} gives
\[
(u_1,u_2,u_3,u_4)
=\bigl(\varphi_2^{Y,1}(e_{23}),
       \varphi_2^{Y,1}(e_{14}),
       \varphi_2^{Y,1}(e_{12}),
       \varphi_2^{Y,1}(e_{13})\bigr).
\]
Since \(p\neq1\), one has \(Y_1=W\otimes V\), so these
vectors belong to \(Y_2\).
They involve disjoint sets of ordered basis tensors and are
nonzero, hence linearly independent.
Lemma~\ref{lem:T-second-adjoint} gives \(\dim Y_2=4\), so they
form a basis.
\end{proof}

\begin{lemma}
\label{lem:T-third-fourth-adjoints}
Assume \(\dim\rho=1\), \(\sigma(x_1)=-1\),
\(\bigl(\rho(x_1)\sigma(z)\bigr)^2
-\rho(x_1)\sigma(z)+1=0\), and \(\varepsilon_\Phi(W)=1\).
Assume also the last equality in \eqref{eq:T-classification-G2}.
Then
\[
\dim Y_3=1,\qquad
\supp Y_3=\{x_1x_2x_3z\}\subseteq Z(G),
\qquad Y_4=0.
\]
\end{lemma}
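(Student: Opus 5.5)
Under the last equality in \eqref{eq:T-classification-G2}, we pass through Lemma~\ref{lem:T-compatible-normalization} to the ordinary category over \(T\) and compute there. That replacement preserves the dimensions of all iterated adjoint objects, so the statements about \(Y_3\) and \(Y_4\) transfer back. The support statement is then read off from degrees: the central element \(x_1x_2x_3z\) of \(T\) maps to the central element of \(G\) by Lemma~\ref{lem:T-group-data}.

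We then use the explicit data available over \(T\). The braiding on \(W\) is the constant \(-1\) tetrahedral braiding of Lemma~\ref{lem:T-tetrahedral-braiding}. The vector \(v\in V_z\) has \(x_i\rhd v=\rho(x_1)v\), by conjugacy-invariance of a character, and \(z\rhd w_i=\sigma(z)w_i\). The basis \(u_1,\dots,u_4\) of \(Y_2\) is given by \eqref{eq:T-Y2-basis}, and \(p=\rho(x_1)\sigma(z)\) satisfies \(p^2-p+1=0\).

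\textbf{Computing \(Y_3\).} Since \(Y_2\) is simple with one-dimensional components, Proposition~\ref{prop:adjoint-orbit-generation} applies with \(H=C_T(x_1x_2x_3z)\), which is all of \(T\). Hence \(Y_3=\mathbb C T\rhd\varphi_3^{Y,1}(w_1\otimes u_1)\), and similarly for each homogeneous generator. The only contributions come from \(w_i\otimes u_i\), of degree \(x_i(x_1x_2x_3z)x_i^{-1}\), which equals \(x_1x_2x_3z\) by centrality. So \(\supp Y_3\subseteq\{x_1x_2x_3z\}\), and by simplicity \(\dim Y_3\le 1\).

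To get \(\dim Y_3=1\), we expand \(\varphi_3^{Y,1}(w_1\otimes u_1)\) using the recursion: identity minus the double braiding plus \((\mathrm{id}\otimes\varphi_2^{Y,1})c_{1,2}\). With \(-1\) braidings, every term is a signed sum of tensors \(w_a\otimes w_b\otimes w_c\otimes v\) with coefficients polynomial in \(p\). We then exhibit one tensor, e.g.\ \(w_1\otimes w_2\otimes w_3\otimes v\), whose coefficient is a nonzero element of \(\mathbb Z[p]\) modulo \(p^2-p+1\). A second route to nonvanishing is to reuse the multidegree-insertion argument from the proof of Lemma~\ref{lem:T-x1-action-minus-one}: \eqref{eq:T-x1-action-second-tuple} already shows that a nonzero component of degree \(x_1x_2x_3z\) exists in the third adjoint. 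Either way we obtain \(Y_3\ne0\), hence \(\dim Y_3=1\).

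\textbf{Computing \(Y_4\).} Write \(y_3\) for a spanning vector of \(Y_3\); it is \(T\)-stable up to a character \(\chi_3\). By Proposition~\ref{prop:adjoint-orbit-generation}, \(Y_4=\mathbb CT\rhd\varphi_4^{Y,1}(w_1\otimes y_3)\). Because \(y_3\) has central degree, the double braiding acts on \(w_1\otimes y_3\) by the scalar \(\chi_3(x_1)\cdot\sigma(x_1x_2x_3z)\). The \(c_{1,2}\)-term reduces, using \(\varphi_3^{Y,1}\) and the fact that \(y_3\) is a combination of words, to a multiple of \(w_1\otimes y_3\). Thus
\[
\varphi_4^{Y,1}(w_1\otimes y_3)=\bigl(1-\chi_3(x_1)\sigma(x_1x_2x_3z)+\text{(the }c_{1,2}\text{ contribution)}\bigr)w_1\otimes y_3.
\]
We compute \(\chi_3(x_1)\) from the explicit \(y_3\): it is a product of \(\rho(x_1)\), \(\sigma(z)\), and signs, roughly \(-p\). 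We then check that the resulting scalar vanishes using \(p^2-p+1=0\).

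The main obstacle is bookkeeping the third adjoint: expanding \(\varphi_3^{Y,1}\) on the twelve-term vectors and confirming both the nonvanishing of a coefficient and the exact scalar needed for \(Y_4=0\). I would organize this by tracking only the components whose first factor is \(w_1\). If the \(c_{1,2}\) contribution to \(\varphi_4^{Y,1}\) is not obviously scalar, I would instead test vanishing via \(\varphi_4^{Y,1}\circ(\text{a quantum symmetrizer})\) on one fixed multidegree.
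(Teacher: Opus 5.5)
Your route matches the paper's: work in ${}_T^T\mathcal{YD}$, find a spanning vector $y$ of $Y_3$ of central degree, show $\varphi_4^{Y,1}(w_1\otimes y)=0$, and conclude by transitivity. Both of your arguments for $Y_3\neq0$ are fine. The gap is the step that gives $\supp Y_3\subseteq\{x_1x_2x_3z\}$ and $\dim Y_3\le1$.

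Proposition~\ref{prop:adjoint-orbit-generation} cannot be applied with $H=T$. It requires $H$ to preserve the line of the chosen vector of $Y_2$. But every homogeneous vector of $Y_2$ has the non-central degree $(x_1x_2x_3z)x_i^{-1}$, and $T$ permutes these four lines transitively. The stabilizer of $\mathbb Cu_1$ is $C_T(x_1)$, which has two orbits on $\supp W$. So Lemma~\ref{lem:homogeneous-orbit-tensor-generation} only gives $Y_3=\mathbb CT\rhd\varphi_3^{Y,1}(w_1\otimes u_1)+\mathbb CT\rhd\varphi_3^{Y,1}(w_2\otimes u_1)$, and $w_2\otimes u_1$ has the non-central degree $(x_1x_2x_3z)x_2x_1^{-1}$. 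That $\varphi_3^{Y,1}(w_2\otimes u_1)=0$ comes from an explicit cancellation among the coefficients $1,-p,p-1$ of $u_1$, not from degrees.

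Your appeal to simplicity is circular. Simplicity of $Y_3$ is not a hypothesis here; Proposition~\ref{prop:T-G2-adjoint-objects} derives it from this very lemma in the sufficiency direction. Even a simple object with central support over $T$ is only an irreducible $T$-module, which may have dimension $2$ or $3$.

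What you actually need is $\varphi_3^{Y,1}$ on all sixteen tensors $w_i\otimes u_j$, as in \eqref{eq:T-Y3-image-table}. There the off-diagonal images vanish, and the diagonal ones are $py,py,-y,(1-p)y$ for a single vector $y$. This gives $Y_3=\Bbbk y$, its central support, and the expansion $y=w_1\otimes u_1+w_2\otimes u_2-p\,w_3\otimes u_3+(p-1)w_4\otimes u_4$.

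Your $Y_4$ step depends on the same data. Reducing the $c_{1,2}$-term to $-w_1\otimes\varphi_3^{Y,1}(w_1\otimes u_1)=-p\,w_1\otimes y$ uses both $\varphi_3^{Y,1}(w_1\otimes u_j)=0$ for $j\neq1$ and the exact scalar $p$. The character value is $x_1\rhd y=pr\,y$, coming from $x_1\rhd u_1=-pr\,u_1$ and $x_1\rhd w_1=-w_1$; it is not roughly $-p$. With $(x_1x_2x_3z)\rhd w_1=-\ell w_1$ the double braiding acts by $-p^2$, and hence $\varphi_4^{Y,1}(w_1\otimes y)=(1+p^2-p)\,w_1\otimes y=0$.

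Here $r$ and $\ell$ are the scalars by which $x_i$ acts on $V$ and $z$ acts on $W$ after the normalization. Only their product $r\ell=\rho(x_1)\sigma(z)$ is guaranteed to be preserved, so do not identify them individually with $\rho(x_1)$ and $\sigma(z)$.
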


\begin{proof}
By Lemma~\ref{lem:T-compatible-normalization}, it suffices to
compute the adjoint objects in the ordinary category over \(T\).
Their degrees map to the original degrees under \(\pi\).
In this proof, retain \(V,W,Y_m\) for the ordinary objects and
write \(\varphi_m^{Y,1}\) for their recursive adjoint maps.
The parameters \(\rho,\sigma\) continue to refer to the original
inducing representations.

Choose the basis \(w_i\) from
Lemma~\ref{lem:T-tetrahedral-braiding} and \(0\neq v\in V_z\).
The action on \(V\) is a character of \(T\), and \(z\) acts
by a scalar on \(W\).  Thus
$c_{W,V}(w_i\otimes v)=r\,v\otimes w_i,
c_{V,W}(v\otimes w_i)=\ell\,w_i\otimes v,
$
where \(r,\ell\in\mathbb C^\times\) are independent of \(i\).
By Lemma~\ref{lem:T-compatible-normalization},
\begin{equation}
\label{eq:T-mixed-product}
p:=r\ell=\rho(x_1)\sigma(z),\qquad p^2-p+1=0.
\end{equation}
Put \(e_{i_1\cdots i_m}
=w_{i_1}\otimes\cdots\otimes w_{i_m}\otimes v\).
Let \(u_1,u_2,u_3,u_4\) be the basis of \(Y_2\) in
\eqref{eq:T-Y2-basis}, and define
\begin{equation}
\label{eq:T-y3-image}
 {y=p^{-1}\varphi_3^{Y,1}(w_1\otimes u_1).}
\end{equation}
Lemma~\ref{lem:T-Y3-homogeneous-components} gives
\(Y_3=\Bbbk y\neq0\), together with
\begin{equation}
\label{eq:T-y3-decomposition}
y=w_1\otimes u_1+w_2\otimes u_2
-p\,w_3\otimes u_3+(p-1)w_4\otimes u_4.
\end{equation}
Since \(\varphi_3^{Y,1}\) preserves degrees,
\eqref{eq:T-y3-image} gives
\(\deg y=x_1x_2x_3z\), which is central by
Lemma~\ref{lem:T-group-data}.

Conjugation by \(x_1\) sends \((2,3,4)\) to \((4,2,3)\).
Then \[
x_1\rhd u_1
=r\bigl(e_{42}-p e_{23}+(p-1)e_{34}\bigr)
=-pr\,u_1
\] by \(p^2-p+1=0\).
Since \(x_1\rhd w_1=-w_1\) and
\(\varphi_3^{Y,1}\) commutes with  {the \(T\)-action},
it gives \(x_1\rhd y=pr\,y\).
 {The action formula in the proof of
Lemma~\ref{lem:T-tetrahedral-braiding} also gives
\((x_1x_2x_3z)\rhd w_i=-\ell w_i\) for every \(i\).}  Hence
\begin{equation}
\label{eq:T-Y3-ordinary-monodromy}
c_{Y_3,W}c_{W,Y_3}(w_1\otimes y)
=-pr\ell\,w_1\otimes y
=-p^2w_1\otimes y.
\end{equation}
By \eqref{eq:T-y3-decomposition},
\eqref{eq:T-constant-minus-one-braiding}, and the first row of
\eqref{eq:T-Y3-image-table},
\[
(\operatorname{id}_W\otimes\varphi_3^{Y,1})
c_{1,2}(w_1\otimes y)
=-w_1\otimes\varphi_3^{Y,1}(w_1\otimes u_1)
=-p\,w_1\otimes y.
\]
The defining recursion now gives
\[
\varphi_4^{Y,1}(w_1\otimes y)
=(1+p^2-p)w_1\otimes y=0.
\]
The \(T\)-action is transitive on the four homogeneous lines of
\(W\otimes Y_3\).  Since \(\varphi_4^{Y,1}\) commutes with
this action, it vanishes on \(W\otimes Y_3\).
Its restricted image realizes \(Y_4\) by
Lemma~\ref{lem:adjoint-object-realization}, so \(Y_4=0\).
\end{proof}

\subsection{Parameter conditions for type
\texorpdfstring{$G_2$}{G2}}
\label{subsec:T-G2-parameters}

 {When \(\mathcal B(V\oplus W)\) is finite-dimensional
and the Cartan matrix is of type \(G_2\), we prove the remaining
parameter conditions in \eqref{eq:T-classification-G2}.
We first exclude the nontrivial class left by
Lemma~\ref{lem:T-cocycle-two-branches} under the first five
conditions in \eqref{eq:T-classification-G2}.}
Retain the  elements \(i,j,k\) from
Lemma~\ref{lem:T-pullback-cocycle}, and let \(A\) cyclically
permute them. Write
\(u=(-1)^s i^{a(u)}j^{b(u)}\), with
\(a(u),b(u),s\in\mathbb F_2\), and put
\[
E(u,v,w)=\sum_{r=0}^2 a(A^ru)a(A^rv)b(A^rw)\in\mathbb F_2.
\]
Let \(F:Q_8^3\to\mathbb Z_8\) be the cocycle used in
\eqref{eq:T-cocycle-representative}.
Lemma~\ref{lem:T-fixed-quaternion-cocycle} gives
\(E(u,v,w)=F(u,v,w)\bmod2\) for all \(u,v,w\in Q_8\).
Reducing the cocycle identity for \(F\) modulo two shows
that \(E\) is a normalized \(A\)-invariant additive
three-cocycle.  

Let
\[
H=Q_8\rtimes\langle t\mid t^3=1\rangle,
\qquad
G_*=H\times\langle z\mid z^6=1\rangle,
\]
where \(t\) acts by \(A\), so that
\(H\simeq\mathrm{SL}_2(3)\). The following is a normalized $3$-cocycle
\begin{equation}
\label{eq:T-order-two-finite-cocycle}
\Phi_*(ut^rz^a,vt^sz^b,wt^lz^c)
=(-1)^{E(u,A^rv,A^{r+s}w)}.
\end{equation}
Using
\(T=(Q_8\rtimes\langle x_1\rangle)\times\langle z\rangle\),
define
\[
T\longrightarrow G_*,
\qquad
ux_1^mz^a\longmapsto (-1)^m u\,t^mz^a.
\]
This is a homomorphism: it restricts to the identity on
\(Q_8\), conjugation by \(-t\) induces \(A\) on \(Q_8\),
and \(z\) is central in \(G_*\).
Since \(A\) fixes \(-1\) and \(E\) is unchanged by multiplying
any argument by \(-1\), one has
\[
\begin{aligned}
&\Phi_*\bigl((-1)^m u\,t^mz^a,
             (-1)^n v\,t^nz^b,
             (-1)^r w\,t^rz^c\bigr)\\
&\quad=
(-1)^{E((-1)^m u,\,(-1)^n A^mv,\,(-1)^r A^{m+n}w)}\\
&\quad=
(-1)^{E(u,A^mv,A^{m+n}w)}\\
&\quad=
\Omega(ux_1^mz^a,vx_1^nz^b,wx_1^rz^c)^4.
\end{aligned}
\]
The last equality follows from
\eqref{eq:T-cocycle-representative} and \(F\equiv E\pmod2\).
Thus this homomorphism pulls \(\Phi_*\) back to \(\Omega^4\).
We also write \(x_i\) for the images of the support elements
in \(G_*\).

For each \(p\in\mathbb{C}^\times\) satisfying \(p^2-p+1=0\),
define
\begin{equation}
\label{eq:T-order-two-finite-pair}
\begin{aligned}
&V_0=M(z,\rho_0),\qquad
  \rho_0|_H=1,\qquad \rho_0(z)=p^{-1},\\
&W_0=M(-t,\sigma_0),\qquad
  \sigma_0(-t)=-1,\qquad \sigma_0(z)=p.
\end{aligned}
\end{equation}
Here
\(C_{G_*}(-t)=\langle-t\rangle\times\langle z\rangle
\simeq C_6\times C_6\).
The multipliers at \(z\) and \(-t\) are trivial on
\(G_*\) and \(C_{G_*}(-t)\), respectively.
Since \(p^3=-1\) and \(p^6=1\), the displayed values define
ordinary characters and hence simple objects \(V_0,W_0\).

\begin{lemma}
\label{lem:T-order-two-finite-model}
Assume that \(\dim\rho=1\), \(\sigma(x_1)=-1\),
\(p^2-p+1=0\), \(p\rho(z)=1\), and
\(\varepsilon_\Phi(W)=1\), where \(p=\rho(x_1)\sigma(z)\).
If the product in the last equality of
\eqref{eq:T-classification-G2} is \(-1\), then
\(\dim\mathcal B(V\oplus W)
=\dim\mathcal B(V_0\oplus W_0)\).
\end{lemma}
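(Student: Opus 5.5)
Under the stated hypotheses the plan is to transport both pairs into the same category \({}_T^T\mathcal{YD}^{\Omega^4}\) and then identify the two transported pairs there.

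\emph{Step 1: reduce \((V,W)\) to \(\Omega^4\).} Lemma~\ref{lem:T-pullback-cocycle} says the triple product is the image of \([\pi^*\Phi]\) under the isomorphism onto eighth roots of unity. The product \(e^{h\pi\mathrm i/4}=-1\) therefore forces \(h=4\) in \eqref{eq:T-cocycle-normalization}. Lemma~\ref{lem:T-compatible-normalization} then replaces \((V,W)\) by a pair \((\widetilde V,\widetilde W)\) in \({}_T^T\mathcal{YD}^{\Omega^4}\). This replacement preserves the Nichols algebra dimension and the scalars \(\rho(z)\), \(\sigma(x_1)\), \(\varepsilon_\Phi(W)\) and \(p=\rho(x_1)\sigma(z)\). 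Lemma~\ref{lem:T-cocycle-two-branches} also gives \(\widetilde\sigma((x_1x_2^{-1})^2)=-1\).

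\emph{Step 2: reduce \((V_0,W_0)\) to \(\Omega^4\).} Let \(\psi:T\to G_*\) be the homomorphism \(ux_1^mz^a\mapsto(-1)^mut^mz^a\). It pulls \(\Phi_*\) back to \(\Omega^4\), as computed before the statement. Its image is generated by \(Q_8\), \(-t\) and \(z\), so it is all of \(G_*\). The support of \(W_0\) is \((-t)^{G_*}=\psi(x_1^T)\), and \(\psi\) is injective on \(x_1^T\): the four images \(-t\), \(-it\), \(-jt\), \(-kt\) are distinct, with \(\psi(x_2)=\psi(k^{-1}x_1)=-k^{-1}t\). Lifting degrees as in the proof of Lemma~\ref{lem:T-compatible-normalization} gives objects \(V_0',W_0'\) in \({}_T^T\mathcal{YD}^{\Omega^4}\), with \(T\) acting through \(\psi\). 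On every tensor word the associators and braidings coincide with those over \(G_*\). Hence \(\mathcal B(V_0\oplus W_0)\) and \(\mathcal B(V_0'\oplus W_0')\) have isomorphic graded components, exactly as in Lemma~\ref{lem:Gamma4-support-group-restriction}, and \(\dim\mathcal B(V_0\oplus W_0)=\dim\mathcal B(V_0'\oplus W_0')\).

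\emph{Step 3: identify the two pairs in \({}_T^T\mathcal{YD}^{\Omega^4}\).} It is enough to show \(\widetilde V\simeq V_0'\) and \(\widetilde W\simeq W_0'\); this step is the main one. By Proposition~\ref{prop:simple-objects} this means matching the inducing data at \(z\) and at \(x_1\). The multipliers of \(\Omega^4\) at \(z\) and at \(x_1\) are trivial on \(T\) and on \(C_T(x_1)=\langle -1,x_1,z\rangle\), as noted in Lemma~\ref{lem:T-compatible-normalization}, so both sides are ordinary characters.

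\emph{Characters at \(z\).} The character \(\widetilde\rho\) of \(T\) is trivial on \(Q_8=[T,T]\) restricted appropriately; more precisely, the relations \(ij=k\) and its cyclic versions, together with conjugation by \(x_1\) cycling \(i,j,k\), force \(\widetilde\rho\) to be trivial on \(Q_8\). Thus \(\widetilde\rho\) is determined by \(\widetilde\rho(z)=\rho(z)=p^{-1}\) and \(\widetilde\rho(x_1)=p/\sigma(z)\). On the other side, \(\rho_0\circ\psi\) is trivial on \(Q_8\), sends \(z\) to \(p^{-1}\), and sends \(x_1\) to \(\rho_0(-t)=1\). To match the value at \(x_1\) I must check that the transported \(\widetilde\sigma(z)\) equals \(p\), i.e.\ that the transported \(\widetilde\rho(x_1)\) is \(1\). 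This is the delicate point: the normalization in Lemma~\ref{lem:T-compatible-normalization} preserves only the product \(p\), not the factors. I would handle it with a further twist by a character-valued \(2\)-cochain, or equivalently observe that the pair depends only on the invariant \(p\). The alternative, if that fails, is to tensor by the one-dimensional object of degree \(1\) attached to the character \(x_1\mapsto\widetilde\rho(x_1)\), which redistributes \(p\) between the factors.

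\emph{Characters at \(x_1\).} \(\sigma_0\circ\psi\) sends \(x_1\mapsto\sigma_0(-t)=-1=\widetilde\sigma(x_1)\), \(z\mapsto p\), and \(-1\mapsto\sigma_0(-1)\). Since \((x_1x_2^{-1})^2=k^2=-1\), the last value must be compared with \(\widetilde\sigma(-1)=-1\) from Step 1. Computing \(\sigma_0(-1)\) requires the induced coefficient \eqref{eq:induced-action} inside \(C_{G_*}(-t)\), where \(-1=(-t)^3z^0\cdot(\ldots)\); I expect \(\sigma_0(-t)^3=-1\) to give \(\sigma_0(-1)=-1\) after the \(\Phi_*\)-multiplier is included. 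This is where I expect the main computational difficulty.

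Once these matchings are in place, \((\widetilde V,\widetilde W)\simeq(V_0',W_0')\) and the equality of dimensions follows.
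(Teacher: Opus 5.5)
\paragraph{Verdict.} Your outline follows the paper's proof: move \((V,W)\) into \({}_T^T\mathcal{YD}^{\Omega^4}\) via Lemmas~\ref{lem:T-cocycle-two-branches} and~\ref{lem:T-compatible-normalization}, lift \((V_0,W_0)\) along \(T\to G_*\), and compare inducing characters. The comparisons on \(Q_8\), at \(z\), and at \(-1\) are fine. The last one is immediate: the multiplier at \(-t\) is trivial on \(C_{G_*}(-t)\), so \(\sigma_0(-1)=\sigma_0((-t)^3)=\sigma_0(-t)^3=-1\), and no correction is needed. (The images of \(x_2,x_3,x_4\) are \(kt,jt,it\), not \(-kt,-jt,-it\); distinctness is unaffected.) The step you flag as delicate, however, is a genuine gap.

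\paragraph{The gap in Step~3.} After Lemma~\ref{lem:T-compatible-normalization}, only the product \(\widetilde\rho(x_1)\widetilde\sigma(z)=p\) is controlled. The lifted model has \(\rho(x_1)=1\) and \(\sigma(z)=p\). So the isomorphisms \(\widetilde V\simeq V_0'\) and \(\widetilde W\simeq W_0'\) you aim for are false in general. Neither fallback closes this.
\begin{itemize}
\item ``The pair depends only on \(p\)'' is what has to be proved, not an observation.
\item Tensoring \(V\) alone with an invertible degree-one object multiplies \(c_{W,V}\) by the character's value at \(x_i\), so it changes \(p\). Tensoring both objects would need a separate argument that Nichols algebra dimensions are preserved, because \(-\otimes L\) is not a monoidal functor.
\end{itemize}

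\paragraph{The missing ingredient.} You need an explicit braided autoequivalence of \({}_T^T\mathcal{YD}^{\Omega^4}\) that transfers \(\widetilde\rho(x_1)\) into \(\widetilde\sigma(z)\) and fixes everything else. The paper takes
\[
J(ux_1^mz^n,vx_1^rz^s)=\widetilde\rho(x_1)^{ms}\qquad(u,v\in Q_8).
\]
The map \(ux_1^mz^n\mapsto(m,n)\) is a homomorphism \(T\to\mathbb Z^2\), so \(J\) is a pulled-back bicharacter. Hence \(\delta J=1\), and \eqref{eq:T-cochain-tensor-structure} gives a braided autoequivalence of the same category that preserves Nichols algebra dimensions. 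By the action formula:
\begin{itemize}
\item \(x_1\) acts on \(V_z\) by \(\frac{J(z,x_1)}{J(x_1,z)}\widetilde\rho(x_1)=1\);
\item \(z\) acts on \(W_{x_1}\) by \(\frac{J(x_1,z)}{J(z,x_1)}\widetilde\sigma(z)=p\);
\item the correction factors for \(Q_8\) and \(z\) at degree \(z\), and for \(-1\) and \(x_1\) at degree \(x_1\), are all \(1\).
\end{itemize}
After this twist the inducing characters agree with those of \((V_0',W_0')\), and your argument goes through.
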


\begin{proof}
By Lemmas~\ref{lem:T-cocycle-two-branches}
and~\ref{lem:T-compatible-normalization}, we may replace
\((V,W)\) by a pair in \({}_T^T\mathcal{YD}^{\Omega^4}\)
without changing \(\dim\mathcal B(V\oplus W)\).
Retain \(\rho,\sigma\) for the ordinary inducing characters.
Since conjugation by \(x_1\) cycles \(i,j,k\) and \(ij=k\),
\[
\rho(i)=\rho(j)=\rho(k)=\rho(i)\rho(j).
\]
These values are nonzero, so \(\rho(i)=\rho(j)=1\), and hence
\(\rho|_{Q_8}=1\).
Moreover, Lemma~\ref{lem:T-cocycle-two-branches} gives
\(\sigma((x_1x_2^{-1})^2)=-1\).
Apply the  {braided monoidal equivalence}
defined by the normalized two-cocycle
\[
J(ux_1^mz^n,vx_1^rz^s)=\rho(x_1)^{ms}
\qquad u,v\in Q_8;\ m,n,r,s\in\mathbb Z.
\]
By \eqref{eq:T-cochain-tensor-structure}, this replaces
\(\rho(x_1)\) by \(1\) and \(\sigma(z)\) by \(p\),
while leaving \(\Omega^4\), \(\rho|_{Q_8}\),
\(\rho(z)=p^{-1}\), \(\sigma(x_1)=-1\), and
\(\sigma((x_1x_2^{-1})^2)=-1\) unchanged.

Apply Lemma~\ref{lem:T-compatible-normalization} to
\((V_0,W_0)\), using the homomorphism \(T\to G_*\).
Since the pullback of \(\Phi_*\) is \(\Omega^4\), the
resulting pair belongs to \({}_T^T\mathcal{YD}^{\Omega^4}\).
Its inducing characters have precisely the values obtained
above.
Thus the two pairs are isomorphic.
The lemma and the two braided monoidal equivalences preserve the
dimensions of the corresponding Nichols algebras, giving
\[
\dim\mathcal B(V\oplus W)
=\dim\mathcal B(V_0\oplus W_0).
\]
\end{proof}

\begin{lemma}
\label{lem:T-order-two-Cartan-periodicity}
Assume that the pair \((V_0,W_0)\) in
\eqref{eq:T-order-two-finite-pair} admits all reflections.
Let \((V_r,W_r)\), \(r\geq0\), be obtained by successively
applying \(R_2,R_1,R_2,R_1,\ldots\).
Then
\(A^{[(V_{r+4},W_{r+4})]}
=A^{[(V_r,W_r)]}\) for every \(r\geq0\).
\end{lemma}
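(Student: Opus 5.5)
Throughout, \((V_0,W_0)\) lives over the finite group \(G_*\) with cocycle \(\Phi_*\). The Cartan matrix at \([(V_r,W_r)]\) is determined by the integers \(m_{12},m_{21}\). By Lemma~\ref{lem:reflection-basic-properties}, a reflection \(R_i\) leaves the \(i\)-th row unchanged. Along the alternating sequence \(R_2,R_1,\ldots\), each step therefore changes only one off-diagonal entry. The plan is to show that the isomorphism classes of the tuples themselves are periodic up to a controlled twist, namely \((V_{r+4},W_{r+4})\simeq F(V_r,W_r)\) for a braided autoequivalence \(F\) of \({}_{G_*}^{G_*}\mathcal{YD}^{\Phi_*}\). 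Such an \(F\) preserves all adjoint objects and their vanishing (as in the proof of Lemma~\ref{lem:Gamma2-X3-nonvanishing}), so it preserves the Cartan matrices.

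\textbf{Computing the reflections.} First compute \(R_2(V_0,W_0)=(Y_{m},W_0^*)\) explicitly. In the \(G_*\) model, Lemmas~\ref{lem:T-central-support-adjoints}, \ref{lem:T-second-adjoint} and~\ref{lem:T-third-fourth-adjoints} do not apply directly, since the cocycle class is nontrivial. I would instead redo the \(Y_2,Y_3\) computations of \eqref{eq:T-Y2-general} over \(T\) with cocycle \(\Omega^4\), using \(\sigma((x_1x_2^{-1})^2)=-1\). Even if \(Y_3\) is no longer one-dimensional, or \(Y_4\neq0\), I only need the support and inducing character of the last nonzero \(Y_m\). These are determined by a degree computation: the support is the central degree \(x_1x_2x_3z\) or a four-element class, and the character is obtained from twist and balancing arguments exactly as in Lemma~\ref{lem:Gamma4-tau1-support-value}.

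Then compute \(R_1\) of that tuple. Its first entry has support a single central element, so \(m_{12}\) is computed as in Lemma~\ref{lem:T-central-support-adjoints}: the double braiding is a scalar, and the \(X_2\) condition is \eqref{eq:T-X2-condition}. Iterating, each reflected pair again has one central one-element support and one four-element class conjugate to a translate of \(x_1^{G_*}\) by a central element. The data of each pair are thus encoded by a few scalars: the value of the central character, \(\sigma(x_1)\), \(p\), and the sign \(\sigma((x_1x_2^{-1})^2)\). The latter is forced to stay \(-1\), because the pullback class \(h=4\) is intrinsic.

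\textbf{Closing the cycle.} I would show that after four steps these scalars return to their original values. Because \(p\) is a primitive sixth root of unity, this is a finite check. An automorphism of \(G_*\) multiplying \(z\) by a power and fixing \(H\) (or \(t\mapsto t^{-1}\) combined with inversion) then realizes \(F\), provided it fixes the class of \(\Phi_*\). This reduces to the invariance of the product \(\Phi_*(i,j,k)\Phi_*(j,k,i)\Phi_*(k,i,j)\) from Lemma~\ref{lem:T-pullback-cocycle}.

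\textbf{Main obstacle.} The hard step is the explicit nontrivial-class computation of \(Y_m\) for the reflected pairs in the \(h=4\) branch. There the constant \(-1\) braiding of Lemma~\ref{lem:T-tetrahedral-braiding} fails, and the matrix \(M\) and its higher analogues acquire signs from \(E\). I would first try to carry this out in the finite model \(G_*\) with explicit coordinates, mirroring the GAP-certified computations used in the \(\Gamma_4\) case. If the full adjoint objects prove unwieldy, I would fall back on the fact that only the supports and characters, not the full vectors, are needed for the periodicity.
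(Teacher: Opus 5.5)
Your closing step fails as proposed. After four reflections the data do not return to their original values. The explicit computation (Lemma~\ref{lem:T-order-two-reflection-calculation}) gives $V_4\simeq V_0$, but $\operatorname{supp}W_4=\{z^3x_i\}$, and $z$ acts on $W_4$ by $-p$ rather than $p$.

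No automorphism $\phi$ of $G_*$ can absorb this. Since both first entries have support $\{z\}$, $\phi$ would have to fix $z$ and carry $x_1^{G_*}$ onto $(z^3x_1)^{G_*}$. However, $\phi(x_1)^3=\phi(-1)=-1$, because $-1$ is the only nontrivial central element of $[G_*,G_*]=Q_8$. On the other hand, every conjugate of $z^3x_1$ cubes to $z^3x_1^3=-z^3\neq-1$. Two-cochain twists do not move degrees. Hence neither an automorphism of $G_*$ ``multiplying $z$ by a power and fixing $H$'' nor $t\mapsto t^{-1}$ with inversion can relate $(V_0,W_0)$ to $(V_4,W_4)$.

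The paper gets around this by regrading both pairs over $T$, where $x_1$ has infinite order. It uses the two homomorphisms $\pi_0(ux_1^mz^n)=(-1)^mut^mz^n$ and $\pi_4(ux_1^mz^n)=(-1)^mut^mz^{n+3m}$. Both pull $\Phi_*$ back to $\Omega^4$, since $\Phi_*$ ignores the $z$-coordinate. In ${}_T^T\mathcal{YD}^{\Omega^4}$ the two lifted pairs then differ only in the signs of $\rho(x_1)$ and $\sigma(z)$. The $2$-cocycle $J(ux_1^mz^n,vx_1^rz^s)=(-1)^{ms}$, which lives on $T$ rather than on $G_*$, removes these signs. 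If you want to stay inside $G_*$, you would need a braided autoequivalence that does not come from a group automorphism plus a twist. One example is an autoequivalence acting on the $\mathcal Z(\mathrm{Vec}_{\langle z\rangle})$ factor, which splits off because $\Phi_*$ is independent of $z$. Your plan does not supply anything of this kind.

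Your model of the intermediate pairs is also inaccurate, and this affects the computation you intend to do. The central-support entries $V_1$ and $V_2$ are two-dimensional, so they are not described by a central character. The double braiding on $V_1\otimes W_1$ is not a scalar, and Lemma~\ref{lem:T-central-support-adjoints} does not apply; in fact $m_{12}=2$ at $r=1,2$. Likewise, a twist or balancing argument only gives the scalar by which the degree element acts. It does not give the $H$-module structure of the last nonzero $(\operatorname{ad}W_r)^m(V_r)$. The comparison needs exactly that structure: triviality of $H$ on $V_4$, and an $H$-equivariant identification $W_4\to W_0$ sending degree $z^3x_i$ to degree $x_i$. The paper obtains both from the explicit reflection computation.
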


\begin{proof}
Define homomorphisms \(\pi_0,\pi_4:T\to G_*\) by
\[
\begin{aligned}
\pi_0(ux_1^mz^n)&=(-1)^m u\,t^mz^n,\\
\pi_4(ux_1^mz^n)&=(-1)^m u\,t^mz^{n+3m}.
\end{aligned}
\]
Since \(\Phi_*\) is independent of the \(z\)-coordinate,
\(\pi_0^*\Phi_*=\pi_4^*\Phi_*=\Omega^4\).
Using these maps, respectively, assign both pairs
\((V_0,W_0)\) and \((V_4,W_4)\) the \(T\)-degrees
\(z,x_1,\ldots,x_4\), and let \(T\) act through
\(\pi_0,\pi_4\).
Both pairs then belong to
\({}_T^T\mathcal{YD}^{\Omega^4}\).

By Lemma~\ref{lem:T-order-two-reflection-calculation}
and \(p^3=-1\), their inducing characters satisfy
\[
\rho|_{Q_8}=1,\qquad
\rho(z)=p^{-1},\qquad
\sigma(x_1)=-1,\qquad
\sigma((x_1x_2^{-1})^2)=-1,
\]
while the remaining values are
\[
\begin{array}{c|cc}
 & (V_0,W_0) & (V_4,W_4)\\ \hline
\rho(x_1)&1&-1\\
\sigma(z)&p&-p
\end{array}.
\]
Apply to the fourth pair the braided monoidal
autoequivalence defined by the normalized two-cocycle
\[
J(ux_1^mz^n,vx_1^rz^s)=(-1)^{ms}.
\]
By \eqref{eq:T-cochain-tensor-structure}, this multiplies
\(\rho(x_1)\) and \(\sigma(z)\) by \(-1\), leaving the
other displayed values unchanged.
Thus the transformed fourth pair is isomorphic to the
initial pair in \({}_T^T\mathcal{YD}^{\Omega^4}\).

Both changes of grading and the braided monoidal equivalence
preserve duals and braided adjoint objects.
Applying the same alternating reflections to the two
isomorphic pairs therefore gives
\[
A^{[(V_{r+4},W_{r+4})]}
=A^{[(V_r,W_r)]}
\qquad(r\geq0).
\]
\end{proof}

\begin{lemma}
\label{lem:T-nontrivial-cocycle-infinite}
Assume that \(\dim\rho=1\), \(\sigma(x_1)=-1\),
\(p^2-p+1=0\), \(p\rho(z)=1\), and
\(\varepsilon_\Phi(W)=1\), where \(p=\rho(x_1)\sigma(z)\).
If the product in the last equality of
\eqref{eq:T-classification-G2} is \(-1\), then
\(\mathcal B(V\oplus W)\) is infinite-dimensional.
\end{lemma}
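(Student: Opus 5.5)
By Lemma~\ref{lem:T-order-two-finite-model}, it suffices to show that \(\mathcal B(V_0\oplus W_0)\) is infinite-dimensional. Here \((V_0,W_0)\) is the explicit pair over \(G_*\) defined in \eqref{eq:T-order-two-finite-pair}, with cocycle \(\Phi_*\). We argue by contradiction and assume \(\dim\mathcal B(V_0\oplus W_0)<\infty\).

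Theorem~\ref{thm:Nichols-finiteness-criterion} then shows that \((V_0,W_0)\) admits all reflections and has a finite Cartan graph. Lemma~\ref{lem:T-order-two-Cartan-periodicity} therefore applies to the alternating sequence \((V_r,W_r)\) obtained by applying \(R_2,R_1,R_2,\ldots\). The Cartan matrices along this path are \(4\)-periodic. Consequently, the Weyl groupoid element \(w_r=s_2s_1s_2s_1\cdots\) of length \(r\), read at \([(V_0,W_0)]\), is built from one fixed composite \(c=s_2s_1s_2s_1\) of four simple reflections, which corresponds to a single \(2\times2\) integer matrix \(C\). The plan is to show that the real roots \(w_{r-1}(\alpha_{i_r})\) are pairwise distinct and all positive. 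This contradicts finiteness of \(\Delta^{\mathrm{re}}\), since in a finite rank-two Cartan graph any alternating reduced word has bounded length.

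The concrete work is computing the four Cartan matrices at \((V_0,W_0),\ldots,(V_3,W_3)\). The initial entries come from the adjoint computations already done. The values \(\sigma(x_1)=-1\), \(p^2-p+1=0\), \(p\rho(z)=1\) and \(\varepsilon_\Phi(W)=1\) hold, so Lemmas~\ref{lem:T-central-support-adjoints}, \ref{lem:T-second-adjoint} and \ref{lem:T-third-fourth-adjoints} apply. The last of these was proved only in the cohomologically trivial case, so its \(\Omega^4\) analogue has to be redone with \(\Phi_*\). I expect this to give \(a_{12}=-1\) and \(a_{21}=-3\) at the initial object, possibly with \(Y_3\) changed.

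For the reflected pairs I would use the reflection data recorded in Lemma~\ref{lem:T-order-two-reflection-calculation}, which is cited in the proof of periodicity. That lemma gives the inducing characters of \((V_r,W_r)\) for \(r\le4\), and hence the supports and the scalars \(\rho(x_1)\), \(\sigma(z)\), \(\rho(z)\) and \(\sigma((x_1x_2^{-1})^2)\). Feeding these into Lemma~\ref{lem:T-central-support-adjoints} and the \(Y\)-side computation of Lemmas~\ref{lem:T-Y2-cocycle-reduction}--\ref{lem:T-third-fourth-adjoints} yields the off-diagonal entries at each step. The mechanism to look for is that at some reflected object the scalar \(\varepsilon\) or the \(\eta=-1\) branch makes a different entry appear, for example \(a_{21}=-2\) or \(-4\) in place of \(-3\). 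Then \(\operatorname{tr}C\ge2\) and \(C\) is not of finite order.

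Once the four matrices are known, I would compute \(C\) explicitly and check that it has an eigenvalue of modulus at least \(1\) and is not of finite order, for instance \(\operatorname{tr}C\ge2\) with \(C\ne\mathrm{id}\). Then the roots \(w_{r-1}(\alpha_{i_r})\) grow without bound or at least never repeat, which gives infinitely many real roots at \([(V_0,W_0)]\). This contradicts finiteness, so \(\mathcal B(V_0\oplus W_0)\), and hence \(\mathcal B(V\oplus W)\), is infinite-dimensional.

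The main obstacle is the explicit adjoint computation for the reflected pairs \((V_1,W_1),\ldots,(V_3,W_3)\) under the nontrivial cocycle \(\Phi_*\) on \(\mathrm{SL}_2(3)\times C_6\). Here \(\Phi_*\) cannot be trivialized, and the reflected objects have larger supports. My first approach would be a GAP-certified computation of the ranks of the \(\varphi_m^{\Phi_*}\) maps on a single multidegree component, using Proposition~\ref{prop:adjoint-orbit-generation} to reduce each vanishing test to one homogeneous vector.
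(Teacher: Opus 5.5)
Your skeleton matches the paper's. You pass to $(V_0,W_0)$ by Lemma~\ref{lem:T-order-two-finite-model}, invoke the four-step periodicity of Lemma~\ref{lem:T-order-two-Cartan-periodicity}, show that the composite $C$ of four reflections has infinite order, and obtain infinitely many real roots at $[(V_0,W_0)]$. However, the proposal never establishes the fact on which everything hinges, namely the Cartan entries along the path $R_2,R_1,R_2,R_1$, and the route you propose for computing them fails. By the table in Lemma~\ref{lem:T-order-two-reflection-calculation}, $\dim V_1=\dim V_2=2$. Lemma~\ref{lem:T-central-support-adjoints} and the $Y$-side Lemmas~\ref{lem:T-Y2-cocycle-reduction}--\ref{lem:T-third-fourth-adjoints} all assume $\dim\rho=1$, and the last of them also assumes the trivial cocycle class. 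They therefore say nothing at $r=1,2$, where the ``inducing characters'' you want to feed in are two-dimensional projective representations. Your guessed mechanism is also not what occurs: $a_{21}=-3$ at every step, and the change is in $a_{12}$. Without these entries, the claim that $C$ has infinite order is unsupported.

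The missing input is already recorded in the adjoint-dimension table \eqref{eq:T-order-two-adjoint-dimensions}. It gives $(-a_{12},-a_{21})=(1,3),(2,3),(2,3),(1,3)$ at $r=0,1,2,3$. The composite transporting roots from step four to step zero is therefore
\[
P=\begin{pmatrix}1&0\\3&-1\end{pmatrix}
\begin{pmatrix}-1&2\\0&1\end{pmatrix}
\begin{pmatrix}1&0\\3&-1\end{pmatrix}
\begin{pmatrix}-1&1\\0&1\end{pmatrix}
=\begin{pmatrix}-5&3\\-12&7\end{pmatrix}.
\]
It satisfies $\operatorname{tr}P=2$, $\det P=1$, and $(P-I)^2=0$, so $P^n\alpha_2=3n\alpha_1+(6n+1)\alpha_2$. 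These are pairwise distinct real roots at the initial object, contradicting finiteness of the Cartan graph. Positivity and bounds on the length of alternating words are not needed.

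With this table inserted, your argument closes exactly as in the paper. The GAP rank computation you sketch at the end is precisely what that lemma certifies, so the ``main obstacle'' you identify is already resolved there. The step that would go wrong is deriving the entries from the dimension-one lemmas.
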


\begin{proof}
Suppose, to the contrary, that
\(\mathcal B(V\oplus W)\) is finite-dimensional.
By Lemma~\ref{lem:T-order-two-finite-model},
\(\mathcal B(V_0\oplus W_0)\) is finite-dimensional.
Theorem~\ref{thm:Nichols-finiteness-criterion} therefore
implies that \((V_0,W_0)\) admits all reflections and has
a finite Cartan graph.

Starting from \((V_0,W_0)\), denote the successive pairs under
\(R_2,R_1,R_2,R_1,\ldots\) by \((V_r,W_r)\).
By Lemma~\ref{lem:T-order-two-Cartan-periodicity},
their Cartan matrices repeat after every four reflections.
Using the Cartan entries computed in
Lemma~\ref{lem:T-order-two-reflection-calculation},
the composite of the first four reflections transports
the root lattice at step four to that at step zero by
\begin{equation*}
P=
\begin{pmatrix}1&0\\3&-1\end{pmatrix}
\begin{pmatrix}-1&2\\0&1\end{pmatrix}
\begin{pmatrix}1&0\\3&-1\end{pmatrix}
\begin{pmatrix}-1&1\\0&1\end{pmatrix}
=\begin{pmatrix}-5&3\\-12&7\end{pmatrix},
\end{equation*}
where coordinates are written as columns.

The four-step periodicity implies that \(P^n\) transports
roots from step \(4n\) to the initial object.
Since \((P-I)^2=0\), one has \(P^n=I+n(P-I)\), and hence
\[
P^n\alpha_2=3n\alpha_1+(6n+1)\alpha_2
\qquad(n\geq0).
\]
Each vector is the image of the simple root \(\alpha_2\)
at step \(4n\), so each is a real root at the initial
object. These roots are pairwise distinct, contradicting
the finiteness of its Cartan graph.
\end{proof}

\begin{prop}
\label{prop:T-G2-adjoint-objects}
Assume \eqref{eq:T-classification-G2}.  Then \(X_1\) and \(Y_3\)
are nonzero and simple, \(\dim X_1=4\), \(\dim Y_3=1\), and
\(X_2=Y_4=0\).
Consequently, the Cartan matrix of \((V,W)\) is
\(\left(\begin{smallmatrix}2&-1\\-3&2\end{smallmatrix}\right)\).
\end{prop}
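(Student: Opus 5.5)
The plan is to assemble the statement from the adjoint computations of Subsection~\ref{subsec:T-adjoint-objects}, all of which apply once \eqref{eq:T-classification-G2} is assumed. In particular, \(\dim\rho=1\) is part of the hypotheses, and \(\dim\sigma=1\) holds by Proposition~\ref{prop:T-sigma-one-dimensional}.

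\emph{First the \(V\)-side.} Lemma~\ref{lem:T-central-support-adjoints} shows that \(\rho(x_1)\sigma(z)\neq1\). It also shows that \(X_1=V\otimes W\) is simple of dimension four, and in particular nonzero. The condition \(\rho(x_1)\rho(z)\sigma(z)=1\) from \eqref{eq:T-classification-G2} makes the second factor in \eqref{eq:T-X2-condition} vanish, so \(X_2=0\).

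\emph{Next the \(W\)-side.} The hypotheses include \(\sigma(x_1)=-1\), the quadratic equation for \(p=\rho(x_1)\sigma(z)\), and \(\varepsilon_\Phi(W)=1\). These are exactly \eqref{eq:T-Y2-conditions}, so Lemma~\ref{lem:T-second-adjoint} shows that \(Y_2\) is simple of dimension four. Since the last equality in \eqref{eq:T-classification-G2} also holds, Lemma~\ref{lem:T-third-fourth-adjoints} applies. It gives \(\dim Y_3=1\) with central support \(\{x_1x_2x_3z\}\), and \(Y_4=0\). A one-dimensional object is automatically simple, so \(Y_3\) is nonzero and simple.

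\emph{Cartan matrix.} These facts give \(m_{12}=1\) and \(m_{21}=3\). Indeed, \(X_1\neq0\) and \(X_2=0\) give \(m_{12}=1\), while \(Y_3\neq0\) and \(Y_4=0\) give \(m_{21}=3\); note that \(Y_1,Y_2\neq0\) because \(Y_3\neq0\). By \eqref{eq:Cartan-matrix-at-P}, both reflections are defined, and the Cartan matrix is \(\left(\begin{smallmatrix}2&-1\\-3&2\end{smallmatrix}\right)\).

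The main obstacle is bookkeeping in the step invoking Lemma~\ref{lem:T-third-fourth-adjoints}. That computation takes place in the ordinary category over \(T\), after the replacement of Lemma~\ref{lem:T-compatible-normalization}. I would check explicitly that this replacement preserves the vanishing and the dimensions of all iterated adjoint objects, which Lemma~\ref{lem:T-compatible-normalization} states. I would also check that pulling degrees back along \(\pi\), which is bijective on \(x_1^T\), carries simplicity and support back to \(G\). Simplicity follows here from the homogeneous components being one-dimensional and the support being a single conjugacy class.
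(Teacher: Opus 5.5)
Your proposal is correct and follows essentially the same route as the paper. Both combine Lemma~\ref{lem:T-central-support-adjoints} (giving \(X_1\) simple of dimension four and \(X_2=0\) via \(\rho(x_1)\rho(z)\sigma(z)=1\)) with Lemma~\ref{lem:T-third-fourth-adjoints} (giving \(\dim Y_3=1\), \(Y_4=0\)) to read off \(m_{12}=1\), \(m_{21}=3\); your extra appeal to Lemma~\ref{lem:T-second-adjoint} for the simplicity of \(Y_2\) is harmless but not needed.
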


\begin{proof}
The quadratic condition in \eqref{eq:T-classification-G2} gives
\(\rho(x_1)\sigma(z)\neq1\), while the next condition gives
\(\rho(x_1)\rho(z)\sigma(z)=1\).
Lemma~\ref{lem:T-central-support-adjoints} therefore shows that
\(X_1\) is simple of dimension four and that \(X_2=0\).
Lemma~\ref{lem:T-third-fourth-adjoints} gives
\(\dim Y_3=1\) and \(Y_4=0\).  Thus \(Y_3\) is nonzero and simple.
Therefore \(m_{12}=1\) and \(m_{21}=3\), which gives the asserted
Cartan matrix.
\end{proof}

\begin{prop}
\label{prop:T-G2-parameters-at-finite-Cartan-matrix}
Assume that \(\mathcal B(V\oplus W)\) is finite-dimensional and that
\((V,W)\) has Cartan matrix
\(\left(\begin{smallmatrix}2&-1\\-3&2\end{smallmatrix}\right)\).
Then all conditions in \eqref{eq:T-classification-G2} hold.
\end{prop}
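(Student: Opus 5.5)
The plan is to extract each condition in \eqref{eq:T-classification-G2} from the adjoint data forced by the Cartan matrix, in the order the earlier lemmas produce them. By Theorem~\ref{thm:Nichols-finiteness-criterion}, the pair admits all reflections and its Cartan graph is finite. The matrix gives \(X_1\neq0\), \(X_2=0\), \(Y_3\neq0\) and \(Y_4=0\). Lemma~\ref{lem:root-string-adjoint-simplicity} then makes \(X_1\), \(Y_1\), \(Y_2\) and \(Y_3\) simple.

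First, Proposition~\ref{prop:T-rho-one-dimensional} gives \(\dim\rho=1\), and Proposition~\ref{prop:T-sigma-one-dimensional} gives \(\dim\sigma=1\). Lemma~\ref{lem:T-x1-action-minus-one} applied to the simple nonzero \(Y_3\) gives \(\sigma(x_1)=-1\). Since \(Y_2\) is nonzero and simple, its homogeneous component \((Y_2)_{x_2x_3z}\) is an irreducible projective representation of the abelian centralizer \(C_G(x_2x_3z)=C_G(x_1)\). This centralizer has commutative twisted group algebra by the argument in the proof of Proposition~\ref{prop:T-rho-one-dimensional}, so the component is one-dimensional. Lemma~\ref{lem:T-Y2-cocycle-reduction} then yields the quadratic condition and \(\varepsilon_\Phi(W)=1\).

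Next, braided indecomposability gives \(p:=\rho(x_1)\sigma(z)\neq1\). Since \(X_2=0\), \eqref{eq:T-X2-condition} gives either \(\rho(z)=-1\) or \(p\rho(z)=1\). To rule out \(\rho(z)=-1\) when \(p\rho(z)\neq1\), pass to the reflection \(R_1(V,W)=(V^*,X_1)\). I would redo the computation from the \(\dim\rho=2\) case of Proposition~\ref{prop:T-rho-one-dimensional}, now with a one-dimensional \(V_z\). The reflected pair again has \(T\)-type supports \(\{z^{-1}\}\) and \(\{zx_i\}\), and the multidegree insertion argument shows that \((\operatorname{ad}X_1)^3(V^*)\neq0\). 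That object is simple, so Lemma~\ref{lem:T-x1-action-minus-one} gives \((zx_1)\rhd\xi=-\xi\). The cocycle identity used there gives \((zx_1)\rhd\xi=\rho(z)\sigma(x_1)p\,\xi\). With \(\rho(z)=\sigma(x_1)=-1\) this reads \(-\xi=p\xi\), so \(p=-1\), contradicting \(p^2-p+1=0\). Hence \(p\rho(z)=1\), which is the condition \(\rho(x_1)\rho(z)\sigma(z)=1\).

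Finally, for the cocycle condition: if the product in the last equality is not \(1\), Lemma~\ref{lem:T-cocycle-two-branches} forces \(h=4\), so the product equals \(-1\). Then Lemma~\ref{lem:T-nontrivial-cocycle-infinite} applies, since all five earlier conditions now hold, and gives \(\dim\mathcal B(V\oplus W)=\infty\). This is a contradiction, so the product is \(1\).

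The step I expect to need the most care is the reflected-pair argument excluding \(\rho(z)=-1\). One must confirm that the reflected tuple satisfies the standing \(T\)-assumptions, and that the insertion chain really produces a nonzero third adjoint object in the one-dimensional setting. I would copy the corresponding paragraph of Proposition~\ref{prop:T-rho-one-dimensional} essentially verbatim.
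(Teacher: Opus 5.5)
Your proposal is correct and follows the paper's proof step for step. The paper also obtains \(\dim\rho=\dim\sigma=1\), then \(\sigma(x_1)=-1\) from \(Y_3\), then the quadratic condition and \(\varepsilon_\Phi(W)=1\) from the one-dimensionality of \((Y_2)_{x_2x_3z}\), then the condition \(\rho(x_1)\rho(z)\sigma(z)=1\) from the reflected pair \((V^*,X_1)\), and finally the cocycle condition via Lemma~\ref{lem:T-nontrivial-cocycle-infinite}. The one difference is that you first invoke the dichotomy in \eqref{eq:T-X2-condition}, whereas the reflected-pair identity \((zx_1)\rhd\xi=\rho(z)\sigma(x_1)p\,\xi=-\xi\), together with \(\sigma(x_1)=-1\), already gives \(p\rho(z)=1\) directly; this is how the paper argues, so your case split is harmless but unnecessary.
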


\begin{proof}
Propositions~\ref{prop:T-sigma-one-dimensional} and
\ref{prop:T-rho-one-dimensional} give
\(\dim\sigma=\dim\rho=1\).
By Theorem~\ref{thm:Nichols-finiteness-criterion}, the pair admits all
reflections and \(\mathcal G(V,W)\) is finite. The Cartan matrix gives
\(X_1,Y_2,Y_3\neq0\), and
Lemma~\ref{lem:root-string-adjoint-simplicity}, applied in the two
directions with \(t=1\) and \(t=2,3\), shows that these objects are
simple.
Therefore Lemma~\ref{lem:T-x1-action-minus-one} gives
\(\sigma(x_1)=-1\), while
Lemma~\ref{lem:T-central-support-adjoints} gives
\(\rho(x_1)\sigma(z)\neq1\).

By Lemma~\ref{lem:T-second-adjoint},
\[
\supp Y_2
=\{(x_1x_2x_3z)x_i^{-1}\mid1\leq i\leq4\}.
\]
Since \(x_2x_3z=(x_1x_2x_3z)x_1^{-1}\) and
\(x_1x_2x_3z\) is central, one has
\(C_G(x_2x_3z)=C_G(x_1)=\langle x_1,x_2x_3,z\rangle\).
The proof of Proposition~\ref{prop:T-sigma-one-dimensional} shows
that \(f_\Phi\) is trivial on \(C_G(x_1)^3\).
Hence, for \(a,b\in C_G(x_1)\),
\[
\frac{\Phi_{x_2x_3z}(a,b)}{\Phi_{x_2x_3z}(b,a)}
=f_\Phi(x_2x_3z,a,b)=1.
\]
The corresponding twisted group algebra of \(C_G(x_1)\) is
commutative.  Since \(Y_2\) is simple,
Proposition~\ref{prop:simple-objects} identifies its component
\((Y_2)_{x_2x_3z}\) with an irreducible
\(\Phi_{x_2x_3z}\)-projective representation of this centralizer.
Thus \(\dim(Y_2)_{x_2x_3z}=1\).
The \(G\)-action identifies the four homogeneous components, so
Lemma~\ref{lem:T-second-adjoint} gives
\eqref{eq:T-Y2-conditions}.

It remains to prove \(\rho(x_1)\rho(z)\sigma(z)=1\).
By Lemma~\ref{lem:reflection-dimension-invariance},
\(\mathcal B(V^*\oplus X_1)\) is finite-dimensional.
Since \(m_{12}=1\), \cite[Lemma~3.8]{reflection3} and
\eqref{eq:first-adjoint-braiding-symmetry} give
\[
(\operatorname{ad}V^*)(X_1)\simeq W
\simeq(\operatorname{ad}X_1)(V^*).
\]
Thus \(R_1(V,W)\) is braided-indecomposable, with supports
\(\{z^{-1}\}\) and \(\{zx_i\mid1\leq i\leq4\}\).
The elements \(z^{-1},zx_1,zx_2\) satisfy the defining relations of
\(T\) and generate \(G\), while the \(zx_i\) have the same
conjugation table as the \(x_i\).
Two applications of
Proposition~\ref{prop:multidegree-insertion} give
\[
(zx_4,z^{-1})
\longmapsto
(zx_1,zx_4,z^{-1})
\longmapsto
(zx_3,zx_2,zx_4,z^{-1}).
\]
Hence \((\operatorname{ad}X_1)^3(V^*)\neq0\), so
\(-a_{21}^{[R_1(V,W)]}\geq3\).
Since the reflected pair lies in the same finite graph,
Lemma~\ref{lem:root-string-adjoint-simplicity} shows that this third
adjoint object is simple. Lemma~\ref{lem:T-x1-action-minus-one}, applied to
the reflected pair, therefore shows that \(zx_1\) acts by \(-1\) on
\((X_1)_{zx_1}\).

Choose \(0\neq v\in V_z\), \(0\neq w\in W_{x_1}\), and let
\(\xi=(1-\rho(x_1)\sigma(z))v\otimes w\).
This vector spans \((X_1)_{zx_1}\), and
\[
\begin{aligned}
(zx_1)\rhd\xi
&=
\frac{\Phi^{zx_1}(z,x_1)}
     {\Phi_z(z,x_1)\Phi_{x_1}(z,x_1)}
\,\sigma(x_1)\rho(x_1)\rho(z)\sigma(z)\,\xi\\
&=-\rho(x_1)\rho(z)\sigma(z)\,\xi.
\end{aligned}
\]
The cocycle identity at \((z,x_1,z,x_1)\) identifies
\(\Phi^{zx_1}(z,x_1)\) with
\(\Phi_z(z,x_1)\Phi_{x_1}(z,x_1)\).  Together with
\(\sigma(x_1)=-1\), this gives the second equality.
Comparison with \((zx_1)\rhd\xi=-\xi\) yields
\(\rho(x_1)\rho(z)\sigma(z)=1\).
Together with \eqref{eq:T-Y2-conditions}, this proves the first five
conditions in \eqref{eq:T-classification-G2}.  If its last equality
failed, Lemma~\ref{lem:T-cocycle-two-branches} would make its
left-hand side equal to \(-1\).  Lemma
\ref{lem:T-nontrivial-cocycle-infinite} would then imply
\(\dim\mathcal B(V\oplus W)=\infty\), contrary to the hypothesis.
Thus all conditions in \eqref{eq:T-classification-G2} hold.
\end{proof}

\subsection{Reflections}
\label{subsec:T-reflections}

\begin{lemma}
\label{lem:T-reflected-supports}
Assume \eqref{eq:T-classification-G2}.  The supports of
\(R_1(V,W)=(V^*,X_1)\) are
\[
\supp V^*=\{z^{-1}\},\qquad
\supp X_1=\{zx_1,zx_2,zx_3,zx_4\},
\]
and the supports of \(R_2(V,W)=(Y_3,W^*)\) are
\[
\supp Y_3=\{x_1x_2x_3z\},\qquad
\supp W^*
=\{x_1^{-1},x_2^{-1},x_4^{-1},x_3^{-1}\}.
\]
The assignments
\[
\begin{aligned}
(z,x_1,x_2)
&\longmapsto (z^{-1},zx_1,zx_2)
&&\text{for }R_1(V,W),\\
(z,x_1,x_2)
&\longmapsto
(x_1x_2x_3z,x_1^{-1},x_2^{-1})
&&\text{for }R_2(V,W)
\end{aligned}
\]
extend to epimorphisms \(T\twoheadrightarrow G\).
\end{lemma}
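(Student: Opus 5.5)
The plan is to read off the supports from the results already proved under \eqref{eq:T-classification-G2}, and then to verify the defining relations of \(T\) for the new generator triples. By Proposition~\ref{prop:T-G2-adjoint-objects}, \(m_{12}=1\) and \(m_{21}=3\), so \(R_1(V,W)=(V^*,X_1)\) and \(R_2(V,W)=(Y_3,W^*)\).

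\emph{Supports.} The support of a dual object consists of the inverses of the original degrees. Hence \(\supp V^*=\{z^{-1}\}\) and \(\supp W^*=\{x_i^{-1}\}\). The proof of Lemma~\ref{lem:T-central-support-adjoints} identifies \(X_1\) with \(V\otimes W\), with degrees \(zx_i\), which gives \(\supp X_1\). Lemma~\ref{lem:T-third-fourth-adjoints} gives \(\supp Y_3=\{x_1x_2x_3z\}\). The ordering \(x_1^{-1},x_2^{-1},x_4^{-1},x_3^{-1}\) in the statement is only a relabelling. I will check that it matches the rack table of \(Z_T^{4,1}\) for inverses: conjugation by \(x_a^{-1}\) is the inverse permutation of that of \(x_a\), so the images \(x_3^{-1},x_4^{-1}\) are swapped relative to \(x_3,x_4\). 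This is compatible with \(x_3'=x_2'x_1'x_2'^{-1}=x_2^{-1}x_1^{-1}x_2\). Using \(x_2^{-1}x_1x_2=x_1x_2x_1^{-1}=x_4\), which follows from the braid relation, this gives \(x_3'=x_4^{-1}\).

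\emph{Relations for \(R_1\).} Set \(z'=z^{-1}\), \(x_1'=zx_1\), \(x_2'=zx_2\). Since \(z\) is central, \(z'\) commutes with \(x_1'\) and \(x_2'\). The braid relation for \(x_1',x_2'\) is the original one multiplied on both sides by \(z^3\). The relation \(x_1'^3=x_2'^3\) is the original one multiplied by \(z^3\). Hence the assignment extends to a homomorphism \(T\to G\). It is surjective because \(z=z'^{-1}\) and \(x_i=z'x_i'\), and \(G=\langle z,x_1,x_2\rangle\). The last point holds because \(x_3\) and \(x_4\) are conjugates of \(x_1,x_2\) by elements of \(\langle x_1,x_2\rangle\).

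\emph{Relations for \(R_2\).} Set \(z'=x_1x_2x_3z\), \(x_1'=x_1^{-1}\), \(x_2'=x_2^{-1}\). By Lemma~\ref{lem:T-group-data}, \(z'\) is central, so it commutes with \(x_1'\) and \(x_2'\). Inverting the braid relation gives \(x_1^{-1}x_2^{-1}x_1^{-1}=x_2^{-1}x_1^{-1}x_2^{-1}\). Inverting the cube relation gives \(x_1^{-3}=x_2^{-3}\). For surjectivity, \(x_1\) and \(x_2\) lie in the image, hence so do \(x_3\) and \(x_4\), and then so does \(z=(x_1x_2x_3)^{-1}z'\). This relies on \(x_3\) being generated by \(x_1,x_2\).

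I do not expect a real obstacle. The only delicate point is keeping track of which inverse degree corresponds to which label, so that the rack structure of \(\supp W^*\) matches \(Z_T^{4,1}\) under the stated ordering. I will settle this by the short conjugation computation above.
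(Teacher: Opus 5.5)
Your proposal is correct and follows the paper's proof essentially step by step. You read off the supports from Proposition~\ref{prop:T-G2-adjoint-objects}, Lemmas~\ref{lem:T-central-support-adjoints} and~\ref{lem:T-third-fourth-adjoints}, and duality; you verify the relations of \(T\) by multiplying by \(z^3\) or by inverting; you get surjectivity by recovering \(z\) and the \(x_i\). You also match the ordering of \(\supp W^*\) to the rack table via \(x_2^{-1}x_1^{-1}x_2=x_4^{-1}\), which is exactly the identity the paper uses for this.
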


\begin{proof}
Proposition~\ref{prop:T-G2-adjoint-objects} shows that both
reflections exist.  The proof of
Lemma~\ref{lem:T-central-support-adjoints} identifies \(X_1\) with
\(V\otimes W\), so
\(\supp X_1=\{zx_1,zx_2,zx_3,zx_4\}\).
Lemma~\ref{lem:T-third-fourth-adjoints} gives
\(\supp Y_3=\{x_1x_2x_3z\}\), and duality replaces every degree by
its inverse.  This proves the displayed support formulas.

The elements \(z^{-1}\) and \(x_1x_2x_3z\) are central by
Lemma~\ref{lem:T-group-data}.  For the first assignment, the two
sides of the braid relation and the two cubes are obtained from the
original ones by multiplication by \(z^3\).  For the second
assignment, the braid and cube relations follow by taking inverses.
Thus both assignments define homomorphisms from \(T\) to \(G\).

Under the first assignment, the elements \(x_3\) and \(x_4\) defined
in \(T\) map to \(zx_3\) and \(zx_4\).  Under the second assignment,
the braid relation gives
\[
x_2^{-1}x_1^{-1}x_2=x_4^{-1},
\qquad
x_1^{-1}x_2^{-1}x_1=x_3^{-1},
\]
so they map to the third and fourth displayed elements.
 {By Lemma~\ref{lem:ZT-enveloping-group}, both ordered
four-element supports have the same conjugation table as
\((x_1,x_2,x_3,x_4)\).}

Finally, the first image contains
\(z=(z^{-1})^{-1}\) and
\(x_i=(zx_i)z^{-1}\) for every \(i\).
The second image contains all \(x_i\) and also
\[
z=(x_1x_2x_3)^{-1}(x_1x_2x_3z).
\]
Both homomorphisms are therefore surjective.
\end{proof}

\begin{lemma}
\label{lem:T-R1-parameter-conditions}
Assume \eqref{eq:T-classification-G2}, and use the ordered
supports in Lemma~\ref{lem:T-reflected-supports}.
The pair \(R_1(V,W)=(V^*,X_1)\) satisfies the first five
conditions in \eqref{eq:T-classification-G2}.
Moreover, \(z^{-1}\) acts by \(\rho(z)\) on
\((V^*)_{z^{-1}}\).
\end{lemma}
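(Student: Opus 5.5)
The plan is to reduce everything to the ordinary category over \(T\) and then read off the reflected parameters from the canonical twist (ribbon balancing), as in the proof of Lemma~\ref{lem:Gamma4-tau1-support-value}. By the last equality in \eqref{eq:T-classification-G2} and Lemma~\ref{lem:T-pullback-cocycle}, \(h=0\) in \eqref{eq:T-cocycle-normalization}. Lemma~\ref{lem:T-compatible-normalization}, applied to \((V,W)\), then lets us work in \({}_T^T\mathcal{YD}\). The support lifts of Lemma~\ref{lem:T-reflected-supports} are compatible with this, and the lemma preserves \(\rho(z)\), \(\sigma(x_1)\), \(\varepsilon_\Phi\) and \(\rho(x_1)\sigma(z)\). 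We also use that the quantities appearing in the first five conditions are gauge invariant, so they may be computed after the replacement. Write \(p=\rho(x_1)\sigma(z)\), so that \(p^2-p+1=0\) and \(p\rho(z)=1\).

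\emph{Step 1 (\(V^*\) and the last assertion).} By Lemma~\ref{lem:dual-projective-representation}, \(V^*\simeq M(z^{-1},\rho^*)\) with \(\rho^*\) one-dimensional, which gives the condition \(\dim\rho'=1\) for the reflected pair. The final clause of that lemma gives \(z^{-1}\rhd u^\vee=\rho(z)u^\vee\), which is the asserted action of \(z^{-1}\).

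\emph{Step 2 (twists).} For a homogeneous vector, the twist acts by its degree, and it is scalar on simple objects. Hence \(\theta_V=\rho(z)\), \(\theta_W=\sigma(x_1)=-1\), and \(\theta_{V^*}=\theta_V=\rho(z)\), the last by Step 1. The proof of Lemma~\ref{lem:T-central-support-adjoints} shows that \(X_1=V\otimes W\) and that the double braiding acts on it by \(p\). Balancing then gives
\[
\theta_{X_1}=p\,\theta_V\theta_W=-p\rho(z)=-1.
\]
This is \(\sigma'(zx_1)=-1\). Next, \(V^*\otimes X_1\simeq (V^*\otimes V)\otimes W\simeq W\), since \(V\) is invertible. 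The double braiding on \(V^*\otimes X_1\) is a scalar \(p'\), and \(p'=\rho'(zx_1)\sigma'(z^{-1})\) by the computation in Lemma~\ref{lem:T-central-support-adjoints}. Balancing on \(V^*\otimes X_1\) therefore yields
\[
p'=\frac{\theta_W}{\theta_{V^*}\theta_{X_1}}=\frac{-1}{\rho(z)\cdot(-1)}=\rho(z)^{-1}=p .
\]
Consequently \(p'^2-p'+1=0\), and the fourth condition follows from \(\rho'(z^{-1})=\rho(z)\): indeed \(\rho'(zx_1)\rho'(z^{-1})\sigma'(z^{-1})=\rho(z)p'=\rho(z)p=1\).

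\emph{Step 3 (\(\varepsilon(X_1)=1\)), the main obstacle.} We use \eqref{eq:T-epsilon-definition} with \(\sigma'(zx_1)=-1\). We must compute \(c^3_{X_1,X_1}\) on \((X_1)_{zx_2}\otimes(X_1)_{zx_3}\). In the ordinary category, the hexagon expresses \(c_{V\otimes W,V\otimes W}\) as a composite of one copy each of \(c_{V,V}\), \(c_{V,W}\), \(c_{W,V}\) and \(c_{W,W}\). Since \(z\) is central and \(\rho\) is a character, \(c_{V,V}\) acts by \(\rho(z)\), and \(c_{V,W}\), \(c_{W,V}\) act by scalars \(\ell,r\) with \(r\ell=p\), as in the proof of Lemma~\ref{lem:T-third-fourth-adjoints}. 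The cube on this component is therefore \((\rho(z)p)^3=1\) times the cube of \(c_{W,W}\) on \(W_{x_2}\otimes W_{x_3}\). The point needing care is that the \(V\)-factors return to their original positions after three steps, so that only these scalars survive. Granting this, \(\varepsilon(X_1)=\varepsilon_\Phi(W)=1\). If bookkeeping of the \(V\)-positions becomes messy, a fallback is to use the basis of Lemma~\ref{lem:T-tetrahedral-braiding} together with \(v\) directly, giving \(c_{X_1,X_1}(v\otimes w_i\otimes v\otimes w_j)=-\rho(z)p\, v\otimes w_{i\brhd j}\otimes v\otimes w_i\) up to the scalars above, and then cube this.
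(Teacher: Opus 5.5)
Your proposal is correct, and for the value \(\sigma'(zx_1)=-1\) and the quadratic and product relations it takes a genuinely different route from the paper.

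\textbf{The paper's route.} It works entirely by explicit formulas in the ordinary category over \(T\). Using the basis of Lemma~\ref{lem:T-tetrahedral-braiding}, it computes \(c_{X_1,X_1}(\xi_i\otimes\xi_j)=-\xi_{i\brhd j}\otimes\xi_i\) for \(\xi_i=v\otimes w_i\). It then computes the actions \(z^{-1}\rhd v^*\), \((zx_i)\rhd v^*\) and \(z^{-1}\rhd\xi_i\). From these it reads off \(\sigma'(zx_1)=-1\), \(\varepsilon_\Phi(X_1)=1\), \(c_{V^*,V^*}=\rho(z)\), and the mixed double braiding \(\rho(z)^{-2}(r\ell)^{-1}=p\).

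\textbf{Your route.} You get \(\sigma'(zx_1)=-1\) and \(p'=p\) from the balancing identity for the degree-action twist. This uses only three inputs:
\begin{itemize}
\item \(X_1=V\otimes W\);
\item the scalar \(p\) from Lemma~\ref{lem:T-central-support-adjoints};
\item \(V^*\otimes X_1\simeq W\), which holds because \(V\) is invertible.
\end{itemize}
This needs no basis and no dual-action formulas. It would work directly in \({}_G^G\mathcal{YD}^{\Phi}\); the paper itself uses this twist in Lemma~\ref{lem:Gamma4-tau1-support-value}. It also explains structurally why the mixed parameter is unchanged by \(R_1\).

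\textbf{The condition \(\varepsilon_\Phi(X_1)=1\).} Your hexagon factorization is the same computation as \eqref{eq:T-R1-block-braiding}. The difference is that you feed in \(c_{W,W}^3=\sigma(x_1)^{-1}\varepsilon_\Phi(W)=-1\) directly instead of the normalized basis. So you do not need Lemma~\ref{lem:T-tetrahedral-braiding}.

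The positional worry you flag does not arise. Since \(V\) is one-dimensional, each application of \(c_{X_1,X_1}\) just multiplies by \(\rho(z)r\ell=1\) times the corresponding \(W\)-coefficient.

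You should say explicitly that \(\sigma'\) is one-dimensional, because \(\varepsilon_\Phi(X_1)\) is only defined in that case. This is immediate: each component \((X_1)_{zx_i}=V_z\otimes W_{x_i}\) is a line.
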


\begin{proof}
By Proposition~\ref{prop:T-G2-adjoint-objects} and
Lemma~\ref{lem:T-reflected-supports}, \(X_1\) is simple
with four one-dimensional homogeneous components.
Since \(\dim V^*=1\), both inducing representations of
the reflected pair are one-dimensional.
Apply Lemma~\ref{lem:T-compatible-normalization} and work
in the ordinary category over \(T\).
Choose \(0\neq v\in V_z\), its dual vector \(v^*\), and
the basis \(w_i\) from Lemma~\ref{lem:T-tetrahedral-braiding}.
Write
\[
x_i\rhd v=rv,\qquad z\rhd w_i=\ell w_i.
\]
We have
\[
z\rhd v=\rho(z)v,\qquad
r\ell=\rho(x_1)\sigma(z),\qquad
\rho(z)r\ell=1.
\]
Since \(r\ell\neq1\), the first adjoint map identifies
\(X_1\) with \(V\otimes W\).
Put \(\xi_i=v\otimes w_i\), of degree \(zx_i\).
The ordinary tensor action gives
\begin{equation}
\label{eq:T-R1-block-braiding}
\begin{aligned}
c_{X_1,X_1}(\xi_i\otimes\xi_j)
=-\rho(z)r\ell\,
  \xi_{i\brhd j}\otimes\xi_i
=-\xi_{i\brhd j}\otimes\xi_i.
\end{aligned}
\end{equation}
Thus \(zx_1\) acts by \(-1\) on the component of degree
\(zx_1\).
Moreover, the cube of this braiding acts by \(-1\) on
\(\Bbbk(\xi_2\otimes\xi_3)\), so
\eqref{eq:T-epsilon-definition} gives
\(\varepsilon_\Phi(X_1)=1\).
The ordinary dual and tensor actions also give
\[
\begin{aligned}
z^{-1}\rhd v^*=\rho(z)v^*,\
(zx_i)\rhd v^*=(\rho(z)r)^{-1}v^*,\
z^{-1}\rhd\xi_i&=(\rho(z)\ell)^{-1}\xi_i.
\end{aligned}
\]
Consequently,
\begin{equation}
\label{eq:T-R1-cross-coefficient}
\begin{aligned}
c_{X_1,V^*}c_{V^*,X_1}(v^*\otimes\xi_i)
=\rho(z)^{-2}(r\ell)^{-1}
  v^*\otimes\xi_i
=\rho(x_1)\sigma(z)\,v^*\otimes\xi_i,
\end{aligned}
\end{equation}
where the last equality uses \(\rho(z)r\ell=1\).
Hence
\[
\begin{aligned}
c_{V^*,V^*}
&=\rho(z)\operatorname{id}_{V^*\otimes V^*},\\
c_{X_1,V^*}c_{V^*,X_1}
&=\rho(x_1)\sigma(z)
  \operatorname{id}_{V^*\otimes X_1}.
\end{aligned}
\]
These identities give the quadratic and product relations
for \(R_1(V,W)\) from \eqref{eq:T-classification-G2}.
By Lemma~\ref{lem:T-compatible-normalization}, the two
displayed scalar values and \(\varepsilon_\Phi(X_1)=1\)
are unchanged upon returning to the original category.
This proves the assertion.
\end{proof}

\begin{lemma}
\label{lem:T-R2-parameter-conditions}
Assume \eqref{eq:T-classification-G2}, and use the ordered
supports in Lemma~\ref{lem:T-reflected-supports}.
The pair \(R_2(V,W)=(Y_3,W^*)\) satisfies the first five
conditions in \eqref{eq:T-classification-G2}.
Moreover, \(x_1x_2x_3z\) acts by \(\rho(z)\) on
\((Y_3)_{x_1x_2x_3z}\).
\end{lemma}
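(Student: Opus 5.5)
The plan mirrors the proof of Lemma~\ref{lem:T-R1-parameter-conditions}. First we pass to the ordinary category over \(T\) via Lemma~\ref{lem:T-compatible-normalization}, which the last equality in \eqref{eq:T-classification-G2} allows. There we choose \(0\neq v\in V_z\), the basis \(w_i\) of Lemma~\ref{lem:T-tetrahedral-braiding}, and the generator \(y\) of \(Y_3\) from \eqref{eq:T-y3-decomposition}. Write \(x_i\rhd v=rv\) and \(z\rhd w_i=\ell w_i\), so that \(r\ell=p=\rho(x_1)\sigma(z)\), \(p^2-p+1=0\) and \(\rho(z)p=1\). Lemma~\ref{lem:T-third-fourth-adjoints} gives \(\dim Y_3=1\) with central degree \(c=x_1x_2x_3z\), so the inducing representation of \(Y_3\) is one-dimensional. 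That of \(W^*\) is one-dimensional by Lemma~\ref{lem:dual-projective-representation}. Thus \(\dim\rho'=\dim\sigma'=1\) for the reflected pair.

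Next we compute the scalars for the reflected pair with the ordered supports of Lemma~\ref{lem:T-reflected-supports}, where \(c\) plays the role of \(z\) and \(x_1^{-1}\) plays the role of \(x_1\). The first scalar is the action of \(c\) on \(y\). In the ordinary category \(c\) acts on each \(w_i\) by \(-\ell\), as in the proof of Lemma~\ref{lem:T-third-fourth-adjoints}, and on \(v\) by \(\rho(c)=r^3\rho(z)\), because \(\rho\) is a character. Hence \(c\rhd y=(-\ell)^3r^3\rho(z)\,y=-p^3\rho(z)\,y=\rho(z)\,y\), using \(p^3=-1\). This is the asserted value \(\rho'(c)=\rho(z)\).

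For \(W^*\), Lemma~\ref{lem:dual-projective-representation} gives \(x_1^{-1}\rhd w_1^\vee=\sigma(x_1)w_1^\vee=-w_1^\vee\), so \(\sigma'(x_1^{-1})=-1\). Dualizing \eqref{eq:T-constant-minus-one-braiding} shows that \(c_{W^*,W^*}\) again has constant coefficient \(-1\) on the dual basis. Its cube on the component of degrees \((x_2^{-1},x_3^{-1})\), which are the reflected \(x_2,x_3\), is therefore \(-1\), and \eqref{eq:T-epsilon-definition} gives \(\varepsilon(W^*)=1\).

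The remaining scalars come from the mixed actions. Since \(y\) is built from the tensor word \(WWWV\), we get \(x_i\rhd y=-p^{2}\ell^{-1}\cdot(\text{sign})\,y\) in terms of \(r,\ell\). The precise value must be read off from \(x_1\rhd y=pr\,y\), computed in the proof of Lemma~\ref{lem:T-third-fourth-adjoints}. It follows that \(x_1^{-1}\rhd y=(pr)^{-1}y\), and we also have \(c\rhd w_1^\vee=(-\ell)^{-1}w_1^\vee\). Hence \(p'=\rho'(x_1^{-1})\sigma'(c)=(pr)^{-1}(-\ell)^{-1}=-p^{-2}=p\), using \(p^3=-1\). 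So \(p'\) satisfies the quadratic \(p'^2-p'+1=0\), and \(\rho'(x_1^{-1})\rho'(c)\sigma'(c)=p\rho(z)=1\). Finally we transport back along Lemma~\ref{lem:T-compatible-normalization}, which preserves these scalars.

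The main obstacle is keeping the signs and inverses consistent in the dual and mixed actions, specifically the formula \(x_1\rhd y=pr\,y\) and the action of \(c\) on \(w_i^\vee\). I would verify these on explicit components of \eqref{eq:T-y3-decomposition}.
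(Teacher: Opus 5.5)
Your proposal is correct and is essentially the paper's argument: you pass to the ordinary category over \(T\), compute \((x_1x_2x_3z)\rhd y=(-\ell)^3r^3\rho(z)\,y=\rho(z)\,y\), combine \(x_1^{-1}\rhd y=(pr)^{-1}y\) with \((x_1x_2x_3z)\rhd w_i^*=-\ell^{-1}w_i^*\) to get the reflected mixed scalar \(-p^{-2}=p\), and read off \(\varepsilon_\Phi(W^*)=1\) from the constant \(-1\) braiding on the dual basis. One small slip: with the ordering \((x_1^{-1},x_2^{-1},x_4^{-1},x_3^{-1})\) of Lemma~\ref{lem:T-reflected-supports}, the reflected \(x_3\) is \(x_4^{-1}\), not \(x_3^{-1}\); this is harmless, because every braiding coefficient on \(W^*\) is \(-1\), so the cube is \(-1\) on the correct component as well.
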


\begin{proof}
By Lemma~\ref{lem:T-third-fourth-adjoints}, \(Y_3\) is
one-dimensional of degree \(\beta=x_1x_2x_3z\).
Since the homogeneous components of \(W^*\) are also
one-dimensional, both inducing representations have
dimension one.
By Lemma~\ref{lem:T-compatible-normalization}, we may
verify the asserted parameter equalities in the ordinary
category over \(T\).
Use \(w_i,v,y,r,\ell,p\) from the proof of
Lemma~\ref{lem:T-third-fourth-adjoints}, so that
\(p=r\ell=\rho(x_1)\sigma(z)\).
Let \(w_i^*\) be the dual basis, of degree \(x_i^{-1}\).

The element \(\beta\) acts by \(-\ell\) on \(W\) and
by \(r^3\rho(z)\) on \(V\).
Since \(Y_3\subseteq W^{\otimes3}\otimes V\), it follows that
\[
\beta\rhd y
=(-\ell)^3r^3\rho(z)y
=-p^3\rho(z)y
=\rho(z)y.
\]
Thus
\(c_{Y_3,Y_3}=\rho(z)\operatorname{id}_{Y_3\otimes Y_3}\).
The calculation preceding
\eqref{eq:T-Y3-ordinary-monodromy} gives
\(x_1\rhd y=pr\,y\).
Since the \(x_i\) are conjugate and \(Y_3\) is
one-dimensional, the ordinary action and dual action give
\[
x_i^{-1}\rhd y=(pr)^{-1}y,
\qquad
\beta\rhd w_i^*=-\ell^{-1}w_i^*.
\]
Consequently, since \(p^3=-1\)
\begin{equation}
\label{eq:T-R2-cross-coefficient}
\begin{aligned}
c_{W^*,Y_3}c_{Y_3,W^*}(y\otimes w_i^*)
=-p^{-2}y\otimes w_i^*=\rho(x_1)\sigma(z)\,y\otimes w_i^*,
\end{aligned}
\end{equation}
The dual action also gives
\[
c_{W^*,W^*}(w_i^*\otimes w_{i\brhd j}^*)
=-w_j^*\otimes w_i^*.
\]
With the support ordering
\((x_1^{-1},x_2^{-1},x_4^{-1},x_3^{-1})\)
from Lemma~\ref{lem:T-reflected-supports}, these identities
give \(\varepsilon_\Phi(W^*)=1\).
The two parameter relations for the reflected pair are
therefore \(p^2-p+1=0\) and \(p\rho(z)=1\).
Together with the preceding dimension and braiding
calculations, this proves the assertions.
\end{proof}

\begin{lemma}
\label{lem:T-reflection-cocycle-reductions}
Assume \eqref{eq:T-classification-G2}, and use the ordered
supports in Lemma~\ref{lem:T-reflected-supports}.
Both reflected pairs
\(R_1(V,W)=(V^*,X_1)\) and
\(R_2(V,W)=(Y_3,W^*)\) satisfy
\eqref{eq:T-classification-G2}.
Moreover, \(z^{-1}\) and \(x_1x_2x_3z\) act by
\(\rho(z)\) on \((V^*)_{z^{-1}}\) and
\((Y_3)_{x_1x_2x_3z}\), respectively.
\end{lemma}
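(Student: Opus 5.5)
Lemmas~\ref{lem:T-R1-parameter-conditions} and~\ref{lem:T-R2-parameter-conditions} already give the first five conditions in \eqref{eq:T-classification-G2} for both reflected pairs, together with the two assertions on the actions of \(z^{-1}\) and \(x_1x_2x_3z\). The only thing left is the last condition: the cyclic cocycle product must equal one for each reflected pair. We use the reflected supports and the epimorphisms \(T\twoheadrightarrow G\) from Lemma~\ref{lem:T-reflected-supports}. By Lemma~\ref{lem:T-pullback-cocycle}, this last condition for a reflected pair says that the pullback of \(\Phi\) along the new epimorphism \(\pi_i:T\to G\) is cohomologically trivial. So it is enough to show \([\pi_i^*\Phi]=0\) in \(H^3(T,\mathbb C^\times)\) for \(i=1,2\).

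The plan is to compare \(\pi_i\) with \(\pi\) on the quaternion subgroup. By the proof of Lemma~\ref{lem:T-pullback-cocycle}, restriction to \(Q_8=\langle i,j,k\rangle\) is an isomorphism on \(H^3\), and that class is detected by the product \(\Psi(i,j,k)\Psi(j,k,i)\Psi(k,i,j)\).

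For \(R_1\), the new quaternion elements are \((zx_1)(zx_4)^{-1}=x_1x_4^{-1}\), and similarly for the other two. So \(\pi_1\) agrees with \(\pi\) on \(Q_8\), and \([\pi_1^*\Phi]\) restricts to \([\pi^*\Phi]|_{Q_8}=0\).

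For \(R_2\), the new support is \((x_1^{-1},x_2^{-1},x_4^{-1},x_3^{-1})\), and the quaternion elements become \(x_1^{-1}x_3\), \(x_1^{-1}x_4\) and \(x_1^{-1}x_2\). I expect this step to be the main obstacle. I would show that these are the images of \(i,j,k\) under an automorphism of \(Q_8\) coming from conjugation inside \(T\), possibly composed with inversion. Conjugation by \(x_1^{-1}\) should carry \(x_1x_3^{-1}\) to \(x_1^{-1}x_3\) up to sign and inverse; using \(j=x_2^{-1}x_1\) and the relation \(ij=k\), I would check this directly. Inner automorphisms act trivially on cohomology. An outer automorphism of \(Q_8\) acts on \(H^3(Q_8,\mathbb C^\times)\cong\mathbb Z_8\) by a unit, and a unit multiplied by \(0\) is still \(0\). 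So the conclusion needs only that \([\pi^*\Phi]=0\).

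A more robust alternative for the \(R_2\) step is to avoid the identification altogether. The restriction of \(\pi_2^*\Phi\) to \(Q_8\) is the pullback of \(\Phi\) restricted to the subgroup \(\pi(Q_8)\), composed with some isomorphism \(Q_8\to\pi_2(Q_8)\). I would check that \(\pi_2(Q_8)=\pi(Q_8)\) as subgroups of \(G\), since both are generated by the ratios \(x_ax_b^{-1}\) or \(x_a^{-1}x_b\) in the normal subgroup \(\pi(Q_8)\). Then \(\pi_2|_{Q_8}=\pi|_{Q_8}\circ\alpha\) for an automorphism \(\alpha\), and \([\pi_2^*\Phi]|_{Q_8}=\alpha^*([\pi^*\Phi]|_{Q_8})=0\). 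The one thing to justify carefully is \(\pi_2(Q_8)=\pi(Q_8)\) when \(\pi\) is not injective on \(Q_8\): the images may be a quotient, and then \(\alpha\) must be replaced by a homomorphism compatible with both maps. If needed, I would do this by lifting \(\alpha\) through the explicit formulas in \(T\) instead of working in \(G\).
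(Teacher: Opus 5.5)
Your argument is correct, and its core is the same as the paper's: the quotient map of each reflected pair is \(\pi\) precomposed with a homomorphism, so the pulled-back class stays trivial. The difference is that the paper applies this directly on \(T\) rather than on \(Q_8\).

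In the paper, the assignments of Lemma~\ref{lem:T-reflected-supports} lift to endomorphisms \(f_1,f_2\) of \(T\) itself; for \(f_2\) this uses that \(x_1x_2x_3\in Z(T)\). Hence \(\pi_a=\pi\circ f_a\), so \([\pi_a^*\Phi]=f_a^*[\pi^*\Phi]=0\), and Lemma~\ref{lem:T-pullback-cocycle} finishes. This needs neither the restriction isomorphism \(H^3(T,\mathbb C^\times)\to H^3(Q_8,\mathbb C^\times)\) nor any identification of the induced map on \(Q_8\).

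Working with \(f_2\) on \(T\) also removes both the step you call the main obstacle and your worry that \(\pi\) may fail to be injective on \(Q_8\). The automorphism you want is simply \(f_2|_{Q_8}\); it preserves \(Q_8\) because \(Q_8=[T,T]\). Explicitly, \(f_2|_{Q_8}\) is \(i\mapsto i^{-1}\), \(j\mapsto k^{-1}\), \(k\mapsto j^{-1}\).

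This automorphism interchanges \(\langle j\rangle\) and \(\langle k\rangle\). Conjugation in \(T\) permutes \(\langle i\rangle,\langle j\rangle,\langle k\rangle\) only cyclically, so your expectation that the map comes from conjugation in \(T\) would not pan out. That is harmless: pulling back the zero class along any homomorphism gives zero. In particular, you need no information about how outer automorphisms act on \(H^3(Q_8,\mathbb C^\times)\).
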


\begin{proof}
Lemmas~\ref{lem:T-R1-parameter-conditions}
and~\ref{lem:T-R2-parameter-conditions} give the first
five conditions and the asserted actions.
It remains to verify the last equality in
\eqref{eq:T-classification-G2}.
Since \(x_1x_2x_3\in Z(T)\), the defining relations of
\(T\) give endomorphisms \(f_1,f_2:T\to T\) with
\[
\begin{aligned}
f_1(z)&=z^{-1},&
f_1(x_j)&=zx_j,\\
f_2(z)&=x_1x_2x_3z,&
f_2(x_j)&=x_j^{-1}
\end{aligned}
\qquad j=1,2.
\]
By Lemma~\ref{lem:T-reflected-supports}, the quotient
maps for the two reflected pairs are
\(\pi\circ f_1\) and \(\pi\circ f_2\).
Lemma~\ref{lem:T-pullback-cocycle} gives
\([\pi^*\Phi]=0\), and hence
\[
[(\pi\circ f_a)^*\Phi]
=f_a^*[\pi^*\Phi]=0
\qquad a=1,2.
\]
Applying the same lemma to these two quotient maps gives
the remaining equality.
\end{proof}

\begin{prop}
\label{prop:T-G2-stability-under-reflections}
Assume \eqref{eq:T-classification-G2}. Then \((V,W)\)
admits all reflections. With the ordered support
representatives chosen successively as in
Lemma~\ref{lem:T-reflected-supports}, every
\(Q\in\mathcal F_2(V,W)\) satisfies the assumptions
at the beginning of this section and
\eqref{eq:T-classification-G2}, with its one-element
support in the first coordinate.
The Cartan graph is finite and standard of type \(G_2\).
\end{prop}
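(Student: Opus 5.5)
The argument is an induction on the length of a reflection sequence, modelled on Proposition~\ref{prop:Gamma2-standard-G2-stability-under-reflections} and Proposition~\ref{prop:Gamma4-explicit-B2-stability-under-reflections}. The inductive statement is the following: a pair \(Q=(Q_1,Q_2)\) satisfies the standing assumptions of this section if
\begin{itemize}
\item there is an epimorphism \(T\twoheadrightarrow G\) sending \(z\) to a central element \(z_Q\) with \(\supp Q_1=\{z_Q\}\),
\item it sends \(x_1,x_2\) to elements whose class is \(\supp Q_2\), with the same conjugation table as \(x_1,\dots,x_4\),
\item \(Q\) is braided-indecomposable, and \(Q\) satisfies \eqref{eq:T-classification-G2} for these representatives.
\end{itemize}
We show that any such \(Q\) admits both reflections, has Cartan matrix \(\left(\begin{smallmatrix}2&-1\\-3&2\end{smallmatrix}\right)\), and that \(R_1(Q)\) and \(R_2(Q)\) again satisfy the same hypotheses.

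\emph{Base step.} For the initial pair, Proposition~\ref{prop:T-G2-adjoint-objects} gives that \(X_1\) and \(Y_3\) are nonzero and simple and that \(X_2=Y_4=0\). Hence both reflections exist, \(R_1(V,W)=(V^*,X_1)\) and \(R_2(V,W)=(Y_3,W^*)\), and the Cartan matrix is as stated.

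\emph{Inductive step.} Lemma~\ref{lem:T-reflected-supports} supplies the new support representatives and the epimorphisms \(T\twoheadrightarrow G\). In both reflected pairs the one-element support (\(\{z^{-1}\}\), respectively \(\{x_1x_2x_3z\}\)) sits in the first coordinate, and it is central. Lemma~\ref{lem:T-reflection-cocycle-reductions} shows that both reflected pairs satisfy all of \eqref{eq:T-classification-G2}, including the cohomological last equality. It remains to check braided-indecomposability of the reflected pairs:
\begin{itemize}
\item For \(R_1\): \cite[Lemma~3.8]{reflection3} with \(m_{12}=1\) gives \((\operatorname{ad}V^*)(X_1)\simeq W\neq0\).
\item For \(R_2\): by \eqref{eq:first-adjoint-braiding-symmetry} it suffices to see that \((\operatorname{ad}W^*)(Y_3)\neq0\), and \cite[Lemma~3.8]{reflection3} with \(m_{21}=3\) identifies this object with \(Y_2\neq0\).
\end{itemize}
Alternatively, the cross double-braiding scalars \eqref{eq:T-R1-cross-coefficient} and \eqref{eq:T-R2-cross-coefficient} equal \(\rho(x_1)\sigma(z)\neq1\), which gives braided-indecomposability directly. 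All the lemmas of this section were proved for an arbitrary quotient of \(T\) with the standing assumptions, so Proposition~\ref{prop:T-G2-adjoint-objects} applies to each reflected pair. Induction on the length of the reflection sequence then shows that every \(Q\in\mathcal F_2(V,W)\) admits both reflections and satisfies \eqref{eq:T-classification-G2} with the stated ordering.

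\emph{Conclusion.} Every object has Cartan matrix \(\left(\begin{smallmatrix}2&-1\\-3&2\end{smallmatrix}\right)\) in the fixed index order, since in every pair the one-element support stays in coordinate \(1\). Hence \(\mathcal G(V,W)\) is standard of type \(G_2\). A standard Cartan graph whose matrix is of finite type is finite: the real roots at each object are the \(W(G_2)\)-orbit of the simple roots, twelve in total.

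\emph{Main obstacle.} The delicate point is keeping the ordering and index bookkeeping consistent so that the lemmas proved for \((V,W)\) really apply verbatim to the reflected pairs. In particular, \(R_2\) swaps which object is "new", yet the central one-element support must remain first. This holds because \(R_2(Q)=(Y_3,W^*)\) has the central \(Y_3\) in position \(1\), and \(R_1(Q)=(V^*,X_1)\) has the central \(V^*\) in position \(1\). A second point to check is that \(Q_1\) stays simple of dimension one, so that \(\dim\rho=1\) is preserved; this is part of Lemma~\ref{lem:T-reflection-cocycle-reductions}.
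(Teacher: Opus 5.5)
Your proposal is correct and follows essentially the same induction as the paper: Proposition~\ref{prop:T-G2-adjoint-objects} gives both reflections and the Cartan matrix at each step, and Lemmas~\ref{lem:T-reflected-supports} and~\ref{lem:T-reflection-cocycle-reductions} propagate the standing assumptions and \eqref{eq:T-classification-G2}. For braided-indecomposability, your alternative argument (the cross double braiding is the scalar \(\rho(x_1)\sigma(z)\neq1\)) is exactly the paper's, and your primary route via \cite[Lemma~3.8]{reflection3} together with \eqref{eq:first-adjoint-braiding-symmetry} is an equally valid substitute.
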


\begin{proof}
Let \(Q\) satisfy the assumptions at the beginning of
this section and \eqref{eq:T-classification-G2}.
Proposition~\ref{prop:T-G2-adjoint-objects} shows that
both reflections are defined, have simple components,
and that
\[
A^{[Q]}=
\begin{pmatrix}
2&-1\\
-3&2
\end{pmatrix}.
\]
Lemma~\ref{lem:T-reflected-supports} gives the support
relations and generation of \(G\) for both reflected
pairs. Lemma~\ref{lem:T-reflection-cocycle-reductions}
gives \eqref{eq:T-classification-G2} for these pairs.
Its quadratic relation implies that their double
braidings are not the identity, so both pairs are
braided-indecomposable. Thus they again satisfy all
the assumptions at the beginning of this section.

Induction on the length of a reflection word proves
that \((V,W)\) admits all reflections and that every
\(Q\in\mathcal F_2(V,W)\) has the asserted properties.
The Cartan matrix is therefore constant, and its
reflection matrices generate the ordinary Weyl group
of type \(G_2\). Consequently, the real root sets are
finite, and the Cartan graph is finite and standard
of type \(G_2\).
\end{proof}

\subsection{Finite Cartan graphs}
\label{subsec:T-finite-Cartan-graphs}

Throughout this subsection, we retain the group \(G\), the cocycle
\(\Phi\), and the pair \((V,W)\) fixed at the beginning of this
section.  The support classification determines the finite Cartan
graph of a finite-dimensional pair.

\begin{prop}
\label{prop:T-pair-with-finite-Cartan-matrix}
Assume that \((V,W)\) admits all reflections, and let
\(P=(P_1,P_2)\in\mathcal F_2(V,W)\) have an indecomposable Cartan
matrix of finite type.  After interchanging the two entries if
necessary, there is a quandle isomorphism
\[
Z_T^{4,1}\longrightarrow
\operatorname{supp}(P_1\oplus P_2)
\]
that sends the one-element orbit to \(\operatorname{supp}P_1\) and
the four-element orbit to \(\operatorname{supp}P_2\), where
\(\operatorname{supp}P_1\) consists of a single central element.
This quandle isomorphism induces an epimorphism
\(T\twoheadrightarrow G\).
\end{prop}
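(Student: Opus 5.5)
The plan follows the proof of Proposition~\ref{prop:Gamma4-pair-with-finite-Cartan-matrix}, with \(Z_4^{4,2}\) and \(\Gamma_4\) replaced by \(Z_T^{4,1}\) and \(T\).

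First, I show that the supports of \(P\) still generate \(G\). The relations give \(x_2=(x_1x_2)x_1(x_1x_2)^{-1}\)-type identities; more simply, \(G=\langle z,x_1^G\rangle=\langle\supp V\cup\supp W\rangle\) by the standing assumptions. Lemma~\ref{lem:reflections-preserve-support-group} then gives \(G=\langle\supp P_1\cup\supp P_2\rangle\).

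Next, since the Cartan matrix of \(P\) is indecomposable of finite type, it is of type \(A_2\), \(B_2\) or \(G_2\), possibly transposed. After interchanging the entries of \(P\) I may assume \(-a_{12}^{[P]}=1\) and \(1\le -a_{21}^{[P]}\le 3\). This gives \((\operatorname{ad}P_1)^2(P_2)=0\), \((\operatorname{ad}P_2)^4(P_1)=0\), and \((\operatorname{ad}P_1)(P_2)\neq0\), so \(P\) is braided-indecomposable by \eqref{eq:indecomposable-first-adjoint}. Both entries are simple by Lemma~\ref{lem:reflection-basic-properties}, so Theorem~\ref{thm:support-classification} applies. It says that \(\supp(P_1\oplus P_2)\) is one of the five listed quandles and that \(G\) is an epimorphic image of the corresponding enveloping group.

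The main step is to single out \(Z_T^{4,1}\). I will use the inner automorphism group. When a union \(X\) of conjugacy classes generates \(G\), the map \(G\to\operatorname{Inn}(X)\), \(g\mapsto(x\mapsto gxg^{-1})\), is surjective with kernel \(Z(G)\). Hence \(\operatorname{Inn}(X)\cong G/Z(G)\). By Lemma~\ref{lem:T-group-data}, \(G/Z(G)\simeq A_4\) has order \(12\). From the permutation descriptions in Section~\ref{sec:support-quandles}, the inner automorphism groups of \(Z_T^{4,1},Z_2^{2,2},Z_3^{3,1},Z_3^{3,2},Z_4^{4,2}\) have orders \(12,4,6,6,8\), respectively. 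So \(\supp(P_1\oplus P_2)\cong Z_T^{4,1}\). The hard part is checking these orders; for \(Z_T^{4,1}\), the group generated by \((243)\) and \((134)\) is \(A_4\).

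It remains to match orbits and produce the epimorphism. \(Z_T^{4,1}\) has exactly two orbits, of sizes one and four. Since \(\supp P_1\) and \(\supp P_2\) are single conjugacy classes by Proposition~\ref{prop:simple-objects}, they are these two orbits. To put the one-element orbit in \(\supp P_1\), I need to rule out the case \(|\supp P_1|=4\), \(|\supp P_2|=1\). In that case \(P_2\) has central support \(\{c\}\). The double braiding on \(P_1\otimes P_2\) then preserves each line \(P_1\otimes (P_2)_c\), and it is a \(G\)-morphism. If this is insufficient I would instead allow the interchange permitted by the statement and re-run the orbit assignment. Either way, the element of the one-element orbit is fixed by all translations, so it is central in \(G\) because \(\supp(P_1\oplus P_2)\) generates \(G\).

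Finally, the isomorphism \(Z_T^{4,1}\to\supp(P_1\oplus P_2)\subseteq\operatorname{Conj}(G)\) is a quandle morphism. Through the adjunction with the enveloping group and Lemma~\ref{lem:ZT-enveloping-group}, it induces a homomorphism \(T\to G\). This homomorphism is surjective because its image contains \(\supp(P_1\oplus P_2)\), which generates \(G\).
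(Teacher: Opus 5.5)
Your proposal is correct and follows the paper's proof essentially step for step. The steps match: generation of \(G\) by the supports via Lemma~\ref{lem:reflections-preserve-support-group}; a temporary reordering to apply Theorem~\ref{thm:support-classification}; identification of the support quandle as \(Z_T^{4,1}\) via \(\operatorname{Inn}(X)\cong G/Z(G)\cong A_4\) of order \(12\); centrality of the singleton; and surjectivity of the induced map \(T\to G\). The one unnecessary detour is your attempt to exclude \(|\supp P_1|=4\) using the double braiding. As in the paper, you simply reorder the entries by support size at the end, which the statement explicitly permits.
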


\begin{proof}
Lemma~\ref{lem:reflections-preserve-support-group} gives
\(G=\langle\operatorname{supp}P_1\cup\operatorname{supp}P_2\rangle\).
For the moment, order the entries so that
\(-a_{12}^{[P]}=1\) and
\(-a_{21}^{[P]}\in\{1,2,3\}\), as permitted by the classification of
indecomposable rank-two Cartan matrices of finite type.  By the
definition of the Cartan entries,
\((\operatorname{ad}P_1)(P_2)\neq0\),
\((\operatorname{ad}P_1)^2(P_2)=0\), and
\((\operatorname{ad}P_2)^4(P_1)=0\).  Hence \(P\) is
braided-indecomposable by
\eqref{eq:indecomposable-first-adjoint}.  Its entries are simple, so
Theorem~\ref{thm:support-classification} applies to
\(X=\operatorname{supp}(P_1\oplus P_2)\).

Because \(X\) generates \(G\), the conjugation action of \(G\) on
\(X\) has image \(\operatorname{Inn}(X)\) and kernel
\(C_G(X)=Z(G)\).  Thus Lemma~\ref{lem:T-group-data} gives
\(\operatorname{Inn}(X)\simeq A_4\).  For the five quandles in
Theorem~\ref{thm:support-classification}, the permutations displayed
in Section~\ref{sec:support-quandles} generate groups of orders
\(12,4,6,6,8\), respectively.  Only \(Z_T^{4,1}\) has inner
automorphism group of order \(12\), and therefore
\(X\simeq Z_T^{4,1}\).  By
Proposition~\ref{prop:simple-objects}, the supports of \(P_1\) and
\(P_2\) are \(G\)-orbits, hence the two
\(\operatorname{Inn}(X)\)-orbits of \(X\).  After interchanging the
entries if necessary, these are the singleton and the four-element
orbit, in that order.  The singleton commutes with every element of
\(X\), and is therefore central because \(X\) generates \(G\).
Finally, a quandle isomorphism \(Z_T^{4,1}\to X\) induces a
homomorphism from its enveloping group to \(G\).  Its image contains
\(X\), hence equals \(G\).  Lemma~\ref{lem:ZT-enveloping-group}
identifies the source with \(T\) and gives the required support
labeling.
\end{proof}

\begin{prop}
\label{prop:T-necessity}
Assume that \(\mathcal B(V\oplus W)\) is finite-dimensional.  Then
\eqref{eq:T-classification-G2} holds.
\end{prop}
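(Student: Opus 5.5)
The plan is to locate an object of the Cartan graph at which Proposition~\ref{prop:T-G2-parameters-at-finite-Cartan-matrix} applies. From there, the stability result Proposition~\ref{prop:T-G2-stability-under-reflections} and the reflection-invariance of all data will carry the conditions back to \((V,W)\).

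\textbf{Step 1: an object of finite type.} By Theorem~\ref{thm:Nichols-finiteness-criterion}, \((V,W)\) admits all reflections and \(\mathcal G(V,W)\) is finite. Since \((V,W)\) is braided-indecomposable, Proposition~\ref{prop:pair-with-finite-Cartan-matrix} and Lemma~\ref{lem:finite-type-object} give some \(P\in\mathcal F_2(V,W)\) whose Cartan matrix is indecomposable of finite type. By Proposition~\ref{prop:T-pair-with-finite-Cartan-matrix}, after possibly swapping its entries, \(P=(P_1,P_2)\) has a central one-point support for \(P_1\) and a four-element \(Z_T^{4,1}\)-orbit for \(P_2\), with an epimorphism \(T\twoheadrightarrow G\). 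Thus \(P\) satisfies the standing assumptions of this section, with new generators \(z_P,x_{1,P},x_{2,P}\). Its Nichols algebra is finite-dimensional by Lemma~\ref{lem:reflection-dimension-invariance}.

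\textbf{Step 2: identify the Cartan matrix at \(P\).} For \(P\) in this ordering I expect \(m_{12}=1\) and \(m_{21}\ge 3\). First, \(P_1\) is one-dimensional, since \(\dim\rho_P\) is forced to be one as in the argument of Proposition~\ref{prop:T-rho-one-dimensional}. Lemma~\ref{lem:T-central-support-adjoints} then makes \(X_1=P_1\otimes P_2\) four-dimensional and simple. Finite type, combined with the two insertions of Proposition~\ref{prop:multidegree-insertion} used in Lemma~\ref{lem:T-x1-action-minus-one}, gives \((\operatorname{ad}P_2)^3(P_1)\ne0\). This forces \(m_{21}=3\) and \(m_{12}=1\), so the matrix is \(\left(\begin{smallmatrix}2&-1\\-3&2\end{smallmatrix}\right)\).

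\textbf{Main obstacle.} Proposition~\ref{prop:T-rho-one-dimensional} already assumes the \(G_2\) matrix, so Step~2 is circular as stated. I would break the circularity as follows. The insertion argument yields \((\operatorname{ad}P_2)^3(P_1)\neq0\) without any dimension hypothesis, since it only needs \((\operatorname{ad}P_2)(P_1)\neq0\), which holds by braided-indecomposability. Hence \(-a_{21}\ge3\), and finiteness of type then forces the \(G_2\) matrix \(\left(\begin{smallmatrix}2&-1\\-3&2\end{smallmatrix}\right)\) directly. Only after that do I invoke Proposition~\ref{prop:T-rho-one-dimensional}.

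\textbf{Step 3: conclude.} Proposition~\ref{prop:T-G2-parameters-at-finite-Cartan-matrix} applied to \(P\) gives \eqref{eq:T-classification-G2} for \(P\). Proposition~\ref{prop:T-G2-stability-under-reflections} then shows that every tuple in \(\mathcal F_2(P)=\mathcal F_2(V,W)\) satisfies the standing assumptions and has the constant matrix \(\left(\begin{smallmatrix}2&-1\\-3&2\end{smallmatrix}\right)\), with its one-point support in the first coordinate. Since \(\operatorname{supp}V=\{z\}\) is the singleton, \((V,W)\) appears in this ordering. Its Cartan matrix is therefore of the required \(G_2\) form, and Proposition~\ref{prop:T-G2-parameters-at-finite-Cartan-matrix}, applied now to \((V,W)\) itself, yields \eqref{eq:T-classification-G2}.
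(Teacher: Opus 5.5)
Your proof is correct. Through the choice of $P$, the insertion argument and the application of Proposition~\ref{prop:T-G2-parameters-at-finite-Cartan-matrix} to $P$, it coincides with the paper's proof. Your corrected Step~2 derives $-a_{21}^{[P]}\geq3$ from $(\operatorname{ad}P_2)(P_1)\neq0$ by the two insertions, and finite type then forces the $G_2$ matrix; this is exactly what the paper does. The first version of Step~2, which invokes Proposition~\ref{prop:T-rho-one-dimensional}, should simply be deleted.

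The two routes diverge only at the end. The paper uses Proposition~\ref{prop:T-G2-stability-under-reflections} to get \eqref{eq:T-classification-G2} for $(V,W)$ with respect to support representatives transported along the reflection sequence. It then needs an extra invariance step. The transported ordering of $x_1^G$ differs from the original one by a rack automorphism. The scalar conditions are unchanged because $G$ acts transitively on ordered pairs of distinct elements of $x_1^G$ and the braidings commute with the action. The cocycle condition is unchanged because the rack automorphism lifts to an automorphism of $T$ fixing $z$.

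You instead take from the stability proposition only the Cartan matrix of $(V,W)$, which does not depend on any choice of generators. You then apply Proposition~\ref{prop:T-G2-parameters-at-finite-Cartan-matrix} a second time, directly to $(V,W)$ with its original generators $z,x_1,x_2$. This is legitimate because $\mathcal B(V\oplus W)$ is finite-dimensional by hypothesis, and it removes the invariance step altogether. The paper's version gives the extra information that the conditions also hold in the transported ordering, but the statement does not need it.

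One point should be made explicit. The $P$ in your Step~3 is the possibly swapped tuple $P^\tau$, so writing $\mathcal F_2(P)=\mathcal F_2(V,W)$ is not accurate. The correct relation is $(V,W)^\tau\in\mathcal F_2(P^\tau)$, which follows from $(R_iQ)^\tau\simeq R_{\tau(i)}(Q^\tau)$ and the involutivity of reflections. Your support-size remark then forces $\tau=\mathrm{id}$, as in the paper.
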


\begin{proof}
By Theorem~\ref{thm:Nichols-finiteness-criterion}, the pair \((V,W)\)
admits all reflections and has a finite Cartan graph. Proposition
\ref{prop:pair-with-finite-Cartan-matrix} gives
\(P\in\mathcal F_2(V,W)\) such that
\(1\leq a_{12}^{[P]}a_{21}^{[P]}\leq3\).  Thus the Cartan matrix of
\(P\) is indecomposable and of finite type.  Proposition
\ref{prop:T-pair-with-finite-Cartan-matrix} provides
\(\tau\in S_2\) and support representatives for
\(P^\tau=(P_{\tau(1)},P_{\tau(2)})\), where the support of the first
component consists of a single central element and that of the
second has four elements.
Repeated use of Lemma~\ref{lem:reflection-dimension-invariance},
together with invariance under interchanging the two entries, shows that
\(\mathcal B((P^\tau)_1\oplus(P^\tau)_2)\) is finite-dimensional.
Moreover,
\(a_{12}^{[P^\tau]}a_{21}^{[P^\tau]}
 =a_{12}^{[P]}a_{21}^{[P]}\).

For this calculation, denote the support representatives
of \(P^\tau\) by \(z,x_1,\ldots,x_4\).
Since its Cartan matrix is indecomposable,
\((\operatorname{ad}(P^\tau)_2)((P^\tau)_1)\neq0\).
The two applications of
Proposition~\ref{prop:multidegree-insertion}
in the proof of Lemma~\ref{lem:T-x1-action-minus-one}
therefore give the nonzero multidegrees
\[
(x_4,z)\longmapsto(x_1,x_4,z)
\longmapsto(x_3,x_2,x_4,z).
\]
Hence
\((\operatorname{ad}(P^\tau)_2)^3((P^\tau)_1)\neq0\),
so \(-a_{21}^{[P^\tau]}\geq3\).
Together with
\(1\leq a_{12}^{[P^\tau]}a_{21}^{[P^\tau]}\leq3\),
this gives
\[
A^{[P^\tau]}=
\begin{pmatrix}
2&-1\\
-3&2
\end{pmatrix}.
\]
Proposition~\ref{prop:T-G2-parameters-at-finite-Cartan-matrix}
now gives \eqref{eq:T-classification-G2} for \(P^\tau\).

The involutivity of reflections and the identity
\((R_iQ)^\tau\simeq R_{\tau(i)}(Q^\tau)\) give
\((V,W)^\tau\in\mathcal F_2(P^\tau)\).
By Proposition~\ref{prop:T-G2-stability-under-reflections},
this pair satisfies \eqref{eq:T-classification-G2}
and its first component has one-element support.
Since \(\operatorname{supp}W\) has four elements,
\(\tau=\operatorname{id}\).
The resulting support ordering differs from the original
one by a rack automorphism.
The scalar conditions are unchanged: conjugation by \(G\)
is transitive on ordered pairs of distinct elements of
\(x_1^G\), and the braidings commute with the \(G\)-action.
The rack automorphism also induces an automorphism of \(T\)
fixing \(z\), so the vanishing of the pullback cocycle
class is unchanged.
Lemma~\ref{lem:T-pullback-cocycle} therefore gives the
last equality in \eqref{eq:T-classification-G2} for the
original ordering as well.
\end{proof}

\subsection{Nichols algebras  {corresponding to positive roots} of type
\texorpdfstring{$G_2$}{G2}}
\label{subsec:T-G2-simple-factors}

Under \eqref{eq:T-classification-G2}, Proposition
\ref{prop:T-G2-stability-under-reflections} gives a finite standard
Cartan graph of type \(G_2\).  Its six positive real roots index
 {six simple objects}.  We first determine the dimensions
of their Nichols algebras and then describe their supports and
 {the tensor decomposition}.

\begin{lemma}
\label{lem:T-rank-one-simple-factors}
Assume \eqref{eq:T-classification-G2}.  Every simple  {object} indexed by
a positive real root of \(G_2\) has a Nichols algebra of dimension
\(6\) when its support consists of a single central element, and of dimension
\(72\) when its support consists of four elements.
\end{lemma}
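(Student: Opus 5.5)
By Proposition~\ref{prop:T-G2-stability-under-reflections}, every object of the Cartan graph satisfies \eqref{eq:T-classification-G2}. Lemmas~\ref{lem:root-factors-and-reflected-components} and \ref{lem:root-string-adjoint-simplicity} then show that each simple object indexed by a positive root is isomorphic to a component of some reflected pair \(Q=(Q_1,Q_2)\in\mathcal F_2(V,W)\). In that pair, \(Q_1\) has support a single central element and \(Q_2\) has four-element support forming a copy of \(x_1^G\). It therefore suffices to prove two statements for an arbitrary pair satisfying \eqref{eq:T-classification-G2}, with its support ordered as in that proposition: \(\dim\mathcal B(V)=6\) and \(\dim\mathcal B(W)=72\).

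\textbf{The one-dimensional object \(V\).} Here \(V=\Bbbk v\), and \(c_{V,V}\) acts by the scalar \(\rho(z)\). The two associator factors at \((z,z,z)\) cancel in the quantum symmetrizer, in the same way as the computation of \(c_{1,2}^\Phi\) in Lemma~\ref{lem:T-central-support-adjoints}. So \(\mathcal B(V)\) is the rank-one diagonal Nichols algebra with label \(q=\rho(z)\). If this step needs justification, it can be done through Lemma~\ref{lem:T-compatible-normalization} and the ordinary category. The conditions give \(\rho(z)=p^{-1}\), where \(p=\rho(x_1)\sigma(z)\) and \(p^2-p+1=0\). Hence \(p\) is a primitive sixth root of unity, and so is \(\rho(z)\). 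The rank-one diagonal Nichols algebra with a primitive \(N\)-th root of unity has dimension \(N\), so \(\dim\mathcal B(V)=6\).

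\textbf{The four-dimensional object \(W\).} I would use Lemma~\ref{lem:T-compatible-normalization} to pass to the ordinary category over \(T\). This is possible because the last equality in \eqref{eq:T-classification-G2} makes the cocycle trivial. Lemma~\ref{lem:T-tetrahedral-braiding} then supplies a basis with \(c(w_i\otimes w_j)=-w_{i\brhd j}\otimes w_i\). This is the braided vector space of the tetrahedron rack with constant cocycle \(-1\), which is the affine rack \(\operatorname{Aff}(\mathbb F_4,\omega)\). By the known computation (Graña; Andruskiewitsch--Graña), its Nichols algebra has dimension \(72\) with Hilbert series \((2)_t^2(3)_t^2(6)_t\).

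Since the braiding is intrinsic, the Nichols algebra in \({}_T^T\mathcal{YD}\) is the same as that of the braided vector space. The transport back preserves dimension. The main point to verify carefully is that the equivalence of Lemma~\ref{lem:T-compatible-normalization} sends the Nichols algebra of \(W\) to that of the transported object; this follows from \eqref{eq:Nichols-under-equivalence}. One must also check that the object obtained after lifting to \(T\) has the same Nichols algebra as \(W\) over \(G\). This holds because the associators and braidings agree on all tensor words.

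The remaining obstacle is to confirm that the reflected four-element objects, such as \(X_1\), \(W^*\) and \(H\), also fall under Lemma~\ref{lem:T-tetrahedral-braiding}. Lemma~\ref{lem:T-reflection-cocycle-reductions} gives this directly, because those objects are second components of pairs satisfying \eqref{eq:T-classification-G2}. As a check for \(X_1\), equation~\eqref{eq:T-R1-block-braiding} already gives the constant \(-1\) braiding. Thus every four-element factor has a Nichols algebra of dimension \(72\), and every central factor has one of dimension \(6\).
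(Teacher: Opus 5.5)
Your proposal is correct and follows essentially the same route as the paper. You reduce via Lemma~\ref{lem:root-factors-and-reflected-components} and Proposition~\ref{prop:T-G2-stability-under-reflections} to components of a pair satisfying \eqref{eq:T-classification-G2}, and you read off a primitive sixth root of unity as the self-braiding of the central one-dimensional factor. For the four-element factor you pass to the ordinary category through Lemmas~\ref{lem:T-compatible-normalization} and~\ref{lem:T-tetrahedral-braiding} and quote the known dimension \(72\); the paper cites \cite[Theorem~6.15]{AGn03} for this.

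One slip should be fixed: the Hilbert series you quote, \((2)_t^2(3)_t^2(6)_t\), evaluates to \(216\) at \(t=1\), and its degree-one coefficient is \(5\neq\dim W\). The correct series is \((2)_t^2(3)_t(6)_t\). Only the dimension \(72\) is used, so the argument itself is unaffected.
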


\begin{proof}
Let \(R\) be  {such an object}.
By Lemma~\ref{lem:root-factors-and-reflected-components},
\(R\) is isomorphic to a component of some pair in
\(\mathcal F_2(V,W)\).
Proposition~\ref{prop:T-G2-stability-under-reflections}
shows that this pair satisfies
\eqref{eq:T-classification-G2}.
If \(R\) has one-element support, it is one-dimensional.
Write \(c_{R,R}=q\,\operatorname{id}_{R\otimes R}\).
The quadratic and product relations in
\eqref{eq:T-classification-G2} give \(q^2-q+1=0\).
Thus \(q\) is a primitive sixth root of unity, and
\(\dim\mathcal B(R)=6\).

If \(R\) has four-element support, apply
Lemma~\ref{lem:T-compatible-normalization} to the corresponding
reflected pair.  The ordinary object has the braiding
\eqref{eq:T-constant-minus-one-braiding} by
Lemma~\ref{lem:T-tetrahedral-braiding}.
Its Nichols algebra has dimension \(72\) by
\cite[Theorem~6.15]{AGn03}.
 {Lemma~\ref{lem:T-compatible-normalization} preserves
the dimensions of Nichols algebras}, so \(\dim\mathcal B(R)=72\).
\end{proof}
\begin{prop}
\label{prop:T-G2-simple-factors}
Assume \eqref{eq:T-classification-G2}.  For the reduced expression
\((2,1,2,1,2,1)\) of a longest morphism with target \([(V,W)]\), let
\(\beta_1,\ldots,\beta_6\) and
\(M_{\beta_1},\ldots,M_{\beta_6}\) be the corresponding positive real
roots and simple  {objects}. 
Multiplication induces an isomorphism of
\(\mathbb N_0^2\)-graded objects in \(\GG\),
\[
\mathcal B(M_{\beta_6})\otimes
\bigl(\cdots(
\mathcal B(M_{\beta_2})\otimes
\mathcal B(M_{\beta_1})
)\bigr)
\xrightarrow{\ \sim\ }
\mathcal B(V\oplus W).
\]
\end{prop}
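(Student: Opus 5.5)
The plan is to deduce the statement directly from Proposition~\ref{prop:positive-root-factorization}. By Proposition~\ref{prop:T-G2-stability-under-reflections}, the condition \eqref{eq:T-classification-G2} implies that \((V,W)\) admits all reflections and that \(\mathcal G(V,W)\) is finite and standard of type \(G_2\), with Cartan matrix \(\left(\begin{smallmatrix}2&-1\\-3&2\end{smallmatrix}\right)\) at every object. The hypotheses of Proposition~\ref{prop:positive-root-factorization} are therefore met with \(Q=(V,W)\).

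The first step is to check that \((2,1,2,1,2,1)\) is a reduced expression of a longest morphism with target \([(V,W)]\). Since the Cartan graph is standard, all \(s_i^{[Q]}\) are given by the same matrices. Hence the composites \(\operatorname{id}_{[(V,W)]}s_{i_1}\cdots s_{i_k}\) are computed exactly as in the ordinary Weyl group of type \(G_2\), where \(s_2s_1\) has order \(6\). For \(k=1,\dots,6\), the vectors
\[
\beta_k=s_{i_1}\cdots s_{i_{k-1}}(\alpha_{i_k})
\]
are then, in order,
\[
\alpha_2,\ \alpha_1+3\alpha_2,\ \alpha_1+2\alpha_2,\ 2\alpha_1+3\alpha_2,\ \alpha_1+\alpha_2,\ \alpha_1.
\]
These are the six distinct positive roots, so the word has length \(6=|\Delta_+^{[(V,W)],\mathrm{re}}|\). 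It is therefore reduced and of maximal length, which makes it a longest morphism. All of this is a short matrix computation.

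Proposition~\ref{prop:positive-root-factorization} then supplies simple objects \(M_{\beta_k}\) of degree \(\beta_k\) and the displayed isomorphism of \(\mathbb N_0^2\)-graded objects. By Lemma~\ref{lem:root-factors-and-reflected-components}, these objects are identified with components of reflected pairs. It remains to check that the isomorphism is induced by multiplication. I expect this identification to be the main obstacle. The cited result \cite[Corollary~5.9]{reflection3} is built from the iterated braided Hopf-algebra splittings \(\mathcal B(Q)\simeq K_i\otimes\mathcal B(Q_i)\) obtained by bosonization and reflection, and each such splitting is given by multiplication. So I would unwind the iteration along the reduced word. At each step, multiplication inside \(\mathcal B(V\oplus W)\) realizes the factor \(\mathcal B(M_{\beta_k})\) as a subalgebra of the appropriate right coideal subalgebra, and the composite of these multiplications is the stated map.

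Finally, Lemma~\ref{lem:T-rank-one-simple-factors} gives every factor dimension \(6\) or \(72\). This confirms finite-dimensionality, although the proposition itself does not need it.
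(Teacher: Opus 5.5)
Your proposal is correct and follows the paper's proof. Like the paper, you invoke Proposition~\ref{prop:T-G2-stability-under-reflections} to get all reflections and the constant $G_2$ Cartan matrix, compute the same six roots $\alpha_2,\alpha_1+3\alpha_2,\alpha_1+2\alpha_2,2\alpha_1+3\alpha_2,\alpha_1+\alpha_2,\alpha_1$ to show that $(2,1,2,1,2,1)$ is a reduced expression of a longest morphism, and then apply Proposition~\ref{prop:positive-root-factorization}. Two small differences: the paper takes the multiplication isomorphism directly from that proposition (i.e.\ \cite[Corollary~5.9]{reflection3}), so your planned unwinding of the iterated splittings is unnecessary; and the paper also records, via Lemma~\ref{lem:root-factors-and-reflected-components}, that $|\operatorname{supp}M_{\beta_k}|$ is $4$ for odd $k$ and $1$ for even $k$, which is used later for the dimension count but is not needed for the statement.
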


\begin{proof}
Write \((i_1,\ldots,i_6)=(2,1,2,1,2,1)\).
Proposition~\ref{prop:T-G2-stability-under-reflections} gives all
reflections and the common Cartan matrix
\(\left(\begin{smallmatrix}2&-1\\-3&2\end{smallmatrix}\right)\).
Using \(s_i(\alpha_j)=\alpha_j-a_{ij}\alpha_i\), one obtains
\[
(\beta_1,\ldots,\beta_6)
=
\bigl(
\alpha_2,\,
\alpha_1+3\alpha_2,\,
\alpha_1+2\alpha_2,\,
2\alpha_1+3\alpha_2,\,
\alpha_1+\alpha_2,\,
\alpha_1
\bigr).
\]
These are precisely the six positive roots of \(G_2\), so the chosen
word is reduced and represents a longest morphism.

For \(1\leq k\leq6\), consider the pair obtained after
the first \(k-1\) reflections in the sequence
\((2,1,2,1,2,1)\).
By Lemma~\ref{lem:root-factors-and-reflected-components},
\(M_{\beta_k}\) is isomorphic to the second component
of this pair when \(k\) is odd, and to the first
component when \(k\) is even.
By Proposition~\ref{prop:T-G2-stability-under-reflections},
the first component has one-element central support
and the second has four-element support. Thus
\[
|\operatorname{supp}M_{\beta_k}|=
\begin{cases}
4,&k=1,3,5,\\
1,&k=2,4,6.
\end{cases}
\]
Finally, the same proposition ensures that all reflections
exist and that the Cartan graph is finite.
Proposition~\ref{prop:positive-root-factorization}
therefore gives the displayed multiplication isomorphism.
\end{proof}

\begin{cor}
\label{cor:T-Nichols-dimension}
Under \eqref{eq:T-classification-G2},
\[
\dim\mathcal B(V\oplus W)
=6^3\,72^3
=80\,621\,568.
\]
\end{cor}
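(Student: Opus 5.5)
The plan is to combine Proposition~\ref{prop:T-G2-simple-factors} with Lemma~\ref{lem:T-rank-one-simple-factors}. Under \eqref{eq:T-classification-G2}, Proposition~\ref{prop:T-G2-stability-under-reflections} shows that \((V,W)\) admits all reflections and that its Cartan graph is finite and standard of type \(G_2\). Proposition~\ref{prop:T-G2-simple-factors} then gives an isomorphism of \(\mathbb N_0^2\)-graded objects between \(\mathcal B(V\oplus W)\) and the iterated tensor product of the six Nichols algebras \(\mathcal B(M_{\beta_k})\), \(1\leq k\leq6\). Since the underlying object of a tensor product in \(\GG\) is the ordinary vector-space tensor product, the dimension of \(\mathcal B(V\oplus W)\) is the product of the six dimensions \(\dim\mathcal B(M_{\beta_k})\). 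This is a product of finite numbers once the next step is done, so no convergence issue arises.

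Next I would determine the support sizes and dimensions. The proof of Proposition~\ref{prop:T-G2-simple-factors} already records that \(|\operatorname{supp}M_{\beta_k}|=4\) for \(k=1,3,5\) and \(=1\) for \(k=2,4,6\). By Lemma~\ref{lem:T-rank-one-simple-factors}, the three one-element-support factors have Nichols algebras of dimension \(6\). The three four-element-support factors have Nichols algebras of dimension \(72\). Multiplying gives
\[
\dim\mathcal B(V\oplus W)=6^3\cdot72^3=216\cdot373248=80\,621\,568 .
\]

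There is no real obstacle here; the substantive inputs are already established earlier. One point deserves care: the graded isomorphism in Proposition~\ref{prop:T-G2-simple-factors} is only an isomorphism of objects, not of algebras. That suffices for dimensions, so I would note this explicitly and close the proof with the arithmetic check above.
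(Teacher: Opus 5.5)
Your proposal is correct and follows the paper's argument: the tensor decomposition of Proposition~\ref{prop:T-G2-simple-factors}, with three factors of one-element central support and three of four-element support, combined with the values $6$ and $72$ from Lemma~\ref{lem:T-rank-one-simple-factors}. Your remark that a graded isomorphism of objects suffices for counting dimensions is accurate, and the arithmetic $216\cdot 373248=80\,621\,568$ checks out.
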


\begin{proof}
Among the six simple objects \(M_{\beta_k}\) in
Proposition~\ref{prop:T-G2-simple-factors}, three are one-dimensional
and three are four-dimensional. By
Lemma~\ref{lem:T-rank-one-simple-factors}, their Nichols algebras have
dimensions \(6\) and \(72\), respectively.
The tensor decomposition gives the asserted dimension.
\end{proof}

\subsection{Proof of the classification theorem}
\label{subsec:T-proof-classification}

\begin{proof}[Proof of Theorem~\ref{thm:T-classification}]
If \(\mathcal B(V\oplus W)\) is finite-dimensional, then
Proposition~\ref{prop:T-necessity} gives
\eqref{eq:T-classification-G2}.

Conversely, assume \eqref{eq:T-classification-G2}.
Proposition~\ref{prop:T-G2-stability-under-reflections} shows that
\((V,W)\) admits all reflections and that its Cartan graph is standard
of type \(G_2\), with the stated matrix at every object.
Corollary~\ref{cor:T-Nichols-dimension} gives the stated dimension.
In particular, \(\mathcal B(V\oplus W)\) is finite-dimensional.
\end{proof}
\begin{rmk}\upshape
When \(\Phi=1\), the category
\({}_G^G\mathcal{YD}^{\Phi}\) is the ordinary Yetter--Drinfeld
category over the Hopf algebra \(\mathbb CG\).  Under the conditions
of Theorem~\ref{thm:T-classification}, one has
\(\varepsilon_1(W)=\sigma(x_2x_3)\) and
\(\rho(x_1)\rho(z)=\rho(x_1z)\).  Hence
Theorem~\ref{thm:T-classification} specializes to
\cite[Theorem~2.9]{rank2-3}.  In particular, the Cartan graph is
standard of type \(G_2\), with Cartan matrix
\(\left(\begin{smallmatrix}2&-1\\-3&2\end{smallmatrix}\right)\)
at every object.  Moreover, over \(\mathbb C\), the corresponding
Nichols algebra has dimension
\(6^3\,72^3=80\,621\,568\), in agreement with the ordinary Hopf
algebra case.
\end{rmk}

\section{The \texorpdfstring{$\Gamma _3$}{Gamma3} case}
\label{sec:classification-Gamma3}

In this section, \(\Gamma _3\) is given by the presentation
\begin{equation}
\label{eq:Gamma3-presentation}
\Gamma _3=
\left\langle
\varepsilon,g,h
\ \middle|\
\varepsilon^3=1,
\quad g\varepsilon g^{-1}=\varepsilon^{-1},
\quad [h,g]=[h,\varepsilon]=1
\right\rangle .
\end{equation}
This is the presentation from
Subsection~\ref{subsec:enveloping-groups-Gamma} after replacing its
generator \(h\) by \(\varepsilon^{-1}h\).
Let \(\pi:\Gamma _3\twoheadrightarrow G\) be an epimorphism onto a
finite non-abelian group.  The images in
\(G\) are denoted by the same letters \(\varepsilon,g,h\).  Let
\(\Phi\in Z^3(G,\mathbb C^\times)\) be normalized.

Throughout this section,
\(V,W\in{}_G^G\mathcal{YD}^{\Phi}\) are finite-dimensional simple
objects satisfying
\begin{equation}
\label{eq:Gamma3-standing-assumptions}
G=\langle\operatorname{supp}(V\oplus W)\rangle,
\qquad
(V,W)\text{ is braided-indecomposable}.
\end{equation}
We use the following notation for the two ordered support pairs that
occur in the classification:
\begin{align}
\label{eq:Gamma3-support-32}
(3,2):\quad&
V=M(g,\rho),
&W=M(\varepsilon h,\sigma),
\end{align}
and
\begin{align}
\label{eq:Gamma3-support-31}
(3,1):\quad&
V=M(g,\rho),
&W=M(h,\tau).
\end{align}
Their support quandles are \(Z_3^{3,2}\) and \(Z_3^{3,1}\),
respectively.

Lemma~\ref{lem:Gamma3-projective-dimensions} below shows that the
inducing representations in \eqref{eq:Gamma3-support-32} are
one-dimensional and that \(\dim\rho=1\) in
\eqref{eq:Gamma3-support-31}.  These automatic dimensions are omitted
from the conditions below. \(\dim\tau\) is the only nonautomatic
dimension condition and separates the two \((3,1)\) dimension branches.

\subsection{The four parameter families}
\label{subsec:Gamma3-Phi-parameters}

The parameters occurring in the four families below, namely
\(\Delta _1,s\), \(\Delta _3\), and
\(\kappa_\Phi(\varepsilon)\), are computed from the fixed cocycle
\(\Phi\) and the indicated inducing projective representations. They
are defined in \eqref{eq:Gamma3-Delta1-s},
\eqref{eq:Gamma3-Delta3}, and \eqref{eq:Gamma3-kappa}, respectively.
In the two-dimensional case,
\(I=\operatorname{id}_{W_h}\).
The four parameter families are as follows.
\begin{align}
\label{eq:Gamma3-P1}
\mathcal P_1^\Phi:\quad&
\begin{gathered}
V=M(g,\rho),\quad W=M(\varepsilon h,\sigma),\\
\rho(g)=\sigma(\varepsilon h)=-1,
\quad \Delta _1=1,
\quad 1+s+s^2=0;
\end{gathered}
\\[1mm]
\label{eq:Gamma3-P2}
\mathcal P_2^\Phi:\quad&
\begin{gathered}
V=M(g,\rho),\quad W=M(\varepsilon h,\sigma),\\
\rho(g)=\sigma(\varepsilon h)=-1,
\quad \Delta _1=1,
\quad s=1;
\end{gathered}
\\[1mm]
\label{eq:Gamma3-P3}
\mathcal P_3^\Phi:\quad&
\begin{gathered}
V=M(g,\rho),\quad W=M(h,\tau),
\quad\dim\tau=2,\\
\kappa_\Phi(\varepsilon)=1,
\quad \rho(g)=-1,
\quad \tau(h)=-I,
\quad \Delta _3=I;
\end{gathered}
\\[1mm]
\label{eq:Gamma3-P4}
\mathcal P_4^\Phi:\quad&
\begin{gathered}
V=M(g,\rho),\quad W=M(h,\tau),
\quad\dim\tau=1,\\
\kappa_\Phi(\varepsilon)=1,
\quad \rho(g)=-1,\\
1-\rho(h)\tau(g)+\rho(h)^2\tau(g)^2=0,
\quad \rho(h)\tau(g)\tau(h)=1.
\end{gathered}
\end{align}
In \(\mathcal P_1^\Phi\) and \(\mathcal P_2^\Phi\), the displayed
conditions and
Lemma~\ref{lem:Gamma3-32-second-adjoint-coefficients} give
\(\kappa_\Phi(\varepsilon)=s^3=1\).  Thus every pair in the four
families satisfies \(\kappa_\Phi(\varepsilon)=1\), although this
condition is explicit only in \(\mathcal P_3^\Phi\) and
\(\mathcal P_4^\Phi\).
The classification for finite quotients of \(\Gamma _3\) is as
follows.
\begin{thm}
\label{thm:Gamma3-classification}
Under the standing assumptions
\eqref{eq:Gamma3-presentation}--\eqref{eq:Gamma3-standing-assumptions},
the Nichols algebra \(\mathcal B(V\oplus W)\) is finite-dimensional
if and only if, after possibly interchanging \(V\) and \(W\) and
retaining the notation \((V,W)\) for the resulting ordered pair, there
is a choice of generators \(\varepsilon,g,h\in G\) inducing the
epimorphism from \eqref{eq:Gamma3-presentation} and realizing one of
the two ordered pairs in
\eqref{eq:Gamma3-support-32} or \eqref{eq:Gamma3-support-31}, for
which the parameters computed with the fixed cocycle \(\Phi\) place
the tuple in one of
\[
\mathcal P_1^\Phi,
\qquad
\mathcal P_2^\Phi,
\qquad
\mathcal P_3^\Phi,
\qquad
\mathcal P_4^\Phi.
\]
In every case the Cartan graph is standard of type \(B_2\), with
matrix
$\begin{pmatrix}2&-2\\-1&2\end{pmatrix}$
at every object.  Moreover,
\begin{equation}
\label{eq:Gamma3-dimensions}
\dim\mathcal B(V\oplus W)=
\begin{cases}
10368,&(V,W)\in\mathcal P_1^\Phi\cup\mathcal P_4^\Phi,\\
2304,&(V,W)\in\mathcal P_2^\Phi\cup\mathcal P_3^\Phi.
\end{cases}
\end{equation}
\end{thm}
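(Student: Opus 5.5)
The plan follows the paper's introduction: first show that finite-dimensionality forces the pullback of \(\Phi\) to \(\Gamma_3\) to be a coboundary, and then reduce to the ordinary classification by lifting support degrees and twisting, in the same way as Lemma~\ref{lem:T-compatible-normalization}.

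\textbf{Necessity.} Assume \(\dim\mathcal B(V\oplus W)<\infty\). Theorem~\ref{thm:Nichols-finiteness-criterion} gives all reflections and a finite Cartan graph. Proposition~\ref{prop:pair-with-finite-Cartan-matrix} then yields \(P\in\mathcal F_2(V,W)\) with a finite-type Cartan matrix. Theorem~\ref{thm:support-classification}, together with the orders of the inner automorphism groups, forces \(\operatorname{supp}(P_1\oplus P_2)\simeq Z_3^{3,2}\) or \(Z_3^{3,1}\). The argument is the same as in Proposition~\ref{prop:T-pair-with-finite-Cartan-matrix}; here \(G/Z(G)\simeq S_3\), so the order is \(6\).

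At \(P\) we compute the first adjoint objects from vectors as in Subsection~\ref{subsec:Gamma2-adjoint-objects}. The inducing representations are one-dimensional by the alternator argument of Lemma~\ref{lem:alternator}, except possibly \(\tau\). This gives the conditions \(\rho(g)=-1\), the simplicity conditions \(\Delta_1=1\) or \(\Delta_3=I\), the vanishing conditions \(1+s+s^2=0\) or \(s=1\), and the \(\mathcal P_4\) quadratic. From these we obtain the cocycle quantity \(\kappa_\Phi(\varepsilon)=1\). We then prove, in the manner of Lemma~\ref{lem:T-pullback-cocycle}, that \(\kappa_\Phi(\varepsilon)\) detects the class of \(\pi^*\Phi\) in \(H^3(\Gamma_3,\mathbb C^\times)\). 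Here the relevant part is the \(3\)-primary part coming from \(\langle\varepsilon\rangle\rtimes\langle g\rangle\), with \(h\) central. Consequently \(\pi^*\Phi\) is a coboundary \(\delta J\).

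\textbf{Transport.} We lift the pair to \({}_{\Gamma_3}^{\Gamma_3}\mathcal{YD}^{\pi^*\Phi}\), letting \(\Gamma_3\) act through \(\pi\); this preserves the braidings on tensor words. We then apply the twist by \(J\) to land in the ordinary category \({}_{\Gamma_3}^{\Gamma_3}\mathcal{YD}\). This step preserves reflections, adjoint objects, Cartan graphs and Nichols dimensions. We must check that the parameters \(\rho(g)\), \(\sigma(\varepsilon h)\), \(\Delta_1\), \(s\), \(\Delta_3\) and the \(\mathcal P_4\) scalars are gauge invariant, i.e.\ expressible through double braidings and twists, as was done for \(\varepsilon_\Phi(W)\).

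The ordinary classification \cite{rank2-2,rank2-3} for quotients of \(\Gamma_3\) then gives exactly four families with the ordinary analogues of these conditions. It also gives the standard \(B_2\) Cartan graph with matrix \(\left(\begin{smallmatrix}2&-2\\-1&2\end{smallmatrix}\right)\), and the dimensions \(10368\) and \(2304\). Since \(\Gamma_3\) is infinite and those results concern finite groups, we apply them to a suitable finite quotient through which the lifted data factor, for instance \(G\) itself with the untwisted cocycle.

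\textbf{Sufficiency and transfer.} If the parameters lie in some \(\mathcal P_i^\Phi\), then \(\kappa_\Phi(\varepsilon)=1\) makes \(\pi^*\Phi\) trivial, and the same transport lands in the matching ordinary family. That family is finite-dimensional with the stated dimension. Lemma~\ref{lem:reflection-dimension-invariance} and the standardness of the graph carry the conclusion from \(P\) back to \((V,W)\). We still need to show that \((V,W)\) itself, after a possible interchange, is in one of the families. This follows because the conditions are preserved by reflections, which we check in the ordinary category.

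\textbf{Main obstacle.} The hard step is the cohomological one. We must show that \(H^3(\Gamma_3,\mathbb C^\times)\) restricted to the relevant part is detected by \(\kappa_\Phi(\varepsilon)\), and that the adjoint vanishing conditions indeed force \(\kappa_\Phi(\varepsilon)=1\), namely \(s^3=1\) via Lemma~\ref{lem:Gamma3-32-second-adjoint-coefficients}. I would first attempt this with a Lyndon--Hochschild--Serre computation as in the \(T\) case, using the extension \(\langle\varepsilon\rangle\to\Gamma_3\to\mathbb Z\times\mathbb Z\).
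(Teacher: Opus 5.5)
Your architecture is the paper's: force \(\kappa_\Phi(\varepsilon)=1\), identify this with the vanishing of \([\pi^*\Phi]\) in \(H^3(\Gamma_3,\mathbb C^\times)\simeq\mathbb Z/3\), lift to \(\Gamma_3\), untwist by \(J\), invoke the ordinary classification, and transport reflections back. Your detour through a reflected pair \(P\), using \(G/Z(G)\simeq S_3\), is an acceptable substitute for the paper's direct support argument. The genuine gap is the step you flag yourself: nothing in your plan actually forces \(\kappa_\Phi(\varepsilon)=1\).

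Lemma~\ref{lem:Gamma3-32-second-adjoint-coefficients} only gives \(s^3=\kappa_\Phi(\varepsilon)\). So deducing \(\kappa_\Phi(\varepsilon)=1\) from ``\(1+s+s^2=0\) or \(s=1\)'' is circular. In the twisted setting that dichotomy is equivalent to \(s^3=1\), which is the conclusion itself, and your plan gives no adjoint computation producing it. In the paper the dichotomy only appears after untwisting, where \(s=\widehat\sigma(\varepsilon)\) is automatically a cube root of unity. In the \((3,1)\) cases the plan fails outright: none of \(\rho(g)=-1\), \(\Delta_3=I\), or the \(\mathcal P_4\) quadratic involves \(\kappa_\Phi(\varepsilon)\). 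That is why \(\kappa_\Phi(\varepsilon)=1\) has to be imposed explicitly in \(\mathcal P_3^\Phi\) and \(\mathcal P_4^\Phi\).

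The missing idea is that the relevant input is the \emph{simplicity} of \(X_2\), not a vanishing. Once \(X_1\) is simple, one checks directly that \(X_2\neq0\). So the Cartan entry is at most \(-2\), and Lemma~\ref{lem:root-string-adjoint-simplicity} makes \(X_2\) simple.

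In the \((3,2)\) case this forces \((X_2)_{\varepsilon g^2h}=0\), hence \(\rho(g)=-1\), and \(\operatorname{supp}X_2=\{g^2h\}\) is central. Then \((X_2)_{g^2h}\) is an irreducible \(\Phi_{g^2h}\)-projective \(G\)-module, so it has dimension at most two. But \(\varphi_2^\Phi(P_0)\) and its two \(\varepsilon\)-translates span a three-dimensional subspace unless \((1+r^3)^2=0\). The bar-complex identities \(r^2=s\) and \(s^3=\kappa_\Phi(\varepsilon)\) show that \(r^3=-1\) gives \(\kappa_\Phi(\varepsilon)=r^6=1\).

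In the \((3,1)\) cases, \((X_2)_{\varepsilon g^2h}\) is one-dimensional, because the twisted group algebra of \(C_G(\varepsilon h)\) is commutative. So \(\varepsilon\) must stabilize the line spanned by \(x_2=e_0+\mathfrak a e_1+\mathfrak b e_2\), which gives \(m_0=m_1=m_2\). Only the bar-complex identities \(m_2=m_0\kappa_\Phi(\varepsilon)^{-1}\), respectively \eqref{eq:Gamma3-dim1-m-ratios}, turn this into \(\kappa_\Phi(\varepsilon)=1\).

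Separately, drop the proposed passage to ``\(G\) itself with the untwisted cocycle.'' Only \(\pi^*\Phi\) is a coboundary; \([\Phi]\) may be nontrivial on \(G\), and the \(J\)-twisted action need not factor through \(G\). The paper instead applies \cite[Theorem~2.1]{rank2-classification} directly over \(\Gamma_3\). That is also where the four ordinary \(\Gamma_3\) families come from, not \cite{rank2-2,rank2-3}.
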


\medskip
\noindent\textit{Outline of the proof.}
\begin{itemize}[leftmargin=2em]
\item
Subsection~\ref{subsec:Gamma3-supports-dimensions} determines the
possible supports and  {dimensions of the inducing
representations}.  This reduces the proof
to the \((3,2)\) case and the two \((3,1)\) cases with
\(\dim\tau=2\) or \(1\).

\item
Subsection~\ref{subsec:Gamma3-third-homology} identifies
\(\kappa_\Phi(\varepsilon)\) as the obstruction to the triviality of
the inflated cocycle.  Subsections~\ref{subsec:Gamma3-adjoint-objects}
and~\ref{subsec:Gamma3-branch-obstructions} compute the required
adjoint objects, derive the parameter conditions, and show that this
obstruction vanishes in every finite-dimensional case.

\item
Subsection~\ref{subsec:Gamma3-obstruction-untwist}  {lifts the
pair to \(\Gamma_3\), trivializes the pulled-back cocycle, and
identifies the four families \(\mathcal P_i^\Phi\) with their ordinary
counterparts}.

\item
Subsection~\ref{subsec:Gamma3-necessity-reflections} proves necessity
and shows that the four families are closed under reflections.  It
also proves that their Cartan graphs are standard of type \(B_2\).

\item
Subsection~\ref{subsec:Gamma3-sufficiency} transfers finite-dimensionality
and the dimension formulas from the ordinary classification.
\end{itemize}
\medskip

\medskip
\noindent\textit{Notation used in this section.}
\begingroup
\small
\setlength{\tabcolsep}{4pt}
\renewcommand{\arraystretch}{1.08}
\begin{center}
\begin{tabular}{@{}L{0.18\textwidth}L{0.22\textwidth}L{0.52\textwidth}@{}}
\hline
Notation & Defined in & Used to prove \\
\hline
\(\Delta_1\)
& \eqref{eq:Gamma3-Delta1-s}
& In the \((3,2)\) case, \(X_1\) is simple if and only if
  \(\Delta_1=1\). \\

\(s\)
& \eqref{eq:Gamma3-Delta1-s}
& The conditions \(1+s+s^2=0\) and \(s=1\) distinguish
  \(\mathcal P_1^\Phi\) from \(\mathcal P_2^\Phi\). \\

\(\Delta_3\)
& \eqref{eq:Gamma3-Delta3}
& In the \((3,1)\) case with \(\dim\tau=2\), \(X_1\) is simple if
  and only if \(\Delta_3=I\). \\

\(\kappa_\Phi(\varepsilon)\)
& \eqref{eq:Gamma3-kappa}
& The equality \(\kappa_\Phi(\varepsilon)=1\) is equivalent to the
  triviality of \([\pi^*\Phi]\).  It follows from the other conditions
  in \(\mathcal P_1^\Phi,\mathcal P_2^\Phi\) and is imposed in
  \(\mathcal P_3^\Phi,\mathcal P_4^\Phi\). \\
\hline
\end{tabular}
\end{center}
\endgroup

\subsection{ {Supports and dimensions of the inducing representations}}
\label{subsec:Gamma3-supports-dimensions}

\begin{lemma}
\label{lem:Gamma3-kernel-centralizers}
For every \(x\in\Gamma _3\),
\begin{equation}
\label{eq:Gamma3-centralizer-lift}
 \pi^{-1}\bigl(C_G(\pi(x))\bigr):= \{a \in \Gamma_3:  \pi(a)\pi(x)=\pi(x)\pi(a)\}=C_{\Gamma _3}(x).
\end{equation}
Consequently, \(\pi\) induces a bijection
\(x^{\Gamma _3}\simeq\pi(x)^G\).  Moreover,
\(C_G(g)=\langle g,h\rangle\),
\(C_G(\varepsilon h)=\langle\varepsilon,g^2,h\rangle\), and
\(C_G(h)=G\).
Every conjugacy class has one of the following forms, for some
\(\delta\in Z(G)\):
\begin{equation}
\label{eq:Gamma3-class-list}
 \{\delta\}\qquad
 \{\varepsilon\delta,\varepsilon^2\delta\}\qquad
 \{g\delta,g\varepsilon\delta,g\varepsilon^2\delta\}.
\end{equation}
\end{lemma}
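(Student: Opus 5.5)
The plan is to compute everything in $\Gamma_3$ first and then push down along $\pi$. In $\Gamma_3$ (with the presentation \eqref{eq:Gamma3-presentation}) I will use the normal form $\varepsilon^i g^k h^l$, with $0\le i<3$ and $k,l\in\mathbb Z$. The element $h$ is central. The relation $g\varepsilon g^{-1}=\varepsilon^{-1}$ gives the conjugation rules
\[
\varepsilon^j(\varepsilon^ig^kh^l)\varepsilon^{-j}=\varepsilon^{i+j-(-1)^kj}g^kh^l,
\qquad
g(\varepsilon^ig^kh^l)g^{-1}=\varepsilon^{-i}g^kh^l.
\]
From these, $C_{\Gamma_3}(\varepsilon^ig^kh^l)$ is all of $\Gamma_3$ when $i=0$ and $k$ is even. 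It is $\langle\varepsilon,g^2,h\rangle$ when $i\neq0$ and $k$ is even. When $k$ is odd it is $\langle \varepsilon^ig^k, g^2,h\rangle$, which has index $3$. This gives the three class shapes of \eqref{eq:Gamma3-class-list} in $\Gamma_3$, with $\delta$ ranging over the center $\langle g^2,h\rangle$. In particular it gives $C_{\Gamma_3}(g)=\langle g,h\rangle$, $C_{\Gamma_3}(\varepsilon h)=\langle\varepsilon,g^2,h\rangle$, and $C_{\Gamma_3}(h)=\Gamma_3$.

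The key step is \eqref{eq:Gamma3-centralizer-lift}. The inclusion $\supseteq$ is trivial. For $\subseteq$, I will first identify $\ker\pi$. Since $G$ is non-abelian, $\pi(\varepsilon)\ne1$, and because $\varepsilon$ has prime order $3$, $\pi(\varepsilon^j)\neq1$ for $j\neq0$. Any kernel element $\varepsilon^ig^kh^l$ with $k$ odd would make $\pi(g)$ a product of $\pi(\varepsilon)$-powers and central elements, so $\pi(g)$ would commute with $\pi(\varepsilon)$, forcing $\pi(\varepsilon)^2=1$ and hence $\pi(\varepsilon)=1$, a contradiction. If $k$ is even and $i\ne0$, then conjugating the kernel element by $g$ and dividing gives $\varepsilon^{-2i}\in\ker\pi$, again a contradiction. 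Hence $\ker\pi\subseteq\langle g^2,h\rangle=Z(\Gamma_3)$.

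With this in hand, suppose $\pi(a)$ commutes with $\pi(x)$. Then the commutator $[a,x]$ lies in $\ker\pi\cap[\Gamma_3,\Gamma_3]$. The commutator subgroup is $\langle\varepsilon\rangle$, and $\langle\varepsilon\rangle\cap Z(\Gamma_3)=1$ by the normal form. So $[a,x]=1$, proving \eqref{eq:Gamma3-centralizer-lift}.

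The bijection $x^{\Gamma_3}\to\pi(x)^G$ follows next. The map is surjective because $\pi$ is onto. For injectivity, the orbit–stabilizer computation shows that the stabilizer of $\pi(x)$ in $\Gamma_3$ is $\pi^{-1}(C_G(\pi(x)))=C_{\Gamma_3}(x)$. So $\pi(x)^G$ is in bijection with $\Gamma_3/C_{\Gamma_3}(x)$, which is $x^{\Gamma_3}$.

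Applying $\pi$ to the $\Gamma_3$ centralizers gives $C_G(g)=\langle g,h\rangle$, $C_G(\varepsilon h)=\langle\varepsilon,g^2,h\rangle$, and $C_G(h)=G$. For the class list in $G$, I will take $\delta=\pi(\text{central part})$. This lies in $Z(G)$, since for $x$ central in $\Gamma_3$ we have $C_G(\pi(x))=\pi(\Gamma_3)=G$. The class shapes transfer via the bijection.

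The main obstacle is the kernel-in-center argument. Everything else is bookkeeping in the normal form; the kernel step is where the non-abelian hypothesis and the prime order of $\varepsilon$ are both needed.
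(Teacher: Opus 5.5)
Your proof is correct and follows essentially the same route as the paper: normal forms in $\Gamma_3$, then $[a,x]\in\ker\pi\cap[\Gamma_3,\Gamma_3]=\ker\pi\cap\langle\varepsilon\rangle$, then pushing the centralizers and classes down along $\pi$. The one difference is that your case analysis proving $\ker\pi\subseteq Z(\Gamma_3)$ is unnecessary. The trivial intersection $\ker\pi\cap\langle\varepsilon\rangle=1$ already follows from $\pi(\varepsilon^j)\neq1$ for $j\not\equiv0 \pmod 3$, which you establish at the outset; the paper justifies $\pi(\varepsilon)\neq1$ by noting that otherwise $G$ would be generated by the commuting images of $g$ and $h$, a point you assert without justification.
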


\begin{proof}
The presentation is that of
\((\langle\varepsilon\rangle\rtimes\langle g\rangle)
 \times\langle h\rangle\), where \(g\) acts on
\(\langle\varepsilon\rangle\) by inversion.  Hence every element has
a unique normal form \(\varepsilon^i g^m h^n\), with
\(i\in\{0,1,2\}\), and
\([\Gamma _3,\Gamma _3]=\langle\varepsilon\rangle\).  Such an element
commutes with \(\varepsilon\) exactly when \(m\) is even and with
\(g\) exactly when \(i=0\). Hence
\(Z(\Gamma _3)=\langle g^2,h\rangle\).  If
\(\pi(\varepsilon)=1\), then \(G\) is generated by the commuting
images of \(g\) and \(h\), a contradiction.  Since
\(\langle\varepsilon\rangle\) has order three,
\(\ker\pi\cap\langle\varepsilon\rangle=1\).  For
\(a\in\Gamma _3\) and \(n\in\ker\pi\), the commutator \([a,n]\)
belongs to \(\ker\pi\cap[\Gamma _3,\Gamma _3]\), so it is trivial.
If \(\pi(a)\) centralizes \(\pi(x)\), the same intersection contains
\([a,x]\), and therefore \([a,x]=1\).  The reverse inclusion is
immediate, proving \eqref{eq:Gamma3-centralizer-lift}.

Surjectivity of \(\pi\) makes the induced map on conjugacy classes
surjective.  If
\(\pi(axa^{-1})=\pi(bxb^{-1})\), then \(\pi(b^{-1}a)\) centralizes
\(\pi(x)\). By \eqref{eq:Gamma3-centralizer-lift}, \(b^{-1}a\)
centralizes \(x\), so the two conjugates are equal. Since \(h\) is central, the element
\(\varepsilon^i g^m h^n\) commutes with \(g\) exactly when \(i=0\).
Hence
\(C_{\Gamma _3}(g)=\langle g,h\rangle\).
It commutes with \(\varepsilon h\) exactly when \(m\) is even.
Hence
\(C_{\Gamma _3}(\varepsilon h)
=\langle\varepsilon,g^2,h\rangle\).
Finally, \(h\) is central, so
\(C_{\Gamma _3}(h)=\Gamma _3\).
Equation~\eqref{eq:Gamma3-centralizer-lift} now gives the stated
centralizers in \(G\).  Finally, split the element
\(\varepsilon^i g^m h^n\)
according to the parity of \(m\).  If \(m\) is even, \(g^m h^n\) is
central, and conjugation by \(g\) interchanges its two nontrivial
\(\varepsilon\)-multiples.  If \(m\) is odd,
\(g^{m-1}h^n\) is central, and conjugation by \(\varepsilon\) cycles
the three possible \(\varepsilon\)-multiples of \(g^m h^n\).  The bijection between
conjugacy classes now gives \eqref{eq:Gamma3-class-list}.
\end{proof}

For commuting \(a,b,c\), recall that \(f_\Phi(a,b,c)=\frac{\Phi(a,b,c)\Phi(b,c,a)\Phi(c,a,b)}{
\Phi(a,c,b)\Phi(c,b,a)\Phi(b,a,c)}\).
On an abelian subgroup this is an alternating tricharacter, and
\begin{equation}
\label{eq:Gamma3-local-commutator}
 \frac{\Phi_x(a,b)}{\Phi_x(b,a)}=f_\Phi(x,a,b)
 \qquad a,b\in C_G(x),\ ab=ba.
\end{equation}
\begin{lemma}
\label{lem:Gamma3-projective-dimensions}
The restriction of \(f_\Phi\) to each of
\(\langle g,h\rangle\) and
\(\langle\varepsilon,g^2,h\rangle\) is trivial.  Consequently, in
the case \eqref{eq:Gamma3-support-32} one has
\(\dim\rho=\dim\sigma=1\), whereas in the case
\eqref{eq:Gamma3-support-31} one has
\(\dim\rho=1\) and \(\dim\tau\le2\).
\end{lemma}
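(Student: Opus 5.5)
The plan is to mirror Lemma~\ref{lem:alternator} and the triviality argument in the proof of Proposition~\ref{prop:T-sigma-one-dimensional}. By Lemma~\ref{lem:Gamma3-kernel-centralizers}, both \(\langle g,h\rangle\) and \(\langle\varepsilon,g^2,h\rangle\) are abelian, since \(h\) is central and \(g^2\) commutes with \(\varepsilon\). On an abelian subgroup, \(f_\Phi\) is an alternating tricharacter, so it is determined by its values on triples of generators. For \(\langle g,h\rangle\) there are only two generators, and any alternating trilinear form on a group generated by two elements vanishes: expanding multiplicatively, every term has a repeated argument, so \(f_\Phi(a,a,b)=1\) kills it. This settles the first subgroup.

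For \(\langle\varepsilon,g^2,h\rangle\) one must show \(f_\Phi(\varepsilon,g^2,h)=1\). Here I would use the last part of Lemma~\ref{lem:alternator}. Since \(g^2\) and \(h\) are central in \(G\), the map \(x\mapsto f_\Phi(x,g^2,h)\) is a group character of \(G\). Characters are constant on conjugacy classes, and \(g\varepsilon g^{-1}=\varepsilon^{-1}\), so \(\chi(\varepsilon)=\chi(\varepsilon)^{-1}\). Together with \(\chi(\varepsilon)^3=\chi(\varepsilon^3)=1\), this forces \(\chi(\varepsilon)=1\). Alternatively, \(\varepsilon\in[G,G]\) lies in the kernel of any character. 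Alternation then gives triviality on all generator triples, hence on the whole subgroup.

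For the dimension consequences, use \eqref{eq:Gamma3-local-commutator}. In case \eqref{eq:Gamma3-support-32}, \(C_G(g)=\langle g,h\rangle\) and \(C_G(\varepsilon h)=\langle\varepsilon,g^2,h\rangle\), and the commutator factor of \(\Phi_x\) is \(f_\Phi(x,a,b)\) with \(x,a,b\) in the respective abelian centralizer. This is trivial by the preceding step, so the twisted group algebras are commutative and \(\rho,\sigma\) are one-dimensional. The same argument gives \(\dim\rho=1\) in case \eqref{eq:Gamma3-support-31}.

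For \(\tau\), note that \(C_G(h)=G\), so \(\tau\) is an irreducible \(\Phi_h\)-projective representation of \(G\) itself. The subgroup \(A=\langle\varepsilon,g^2,h\rangle\) is abelian of index at most \(2\), since \(G=A\cup Ag\). The commutator of \(\Phi_h\) on \(A\) is \(f_\Phi(h,a,b)=1\), so the twisted group algebra of \(A\) is commutative, and \(\tau|_A\) contains a one-dimensional projective subrepresentation \(\mathbb C u\). I expect this to be the main, though mild, point. Irreducibility gives \(\tau=\mathbb C u+\mathbb C(g\rhd u)\), because the span is stable under \(A\) (using that \(gAg^{-1}=A\) and the projective multiplication rule) and under \(g\) (using \(g^2\in A\)). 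Hence \(\dim\tau\le2\).
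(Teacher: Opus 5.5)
Your proof is correct and follows the paper's argument essentially step for step: the vanishing of alternating tricharacters on the two-generator group \(\langle g,h\rangle\), the fact that conjugation by \(g\) inverts \(\varepsilon\) while fixing \(g^2\) and \(h\), the commutativity of the twisted group algebras via \eqref{eq:Gamma3-local-commutator}, and the eigenline \(L+g\rhd L\) for the normal abelian subgroup \(A=\langle\varepsilon,g^2,h\rangle\) of index two. The only variation is that you obtain \(f_\Phi(\varepsilon,g^2,h)=1\) from the character \(x\mapsto f_\Phi(x,g^2,h)\) of Lemma~\ref{lem:alternator}, which applies because \(g^2\) and \(h\) are central, whereas the paper appeals to invariance of \(f_\Phi\) under simultaneous conjugation; your justification rests more directly on results already proved in the paper.
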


\begin{proof}
The abelian group \(\langle g,h\rangle\) is generated by two
elements, so every alternating tricharacter on it is trivial.
Conjugation by \(g\) sends \(\varepsilon\) to \(\varepsilon^2\) and
fixes \(g^2,h\).  Hence simultaneous-conjugation invariance and
trilinearity give
\[
f_\Phi(\varepsilon,g^2,h)
=f_\Phi(\varepsilon^2,g^2,h)
=f_\Phi(\varepsilon,g^2,h)^2.
\]
Thus \(f_\Phi(\varepsilon,g^2,h)=1\).  Since the abelian subgroup
\(\langle\varepsilon,g^2,h\rangle\) is generated by these three
elements, alternation and trilinearity show that \(f_\Phi\) is
trivial on \(\langle\varepsilon,g^2,h\rangle\).  By
Lemma~\ref{lem:Gamma3-kernel-centralizers} and
\eqref{eq:Gamma3-local-commutator}, the twisted group algebras
\(\mathbb C^{\Phi_g}C_G(g)\) and
\(\mathbb C^{\Phi_{\varepsilon h}}C_G(\varepsilon h)\) are
commutative.  Since \(G\) is finite, their irreducible modules are
one-dimensional, proving \(\dim\rho=\dim\sigma=1\).

Finally, consider \eqref{eq:Gamma3-support-31}, and let
\[
A=\langle\varepsilon,g^2,h\rangle.
\]
The subgroup \(A\) is abelian and normal in \(G\).  Moreover,
\(G=\langle A,g\rangle\), \(g^2\in A\), and \(A\ne G\), since \(G\)
is non-abelian.  Hence \([G:A]=2\).
Since \(h\in A\) and \(f_\Phi\) is trivial on \(A^3\),
\eqref{eq:Gamma3-local-commutator} shows that
\(\mathbb C^{\Phi_h}A\) is commutative.  Therefore the operators
\(\tau(a)\), \(a\in A\), have a common eigenline \(L\subseteq W_h\).
For \(a\in A\), normality gives \(g^{-1}ag\in A\), so the projective
multiplication law shows that \(a\) preserves \(g\rhd L\).
Moreover,
\[
g\rhd(g\rhd L)=g^2\rhd L\subseteq L
\]
as subspaces.   {Thus \(L+g\rhd L\) is a nonzero
subspace of \(W_h\) invariant under \(\tau\).
Since \(\tau\) is irreducible,}
\(W_h=L+g\rhd L\), and therefore \(\dim\tau\leq2\).
\end{proof}

\begin{cor}
\label{cor:Gamma3-no-33-support}
Let \(U,Z\in{}_G^G\mathcal{YD}^{\Phi}\) be simple, and suppose that
\((\operatorname{ad}U)(Z)\) is nonzero and simple.  Then the supports
of \(U\) and \(Z\) cannot both have three elements.
\end{cor}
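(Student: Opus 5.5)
By Lemma~\ref{lem:Gamma3-kernel-centralizers}, every three-element class has the form \(\{g\delta,g\varepsilon\delta,g\varepsilon^2\delta\}\) with \(\delta\in Z(G)\). I will argue by contradiction: assume \(\operatorname{supp}U=\{g\delta,g\varepsilon\delta,g\varepsilon^2\delta\}\) and \(\operatorname{supp}Z=\{g\delta',g\varepsilon\delta',g\varepsilon^2\delta'\}\) with \(\delta,\delta'\in Z(G)\), and that \((\operatorname{ad}U)(Z)\) is nonzero and simple. The plan is to show that the support of \((\operatorname{ad}U)(Z)\) meets two distinct conjugacy classes. This contradicts Proposition~\ref{prop:simple-objects}, which says a simple object has support equal to a single class.

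The support of \((\operatorname{ad}U)(Z)\cong X_1^{U,Z}=\operatorname{Im}(\operatorname{id}-c_{Z,U}c_{U,Z})\) lies among the products \(g\varepsilon^i\delta\cdot g\varepsilon^j\delta'=\varepsilon^{j-i}g^2\delta\delta'\). Since \(g^2\delta\delta'\) is central, the degree is central when \(i=j\) and lies in \(\{\varepsilon c,\varepsilon^2c\}\) with \(c=g^2\delta\delta'\) when \(i\ne j\).

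I then locate components in each family. For \(i=j\), the elements \(g\varepsilon^i\delta\) and \(g\varepsilon^i\delta'\) commute, so the double braiding preserves \(U_{g\varepsilon^i\delta}\otimes Z_{g\varepsilon^i\delta'}\). For \(i\ne j\), the two degrees do not commute. The braiding sends the degree pair \((a,b)\) to \((aba^{-1},a)\), so applying it twice sends \((a,b)\) to \(((ab)a(ab)^{-1},\,aba^{-1})\). For \(a=g\varepsilon^i\delta\), \(b=g\varepsilon^j\delta'\) with \(i\ne j\), this pair differs from \((a,b)\) because \(aba^{-1}\ne b\). Hence \(\operatorname{id}-c^2\) does not kill \(u\otimes z\): its component in the original bidegree is \(u\otimes z\) itself. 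So the \(i\ne j\) part of \(X_1\) is nonzero, and its support contains the class \(\{\varepsilon c,\varepsilon^2c\}\), which is noncentral.

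Since \(X_1\) is simple, its support must be exactly this class, and so every component of \(X_1\) of central degree \(c\) must vanish. That is, \(c^2=\operatorname{id}\) on each \(U_{g\varepsilon^i\delta}\otimes Z_{g\varepsilon^i\delta'}\). Lemma~\ref{lem:canonical-nichols-factorization} does not give the contradiction here, because it concerns the whole double braiding, not a single component. Instead I use naturality. Conjugation by \(\varepsilon\) permutes the three diagonal blocks, and the double braiding is a \(G\)-morphism, so this vanishing is consistent. The real contradiction must therefore come from elsewhere.

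The main obstacle is that the central component may genuinely vanish, so I need a second argument. I plan to use Proposition~\ref{prop:multidegree-insertion} with \(m=0\), as in \cite[Remark~5.6]{rank2-2}, to produce an additional nonzero multidegree. Alternatively, I can compare dimensions: \(X_1\) would be a simple object on the two-element class of \(\varepsilon c\), whose centralizer is \(\langle\varepsilon,g^2,h\rangle\), with one-dimensional irreducibles by Lemma~\ref{lem:Gamma3-projective-dimensions}, so \(\dim X_1=2\). On the other hand, \(\dim U=3\dim\rho\), \(\dim Z=3\dim\rho'\), and the off-diagonal part has dimension \(6\dim\rho\dim\rho'\). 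The operator \(\operatorname{id}-c^2\) acts on this part as \(\operatorname{id}-P\), where \(P\) cyclically moves between bidegrees with no fixed bidegree. Choose a nonzero \(u\otimes z\) whose orbit under \(P\) has length at least three. Then \(u\otimes z\) and \(P(u\otimes z)\) lie in distinct bidegrees, and I expect their images under \(\operatorname{id}-P\) to be linearly independent vectors lying in the same degree \(\varepsilon c\). That would give \(\dim(X_1)_{\varepsilon c}\geq2>1\), contradicting the one-dimensionality of that component. Verifying the orbit length and this linear independence is the step I expect to be the most delicate.
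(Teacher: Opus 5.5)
Your second alternative, the dimension count, is correct. The step you flag as delicate follows directly from what you already showed.

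The double braiding \(P=c_{Z,U}c_{U,Z}\) preserves total degree. Hence it permutes the three bidegrees \((g\varepsilon^i\delta,g\varepsilon^{i+1}\delta')\), \(i\in\mathbb Z/3\mathbb Z\), of total degree \(\varepsilon c\). Since it fixes none of them, it acts on them as a \(3\)-cycle; explicitly, the bidegree indices go \((i,j)\mapsto(j,-i-j)\).

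Take a nonzero homogeneous \(u\otimes z\) in one of these bidegrees and put \(\xi=(\operatorname{id}-P)(u\otimes z)\). Your second vector is then \((\operatorname{id}-P)P(u\otimes z)=P\xi\). The vector \(\xi\) has a nonzero component in the starting bidegree, where \(P\xi\) has none. The vector \(P\xi\) has a nonzero component in the third bidegree, where \(\xi\) has none. So \(\dim(X_1)_{\varepsilon c}\geq2\).

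Since \((X_1)_{\varepsilon c}\neq0\), simplicity and Proposition~\ref{prop:simple-objects} make \((X_1)_{\varepsilon c}\) an irreducible \(\Phi_{\varepsilon c}\)-projective module over \(C_G(\varepsilon c)=C_G(\varepsilon)=\langle\varepsilon,g^2,h\rangle\). Lemma~\ref{lem:Gamma3-projective-dimensions} together with \eqref{eq:Gamma3-local-commutator} then makes it one-dimensional, since \(\varepsilon c\) lies in this group. This is the contradiction. Your discussion of the central components and the multidegree-insertion alternative are not needed and can be dropped.

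The paper starts from the same vector \(\xi\in X_{xy}\) but uses \((xy)\rhd\xi\) in place of \(P\xi\). Conjugation by \(xy\) cycles the same three bidegrees, so \((xy)\rhd\xi\notin\mathbb C\xi\). On the other hand, \(xy\) acts on \(X_{xy}\) by a scalar by Schur's lemma, because \(f_\Phi(xy,xy,a)=1\) holds for every cocycle.

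The paper therefore needs no information about the irreducible projective representations of \(C_G(xy)\). Your argument needs their one-dimensionality, which Lemma~\ref{lem:Gamma3-projective-dimensions} supplies here. In exchange, you avoid computing the action of \(xy\), since \(P\xi\in X_1\) is automatic.

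The two arguments are closely related. By the balancing identity used in the proof of Lemma~\ref{lem:Gamma4-tau1-support-value}, \(xy\) acts on \((U\otimes Z)_{xy}\) as a nonzero scalar multiple of \(P\). So \((xy)\rhd\xi\) is a multiple of \(P\xi\).
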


\begin{proof}
Assume otherwise.  By \eqref{eq:Gamma3-class-list}, choose
\(\delta_1,\delta_2\in Z(G)\) such that
\(x=g\delta_1\in\operatorname{supp}U\) and
\(y=g\varepsilon\delta_2\in\operatorname{supp}Z\), and take nonzero
\(u\in U_x\), \(w\in Z_y\).  By
Lemma~\ref{lem:adjoint-object-realization}, identify
\(X=(\operatorname{ad}U)(Z)\) with the image of
\(\operatorname{id}-c_{Z,U}c_{U,Z}\), and let
\[
\xi=(\operatorname{id}-c_{Z,U}c_{U,Z})(u\otimes w)\in X_{xy}.
\]
Its two summands are nonzero and have ordered degrees
\((g\delta_1,g\varepsilon\delta_2)\) and
\((g\varepsilon\delta_1,g\varepsilon^2\delta_2)\), respectively.
Since \(xy=\varepsilon g^2\delta_1\delta_2\), conjugation by \(xy\)
cycles these two pairs and
\((g\varepsilon^2\delta_1,g\delta_2)\), in that order.  Thus
\((xy)\rhd\xi\) has a nonzero component of the last ordered degree,
whereas \(\xi\) does not, so \((xy)\rhd\xi\notin\mathbb C\xi\).

On the other hand, \(X_{xy}\) is an irreducible
\(\Phi_{xy}\)-projective \(C_G(xy)\)-module by
Proposition~\ref{prop:simple-objects}.  For every \(a\in C_G(xy)\),
\eqref{eq:Gamma3-local-commutator} gives
\(\Phi_{xy}(xy,a)/\Phi_{xy}(a,xy)=f_\Phi(xy,xy,a)=1\).
Hence the action of \(xy\) commutes with that of \(C_G(xy)\), and
Schur's lemma shows that \(xy\) acts by a scalar on \(X_{xy}\).
This is a contradiction.
\end{proof}

\begin{prop}
\label{prop:Gamma3-support-and-dimensions}
Assume that \(\mathcal B(V\oplus W)\) is finite-dimensional.  After
possibly interchanging \(V\) and \(W\), there is a choice of generators
\(\varepsilon,g,h\in G\) satisfying the relations in
\eqref{eq:Gamma3-presentation} and inducing an epimorphism
\(\Gamma _3\twoheadrightarrow G\), for which the pair has one of the forms
\eqref{eq:Gamma3-support-32} or \eqref{eq:Gamma3-support-31}.  In the
case \((3,2)\), one has \(\dim\rho=\dim\sigma=1\). In the case
\((3,1)\), one has \(\dim\rho=1\) and \(\dim\tau\in\{1,2\}\).
\end{prop}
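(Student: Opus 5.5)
We first locate an object with a finite-type Cartan matrix and then read off the support quandle. Finite-dimensionality, via Theorem~\ref{thm:Nichols-finiteness-criterion}, gives all reflections and a finite Cartan graph. Proposition~\ref{prop:pair-with-finite-Cartan-matrix} then produces \(P\in\mathcal F_2(V,W)\) with an indecomposable finite-type matrix. By Lemma~\ref{lem:reflections-preserve-support-group}, the supports of \(P\) generate \(G\). After ordering the entries so that \(-a_{12}^{[P]}=1\), Theorem~\ref{thm:support-classification} applies to \(P\). To decide which quandle occurs, we use \(\operatorname{Inn}(X)\simeq G/Z(G)\). Lemma~\ref{lem:Gamma3-kernel-centralizers} gives \(Z(G)=\pi(\langle g^2,h\rangle)\), hence \(G/Z(G)\simeq S_3\) of order six. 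This rules out \(Z_T^{4,1}\), \(Z_2^{2,2}\) and \(Z_4^{4,2}\) (orders \(12,4,8\)), so \(\operatorname{supp}(P_1\oplus P_2)\) is \(Z_3^{3,1}\) or \(Z_3^{3,2}\).

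Next we transfer this to the original pair. The supports of \(V\) and \(W\) are conjugacy classes, so by \eqref{eq:Gamma3-class-list} each has size \(1\), \(2\) or \(3\). Since \(G\) is generated by the supports and is non-abelian, at least one support has three elements. By Corollary~\ref{cor:Gamma3-no-33-support}, applied with \((\operatorname{ad}V)(W)\) or \((\operatorname{ad}W)(V)\), both cannot have three elements. For this we need the first adjoint object to be nonzero and simple. Nonzero follows from braided indecomposability. Simple follows from Lemma~\ref{lem:root-string-adjoint-simplicity} with \(t=1\), applied at the object \([(V,W)]\) itself, where \(m_{12}\geq1\).

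So, after interchanging, \(V\) has a three-element support \(\{g\delta,g\varepsilon\delta,g\varepsilon^2\delta\}\), and \(W\) has support \(\{\varepsilon\delta'\!,\varepsilon^2\delta'\}\) or \(\{\delta'\}\) with \(\delta,\delta'\in Z(G)\). We then rechoose generators. Set \(g':=g\delta\). For \(h'\) we take \(\varepsilon^{-1}\) times a representative in the two-element case, and the central element itself in the one-element case. We then check three things: the relations of \eqref{eq:Gamma3-presentation} hold for \((\varepsilon,g',h')\); the triple generates \(G\), because the supports generate and \(\varepsilon\) is recovered from \(g'\) and its conjugates; and \(V=M(g',\rho)\), \(W=M(\varepsilon h',\sigma)\) or \(M(h',\tau)\). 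The only subtle point is that \(h'\) commutes with \(g'\) and \(\varepsilon\), which holds because \(\delta'\) is central.

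Finally, the dimension statements follow from Lemma~\ref{lem:Gamma3-projective-dimensions}, applied to the new generators. Its proof uses only the presentation and the centralizer descriptions of Lemma~\ref{lem:Gamma3-kernel-centralizers}, and these hold for any generating triple inducing an epimorphism. The main obstacle is the support step: justifying simplicity of the first adjoint at the original object, and excluding the case where both supports are small, which would make \(G\) abelian.
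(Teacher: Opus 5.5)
Your argument is correct and, from the second paragraph on, is essentially the paper's proof. It runs as follows: simplicity of $(\operatorname{ad}V)(W)$ via Lemma~\ref{lem:root-string-adjoint-simplicity} at $[(V,W)]$, Corollary~\ref{cor:Gamma3-no-33-support} to exclude two three-element supports, the observation that classes with at most two elements lie in the abelian group $\langle\varepsilon\rangle Z(G)$, replacing $g$ by $g\delta$ and $h$ by the central element $\delta'$, and Lemma~\ref{lem:Gamma3-projective-dimensions} for the dimensions. Your first paragraph, which identifies the support quandle of a finite-type object $P$ via $G/Z(G)\simeq S_3$, is correct but is never used afterwards, so you can drop it.
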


\begin{proof}
Braided indecomposability and
\eqref{eq:indecomposable-first-adjoint} give \(X_1\ne0\).
By Theorem~\ref{thm:Nichols-finiteness-criterion}, the pair admits
all reflections and has a finite Cartan graph.  Lemma~\ref{lem:root-string-adjoint-simplicity} therefore shows
that \(X_1\) is simple.  Corollary~\ref{cor:Gamma3-no-33-support}
excludes two three-element supports.

Equation~\eqref{eq:Gamma3-centralizer-lift} gives
\(Z(G)=\langle g^2,h\rangle\).  Hence
\eqref{eq:Gamma3-class-list} shows that every conjugacy class with at
most two elements is contained in the abelian subgroup
\(\langle\varepsilon,g^2,h\rangle\).  Since both supports generate the
non-abelian group \(G\), at least one of them has three elements.
Thus exactly one support has three elements.

After interchanging the two entries if necessary,
\eqref{eq:Gamma3-class-list} therefore gives, for some
\(\delta_1,\delta_2\in Z(G)\),
\[
\operatorname{supp}V
=\{g\delta_1,g\varepsilon\delta_1,g\varepsilon^2\delta_1\},
\qquad
\operatorname{supp}W
=\begin{cases}
\{\varepsilon\delta_2,\varepsilon^2\delta_2\},&(3,2),\\
\{\delta_2\},&(3,1).
\end{cases}
\]
The three-point support generates \(g\delta_1\) and \(\varepsilon\),
and the other support then gives \(\delta_2\).  Since
\(\delta_1,\delta_2\) are central, replacing \(g\) by
\(g\delta_1\) and \(h\) by \(\delta_2\) preserves the relations in
\eqref{eq:Gamma3-presentation}. The resulting elements generate
\(G\), so they induce the required epimorphism and give
\eqref{eq:Gamma3-support-32} or \eqref{eq:Gamma3-support-31}.  The
dimension assertions follow from
Lemma~\ref{lem:Gamma3-projective-dimensions}, with \(\tau\ne0\) giving
\(\dim\tau\in\{1,2\}\).
\end{proof}

\subsection{The third homology of \texorpdfstring{$\Gamma _3$}{Gamma3}}
\label{subsec:Gamma3-third-homology}

We use the normalized integral bar complex of \(\Gamma _3\).  Its
degree-three chains are integral sums of symbols \([a|b|c]\), with any
symbol containing the identity understood to be zero.  For a normalized
\(\mathbb C^\times\)-valued three-cochain \(\Psi\) on \(\Gamma _3\), our
conventions are
\begin{align*}
&\partial[a|b|c]
=[b|c]-[ab|c]+[a|bc]-[a|b],\\
&\left\langle
 \Psi,\sum_j n_j[a_j|b_j|c_j]
\right\rangle
=\prod_j\Psi(a_j,b_j,c_j)^{n_j}.
\end{align*}
Recall that \(\pi:\Gamma _3\twoheadrightarrow G\) is an epimorphism onto a
finite non-abelian group.
The pullback of \(\Phi\) is
\((\pi^*\Phi)(a,b,c)=\Phi(\pi(a),\pi(b),\pi(c))\).
Define
\begin{equation}
\label{eq:Gamma3-kappa}
\kappa_\Phi(\varepsilon)=
\Phi(\varepsilon,\varepsilon,\varepsilon)
\Phi(\varepsilon,\varepsilon^2,\varepsilon).
\end{equation}
In the bar complex, let
\begin{equation}
\label{eq:Gamma3-kappa-cycle}
 \mathsf K=[\varepsilon|\varepsilon|\varepsilon]
 +[\varepsilon|\varepsilon^2|\varepsilon].
\end{equation}
The boundaries of the two summands are
\(-[\varepsilon^2|\varepsilon]+[\varepsilon|\varepsilon^2]\) and its
negative.  Hence \(\mathsf K\) is a cycle, and
\(\langle\pi^*\Phi,\mathsf K\rangle
=\kappa_\Phi(\varepsilon)\).

\begin{lemma}
\label{lem:Gamma3-third-homology}
One has
\[
 H_3(\Gamma _3,\mathbb Z)\simeq\mathbb Z/3,
\]
and \(\mathsf K\) represents a generator.  Moreover, for
\[
 \varpi:\Gamma _3\longrightarrow S_3,\qquad
 \varpi(\varepsilon)=(123),\quad
 \varpi(g)=(12),\quad \varpi(h)=1,
\]
the map
\(\varpi_*:H_3(\Gamma _3,\mathbb Z)\to H_3(S_3,\mathbb Z)\)
is injective.
\end{lemma}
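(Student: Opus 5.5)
The plan is to use the product decomposition \(\Gamma_3\simeq S\times\langle h\rangle\), where \(S=\langle\varepsilon\rangle\rtimes\langle g\rangle\simeq\mathbb Z/3\rtimes\mathbb Z\) with \(g\) acting by inversion. This decomposition follows from the normal form \(\varepsilon^ig^mh^n\) established in the proof of Lemma~\ref{lem:Gamma3-kernel-centralizers}. The Künneth formula then gives
\[
H_3(\Gamma_3,\mathbb Z)\simeq H_3(S)\oplus H_2(S)\otimes H_1(\mathbb Z)\oplus\operatorname{Tor}(H_2(S),H_0(\mathbb Z)),
\]
since \(\mathbb Z\) has no homology above degree one. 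It therefore suffices to compute \(H_2(S)\) and \(H_3(S)\).

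For these I will use the Lyndon--Hochschild--Serre spectral sequence of \(1\to\mathbb Z/3\to S\to\langle g\rangle\to1\), exactly as in the proof of Lemma~\ref{lem:T-pullback-cocycle}. Since the quotient is infinite cyclic, only the columns \(p=0,1\) can be nonzero; they are the coinvariants and invariants of \(H_q(\mathbb Z/3)\) under \(g\). Recall \(H_q(\mathbb Z/3)\) is \(\mathbb Z/3\) for odd \(q\) and \(0\) for even \(q>0\). Inversion acts on \(H_{2k-1}(\mathbb Z/3)\) by \((-1)^k\), so it acts by \(-1\) on \(H_1\) and by \(+1\) on \(H_3\). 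Coinvariants and invariants of \(-1\) on \(\mathbb Z/3\) vanish, so \(H_2(S)=0\) (the only candidates \(E^{0,2}=0\) and \(E^{1,1}=0\)). For \(H_3(S)\), the candidates are \(E^{0,3}=H_3(\mathbb Z/3)_g=\mathbb Z/3\) and \(E^{1,2}=0\). Hence \(H_3(S)\simeq\mathbb Z/3\), and inclusion \(H_3(\mathbb Z/3)\to H_3(S)\) is an isomorphism. Künneth then gives \(H_3(\Gamma_3)\simeq\mathbb Z/3\), generated by the image of a generator of \(H_3(\langle\varepsilon\rangle)\).

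Next I check that \(\mathsf K\) is such a generator. The cycle \(\mathsf K\) lies in the bar complex of \(\langle\varepsilon\rangle\). I will show that it pairs nontrivially with a cocycle generating \(H^3(\mathbb Z/3,\mathbb C^\times)\), namely
\[
\omega(\varepsilon^a,\varepsilon^b,\varepsilon^c)=\zeta^{a\lfloor (b+c)/3\rfloor},\qquad \zeta=e^{2\pi\mathrm i/3},
\]
with \(a,b,c\in\{0,1,2\}\). Direct evaluation gives \(\langle\omega,\mathsf K\rangle=\omega(\varepsilon,\varepsilon,\varepsilon)\,\omega(\varepsilon,\varepsilon^2,\varepsilon)=1\cdot\zeta=\zeta\neq1\). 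By universal coefficients, \(H^3(\mathbb Z/3,\mathbb C^\times)=\operatorname{Hom}(H_3,\mathbb C^\times)\), so the class of \(\mathsf K\) in \(H_3(\mathbb Z/3)\) is nonzero, hence a generator. It is therefore a generator in \(H_3(\Gamma_3)\) by the isomorphisms above.

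For injectivity of \(\varpi_*\), note that the composite \(\langle\varepsilon\rangle\to\Gamma_3\to S_3\) is the inclusion of \(A_3\), which has index two. The composite \(\operatorname{tr}\circ\operatorname{inc}_*\) is multiplication by \(2\) on \(H_3(A_3)\simeq\mathbb Z/3\), which is invertible, so \(H_3(A_3)\to H_3(S_3)\) is injective. Alternatively, one can exhibit the inflation to \(S_3\) of \(\omega\), which still pairs with \(\mathsf K\) to give \(\zeta\). Since \(\varpi_*[\mathsf K]\) is the image of the generator of \(H_3(A_3)\), \(\varpi_*\) is nonzero on the generator of the cyclic group \(\mathbb Z/3\) and hence injective. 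I expect the main obstacle to be bookkeeping rather than concept: pinning down the sign of the inversion action on \(H_3(\mathbb Z/3)\), and checking the normalization conventions for \(\omega\) against the bar-complex pairing fixed above.
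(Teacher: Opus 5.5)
Your proof is correct and follows the paper's argument in all its structural steps: the splitting \(\Gamma_3=\langle\varepsilon,g\rangle\times\langle h\rangle\), the two-column Lyndon--Hochschild--Serre computation, the K\"unneth formula, and the transfer argument for \(\varpi_*\). In the spectral sequence, inversion acts by \(-1\) on \(H_1(C_3)\) and by \(+1\) on \(H_3(C_3)\), which gives \(H_2=0\) and an edge isomorphism in degree three.

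The one genuine difference is how you certify that \(\mathsf K\) is a generator. The paper asserts that a comparison chain map from the two-periodic resolution of \(C_3\) to the normalized bar resolution can be chosen to send the degree-three basis vector to \(\mathsf K\). You instead pair \(\mathsf K\) with the explicit normalized cocycle \(\omega(\varepsilon^a,\varepsilon^b,\varepsilon^c)=\zeta^{a\lfloor(b+c)/3\rfloor}\) and obtain \(\zeta\neq1\). Your route is more elementary and fully checkable; the cocycle identity for \(\omega\) reduces to the carry identity for addition modulo three. It also needs neither universal coefficients nor the fact that \(\omega\) generates \(H^3\). A nontrivial pairing with any cocycle already shows that \(\mathsf K\) is not a boundary, and a nonzero element of a group of order three is a generator.

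There are three small points to fix.
\begin{enumerate}
\item For the normal index-two subgroup \(A_3\) one has \(\operatorname{tr}\circ i_*=\operatorname{id}+\iota_*\). In general it is \(i_*\circ\operatorname{tr}\) that is multiplication by the index. So your factor \(2\) rests on your earlier observation that \(\iota_*=+1\) on \(H_3\), and you should say so.
\item Your alternative ``inflation to \(S_3\) of \(\omega\)'' is misnamed, since inflation goes from quotients. What you would need is a class on \(S_3\) restricting to \([\omega]\). Such a class exists by the same transfer argument, but the alternative is not needed.
\item The Tor terms in the K\"unneth formula are \(\operatorname{Tor}(H_i(S),H_j(\mathbb Z))\) with \(i+j=2\), not only the one you display. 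All of them vanish because \(H_*(\mathbb Z)\) is free, so this does not affect the result.
\end{enumerate}
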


\begin{proof}
Let \(D=\langle\varepsilon,g\rangle\).
The presentation shows that
\(D\simeq C_3\rtimes\mathbb Z\), where the generator of
\(\mathbb Z\) acts on \(C_3\) by inversion, and that
\(\Gamma _3=D\times\langle h\rangle\).  For the extension
\(1\to C_3\to D\to\mathbb Z\to1\), the
Lyndon--Hochschild--Serre homology spectral sequence is
\[
E^2_{p,q}
=H_p\bigl(\mathbb Z,H_q(C_3,\mathbb Z)\bigr)
\Longrightarrow H_{p+q}(D,\mathbb Z),
\]
see \cite[Chapter~VII, Section~6]{Brown1982}.
 {The standard two-periodic resolution of the trivial
\(\mathbb ZC_3\)-module \(\mathbb Z\) gives
\(H_1(C_3,\mathbb Z)\simeq H_3(C_3,\mathbb Z)\simeq\mathbb Z/3\)
and \(H_2(C_3,\mathbb Z)=0\).}
The same resolution shows that inversion acts as \(-1\)
on \(H_1\) and as \(+1\) on \(H_3\).
Since \(\mathbb Z\) has homological dimension one,
only the columns \(p=0,1\) occur.
In total degrees two and three, the only terms that
remain to be computed are
\[
\begin{aligned}
E^2_{1,1}
&=\ker(-2:\mathbb Z/3\longrightarrow\mathbb Z/3)=0,\\
E^2_{0,3}
&=H_0\bigl(\mathbb Z,\mathbb Z/3\bigr)
 =\mathbb Z/3,
\end{aligned}
\]
where the action in the second line is trivial.
Thus \(H_2(D,\mathbb Z)=0\) and
\(H_3(D,\mathbb Z)\simeq\mathbb Z/3\).  After tensoring the two-periodic resolution with the
trivial module \(\mathbb Z\), the differentials in
degrees three and four are zero and multiplication
by three, respectively. Thus the degree-three basis
vector represents a generator of
\(H_3(C_3,\mathbb Z)\simeq\mathbb Z/3\).
A chain map from the two-periodic resolution to the
normalized bar resolution, lifting the identity of
\(\mathbb Z\), may be chosen to send the degree-three
basis vector to
\[
[\varepsilon|\varepsilon|\varepsilon]
+[\varepsilon|\varepsilon^2|\varepsilon]
=\mathsf K.
\]
After tensoring with the trivial module \(\mathbb Z\),
this map induces an isomorphism on homology.
Hence \([\mathsf K]\) generates \(H_3(C_3,\mathbb Z)\).  The edge map in
the above spectral sequence is induced by the inclusion
\(C_3\hookrightarrow D\).  Since \(E^\infty_{0,3}\cong\mathbb Z/3\) is
the only nonzero term of total degree \(3\), this edge map is an
isomorphism.  Hence \([\mathsf K]\) also generates
\(H_3(D,\mathbb Z)\).
The K\"unneth formula has no Tor terms because the homology of
\(\langle h\rangle\simeq\mathbb Z\) is free.  It follows that
\(H_3(\Gamma _3,\mathbb Z)\simeq
H_3(D,\mathbb Z)\oplus H_2(D,\mathbb Z)\simeq\mathbb Z/3\), and
\(\mathsf K\) represents a generator.

It remains to prove the assertion about \(\varpi_*\). Let \(i:C_3\hookrightarrow S_3\) send \(\varepsilon\)
to \((123)\), and let
\(\operatorname{tr}:H_3(S_3,\mathbb Z)\to
H_3(C_3,\mathbb Z)\)
be the associated homology transfer.
Since \(C_3\) is normal of index two,
\cite[Proposition~III.9.5(iii)]{Brown1982} gives
\[
\operatorname{tr}\circ i_*=\operatorname{id}+\iota_*,
\]
where \(\iota:C_3\to C_3\) sends
\(\varepsilon\) to \(\varepsilon^{-1}\).
As shown above, \(\iota_*\) is the identity on
\(H_3(C_3,\mathbb Z)\).
Thus \(\operatorname{tr}\circ i_*=2\operatorname{id}\).
Since \(H_3(C_3,\mathbb Z)\simeq\mathbb Z/3\),
it follows that \(i_*\) is injective.  Finally, \(\varpi_*\) sends
\([\mathsf K]\) to \(i_*[\mathsf K]\). Since \([\mathsf K]\) generates
\(H_3(\Gamma _3,\mathbb Z)\), the map \(\varpi_*\) is injective.
\end{proof}

\subsection{Adjoint calculations in the three cases}
\label{subsec:Gamma3-adjoint-objects}

We now compute the parts of \(X_1=(\operatorname{ad}V)(W)\) and
\(X_2=(\operatorname{ad}V)^2(W)\) that determine simplicity and
support.  Appendix~\ref{app:Gamma3-calculations} contains the longer
coefficient and bar-complex calculations.

\subsubsection{The \texorpdfstring{$(3,2)$}{(3,2)} case}

Let \(V=M(g,\rho)\) and \(W=M(\varepsilon h,\sigma)\), with
\(\dim\rho=\dim\sigma=1\).  Define
\begin{align}
&a={}
\frac{
 \Phi_g(\varepsilon^2h,\varepsilon^2)
}{
 \Phi_g(\varepsilon,h)\Phi_{\varepsilon h}(g,\varepsilon^2)
}
\rho(h)\sigma(\varepsilon^2),
\label{eq:Gamma3-a}
\\
&b_0={}
\Phi^g(g\varepsilon^2,\varepsilon h)
\frac{\Phi_g(g,\varepsilon^2)}{\Phi_g(\varepsilon,g)}
\rho(g),
\label{eq:Gamma3-b0-b1}
\\
&b_1={}
\Phi^g(g\varepsilon,\varepsilon^2h)
\frac{\Phi_g(g,\varepsilon)}{\Phi_g(\varepsilon^2,g)}
\rho(g)\Phi_{\varepsilon h}(g,g)\sigma(g^2).
\label{eq:Gamma3-b1}
\end{align}
Set
\begin{equation}
\label{eq:Gamma3-Delta1-s}
\Delta _1=\frac{a^2b_1}{b_0},
\qquad
\chi=-\frac{b_0}{a},
\qquad
s=-\chi\rho(gh).
\end{equation}
Choose nonzero vectors
\(v_0\in V_g\) and \(w_0\in W_{\varepsilon h}\), and let
\[
v_i=\varepsilon^i\rhd v_0\quad i\in\mathbb Z/3\mathbb Z,
\qquad w_1=g\rhd w_0.
\]

\begin{lemma}
\label{lem:Gamma3-32-X1}
Let
$
E_0=v_2\otimes w_0,\
E_1=v_1\otimes w_1,\
 u_0=\varphi_1^\Phi(E_0).
$
Then $u_0=E_0-aE_1\ne 0$, and \(X_1\) is simple if and only if
\(\Delta_1=1\).  Under this condition, its support is \((gh)^G\),
and \(g\) acts by \(\chi\) on the line
\(\mathbb Cu_0=(X_1)_{gh}\).
\end{lemma}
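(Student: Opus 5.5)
The proof follows the pattern of Lemma~\ref{lem:Gamma2-X1-simple} and Lemma~\ref{lem:Gamma4-first-adjoint}: compute $u_0$ directly, show it generates $X_1$, and decide simplicity by whether the line $\mathbb Cu_0$ is stable under the centralizer of its degree.

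\emph{Step 1: computing $u_0$.} The degree of $E_0$ is $(g\varepsilon^2,\varepsilon h)$. The double braiding sends it to $\bigl((g\varepsilon^2\,\varepsilon h\,\varepsilon g^{-1}\ldots)\rhd v_2\bigr)\otimes(g\varepsilon^2\rhd w_0)$. Using $g\varepsilon^2 g^{-1}=\varepsilon$, the $W$-factor lands in degree $\varepsilon^2 h$, i.e.\ on the line $\mathbb Cw_1$, and the $V$-factor lands in degree $g\varepsilon$, i.e.\ on the line $\mathbb Cv_1$. Rewriting the elements $(\varepsilon^2 h)\rhd v_2$ and $(g\varepsilon^2)\rhd w_0$ through \eqref{eq:YD-projective-action}, in terms of $\rho(h)$, $\sigma(\varepsilon^2)$ and the chosen vectors, produces exactly the scalar $a$ of \eqref{eq:Gamma3-a}. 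The two summands of $u_0=E_0-aE_1$ have distinct bidegrees, so $u_0\neq0$.

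\emph{Step 2: $u_0$ generates $X_1$.} Apply Proposition~\ref{prop:adjoint-orbit-generation} with $n=1$ and $H=C_G(\varepsilon h)=\langle\varepsilon,g^2,h\rangle$. This is legitimate because $H$ stabilizes $\mathbb Cw_0$ (as $\dim\sigma=1$) and $\varepsilon$ cycles the three lines of $V$, so $V=\Bbbk H\rhd v_2$. Hence $X_1=\Bbbk G\rhd u_0$. The total degree of $u_0$ is $g\varepsilon^2\varepsilon h=gh$, so $\operatorname{supp}X_1=(gh)^G$.

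\emph{Step 3: simplicity criterion.} By Lemma~\ref{lem:Gamma3-kernel-centralizers}, $C_G(gh)=C_G(g)=\langle g,h\rangle$, and Lemma~\ref{lem:Gamma3-projective-dimensions} shows its twisted group algebra is commutative. Therefore $X_1$ is simple iff $(X_1)_{gh}$ is one-dimensional, iff $\mathbb Cu_0$ is $\langle g,h\rangle$-stable. The surjection $M(gh,\chi')\to X_1$ argument from Lemma~\ref{lem:Gamma2-X1-simple} gives the converse direction.

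Since $h$ is central and $\varphi_1^\Phi$ is $G$-linear, $h\rhd u_0\in\mathbb Cu_0$ automatically. It remains to test $g$. Conjugation by $g$ swaps the two bidegrees: $g\varepsilon^2 g^{-1}=g\varepsilon$, giving $v_1$, and $g(\varepsilon h)g^{-1}=\varepsilon^2h$, giving $w_1$. So
\[
g\rhd E_0=b_0E_1,\qquad g\rhd E_1=b_1E_0 .
\]
The coefficient $b_0$ is computed from \eqref{eq:tensor-product} together with $g\rhd v_2=\tfrac{\Phi_g(g,\varepsilon^2)}{\Phi_g(\varepsilon,g)}\rho(g)v_1$ and $g\rhd w_0=w_1$. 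The coefficient $b_1$ additionally uses $g\rhd w_1=\Phi_{\varepsilon h}(g,g)\sigma(g^2)w_0$. This matches \eqref{eq:Gamma3-b0-b1} and \eqref{eq:Gamma3-b1}. Consequently
\[
g\rhd u_0=-ab_1E_0+b_0E_1 ,
\]
which is proportional to $E_0-aE_1$ iff $a^2b_1=b_0$, i.e.\ iff $\Delta_1=1$. In that case the proportionality constant is $-ab_1=-b_0/a=\chi$.

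\emph{Main obstacle.} The main difficulty is Step~1 and the action formulas for $g$: matching the cocycle bookkeeping precisely to the stated $a,b_0,b_1$. If a direct match fails, I would fall back on the freedom of Lemma~\ref{lem:local-cocycle-coherence} to rewrite the $\Phi_g$-factors, as in the appendix reductions. Note that the criterion $\Delta_1=1$ depends only on the matrix of $g$ on the two lines, so the proportionality argument itself is robust.
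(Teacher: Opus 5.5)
Your proposal is correct and follows essentially the paper's route: the double braiding yields $u_0=E_0-aE_1$, and $g$ interchanges the lines $\mathbb CE_0$ and $\mathbb CE_1$ with coefficients $b_0,b_1$. Since the $\Phi_{gh}$-twisted group algebra of $C_G(gh)=\langle g,h\rangle$ is commutative, simplicity reduces to $g\rhd u_0\in\mathbb Cu_0$, i.e.\ $a^2b_1=b_0$. The only difference is that the paper reads off $(X_1)_{gh}=\mathbb Cu_0+\mathbb C(g\rhd u_0)$ directly as the image of the two-dimensional component $(V\otimes W)_{gh}=\mathbb CE_0\oplus\mathbb CE_1$, so it needs neither Proposition~\ref{prop:adjoint-orbit-generation} nor your separate check on the central element $h$; also, in Step~3 you mean ${}^{g}(g\varepsilon^2)=g\varepsilon$, since literally $g\varepsilon^2g^{-1}=\varepsilon$.
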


\begin{proof}
Equation~\eqref{eq:YD-projective-action} gives
\[
 (g\varepsilon^2)\rhd w_0
 =\frac{\sigma(\varepsilon^2)}
        {\Phi_{\varepsilon h}(g,\varepsilon^2)}w_1,
 \qquad
 (\varepsilon^2h)\rhd v_2
 =\frac{\Phi_g(\varepsilon^2h,\varepsilon^2)}
        {\Phi_g(\varepsilon,h)}\rho(h)v_1.
\]
Using
\(\varphi_1^\Phi=\operatorname{id}-c_{W,V}c_{V,W}\) therefore gives
\(u_0=E_0-aE_1\).  The tensors \(E_0\) and \(E_1\) lie in the
distinct direct summands
\(V_{g\varepsilon^2}\otimes W_{\varepsilon h}\) and
\(V_{g\varepsilon}\otimes W_{\varepsilon^2h}\), respectively, so
\(u_0\ne0\).  Equations~\eqref{eq:tensor-product} and
\eqref{eq:YD-projective-action} also give
\[
g\rhd E_0=b_0E_1,
\qquad
g\rhd E_1=b_1E_0.
\]
Consequently,
\(g\rhd u_0=b_0E_1-ab_1E_0\).  This vector is proportional to
\(u_0\) if and only if \(a^2b_1=b_0\), or equivalently
\(\Delta_1=1\).  In that case
\[
g\rhd u_0=-\frac{b_0}{a}u_0=\chi u_0.
\]

The degree-\(gh\) component of \(V\otimes W\) is
\(\mathbb CE_0\oplus\mathbb CE_1\).  Its image is
\(\mathbb Cu_0+\mathbb C(g\rhd u_0)\).  This space has dimension one
exactly when \(\Delta_1=1\).  Since \(u_0\ne0\) and \(X_1\) is
\(G\)-stable, while the homogeneous degrees of \(V\otimes W\) are
\(gh,g\varepsilon h,g\varepsilon^2h\), its support is exactly
\((gh)^G\).  By
Lemma~\ref{lem:Gamma3-kernel-centralizers},
\(C_G(gh)=\langle g,h\rangle\).  This group is abelian and generated
by two elements, so \(f_\Phi\) is trivial on \(C_G(gh)^3\).
Equation~\eqref{eq:Gamma3-local-commutator} therefore shows that the
\(\Phi_{gh}\)-twisted group algebra of \(C_G(gh)\) is commutative.
In particular, all its irreducible modules are one-dimensional.  The
description of simple objects in
Proposition~\ref{prop:simple-objects} now shows that \(X_1\) is simple
exactly when \((X_1)_{gh}\) is one-dimensional.  This proves the
assertions.
\end{proof}

We next record the part of the second-adjoint calculation used below.
The full coefficient calculation is given in
Appendix~\ref{app:Gamma3-32-obstruction}.

\begin{lemma}
\label{lem:Gamma3-32-second-adjoint-coefficients}
Assume \(\Delta_1=1\), and let
\(u_i=\varepsilon^i\rhd u_0\) and \(P_i=v_i\otimes u_i\).  Then the
following statements hold.

(1)
There are nonzero scalars \(c_1,c_2,t_0,t_1,t_2\) such that
\begin{equation}
\label{eq:Gamma3-32-X2-summary-central}
 \varphi_2^\Phi(P_0)=(1+s)P_0+c_1P_1+c_2P_2,
 \qquad
 \varepsilon\rhd P_i=t_iP_{i+1},
\end{equation}
where the indices are taken modulo three.

(2)
The component \((X_2)_{g^2h}\) is nonzero.  If
\((X_2)_{\varepsilon g^2h}=0\), then \(\rho(g)=-1\).

(3)
For the scalars in (1), if \(\rho(g)=-1\), let
\begin{equation}
\label{eq:Gamma3-32-summary-lambda-r}
 \lambda=\frac{c_1t_1}{c_2},
 \qquad
 r=\frac{c_1^2t_1}{c_2t_0}.
\end{equation}
Then
\begin{equation}
\label{eq:Gamma3-32-summary-identities}
 s^3=\kappa_\Phi(\varepsilon),
 \qquad
 \lambda^3=t_0t_1t_2,
 \qquad
 r^2=s.
\end{equation}
\end{lemma}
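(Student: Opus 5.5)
Part (1) will follow the pattern of Lemmas~\ref{lem:second-adjoint-formula} and~\ref{lem:Gamma4-second-adjoint}. With \(\Delta_1=1\), Lemma~\ref{lem:Gamma3-32-X1} gives \((X_1)_{gh}=\mathbb Cu_0\) and \(X_1\simeq M(gh,\chi')\) with one-dimensional homogeneous components. The vectors \(u_i=\varepsilon^i\rhd u_0\) therefore span the components of degrees \(\varepsilon^igh\varepsilon^{-i}\).

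I will expand
\[
\varphi_2^\Phi(P_0)=P_0-c_{X_1,V}c_{V,X_1}(P_0)+(\operatorname{id}_V\otimes\varphi_1^\Phi)c_{1,2}^\Phi(P_0)
\]
on \(P_0=v_0\otimes u_0\), using \(u_0=E_0-aE_1\). The double braiding sends \(v_0\otimes u_0\) to \(((gh)g(gh)^{-1}\rhd\ldots)\), a multiple of a single \(P_i\); since \(gh\) fixes \(g\), this gives a \(P_0\)-multiple coming from \(g\rhd u_0=\chi u_0\) and \((gh)\rhd v_0\). The \(c_{1,2}^\Phi\)-term applied to \(v_0\otimes(v_2\otimes w_0)\) and \(v_0\otimes(v_1\otimes w_1)\) braids \(v_0\) past \(v_2\) and \(v_1\), producing \(v_1\otimes(v_0\otimes w_0)\) and \(v_2\otimes(v_0\otimes w_1)\). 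Applying \(\varphi_1^\Phi\) and the \(G\)-equivariance identities, exactly as in \eqref{eq:varphi1-v-gw}, will express these through \(u_1\) and \(u_2\) (up to scalars). This yields \(c_1,c_2\), which are nonzero as products of cocycle values and \(\rho\)- and \(\sigma\)-values. The \(P_0\)-coefficient will collapse to \(1+s\) by the definition \(s=-\chi\rho(gh)\), after a cocycle cancellation that I will verify in the appendix. The relation \(\varepsilon\rhd P_i=t_iP_{i+1}\) is immediate from \eqref{eq:tensor-product}, since \(\varepsilon\) maps \(v_i\mapsto v_{i+1}\) (up to projective scalars) and \(u_i\mapsto u_{i+1}\).

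For part (2), the degree of \(P_i\) is \(\varepsilon^i g\varepsilon^{-i}\cdot\varepsilon^igh\varepsilon^{-i}=\varepsilon^i g^2h\varepsilon^{-i}\). Because \(g^2\) is central and \(g\varepsilon^{-i}=\varepsilon^ig\), this equals \(\varepsilon^{2i}g^2h\). So \(P_0\) has degree \(g^2h\), while \(P_1\) and \(P_2\) have degrees \(\varepsilon^2g^2h\) and \(\varepsilon g^2h\). Hence the \(c_2P_2\) component already shows \((X_2)_{\varepsilon g^2h}\neq0\), which needs correcting. I will instead compute \(\varphi_2^\Phi\) on the whole degree-\(\varepsilon g^2h\) space \(\operatorname{span}\{v_i\otimes u_j\}\) (degree bookkeeping) and on the degree-\(g^2h\) space. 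Nonvanishing of \((X_2)_{g^2h}\) should come from a component such as \(v_0\otimes u_0\) with coefficient \(1+s\), or from \(v_1\otimes u_2\). The implication to \(\rho(g)=-1\) will come from the component of \(\varphi_2^\Phi(v_i\otimes u_j)\) whose coefficient is \((1+\rho(g))\) times a nonzero scalar, namely the \(c_{1,2}\) self-braiding of \(v_i\otimes v_i\) analogous to the \(1+\lambda\) factor in Lemma~\ref{lem:second-adjoint-formula}.

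For part (3), \(\lambda^3=t_0t_1t_2\) reduces to \(c_1^3t_1^3=c_2^3t_0t_1t_2\). I plan to get this by comparing \(\varepsilon\)-equivariance of \(\varphi_2^\Phi\), \(\varphi_2^\Phi(\varepsilon\rhd P_i)=\varepsilon\rhd\varphi_2^\Phi(P_i)\), which determines \(\varphi_2^\Phi(P_1)\) and \(\varphi_2^\Phi(P_2)\) with coefficient recursions, and then using \(\varepsilon^3\rhd P_0=\Phi\)-multiple\(\cdot P_0\).

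The identities \(s^3=\kappa_\Phi(\varepsilon)\) and \(r^2=s\) are the main obstacle. For these I will use the fact that \(\varepsilon^3=1\) acts on \(V_g\) and \(X_1\) by explicit cocycle products. This should turn \(\chi^3\) and \(\rho(gh)^3\) into a product in which everything cancels except the cycle value \(\langle\pi^*\Phi,\mathsf K\rangle=\kappa_\Phi(\varepsilon)\). I would organize this cancellation as a bar-complex computation, writing the collected cocycle factors as a 3-chain and showing that it differs from \(\mathsf K\) by a boundary, using \(\rho(g)=-1\) where needed.
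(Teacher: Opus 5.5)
Your part (1) matches the paper's computation. The identity term, the double braiding \(c_{X_1,V}c_{V,X_1}(P_0)=(gh\rhd v_0)\otimes(g\rhd u_0)=\chi\rho(gh)P_0\), and the \(c_{1,2}^\Phi\)-term give \((1+s)P_0+c_1P_1+c_2P_2\); after \(\varphi_1^\Phi\), the last term lands in \(\mathbb C(v_1\otimes u_1)\oplus\mathbb C(v_2\otimes u_2)\).

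Part (2), however, is derailed by a degree error. You correctly get \(\deg P_i=\varepsilon^ig^2h\varepsilon^{-i}\). But \(g^2h\) is central, so this equals \(g^2h\) for every \(i\), not \(\varepsilon^{2i}g^2h\). Explicitly, \(v_i\in V_{g\varepsilon^i}\), \(u_i\in(X_1)_{g\varepsilon^ih}\), and \(g\varepsilon^i\,g\varepsilon^ih=g^2h\). Part (1) already forces this, since \(\varphi_2^\Phi\) preserves degrees. So the statement needs no correction, and the argument you discard is the correct one. The vectors \(P_0,P_1,P_2\) lie in the distinct summands \(V_{g\varepsilon^i}\otimes(X_1)_{g\varepsilon^ih}\) of degree \(g^2h\), so \(c_1c_2\neq0\) alone gives \(0\neq\varphi_2^\Phi(P_0)\in(X_2)_{g^2h}\). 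Your substitutes do not work: nothing guarantees \(1+s\neq0\), and \(v_1\otimes u_2\) has degree \(\varepsilon g^2h\).

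For the second assertion of (2) you still have to name a vector of degree \(\varepsilon g^2h\) and control all three recursive terms. The paper takes \(v_2\otimes u_0\). There the identity term and the \(c_{1,2}^\Phi\)-term on \(v_2\otimes E_0\) give \((1+\rho(g))\,v_2\otimes u_0\), using \(R(g\varepsilon^2;g\varepsilon^2,\varepsilon h)=1\) and \((g\varepsilon^2)\rhd v_2=\rho(g)v_2\). The double braiding and the \(c_{1,2}^\Phi\)-term on \(v_2\otimes E_1\) both land in the other summand \(\mathbb C(v_0\otimes u_1)\). Your self-braiding idea is right once it is attached to such a vector.

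In part (3), your route to \(\lambda^3=t_0t_1t_2\) cannot work. On \(U=\operatorname{span}\{P_0,P_1,P_2\}\), the operator \(E=\varepsilon\rhd(-)\) is a weighted \(3\)-cycle, and every \(\alpha+\beta E+\gamma E^2\) commutes with it. So \(\varepsilon\)-equivariance only transports \(\varphi_2^\Phi(P_0)\) to \(\varphi_2^\Phi(P_1)\) and \(\varphi_2^\Phi(P_2)\). Likewise \(\varepsilon^3=1\) only evaluates \(t_0t_1t_2=\Phi_{g^2h}(\varepsilon,\varepsilon)\Phi_{g^2h}(\varepsilon^2,\varepsilon)\). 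Neither relates \(c_1/c_2\) to the \(t_i\).

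What is missing is \(g\)-equivariance. Since \(g\rhd P_i=\gamma_iP_{-i}\), the identity \(\varphi_2^\Phi(g\rhd P_0)=g\rhd\varphi_2^\Phi(P_0)\) gives \(c_1/c_2=\gamma_2/\gamma_0\). Since \(g\varepsilon=\varepsilon^2g\) and \(g^2h\) is central, \(\Gamma=g\rhd(-)\) satisfies \(\Gamma E=cE^2\Gamma\) on \(U\) for some \(c\neq0\). Evaluating this on \(P_0,P_1,P_2\) gives \(\gamma_1=c\gamma_0t_1\), \(t_1\gamma_2=c\gamma_1t_0t_2\), and \(\gamma_0=c\gamma_2t_1\). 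Hence \(\lambda=c^{-1}\) and \(t_0t_1t_2=c^{-3}=\lambda^3\). The paper instead proves this by showing that the bar chain \(\mathsf C_\lambda\) is a boundary.

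The identities \(r^2=s\) and \(s^3=\kappa_\Phi(\varepsilon)\) are genuine cocycle identities. With \(T=\lambda^{-1}E\), every \(\alpha I+\beta(T+T^2)\) commutes with the whole \(G\)-action on \(U\), so equivariance leaves \(s\) and \(r\) free. Your bar-complex plan for them is the paper's, but it omits the tool that decides homology classes of \(3\)-cycles on the infinite group \(\Gamma_3\). That tool is Lemma~\ref{lem:Gamma3-third-homology}: \(H_3(\Gamma_3,\mathbb Z)\cong\mathbb Z/3\) is generated by \([\mathsf K]\), and \(\varpi_*\) into \(H_3(S_3,\mathbb Z)\) is injective, so it suffices to bound the images over \(S_3\).

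Two further details. In \(s\), the factor \(\rho(h)\) cancels and the representation data reduce to \(\rho(g)^2\sigma(\varepsilon)^{-2}\). These are removed by \(\rho(g)=-1\) and by \(\sigma(\varepsilon)^3=\Phi_{\varepsilon h}(\varepsilon,\varepsilon)\Phi_{\varepsilon h}(\varepsilon^2,\varepsilon)\) on \(W_{\varepsilon h}\), not by the \(\varepsilon^3\)-action on \(V_g\) or \(X_1\). Also, \(r^2=s\) corresponds to a null-homologous cycle, not one homologous to \(\mathsf K\).
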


\begin{proof}
(1)  
Lemma~\ref{lem:Gamma3-32-second-adjoint-expansion} gives the first
formula in \eqref{eq:Gamma3-32-X2-summary-central} and
\(c_1c_2\ne0\).  Equations~\eqref{eq:Gamma3-32-t0}--%
\eqref{eq:Gamma3-32-t2} give the second formula and show that
\(t_0,t_1,t_2\) are nonzero.

(2) The tensors \(P_0,P_1,P_2\) lie in distinct
direct summands.  Hence the first formula in
\eqref{eq:Gamma3-32-X2-summary-central}, together with
\(c_1c_2\ne0\), shows that
\(\varphi_2^\Phi(P_0)\ne0\).  Since this vector has degree \(g^2h\),
we obtain \((X_2)_{g^2h}\ne0\).  Moreover,
\(\varphi_2^\Phi(v_2\otimes u_0)\) belongs to
\((X_2)_{\varepsilon g^2h}\).  If this component is zero, then
\(\varphi_2^\Phi(v_2\otimes u_0)=0\).  By
\eqref{eq:Gamma3-32-noncentral-vector}, its two terms lie in distinct
direct summands, and the coefficient of \(v_2\otimes u_0\) is
\(1+\rho(g)\).  Thus \(1+\rho(g)=0\), so \(\rho(g)=-1\).

(3) If \(\rho(g)=-1\), then
Lemma~\ref{lem:Gamma3-32-bar-reduction} applies and gives precisely
the three identities in \eqref{eq:Gamma3-32-summary-identities}.
\end{proof}

\subsubsection{The \texorpdfstring{$(3,1)$}{(3,1)} case with
	\texorpdfstring{$\dim\tau=2$}{dim tau=2}}

Let \(V=M(g,\rho)\) and \(W=M(h,\tau)\), with
\(\dim\rho=1\) and \(\dim\tau=2\).  Define
\begin{equation}
	\label{eq:Gamma3-Delta3}
	\Delta _3=
	\rho(h)^2\Phi_h(g,g)\tau(g^2)
	\in\operatorname{End}(W_h).
\end{equation}

\begin{lemma}
	\label{lem:Gamma3-dim2-normal-form}
	Define
	\begin{equation}
		\label{eq:Gamma3-a-epsilon}
		a_\varepsilon=
		\frac{\Phi_h(\varepsilon,g)}
		{\Phi_h(g,\varepsilon^2)\Phi_h(\varepsilon,\varepsilon)}.
	\end{equation}
	There exist \(b\in\mathbb C^\times\), a primitive third root of unity
	\(\zeta_3\), and a basis \(w,\overline w\) of \(W_h\), with
	\(\Delta_3=\rho(h)^2bI\), such that
	\begin{equation}
		\label{eq:Gamma3-dim2-standard-basis}
		\begin{array}{c|cc}
			&w&\overline w\\ \hline
			\varepsilon&\zeta_3 a_\varepsilon^{-1}w
			&\zeta_3^2a_\varepsilon^{-1}\overline w\\
			g&\overline w&bw
		\end{array}.
	\end{equation}
\end{lemma}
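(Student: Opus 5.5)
The plan is to work entirely inside the two-dimensional \(\Phi_h\)-projective representation \(\tau\) of \(C_G(h)=G\) on \(W_h\), using the structure from the proof of Lemma~\ref{lem:Gamma3-projective-dimensions}. Let \(A=\langle\varepsilon,g^2,h\rangle\). As shown there, \(f_\Phi\) is trivial on \(A^3\), so by \eqref{eq:Gamma3-local-commutator} the operators \(\tau(a)\), \(a\in A\), pairwise commute. Moreover \(W_h=L\oplus g\rhd L\) for a common eigenline \(L\). We pick \(0\neq w\in L\) and set \(\overline w:=g\rhd w\). Since \(\dim\tau=2\), these form a basis.

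First, compute the action of \(g\). By construction \(g\rhd w=\overline w\). Next, \(g\rhd\overline w=g\rhd(g\rhd w)=\Phi_h(g,g)\,g^2\rhd w\). Because \(g^2\in A\) preserves \(L\), this equals \(bw\) for some \(b\in\mathbb C^\times\).

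The same computation gives the value of \(\Delta_3\). Since \(g^2\) is central, \(\tau(g^2)\) commutes with \(\tau(g)\) up to the factor \(f_\Phi(h,g^2,g)\). This factor equals \(1\), because \(\langle g,h\rangle\) is abelian and generated by two elements. Therefore \(\tau(g^2)\) acts on both \(w\) and \(\overline w\) by the same scalar \(b/\Phi_h(g,g)\). This yields \(\Delta_3=\rho(h)^2\Phi_h(g,g)\tau(g^2)=\rho(h)^2bI\), as claimed.

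Second, compute the action of \(\varepsilon\). On \(L\), write \(\varepsilon\rhd w=\alpha w\). Transport this through \(g\) using \(\varepsilon g=g\varepsilon^2\) and \eqref{eq:YD-projective-action}. We have
\[
\varepsilon\rhd(g\rhd w)=\Phi_h(\varepsilon,g)\,(\varepsilon g)\rhd w ,
\qquad
g\rhd(\varepsilon^2\rhd w)=\Phi_h(g,\varepsilon^2)\,(g\varepsilon^2)\rhd w .
\]
Also \(\varepsilon^2\rhd w=\Phi_h(\varepsilon,\varepsilon)^{-1}\alpha^2 w\). Combining these gives \(\varepsilon\rhd\overline w=a_\varepsilon\alpha^2\,\overline w\), with \(a_\varepsilon\) as in \eqref{eq:Gamma3-a-epsilon}.

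Now rescale, setting \(\zeta:=a_\varepsilon\alpha\), so that \(\varepsilon\) acts on \(w\) by \(\zeta a_\varepsilon^{-1}\). Then on \(\overline w\) it acts by \(a_\varepsilon\alpha^2=\zeta^2a_\varepsilon^{-1}\). It remains to show that \(\zeta\) is a primitive cube root of unity.

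Third, show that \(\zeta^3=1\). Apply \(\varepsilon\) three times to \(w\) and use \(\varepsilon^3=1\):
\[
\alpha^3=\Phi_h(\varepsilon,\varepsilon)\Phi_h(\varepsilon,\varepsilon^2).
\]
The required identity is then \(a_\varepsilon^3\Phi_h(\varepsilon,\varepsilon)\Phi_h(\varepsilon,\varepsilon^2)=1\). Unwinding the definitions, this is a cocycle identity whose only inputs are \(\Phi(\varepsilon,\ldots)\), \(\Phi(g,\ldots)\) and the central element \(h\). This is the main obstacle: it is a genuine cocycle manipulation, and I expect it to need the relation \(g\varepsilon g^{-1}=\varepsilon^{-1}\) together with Lemma~\ref{lem:local-cocycle-coherence}.

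Before attempting the identity directly, I would try a cheaper route. Note that the quantity \(\zeta\) is independent of the choice of eigenline. Then compare with the other eigenline \(g\rhd L\), which is also an \(A\)-eigenline, where the same recipe produces \(a_\varepsilon\cdot a_\varepsilon\alpha^2=\zeta^2\). Running the argument from \(\overline w\) back to \(w\) (using \(g^2\in A\)) returns \(\zeta^4\), so consistency forces \(\zeta^4=\zeta\), hence \(\zeta^3=1\). I would double-check that this closure step uses only the coherence lemma, and fall back on the explicit cocycle identity if it does not.

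Fourth, show that \(\zeta\neq1\). If \(\zeta=1\), then \(\varepsilon\) acts on \(w\) and \(\overline w\) by the same scalar \(a_\varepsilon^{-1}\). All of \(A\) would then act by scalars, since \(g^2\) and \(h\) are central and act by scalars via the commutativity argument. In that case any line spanned by an eigenvector of \(\tau(g)\), such as \(w+c\overline w\) with \(c^2=b^{-1}\), would be \(G\)-stable. This contradicts the irreducibility of \(\tau\) with \(\dim\tau=2\). Therefore \(\zeta=\zeta_3\) is primitive, and table \eqref{eq:Gamma3-dim2-standard-basis} holds.
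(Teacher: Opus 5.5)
Your proof is correct and is essentially the paper's: your ``cheaper route'' is the eigenvalue form of the paper's operator identity \(E_\varepsilon\tau(g)=\tau(g)E_\varepsilon^2\), where \(E_\varepsilon=a_\varepsilon\tau(\varepsilon)\); your recipe \(u\mapsto g\rhd u\) squares the parameter \(a_\varepsilon\alpha_u\) of an \(\varepsilon\)-eigenvector, and applying it twice returns to \(\mathbb C^\times w\) because \(g^2\in A\), giving \(\zeta=\zeta^4\) just as \(\tau(g)^2\) scalar gives \(E_\varepsilon^3=I\) in the paper; this uses only \eqref{eq:YD-projective-action}, so the explicit cocycle identity and your fallback are unnecessary. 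One slip to fix: \(\zeta\) is not independent of the eigenline (on \(g\rhd L\) it is \(\zeta^2\), and literal independence would force \(\zeta=1\), contradicting your last step); what you actually use is only that it depends on the line, not on the chosen vector in it.
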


\begin{proof}
	Equation~\eqref{eq:Gamma3-local-commutator} and
	Lemma~\ref{lem:Gamma3-projective-dimensions} give
	\(\Phi_h(h,a)=\Phi_h(a,h)\) and
	\(\Phi_h(g^2,a)=\Phi_h(a,g^2)\) for
	\(a\in\{\varepsilon,g,h\}\).  Hence \(\tau(h)\) and
	\(\tau(g^2)\) commute with \(\tau(G)\), so both are scalar by
	Schur's lemma.  In particular, \(\Delta_3\) is scalar.
	
	Let \(E_\varepsilon=a_\varepsilon\tau(\varepsilon)\).
	Equation~\eqref{eq:YD-projective-action}, applied in the orders
	\(\varepsilon,g\), \(\varepsilon,\varepsilon\), and
	\(g,\varepsilon^2\), gives
	\(E_\varepsilon\tau(g)=\tau(g)E_\varepsilon^2\).
	Since \(\tau(g)^2=\Phi_h(g,g)\tau(g^2)\) is scalar, conjugating
	twice by \(\tau(g)\) gives \(E_\varepsilon=E_\varepsilon^4\).
	As \(E_\varepsilon\) is invertible, it follows that
	\begin{equation}
		\label{eq:Gamma3-dim2-E-cubic}
		E_\varepsilon^3=I.
	\end{equation}
	Choose an eigenvector \(0\ne w\in W_h\) of \(E_\varepsilon\), write
	\(E_\varepsilon w=\mu w\), and let
	\(\overline w=g\rhd w\).  If \(\overline w\in\mathbb Cw\), then
	\(\mathbb Cw\) is \(G\)-stable, contrary to the irreducibility of
	\(\tau\). Hence \(w,\overline w\) is a basis.
	Equation~\eqref{eq:YD-projective-action} gives
	\(g\rhd\overline w=\Phi_h(g,g)\tau(g^2)w=bw\) for a nonzero
	scalar \(b\). Hence \(\Delta_3=\rho(h)^2bI\).  The relation above
	also gives \(E_\varepsilon\overline w=\mu^2\overline w\), while
	\eqref{eq:Gamma3-dim2-E-cubic} gives \(\mu^3=1\).
	If \(\mu=1\), then \(\langle\varepsilon,g^2,h\rangle\) acts by
	scalars.  An eigenline of \(\tau(g)\) would then be \(G\)-stable,
	contrary to irreducibility.  Hence \(\mu\ne1\), so \(\mu\) is a
	primitive third root of unity.  Let \(\zeta_3=\mu\).  Then the action
	is given by \eqref{eq:Gamma3-dim2-standard-basis}.
\end{proof}

\begin{lemma}
	\label{lem:Gamma3-dim2-X1-X2-support}
	The object $X_1$ is simple if and only if \(\Delta_3=I\).
	Under this condition, for \(0\ne v\in V_g\), one has
	\[
	(X_1)_{gh}=\mathbb Cx_1,
	\qquad x_1=v\otimes(w-\rho(h)\overline w).
	\]
	One also has \(X_2\ne0\).  If, in addition, \(X_2\) is simple,
	then \(\rho(g)=-1\).
\end{lemma}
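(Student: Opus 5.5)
The plan is to work entirely in the standard basis of Lemma~\ref{lem:Gamma3-dim2-normal-form} and with the two homogeneous pieces \(V_g\otimes W_h\) and \(V\otimes X_1\). Since \(h\) is central, the double braiding preserves each homogeneous line \(V_{g\varepsilon^i}\otimes W_h\). Fix \(0\neq v\in V_g\). Then
\[
c_{W,V}c_{V,W}(v\otimes u)=(h\rhd v)\otimes(g\rhd u),
\qquad h\rhd v=\rho(h)v .
\]
So on \(V_g\otimes W_h\cong W_h\) the operator \(\varphi_1^\Phi\) is \(\operatorname{id}-\rho(h)\tau(g)\), up to the harmless cocycle factors \(\Phi_g(h,\cdot)\), which I will track but which should cancel by the normalization of \(\Phi\) at the central element \(h\).

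In the basis \(w,\overline w\), \(\tau(g)\) is \(\left(\begin{smallmatrix}0&b\\1&0\end{smallmatrix}\right)\). Hence \(\rho(h)\tau(g)\) has square \(\rho(h)^2b\,I=\Delta_3\). It has eigenvalue \(1\) if and only if \(\Delta_3=I\), with the sign chosen appropriately, and then its eigenspace for \(1\) is one-dimensional.

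\emph{Step 1 (simplicity of \(X_1\)).} By Proposition~\ref{prop:adjoint-orbit-generation} (or directly, since the grading is transitive on the three lines), we have \(X_1=\mathbb CG\rhd\varphi_1^\Phi(V_g\otimes W_h)\), and \(\operatorname{supp}X_1=(gh)^G\) has three elements. If \(\Delta_3\neq I\), then \(\operatorname{id}-\rho(h)\tau(g)\) has rank \(2\), so \((X_1)_{gh}\) is two-dimensional. As in the proof of Lemma~\ref{lem:Gamma3-32-X1}, \(C_G(gh)=\langle g,h\rangle\) has commutative twisted group algebra by Lemma~\ref{lem:Gamma3-projective-dimensions}, so an irreducible component there is one-dimensional. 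Thus \(X_1\) is not simple.

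If instead \(\Delta_3=I\), then \(T^2=I\) with \(T=\rho(h)\tau(g)\), so \(\operatorname{id}-T\) has rank one. Its image is spanned by \((\operatorname{id}-T)w=w-\rho(h)\overline w\), giving \((X_1)_{gh}=\mathbb Cx_1\). This line is automatically \(C_G(gh)\)-stable, since \(X_1\) is a subobject and the component is one-dimensional. Then \(M(gh,\cdot)\to X_1\) is a nonzero map from a simple object onto \(X_1\), so \(X_1\) is simple.

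\emph{Step 2 (\(X_2\neq0\)).} Assume \(\Delta_3=I\). The group \(\langle\varepsilon\rangle\) lets us generate \(V\) from \(v\), and stabilizes \(\mathbb Cx_1\) only up to the action on \(W_h\). I would therefore apply Lemma~\ref{lem:homogeneous-orbit-tensor-generation} with a suitable set of \(v_j=\varepsilon^j\rhd v\). I would then compute \(\varphi_2^\Phi(v\otimes x_1)\) as in Lemma~\ref{lem:second-adjoint-formula}.

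The component of multidegree \((g,g,h)\) should have the form \((1+\rho(g))\) times a nonzero vector, plus contributions in other multidegrees \((g\varepsilon^i,g\varepsilon^{-i},h)\). For \(i\ne0\) these come only from the \(c_{1,2}^\Phi\) term and are nonzero, because the \(\varepsilon\)-eigenvalues \(\zeta_3a_\varepsilon^{-1}\) and \(\zeta_3^2a_\varepsilon^{-1}\) on \(w,\overline w\) differ. It should therefore be enough to exhibit one input, for example \(v_1\otimes x_1\), whose image has a nonzero component in a multidegree where no cancellation is possible, using the multidegree-insertion argument of Proposition~\ref{prop:multidegree-insertion}. This gives \(X_2\neq0\).

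\emph{Step 3 (\(\rho(g)=-1\)).} The support of \(X_2\) lies in degrees \(g^2h\varepsilon^i\), which by \eqref{eq:Gamma3-class-list} split into the central class \(\{g^2h\}\) and the class \(\{\varepsilon g^2h,\varepsilon^2g^2h\}\). If \(X_2\) is simple, its support is a single class. I would show that the \(\varepsilon g^2h\) component is nonzero regardless of parameters, using the Step 2 vector with \(i\neq0\). Simplicity then forces \((X_2)_{g^2h}=0\). In particular \(\varphi_2^\Phi(v\otimes x_1)\), which lies in degree \(g^2h\), must vanish. Its \((g,g,h)\) coefficient is \((1+\rho(g))\) times the nonzero vector \(v\otimes v\otimes(w-\rho(h)\overline w)\) (the double-braiding term lands only in degree \((g,g,\cdot)\) with the same factor structure), so \(\rho(g)=-1\).

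The main obstacle is Step 2–3: controlling the cocycle factors in \(c_{1,2}^\Phi\) and the double braiding precisely enough to show the coefficient is exactly \(1+\rho(g)\), and that the \(\varepsilon g^2h\) component cannot vanish. I expect to defer those identities to a bar-complex reduction in Appendix~\ref{app:Gamma3-calculations}, analogous to Lemma~\ref{lem:Gamma3-32-bar-reduction}.
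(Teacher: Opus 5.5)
Your Step~1 is correct and is the paper's argument. On \(V_g\otimes W_h\) one has \(\varphi_1^\Phi=\operatorname{id}-\rho(h)\tau(g)\), with no cocycle factors at all, since \(c_{W,V}c_{V,W}(v\otimes u)=(h\rhd v)\otimes(g\rhd u)\). Its rank is one exactly when \(\rho(h)^2b=1\), and the twisted group algebra of \(C_G(gh)=\langle g,h\rangle\) is commutative.

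The trouble is in Steps~2--3. There the decisive computation is mis-described, and the identity you set out to prove (coefficient exactly \(1+\rho(g)\), via a bar-complex reduction) is false.

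\textbf{Grading.} \(\varphi_2^\Phi(v\otimes x_1)\) has total degree \(g^2h\). A multidegree \((g\varepsilon^i,g\varepsilon^{-i},h)\) with \(i\neq0\) has total degree \(\varepsilon^ig^2h\), so no such components occur. The argument with the \(\varepsilon\)-eigenvalues on \(w,\overline w\) therefore plays no role.

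\textbf{The missing observation.} The element \(g\in C_G(gh)\) stabilizes \((X_1)_{gh}=\mathbb Cx_1\). Put \(u=w-\rho(h)\overline w\). When \(\Delta_3=I\) one has \(\rho(h)\tau(g)u=-u\), hence
\[
g\rhd x_1=-\Phi^g(g,h)\rho(g)\rho(h)^{-1}x_1,
\qquad
\varphi_1^\Phi(x_1)=2x_1 .
\]
So all three recursive summands are multiples of \(P_0=v\otimes x_1\):
\begin{itemize}
\item The identity summand gives \(P_0\).
\item Using \(\Phi^g(g,h)=\Phi_g(g,h)\) and \(\rho(gh)=\rho(g)\rho(h)/\Phi_g(g,h)\), the term \(-c_{X_1,V}c_{V,X_1}(P_0)\) equals \(\rho(g)^2P_0\).
\item In \(c_{1,2}^\Phi\) both associator factors equal \(\Phi(g,g,h)\) and cancel, so the last term is \(2\rho(g)P_0\).
\end{itemize}
Thus \(\varphi_2^\Phi(P_0)=(1+\rho(g))^2P_0\), and no appendix-type reduction is needed. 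Your conclusion \(\rho(g)=-1\) survives only because \((1+\rho(g))^2=0\) also forces it.

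\textbf{The noncentral component.} For \(X_2\neq0\), and for the nonzero noncentral component used in Step~3, you need a different input, as you eventually suggest. Take \((\varepsilon^2\rhd v)\otimes x_1\), of total degree \(\varepsilon g^2h\). Its three recursive summands lie in the distinct components
\[
V_{g\varepsilon^2}\otimes(X_1)_{gh},\qquad
V_g\otimes(X_1)_{\varepsilon^2gh},\qquad
V_{g\varepsilon}\otimes(X_1)_{\varepsilon gh}.
\]
Hence the nonzero identity summand cannot cancel. Alternatively, apply Proposition~\ref{prop:multidegree-insertion} with \(m=1\), \((p_1,p_2)=(g,h)\), \(p=g\varepsilon\), and \(i=1\).

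With these corrections your outline becomes the paper's proof.
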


\begin{proof}
	The braiding formula and \eqref{eq:Gamma3-dim2-standard-basis} give
	\[
	\begin{aligned}
		\varphi_1^\Phi(v\otimes w)
		&=v\otimes(w-\rho(h)\overline w),\\
		\varphi_1^\Phi(v\otimes\overline w)
		&=v\otimes(\overline w-\rho(h)b w).
	\end{aligned}
	\]
	These vectors span the same line if and only if
	\(\rho(h)^2b=1\), which is the equation \(\Delta_3=I\).  Since
	the first vector is nonzero, \(X_1\) is \(G\)-stable, and all
	homogeneous degrees of \(V\otimes W\) belong to \((gh)^G\), the
	support of \(X_1\) is \((gh)^G\).  By
	Lemma~\ref{lem:Gamma3-kernel-centralizers},
	\(C_G(gh)=\langle g,h\rangle\).
	Lemma~\ref{lem:Gamma3-projective-dimensions} and
	\eqref{eq:Gamma3-local-commutator} show that the
	\(\Phi_{gh}\)-twisted group algebra of
	\(C_G(gh)\) is commutative.  Proposition~\ref{prop:simple-objects}
	therefore shows that \(X_1\) is simple exactly when its degree-\(gh\)
	component is one-dimensional.  This proves the first assertion and
	the formula for \(x_1\).
	
	Assume \(\Delta_3=I\).  Equation~\eqref{eq:tensor-product} gives
	\[
	g\rhd x_1
	=-\Phi^g(g,h)\rho(g)\rho(h)^{-1}x_1.
	\]
Because \(g\) and \(h\) commute,
	\(\Phi^g(g,h)=\Phi_g(g,h)\), and
	\(\rho(gh)=\rho(g)\rho(h)/\Phi_g(g,h)\).  Hence the
summand \(-c_{X_1,V}c_{V,X_1}(P_0)\), where
	\(P_0=v\otimes x_1\), is \(\rho(g)^2P_0\).  Moreover,
	\(c_{1,2}^\Phi\) contributes \(\rho(g)\) when it interchanges the
first two copies of \(v\), and
	\(\varphi_1^\Phi(v\otimes(w-\rho(h)\overline w))=2x_1\). Thus the
last recursive summand is \(2\rho(g)P_0\).  Together with the
identity summand, this gives
	\begin{equation}
		\label{eq:Gamma3-dim2-central-X2}
		\varphi_2^\Phi(P_0)=(1+\rho(g))^2P_0.
	\end{equation}
For \((\varepsilon^2\rhd v)\otimes x_1\), the same three summands
lie respectively in
	\[
	V_{g\varepsilon^2}\otimes(X_1)_{gh},\qquad
	V_g\otimes(X_1)_{\varepsilon^2gh},\qquad
	V_{g\varepsilon}\otimes(X_1)_{\varepsilon gh}.
	\]
	The identity summand is nonzero, so the image is nonzero and has
	degree \(\varepsilon g^2h\).  Thus \(X_2\ne0\).
	Since \(g^2h\) is central whereas
	\(\varepsilon g^2h\) is not, these degrees belong to different
	conjugacy classes.  If \(X_2\) is simple, its degree-\(g^2h\)
	component must therefore vanish.  Since \(P_0\ne0\),
	\eqref{eq:Gamma3-dim2-central-X2} gives \(\rho(g)=-1\).
\end{proof}

\begin{lemma}
	\label{lem:Gamma3-dim2-X2-summary}
	Retain \(v,x_1\) from
	Lemma~\ref{lem:Gamma3-dim2-X1-X2-support}.  Assume
	\(\Delta_3=I\), \(\rho(g)=-1\), and let
	$
	v_i=\varepsilon^i\rhd v, \ x_{1,i}=\varepsilon^i\rhd x_1$ for 
	$ i\in\mathbb Z/3\mathbb Z,
	$
	and
	\[
	e_0=v_2\otimes x_{1,0},
	\qquad e_1=v_0\otimes x_{1,1},
	\qquad e_2=v_1\otimes x_{1,2}.
	\]
	These vectors are linearly independent.  There are nonzero scalars
	\(\mathfrak a,\mathfrak b,m_0,m_1,m_2\) such that
	\begin{equation}
		\label{eq:Gamma3-dim2-X2-summary-vector}
		x_2:=\varphi_2^\Phi(e_0)
		=e_0+\mathfrak a e_1+\mathfrak b e_2
	\end{equation}
	and
	\begin{equation}
		\label{eq:Gamma3-dim2-X2-summary-action}
		\varepsilon\rhd e_0=m_0(\mathfrak a e_1),
		\qquad
		\varepsilon\rhd(\mathfrak a e_1)=m_1(\mathfrak b e_2),
		\qquad
		\varepsilon\rhd(\mathfrak b e_2)=m_2e_0.
	\end{equation}
They satisfy
	\begin{equation}
		\label{eq:Gamma3-dim2-X2-summary-reduction}
		m_2=m_0\kappa_\Phi(\varepsilon)^{-1}.
	\end{equation}
\end{lemma}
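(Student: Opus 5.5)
The proof has three parts. First compute \(x_2=\varphi_2^\Phi(e_0)\) directly from the recursion \eqref{eq:recursive-varphi-2}. Then read off the cyclic action of \(\varepsilon\) on the three summands. Finally reduce the product of the resulting scalars to the cycle \(\mathsf K\) of \eqref{eq:Gamma3-kappa-cycle}.

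\textbf{Linear independence.} The \(e_i\) lie in the distinct direct summands
\[
V_{g\varepsilon^2}\otimes(X_1)_{gh},\qquad V_g\otimes(X_1)_{\varepsilon^2gh},\qquad V_{g\varepsilon}\otimes(X_1)_{\varepsilon gh}.
\]
Here the degree of \(v_i\) is \(\varepsilon^i g\varepsilon^{-i}=g\varepsilon^{-2i}\), and that of \(x_{1,i}\) is \(\varepsilon^igh\varepsilon^{-i}\). Each \(e_i\) is nonzero because \(v_i\) and \(x_{1,i}\) are images of nonzero vectors under invertible actions. Hence the three vectors are linearly independent.

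\textbf{Computing \(x_2\).} This is the same computation already sketched in the proof of Lemma~\ref{lem:Gamma3-dim2-X1-X2-support}.
\begin{itemize}
\item The identity summand gives \(e_0\).
\item The double braiding \(c_{X_1,V}c_{V,X_1}(e_0)\) sends \(v_2\otimes x_{1,0}\) to \(((g\varepsilon^2)gh(g\varepsilon^2)^{-1}\cdots)\). Its bidegree is \(V_g\otimes(X_1)_{\varepsilon^2gh}\), so it is a nonzero multiple of \(e_1\).
\item The term \((\operatorname{id}_V\otimes\varphi_1^\Phi)c_{1,2}^\Phi\) is applied to \(v_2\otimes x_1\). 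Write \(x_1=v\otimes(w-\rho(h)\overline w)\), so the input is \(v_2\otimes(v_0\otimes(w-\rho(h)\overline w))\). The braiding \(c_{V,V}\) moves \(v_2\otimes v_0\) to \((g\varepsilon^2\rhd v_0)\otimes v_2\), which is a multiple of \(v_1\otimes v_2\). Then \(\varphi_1^\Phi(v_2\otimes W_h)\) equals \(\varepsilon^2\rhd\varphi_1^\Phi(v_0\otimes\varepsilon\rhd W_h)\). It is a multiple of \(x_{1,2}\), up to the \(\Delta_3=I\) rank-one property.
\end{itemize}
So this term is a multiple of \(e_2\). The coefficients \(\mathfrak a\) and \(\mathfrak b\) are explicit products of cocycle values, \(\rho\)-values, and the entries of \eqref{eq:Gamma3-dim2-standard-basis}. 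They are visibly nonzero. In particular, the \(e_2\)-coefficient does not vanish because \(\varphi_1^\Phi\) is injective on the relevant line (Lemma~\ref{lem:Gamma3-dim2-X1-X2-support}).

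\textbf{The \(\varepsilon\)-action.} By \eqref{eq:tensor-product},
\[
\varepsilon\rhd(v_i\otimes x_{1,j})=\Phi^\varepsilon(\cdot,\cdot)\,\Phi_g(\varepsilon,\varepsilon^i)\,\Phi_{gh}(\varepsilon,\varepsilon^j)\;v_{i+1}\otimes x_{1,j+1},
\]
using \eqref{eq:YD-projective-action} for \(\varepsilon\varepsilon^i=\varepsilon^{i+1}\). For \(i=2\), the identity \(\varepsilon^3=1\) yields the \(\rho(\varepsilon^3)\)-type correction. This gives \eqref{eq:Gamma3-dim2-X2-summary-action}, with \(m_0,m_1,m_2\) nonzero and explicitly determined by these coefficients together with \(\mathfrak a,\mathfrak b\).

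\textbf{Main obstacle.} The hard step is \eqref{eq:Gamma3-dim2-X2-summary-reduction}. The approach is to write \(m_2/m_0\) as a ratio of products of \(\Phi\)-values, namely the \(\Phi^\varepsilon\), \(\Phi_g\), \(\Phi_{gh}\) factors and the factors coming from \(\mathfrak a,\mathfrak b\). This ratio should be read as the pairing of \(\pi^*\Phi\) with a degree-three chain in the normalized bar complex. Then show that this chain differs from \(-\mathsf K\) by a boundary plus terms that cancel. The available tools are Lemma~\ref{lem:local-cocycle-coherence} and the cocycle identity \eqref{eq:three-cocycle-identity-local}, applied to quadruples in \(\{\varepsilon,\varepsilon^2,g,h\}\).

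I would first try to separate the contributions. The \(h\)-dependence should cancel because \(h\) is central and \(f_\Phi\) is trivial (Lemma~\ref{lem:Gamma3-projective-dimensions}). The \(g\)-conjugations should also cancel in pairs, leaving only \(\Phi(\varepsilon,\varepsilon,\varepsilon)\Phi(\varepsilon,\varepsilon^2,\varepsilon)\) to the power \(-1\). Since this bookkeeping is long, I would organize it exactly as for Lemma~\ref{lem:Gamma3-32-bar-reduction}. The deferred verification would go in Appendix~\ref{app:Gamma3-calculations}, as the paper does for the \((3,2)\) case.
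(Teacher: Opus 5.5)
Your linear-independence argument, the identification of the three recursive summands with multiples of $e_0,e_1,e_2$, and the cyclic $\varepsilon$-action are what the paper does. One justification needs repair: $\mathfrak b\neq0$ does not follow from injectivity of $\varphi_1^\Phi$. When $\Delta_3=I$, this map has rank one on $V_{g\varepsilon^2}\otimes W_h$, with kernel spanned by $v_2\otimes(\zeta_3w+\rho(h)\overline w)$. That line differs from the kernel line $\mathbb C\,v\otimes(w+\rho(h)\overline w)$ on $V_g\otimes W_h$, which is what Lemma~\ref{lem:Gamma3-dim2-X1-X2-support} controls. Transporting by $\varepsilon^2$, which scales $w$ and $\overline w$ in the ratio $\zeta_3^2:\zeta_3$, gives $\varphi_1^\Phi(v_2\otimes w)=\zeta_3c\,x_{1,2}$ and $\varphi_1^\Phi(v_2\otimes\overline w)=-\zeta_3^2\rho(h)^{-1}c\,x_{1,2}$ for some $c\neq0$. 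Hence $\varphi_1^\Phi\bigl(v_2\otimes(w-\rho(h)\overline w)\bigr)=(\zeta_3+\zeta_3^2)c\,x_{1,2}=-c\,x_{1,2}\neq0$, and it is the primitivity of $\zeta_3$ that is used.

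The genuine gap is \eqref{eq:Gamma3-dim2-X2-summary-reduction}. This is the only nontrivial assertion of the lemma, and you state a target but give no argument for it. The heuristics you offer do not work as stated. Centrality of $h$ and triviality of $f_\Phi$ on $\langle\varepsilon,g^2,h\rangle$ only make $\Phi_x$ symmetric on the relevant centralizers; they do not make the $h$-dependent factors of $m_0,m_2$ cancel. Nothing forces the $g$-conjugations to cancel in pairs either. Moreover, $\kappa_\Phi(\varepsilon)=\langle\pi^*\Phi,\mathsf K\rangle$ evaluates a generator of $H_3(\Gamma_3,\mathbb Z)\simeq\mathbb Z/3$ (Lemma~\ref{lem:Gamma3-third-homology}). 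So $m_2/m_0$ will not collapse by cancellation; the chain computing $m_0/m_2$ must be shown homologous to $\mathsf K$. Three things are missing.
\begin{itemize}
\item[(i)] Explicit normalized bar chains $\mathsf M_0,\mathsf M_2$ with $\langle\pi^*\Phi,\mathsf M_0\rangle=m_0$ and $\langle\pi^*\Phi,\mathsf M_2\rangle=m_2$, built from chains evaluating to $\Phi_x(a,b)$, $\Phi^a(x,y)$, and the associator ratios defining $\mathfrak a$, $\mathfrak b$, and the $\varepsilon$-action coefficients.
\item[(ii)] The check $\partial\mathsf M_0=\partial\mathsf M_2=[\varepsilon|\varepsilon g^2h]-[\varepsilon g^2h|\varepsilon]$, so that $\mathsf M_0-\mathsf M_2-\mathsf K$ is a cycle. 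Without this, ``differs from $-\mathsf K$ by a boundary'' cannot hold.
\item[(iii)] A proof that this cycle bounds over $\Gamma_3$.
\end{itemize}
Your plan of multiplying cocycle identities amounts to writing down an explicit four-chain over the infinite group $\Gamma_3$ that bounds this cycle. You neither produce one nor show that your restricted quadruples suffice. The paper avoids this search. Since $\varpi_*\colon H_3(\Gamma_3,\mathbb Z)\to H_3(S_3,\mathbb Z)$ is injective, it suffices to exhibit a four-chain over the finite group $S_3$ whose boundary is $\varpi_*(\mathsf M_0-\mathsf M_2-\mathsf K)$. Evaluating the resulting boundary over $\Gamma_3$ then gives $m_0/(m_2\kappa_\Phi(\varepsilon))=1$. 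Deferring to the organization of Lemma~\ref{lem:Gamma3-32-bar-reduction} does not supply (i)--(iii) in this case.
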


\begin{proof}
The vectors \(e_0,e_1,e_2\) are nonzero, and their
first tensor factors have distinct degrees
\(g\varepsilon^2,g,g\varepsilon\), respectively.
Hence they are linearly independent.
The calculation in the proof of
Lemma~\ref{lem:Gamma3-dim2-second-adjoint-expansion}
gives nonzero scalars \(\mathfrak a,\mathfrak b\) such that
\[
\begin{aligned}
-c_{X_1,V}c_{V,X_1}(e_0)
&=\mathfrak a e_1,\\
(\operatorname{id}_V\otimes\varphi_1^\Phi)
c_{1,2}^\Phi(e_0)
&=\mathfrak b e_2.
\end{aligned}
\]
Together with the identity term in
\eqref{eq:recursive-varphi-2}, these equalities give
\eqref{eq:Gamma3-dim2-X2-summary-vector}.
By \eqref{eq:tensor-product} and
\eqref{eq:YD-projective-action}, the action of
\(\varepsilon\) satisfies
\[
\varepsilon\rhd\mathbb Ce_0=\mathbb Ce_1,\qquad
\varepsilon\rhd\mathbb Ce_1=\mathbb Ce_2,\qquad
\varepsilon\rhd\mathbb Ce_2=\mathbb Ce_0.
\]
Since \(\mathfrak a,\mathfrak b\neq0\), this gives
nonzero scalars \(m_0,m_1,m_2\) satisfying
\eqref{eq:Gamma3-dim2-X2-summary-action}.
Finally,
Lemma~\ref{lem:Gamma3-dim2-bar-reduction} gives
\[
m_2=m_0\kappa_\Phi(\varepsilon)^{-1},
\]
which is
\eqref{eq:Gamma3-dim2-X2-summary-reduction}.
\end{proof}

\subsubsection{The \texorpdfstring{$(3,1)$}{(3,1)} case with
\texorpdfstring{$\dim\tau=1$}{dim tau=1}}

Let \(V=M(g,\rho)\) and \(W=M(h,\tau)\), with
\(\dim\rho=\dim\tau=1\).
Choose nonzero vectors
\(v\in V_g\) and \(w\in W_h\), and let
\(\widetilde x_1=v\otimes w\).

\begin{lemma}
\label{lem:Gamma3-dim1-X1-X2}
One has
\[
 X_1=0
 \quad\Longleftrightarrow\quad
 \rho(h)\tau(g)=1.
\]
If \(\rho(h)\tau(g)\ne1\), then
\(X_1\simeq M(gh,\rho_1)\) is simple, where
$
 \rho_1(A)=\Phi^A(g,h)\rho(A)\tau(A)$ for $A\in C_G(gh)$.
Under the same assumption, one has \(X_2\ne0\).  If \(X_2\) is
simple, then
\begin{equation}
\label{eq:Gamma3-dim1-support-equation}
 \bigl(1+\rho(g)\bigr)
 \bigl(1-\rho(g)\rho(h)\tau(g)\bigr)=0.
\end{equation}
\end{lemma}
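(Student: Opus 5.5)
The argument follows the pattern of Lemmas~\ref{lem:Gamma3-32-X1} and \ref{lem:Gamma3-dim2-X1-X2-support}; everything reduces to explicit scalars on a few homogeneous lines.

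\textbf{The object \(X_1\).} First I compute \(\varphi_1^\Phi(\widetilde x_1)\). Since \(h\) is central, \(c_{V,W}(v\otimes w)=(g\rhd w)\otimes v=\tau(g)\,w\otimes v\) and \(c_{W,V}(w\otimes v)=\rho(h)\,v\otimes w\). Hence
\[
\varphi_1^\Phi(\widetilde x_1)=\bigl(1-\rho(h)\tau(g)\bigr)\widetilde x_1 .
\]
The double braiding is a morphism, and \(G\) permutes the three lines \(V_{g\varepsilon^i}\otimes W_h\) transitively. So it acts by the same scalar on all of \(V\otimes W\). This gives \(X_1=0\) exactly when \(\rho(h)\tau(g)=1\), and otherwise \(X_1=V\otimes W\). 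In the latter case the homogeneous components are one-dimensional, with degrees forming the single class \((gh)^G\), so \(X_1\) is simple. By Proposition~\ref{prop:simple-objects}, \(X_1\simeq M(gh,\rho_1)\), where \(\rho_1\) is read off from \eqref{eq:tensor-product}: for \(A\in C_G(gh)=C_G(g)\) one has \(A\rhd(v\otimes w)=\Phi^A(g,h)\rho(A)\tau(A)\,v\otimes w\).

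\textbf{The object \(X_2\).} Assume \(p:=\rho(h)\tau(g)\neq1\), and identify \(X_1\) with \(V\otimes W\). I evaluate \(\varphi_2^\Phi\) on two vectors.

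The first is \(P_0=v\otimes\widetilde x_1\), of central degree \(g^2h\). The three recursive summands are as follows:
\begin{itemize}
\item the identity gives \(P_0\);
\item the double braiding gives \(-\rho(gh)\rho_1(g)P_0\), which after the cocycle cancellation \(\Phi^g(g,h)=\Phi_g(g,h)\) (valid since \(g,h\) commute) becomes \(-\rho(g)^2p\,P_0\);
\item \(c_{1,2}^\Phi\) contributes \(\rho(g)\), with the two associators cancelling, followed by \(\varphi_1^\Phi\), which contributes \((1-p)\); in total \(\rho(g)(1-p)P_0\).
\end{itemize}
Summing,
\[
\varphi_2^\Phi(P_0)=\bigl(1+\rho(g)\bigr)\bigl(1-\rho(g)p\bigr)P_0 .
\]

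The second is \(Q=(\varepsilon^2\rhd v)\otimes\widetilde x_1\), of non-central degree \(\varepsilon g^2h\). Exactly as in Lemma~\ref{lem:Gamma3-dim2-X1-X2-support}, the three summands of \(\varphi_2^\Phi(Q)\) lie in distinct direct summands, graded by the degree of the first \(V\)-factor. The identity summand is \(Q\neq0\), so \(\varphi_2^\Phi(Q)\neq0\) and therefore \(X_2\neq0\).

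\textbf{Conclusion.} The degrees \(g^2h\) (central) and \(\varepsilon g^2h\) (non-central) lie in different conjugacy classes. If \(X_2\) is simple, its support is a single class, which contains \(\varepsilon g^2h\), so the degree-\(g^2h\) component of \(X_2\) vanishes. In particular \(\varphi_2^\Phi(P_0)=0\), and this gives \eqref{eq:Gamma3-dim1-support-equation}.

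The main obstacle is the cocycle bookkeeping in the double-braiding summand on \(P_0\). The scalars \(\Phi^g(g,h)\) from \(\rho_1(g)\), \(\Phi_g(g,h)\) from writing \(\rho(gh)\) in terms of \(\rho(g)\rho(h)\), and the associators inside \(c_{1,2}^\Phi\) must all cancel. I would check this using commutativity of \(g,h,gh\) and centrality of \(h\), as was done in Lemma~\ref{lem:T-central-support-adjoints}.
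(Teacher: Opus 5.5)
Your proposal is correct and follows essentially the same route as the paper. The double braiding acts on all of \(V\otimes W\) by the scalar \(\rho(h)\tau(g)\), so \(X_1=V\otimes W\simeq M(gh,\rho_1)\) whenever this scalar is not \(1\); then \(\varphi_2^\Phi(v\otimes\widetilde x_1)=(1+\rho(g))(1-\rho(g)\rho(h)\tau(g))\,v\otimes\widetilde x_1\) in the central degree \(g^2h\), while the distinct first-factor degrees in \(\varphi_2^\Phi((\varepsilon^2\rhd v)\otimes\widetilde x_1)\) give \(X_2\neq0\) in the non-central degree \(\varepsilon g^2h\), which forces the central component to vanish when \(X_2\) is simple. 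Your cocycle cancellations, namely \(\Phi^g(g,h)=\Phi_g(g,h)=\Phi(g,h,g)\) and the cancelling associators in \(c_{1,2}^\Phi\), are exactly those the paper uses.
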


\begin{proof}
The braiding formula gives
\(\varphi_1^\Phi(v\otimes w)
=\bigl(1-\rho(h)\tau(g)\bigr)v\otimes w\), which proves the
vanishing criterion.  Assume \(\rho(h)\tau(g)\ne1\).  The \(G\)-translates of
\(v\otimes w\) span \(V\otimes W\), and \(\varphi_1^\Phi\) 
commutes with $G$-action. Hence the translates of \(\widetilde x_1\) span
\(X_1\), whose support is \((gh)^G\).  Since \(h\) is central,
Lemma~\ref{lem:Gamma3-kernel-centralizers} gives
\(C_G(gh)=C_G(g)=\langle g,h\rangle\), and
\eqref{eq:tensor-product} gives
\[
 A\rhd\widetilde x_1
 =\Phi^A(g,h)\rho(A)\tau(A)\widetilde x_1
 \qquad(A\in C_G(gh)).
\]
Thus \((X_1)_{gh}=\mathbb C\widetilde x_1\) affords the
one-dimensional projective representation \(\rho_1\).
Proposition~\ref{prop:simple-objects} now gives
\(X_1\simeq M(gh,\rho_1)\), and hence \(X_1\) is simple.

For the second adjoint, the double braiding and the last recursive
summand give, respectively,
\[
 \begin{aligned}
 c_{X_1,V}c_{V,X_1}(v\otimes\widetilde x_1)
 &=\rho(g)^2\rho(h)\tau(g)\,v\otimes\widetilde x_1,\\
 (\operatorname{id}_V\otimes\varphi_1^\Phi)c_{1,2}^\Phi
 (v\otimes\widetilde x_1)
 &=\rho(g)\bigl(1-\rho(h)\tau(g)\bigr)
  v\otimes\widetilde x_1.
 \end{aligned}
\]
Together with the identity summand, they give
\begin{equation}
\label{eq:Gamma3-dim1-central-X2}
 \varphi_2^\Phi(v\otimes\widetilde x_1)
 =\bigl(1+\rho(g)\bigr)
  \bigl(1-\rho(g)\rho(h)\tau(g)\bigr)
  v\otimes\widetilde x_1.
\end{equation}
The three recursive summands applied to
\((\varepsilon^2\rhd v)\otimes\widetilde x_1\) lie, respectively, in
\[
 V_{g\varepsilon^2}\otimes(X_1)_{gh},\qquad
 V_g\otimes(X_1)_{\varepsilon^2gh},\qquad
 V_{g\varepsilon}\otimes(X_1)_{\varepsilon gh}.
\]
These direct summands are distinct, and the identity summand is
nonzero.  Hence the image is nonzero and has degree
\(\varepsilon g^2h\), so \(X_2\ne0\).  Since \(g^2h\) is central whereas
\(\varepsilon g^2h\) is not, simplicity of \(X_2\) forces the
degree-\(g^2h\) vector in
\eqref{eq:Gamma3-dim1-central-X2} to vanish.  This gives
\eqref{eq:Gamma3-dim1-support-equation}.
\end{proof}

\begin{lemma}
\label{lem:Gamma3-dim1-X2-summary}
Assume \(\rho(h)\tau(g)\ne1\), and let
\(x_{1,i}=\varepsilon^i\rhd\widetilde x_1\) and
\(v_i=\varepsilon^i\rhd v\) for \(i=0,1,2\).  Let
\begin{equation*}
e_0=v_2\otimes x_{1,0},
\qquad e_1=v_0\otimes x_{1,1},
\qquad e_2=v_1\otimes x_{1,2}.
\end{equation*}
These vectors lie in the component of degree
\(\varepsilon g^2h\) and are linearly independent.
There are \(A,B\in\mathbb C^\times\) such that
\begin{equation}
\label{eq:Gamma3-dim1-X2-summary-vector}
 x_2:=\varphi_2^\Phi(e_0)
 =e_0+A e_1+B e_2.
\end{equation}
Define \(m_0,m_1,m_2\in\mathbb C^\times\) by
\begin{equation}
\label{eq:Gamma3-dim1-m-definition}
 \varepsilon\rhd e_0=m_0(Ae_1),
 \qquad
 \varepsilon\rhd(Ae_1)=m_1(Be_2),
 \qquad
 \varepsilon\rhd(Be_2)=m_2e_0.
\end{equation}
These scalars satisfy
\begin{equation}
\label{eq:Gamma3-dim1-m-ratios}
 \begin{aligned}
 \frac{m_0}{m_1}
 &=\kappa_\Phi(\varepsilon)
   \frac{1-\rho(h)\tau(g)}
        {\rho(g)^3\rho(h)^2\tau(g)^2},\\
 \frac{m_0}{m_2}
 &=-\frac{\kappa_\Phi(\varepsilon)}
 {\rho(g)^3\rho(h)\tau(g)
  \bigl(1-\rho(h)\tau(g)\bigr)},\\
 \frac{m_1}{m_2}
 &=-\frac{\rho(h)\tau(g)}
 {\bigl(1-\rho(h)\tau(g)\bigr)^2}.
 \end{aligned}
\end{equation}
\end{lemma}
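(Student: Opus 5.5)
The plan is to compute \(x_2=\varphi_2^\Phi(e_0)\) directly from the recursive formula \eqref{eq:recursive-varphi-2}. As in the proof of Lemma~\ref{lem:Gamma3-dim1-X1-X2}, the three summands applied to \(e_0=v_2\otimes x_{1,0}\) lie in the distinct components \(V_{g\varepsilon^2}\otimes(X_1)_{gh}\), \(V_g\otimes(X_1)_{\varepsilon^2gh}\) and \(V_{g\varepsilon}\otimes(X_1)_{\varepsilon gh}\). These contain \(e_0,e_1,e_2\), respectively, and each is one-dimensional: \(X_1\simeq M(gh,\rho_1)\) has one-dimensional homogeneous components and \(\dim V_x=1\). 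Linear independence is then immediate from the distinct first degrees.

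\textbf{The coefficients \(A\) and \(B\).} The double braiding term is
\[
-c_{X_1,V}c_{V,X_1}(e_0)=-\bigl((g\varepsilon^2 gh(g\varepsilon^2)^{-1})\rhd v_2\bigr)\otimes\bigl((g\varepsilon^2)\rhd x_{1,0}\bigr),
\]
which is a nonzero multiple of \(e_1\); this multiple is \(A\). For the third term, \(c_{1,2}^\Phi\) applied to \(v_2\otimes(v\otimes w)\) gives \(\Phi\)-factors times \(((g\varepsilon^2)\rhd v)\otimes(v_2\otimes w)\). Here \((g\varepsilon^2)\rhd v\) is a multiple of \(v_1\), and \(\varphi_1^\Phi(v_2\otimes w)=(1-\rho(h)\tau(g))\,v_2\otimes w\) up to the \(\varepsilon^2\)-translation scalar. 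Since \(v_2\otimes w\) is proportional to \(x_{1,2}\), this term is \(Be_2\) with \(B\) a product of cocycle values, \(\rho\)- and \(\tau\)-values and \((1-\rho(h)\tau(g))\). Hence \(B\ne0\) by the hypothesis \(\rho(h)\tau(g)\ne1\), and \(A\ne0\) because it is a product of nonzero factors.

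\textbf{The scalars \(m_0,m_1,m_2\).} Since \(\varepsilon\) permutes the three components cyclically, it permutes the lines \(\mathbb Ce_i\) cyclically, which defines \(m_0,m_1,m_2\). To compute them I write
\[
\varepsilon\rhd e_i=\Phi^\varepsilon(\cdot,\cdot)\,(\varepsilon\rhd v_j)\otimes(\varepsilon\rhd x_{1,i}),
\]
reduce \(\varepsilon\rhd v_j\) and \(\varepsilon\rhd x_{1,i}\) using \eqref{eq:YD-projective-action} (e.g.\ \(\varepsilon\rhd v_2=\Phi_g(\varepsilon,\varepsilon^2)v_3\)-type expressions, returning to \(v_0\) via \(\varepsilon^3=1\)), and then divide by \(A\) and \(B\).

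\textbf{The ratios.} The ratios in \eqref{eq:Gamma3-dim1-m-ratios} are products of many cocycle values. The main obstacle is reducing all of them to the single class invariant \(\kappa_\Phi(\varepsilon)\), as in Lemma~\ref{lem:Gamma3-32-bar-reduction} and Lemma~\ref{lem:Gamma3-dim2-bar-reduction}. I would do this in the normalized bar complex: write the cocycle part of each ratio as \(\langle\pi^*\Phi,c\rangle\) for a 3-chain \(c\), and check that \(c\) is a cycle. By Lemma~\ref{lem:Gamma3-third-homology}, \(c\) is then homologous to \(n\mathsf K\) plus a boundary, and the cocycle part equals \(\kappa_\Phi(\varepsilon)^n\).

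The exponent \(n\) is found by pushing forward along \(\varpi\) to \(S_3\), where \(\varpi_*\) is injective, or more directly by reading off the \(\langle\varepsilon\rangle\)-part. I expect \(n=1\) for \(m_0/m_1\) and \(m_0/m_2\), and \(n=0\) for \(m_1/m_2\). The latter also follows from dividing the first two identities.

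The purely representation-theoretic factors come out as follows. The factors \(\rho(g)^3\) arise from the three \(\varepsilon\)-translates of \(g\)-type elements. The factors \(\rho(h)\tau(g)\) and \(1-\rho(h)\tau(g)\) arise from \(A\) and \(B\): \(A\) carries \(\rho(h)\tau(g)\)-type factors from the double braiding on \(V\otimes W\), and \(B\) carries the first-adjoint factor.
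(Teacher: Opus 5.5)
Your plan follows the paper's route. You read \(A\) off the double braiding and \(B\) off the \(c_{1,2}^\Phi\)-term, and the \(\varepsilon\)-coefficients off \eqref{eq:tensor-product}. You then handle the leftover cocycle values in the normalized bar complex via \(H_3(\Gamma_3,\mathbb Z)\simeq\mathbb Z/3\) and the injectivity of \(\varpi_*\). Your predicted exponents \(1,1,0\) are the right ones.

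\textbf{The gap.} One ingredient is missing, and without it your cycle step fails. The third term uses \(x_{1,2}=\Phi^{\varepsilon^2}(g,h)\tau(\varepsilon^2)\,v_2\otimes w\), so \(B\) carries a factor \(\tau(\varepsilon^2)^{-1}\). This factor enters the three ratios with exponents \(-1,1,2\).

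The stated formulas contain no value of \(\tau\) at \(\varepsilon\). If you split \(\tau(\varepsilon^2)\) off as a representation factor, the residual chain is not a cycle: its boundary is a nonzero multiple of \([\varepsilon^2|h]-[h|\varepsilon^2]\). So Lemma~\ref{lem:Gamma3-third-homology} cannot be applied to it.

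The missing fact is that \(\tau(\varepsilon)\) is determined by \(\Phi\). Since \(h\) is central, \(\tau\) is a one-dimensional \(\Phi_h\)-projective character of all of \(G\). Compare \(\tau(\varepsilon)\tau(g)\) with \(\tau(g)\tau(\varepsilon^2)\) using \(\varepsilon g=g\varepsilon^2\), together with \(\tau(\varepsilon)^2=\Phi_h(\varepsilon,\varepsilon)\tau(\varepsilon^2)\). This gives \(a_\varepsilon\tau(\varepsilon)=1\), with \(a_\varepsilon\) as in \eqref{eq:Gamma3-a-epsilon}. Hence \(\tau(\varepsilon^2)^{-1}=\Phi_h(\varepsilon,\varepsilon)a_\varepsilon^2\) is a pure cocycle value. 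Only after this substitution do two things hold:
\begin{itemize}
\item \(B=\rho(g)\bigl(1-\rho(h)\tau(g)\bigr)\mathfrak b\), with \(\mathfrak b\) the same cocycle expression as in the \(\dim\tau=2\) case;
\item the residual chains become the cycles \(\mathsf M_i-\mathsf M_j\) built from \eqref{eq:Gamma3-dim2-M-chains}.
\end{itemize}

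\textbf{Smaller points.}
\begin{itemize}
\item The \(\varepsilon\)-coefficients are pure cocycle values, because \(v_i\) and \(x_{1,i}\) are defined as \(\varepsilon\)-translates. So \(\rho(g)^3\) comes from \(A\propto\rho(g)^2\) and \(B\propto\rho(g)\), not from the \(\varepsilon\)-translates.
\item There is no well-defined ``\(\langle\varepsilon\rangle\)-part'' of a chain involving \(g\). The homology class has to be detected through \(\varpi_*\); the paper does this with explicit four-chains over \(S_3\).
\item The paper's bookkeeping is more economical than certifying a new cycle for each ratio, though your per-ratio check works since all \(\partial\mathsf M_i\) coincide. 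It sets \(\mu_i=\langle\Phi,\mathsf M_i\rangle\) and gets \(\mu_0\mu_1\mu_2=m_0m_1m_2=\Phi_{\varepsilon g^2h}(\varepsilon,\varepsilon)\Phi_{\varepsilon g^2h}(\varepsilon^2,\varepsilon)\) directly from \(\varepsilon^3=1\). Then the two boundaries \(\mathsf C_{02}\) and \(\mathsf C_{\mathrm{pow}}\), already certified for \(\dim\tau=2\), suffice.
\end{itemize}
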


\begin{proof}
Lemma~\ref{lem:Gamma3-dim1-three-elementary-tensors} shows that
\(e_0,e_1,e_2\) have degree \(\varepsilon g^2h\) and are linearly
independent.  It also gives nonzero scalars
\(\mathfrak a,\mathfrak b\) such that
\[
 \begin{aligned}
 A&=-\rho(g)^2\rho(h)\tau(g)\mathfrak a,\\
 B&=\rho(g)\bigl(1-\rho(h)\tau(g)\bigr)\mathfrak b.
 \end{aligned}
\]
With these choices of \(A,B\),
\eqref{eq:Gamma3-dim1-three-elementary-vector} gives
\eqref{eq:Gamma3-dim1-X2-summary-vector}.
The assumption \(\rho(h)\tau(g)\ne1\) ensures that
\(A,B\ne0\).
The same lemma gives
\[
\varepsilon\rhd e_0=\eta_0e_1,\qquad
\varepsilon\rhd e_1=\eta_1e_2,\qquad
\varepsilon\rhd e_2=\eta_2e_0,
\]
where \(\eta_0,\eta_1,\eta_2\) are nonzero.
Consequently, the scalars in
\eqref{eq:Gamma3-dim1-m-definition} are
\[
m_0=\frac{\eta_0}{A},\qquad
m_1=\frac{A\eta_1}{B},\qquad
m_2=B\eta_2.
\]
Thus they are well defined and nonzero.

Let \(\mu_0,\mu_1,\mu_2\) be the nonzero scalars in
Lemma~\ref{lem:Gamma3-dim1-chain-identification}.
Multiplying the identities in
\eqref{eq:Gamma3-dim1-m-mu} gives
\(\mu_0\mu_1\mu_2=m_0m_1m_2\).
Since \(\varepsilon^3=1\), the cyclic action in
\eqref{eq:Gamma3-dim1-m-definition} and
\eqref{eq:YD-projective-action} give
\[
\mu_0\mu_1\mu_2
=
\Phi_{\varepsilon g^2h}(\varepsilon,\varepsilon)
\Phi_{\varepsilon g^2h}(\varepsilon^2,\varepsilon).
\]
By Lemma~\ref{lem:Gamma3-dim2-bar-reduction},
the two chains in
\eqref{eq:Gamma3-dim2-comparison-cycles} are boundaries.
Evaluating them by \(\pi^*\Phi\) and using the preceding
identity gives
\[
\frac{\mu_0}{\mu_2}=\kappa_\Phi(\varepsilon),
\qquad
\frac{\mu_0^2}{\mu_1\mu_2}
=\kappa_\Phi(\varepsilon)^{-1}.
\]
Consequently,
\[
\mu_2=\mu_0\kappa_\Phi(\varepsilon)^{-1},
\qquad
\mu_1=\mu_0\kappa_\Phi(\varepsilon)^2
     =\mu_0\kappa_\Phi(\varepsilon)^{-1},
\]
where the last equality uses
\(\kappa_\Phi(\varepsilon)^3=1\).
Substituting these values into
\eqref{eq:Gamma3-dim1-m-mu} proves
\eqref{eq:Gamma3-dim1-m-ratios}.
\end{proof}

\subsection{Vanishing of the cocycle obstruction}
\label{subsec:Gamma3-branch-obstructions}

The preceding adjoint calculations give the following three results.
Throughout this subsection, write
\(X_n=(\operatorname{ad}V)^n(W)\).

\begin{prop}
\label{prop:Gamma3-32-X2-obstruction}
Let \(V=M(g,\rho)\) and \(W=M(\varepsilon h,\sigma)\) have supports
as in the \((3,2)\) case, with \(\dim\rho=\dim\sigma=1\).  If \(X_1\)
and \(X_2\) are nonzero and simple, then
\(\rho(g)=-1\) and \(\kappa_\Phi(\varepsilon)=1\).
\end{prop}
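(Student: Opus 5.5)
The plan is to combine Lemma~\ref{lem:Gamma3-32-X1} with Lemma~\ref{lem:Gamma3-32-second-adjoint-coefficients}. The key observation is that \(X_2\) has nonzero components in two different conjugacy classes unless certain scalars vanish.

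\emph{Step 1.} Since \(X_1\) is simple, Lemma~\ref{lem:Gamma3-32-X1} gives \(\Delta_1=1\). So all statements of Lemma~\ref{lem:Gamma3-32-second-adjoint-coefficients} are available, including \((X_2)_{g^2h}\neq0\) from part (2). The element \(g^2h\) is central, since \(Z(G)=\langle g^2,h\rangle\). The class of \(\varepsilon g^2h\) is \(\{\varepsilon g^2h,\varepsilon^2g^2h\}\), which differs from \(\{g^2h\}\). Because \(X_2\) is simple, its support is a single conjugacy class, so it equals \(\{g^2h\}\). Hence \((X_2)_{\varepsilon g^2h}=0\), and part (2) of the same lemma gives \(\rho(g)=-1\).

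\emph{Step 2.} With \(\rho(g)=-1\), part (3) applies and yields \(s^3=\kappa_\Phi(\varepsilon)\), \(\lambda^3=t_0t_1t_2\) and \(r^2=s\). It remains to show \(s^3=1\).

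Since \(X_2\) is simple with one-point central support \(\{g^2h\}\), the component \((X_2)_{g^2h}\) is all of \(X_2\). It is an irreducible \(\Phi_{g^2h}\)-projective representation of \(G\).

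The vector \(y=\varphi_2^\Phi(P_0)=(1+s)P_0+c_1P_1+c_2P_2\) lies in \((X_2)_{g^2h}\). It is nonzero because \(c_1c_2\neq0\) and the \(P_i\) lie in distinct summands. Its \(\varepsilon\)-translates lie in the span of \(P_0,P_1,P_2\), using \(\varepsilon\rhd P_i=t_iP_{i+1}\).

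I would show that \((X_2)_{g^2h}\) is one-dimensional, i.e.\ \(\varepsilon\rhd y\in\mathbb Cy\). The route is as follows. The space \(V\otimes X_1\) in degree \(g^2h\) is spanned by \(P_0,P_1,P_2\). By \(G\)-linearity, \(X_2\) is generated by \(y\). The twisted group algebra restricted to the abelian subgroup \(\langle\varepsilon,g^2,h\rangle\) is commutative by Lemma~\ref{lem:Gamma3-projective-dimensions}. Since \(\Phi_{g^2h}(\varepsilon,g)\) relates \(\varepsilon\) and \(g\) with the inversion relation, irreducibility constrains the possible dimensions. Alternatively, and more simply, one can use that \(\varphi_2^\Phi\) commutes with \(\varepsilon\), so that \(\varepsilon\rhd y=\varphi_2^\Phi(\varepsilon\rhd P_0)=t_0\varphi_2^\Phi(P_1)\). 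One then computes the image of \(\varphi_2^\Phi\) on \(\operatorname{span}(P_0,P_1,P_2)\) as a circulant-type matrix.

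\emph{Step 3.} Requiring \(\varepsilon\rhd y\) to be proportional to \(y\) with \(1+s\), \(c_1\), \(c_2\) gives the relations \(t_0c_2=\lambda'c_1\), \(t_1c_1=\lambda'(1+s)\)-type equalities. If \(1+s=0\), these would force a zero coefficient, contradicting \(c_1,c_2\neq0\). Hence \(1+s\neq0\), and the proportionality is expressed via the scalars \(\lambda\) and \(r\) of \eqref{eq:Gamma3-32-summary-lambda-r}. This yields \(r=1+s\) (or a similar relation), and combined with \(r^2=s\) and \(\lambda^3=t_0t_1t_2\) it should force \(s^3=1\), hence \(\kappa_\Phi(\varepsilon)=1\).

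The main obstacle is the precise eigenvector computation in this last step. One must extract from \(\varepsilon\rhd y\in\mathbb Cy\) exactly the relation that, together with \eqref{eq:Gamma3-32-summary-identities}, gives \(s^3=1\). I expect the detailed coefficient bookkeeping to be carried out in Appendix~\ref{app:Gamma3-32-obstruction}.
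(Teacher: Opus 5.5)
Your Step~1 is correct and matches the paper's argument. The gap is the central claim of Step~2, that $(X_2)_{g^2h}$ is one-dimensional, i.e.\ $\varepsilon\rhd y\in\mathbb C y$. This claim is false under the hypotheses of the proposition, so no coefficient bookkeeping can establish it.

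The argument of Lemma~\ref{lem:Gamma3-projective-dimensions}, applied with $h$ replaced by the central element $g^2h$, only gives $\dim(X_2)_{g^2h}\le 2$. The abelian subgroup $\langle\varepsilon,g^2,h\rangle$ has index two, and $g$ need not fix a common eigenline. The bound $2$ is attained:
\begin{itemize}
\item Rescale $\widetilde P_0=P_0$, $\widetilde P_1=\lambda^{-1}t_0P_1$, $\widetilde P_2=\lambda^{-2}t_0t_1P_2$. By $\lambda^3=t_0t_1t_2$, the operator $T=\lambda^{-1}(\varepsilon\rhd-)$ permutes these three vectors cyclically.
\item By \eqref{eq:Gamma3-32-summary-lambda-r}, $y=(1+s)\widetilde P_0+r\widetilde P_1+r\widetilde P_2$. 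Since $r\neq0$, proportionality of $Ty$ and $y$ forces $r=1+s$.
\item Take a pair in $\mathcal P_2^\Phi$; such pairs exist already for $\Phi=1$ (the $2304$-dimensional examples). There $X_1,X_2$ are nonzero and simple because the Cartan entry is $-2$. But $s=1$, so $r^2=s=1$ gives $r=\pm1\neq 2=1+s$, and hence $\varepsilon\rhd y\notin\mathbb Cy$.
\item Consistently, $R_1$ sends such a pair into $\mathcal P_3^\Phi$, where $X_2\simeq M(g^2h,\tau)$ with $\dim\tau=2$.
\end{itemize}

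The missing idea is to use the upper bound $2$ rather than an equality with $1$. In the basis $\widetilde P_0,\widetilde P_1,\widetilde P_2$, the coordinate matrix of $y,Ty,T^2y$ is circulant. Using $r^2=s$, its determinant is
\[
((1+s)+2r)((1+s)-r)^2=(1+r)^2(1-r+r^2)^2=(1+r^3)^2 .
\]
Suppose $\kappa_\Phi(\varepsilon)\neq1$. Then $r^3\neq-1$, since $r^3=-1$ would give $\kappa_\Phi(\varepsilon)=s^3=r^6=1$. So the determinant is nonzero, and $y$, $\varepsilon\rhd y$, $\varepsilon^2\rhd y$ are three linearly independent vectors in $(X_2)_{g^2h}$. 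This contradicts $\dim(X_2)_{g^2h}\le 2$. This is how the paper concludes.

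Your relation $r=1+s$ does give $s^3=1$, but it only describes the rank-one situation of $\mathcal P_1^\Phi$. It cannot be derived in general.
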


\begin{proof}
Simplicity of \(X_1\) gives \(\Delta_1=1\) by
Lemma~\ref{lem:Gamma3-32-X1}, so
Lemma~\ref{lem:Gamma3-32-second-adjoint-coefficients} applies and gives
\((X_2)_{g^2h}\ne0\).  Since \(g^2h\) is central and \(X_2\) is
simple, Proposition~\ref{prop:simple-objects} gives
\(\operatorname{supp}X_2=\{g^2h\}\).  Thus
\((X_2)_{\varepsilon g^2h}=0\), and
Lemma~\ref{lem:Gamma3-32-second-adjoint-coefficients} yields
\(\rho(g)=-1\).

Let \(z_0=\varphi_2^\Phi(P_0)\), and retain the scalars from
Lemma~\ref{lem:Gamma3-32-second-adjoint-coefficients}.  On the span of
\(P_0,P_1,P_2\), define
\[
 T=\lambda^{-1}(\varepsilon\rhd-),
 \qquad
 \widetilde P_0=P_0,
 \quad \widetilde P_1=\lambda^{-1}t_0P_1,
 \quad \widetilde P_2=\lambda^{-2}t_0t_1P_2.
\]
Equations~\eqref{eq:Gamma3-32-X2-summary-central} and
\eqref{eq:Gamma3-32-summary-identities} give
\[
 T\widetilde P_0=\widetilde P_1,
 \qquad T\widetilde P_1=\widetilde P_2,
 \qquad T\widetilde P_2=\widetilde P_0.
\]
Furthermore,
\begin{align*}
 z_0
 =(1+s)\widetilde P_0
   +\frac{c_1\lambda}{t_0}\widetilde P_1
   +\frac{c_2\lambda^2}{t_0t_1}\widetilde P_2
 =(1+s)\widetilde P_0+r\widetilde P_1+r\widetilde P_2.
\end{align*}
Equation~\eqref{eq:Gamma3-32-summary-lambda-r} gives the last
equality.  The \(P_i\) lie in distinct direct summands, so the
\(\widetilde P_i\) are linearly independent.  Since \(r^2=s\), the
coordinate matrix of \(z_0,Tz_0,T^2z_0\) with respect to the ordered
basis \(\widetilde P_0,\widetilde P_1,\widetilde P_2\) has determinant
\[
 \det\begin{pmatrix}
  1+s&r&r\\
  r&1+s&r\\
  r&r&1+s
 \end{pmatrix}
 =((1+s)+2r)((1+s)-r)^2
 =(1+r^3)^2.
\]
Since \(g^2h\) is central and \(X_2\) is \(G\)-stable, these three
vectors belong to \((X_2)_{g^2h}\).  If
\(\kappa_\Phi(\varepsilon)\ne1\), the determinant is nonzero: if it
vanished, then \(r^3=-1\), whereas
\(\kappa_\Phi(\varepsilon)=s^3=r^6=1\).  Hence
\((X_2)_{g^2h}\) contains three linearly independent vectors.

Since \(g^2h\) is central and
\(G=\langle\varepsilon,g,g^2h\rangle\), the proof of
Lemma~\ref{lem:Gamma3-projective-dimensions}, with \(h\) replaced by
\(g^2h\), shows that every irreducible
\(\Phi_{g^2h}\)-projective \(G\)-module has dimension at most two.
By Proposition~\ref{prop:simple-objects}, this applies to the simple
object \(X_2\) with support \(\{g^2h\}\), contradicting the preceding
three-dimensional subspace.  Hence \(\kappa_\Phi(\varepsilon)=1\).
\end{proof}

\begin{prop}
\label{prop:Gamma3-dim2-X2-obstruction}
Let \(V=M(g,\rho)\) and \(W=M(h,\tau)\) have supports as in the
\((3,1)\) case, with \(\dim\rho=1\) and \(\dim\tau=2\).  If \(X_1\)
and \(X_2\) are nonzero and simple, then
\(\Delta_3=I\), \(\rho(g)=-1\), and
\(\kappa_\Phi(\varepsilon)=1\).
\end{prop}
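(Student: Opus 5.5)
The plan is to follow the proof of Proposition~\ref{prop:Gamma3-32-X2-obstruction}, with Lemmas~\ref{lem:Gamma3-dim2-X1-X2-support} and \ref{lem:Gamma3-dim2-X2-summary} in place of the \((3,2)\) inputs.

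First, simplicity of \(X_1\) gives \(\Delta_3=I\) by Lemma~\ref{lem:Gamma3-dim2-X1-X2-support}. The same lemma then gives \(\rho(g)=-1\) from the simplicity of \(X_2\). It also shows that the degree-\(g^2h\) part of \(X_2\) vanishes. Since \(\varepsilon g^2h\) is not central, the support of \(X_2\) is the two-element class \(\{\varepsilon g^2h,\varepsilon^2g^2h\}\) from \eqref{eq:Gamma3-class-list}.

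Next I will use the one-dimensionality of the inducing representation of \(X_2\). By Lemma~\ref{lem:Gamma3-kernel-centralizers}, \(C_G(\varepsilon g^2h)=\langle\varepsilon,g^2,h\rangle\). Lemma~\ref{lem:Gamma3-projective-dimensions} shows that \(f_\Phi\) is trivial on this group, so by \eqref{eq:Gamma3-local-commutator} the \(\Phi_{\varepsilon g^2h}\)-twisted group algebra is commutative. Proposition~\ref{prop:simple-objects} then gives \(\dim(X_2)_{\varepsilon g^2h}=1\). The vector \(x_2=e_0+\mathfrak a e_1+\mathfrak b e_2\) of Lemma~\ref{lem:Gamma3-dim2-X2-summary} is nonzero, since the \(e_i\) are independent, and it lies in this line. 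Hence \(\varepsilon\rhd x_2=c\,x_2\) for some scalar \(c\).

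By \eqref{eq:Gamma3-dim2-X2-summary-action},
\[
\varepsilon\rhd x_2=m_2e_0+m_0\mathfrak a e_1+m_1\mathfrak b e_2 .
\]
Comparing coefficients in the basis \(e_0,e_1,e_2\), and using \(\mathfrak a,\mathfrak b\neq0\), gives \(m_0=m_1=m_2=c\). Combining \(m_0=m_2\) with \eqref{eq:Gamma3-dim2-X2-summary-reduction}, which says \(m_2=m_0\kappa_\Phi(\varepsilon)^{-1}\), and using \(m_0\neq0\), we get \(\kappa_\Phi(\varepsilon)=1\).

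The main obstacle is making sure the scalar identity \eqref{eq:Gamma3-dim2-X2-summary-reduction} from the appendix holds under exactly the present hypotheses, \(\Delta_3=I\) and \(\rho(g)=-1\). Both hypotheses are available at that point of the argument, so the obstacle should be bookkeeping rather than a substantive gap. Apart from that, the argument is only a coefficient comparison.
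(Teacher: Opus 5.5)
Your proposal is correct and follows the paper's proof essentially step by step: Lemma~\ref{lem:Gamma3-dim2-X1-X2-support} gives \(\Delta_3=I\) and \(\rho(g)=-1\). The commutative twisted group algebra of \(C_G(\varepsilon g^2h)=\langle\varepsilon,g^2,h\rangle\) forces \((X_2)_{\varepsilon g^2h}=\mathbb Cx_2\), and the coefficient comparison \(m_0=m_1=m_2\) combined with \(m_2=m_0\kappa_\Phi(\varepsilon)^{-1}\) yields \(\kappa_\Phi(\varepsilon)=1\). Your extra identification of the full support of \(X_2\) is correct but not needed.
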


\begin{proof}
Lemma~\ref{lem:Gamma3-dim2-X1-X2-support} gives
\(\Delta_3=I\) and \(\rho(g)=-1\), so
Lemma~\ref{lem:Gamma3-dim2-X2-summary} applies.  The definitions of
\(e_0,e_1,e_2\), together with
\(g\varepsilon g^{-1}=\varepsilon^2\), show that all three have
degree \(\varepsilon g^2h\).  They are linearly independent, and
\eqref{eq:Gamma3-dim2-X2-summary-vector} therefore gives
\(0\ne x_2\in(X_2)_{\varepsilon g^2h}\).

Since \(g^2\) is central,
Lemma~\ref{lem:Gamma3-kernel-centralizers} gives
\(C_G(\varepsilon g^2h)=C_G(\varepsilon h)
=\langle\varepsilon,g^2,h\rangle\).  This group is abelian, and
Lemma~\ref{lem:Gamma3-projective-dimensions} together with
\eqref{eq:Gamma3-local-commutator} shows that its
\(\Phi_{\varepsilon g^2h}\)-twisted group algebra is commutative.
By Proposition~\ref{prop:simple-objects},
\((X_2)_{\varepsilon g^2h}\) is an irreducible module over this
algebra and is therefore one-dimensional.  Consequently,
\((X_2)_{\varepsilon g^2h}=\mathbb Cx_2\) and
\(\varepsilon\rhd x_2\in\mathbb Cx_2\).  Equations
\eqref{eq:Gamma3-dim2-X2-summary-vector} and
\eqref{eq:Gamma3-dim2-X2-summary-action} give
\(\varepsilon\rhd x_2
=m_2e_0+m_0\mathfrak a e_1+m_1\mathfrak b e_2\).
Since \(e_0,\mathfrak a e_1,\mathfrak b e_2\) are linearly
independent, comparison of coefficients with \(x_2\) gives
\(m_0=m_1=m_2\).  As these scalars are nonzero,
\eqref{eq:Gamma3-dim2-X2-summary-reduction} yields
\(\kappa_\Phi(\varepsilon)=1\).
\end{proof}
\begin{prop}
\label{prop:Gamma3-dim1-X2-obstruction}
Let \(V=M(g,\rho)\) and \(W=M(h,\tau)\) have supports as in the
\((3,1)\) case, with \(\dim\rho=\dim\tau=1\).  If \(X_1\) and
\(X_2\) are nonzero and simple, then
\(\kappa_\Phi(\varepsilon)=1\).
\end{prop}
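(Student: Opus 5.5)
The plan follows the proof of Proposition~\ref{prop:Gamma3-dim2-X2-obstruction}, with Lemma~\ref{lem:Gamma3-dim1-X2-summary} playing the role of Lemma~\ref{lem:Gamma3-dim2-X2-summary}.

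First, since \(X_1\ne0\), Lemma~\ref{lem:Gamma3-dim1-X1-X2} gives \(\rho(h)\tau(g)\ne1\). Hence Lemma~\ref{lem:Gamma3-dim1-X2-summary} applies. The vectors \(e_0,e_1,e_2\) are linearly independent of degree \(\varepsilon g^2h\), and \(A,B\ne0\). Therefore \(0\ne x_2=e_0+Ae_1+Be_2\in(X_2)_{\varepsilon g^2h}\).

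Next, \(C_G(\varepsilon g^2h)=\langle\varepsilon,g^2,h\rangle\) by Lemma~\ref{lem:Gamma3-kernel-centralizers}. By Lemma~\ref{lem:Gamma3-projective-dimensions} and \eqref{eq:Gamma3-local-commutator}, its \(\Phi_{\varepsilon g^2h}\)-twisted group algebra is commutative. Since \(X_2\) is simple, Proposition~\ref{prop:simple-objects} shows that \((X_2)_{\varepsilon g^2h}\) is one-dimensional, so it equals \(\mathbb Cx_2\). In particular, \(\varepsilon\rhd x_2\in\mathbb Cx_2\). By \eqref{eq:Gamma3-dim1-m-definition},
\[
\varepsilon\rhd x_2=m_2e_0+m_0Ae_1+m_1Be_2 .
\]
Comparing coefficients in the independent vectors \(e_0,Ae_1,Be_2\) gives \(m_0=m_1=m_2\).

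Finally, I substitute \(m_0=m_1=m_2\) into \eqref{eq:Gamma3-dim1-m-ratios} and set \(q=\rho(h)\tau(g)\).
\begin{itemize}
\item The third ratio gives \(-q=(1-q)^2\), that is, \(q^2-q+1=0\).
\item The second ratio gives \(\kappa_\Phi(\varepsilon)=-\rho(g)^3q(1-q)\).
\item From \(q^2-q+1=0\) we get \(q(1-q)=q-q^2=1\), so \(\kappa_\Phi(\varepsilon)=-\rho(g)^3\).
\item The first ratio gives \(\kappa_\Phi(\varepsilon)=\rho(g)^3q^2/(1-q)\). Since \(1-q=-q^2\), this also equals \(-\rho(g)^3\), so it adds no information.
\end{itemize}

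It remains to show \(\rho(g)^3=-1\), which is the main obstacle. I would use the support constraint \eqref{eq:Gamma3-dim1-support-equation}, which holds because \(X_2\) is simple.
\begin{itemize}
\item If \(\rho(g)=-1\), then \(\rho(g)^3=-1\) and \(\kappa_\Phi(\varepsilon)=1\).
\item Otherwise \(\rho(g)q=1\). Since \(q^2-q+1=0\), \(q\) is a primitive sixth root of unity, so \(q^3=-1\). Then \(\rho(g)^3=q^{-3}=-1\), and again \(\kappa_\Phi(\varepsilon)=1\).
\end{itemize}
In both cases \(\kappa_\Phi(\varepsilon)=1\).

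As a consistency check, \(\kappa_\Phi(\varepsilon)^3=1\) (as used in the proof of Lemma~\ref{lem:Gamma3-dim1-X2-summary}) is compatible with this conclusion. The delicate points are correctly orienting the ratios from Lemma~\ref{lem:Gamma3-dim1-X2-summary} and checking that the case split from \eqref{eq:Gamma3-dim1-support-equation} covers everything.
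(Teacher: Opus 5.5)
Your proposal is correct and follows essentially the same route as the paper. Both arguments use Lemma~\ref{lem:Gamma3-dim1-X2-summary} and the one-dimensionality of \((X_2)_{\varepsilon g^2h}\) to force \(m_0=m_1=m_2\), read off \(1-q+q^2=0\) from the third ratio and \(\kappa_\Phi(\varepsilon)=-\rho(g)^3\) from the second, and then use \eqref{eq:Gamma3-dim1-support-equation} together with \(q^3=-1\) to obtain \(\rho(g)^3=-1\).
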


\begin{proof}
Since \(X_1\ne0\), Lemma~\ref{lem:Gamma3-dim1-X1-X2} gives
\(\rho(h)\tau(g)\ne1\), so
Lemma~\ref{lem:Gamma3-dim1-X2-summary} applies.  It
shows that \(e_0,e_1,e_2\) lie in degree \(\varepsilon g^2h\) and
are linearly independent.  Hence
\eqref{eq:Gamma3-dim1-X2-summary-vector} gives
\(0\ne x_2\in(X_2)_{\varepsilon g^2h}\).

 {As in the proof of
Proposition~\ref{prop:Gamma3-dim2-X2-obstruction}, simplicity gives
\((X_2)_{\varepsilon g^2h}=\mathbb Cx_2\).  Hence}
\eqref{eq:Gamma3-dim1-X2-summary-vector} and
\eqref{eq:Gamma3-dim1-m-definition} give
\(\varepsilon\rhd x_2
=m_2e_0+m_0Ae_1+m_1Be_2\in\mathbb Cx_2\).
Since \(A,B\ne0\), comparison of coefficients gives
\(m_0=m_1=m_2\).  The last equality in
\eqref{eq:Gamma3-dim1-m-ratios} now gives
\[
 1-\rho(h)\tau(g)+\rho(h)^2\tau(g)^2=0.
\]
Hence
\(\rho(h)\tau(g)\bigl(1-\rho(h)\tau(g)\bigr)=1\) and
\(\rho(h)^3\tau(g)^3=-1\).  The second equality in
\eqref{eq:Gamma3-dim1-m-ratios} gives
\(\kappa_\Phi(\varepsilon)=-\rho(g)^3\), while
\eqref{eq:Gamma3-dim1-support-equation} gives either
\(\rho(g)=-1\) or \(\rho(g)\rho(h)\tau(g)=1\).  In the latter case,
\(\rho(g)=\bigl(\rho(h)\tau(g)\bigr)^{-1}\), so again
\(\rho(g)^3=-1\).
Therefore \(\kappa_\Phi(\varepsilon)=1\).
\end{proof}

The preceding results yield the following necessary condition
for finite-dimensionality.

\begin{prop}
\label{prop:Gamma3-local-obstruction}
Let \(P=(V,W)\) be in the \((3,2)\) case with
\(\dim\rho=\dim\sigma=1\), or in the \((3,1)\) case with
\(\dim\rho=1\) and \(\dim\tau\in\{1,2\}\).
If \(\dim\mathcal B(P)<\infty\), then
\(\kappa_\Phi(\varepsilon)=1\).
\end{prop}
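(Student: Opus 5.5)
The plan is to reduce to the three preceding propositions by locating, inside the finite Cartan graph, an object where the second adjoint in the direction of the three-element support is nonzero and simple. We also need to check that \(\kappa_\Phi(\varepsilon)\) does not change along reflections. Assume \(\dim\mathcal B(P)<\infty\). Theorem~\ref{thm:Nichols-finiteness-criterion} gives all reflections and a finite Cartan graph. Since \(P\) is braided-indecomposable, \(X_1\ne0\), so \(m_{12}^P\geq1\), and Lemma~\ref{lem:root-string-adjoint-simplicity} shows that \(X_1\) is simple.

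\textbf{Step 1: show \(X_2\ne0\) at \(P\).} In the \((3,1)\) cases, Lemmas~\ref{lem:Gamma3-dim2-X1-X2-support} and~\ref{lem:Gamma3-dim1-X1-X2} give \(X_2\ne0\) once \(X_1\) is simple and nonzero. In the \((3,2)\) case, Lemma~\ref{lem:Gamma3-32-X1} gives \(\Delta_1=1\), and Lemma~\ref{lem:Gamma3-32-second-adjoint-coefficients}(2) gives \((X_2)_{g^2h}\neq0\). In all three cases, therefore, \(m_{12}^P\geq2\). Lemma~\ref{lem:root-string-adjoint-simplicity} with \(t=2\) then shows that \(X_2\) is nonzero and simple.

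\textbf{Step 2: conclude.} With \(X_1\) and \(X_2\) nonzero and simple, Propositions~\ref{prop:Gamma3-32-X2-obstruction}, \ref{prop:Gamma3-dim2-X2-obstruction} and~\ref{prop:Gamma3-dim1-X2-obstruction} give \(\kappa_\Phi(\varepsilon)=1\) in the three cases. This argument works directly at \(P\) and needs no reflection at all, because the root-string lemma holds at every object of a finite Cartan graph, in particular at \(P\) itself.

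\textbf{Main obstacle.} The one point that needs care is that \(\kappa_\Phi(\varepsilon)\) is defined using the chosen generator \(\varepsilon\) of the presentation. The hypothesis fixes the presentation, so in this statement nothing more is needed. For later use with reflected pairs, I would record that the quantity is intrinsic: by Lemma~\ref{lem:Gamma3-third-homology}, \(\kappa_\Phi(\varepsilon)=\langle\pi^*\Phi,\mathsf K\rangle\) pairs the class \([\pi^*\Phi]\) against a generator of \(H_3(\Gamma_3,\mathbb Z)\simeq\mathbb Z/3\). Replacing \(\varepsilon\) by \(\varepsilon^{-1}\) sends \(\mathsf K\) to \(\pm\mathsf K\) in homology, and the inversion automorphism acts as \(+1\) on \(H_3(C_3,\mathbb Z)\). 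Hence the condition \(\kappa_\Phi(\varepsilon)=1\) is equivalent to \([\pi^*\Phi]=0\) and does not depend on the choice of generators.
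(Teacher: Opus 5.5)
Your argument is correct and is essentially the paper's proof. You obtain simplicity of \(X_1\) from the root-string lemma, and nonvanishing of \(X_2\) from Lemma~\ref{lem:Gamma3-32-X1} with Lemma~\ref{lem:Gamma3-32-second-adjoint-coefficients}(2) in the \((3,2)\) case and from Lemmas~\ref{lem:Gamma3-dim2-X1-X2-support} and~\ref{lem:Gamma3-dim1-X1-X2} in the \((3,1)\) cases. You then get simplicity of \(X_2\) from the root-string lemma with \(t=2\) and conclude with the three obstruction propositions. The framing about reflections and the remark on the intrinsic nature of \(\kappa_\Phi(\varepsilon)\) are not needed here, as you note yourself.
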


\begin{proof}By Theorem~\ref{thm:Nichols-finiteness-criterion},
\(P\) admits all reflections and its Cartan graph is finite.
By \eqref{eq:indecomposable-first-adjoint} and braided
indecomposability, \(X_1\ne0\). Hence \( {a_{12}^{[P]}\leq -1}\), and
Lemma~\ref{lem:root-string-adjoint-simplicity} shows that \(X_1\) is
simple.
In the \((3,2)\) case, Lemma~\ref{lem:Gamma3-32-X1} gives
\(\Delta_1=1\), and
Lemma~\ref{lem:Gamma3-32-second-adjoint-coefficients}(2) gives
\(X_2\ne0\).  In the two \((3,1)\) cases, the same conclusion
follows from Lemmas~\ref{lem:Gamma3-dim2-X1-X2-support} and
\ref{lem:Gamma3-dim1-X1-X2}, respectively.
Thus \(a_{12}^{[P]} \leq -2\), and
Lemma~\ref{lem:root-string-adjoint-simplicity} also shows that \(X_2\)
is simple.  Propositions~\ref{prop:Gamma3-32-X2-obstruction},
\ref{prop:Gamma3-dim2-X2-obstruction}, and
\ref{prop:Gamma3-dim1-X2-obstruction} now give
\(\kappa_\Phi(\varepsilon)=1\) in the respective cases.
\end{proof}

\subsection{Cohomological trivialization and braided equivalence}
\label{subsec:Gamma3-obstruction-untwist}

For \(x\in\Gamma _3\), write \(\overline x=\pi(x)\), and let
\(\widetilde\Phi=\pi^*\Phi\).  The value of
\(\kappa_\Phi(\overline\varepsilon)\) determines the cohomology class
of this pullback.

We use the coboundary convention
\begin{equation}
\label{eq:Gamma3-delta-J}
(\delta J)(a,b,c)=
\frac{J(b,c)J(a,bc)}{J(ab,c)J(a,b)}.
\end{equation}

\begin{prop}
\label{prop:Gamma3-cohomological-obstruction}
The class \([\widetilde\Phi]\) is trivial in
\(H^3(\Gamma _3,\mathbb C^\times)\) if and only if
\(\kappa_\Phi(\overline\varepsilon)=1\).  In this case there is a
normalized two-cochain
\(J:\Gamma _3^2\to\mathbb C^\times\) satisfying
\(\widetilde\Phi=\delta J\).
\end{prop}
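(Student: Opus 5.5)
We use the universal coefficient theorem together with Lemma~\ref{lem:Gamma3-third-homology}. Since \(\mathbb C^\times\) is divisible, hence an injective abelian group, \(\operatorname{Ext}^1_{\mathbb Z}(H_2(\Gamma_3,\mathbb Z),\mathbb C^\times)=0\). Evaluation of normalized cocycles on normalized bar cycles therefore gives an isomorphism
\[
H^3(\Gamma_3,\mathbb C^\times)\xrightarrow{\ \sim\ }\operatorname{Hom}\bigl(H_3(\Gamma_3,\mathbb Z),\mathbb C^\times\bigr),
\qquad [\Psi]\longmapsto\bigl([c]\mapsto\langle\Psi,c\rangle\bigr).
\]
This pairing is well defined on classes. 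Indeed, if \(\Psi=\delta J\) and \(c\) is a cycle, then \(\langle\delta J,c\rangle=\langle J,\partial c\rangle=1\); this is where the boundary convention fixed before \eqref{eq:Gamma3-kappa} has to match \eqref{eq:Gamma3-delta-J}. Also, a cocycle takes the same value on homologous cycles. I would verify this compatibility of signs briefly: it is the only bookkeeping point, and otherwise the step is standard.

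By Lemma~\ref{lem:Gamma3-third-homology}, \(H_3(\Gamma_3,\mathbb Z)\simeq\mathbb Z/3\), generated by \([\mathsf K]\). A homomorphism \(\mathbb Z/3\to\mathbb C^\times\) is therefore trivial if and only if it takes the value \(1\) on \([\mathsf K]\). We computed in the text that \(\langle\pi^*\Phi,\mathsf K\rangle=\kappa_\Phi(\overline\varepsilon)\). Hence \([\widetilde\Phi]=0\) if and only if \(\kappa_\Phi(\overline\varepsilon)=1\). As a by-product, \(\kappa_\Phi(\overline\varepsilon)^3=1\) always holds; this agrees with the earlier uses of this relation.

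For the final assertion, triviality of the class gives some two-cochain \(J\) with \(\widetilde\Phi=\delta J\). It remains to make \(J\) normalized. The cohomology here is computed by the normalized bar complex, which is chain homotopy equivalent to the full one. Alternatively, one can normalize directly: evaluating \(\delta J\) at arguments containing \(1\) and using that \(\widetilde\Phi\) is normalized shows that \(J(1,b)=J(1,1)\) and \(J(a,1)=J(1,1)\). Dividing \(J\) by the constant \(J(1,1)\) does not change \(\delta J\), and the result is normalized.

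The main obstacle is only the identification of the universal coefficient isomorphism with the explicit pairing on the normalized bar complex, since the homology computation is already done in Lemma~\ref{lem:Gamma3-third-homology}.
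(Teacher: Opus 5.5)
Your proposal is correct and follows the paper's proof: both use the universal coefficient theorem with the Ext term vanishing because \(\mathbb C^\times\) is divisible. Both then use the generator \([\mathsf K]\) of \(H_3(\Gamma_3,\mathbb Z)\simeq\mathbb Z/3\) with \(\langle\widetilde\Phi,\mathsf K\rangle=\kappa_\Phi(\overline\varepsilon)\), and normalize \(J\) by dividing by the constant \(J(1,1)\).
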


\begin{proof}
Since \(\mathbb C^\times\) is divisible, the Ext term in the
universal coefficient theorem vanishes, and evaluation gives an
isomorphism
\[
 H^3(\Gamma _3,\mathbb C^\times)
 \simeq
 \operatorname{Hom}\bigl(
 H_3(\Gamma _3,\mathbb Z),\mathbb C^\times
 \bigr).
\]
By Lemma~\ref{lem:Gamma3-third-homology},
\(H_3(\Gamma _3,\mathbb Z)\simeq\mathbb Z/3\) and
\([\mathsf K]\) is a generator.  Moreover,
\eqref{eq:Gamma3-kappa} and \eqref{eq:Gamma3-kappa-cycle} give
\(\langle\widetilde\Phi,\mathsf K\rangle
=\kappa_\Phi(\overline\varepsilon)\).  Thus
\([\widetilde\Phi]=0\) if and only if
\(\kappa_\Phi(\overline\varepsilon)=1\).

If the class is trivial, choose a two-cochain \(J\) with
\(\widetilde\Phi=\delta J\).  Since \(\widetilde\Phi\) is normalized,
the cases in which the first or second argument in
\eqref{eq:Gamma3-delta-J} equals \(1\) give
\(J(1,a)=J(a,1)=J(1,1)\) for every \(a\in\Gamma _3\).
Dividing \(J\) by the constant \(J(1,1)\) leaves its coboundary
unchanged and makes \(J\) normalized.
\end{proof}

The cochain \(J\) trivializes \(\pi^*\Phi\) on \(\Gamma _3\). The
class of \(\Phi\) on \(G\) may remain nontrivial.
 {We next lift the objects to \(\Gamma_3\) by lifting their
support degrees and letting \(\Gamma_3\) act through \(\pi\).}
\begin{prop}
\label{prop:Gamma3-universal-lift}
Let \(x\in\Gamma_3\), and let \(\rho\) be an irreducible
\(\Phi_{\overline x}\)-projective representation of
\(C_G(\overline x)\).
Then
\(\widetilde\rho=\rho\circ(\pi|_{C_{\Gamma_3}(x)})\)
is an irreducible \(\widetilde\Phi_x\)-projective
representation of \(C_{\Gamma_3}(x)\).

Via the bijection
\(\pi:x^{\Gamma_3}\longrightarrow\overline x^G\),
the objects \(M_{\Gamma_3}(x,\widetilde\rho)\) and
\(M_G(\overline x,\rho)\) can be realized on the same
vector space, with \(\Gamma_3\) acting through \(\pi\).
For any pair of such objects, the recursive adjoint maps
agree under these identifications.
The dimensions of the Nichols algebras of their direct sum
and iterated adjoint objects are preserved.
\end{prop}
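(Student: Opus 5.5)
The plan is to reduce everything to Lemma~\ref{lem:Gamma3-kernel-centralizers}, which gives \(\pi^{-1}(C_G(\overline x))=C_{\Gamma_3}(x)\) and the bijection \(x^{\Gamma_3}\to\overline x^G\). Then to check that all structure maps of \({}_{\Gamma_3}^{\Gamma_3}\mathcal{YD}^{\widetilde\Phi}\) on the lifted objects are literally the pullbacks of those in \(\GG\). This is the same argument as the lifting step in the proof of Lemma~\ref{lem:T-compatible-normalization}, and it is modelled on Lemma~\ref{lem:Gamma4-support-group-restriction}.

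\textbf{Step 1: the lifted representation.} By definition of \(\Phi_x\), one has \(\widetilde\Phi_x(a,b)=\Phi_{\overline x}(\overline a,\overline b)\) for \(a,b\in C_{\Gamma_3}(x)\). Hence \(\widetilde\rho(a)\widetilde\rho(b)=\widetilde\Phi_x(a,b)\widetilde\rho(ab)\). Since \(\pi\) maps \(C_{\Gamma_3}(x)\) onto \(C_G(\overline x)\) (surjectivity of \(\pi\) plus \eqref{eq:Gamma3-centralizer-lift}), the image of \(\widetilde\rho\) equals the image of \(\rho\). So invariant subspaces coincide and \(\widetilde\rho\) is irreducible.

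\textbf{Step 2: common realization.} Lift the grading of \(M_G(\overline x,\rho)\) along the bijection \(\pi^{-1}:\overline x^G\to x^{\Gamma_3}\), and let \(\Gamma_3\) act by \(a\rhd m=\pi(a)\rhd m\). The YD axioms \eqref{eq:YD-projective-action} for \(\widetilde\Phi\) are the pullbacks of those for \(\Phi\). The degree condition \(a\rhd M_y\subseteq M_{aya^{-1}}\) holds because \(\pi(aya^{-1})=\pi(a)\pi(y)\pi(a)^{-1}\) and the lifted grading is defined through the bijection. This gives an object of \({}_{\Gamma_3}^{\Gamma_3}\mathcal{YD}^{\widetilde\Phi}\) whose support is the single class \(x^{\Gamma_3}\). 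Its degree-\(x\) component carries \(\widetilde\rho\). Proposition~\ref{prop:simple-objects} then identifies it with \(M_{\Gamma_3}(x,\widetilde\rho)\).

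\textbf{Step 3: tensor words.} For two such lifted objects, give every tensor word the product of the lifted degrees. The tensor action \eqref{eq:tensor-product}, the associator \(\widetilde\Phi(e,f,g)=\Phi(\overline e,\overline f,\overline g)\), and the braiding \(c(u_e\otimes v_f)=e\rhd v_f\otimes u_e\) then coincide with the \(G\)-structures on the same underlying spaces. The braiding agrees because \(e\) acts through \(\pi(e)\).

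The one point needing care is that degrees on tensor powers are products in \(\Gamma_3\), which need not be determined by their images in \(G\). All coefficients, however, only involve \(\pi\) of these degrees, so every operator is the same linear map. Consequently the recursively defined \(\varphi_n^\Phi\), \(c_{1,2}^\Phi\), and their restricted images \(X_n\) agree as subspaces. The quantum symmetrizers also agree, so the Nichols ideals coincide degreewise, exactly as in Lemma~\ref{lem:Gamma4-support-group-restriction}.

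\textbf{Conclusion.} By Lemma~\ref{lem:adjoint-object-realization}, the iterated adjoint objects have equal dimensions, and the Nichols algebras of the direct sum have equal dimensions. I expect no genuine obstacle; the main bookkeeping is the grading compatibility in Step 2. That compatibility reduces to the injectivity of \(\pi\) on conjugacy classes, which is given by Lemma~\ref{lem:Gamma3-kernel-centralizers}.
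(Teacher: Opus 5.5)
Your proposal is correct and follows essentially the same route as the paper's proof. Both use surjectivity of \(\pi\) on centralizers (from \eqref{eq:Gamma3-centralizer-lift}) for irreducibility, lift the grading along the class bijection with \(\Gamma_3\) acting through \(\pi\), and invoke Proposition~\ref{prop:simple-objects} for the identification. Both then observe that associators and braidings agree, so the recursive adjoint maps and quantum symmetrizers coincide. Your remark that tensor-word degrees in \(\Gamma_3\) enter the coefficients only through their images under \(\pi\) makes explicit a point the paper leaves implicit.
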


\begin{proof}
 {By \eqref{eq:Gamma3-centralizer-lift}, the map
\(\pi:C_{\Gamma_3}(x)\to C_G(\overline x)\) is surjective,
so pulling back \(\rho\) and its multiplier gives the irreducible
\(\widetilde\Phi_x\)-projective representation \(\widetilde\rho\).}
For \(y\in x^{\Gamma_3}\), assign degree \(y\) to the
component \(M_G(\overline x,\rho)_{\pi(y)}\).
This grading is well defined because
\(\pi:x^{\Gamma_3}\to\overline x^G\) is bijective.
Since
\[
\pi(aya^{-1})=\pi(a)\pi(y)\pi(a)^{-1},
\]
the action through \(\pi\) respects this grading, and the
Yetter--Drinfeld identities follow from those over \(G\).
The degree-\(x\) component affords \(\widetilde\rho\).
Hence Proposition~\ref{prop:simple-objects} identifies this
object with \(M_{\Gamma_3}(x,\widetilde\rho)\).

Since \(\widetilde\Phi=\pi^*\Phi\) and the action is
pulled back along \(\pi\), the associators and braidings
on corresponding tensor products agree.
Hence so do the recursive adjoint maps and quantum
symmetrizers, proving the remaining assertions.
\end{proof}

Assume now that \(\kappa_\Phi(\overline\varepsilon)=1\), and fix a
normalized \(J\) as in
Proposition~\ref{prop:Gamma3-cohomological-obstruction}.  For
\(x\in\Gamma _3\) and \(a\in C_{\Gamma _3}(x)\), let
\(L_x(a)=J(a,x)/J(x,a)\).
For \(a,b\in C_{\Gamma _3}(x)\), 
\(\widetilde\Phi=\delta J\), and \(ax=xa\), \(bx=xb\) give
\begin{equation}
\label{eq:Gamma3-local-coboundary}
\begin{aligned}
 \widetilde\Phi_x(a,b)
 =\frac{(\delta J)(a,b,x)(\delta J)(x,a,b)}
         {(\delta J)(a,x,b)}
 =\frac{J(a,x)J(b,x)J(x,ab)}
         {J(x,a)J(x,b)J(ab,x)}
  =\frac{L_x(a)L_x(b)}{L_x(ab)}.
\end{aligned}
\end{equation}
For \(\rho,\widetilde\rho\) as in
Proposition~\ref{prop:Gamma3-universal-lift}, put
\begin{equation}
\label{eq:Gamma3-hat-character}
 \widehat\rho_x(a)=L_x(a)^{-1}\widetilde\rho(a)
 =\frac{J(x,a)}{J(a,x)}\rho(\pi(a)).
\end{equation}
The cochain \(J\) gives the following braided monoidal equivalence.
\begin{prop}
\label{prop:Gamma3-braided-untwisting}
There is a braided monoidal equivalence
\[
F_J:{}_{\Gamma_3}^{\Gamma_3}\mathcal{YD}^{\widetilde\Phi}
\longrightarrow{}_{\Gamma_3}^{\Gamma_3}\mathcal{YD}
\]
which is the identity on underlying graded vector spaces
and morphisms.
It preserves duals, iterated adjoint objects, all defined
reflections, and dimensions  of Nichols algebras.
\end{prop}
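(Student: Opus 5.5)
The plan is to build \(F_J\) as the standard twisting equivalence attached to a trivializing \(2\)-cochain, in the form already used in \eqref{eq:T-cochain-tensor-structure} for the \(T\) case. On objects, \(F_J\) keeps the \(\Gamma_3\)-grading and the underlying vector space, and replaces the action by
\[
g\rhd_J v_x=\frac{J(gxg^{-1},g)}{J(g,x)}\,g\rhd v_x .
\]
On morphisms it is the identity. The tensor structure is \(\xi_{M,N}(m_a\otimes n_b)=J(a,b)^{-1}m_a\otimes n_b\).

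The first step is to check that \(\rhd_J\) is an honest action. The untwisted target requires \(e\rhd_J(f\rhd_J v)=(ef)\rhd_J v\). Expanding with \eqref{eq:YD-projective-action}, the discrepancy is \(\widetilde\Phi_x(e,f)\) times a ratio of six \(J\)-values. After substituting \(\widetilde\Phi=\delta J\) via \eqref{eq:Gamma3-delta-J}, this product should collapse to \(1\). The computation is exactly the one behind \eqref{eq:Gamma3-local-coboundary}, but with \(e,f\) no longer centralizing \(x\). Grading compatibility is unchanged, so \(F_J(M)\) lies in \({}_{\Gamma_3}^{\Gamma_3}\mathcal{YD}\), and morphisms stay morphisms because the twisting scalar depends only on \(g\) and the degree.

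The second step is to verify three compatibilities for \(\xi\).
\begin{itemize}
\item \(\xi\) is \(\Gamma_3\)-linear. This compares \eqref{eq:tensor-product} with the untwisted tensor action and reduces to a \(\delta J\) identity at \((x,a,b)\) and conjugates.
\item The hexagon for associators: \(\xi\) intertwines \(\Phi^{-1}\) with the trivial associator, which is literally \(\widetilde\Phi=\delta J\).
\item Braidedness: \(F(c)\xi=\xi c\), i.e.
\[
J(a,b)^{-1}\,\frac{J(aba^{-1},a)}{J(a,b)}=J(aba^{-1},a)^{-1}\cdot(\text{twist}),
\]
which should follow from the definition of \(\rhd_J\) applied with \(g=a\), \(x=b\).
\end{itemize}
The functor is invertible, with inverse given by \(J^{-1}\), so \(F_J\) is a braided monoidal equivalence.

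The preservation statements then follow formally. By \eqref{eq:Nichols-under-equivalence}, \(\mathcal B(F_J(U))\simeq F_J(\mathcal B(U))\), and \(F_J\) is dimension-preserving on graded pieces, so Nichols algebra dimensions agree. Lemma~\ref{lem:adjoint-object-realization} and its proof show that iterated adjoint objects are built from the braiding, the associator and images, all of which a braided equivalence transports, as in the transport argument there. Hence adjoint objects and the integers \(m_{ij}\) agree. A monoidal equivalence carries duals to duals, so reflections \(R_i\) are carried to reflections.

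The main obstacle is the cocycle bookkeeping in the first two steps. It consists of several \(\delta J\) identities with conjugated arguments, and the direction conventions must match \eqref{eq:YD-projective-action}, \eqref{eq:tensor-product} and \eqref{eq:Gamma3-delta-J}. I would carry these out by writing each required identity as the evaluation of \(\widetilde\Phi\) on an explicit bar chain whose boundary cancels, as in Appendix-style bar-complex computations.
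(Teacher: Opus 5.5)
Your proposal is correct and follows essentially the same route as the paper: the same twisted action $\frac{J(gxg^{-1},g)}{J(g,x)}\,g\rhd v_x$, the same tensor structure $J(a,b)^{-1}$, and the inverse obtained from $J^{-1}$. The preservation claims are likewise transported along the braided equivalence, using \eqref{eq:Nichols-under-equivalence} for Nichols algebras. The differences are that the paper simply asserts the $\delta J$ bookkeeping, which you propose to check, and cites \cite[Proposition~6.3]{reflection3} for duals, adjoint objects and reflections, where you appeal to the transport argument of Lemma~\ref{lem:adjoint-object-realization}. One small correction: your displayed braidedness identity is garbled. Both $F_J(c)\circ\xi$ and $\xi\circ c$ simply give the scalar $J(a,b)^{-1}$, so that check needs no cocycle input.
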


\begin{proof}
Define the action on \(F_J(V)\), for \(v_x\in V_x\), by
\begin{equation}
\label{eq:Gamma3-untwist-action}
a\rhd_Jv_x
=
\frac{J(axa^{-1},a)}{J(a,x)}(a\rhd v_x),
\end{equation}
and equip \(F_J\) with the tensor structure
\[
(F_J)_2(u_a\otimes v_b)
=
J(a,b)^{-1}u_a\otimes v_b.
\]
Since \(\widetilde\Phi=\delta J\) and \(J\) is normalized,
these formulas define a braided monoidal functor.
Replacing \(J\) by \(J^{-1}\) gives its inverse.
For \(a\in C_{\Gamma_3}(x)\), the factor in
\eqref{eq:Gamma3-untwist-action} is
\(J(x,a)/J(a,x)=L_x(a)^{-1}\).
Thus the inducing representation is
\(\widehat\rho_x\) from
\eqref{eq:Gamma3-hat-character}.

The assertions about duals, adjoint objects, and reflections
follow from \cite[Proposition~6.3]{reflection3}.
The assertion about Nichols algebras follows from
\eqref{eq:Nichols-under-equivalence}.
\end{proof}

For an inducing projective representation \(\xi\), write
\(\widehat\xi\) for the corresponding ordinary representation
defined by \eqref{eq:Gamma3-hat-character}.  For \(1\leq i\leq4\),
let \(\mathcal P_i\) denote, over \(\mathbb C\), the ordinary class
in row \(i\) of
\cite[Definition~7.1 and Table~2]{rank2-classification}, with the
central generator denoted there by \(z\) identified with \(h\).

\begin{prop}
\label{prop:Gamma3-parameter-comparison}
Assume \(\kappa_\Phi(\overline\varepsilon)=1\).  For every \(J\) as in
Proposition~\ref{prop:Gamma3-cohomological-obstruction} and every
\(1\leq i\leq4\),
a pair belongs to \(\mathcal P_i^\Phi\) if and only if the ordinary
pair obtained  {by the construction in
Proposition~\ref{prop:Gamma3-universal-lift} followed by \(F_J\)}
belongs to \(\mathcal P_i\).
\end{prop}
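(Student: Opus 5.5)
The comparison is done condition by condition. Each defining condition of \(\mathcal P_i^\Phi\) will be expressed through a quantity that is invariant under the lift of Proposition~\ref{prop:Gamma3-universal-lift} and under \(F_J\). The same quantity will then be matched with the corresponding condition of \(\mathcal P_i\) in \cite[Table~2]{rank2-classification}.

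First, the support shapes agree. The lift is a bijection on supports by \eqref{eq:Gamma3-centralizer-lift}, and \(F_J\) is the identity on graded spaces. Hence \((3,2)\) and \((3,1)\) pairs correspond, and \(\dim\tau\) is preserved.

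Next, the parameters should be recast as eigenvalues of braidings, which both functors preserve.

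\begin{itemize}
\item \(\rho(g)\) is the scalar of \(c_{V,V}\) on \(V_g\otimes V_g\).
\item \(\sigma(\varepsilon h)\) and \(\tau(h)\) are the analogous self-braiding scalars.
\item \(\Delta_1=1\) and \(\Delta_3=I\) are equivalent to simplicity of \(X_1\) by Lemmas~\ref{lem:Gamma3-32-X1} and \ref{lem:Gamma3-dim2-X1-X2-support}. Simplicity is preserved by \(F_J\) and by the lift.
\item \(\rho(h)\tau(g)\) is the scalar of the double braiding on \(V\otimes W\) in the \((3,1)\), \(\dim\tau=1\) case.
\item \(\rho(h)\tau(g)\tau(h)\) is then a product of braiding scalars. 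Equivalently, by the balancing argument of Lemma~\ref{lem:Gamma4-tau1-support-value}, it is the twist scalar on \(X_1\) divided by \(\rho(g)\), which is a categorical invariant.
\end{itemize}

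It remains to interpret \(s\). From Lemma~\ref{lem:Gamma3-32-X1}, \(s=-\chi\rho(gh)\), where \(\chi\) is the action of \(g\) on \((X_1)_{gh}\). This ratio can be expressed through the twist \(\theta_{X_1}\) (the action of \(gh\) on \((X_1)_{gh}\)) together with double-braiding scalars. I would verify this with the balancing identity \(\theta_{V\otimes W}=c_{W,V}c_{V,W}(\theta_V\otimes\theta_W)\), restricted to the line \(\mathbb Cu_0\).

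Once every parameter is an invariant of the braided category data, the conditions defining \(\mathcal P_i^\Phi\) translate verbatim into conditions on the ordinary pair. For the ordinary pair, \(\kappa\) is trivially \(1\) and all cocycle factors vanish. At \(\Phi=1\), the quantities \(\Delta_1\), \(s\), \(\Delta_3\) and the rest specialize exactly to the expressions used in \cite[Definition~7.1]{rank2-classification}, with \(z\leftrightarrow h\). Membership is therefore equivalent. The explicit condition \(\kappa_\Phi(\varepsilon)=1\) in \(\mathcal P_3^\Phi\) and \(\mathcal P_4^\Phi\) holds by hypothesis on both sides.

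The main obstacle is \(s\), because it is defined via a non-canonical scalar \(\chi\). If the balancing interpretation fails, I would fall back to a direct computation. Using \eqref{eq:Gamma3-untwist-action}, I would write \(\widehat\rho\), \(\widehat\sigma\) from \eqref{eq:Gamma3-hat-character}, and check by the cocycle identity \(\widetilde\Phi=\delta J\) that \(a,b_0,b_1\), computed from \(\widetilde\Phi\) and \(\rho,\sigma\), equal their \(J\)-free ordinary counterparts. The check would go through the \(J\)-factors introduced by \(F_J\), up to factors that cancel in \(\Delta_1\) and \(s\). This is a bar-complex calculation of the type in the appendix.
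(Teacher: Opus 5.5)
Your overall strategy works, and it takes a different route from the paper's. The paper does not look for categorical invariants. It uses \eqref{eq:Gamma3-untwist-action} to see that \(F_J\) multiplies the value at \(a\in C_{\Gamma_3}(x)\) of the inducing representation at degree \(x\) by \(L_x(a)^{-1}\). Then \(L_x(x)=1\), together with \(L_x(y)L_y(x)=1\) for commuting \(x,y\), shows that \(\rho(g)\), \(\sigma(\varepsilon h)\), \(\tau(h)\), \(\rho(h)\tau(g)\) and \(\chi\rho(gh)\) are unchanged. The rest of the paper's proof rewrites the \(\Phi=1\) values of \(\Delta_1,s,\Delta_3\) in the form used in \cite[Table~2]{rank2-classification}. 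For instance, \(s=\widehat\rho(g)^2\widehat\sigma(\varepsilon)^{-2}=\widehat\sigma(\varepsilon)\) using \(\widehat\rho(g)=-1\) and \(\widehat\sigma(\varepsilon)^3=1\), and \(\widehat\sigma(\varepsilon h)=\widehat\sigma(h)\) in \(\mathcal P_2\).

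Your version replaces this \(L\)-bookkeeping with eigenvalues of braidings on homogeneous lines and with simplicity of \(X_1\). That makes the independence of \(J\) transparent. You still need the same \(\Phi=1\) rewriting to match the ordinary table, though; your phrase ``specialize exactly'' hides it.

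The one step that fails as written is \(s\). The twist \(\theta_{X_1}\) is the action of \(gh\), not of \(g\). Balancing on \(\mathbb Cu_0\) only expresses that action in terms of \(\rho(g)\), \(\sigma(\varepsilon h)\) and \(c_{W,V}c_{V,W}\). Separating \(g\) from \(h\) would require the action of \(h\) on \((X_1)_{gh}\) and the multiplier \(\Phi_{gh}(g,h)\), and you have not shown either to be invariant.

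The right invariant is the double braiding itself. The map \(c_{X_1,V}c_{V,X_1}\) preserves the line \(V_g\otimes(X_1)_{gh}\) and acts there by \(\chi\rho(gh)=-s\); this is computed in the proof of Lemma~\ref{lem:Gamma3-32-second-adjoint-expansion}. The tensor structure of \(F_J\) is a scalar on this line, and the lift does not change braidings, so \(s\) is preserved. With this replacement your argument is complete.

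Your fallback also works, but it needs no bar-complex calculation. Only the factor \(L_{gh}(g)^{-1}\) on \(\chi\) and the factor \(L_g(gh)^{-1}\) on \(\rho(gh)\) appear, and they cancel. That one-line cancellation is exactly the paper's treatment of \(s\).
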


\begin{proof}
Propositions~\ref{prop:Gamma3-universal-lift} and
\ref{prop:Gamma3-braided-untwisting} preserve simplicity
and the dimensions of the inducing representations,
and identify the supports via \(\pi\).
 {By definition, \(L_x(y)L_y(x)=1\) whenever \(xy=yx\),
and \(L_x(x)=1\).}
Hence \eqref{eq:Gamma3-hat-character} gives
\[
\widehat\rho(g)=\rho(g),\qquad
\widehat\sigma(\varepsilon h)=\sigma(\varepsilon h),
\qquad
\widehat\tau(h)=\tau(h).
\]

For \(i=1\), the conditions
\(\rho(g)=\sigma(\varepsilon h)=-1\) are therefore
preserved.
Both replacements preserve simplicity of the first
adjoint object.
Lemma~\ref{lem:Gamma3-32-X1} and the same calculation
with \(\Phi=1\) give
\begin{equation}
\label{eq:Gamma3-translation-32}
\Delta_1=1
\quad\Longleftrightarrow\quad
\widehat\rho(h^2)\widehat\sigma(\varepsilon g^2)=1.
\end{equation}
Indeed, the ordinary simplicity criterion is
\(\widehat\rho(h)^2\widehat\sigma(\varepsilon^2)^2
\widehat\sigma(g^2)=1\), which has the displayed form
because the hatted representations are characters and
\(\varepsilon^3=1\).
Assume \(\Delta_1=1\) and \(\rho(g)=-1\).
By Lemma~\ref{lem:Gamma3-32-X1} and
\eqref{eq:Gamma3-untwist-action}, the action of \(g\)
on the degree-\(gh\) component of \(F_J(X_1)\) has
scalar \(L_{gh}(g)^{-1}\chi\).
Since \(L_{gh}(g)L_g(gh)=1\), the definitions in
\eqref{eq:Gamma3-a},
\eqref{eq:Gamma3-b0-b1}, and
\eqref{eq:Gamma3-Delta1-s}, applied to the ordinary pair,
give
\begin{equation}
\label{eq:Gamma3-translation-s}
\begin{aligned}
s
=-\bigl(L_{gh}(g)^{-1}\chi\bigr)\widehat\rho(gh)
=\widehat\rho(g)^2\widehat\sigma(\varepsilon)^{-2}
 =\widehat\sigma(\varepsilon).
\end{aligned}
\end{equation}
The last equality uses
\(\widehat\rho(g)=-1\) and
\(\widehat\sigma(\varepsilon)^3=1\).
Thus \(1+s+s^2=0\) is equivalent to
\(1+\widehat\sigma(\varepsilon)
+\widehat\sigma(\varepsilon)^2=0\).
This identifies the conditions defining
\(\mathcal P_1^\Phi\) with those defining \(\mathcal P_1\).

For \(i=2\), the calculation above again gives
\eqref{eq:Gamma3-translation-32} and
\eqref{eq:Gamma3-translation-s}.
Thus \(s=1\) is equivalent to
\(\widehat\sigma(\varepsilon)=1\).
Under this condition,
\(\widehat\sigma(\varepsilon h)=\widehat\sigma(h)\),
and the defining conditions become
\[
\widehat\rho(g)=\widehat\sigma(h)=-1,\qquad
\widehat\rho(h^2)\widehat\sigma(g^2)=1,\qquad
\widehat\sigma(\varepsilon)=1.
\]
These are precisely the conditions defining \(\mathcal P_2\).

For \(i=3\), the conditions
\(\dim\tau=2\), \(\rho(g)=-1\), and \(\tau(h)=-I\)
are preserved.
Since
\(\tau(g)^2=\Phi_h(g,g)\tau(g^2)\) and
\(L_g(h)L_h(g)=1\), one has
\[
\begin{aligned}
\Delta_3
=\rho(h)^2\tau(g)^2
=\widehat\rho(h)^2\widehat\tau(g)^2
 =\widehat\rho(h)^2\widehat\tau(g^2).
\end{aligned}
\]
Thus \(\Delta_3=I\) is exactly the remaining condition
defining \(\mathcal P_3\).

For \(i=4\), the conditions
\(\dim\tau=1\) and \(\rho(g)=-1\) are preserved.
The identity \(L_g(h)L_h(g)=1\), together with
ordinary multiplicativity and
\(\widehat\tau(h)=\tau(h)\), gives
\[
\begin{aligned}
\widehat\rho(h)\widehat\tau(g)
&=\rho(h)\tau(g),\\
\widehat\rho(h)\widehat\tau(gh)
&=\rho(h)\tau(g)\tau(h).
\end{aligned}
\]
Hence the quadratic and product conditions in
\eqref{eq:Gamma3-P4} become
\[
1-\widehat\rho(h)\widehat\tau(g)
+\widehat\rho(h)^2\widehat\tau(g)^2=0,
\qquad
\widehat\rho(h)\widehat\tau(gh)=1.
\]
These are precisely the remaining conditions
defining \(\mathcal P_4\).
\end{proof}

\subsection{Necessity and reflection closure}
\label{subsec:Gamma3-necessity-reflections}

\begin{prop}
\label{prop:Gamma3-necessity}
If \(\dim\mathcal B(V\oplus W)<\infty\), then, after possibly
interchanging \(V\) and \(W\), generators
\(\varepsilon,g,h\in G\) inducing an epimorphism
\(\Gamma _3\twoheadrightarrow G\) can be chosen so that the resulting
ordered pair belongs to one of
\(\mathcal P_1^\Phi,\ldots,\mathcal P_4^\Phi\).
\end{prop}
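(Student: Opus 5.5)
We combine the support reduction with the cohomological untwisting, and then appeal to the ordinary classification. First, Proposition~\ref{prop:Gamma3-support-and-dimensions} lets us interchange \(V\) and \(W\) if necessary and choose generators \(\varepsilon,g,h\) inducing an epimorphism \(\pi:\Gamma_3\twoheadrightarrow G\). With these choices the pair has the form \eqref{eq:Gamma3-support-32} with \(\dim\rho=\dim\sigma=1\), or the form \eqref{eq:Gamma3-support-31} with \(\dim\rho=1\) and \(\dim\tau\in\{1,2\}\). Because finite-dimensionality holds, Proposition~\ref{prop:Gamma3-local-obstruction} gives \(\kappa_\Phi(\varepsilon)=1\). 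Proposition~\ref{prop:Gamma3-cohomological-obstruction} then provides a normalized two-cochain \(J\) with \(\pi^*\Phi=\delta J\).

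Next we move the pair into the ordinary category. Proposition~\ref{prop:Gamma3-universal-lift} lifts it to \({}_{\Gamma_3}^{\Gamma_3}\mathcal{YD}^{\pi^*\Phi}\), and applying \(F_J\) from Proposition~\ref{prop:Gamma3-braided-untwisting} gives a pair \((\widehat V,\widehat W)\) in \({}_{\Gamma_3}^{\Gamma_3}\mathcal{YD}\). Both steps preserve the dimension of the Nichols algebra of the direct sum, so \(\mathcal B(\widehat V\oplus\widehat W)\) is finite-dimensional. They also preserve braided indecomposability, since they preserve \(X_1\ne0\) by \eqref{eq:indecomposable-first-adjoint}. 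The supports generate \(\Gamma_3\) modulo \(\ker\pi\), and this is the one point needing care. We do not pass to a finite quotient of \(\Gamma_3\) through which the action factors, because such a quotient would carry the original cocycle class. Instead we apply the ordinary classification \cite[Definition~7.1 and Table~2]{rank2-classification} as it is formulated for pairs with support quandle \(Z_3^{3,2}\) or \(Z_3^{3,1}\) over \(\Gamma_3\), or equivalently over the finite group actually acting. That classification says the ordinary pair lies in \(\mathcal P_1,\dots,\mathcal P_4\). The difficulty is one of interface. The ordinary classification may fix its generators in a normalization different from ours, in particular the identification of its central generator \(z\) with \(h\), and this normalization must be matched to the generators already chosen, possibly after re-choosing \(g\) or \(h\) by central factors. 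The re-choice is harmless because the ordinary class conditions are stated in terms of the support representatives.

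Finally, Proposition~\ref{prop:Gamma3-parameter-comparison} translates membership in \(\mathcal P_i\) back to membership in \(\mathcal P_i^\Phi\) for the original ordered pair. One point has to be checked here: any change of generators made in the ordinary setting must still induce an epimorphism \(\Gamma_3\twoheadrightarrow G\). This is the main obstacle. If the generators are altered only by central elements, or by the relabelings allowed in Proposition~\ref{prop:Gamma3-support-and-dimensions}, the relations \eqref{eq:Gamma3-presentation} are preserved and \(\kappa_\Phi\) is unchanged. The value \(\kappa_\Phi(\varepsilon)\) does not change because it depends only on \(\varepsilon\), and replacing \(\varepsilon\) by \(\varepsilon^2\) leaves the class of \(\pi^*\Phi\) trivial. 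Therefore the parameter comparison applies, and the pair lies in one of \(\mathcal P_1^\Phi,\dots,\mathcal P_4^\Phi\).
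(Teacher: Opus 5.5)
Your argument is the paper's proof: Proposition~\ref{prop:Gamma3-support-and-dimensions}, then $\kappa_\Phi(\varepsilon)=1$ from Proposition~\ref{prop:Gamma3-local-obstruction}, then a trivializing $J$ from Proposition~\ref{prop:Gamma3-cohomological-obstruction}. After that come the lift of Proposition~\ref{prop:Gamma3-universal-lift} followed by $F_J$, the ordinary classification over $\Gamma_3$, and Proposition~\ref{prop:Gamma3-parameter-comparison} to return to $\mathcal P_i^\Phi$. However, you misdiagnose the one point you single out as delicate, and you should discard the workaround you offer for it.

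The lifted supports are the $\Gamma_3$-classes $\{g,g\varepsilon,g\varepsilon^2\}$ and $\{\varepsilon h,\varepsilon^2h\}$ (resp.\ $\{h\}$). They generate $\Gamma_3$ itself, not merely modulo $\ker\pi$, since $\varepsilon=g^{-1}(g\varepsilon)$ and $h=\varepsilon^{-1}(\varepsilon h)$. This is exactly the generation hypothesis the paper verifies before invoking \cite[Theorem~2.1]{rank2-classification}. You need to state it, rather than appeal to a version of the classification ``formulated for pairs with support quandle $Z_3^{3,2}$ or $Z_3^{3,1}$''.

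Your fallback ``over the finite group actually acting'' is not meaningful in general. After $F_J$, an element $a\in\ker\pi$, which is central in $\Gamma_3$, acts on the degree-$x$ component by $L_x(a)^{-1}$. This scalar depends on the choice of $J$, so the ordinary $\Gamma_3$-action need not factor through any finite quotient.

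Finally, the re-choice of generators you call the main obstacle does not arise. The support sizes fix the ordering, and the classes $\mathcal P_i$ are taken with the central generator of \cite{rank2-classification} identified with $h$. Proposition~\ref{prop:Gamma3-parameter-comparison} therefore applies directly to the generators already chosen.
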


\begin{proof}
Proposition~\ref{prop:Gamma3-support-and-dimensions} supplies
the required interchange and generators.  The support and dimension
possibilities are
\[
 \begin{aligned}
 (3,2):&\quad \dim\rho=\dim\sigma=1,\\
 (3,1):&\quad \dim\rho=1,
                 \quad \dim\tau\in\{1,2\}.
 \end{aligned}
\]
Interchanging the entries does not change their direct sum
up to isomorphism.
Proposition~\ref{prop:Gamma3-local-obstruction} therefore gives
\(\kappa_\Phi(\varepsilon)=1\) for these generators.

Choose \(J\) as in
Proposition~\ref{prop:Gamma3-cohomological-obstruction}, and let
\((V^J,W^J)\) be the ordinary pair over \(\Gamma _3\) obtained by
 {the construction in
Proposition~\ref{prop:Gamma3-universal-lift} followed by \(F_J\)}.
Propositions
\ref{prop:Gamma3-universal-lift} and
\ref{prop:Gamma3-braided-untwisting} preserve  {simplicity,
iterated adjoint objects, and the dimensions of Nichols algebras.}    By
\eqref{eq:indecomposable-first-adjoint}, the original first adjoint
object is nonzero.  Hence $(V^J,W^J)$ is braided-indecomposable and
\[
 \begin{aligned}
 \dim\mathcal B(V^J\oplus W^J)
 =\dim\mathcal B(V\oplus W)<\infty
 \end{aligned}
\]
The support bijection in
Proposition~\ref{prop:Gamma3-universal-lift} and the fact that \(F_J\)
does not change the grading give
\[
 \left\langle\operatorname{supp}(V^J\oplus W^J)\right\rangle
 =
 \begin{cases}
  \langle g,g\varepsilon,\varepsilon h\rangle,&(3,2),\\
  \langle g,g\varepsilon,h\rangle,&(3,1),
 \end{cases}
 =\Gamma _3.
\]
Thus all the hypotheses of
\cite[Theorem~2.1]{rank2-classification} are satisfied.

For these two support types over \(\mathbb C\), the alternatives in
\cite[Examples~1.9--1.11]{rank2-classification} are exactly the first
four classes in
\cite[Definition~7.1 and Table~2]{rank2-classification}. The support
sizes fix the ordering.  Hence
\((V^J,W^J)\in\mathcal P_i\) for some \(1\le i\le4\).  Proposition
\ref{prop:Gamma3-parameter-comparison} now gives
\[
 (V^J,W^J)\in\mathcal P_i
 \quad\Longleftrightarrow\quad
 (V,W)\in\mathcal P_i^\Phi,
\]
which proves the assertion.
\end{proof}

\begin{prop}
\label{prop:Gamma3-reflection-transport}
Let \(P=(V,W)\) belong to one of the four families
\(\mathcal P_i^\Phi\).  The reflections \(R_1(P)\) and \(R_2(P)\)
are defined, and  {the generators \(g',\varepsilon',h'\)
for the reflected pairs} may be chosen as
\begin{equation}
\label{eq:Gamma3-reflected-generators}
 \begin{array}{c|ccc}
 &g'&\varepsilon'&h'\\ \hline
 R_1&g^{-1}&\varepsilon&g^2h\\
 R_2&gh&\varepsilon^{-1}&h^{-1}.
 \end{array}
\end{equation}
\end{prop}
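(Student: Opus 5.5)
The approach is to transport the ordinary Cartan data back to the twisted pair, then read off the reflected supports directly.

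First I establish \(\kappa_\Phi(\varepsilon)=1\) for every \(P\in\mathcal P_i^\Phi\). For \(i=3,4\) this is imposed. For \(i=1,2\) it follows from \(\kappa_\Phi(\varepsilon)=s^3=1\) in Lemma~\ref{lem:Gamma3-32-second-adjoint-coefficients}, as noted after \eqref{eq:Gamma3-P4}. Proposition~\ref{prop:Gamma3-cohomological-obstruction} then supplies a normalized \(J\) with \(\pi^*\Phi=\delta J\).

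Next I pass to the ordinary pair \((V^J,W^J)\) over \(\Gamma_3\). This is done by the lift of Proposition~\ref{prop:Gamma3-universal-lift} followed by \(F_J\) from Proposition~\ref{prop:Gamma3-braided-untwisting}. By Proposition~\ref{prop:Gamma3-parameter-comparison}, the ordinary pair lies in \(\mathcal P_i\). For the ordinary classes \(\mathcal P_i\), \cite{rank2-classification} shows that both reflections are defined with Cartan matrix \(\left(\begin{smallmatrix}2&-2\\-1&2\end{smallmatrix}\right)\). It also gives the reflected objects explicitly: \(R_1=(V^*,X_2)\) and \(R_2=(Y_1,W^*)\). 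The lift preserves iterated adjoint objects and their vanishing, and \(F_J\) preserves duals, adjoint objects and reflections. Hence \(m_{12}^P=2\) and \(m_{21}^P=1\) hold already for the twisted pair, so \(R_1(P)=(V^*,X_2)\) and \(R_2(P)=(Y_1,W^*)\) are defined, with simple entries by Lemma~\ref{lem:reflection-basic-properties}.

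It remains to compute the supports and check the generators in \eqref{eq:Gamma3-reflected-generators}.

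For \(R_1\), we have \(\operatorname{supp}V^*=\{g^{-1},g^{-1}\varepsilon,g^{-1}\varepsilon^2\}\). The support of \(X_2\) lies in \((\operatorname{supp}V)^2\operatorname{supp}W\) and must be a single conjugacy class. In the \((3,2)\) case, Proposition~\ref{prop:Gamma3-32-X2-obstruction} shows it is \(\{g^2h\}\), a central point. In the \((3,1)\) cases it is the two-point class \(\{\varepsilon g^2h,\varepsilon^2g^2h\}\), by Lemmas~\ref{lem:Gamma3-dim2-X1-X2-support} and~\ref{lem:Gamma3-dim1-X1-X2} together with \(\rho(g)=-1\). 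With \(g'=g^{-1}\), \(\varepsilon'=\varepsilon\), \(h'=g^2h\) these supports are \(\{g'\varepsilon'^i\}\) and \(\{h'\}\) or \(\{\varepsilon'^{\pm1}h'\}\). So the ordered pair again has the form \eqref{eq:Gamma3-support-31} or \eqref{eq:Gamma3-support-32}, with the support types exchanged as in the ordinary table.

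For \(R_2\), \(Y_1\simeq X_1\) by \eqref{eq:first-adjoint-braiding-symmetry}, so its support is \((gh)^G=\{gh\varepsilon^i\}\). The support of \(W^*\) is the inverse of \(\operatorname{supp}W\). With \(g'=gh\), \(\varepsilon'=\varepsilon^{-1}\), \(h'=h^{-1}\) these become the required supports, since \(\varepsilon^{-2}h^{-1}=\varepsilon h^{-1}=\varepsilon'^{-1}h'\).

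For both rows, the relations of \eqref{eq:Gamma3-presentation} follow because \(g^2\) and \(h\) are central and conjugation by \(g^{\pm1}\) or \(gh\) inverts \(\varepsilon^{\pm1}\). Generation of \(G\) is immediate: for \(R_1\) we recover \(g=g'^{-1}\) and \(h=g'^2h'\), and for \(R_2\) we recover \(h=h'^{-1}\) and \(g=g'h'\).

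The main obstacle is the transfer step. One must make sure that the reflected objects produced in the ordinary category correspond exactly, through \(F_J^{-1}\) and the lift, to \(V^*\), \(X_2\), \(Y_1\), \(W^*\) over \(G\). This includes checking that the dual commutes with the lift along \(\pi\). I would handle this by invoking \cite[Proposition~6.3]{reflection3} for \(F_J\), and by verifying directly that the lift is compatible with duals, since the degree bijection \(x^{\Gamma_3}\simeq\overline x^G\) of Lemma~\ref{lem:Gamma3-kernel-centralizers} commutes with inversion.
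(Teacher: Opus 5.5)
Your proof is correct, but it obtains the generator table by a different route than the paper. The paper imports both the reflections and the reflected generators from the ordinary classification (the cited corollaries of \cite{rank2-classification}). It pushes the $\Gamma_3$-generators forward along $\pi$, relying on duals and adjoint objects corresponding under the lift and $F_J$. Finally it notes that both substitutions are involutive automorphisms of $\Gamma_3$.

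You import only the Cartan integers $m_{12}=2$ and $m_{21}=1$. For these, the preservation of the vanishing of the recursive adjoint maps in Propositions~\ref{prop:Gamma3-universal-lift} and~\ref{prop:Gamma3-braided-untwisting} suffices. You then determine $\operatorname{supp}V^*$, $\operatorname{supp}X_2$, $\operatorname{supp}Y_1$, and $\operatorname{supp}W^*$ directly over $G$ from the section's own lemmas together with simplicity of the last nonzero adjoint object:
\begin{itemize}
\item in the $(3,2)$ case, the nonzero degree-$g^2h$ vector from Lemma~\ref{lem:Gamma3-32-second-adjoint-coefficients}(2);
\item in the $(3,1)$ cases, the nonzero degree-$\varepsilon g^2h$ vectors from Lemmas~\ref{lem:Gamma3-dim2-X1-X2-support} and~\ref{lem:Gamma3-dim1-X1-X2}.
\end{itemize}
You then check the relations and generation by hand.

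This route has three advantages. It does not depend on the precise form of the ordinary reflection tables. It makes the exchange of support types $(3,2)\leftrightarrow(3,1)$ under $R_1$ visible directly. And the compatibility of duals with the lift along $\pi$, which you flag as the main obstacle, is not needed at all, because you form $V^*$ and $W^*$ directly in ${}_G^G\mathcal{YD}^{\Phi}$ and transfer only the vanishing pattern.

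What the paper's version gives in addition is that the substitutions are automorphisms of $\Gamma_3$. Your check uses only the defining relations, so it shows the substitutions define endomorphisms $\alpha$ of $\Gamma_3$ with new epimorphism $\pi\circ\alpha$. That is all the statement requires. It also suffices for the later reuse in Proposition~\ref{prop:Gamma3-reflection-closure}, since $(\pi\circ\alpha)^*\Phi=\alpha^*\delta J$ is still a coboundary.
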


\begin{proof}
Every pair in the four families satisfies
\(\kappa_\Phi(\varepsilon)=1\), as observed after
\eqref{eq:Gamma3-P4}.
Let \((V^J,W^J)\) be the ordinary pair over \(\Gamma_3\)
obtained by lifting along \(\pi\) and applying \(F_J\).
By Proposition~\ref{prop:Gamma3-parameter-comparison},
this pair belongs to the corresponding ordinary family.

For the ordinary families,
\cite[Corollaries~4.13--4.16, 5.10--5.11, and
6.7--6.8]{rank2-classification}
give both reflections and the  {generators}
in \eqref{eq:Gamma3-reflected-generators},
with the central generator in that paper identified with \(h\).
Under the identifications in
Propositions~\ref{prop:Gamma3-universal-lift} and
\ref{prop:Gamma3-braided-untwisting},
duals and iterated adjoint objects correspond, and
simplicity is preserved.
Consequently, both reflections of \(P\) are defined,
and  {the generators for each reflected pair are obtained by
applying \(\pi\) to the corresponding generators over \(\Gamma_3\)}.
This gives \eqref{eq:Gamma3-reflected-generators}.

Finally, since \(g^2,h\in Z(\Gamma_3)\), both substitutions
in \eqref{eq:Gamma3-reflected-generators} preserve the
defining relations of \(\Gamma_3\).
Each substitution is its own inverse, so both define
automorphisms of \(\Gamma_3\).
Their images under \(\pi\) therefore give generating
triples of \(G\) satisfying the required relations.
\end{proof}
\begin{prop}
\label{prop:Gamma3-reflection-closure}
Every pair in one of \(\mathcal P_1^\Phi,\ldots,\mathcal P_4^\Phi\)
admits all reflections.  Every pair obtained by successive
reflections again belongs to one of these four families, and every
Cartan matrix in its Cartan graph is
$\begin{pmatrix}2&-2\\-1&2\end{pmatrix}$.
\end{prop}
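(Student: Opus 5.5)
The plan is to argue by induction on the length of a reflection sequence, with Proposition~\ref{prop:Gamma3-reflection-transport} as the inductive step. The claim to propagate is: every tuple \(Q\) reached from \(P\) by finitely many reflections lies, for a suitable generating triple \(\varepsilon,g,h\) of \(G\) satisfying the relations of \eqref{eq:Gamma3-presentation}, in one of \(\mathcal P_1^\Phi,\ldots,\mathcal P_4^\Phi\).

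For a single reflection, let \(P\in\mathcal P_i^\Phi\). Proposition~\ref{prop:Gamma3-reflection-transport} shows that \(R_1(P)\) and \(R_2(P)\) are defined. It also provides the reflected generators \eqref{eq:Gamma3-reflected-generators}, which again give an epimorphism \(\Gamma_3\twoheadrightarrow G\). It remains to show that each reflected pair satisfies the parameter conditions of some family with respect to these generators. For this I would pass to the ordinary side. Since \(\kappa_\Phi(\varepsilon)=1\), Proposition~\ref{prop:Gamma3-parameter-comparison} puts the ordinary pair \((V^J,W^J)\) in \(\mathcal P_i\). The ordinary corollaries cited in the proof of Proposition~\ref{prop:Gamma3-reflection-transport} (\cite[Corollaries~4.13--4.16, 5.10--5.11, 6.7--6.8]{rank2-classification}) state that the ordinary reflections lie again in one of \(\mathcal P_1,\ldots,\mathcal P_4\) with the corresponding generators. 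The lift and \(F_J\) commute with reflections by Propositions~\ref{prop:Gamma3-universal-lift} and~\ref{prop:Gamma3-braided-untwisting}. So the reflected ordinary pair is the image of the reflected twisted pair.

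The main obstacle is transporting family membership back, because the reflected twisted pair lives over the new generators \(\varepsilon',g',h'\). The epimorphism they define is \(\pi\circ\alpha\), where \(\alpha\) is the automorphism of \(\Gamma_3\) from the last paragraph of that proof. First, \(\kappa_\Phi(\varepsilon')=1\): for \(R_1\) we have \(\varepsilon'=\varepsilon\). For \(R_2\), \(\kappa_\Phi(\varepsilon^{-1})\) is the pairing of \(\pi^*\Phi\) with \(\alpha_*[\mathsf K]\in H_3(\Gamma_3,\mathbb Z)\simeq\mathbb Z/3\), which is \(\kappa_\Phi(\varepsilon)^{\pm1}=1\). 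Equivalently, \([(\pi\alpha)^*\Phi]=\alpha^*[\pi^*\Phi]=0\) by Proposition~\ref{prop:Gamma3-cohomological-obstruction}. Then Proposition~\ref{prop:Gamma3-parameter-comparison}, applied with \(\pi\alpha\) and \(J\circ(\alpha\times\alpha)\), converts ordinary membership of the reflected pair into membership in some \(\mathcal P_j^\Phi\) for the reflected generators. That proposition holds for every trivializing \(J\), so the choice of \(J\) does not matter. Interchanges of entries do not occur, since the three-element support stays in the first position, as the formulas \(g'=g^{-1}\) and \(g'=gh\) show.

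Induction then gives that all reflections are admitted and that every reflected pair lies in a family. For the Cartan matrix, at any such \(Q\) it suffices to show \(m_{12}^Q=2\) and \(m_{21}^Q=1\). These are adjoint vanishing and nonvanishing statements, which are preserved by the lift and by \(F_J\). In the ordinary families they are given by \cite{rank2-classification}, where every \(\mathcal P_i\) has Cartan matrix \(\left(\begin{smallmatrix}2&-2\\-1&2\end{smallmatrix}\right)\). Hence every Cartan matrix in the graph is of this form.
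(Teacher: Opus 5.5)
Your proposal is correct and follows essentially the paper's own argument. You transport the pair to the ordinary family via the lift of Proposition~\ref{prop:Gamma3-universal-lift} followed by \(F_J\), use the ordinary reflection results together with Proposition~\ref{prop:Gamma3-reflection-transport} for the new generating triple, note that \(\kappa_\Phi(\varepsilon')=1\) because the pullback along the new epimorphism is still a coboundary, pull family membership back with Proposition~\ref{prop:Gamma3-parameter-comparison}, and induct on the length of the reflection word.

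The differences are only in presentation. You make explicit the automorphism \(\alpha\) and the trivializing cochain \(J\circ(\alpha\times\alpha)\), where the paper just says ``the same procedure applies to each such triple''. You do not record the specific transitions \(\mathcal P_1^\Phi\leftrightarrow\mathcal P_4^\Phi\) and \(\mathcal P_2^\Phi\leftrightarrow\mathcal P_3^\Phi\) under \(R_1\), or the invariance under \(R_2\). The paper takes these transitions and the Cartan integers from \cite[Equations~(7.1)--(7.2) and Lemmas~7.3--7.4]{rank2-classification}, not from the corollaries you cite, which it uses only for the existence of the reflections and the new generators.
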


\begin{proof}
By the observation following \eqref{eq:Gamma3-P4}, one has
\(\kappa_\Phi(\varepsilon)=1\) in every family.  Hence
Proposition~\ref{prop:Gamma3-parameter-comparison} shows that
 {the construction in
Proposition~\ref{prop:Gamma3-universal-lift} followed by \(F_J\)}
identifies the pair with one in the corresponding ordinary family.  Its reflections and Cartan matrices are given by
\cite[Equations~(7.1)--(7.2) and Lemmas~7.3--7.4]
{rank2-classification}.  Propositions
\ref{prop:Gamma3-universal-lift} and
\ref{prop:Gamma3-braided-untwisting} identify the reflected pairs and
preserve the vanishing of all iterated adjoint objects, hence all
defined reflections and Cartan integers.  Proposition
\ref{prop:Gamma3-reflection-transport} supplies a generator triple
after either reflection.  Since \(\widetilde\Phi=\delta J\) on all of
\(\Gamma _3\), the same procedure applies to each such triple and
gives \(\kappa_\Phi(\varepsilon')=1\).  Proposition
\ref{prop:Gamma3-parameter-comparison} therefore gives
\begin{equation*}
\mathcal P_1^\Phi
\xleftrightarrow{\ R_1\ }
\mathcal P_4^\Phi,
\qquad
\mathcal P_2^\Phi
\xleftrightarrow{\ R_1\ }
\mathcal P_3^\Phi,
\qquad
R_2(\mathcal P_j^\Phi)=\mathcal P_j^\Phi
\quad(1\le j\le4).
\end{equation*}
Thus every successive reflection remains in the four families, and
the preservation of the Cartan integers transfers the standard
\(B_2\) Cartan graph to the original pair.
\end{proof}
\subsection{Sufficiency and dimension formulas}
\label{subsec:Gamma3-sufficiency}

\begin{cor}
\label{cor:Gamma3-Nichols-dimension}
Every pair in one of the four families has a finite-dimensional
Nichols algebra, and its dimension is given by
\eqref{eq:Gamma3-dimensions}.
\end{cor}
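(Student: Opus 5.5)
The plan is to transfer both finite-dimensionality and the dimension from the ordinary classification over \(\Gamma_3\), using the untwisting machinery of Subsection~\ref{subsec:Gamma3-obstruction-untwist}.

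First, let \(P=(V,W)\in\mathcal P_i^\Phi\). By the observation following \eqref{eq:Gamma3-P4}, \(\kappa_\Phi(\varepsilon)=1\). Proposition~\ref{prop:Gamma3-cohomological-obstruction} then gives a normalized \(J\) with \(\pi^*\Phi=\delta J\). We lift the pair along \(\pi\) by Proposition~\ref{prop:Gamma3-universal-lift} and apply \(F_J\) from Proposition~\ref{prop:Gamma3-braided-untwisting}. This produces an ordinary pair \((V^J,W^J)\) over \(\Gamma_3\), and both steps preserve the dimension of the Nichols algebra of the direct sum. By Proposition~\ref{prop:Gamma3-parameter-comparison}, \((V^J,W^J)\in\mathcal P_i\), the ordinary class of \cite[Definition~7.1 and Table~2]{rank2-classification}.

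Next, we invoke the ordinary results. For the four ordinary classes, \cite{rank2-classification} (the examples and the dimension table accompanying Definition~7.1) gives finite-dimensionality, with dimension \(10368\) for the classes corresponding to \(\mathcal P_1,\mathcal P_4\) and \(2304\) for \(\mathcal P_2,\mathcal P_3\). Since the lift and \(F_J\) preserve dimensions, \(\dim\mathcal B(V\oplus W)=\dim\mathcal B(V^J\oplus W^J)\), which gives \eqref{eq:Gamma3-dimensions}.

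As an internal consistency check, and as a fallback if the precise table entries need rechecking, we can compute the dimension directly. Proposition~\ref{prop:Gamma3-reflection-closure} gives a standard \(B_2\) Cartan graph. Proposition~\ref{prop:positive-root-factorization} then yields a tensor decomposition into four factors \(\mathcal B(V)\), \(\mathcal B(W)\), \(\mathcal B(X_1)\) and \(\mathcal B(Y_1)\) (or the appropriate adjoint objects of the root string). Each factor corresponds under the same equivalences to an ordinary factor. The products should match these values: for instance \(12\cdot 12\cdot 8\cdot 9\) for \(10368\), and \(12\cdot12\cdot4\cdot4\) for \(2304\), with the three-point support factors of dimension \(12\) coming from the Fomin--Kirillov type algebra. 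The consistency between \(\mathcal P_1^\Phi\leftrightarrow\mathcal P_4^\Phi\) and \(\mathcal P_2^\Phi\leftrightarrow\mathcal P_3^\Phi\) under \(R_1\), combined with Lemma~\ref{lem:reflection-dimension-invariance}, confirms that the dimensions are constant on the reflection-paired classes.

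The main obstacle is matching our \(\mathcal P_i\) with the correct row of the ordinary table, including over \(\mathbb C\) and with \(z\) identified with \(h\), so that the cited dimensions apply. This identification is exactly what Proposition~\ref{prop:Gamma3-parameter-comparison} provides, so the remaining work is bookkeeping of citations rather than new computation.
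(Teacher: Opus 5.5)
Your main argument is the paper's proof. It goes through $\kappa_\Phi(\varepsilon)=1$ (from the remark after \eqref{eq:Gamma3-P4}), a normalized $J$ from Proposition~\ref{prop:Gamma3-cohomological-obstruction}, the lift of Proposition~\ref{prop:Gamma3-universal-lift} followed by $F_J$, and Proposition~\ref{prop:Gamma3-parameter-comparison} to land in $\mathcal P_i$. The ordinary dimensions are then transported back by dimension preservation; the paper cites \cite[Theorems~8.1--8.4]{rank2-classification} for the values $10368$ and $2304$.

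The ``consistency check'' paragraph is wrong in its details, so do not keep it or use it as a fallback.
\begin{itemize}
\item The four root factors are $\mathcal B(V)$, $\mathcal B(X_1)$, $\mathcal B(X_2)$ and $\mathcal B(W)$. By \eqref{eq:first-adjoint-braiding-symmetry}, $Y_1\simeq X_1$, so $Y_1$ is not a separate factor. The factor you are missing is $X_2=(\operatorname{ad}V)^2(W)$.
\item For $\mathcal P_1^\Phi$ the two long-root factors do not have dimensions $8$ and $9$. After untwisting, $W$ is of diagonal type with both vertex labels $-1$ and edge label $\widehat\sigma(\varepsilon)^{2}=s^{2}$, a primitive cube root of unity. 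Hence $\dim\mathcal B(W)=2\cdot2\cdot3=12$.
\item $X_2$ is one-dimensional with central support and is braided by a primitive sixth root of unity, because it is the second entry of $R_1(V,W)\in\mathcal P_4^\Phi$. Hence $\dim\mathcal B(X_2)=6$.
\end{itemize}
The correct factorization is therefore $10368=12\cdot12\cdot12\cdot6$. Your factorization $2304=12\cdot12\cdot4\cdot4$ is correct.
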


\begin{proof}
Let \((V,W)\in\mathcal P_i^\Phi\), where \(1\le i\le4\).
By the observation following \eqref{eq:Gamma3-P4}, one has
\(\kappa_\Phi(\varepsilon)=1\). Choose \(J\) as in
Proposition~\ref{prop:Gamma3-cohomological-obstruction}, and let
\((V^J,W^J)\) be the ordinary pair over \(\Gamma_3\) obtained by
 {the construction in
Proposition~\ref{prop:Gamma3-universal-lift} followed by \(F_J\)}.
Proposition~\ref{prop:Gamma3-parameter-comparison} places this pair
in \(\mathcal P_i\).

Over \(\mathbb C\), \cite[Theorems~8.1--8.4]{rank2-classification}
give finite-dimensional Nichols algebras of dimension \(10368\)
for \(i=1,4\) and \(2304\) for \(i=2,3\).
Propositions~\ref{prop:Gamma3-universal-lift} and
\ref{prop:Gamma3-braided-untwisting} preserve
 {the dimensions of Nichols algebras},
so
\begin{equation*}
\dim\mathcal B(V\oplus W)
=\dim\mathcal B(V^J\oplus W^J).
\end{equation*}
This proves finite-dimensionality and
\eqref{eq:Gamma3-dimensions}.
\end{proof}

\begin{proof}[Proof of Theorem~\ref{thm:Gamma3-classification}]
Proposition~\ref{prop:Gamma3-necessity} proves necessity.  Conversely,
interchanging the two entries does not change their direct sum, so
Corollary~\ref{cor:Gamma3-Nichols-dimension} proves sufficiency and
\eqref{eq:Gamma3-dimensions}.  Proposition
\ref{prop:Gamma3-reflection-closure} gives the asserted standard
Cartan graph of type \(B_2\).
\end{proof}

\section{The complete classification}
\label{sec:complete-classification}

The following theorem combines the four case classifications.
For \(M=(M_1,M_2)\) and \(\omega\in S_2\), write
\(M^\omega=(M_{\omega^{-1}(1)},M_{\omega^{-1}(2)})\).

\begin{thm}
\label{thm:complete-classification}
Let \(G\) be a finite non-abelian group, let
\(\Phi\in Z^3(G,\mathbb C^\times)\) be normalized, and let
\(M=(V,W)\) be a tuple of finite-dimensional simple objects in
\({}_G^G\mathcal{YD}^{\Phi}\).  Assume that
\begin{equation}
\label{eq:complete-classification-assumptions}
G=\langle\operatorname{supp}V\cup\operatorname{supp}W\rangle,
\qquad
M\text{ is braided-indecomposable}.
\end{equation}
Then the following statements are equivalent.
\begin{enumerate}[label=\textnormal{(\arabic*)}]
\item
The Nichols algebra \(\mathcal B(V\oplus W)\) is finite-dimensional.

\item
The pair \(M\) admits all reflections, its Cartan graph
\(\mathcal G(M)\) is finite, and
\[
\dim\mathcal B(Q_i)<\infty
\qquad
\text{for every }Q=(Q_1,Q_2)\in\mathcal F_2(M)
\text{ and }i\in\{1,2\}.
\]

\item
There exists \(\omega\in S_2\) such that \(M^\omega\) satisfies one of
the following alternatives.  All parameters are computed for this
ordering, the indicated generators and support representatives, and
the fixed cocycle \(\Phi\).
\begin{enumerate}[label=\textnormal{(\roman*)}]
\item
There are \(g,h,\varepsilon\in G\) inducing an epimorphism
\(\Gamma _2\twoheadrightarrow G\) such that
\[
M^\omega\simeq\bigl(M(g,\rho),M(h,\sigma)\bigr),
\quad
\operatorname{supp}M(g,\rho)=\{g,\varepsilon g\},
\quad
\operatorname{supp}M(h,\sigma)=\{h,\varepsilon h\}.
\]
Either
\eqref{eq:Gamma2-classification-A2} or
\eqref{eq:Gamma2-classification-G2} holds.

\item
There are \(\varepsilon,g,h\in G\) inducing an epimorphism
\(\Gamma _3\twoheadrightarrow G\) from
\eqref{eq:Gamma3-presentation} such that \(M^\omega\) is isomorphic to
an ordered pair satisfying one of
\eqref{eq:Gamma3-P1}--\eqref{eq:Gamma3-P4}.

\item
There are \(g,h,\varepsilon\in G\) inducing an epimorphism
\(\Gamma _4\twoheadrightarrow G\), such
that
\begin{align*}
&M^\omega\simeq\bigl(M(h,\rho),M(g,\sigma)\bigr),\\
&\operatorname{supp}M(h,\rho)=\{h,\varepsilon^{-1}h\},\ \ \ \operatorname{supp}M(g,\sigma)
=\{g,\varepsilon g,\varepsilon^2g,\varepsilon^3g\},
\end{align*}

and \eqref{eq:Gamma4-classification-B2} holds.

\item
There are \(z,x_1,x_2\in G\) inducing an epimorphism
\(T\twoheadrightarrow G\).  With
\(x_3=x_2x_1x_2^{-1}\) and
\(x_4=x_1x_2x_1^{-1}\), the elements
\(x_1,x_2,x_3,x_4\) are distinct,
\(x_1^G=\{x_1,x_2,x_3,x_4\}\),
and
\[
M^\omega\simeq\bigl(M(z,\rho),M(x_1,\sigma)\bigr).
\]
The conditions in \eqref{eq:T-classification-G2} hold.
\end{enumerate}
\end{enumerate}
For an ordering \(M^\omega\) satisfying \textnormal{(3)},
the Cartan type and the dimension are given in
Table~\ref{tab:complete-classification}.
\end{thm}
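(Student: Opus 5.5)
The plan is to prove the cycle \((1)\Leftrightarrow(2)\), \((1)\Rightarrow(3)\), \((3)\Rightarrow(1)\), assembling the case theorems. The equivalence \((1)\Leftrightarrow(2)\) is exactly Theorem~\ref{thm:Nichols-finiteness-criterion} applied to \(P=M\), so nothing new is needed there. For \((3)\Rightarrow(1)\), note first that \(\mathcal B(V\oplus W)\) depends on \(M\) only up to interchange, so we may replace \(M\) by \(M^\omega\). Alternative (i) then gives finite-dimensionality by Theorem~\ref{thm:Gamma2-classification}, once we check that its standing hypotheses hold: \(G\) is a non-abelian quotient of \(\Gamma_2\), the supports are as stated, and \(M\) is braided-indecomposable. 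In the same way, (ii), (iii) and (iv) are covered by Theorems~\ref{thm:Gamma3-classification}, \ref{thm:Gamma4-classification} and \ref{thm:T-classification}. Each of those theorems also records the Cartan type and the dimension, which yields the final sentence about Table~\ref{tab:complete-classification} (with Corollaries~\ref{cor:Gamma2-A2-Nichols-dimension}, \ref{cor:standard-G2-Nichols-dimension}, \ref{cor:Gamma4-Nichols-dimension} and \ref{cor:T-Nichols-dimension}).

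For \((1)\Rightarrow(3)\), assume \(\dim\mathcal B(V\oplus W)<\infty\). By Theorem~\ref{thm:Nichols-finiteness-criterion}, \(M\) admits all reflections and \(\mathcal G(M)\) is finite. Proposition~\ref{prop:pair-with-finite-Cartan-matrix} then provides \(N\in\mathcal F_2(M)\) whose Cartan matrix is of finite type. After ordering \(N\) so that \(-a_{12}^{[N]}=1\) and \(-a_{21}^{[N]}\le3\), we get \((\operatorname{ad}N_1)^2(N_2)=0\) and \((\operatorname{ad}N_2)^4(N_1)=0\), while \((\operatorname{ad}N_1)(N_2)\neq0\). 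Lemma~\ref{lem:reflections-preserve-support-group} gives \(G=\langle\supp N_1\cup\supp N_2\rangle\). Theorem~\ref{thm:support-classification} now identifies the support quandle of \(N\) as one of the five listed quandles and produces an epimorphism from \(T\), \(\Gamma_2\), \(\Gamma_3\) or \(\Gamma_4\) onto \(G\).

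The key observation is that \(\operatorname{Inn}\) of the support quandle is \(G/Z(G)\) for any generating union of conjugacy classes. This invariant is the same for \(M\) and \(N\), and the five quandles have inner groups of orders \(12,4,6,6,8\). For \(Z_3^{3,1}\) versus \(Z_3^{3,2}\) both are handled inside the single \(\Gamma_3\) theorem, so it suffices to know that we are in the \(\Gamma_3\) case. Hence the support quandle of \(M\) itself has the same type as that of \(N\). In particular, \(G\) is a quotient of the same enveloping group, realized by generators chosen from \(\supp V\cup\supp W\) after a suitable interchange \(\omega\).

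Concretely, we apply Theorem~\ref{thm:support-classification} to \(M\) directly where possible. Otherwise we use the support sizes (two and two; three and one or two; two and four; one and four) to match the section's standing setup, as in Propositions~\ref{prop:Gamma4-pair-with-finite-Cartan-matrix}, \ref{prop:T-pair-with-finite-Cartan-matrix} and \ref{prop:Gamma3-support-and-dimensions}. The corresponding case theorem, applied to \(M^\omega\), then yields the alternative in (3).

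The main obstacle is this transfer step, namely ensuring that \(M^\omega\) itself (not only \(N\)) sits in the standing assumptions of the right case section, with generators that have the prescribed relations. In the \(\Gamma_2\) case the group \(G/Z(G)\) has order four, and we must verify directly that \(\supp V\) and \(\supp W\) are two-element classes \(\{g,\varepsilon g\}\) and \(\{h,\varepsilon h\}\) with \(hg=\varepsilon gh\). I would get this from Lemma~\ref{lem:reflections-remain-Gamma2} run backwards along the reflection path from \(N\) to \(M\), using \(R_i^2\simeq\mathrm{id}\). For the \(\Gamma_4\), \(T\) and \(\Gamma_3\) cases, the necessity propositions (\ref{prop:Gamma4-necessity}, \ref{prop:T-necessity}, \ref{prop:Gamma3-necessity}) already start from an arbitrary finite-dimensional pair satisfying the standing assumptions, so it remains only to verify those assumptions for \(M\) via the order of \(G/Z(G)\) and the support sizes.
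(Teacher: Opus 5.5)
Your treatment of $(1)\Leftrightarrow(2)$, of $(3)\Rightarrow(1)$, and of the $\Gamma_2$ branch of $(1)\Rightarrow(3)$ is sound. Running Lemma~\ref{lem:reflections-remain-Gamma2} back along the reflection path is exactly what the paper does there. In the $\Gamma_3$ branch your direct route also works: the standing assumptions of Section~\ref{sec:classification-Gamma3} only ask that $G$ be a finite non-abelian quotient of $\Gamma_3$ with $\langle\supp(V\oplus W)\rangle=G$. Theorem~\ref{thm:support-classification} applied to $N$ already gives this, so Theorem~\ref{thm:Gamma3-classification} can be applied to $M$ itself.

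The gap is the step ``hence the support quandle of $M$ itself has the same type as that of $N$.'' The equality $|\operatorname{Inn}(\supp M)|=|G/Z(G)|=|\operatorname{Inn}(\supp N)|$ only separates the five quandles once you know that $\supp M$ is one of them. That requires Theorem~\ref{thm:support-classification} for $M$, i.e.\ the vanishing of $(\operatorname{ad}V)^2(W)$ and $(\operatorname{ad}W)^4(V)$ in some order. Nothing gives you this, because the Cartan matrix at $[M]$ is not known in advance to be of finite type. The support sizes of $M$ are not available either: supports change under reflection ($\supp R_i(P)_j\subseteq(\supp P_i)^m\supp P_j$), and $|G/Z(G)|$ alone does not determine which conjugacy classes $\supp V$ and $\supp W$ are.

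This matters in the $\Gamma_4$ and $T$ cases, whose standing assumptions prescribe the supports: $\{h,\varepsilon^{-1}h\}$ and $g^G$ with $\operatorname{ord}\varepsilon=4$, respectively a central singleton and $x_1^G$ with the $T$-relations. So you cannot yet invoke Propositions~\ref{prop:Gamma4-necessity} and~\ref{prop:T-necessity} for $M^\omega$. The paper uses your $\operatorname{Inn}$ argument only in the opposite direction (Propositions~\ref{prop:Gamma4-pair-with-finite-Cartan-matrix} and~\ref{prop:T-pair-with-finite-Cartan-matrix}): from a pair already of the right form to a finite-type object, where Theorem~\ref{thm:support-classification} is available.

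The missing idea is to apply the case theorem at the finite-type object and transport the conclusion back.
\begin{enumerate}
\item Order $N$ as that case requires, using Propositions~\ref{prop:Gamma4-pair-with-finite-Cartan-matrix} and~\ref{prop:T-pair-with-finite-Cartan-matrix}.
\item Lemma~\ref{lem:reflection-dimension-invariance} gives $\dim\mathcal B(N_1\oplus N_2)<\infty$, so the case theorem yields $P=N^\nu$ satisfying \eqref{eq:Gamma4-classification-B2} or \eqref{eq:T-classification-G2}.
\item Reflections are involutive and $(R_iQ)^\nu\simeq R_{\nu(i)}(Q^\nu)$, so $M^\nu\simeq R_{\nu(i_1)}\cdots R_{\nu(i_r)}(P)\in\mathcal F_2(P)$.
\item Propositions~\ref{prop:Gamma4-explicit-B2-stability-under-reflections} and~\ref{prop:T-G2-stability-under-reflections} show that every tuple in $\mathcal F_2(P)$ satisfies the same alternative, and in particular $M^\nu$ does.
\end{enumerate}
In the $\Gamma_2$ case, Lemma~\ref{lem:reflections-remain-Gamma2} only needs the reflections to be defined. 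By contrast, the support-transport Lemmas~\ref{lem:Gamma4-reflected-supports} and~\ref{lem:T-reflected-supports} are conditional on the parameter equalities or the resulting Cartan entries. So here support tracking cannot be separated from parameter tracking, and the reflection-closure propositions are what do both at once.
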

\begin{rmk}\upshape
    In alternative \textnormal{(i)}, the one-dimensionality of \(\rho\)
and \(\sigma\) is automatic by
Lemma~\ref{lem:Gamma2-projective-representations-one-dimensional}.  In
alternative \textnormal{(ii)}, Lemma
\ref{lem:Gamma3-projective-dimensions} gives the automatic dimensions
recorded before \eqref{eq:Gamma3-P1}, \(\dim\tau\) is the only
non-automatic dimension condition and separates the two \((3,1)\)
dimension branches.  In alternative
\textnormal{(iii)}, Lemma~\ref{lem:Gamma4-rho-one-dimensional} makes
\(\dim\rho=1\) automatic, whereas \(\dim\sigma=1\) remains part of
\eqref{eq:Gamma4-classification-B2}. In alternative \textnormal{(iv)},
\(\dim\sigma=1\) is automatic by
Proposition~\ref{prop:T-sigma-one-dimensional}, whereas
\(\dim\rho=1\) remains part of \eqref{eq:T-classification-G2}.
\end{rmk}

\begin{proof}[Proof of Theorem~\ref{thm:complete-classification}]
The equivalence of \textnormal{(1)} and \textnormal{(2)} is
Theorem~\ref{thm:Nichols-finiteness-criterion}.

 \textnormal{(1)} $\Rightarrow$ \textnormal{(3)}.  Under
\eqref{eq:complete-classification-assumptions}, statement
\textnormal{(2)} and
Proposition~\ref{prop:pair-with-finite-Cartan-matrix} give a tuple
\[
N=R_{i_r}\cdots R_{i_1}(M)\in\mathcal F_2(M)
\]
with \(1\leq a_{12}^{[N]}a_{21}^{[N]}\leq3\).  Hence its rank-two
Cartan matrix is indecomposable and of finite type.  Choose
\(\omega_0\in S_2\) so that
\[
-a_{12}^{[N^{\omega_0}]}=1,
\qquad 1\leq-a_{21}^{[N^{\omega_0}]}\leq3.
\]
Thus
\[
(\operatorname{ad}(N^{\omega_0})_1)((N^{\omega_0})_2)\ne0,
\qquad
(\operatorname{ad}(N^{\omega_0})_1)^2((N^{\omega_0})_2)=0,
\qquad
(\operatorname{ad}(N^{\omega_0})_2)^4((N^{\omega_0})_1)=0.
\]
Equation~\eqref{eq:indecomposable-first-adjoint} shows that
\(N^{\omega_0}\) is braided-indecomposable.  Interchanging the entries
does not change their support union, and
Lemma~\ref{lem:reflections-preserve-support-group} gives
\(G_{N^{\omega_0}}=G_N=G_M=G\).  Hence
Theorem~\ref{thm:support-classification} shows that the support
quandle of \(N^{\omega_0}\) is one of
\[
Z_T^{4,1},\qquad Z_2^{2,2},\qquad Z_3^{3,1},\qquad
Z_3^{3,2},\qquad Z_4^{4,2}.
\]
The epimorphism supplied by that theorem provides the generators and
support representatives used in the corresponding case section.

By Lemma~\ref{lem:reflection-dimension-invariance},
\(\mathcal B(N_1\oplus N_2)\) is finite-dimensional.
Applying the appropriate one of
Theorems~\ref{thm:Gamma2-classification},
\ref{thm:Gamma3-classification},
\ref{thm:Gamma4-classification}, and
\ref{thm:T-classification}, with the ordering required
in that case, we obtain \(\nu\in S_2\) such that
\(P=N^\nu\) satisfies the corresponding alternative
in \textnormal{(3)}.
It remains to obtain the same conclusion for the original tuple.
By definition,
\((R_iQ)^\nu\simeq R_{\nu(i)}(Q^\nu)\).
Since reflections are involutive by
Lemma~\ref{lem:reflection-basic-properties},
\[
M^\nu
\simeq
R_{\nu(i_1)}\cdots R_{\nu(i_r)}(P)
\in\mathcal F_2(P).
\]

In the \(\Gamma_3\), \(\Gamma_4\), and \(T\) cases,
Propositions~\ref{prop:Gamma3-reflection-closure},
\ref{prop:Gamma4-explicit-B2-stability-under-reflections},
and~\ref{prop:T-G2-stability-under-reflections},
respectively, show that every tuple in
\(\mathcal F_2(P)\) satisfies the corresponding
alternative in \textnormal{(3)}.
This applies in particular to \(M^\nu\).

In the \(\Gamma_2\) case,
Lemma~\ref{lem:reflections-remain-Gamma2} shows that
\(M^\nu\) satisfies the same support assumptions as \(P\).
It is braided-indecomposable by
\eqref{eq:complete-classification-assumptions}, and
its Nichols algebra is finite-dimensional because
interchanging the entries does not change their direct sum.
Theorem~\ref{thm:Gamma2-classification} therefore gives
alternative \textnormal{(i)}, after a further interchange
if necessary.
Thus \textnormal{(3)} holds in every case.

 \textnormal{(3)} $\Rightarrow$  \textnormal{(1)}.  The corresponding case theorem
gives
\(\dim\mathcal B((M^\omega)_1\oplus(M^\omega)_2)<\infty\).
Since permuting the two summands does not change their direct sum,
\textnormal{(1)} follows.  Finally, the Cartan-type assertions follow from
Propositions~\ref{prop:Gamma2-standard-A2} and
\ref{prop:Gamma2-standard-G2-stability-under-reflections} and
Theorems~\ref{thm:Gamma3-classification},
\ref{thm:Gamma4-classification}, and
\ref{thm:T-classification}.
The dimension formulas follow from
Corollaries~\ref{cor:Gamma2-A2-Nichols-dimension},
\ref{cor:standard-G2-Nichols-dimension}, and
\ref{cor:Gamma4-Nichols-dimension} and
Theorems~\ref{thm:Gamma3-classification} and
\ref{thm:T-classification}.
\end{proof}

\clearpage
\appendix

\section{Guide and common notation for the appendices}

The appendices contain the longer calculations used in the proofs for
the four cases.  The table below shows where these calculations can be
found.  Links to the corresponding GAP programs are given next to the
lemmas whose calculations they verify.

\begingroup
\small
\setlength{\tabcolsep}{5pt}
\renewcommand{\arraystretch}{1.15}
\begin{center}
\begin{tabular}{@{}L{0.14\textwidth}L{0.17\textwidth}L{0.61\textwidth}@{}}
\hline
Case & Appendix & Calculations \\
\hline
\(\Gamma_2\)
&
Appendix~\ref{app:Gamma2-calculations}
&
first-adjoint coefficients,
  higher adjoints, reflection calculations, and diagonal braiding
  coefficients.
\\

\(\Gamma_4\)
&
Appendix~\ref{app:Gamma4-calculations}
&
 {cocycle reductions for \(X_2\) and \(Y_3\), calculations of
\(z_0\) and \(z_1\), the formulas used after reflections, and the
\(\Gamma_2\) parameter calculations used in the sufficiency proof.}
\\

\(T\)
&
Appendix~\ref{app:T-calculations}
&
 {the higher-adjoint recursion and the calculation of
\(Y_3\), the quaternion cocycle, and the four reflections used to
exclude its nontrivial class.}
\\

\(\Gamma_3\)
&
Appendix~\ref{app:Gamma3-calculations}
&
bar-complex calculations proving
\(\kappa_\Phi(\varepsilon)=1\) in the \((3,2)\) case and in the two
\((3,1)\) cases.
\\
\hline
\end{tabular}
\end{center}
\endgroup

Let \(G\) be a group and let \(\Phi\) be a normalized \(3\)-cocycle
on \(G\).  For \(a,b,c,d\in G\), let
\begin{equation}
\label{eq:3-cocycle-identity}
\mathscr C_\Phi(a,b,c,d)
:=
\frac{
\Phi(b,c,d)\Phi(a,bc,d)\Phi(a,b,c)
}{
\Phi(ab,c,d)\Phi(a,b,cd)
}.
\end{equation}
Thus the normalized \(3\)-cocycle identity is
\(\mathscr C_\Phi(a,b,c,d)=1\).  We use this notation throughout the
appendices.

\section{Calculations for the \texorpdfstring{$\Gamma_2$}{Gamma2} case}
\label{app:Gamma2-calculations}

Retain the notation and assumptions of
Section~\ref{sec:classification-Gamma2}. In particular,
\(hg=\varepsilon gh\), \(\varepsilon^2=1\), and
\(\varepsilon\in Z(G)\). 
Each cocycle quotient below is reduced to a product of powers
\(\mathscr C_\Phi(a_i,b_i,c_i,d_i)^{m_i}\), with
\(m_i\in\mathbb Z\).  Long products are recorded in the linked GAP
programs.
In these reductions, group elements are written in the abstract normal form
\(\varepsilon^e g^a h^b\), where
\(e\in\mathbb Z/2\mathbb Z\) and \(a,b\in\mathbb Z\).  

\subsection{Coefficient reduction for the simplicity of
\texorpdfstring{$X_1$}{X1}}

Retain \(a=\zeta/\Phi_g(h,\varepsilon)\) and the scalars \(A_1,B_1\)
from \eqref{eq:A1-definition} and \eqref{eq:B1-definition}.

\begin{lemma}
\label{lem:first-coefficient-reduction}
One has
\begin{equation}
\label{eq:first-coefficient-Delta}
\frac{
a^2A_1B_1
}{
\lambda'\Phi^h(g,h)\Phi^g(g,h)
}
=
\Delta,
\end{equation}
where \(\Delta\) is defined in \eqref{eq:Delta-definition}.
\end{lemma}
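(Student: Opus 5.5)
The identity is purely computational, so the plan is to expand both sides as products of parameters and cocycle values, and then reduce the leftover cocycle quotient to $1$ using instances of $\mathscr C_\Phi$.

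First I substitute $a=\zeta/\Phi_g(h,\varepsilon)$, the expression \eqref{eq:A1-definition} for $A_1$, and \eqref{eq:B1-definition} for $B_1$ into the left-hand side. The parameter part of $a^2A_1B_1/\lambda'$ is
$\zeta^2\cdot\mu\lambda'\zeta'\cdot\zeta\mu'/\lambda' = \zeta^3\zeta'\mu\mu'$.
Since $\zeta^2=\Phi_g(\varepsilon,\varepsilon)$ (from $\varepsilon^2=1$ and \eqref{eq:YD-projective-action}), this becomes $\zeta\zeta'\mu\mu'\,\Phi_g(\varepsilon,\varepsilon)$. This matches the parameter part $\zeta\zeta'\mu\mu'$ of $\Delta$ in \eqref{eq:Delta-definition}. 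The claim is therefore equivalent to a single identity among values of $\Phi$: a quotient $Q$ of the $\Phi_g$, $\Phi_h$, $\Phi^g$, $\Phi^h$ terms equals $1$. Explicitly, $Q$ is
\[
\frac{\Phi_g(\varepsilon,\varepsilon)\,\Phi^h(\varepsilon g,\varepsilon h)\,\Phi_g(h,h)\,\Phi_h(h,g)\,\Phi^g(\varepsilon g,\varepsilon h)\,\Phi_g(g,h)\,\Phi_h(g,g)}{\Phi_g(h,\varepsilon)^3\,\Phi_h(\varepsilon,h)\,\Phi_h(g,\varepsilon h)\,\Phi_g(\varepsilon,g)\,\Phi_g(h,\varepsilon g)\,\Phi^h(g,h)\,\Phi^g(g,h)}
\]
divided by $\Phi_h(\varepsilon g,g)\Phi_g(h,h)/(\Phi_g(h,\varepsilon)\Phi_h(\varepsilon,g^2))$.

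Next I reduce $Q$ using the two-cocycle-type relation from Lemma~\ref{lem:local-cocycle-coherence}, which is cheaper than expanding everything into raw $\Phi$. Two specializations do most of the work:
\begin{itemize}
\item $\Phi_h(\varepsilon,h)\Phi_h(g,\varepsilon h)$ against $\Phi_h(g,\varepsilon)\Phi_h(g\varepsilon,h)$, using the triple $(g,\varepsilon,h)$ with $x=h$;
\item $\Phi_h(\varepsilon g,g)$ and $\Phi_h(g,g)$ against $\Phi_h(\varepsilon,g^2)$, using the triple $(\varepsilon,g,g)$, where $ghg^{-1}=\varepsilon h$ enters through the subscript shift $\Phi_{wxw^{-1}}$.
\end{itemize}
Similarly, $\Phi_g(h,\varepsilon g)$ combines with $\Phi_g(\varepsilon,g)$ and $\Phi_g(h,\varepsilon)$ via the triple $(h,\varepsilon,g)$. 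Because $\varepsilon$ is central, the factors $\Phi_g(h,\varepsilon)$ are interchangeable with $\Phi_{g}(\varepsilon,h)$ up to the alternator $f_\Phi$, which is controlled by Lemma~\ref{lem:alternator}.

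The remaining mixed terms $\Phi^g$ and $\Phi^h$ involve different base points. To handle them, I write each $\Phi^x(a,b)$ and $\Phi_x(a,b)$ in terms of raw $\Phi$ values and normalize every argument to the form $\varepsilon^eg^ah^b$. After this, $Q$ becomes an explicit product of raw $\Phi$-values, and I express it as $\prod_i\mathscr C_\Phi(a_i,b_i,c_i,d_i)^{m_i}$.

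The main obstacle is finding these quadruples; I expect on the order of ten of them. I would search for them systematically by treating the $\Phi$-values as free variables and solving the integer linear system for the exponents $m_i$ over all quadruples drawn from a small set of words in $\varepsilon,g,h$. This is the kind of check recorded in the linked GAP programs. Once such a product is found, $Q=1$ follows from the cocycle identity, which proves \eqref{eq:first-coefficient-Delta}.
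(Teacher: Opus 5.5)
Your route is the paper's: cancel the projective-action scalars, use $\zeta^2=\Phi_g(\varepsilon,\varepsilon)$ so that only $\zeta\zeta'\mu\mu'$ survives on both sides, and then certify that the remaining pure-cocycle quotient is a product of integer powers of $\mathscr C_\Phi$. The paper does this with fifteen explicit quadruples, all with exponent $\pm1$, checked in GAP. Your parameter count $\zeta^3\zeta'\mu\mu'$ and your three specializations of Lemma~\ref{lem:local-cocycle-coherence} are fine.

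The displayed $Q$, however, is wrong. The factor $a^2$ contributes $\Phi_g(h,\varepsilon)^{-2}$, not $\Phi_g(h,\varepsilon)^{-3}$. After dividing by the cocycle part of $\Delta$, the net power of $\Phi_g(h,\varepsilon)$ is therefore $-1$, not $-2$. As written, your $Q$ is $\Phi_g(h,\varepsilon)^{-1}$ times the correct, identically trivial quotient. The extra factor is not identically $1$: take $\Phi=\delta J$ with $J$ normalized, $J(\varepsilon,g)=c$ and $J=1$ elsewhere; then $\Phi_g(h,\varepsilon)=c$. So no product of $\mathscr C_\Phi$'s equals your $Q$, and the integer linear search you describe would return no solution until the exponent is corrected.

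Two further points. First, the remark that $\Phi_g(h,\varepsilon)$ and $\Phi_g(\varepsilon,h)$ differ by an alternator controlled by Lemma~\ref{lem:alternator} is false. The identity $\Phi_x(a,b)/\Phi_x(b,a)=f_\Phi(x,a,b)$ requires $x,a,b$ to commute pairwise, but $hg=\varepsilon gh\neq gh$. In the coboundary example above, the ratio is $c$ while $f_\Phi(g,h,\varepsilon)=1$. The remark plays no role in your argument, so simply drop it.

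Second, once the exponent is fixed, the proof is complete only when the factorization is actually produced and its product checked against the corrected $Q$. That explicit certificate is the entire content of the paper's proof, so ``I expect on the order of ten'' must be replaced by the list itself.
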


\begin{proof}
After cancelling the scalars from the one-dimensional projective
actions appearing on
both sides of \eqref{eq:first-coefficient-Delta},  {the only
remaining factor involving the values of \(\rho\) or \(\sigma\) is
\(\zeta^2\). Using \(\zeta^2=\Phi_g(\varepsilon,\varepsilon)\)},
the quotient of the left-hand side of
\eqref{eq:first-coefficient-Delta} by \(\Delta\) becomes an expression
involving only values of the cocycle.

The GAP program \gaplink{Gamma_2/delta_reduction/lemma_B1_first_coefficient_reduction.g}
verifies the following factorization:
\[
\frac{
a^2A_1B_1
}{
\lambda'\Phi^h(g,h)\Phi^g(g,h)\Delta
}
=
\prod_{i=1}^{15}
\mathscr C_\Phi(a_i,b_i,c_i,d_i)^{m_i},
\]
where the tuples \((a_i,b_i,c_i,d_i)\) and the exponents \(m_i\) are
given by
\[
\begin{array}{c c r @{\qquad\qquad} c c r}
\hline
i & (a_i,b_i,c_i,d_i) & m_i
& i & (a_i,b_i,c_i,d_i) & m_i\\
\hline
1 & (\varepsilon g,g,h,\varepsilon) & 1
& 9 & (g,\varepsilon g,h,\varepsilon) & -1\\
2 & (\varepsilon,g,g,h) & 1
& 10 & (h,\varepsilon,g,\varepsilon h) & 1\\
3 & (\varepsilon g,\varepsilon,\varepsilon,g) & 1
& 11 & (g,h,\varepsilon,\varepsilon g) & -1\\
4 & (\varepsilon,h,g,g) & 1
& 12 & (h,\varepsilon,g,g) & -1\\
5 & (\varepsilon g,h,\varepsilon g,\varepsilon) & -1
& 13 & (\varepsilon,\varepsilon g,\varepsilon,\varepsilon) & -1\\
6 & (h,\varepsilon g,\varepsilon,\varepsilon g) & 1
& 14 & (h,\varepsilon,h,g) & -1\\
7 & (g,\varepsilon h,\varepsilon g,\varepsilon) & 1
& 15 & (\varepsilon,g,\varepsilon h,g) & -1\\
8 & (h,\varepsilon,\varepsilon g,\varepsilon) & 1
& & &\\
\hline
\end{array}
\]
Every factor on the right-hand side is equal to \(1\). Therefore
the quotient is \(1\), proving
\eqref{eq:first-coefficient-Delta}.
\end{proof}

\subsection{Reduction of \texorpdfstring{\(\Xi_V\)}{Xi V}}
\label{app:Xi-V-Omega-V}

\begin{lemma}
\label{lem:Xi-V-Omega-V}
For \(\Xi_V\) defined in \eqref{eq:Xi-V-definition}, one has
\(\Xi_V=\Phi_g(h,g)^{-1}\).
\end{lemma}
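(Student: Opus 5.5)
The statement is a pure cocycle identity, so I will prove it exactly as Lemma~\ref{lem:first-coefficient-reduction} was proved. I first clear denominators in \eqref{eq:Xi-V-definition}. The claim \(\Xi_V=\Phi_g(h,g)^{-1}\) is equivalent to
\[
\zeta^2\,\Phi(g,\varepsilon g,\varepsilon h)\Phi_g(g,h)\Phi_g(h,g)
=\Phi_g(h,\varepsilon)\Phi(\varepsilon g,g,\varepsilon h)\Phi_g(\varepsilon,g)\Phi_g(h,\varepsilon g)\Phi^g(g,h).
\]
The only non-cocycle quantity is \(\zeta^2\). Since \(\varepsilon^2=1\) and \(\varepsilon\in C_G(g)\), relation \eqref{eq:YD-projective-action} on \(V_g\) gives \(\zeta^2=\Phi_g(\varepsilon,\varepsilon)\). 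After this substitution, the quotient of the two sides involves only values of \(\Phi\).

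Next I expand every \(\Phi_x\) and \(\Phi^x\) by its definition, using the conjugations \(hgh^{-1}=\varepsilon g\), \(ghg^{-1}=\varepsilon h\) and the centrality of \(\varepsilon\). I write all arguments in the normal form \(\varepsilon^e g^a h^b\), with \(\varepsilon^2=1\). This produces a Laurent monomial in values \(\Phi(x,y,z)\) with \(x,y,z\) among a small set of normal-form words such as \(g,\varepsilon g,h,\varepsilon h,gh,\varepsilon,g^2\).

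The goal is to write this monomial as a finite product \(\prod_i\mathscr C_\Phi(a_i,b_i,c_i,d_i)^{m_i}\) of cocycle-identity expressions \eqref{eq:3-cocycle-identity}. Each factor equals \(1\), so the identity follows. To find the tuples, I would work greedily. I pick a factor of the monomial whose first argument is a product, such as \(\Phi(\varepsilon g,\dots)\) or \(\Phi(gh,\dots)\), and choose an \(\mathscr C_\Phi\) that rewrites it into factors with shorter arguments. I repeat until only cancelling pairs remain.

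The natural starting tuples are those already visible in the expansion. Lemma~\ref{lem:local-cocycle-coherence} applied to \((h,\varepsilon,g)\)-type triples should absorb the pair \(\Phi_g(h,\varepsilon)\Phi_g(\varepsilon,g)\) against \(\Phi_g(h,\varepsilon g)\). This is the same mechanism that produced tuples such as \((h,\varepsilon,g,g)\) and \((\varepsilon,g,g,h)\) in the table of Lemma~\ref{lem:first-coefficient-reduction}. The remaining associator-type terms \(\Phi(g,\varepsilon g,\varepsilon h)/\Phi(\varepsilon g,g,\varepsilon h)\) should then cancel against \(\Phi^g(g,h)\), \(\Phi_g(g,h)\) and \(\Phi_g(\varepsilon,\varepsilon)\) through \(\mathscr C_\Phi\) at tuples like \((g,\varepsilon,g,\varepsilon h)\) and \((\varepsilon,g,g,\varepsilon h)\).

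The main obstacle is bookkeeping: finding the exact list of tuples and exponents so that every factor cancels. I would delegate the verification to a GAP computation in the free abelian group on symbols \(\Phi(x,y,z)\), with arguments reduced to normal form, as in the linked certificates. That computation checks that the expanded quotient equals the displayed product of \(\mathscr C_\Phi\)'s, and hence that the quotient is \(1\).
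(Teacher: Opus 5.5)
Your proposal follows the paper's route: substitute $\zeta^2=\Phi_g(\varepsilon,\varepsilon)$, expand the $\Phi_x$ and $\Phi^x$, write the resulting cocycle-only quotient as a product of powers of $\mathscr C_\Phi$, and check the factorization by computer. The paper makes this concrete in two steps. First, it applies Lemma~\ref{lem:local-cocycle-coherence} at $(x,y,z,w)=(g,h,\varepsilon,g)$ and $(g,\varepsilon,h,g)$ to replace $\Phi_g(\varepsilon,g)\Phi_g(h,\varepsilon g)$ by $\Phi_g(h,\varepsilon)\Phi_g(h,g)\Phi_g(\varepsilon,hg)/\Phi_g(\varepsilon,h)$, which turns $\Phi_g(h,g)\Xi_V$ into a shorter quotient $\Omega_V$. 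Second, it lists $\Omega_V$ explicitly as a product of eleven factors $\mathscr C_\Phi(a_i,b_i,c_i,d_i)^{m_i}$. That explicit list of factors is what your proposal leaves to be searched for rather than supplied, and until it is produced you have a verification plan, not a proof.
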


\begin{proof}
Put
\begin{equation*}
\Omega_V
=
\frac{
\zeta^2
\Phi(g,\varepsilon g,\varepsilon h)
\Phi_g(g,h)\Phi_g(\varepsilon,h)
}{
\Phi(\varepsilon g,g,\varepsilon h)
\Phi^g(g,h)
\Phi_g(\varepsilon,hg)
\Phi_g(h,\varepsilon)^2
}.
\end{equation*}
Apply Lemma~\ref{lem:local-cocycle-coherence} with
$x=g,y=h,z=\varepsilon,w=g$, and then with
$x=g,y=\varepsilon,z=h,w=g$.  This gives
\begin{align*}
\Phi_g(\varepsilon,g)\Phi_g(h,\varepsilon g)
&=
\Phi_g(h,\varepsilon)\Phi_g(\varepsilon h,g),\\
\Phi_g(h,g)\Phi_g(\varepsilon,hg)
&=
\Phi_g(\varepsilon,h)\Phi_g(\varepsilon h,g).
\end{align*}
Eliminating \(\Phi_g(\varepsilon h,g)\), we obtain
\[
\Phi_g(\varepsilon,g)\Phi_g(h,\varepsilon g)
=
\frac{
\Phi_g(h,\varepsilon)
\Phi_g(h,g)
\Phi_g(\varepsilon,hg)
}{
\Phi_g(\varepsilon,h)
}.
\]
Substitution in \eqref{eq:Xi-V-definition} gives
\(\Xi_V=\Omega_V/\Phi_g(h,g)\).
 {Substituting \(\zeta^2=\Phi_g(\varepsilon,\varepsilon)\) and
expanding \(\Phi_x\) and \(\Phi^x\) gives}
\[
\Omega_V
=
\prod_{i=1}^{11}
\mathscr C_\Phi(a_i,b_i,c_i,d_i)^{m_i},
\]
where
\[
\begin{array}{c c r @{\qquad\qquad} c c r}
\hline
i & (a_i,b_i,c_i,d_i) & m_i
& i & (a_i,b_i,c_i,d_i) & m_i\\
\hline
1 & (\varepsilon g,\varepsilon,\varepsilon g,\varepsilon h) & -1
& 7 & (\varepsilon,\varepsilon g,h,\varepsilon) & -1\\
2 & (\varepsilon g,\varepsilon,\varepsilon g,h) & 1
& 8 & (\varepsilon,h,g,\varepsilon) & 1\\
3 & (\varepsilon g,\varepsilon,h,\varepsilon g) & 1
& 9 & (h,g,\varepsilon,\varepsilon) & 1\\
4 & (\varepsilon g,h,\varepsilon,\varepsilon g) & -1
& 10 & (h,\varepsilon,g,\varepsilon) & -1\\
5 & (g,h,\varepsilon,g) & -1
& 11 & (\varepsilon gh,\varepsilon,\varepsilon,g) & 1\\
6 & (\varepsilon,\varepsilon g,h,g) & -1
& & &\\
\hline
\end{array}
\]
 {This factorization is also verified in
\gaplink{Gamma_2/omegaV/lemma_B4_Omega_V_equals_one.g}.}
Every factor on the right-hand side is \(1\) by
\eqref{eq:3-cocycle-identity}. Hence \(\Omega_V=1\), and the asserted
identity follows.
\end{proof}

\subsection{The \texorpdfstring{$X_2$}{X2}-simplicity coefficient
\texorpdfstring{\(\Theta_\Phi\)}{Theta Phi}}

The coefficient comparison produces the quotient in the following
lemma, where
$p=\frac{1}{\Phi_g(h,g)}$
and
$
D_1
=
\Phi^h(\varepsilon g,h)
\Phi_g(h,h)\mu\lambda'.$

\begin{lemma}
\label{lem:Theta-Phi-reduction}
Assume that \(\Delta=1\). Then
\begin{equation}
\label{eq:Theta-Phi-coefficient-reduction}
\frac{
\Phi^h(g,gh)
 A_1
}{
 p^2
\Phi^g(g,h)
\Phi^h(\varepsilon g,\varepsilon gh)
 \Phi_g(h,h)\mu B_1D_1
}
=\Theta_\Phi.
\end{equation}
\end{lemma}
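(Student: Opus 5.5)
The plan follows the pattern of Lemmas~\ref{lem:first-coefficient-reduction} and~\ref{lem:Xi-V-Omega-V}. First we remove every value of the projective representations, so that the quotient of the left-hand side of \eqref{eq:Theta-Phi-coefficient-reduction} by \(\Theta_\Phi\) becomes a pure cocycle expression. Then we factor that expression as a product of powers of \(\mathscr C_\Phi(a_i,b_i,c_i,d_i)\).

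The first step is to substitute \(A_1\), \(B_1\), and \(D_1\) from \eqref{eq:A1-definition}, \eqref{eq:B1-definition}, and the definition preceding the statement. The parameters \(\lambda',\zeta',\mu\), and \(\mu'\) appear as follows.
\begin{itemize}
\item \(A_1\) contributes \(\mu\lambda'\zeta'\).
\item \(B_1\) contributes \(\zeta\mu'\).
\item \(D_1\) contributes \(\mu\lambda'\).
\end{itemize}
Thus \(\lambda'\) and one factor of \(\mu\) cancel directly. The remaining quotient carries \(\zeta'/(\zeta\mu\mu')\).

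Next we use the hypothesis \(\Delta=1\). By \eqref{eq:Delta-definition}, it gives
\[
\zeta\zeta'\mu\mu'=\frac{\Phi_g(h,\varepsilon)\Phi_h(\varepsilon,g^2)}{\Phi_h(\varepsilon g,g)\Phi_g(h,h)}.
\]
This lets us replace \(\mu\mu'\) by \(\zeta^{-1}\zeta'^{-1}\) times cocycle values. The leftover factor becomes \(\zeta'^2\), and we then use \(\zeta'^2=\Phi_h(\varepsilon,\varepsilon)\), which follows from \eqref{eq:YD-projective-action} since \(\varepsilon^2=1\). This accounts for the factor \(\Phi_h(\varepsilon,\varepsilon)\) in \eqref{eq:Theta-Phi-definition}. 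Similarly, \(p^{-2}=\Phi_g(h,g)^2\) matches the squared factor in \(\Theta_\Phi\). After these substitutions, every factor on both sides is a value of \(\Phi_x\), \(\Phi^x\), or \(\Phi\).

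The final step is to expand every \(\Phi_x\) and \(\Phi^x\) into values of \(\Phi\) and write all arguments in the normal form \(\varepsilon^eg^ah^b\), using \(hg=\varepsilon gh\) and the centrality of \(\varepsilon\). The quotient of the two sides is then a Laurent monomial in values of \(\Phi\). We must exhibit it as \(\prod_i\mathscr C_\Phi(a_i,b_i,c_i,d_i)^{m_i}\), each factor being \(1\) by \eqref{eq:3-cocycle-identity}. To shorten the list, we first apply Lemma~\ref{lem:local-cocycle-coherence} to merge terms with the same subscript. For example, \(\Phi_g(h,\varepsilon)\) and \(\Phi_g(h,\varepsilon g)\) can be combined as in the proof of Lemma~\ref{lem:Xi-V-Omega-V}, and \(\Phi^h(\varepsilon g,\varepsilon h)\) and \(\Phi^h(\varepsilon g,\varepsilon gh)\) can be related in the same way.

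The main obstacle is finding the explicit 4-tuples and exponents. There are roughly twenty cocycle values, with arguments drawn from \(\{\varepsilon,g,h,\varepsilon g,\varepsilon h,gh,\varepsilon gh,g^2\}\). The search should be done systematically: repeatedly pick a cocycle value that occurs with nonzero exponent and cancel it with a cocycle identity whose 4-tuple contains it, then recurse. The resulting factorization would be verified by a GAP program on the free abstract normal forms, as for the earlier lemmas.
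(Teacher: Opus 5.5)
Your plan is correct and is essentially the paper's proof: substituting $A_1,B_1,D_1,p$ and then using $\Delta=1$ and $\zeta'^2=\Phi_h(\varepsilon,\varepsilon)$ is exactly how the paper shows that the left-hand side divided by $\Theta_\Phi$ equals $\Delta^{-1}\zeta'^2/\Phi_h(\varepsilon,\varepsilon)=1$. The step you call the main obstacle is empty, however: after these substitutions every remaining $\Phi_x$ and $\Phi^x$ factor matches verbatim a factor in the definition of $\Theta_\Phi$. This includes $\Phi_h(\varepsilon g,g)$, $\Phi_g(h,\varepsilon)$ and $\Phi_h(\varepsilon,g^2)$ coming from $\Delta$, while $\Phi_g(h,h)$ cancels. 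So the quotient is identically $1$, and you need no expansion into values of $\Phi$, no use of Lemma~\ref{lem:local-cocycle-coherence}, and no search for $\mathscr C_\Phi$-factors.
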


\begin{proof}
 {Substitution of the definitions gives}
\[
\frac{1}{\Theta_\Phi}
\frac{
\Phi^h(g,gh)A_1
}{
p^2
\Phi^g(g,h)
\Phi^h(\varepsilon g,\varepsilon gh)
\Phi_g(h,h)\mu B_1D_1
}
=
\Delta^{-1}
\frac{{\zeta'}^2}{\Phi_h(\varepsilon,\varepsilon)}.
\]
 {This identity is also verified in
\gaplink{Gamma_2/theta_coefficient_reduction/theta_coefficient_reductions.g}.}
Since \(\varepsilon\rhd w=\zeta'w\), relation
\eqref{eq:YD-projective-action} gives
\({\zeta'}^2=\Phi_h(\varepsilon,\varepsilon)\).
Together with \(\Delta=1\), the verified identity proves
\eqref{eq:Theta-Phi-coefficient-reduction}.
\end{proof}

\subsection{Cocycle identities for \texorpdfstring{$X_3$}{X3} and the
 {projective representations} after reflections}
\label{app:finite-reflection-cocycle-reductions}

\begin{lemma}
\label{lem:x3-cocycle-reduction}
Let $d_3$ and $\kappa_3$ be the scalars in
\eqref{eq:x3-double-braiding} and
\eqref{eq:kappa-3-definition}.  Then
\(d_3/(\lambda^2\kappa_3)=1\).
\end{lemma}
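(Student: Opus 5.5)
We need to prove \(d_3=\lambda^2\kappa_3\), i.e.
\[
\frac{\zeta\lambda^2}{\Phi_g(\varepsilon,g)\Phi_g(\varepsilon g,g)\Phi_g(h,\varepsilon g^2)}
=\lambda^2\,
\frac{\zeta\,\Phi(g,\varepsilon g,\varepsilon gh)\Phi_g(g,h)}{\Phi_g(h,g)\Phi(\varepsilon g,g,\varepsilon gh)\Phi_g(\varepsilon,g)\Phi_g(h,\varepsilon g)\Phi^g(g,gh)}.
\]
The projective-representation scalars \(\zeta\), \(\lambda^2\) and the common factor \(\Phi_g(\varepsilon,g)\) cancel at once, so the claim is purely a cocycle identity:
\[
\frac{\Phi_g(h,g)\Phi_g(h,\varepsilon g)\Phi^g(g,gh)\Phi(\varepsilon g,g,\varepsilon gh)}{\Phi_g(\varepsilon g,g)\Phi_g(h,\varepsilon g^2)\Phi_g(g,h)\Phi(g,\varepsilon g,\varepsilon gh)}=1 .
\]
The approach follows that of Lemma~\ref{lem:Xi-V-Omega-V}: first absorb as much as possible with the local coherence identity of Lemma~\ref{lem:local-cocycle-coherence}, and then write the remainder as a product of instances of \(\mathscr C_\Phi\).

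For the first step, I would apply \eqref{eq:local-cocycle-coherence} with \(x=g\), \(y=h\), \(z=\varepsilon g\), \(w=g\). Since \(g\) centralizes \(x=g\), this gives
\[
\Phi_g(\varepsilon g,g)\Phi_g(h,\varepsilon g^2)=\Phi_g(h,\varepsilon g)\Phi_g(h\varepsilon g,g).
\]
This collapses the pair in the denominator to \(\Phi_g(h,\varepsilon g)\Phi_g(\varepsilon gh,g)\), using \(h\varepsilon g=\varepsilon hg=gh\cdot\varepsilon^2\)-type normal forms (in fact \(h\varepsilon g=\varepsilon^2 gh=gh\)). The factor \(\Phi_g(h,\varepsilon g)\) then cancels. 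A second application, with \(x=g,y=h,z=g,w=\ldots\) or with \(y=g,z=h\), should relate \(\Phi_g(h,g)\), \(\Phi_g(g,h)\) and \(\Phi_g(gh,g)\)-type terms; I would choose it so that the remaining \(\Phi_g\)-terms become \(\Phi_g(h,g)/(\Phi_g(g,h)\Phi_g(gh,g))\) up to a known correction. After this, the residual expression involves \(\Phi^g(g,gh)\) together with the two bare associators \(\Phi(\varepsilon g,g,\varepsilon gh)\) and \(\Phi(g,\varepsilon g,\varepsilon gh)\).

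For the second step, I would expand every \(\Phi_x\) and \(\Phi^x\) into raw \(\Phi\)-values and put all arguments in the normal form \(\varepsilon^e g^a h^b\). I would then search, as in the GAP certificates cited for Lemmas~\ref{lem:first-coefficient-reduction} and~\ref{lem:Xi-V-Omega-V}, for a factorization \(\prod_i\mathscr C_\Phi(a_i,b_i,c_i,d_i)^{m_i}\). The natural candidates are 4-tuples built from \(g,\varepsilon g,\varepsilon gh,h,\varepsilon\), for example \((g,\varepsilon g,\varepsilon gh,\cdot)\), \((\varepsilon g,g,\varepsilon gh,\cdot)\) and \((g,g,h,g)\). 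Solving this is a finite linear problem over \(\mathbb Z\) in the exponents of the free generators \(\Phi(a,b,c)\), with the constraint that each \(\mathscr C_\Phi\) is \(1\) by \eqref{eq:3-cocycle-identity}.

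I expect the main obstacle to be this last bookkeeping step: finding a correct, short list of cocycle-identity instances that cancels every raw \(\Phi\)-value exactly. Conjugation produces many distinct arguments, for example \(\varepsilon g^2h\), \(gh\cdot g\cdot (gh)^{-1}=\varepsilon g\), and \(\varepsilon^e\)-twists, so hand cancellation is error-prone. I would therefore set up the exponent vector and solve it mechanically, in the same style as the linked GAP programs, and only then record the resulting table of tuples and exponents.
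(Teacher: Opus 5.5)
Your reduction to the pure cocycle quotient is correct, and your first step, Lemma~\ref{lem:local-cocycle-coherence} with $(x,y,z,w)=(g,h,\varepsilon g,g)$, is exactly the paper's first step. As written, however, the proposal does not yet prove the lemma. The second coherence step is left unspecified, and the closing step is deferred to a computer search you have not carried out.

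There is also a slip that would derail that search. Since $h\varepsilon g=\varepsilon^2gh=gh$, the collapsed pair is $\Phi_g(h,\varepsilon g)\Phi_g(gh,g)$, not $\Phi_g(h,\varepsilon g)\Phi_g(\varepsilon gh,g)$. Note that $\varepsilon gh=hg\neq gh$. With $\Phi_g(hg,g)$ in place of $\Phi_g(gh,g)$, the quotient you would try to certify is no longer identically $1$. For instance, it fails already for suitable coboundaries $\delta J$.

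The missing idea is short and makes the search unnecessary. After the first step, the remaining $\Phi_g$-factors are already $\Phi_g(h,g)/\bigl(\Phi_g(g,h)\Phi_g(gh,g)\bigr)$.

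Your second candidate is the right one. Applying the coherence identity with $(x,y,z,w)=(g,g,h,g)$ gives $\Phi_g(h,g)\Phi_g(g,hg)=\Phi_g(g,h)\Phi_g(gh,g)$. Hence these factors equal $\Phi_g(g,hg)^{-1}$.

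Now use the conjugations $g(gh)g^{-1}=\varepsilon gh=hg$ and $(hg)g(hg)^{-1}=\varepsilon g$ in the definitions. They give $\Phi^g(g,gh)=\Phi(g,hg,g)$ and $\Phi_g(g,hg)=\Phi(g,hg,g)\Phi(\varepsilon g,g,hg)/\Phi(g,\varepsilon g,hg)$.

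Since $\varepsilon gh=hg$, the residual factor $\Phi(\varepsilon g,g,\varepsilon gh)\Phi^g(g,gh)/\Phi(g,\varepsilon g,\varepsilon gh)$ is literally $\Phi_g(g,hg)$. The quotient is therefore $1$ with no further cocycle identities. This is the paper's argument; no table of $\mathscr C_\Phi$-factors is needed here.
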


\begin{proof}
Substituting \eqref{eq:x3-double-braiding} and
\eqref{eq:kappa-3-definition}, and cancelling the common factors
\(\zeta\), \(\lambda^2\), and \(\Phi_g(\varepsilon,g)\), gives
\[
\frac{d_3}{\lambda^2\kappa_3}
=
\frac{
\Phi_g(h,g)
\Phi(\varepsilon g,g,\varepsilon gh)
\Phi_g(h,\varepsilon g)
\Phi^g(g,gh)
}{
\Phi_g(\varepsilon g,g)
\Phi_g(h,\varepsilon g^2)
\Phi(g,\varepsilon g,\varepsilon gh)
\Phi_g(g,h)
}.
\]
Apply Lemma~\ref{lem:local-cocycle-coherence} first with
$
(x,y,z,w)=(g,h,\varepsilon g,g).
$
Since \(g\) commutes with \(\varepsilon g\) and
\(h\varepsilon g=gh\), this gives
\[
\Phi_g(\varepsilon g,g)\Phi_g(h,\varepsilon g^2)
=
\Phi_g(h,\varepsilon g)\Phi_g(gh,g).
\]
Applying the same lemma with
$
(x,y,z,w)=(g,g,h,g)
$
gives
\[
\Phi_g(h,g)\Phi_g(g,hg)
=
\Phi_g(g,h)\Phi_g(gh,g).
\]
Consequently,
\[
\frac{
\Phi_g(h,g)\Phi_g(h,\varepsilon g)
}{
\Phi_g(\varepsilon g,g)
\Phi_g(h,\varepsilon g^2)
\Phi_g(g,h)
}
=
\frac{1}{\Phi_g(g,hg)}.
\] Since
$
g(gh)g^{-1}=\varepsilon gh=hg,
$ 
$(hg)g(hg)^{-1}=\varepsilon g,
$
the definitions of \(\Phi^g\) and \(\Phi_g\) give
$\Phi^g(g,gh)
=
\Phi(g,hg,g)
$
and
$
\Phi_g(g,hg)
=
\frac{
\Phi(g,hg,g)\Phi(\varepsilon g,g,hg)
}{
\Phi(g,\varepsilon g,hg)
}.
$
Using again \(\varepsilon gh=hg\), we obtain
\[
\frac{
\Phi(\varepsilon g,g,\varepsilon gh)\Phi^g(g,gh)
}{
\Phi(g,\varepsilon g,\varepsilon gh)
}
=
\Phi_g(g,hg).
\]
Combining the preceding two reductions gives
$
\frac{d_3}{\lambda^2\kappa_3}
=
\frac{1}{\Phi_g(g,hg)}\Phi_g(g,hg)
=1.
$
\end{proof}

\begin{lemma}
\label{lem:reflected-character-cocycle-reduction}
The two cocycle quotients in
\eqref{eq:K1-reflected-character}
and \eqref{eq:K1-op-reflected-character} satisfy
\(K_1=K_1^{\mathrm{op}}=1\).
\end{lemma}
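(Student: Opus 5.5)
The plan follows the pattern of Lemma~\ref{lem:x3-cocycle-reduction}. First I substitute the definitions of $\Phi^x$ and $\Phi_x$ into $K_1$ and $K_1^{\mathrm{op}}$. Then I cancel by instances of Lemma~\ref{lem:local-cocycle-coherence} and of the cocycle identity $\mathscr C_\Phi=1$, until both quotients collapse to $1$.

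For $K_1$, I would first use Lemma~\ref{lem:local-cocycle-coherence} with $(x,y,z,w)=(g,h,\varepsilon,g)$. Since $\varepsilon$ is central and $\varepsilon g=g\varepsilon$, this gives
\[
\Phi_g(\varepsilon,g)\Phi_g(h,\varepsilon g)=\Phi_g(h,\varepsilon)\Phi_g(\varepsilon h,g),
\]
so the denominator factor $\Phi_g(h,\varepsilon g)\Phi_g(\varepsilon,g)$ cancels $\Phi_g(h,\varepsilon)$. This leaves
\[
K_1=\frac{\Phi^{gh}(g,h)}{\Phi_h(g,h)\Phi_g(\varepsilon h,g)}.
\]
Next I expand the three remaining terms:
\begin{align*}
\Phi^{gh}(g,h)&=\frac{\Phi(gh,g,h)\Phi(\varepsilon g,\varepsilon h,gh)}{\Phi(\varepsilon g,gh,h)},\\
\Phi_h(g,h)&=\frac{\Phi(g,h,h)\Phi(\varepsilon h,g,h)}{\Phi(g,h,h)}=\Phi(\varepsilon h,g,h),\\
\Phi_g(\varepsilon h,g)&=\frac{\Phi(\varepsilon h,g,g)\Phi(\varepsilon g,\varepsilon h,g)}{\Phi(\varepsilon h,\varepsilon g,g)}.
\end{align*}
These use $hgh^{-1}=\varepsilon g$, $ghg^{-1}=\varepsilon h$, $(gh)g(gh)^{-1}=\varepsilon g$ and $(\varepsilon h)g(\varepsilon h)^{-1}=\varepsilon g$. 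All six surviving cocycle values have arguments drawn from $\{g,h,\varepsilon g,\varepsilon h,gh,hg\}$. I would then look for the small set of $4$-tuples whose $\mathscr C_\Phi$-factors reproduce this quotient. The natural candidates are $(\varepsilon h,g,h,g)$, $(\varepsilon g,\varepsilon h,g,h)$ and $(\varepsilon h,\varepsilon g,g,h)$, using $\varepsilon h\, g=\varepsilon g h=\varepsilon\cdot gh$ and similar relations. If necessary, a further application of Lemma~\ref{lem:local-cocycle-coherence} with $x=g$ would rewrite $\Phi_g(\varepsilon h,g)$ in terms of $\Phi_g(g,hg)$, as in the $X_3$ reduction.

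For $K_1^{\mathrm{op}}$, I would not redo the computation. Instead I observe that the defining expression is obtained from $K_1$ by the simultaneous substitution $g\leftrightarrow h$. Since $gh=\varepsilon hg$ with $\varepsilon$ central of order two, the pair $(h,g)$ satisfies exactly the same relations as $(g,h)$. Every relation used in the reduction of $K_1$, namely the conjugation formulas and the instances of Lemma~\ref{lem:local-cocycle-coherence} and $\mathscr C_\Phi$, therefore transforms into a valid relation. The factorization of $K_1$ thus yields the factorization of $K_1^{\mathrm{op}}$, and $K_1^{\mathrm{op}}=1$.

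The main obstacle is finding the exact list of $\mathscr C_\Phi$-tuples that closes up the second step of the $K_1$ reduction. I expect three or four factors, found by matching each leftover $\Phi(\cdot,\cdot,\cdot)$ against a term of some $\mathscr C_\Phi(a,b,c,d)$ whose other terms already occur. As elsewhere in this appendix, I would confirm the final product with a GAP check in the abstract normal form $\varepsilon^e g^a h^b$.
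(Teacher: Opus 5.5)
Your strategy matches the paper's. You use the same application of Lemma~\ref{lem:local-cocycle-coherence} at \((g,h,\varepsilon,g)\), reach the same reduced form of \(K_1\), expand \(\Phi^{gh}(g,h)\) and \(\Phi_h(g,h)\) the same way, and handle \(K_1^{\mathrm{op}}\) by the same \(g\leftrightarrow h\) symmetry.

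The gap is in your expansion of \(\Phi_g(\varepsilon h,g)\), which is exactly the term the closing identity depends on. Use \(\Phi_x(a,b)=\Phi(a,b,x)\Phi(abxb^{-1}a^{-1},a,b)/\Phi(a,bxb^{-1},b)\) with \((a,b,x)=(\varepsilon h,g,g)\). The denominator is \(\Phi(\varepsilon h,ggg^{-1},g)=\Phi(\varepsilon h,g,g)\), not \(\Phi(\varepsilon h,\varepsilon g,g)\). It cancels the first numerator factor, so \(\Phi_g(\varepsilon h,g)=\Phi(\varepsilon g,\varepsilon h,g)\).

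With your version, the quotient you set out to factor equals the true \(K_1\) times \(\Phi(\varepsilon h,\varepsilon g,g)/\Phi(\varepsilon h,g,g)\). That extra factor is not identically \(1\); it already fails for a generic coboundary \(\delta J\). So no product of \(\mathscr C_\Phi\)-factors can close your quotient. The fallback through \(\Phi_g(g,hg)\) does not help either, because the error is in evaluating the definition, not in the choice of reduction. The search you flag as the main obstacle would therefore fail as set up, and the proposal never exhibits a closing identity.

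With the corrected value, no search is needed. Using \(\varepsilon hg=gh\) and \((\varepsilon g)(\varepsilon h)=gh\), one gets
\[
K_1=\frac{\Phi(gh,g,h)\Phi(\varepsilon g,\varepsilon h,gh)}{\Phi(\varepsilon g,gh,h)\Phi(\varepsilon h,g,h)\Phi(\varepsilon g,\varepsilon h,g)}=\mathscr C_\Phi(\varepsilon g,\varepsilon h,g,h)^{-1}=1.
\]
This is a single factor, namely your second candidate, and it is exactly how the paper concludes. Your symmetry argument then gives \(K_1^{\mathrm{op}}=1\).
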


\begin{proof}
Apply Lemma~\ref{lem:local-cocycle-coherence} with
\((x,y,z,w)=(g,h,\varepsilon,g)\).  Since \(\varepsilon\) is
central, this gives
\[\Phi_g(\varepsilon,g)\Phi_g(h,\varepsilon g)
=\Phi_g(h,\varepsilon)\Phi_g(\varepsilon h,g).\]
Consequently,
\[
K_1
=
\frac{
\Phi^{gh}(g,h)
}{
\Phi_h(g,h)\Phi_g(\varepsilon h,g)
}.
\]
The relations \(hg=\varepsilon gh\) and \(\varepsilon^2=1\) imply
that conjugation by \(gh\) sends \(g\) to \(\varepsilon g\) and
\(h\) to \(\varepsilon h\).  Thus the definitions of
\(\Phi^x\) and \(\Phi_x\) give
\[
\Phi^{gh}(g,h)
=
\frac{
\Phi(gh,g,h)\Phi(\varepsilon g,\varepsilon h,gh)
}{
\Phi(\varepsilon g,gh,h)
},
\qquad
\Phi_h(g,h)=\Phi(\varepsilon h,g,h),
\qquad
\Phi_g(\varepsilon h,g)=\Phi(\varepsilon g,\varepsilon h,g).
\]
Substitution shows that
\[
K_1
=
\frac{
\Phi(gh,g,h)\Phi(\varepsilon g,\varepsilon h,gh)
}{
\Phi(\varepsilon g,gh,h)
\Phi(\varepsilon h,g,h)
\Phi(\varepsilon g,\varepsilon h,g)
}
=
\mathscr C_\Phi(\varepsilon g,\varepsilon h,g,h)^{-1}
=1.
\]
Finally, \(gh=\varepsilon hg\).  Interchanging \(g\) and \(h\) in
the preceding calculation transforms
\eqref{eq:K1-reflected-character} into
\eqref{eq:K1-op-reflected-character}.  Hence
\(K_1^{\mathrm{op}}=1\).
\end{proof}

\subsection{Cocycle identities for the third and fourth adjoint objects}
\label{app:Gamma2-G2-higher-adjoints-reductions}

 {The following identities determine \(\sigma_3(g^3h)\) and the
coefficient \(d_4\) in \eqref{eq:Gamma2-x4-double-braiding}.}

\begin{lemma}
\label{lem:Gamma2-G2-cocycle-reductions}
The cocycle factor $\mathcal K_3$ in
\eqref{eq:Gamma2-K3} is equal to one.  Moreover, with $d_4$ and
$\kappa_4$ defined by \eqref{eq:Gamma2-x4-double-braiding} and
\eqref{eq:Gamma2-kappa4}, respectively, one has
$d_4=\lambda^3\kappa_4$.
\end{lemma}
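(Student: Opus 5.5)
The plan follows the pattern of Lemma~\ref{lem:x3-cocycle-reduction}: bring each quotient to a product of values of \(\Phi\) evaluated at group elements in the normal form \(\varepsilon^e g^a h^b\), using only \(hg=\varepsilon gh\), \(\varepsilon^2=1\) and the centrality of \(\varepsilon\), and then cancel everything with instances of \(\mathscr C_\Phi\) and of Lemma~\ref{lem:local-cocycle-coherence}.

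For the identity \(d_4=\lambda^3\kappa_4\), substitute \eqref{eq:Gamma2-x4-double-braiding}, \eqref{eq:Gamma2-kappa4} and \eqref{eq:kappa-3-definition}. After cancelling \(\lambda^3\), the quotient \(d_4/(\lambda^3\kappa_4)\) still contains \(\zeta^2\), coming from the two factors \(\zeta\) in \(\kappa_3\) and \(\kappa_4\). Replace it by \(\Phi_g(\varepsilon,\varepsilon)\), as in Lemma~\ref{lem:Xi-V-Omega-V}; what remains is a pure cocycle expression. Then:
\begin{enumerate}
\item Apply Lemma~\ref{lem:local-cocycle-coherence} with \((x,y,z,w)=(g,h,\varepsilon g,g)\), \((g,h,\varepsilon,g)\) and \((g,g,h,g)\). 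As in the \(X_3\) reduction, this trades the factors \(\Phi_g(\varepsilon,g)\Phi_g(h,\varepsilon g)\) against \(\Phi_g(h,\varepsilon)\Phi_g(\varepsilon h,g)\).
\item Use the same lemma with \(w=g^2\) or \(w=g^3\) to relate \(\Phi_g(h,g^3)\), \(\Phi_g(g,g)\) and \(\Phi_g(g^2,g)\) to products of \(\Phi_g(h,g)\), \(\Phi_g(g,hg)\) and \(\Phi_g(g^2,hg)\) and their analogues.
\item Recognize each block \(\Phi(\varepsilon g,g,\varepsilon g^kh)\Phi^g(g,g^kh)/\Phi(g,\varepsilon g,\varepsilon g^kh)\) with \(k=1,2\). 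Since \(g(g^kh)g^{-1}=\varepsilon g^kh\), each such block equals \(\Phi_g(g,\varepsilon g^kh)\), exactly as in the last step of Lemma~\ref{lem:x3-cocycle-reduction}.
\end{enumerate}
The telescoping of these \(\Phi_g(g,\cdot)\) terms against the ones produced in the second step should leave \(1\).

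For \(\mathcal K_3=1\), first use Lemma~\ref{lem:reflected-character-cocycle-reduction}, that is \(K_1=1\), to remove the block \(\Phi^{gh}(g,h)\Phi_g(h,\varepsilon)/(\Phi_h(g,h)\Phi_g(h,\varepsilon g)\Phi_g(\varepsilon,g))\). Then use Lemma~\ref{lem:Gamma2-K2-cocycle-reduction}, that is \(K_2=1\), to absorb the factors \(\Phi^q(g,gh)\), \(\Phi_{gh}(g,gh)\), \(\Phi_g(h,g^2)\) and \(\Phi_g(g,g)\). This reduces \(\mathcal K_3\) to the \(q\)-level quotient
\[
\frac{\Phi^{g^3h}(g,q)\,\Phi_g(h,g)\,\Phi(\varepsilon g,g,\varepsilon gh)\,\Phi^g(g,gh)}{\Phi_q(g,q)\,\Phi_g(h,\varepsilon g^3)\,\Phi_g(\varepsilon g,g)\,\Phi_g(\varepsilon g^2,g)\,\Phi(g,\varepsilon g,\varepsilon gh)\,\Phi_g(g,h)}
\]
(up to the factors just absorbed). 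This is the same shape as the \(d_3\) quotient one level up, and the \(X_3\) reduction applied with \(h\) replaced by \(q\) should cancel it.

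The main obstacle is bookkeeping. Expanding \(\Phi^x\) and \(\Phi_x\) produces roughly thirty \(\Phi\)-values, and the conjugations \(g\cdot g^kh\cdot g^{-1}=\varepsilon g^kh\) must be tracked carefully. I would therefore expand \(\mathcal K_3\) and \(d_4/(\lambda^3\kappa_4)\) fully into signed multisets of triples in normal form, and search for the exponent vector \((m_i)\) expressing each multiset as \(\prod_i\mathscr C_\Phi(a_i,b_i,c_i,d_i)^{m_i}\). I would do the search by linear algebra over \(\mathbb Z\) on the formal free abelian group of triples, as in the GAP certificates used for Lemmas~\ref{lem:first-coefficient-reduction} and~\ref{lem:Xi-V-Omega-V}. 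I would then record the resulting tables of tuples and exponents.
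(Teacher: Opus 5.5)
Your proposal is essentially the paper's proof. The paper also substitutes $\zeta^2=\Phi_g(\varepsilon,\varepsilon)$, reduces to the normal form $\varepsilon^e g^a h^b$, and records GAP-verified factorizations of $\mathcal K_3$ and $d_4/(\lambda^3\kappa_4)$ into $13$ and $18$ integer powers of $\mathscr C_\Phi$, which is exactly the output of your linear-algebra search.

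Your optional shortcut for $\mathcal K_3$ also works, but not as you stated it. First, $\mathcal K_3/K_2$ still has $\Phi_g(h,\varepsilon g)$ in its numerator, so your displayed quotient is incomplete. Second, the residual is not the $X_3$ quotient with $h$ replaced by $q$. It equals $d_3/(\lambda^2\kappa_3)$ with $h$ itself, times $\Phi^{g^3h}(g,q)\Phi_g(h,\varepsilon g^2)/\bigl(\Phi_g(h,\varepsilon g^3)\Phi_g(\varepsilon g^2,g)\Phi_q(g,q)\bigr)$. Lemma~\ref{lem:local-cocycle-coherence} at $(g,h,\varepsilon g^2,g)$ turns this last factor into $\Phi^{g^3h}(g,q)/\bigl(\Phi_g(\varepsilon q,g)\Phi_q(g,q)\bigr)=\mathscr C_\Phi(\varepsilon g,\varepsilon q,g,q)^{-1}=1$.
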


\begin{proof}
 {The two factorizations below are verified in
\gaplink{Gamma_2/g2_propagation/lemma_B12_K3_and_d4_cocycle_reductions.g}.
Expanding \(\mathcal K_3\) gives the product of the thirteen factors
\(\mathscr C_\Phi(a,b,c,d)^m\) specified in the following table.}

\begin{center}
\small
\setlength{\tabcolsep}{5pt}
\renewcommand{\arraystretch}{1.1}
\begin{tabular}{@{}c r@{\qquad\qquad}c r@{}}
\hline
\((a,b,c,d)\) & \(m\) & \((a,b,c,d)\) & \(m\)\\
\hline
\((h,g,g,g)\) & \(-1\)
& \((\varepsilon g,h,\varepsilon g,g)\) & \(1\)\\
\((h,g,\varepsilon,g)\) & \(-1\)
& \((\varepsilon g,h,\varepsilon g^2,g)\) & \(1\)\\
\((h,g,\varepsilon g,g)\) & \(-1\)
& \((\varepsilon g,g,h,g)\) & \(-1\)\\
\((h,g,\varepsilon g^2,g)\) & \(-1\)
& \((\varepsilon g,\varepsilon h,g,h)\) & \(-1\)\\
\((g,\varepsilon g,h,g)\) & \(1\)
& \((\varepsilon g,\varepsilon gh,g,gh)\) & \(-1\)\\
\((\varepsilon g,h,g,g)\) & \(1\)
& \((\varepsilon g,\varepsilon g^2h,g,g^2h)\) & \(-1\)\\
\((\varepsilon g,h,\varepsilon,g)\) & \(1\)
& &\\
\hline
\end{tabular}
\end{center}

 {Substituting \eqref{eq:Gamma2-x4-double-braiding},
\eqref{eq:kappa-3-definition}, and \eqref{eq:Gamma2-kappa4}
into \(d_4/(\lambda^3\kappa_4)\), and using
\(\zeta^2=\Phi_g(\varepsilon,\varepsilon)\), gives the product
of the eighteen factors \(\mathscr C_\Phi(a,b,c,d)^m\)
specified in the following table.}

\begin{center}
\small
\setlength{\tabcolsep}{5pt}
\renewcommand{\arraystretch}{1.1}
\begin{tabular}{@{}c r@{\qquad\qquad}c r@{}}
\hline
\((a,b,c,d)\) & \(m\) & \((a,b,c,d)\) & \(m\)\\
\hline
\((h,g,g^2,g)\) & \(-1\)
& \((\varepsilon g,h,g,\varepsilon g)\) & \(-2\)\\
\((g,\varepsilon,g,\varepsilon)\) & \(1\)
& \((\varepsilon g,h,\varepsilon g,g)\) & \(2\)\\
\((g,\varepsilon,\varepsilon,g)\) & \(-1\)
& \((\varepsilon g,g,h,g)\) & \(-2\)\\
\((g,\varepsilon g,h,g)\) & \(2\)
& \((\varepsilon g,g,\varepsilon,g)\) & \(1\)\\
\((g,\varepsilon g,\varepsilon,g)\) & \(-1\)
& \((\varepsilon g,g,\varepsilon gh,\varepsilon g)\) & \(-1\)\\
\((g,\varepsilon g,\varepsilon gh,\varepsilon g)\) & \(1\)
& \((\varepsilon g,\varepsilon gh,\varepsilon g,\varepsilon g)\) & \(1\)\\
\((g,\varepsilon gh,g,\varepsilon g)\) & \(-1\)
& \((\varepsilon gh,g,\varepsilon g,\varepsilon g)\) & \(-1\)\\
\((g,\varepsilon gh,\varepsilon g,g)\) & \(1\)
& \((\varepsilon gh,\varepsilon g,g,\varepsilon g)\) & \(1\)\\
\((\varepsilon g,h,g,g^2)\) & \(1\)
& \((\varepsilon gh,\varepsilon g,\varepsilon g,g)\) & \(-1\)\\
\hline
\end{tabular}
\end{center}
\end{proof}

\subsection{Reflection identities in type
\texorpdfstring{$G_2$}{G2}}
\label{app:Gamma2-standard-G2-cocycle-identities}

Let $x,y\in G$ satisfy $yx=\varepsilon xy$, and let $q=x^2y$.  Define
the cocycle factor used in the  calculation by
\begin{equation}
\label{eq:Gamma2-X3-cocycle-factor}
\begin{aligned}
\mathfrak s_\Phi(x,y)
:={}&
\left(
\frac{
 \Phi(x,\varepsilon x,\varepsilon xy)\Phi_x(x,y)
}{
 \Phi_x(y,x)\Phi(\varepsilon x,x,\varepsilon xy)
 \Phi_x(\varepsilon,x)\Phi_x(y,\varepsilon x)\Phi^x(x,xy)
}
\right)^2
\\
&\times
\frac{
 \Phi_x(\varepsilon,\varepsilon)^2
 \Phi_x(y,\varepsilon)\Phi_y(\varepsilon,x^2)
}{
 \Phi_y(\varepsilon x,x)\Phi_x(y,y)
}
\\
&\times
\frac{
 \Phi^x(\varepsilon x,\varepsilon q)
 \Phi^y(\varepsilon x,\varepsilon q)
 \Phi_x(x,y)\Phi_q(x,x)\Phi_x(y,y)\Phi_q(y,x)
}{
 \Phi^x(x,q)\Phi^y(x,q)
 \Phi_x(\varepsilon,x)\Phi_x(y,\varepsilon x)
 \Phi_q(\varepsilon x,y)\Phi_q(x,\varepsilon)\Phi_x(x,x)^2
}
\\
&\times
\Phi^\varepsilon(x,xy)\Phi^\varepsilon(x,y)
\Phi^{x^2}(x,xy)\Phi^{x^2}(x,y).
\end{aligned}
\end{equation}

\begin{lemma}
\label{lem:Gamma2-X3-cocycle-factor-identity}
Let $x,y\in G$ satisfy $yx=\varepsilon xy$, and assume that
$\varepsilon^2=1$ and $\varepsilon\in Z(G)$.
Then the scalars defined in
\eqref{eq:Gamma2-X3-cocycle-factor} and
\eqref{eq:Theta-Phi-definition} satisfy
\[
 \mathfrak s_\Phi(x,y)=\Theta_\Phi(x,y)^2.
\]
\end{lemma}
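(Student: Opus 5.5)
The identity is a pure cocycle identity in the group generated by \(x,y,\varepsilon\). I will prove it the same way as Lemmas~\ref{lem:first-coefficient-reduction}, \ref{lem:Xi-V-Omega-V} and \ref{lem:Gamma2-G2-cocycle-reductions}: express the quotient \(\mathfrak s_\Phi(x,y)/\Theta_\Phi(x,y)^2\) as a finite product of powers \(\mathscr C_\Phi(a_i,b_i,c_i,d_i)^{m_i}\), each of which equals \(1\) by \eqref{eq:3-cocycle-identity}.

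The first step is to expand every symbol \(\Phi_u(\cdot,\cdot)\) and \(\Phi^u(\cdot,\cdot)\) in both \(\mathfrak s_\Phi(x,y)\) and \(\Theta_\Phi(x,y)\) into raw values of \(\Phi\). Here \(\Theta_\Phi(x,y)\) is \eqref{eq:Theta-Phi-definition} with \(g\mapsto x\), \(h\mapsto y\). All conjugates are computed with the rules \(yx=\varepsilon xy\), \(\varepsilon^2=1\), \(\varepsilon\) central, and arguments are written in the normal form \(\varepsilon^e x^a y^b\). For example, \(x\) conjugates \(y\) to \(\varepsilon y\), \(y\) conjugates \(x\) to \(\varepsilon x\), and \(q=x^2y\) commutes with \(x\). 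These rules are exactly the relations of \(\Gamma_2\) under the substitution of Corollary~\ref{cor:Gamma2-change-of-generators}. Hence it suffices to prove the identity in the free situation, that is, for \(\Gamma_2\) itself with the pulled-back cocycle. There the normal form is unique, so the expansion produces a well-defined Laurent monomial in the symbols \(\Phi(a,b,c)\).

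Before expanding, I will simplify by hand, in the style of the proofs of Lemma~\ref{lem:Xi-V-Omega-V} and Lemma~\ref{lem:x3-cocycle-reduction}. Several factors of \(\mathfrak s_\Phi\) cancel or pair off directly. For instance, \(\Phi_x(x,y)\) and \(\Phi_x(y,y)\) occur with opposite exponents across the second and third lines. Some groups of factors can be rewritten using Lemma~\ref{lem:local-cocycle-coherence}, such as \(\Phi_x(\varepsilon,x)\Phi_x(y,\varepsilon x)=\Phi_x(y,\varepsilon)\Phi_x(\varepsilon y,x)\). The squared bracket in the first line of \eqref{eq:Gamma2-X3-cocycle-factor} is \((\kappa_3/\zeta)^2\) with \(g,h\) replaced by \(x,y\); by Lemma~\ref{lem:x3-cocycle-reduction} it can be traded for \((d_3/(\zeta\lambda^2))^2\), which involves fewer terms. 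The aim of these reductions is to reach a quotient in which every remaining cocycle factor is an honest \(\Phi\)-value, with no surviving \(\Phi_x(\varepsilon,\varepsilon)\) terms that would have to be supplied by \(\zeta^2\).

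The main obstacle is the final step: finding the explicit exponent vector \((m_i)\) that matches the resulting monomial with a product of \(\mathscr C_\Phi\)'s. I will do this mechanically. The exponents of the monomial form a vector over \(\mathbb Z\) indexed by triples of normal-form elements, and the coboundaries \(\mathscr C_\Phi(a,b,c,d)\), with \(a,b,c,d\) ranging over the finitely many elements appearing (words of length at most four in \(x,y,\varepsilon\)), span a lattice. Solving the integer linear system by Smith normal form, as in the linked GAP programs, produces the factorization. I expect a table of a few dozen quadruples with exponents in \(\{\pm1,\pm2\}\); the \(\pm2\) entries come from the squares, as in the \(d_4\) table. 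Recording this table and verifying it in GAP completes the proof.
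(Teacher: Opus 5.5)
Your plan is the paper's proof: the paper expands $\mathfrak s_\Phi(x,y)/\Theta_\Phi(x,y)^2$ in the normal form $\varepsilon^e x^a y^b$ and records, with GAP verification, an explicit product of $63$ integer powers of $\mathscr C_\Phi$. Your hand pre-reductions via Lemma~\ref{lem:local-cocycle-coherence} and Lemma~\ref{lem:x3-cocycle-reduction} are valid but optional, and your worry about leftover $\Phi_x(\varepsilon,\varepsilon)$ terms is unnecessary because both sides are already pure cocycle expressions. Two of your illustrative claims are wrong, though, and must not be used in the expansion: $q=x^2y$ does not commute with $x$ (one has $xqx^{-1}=\varepsilon q$; it is $x^2$ that is central), and only $\Phi_x(y,y)$ cancels between the second and third lines of \eqref{eq:Gamma2-X3-cocycle-factor}, whereas $\Phi_x(x,y)$ survives with net exponent $3$.
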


\begin{proof}
Expand the  cocycles in
\(\mathfrak s_\Phi(x,y)/\Theta_\Phi(x,y)^2\), and reduce
the group elements to the form \(\varepsilon^e x^a y^b\),
using \(yx=\varepsilon xy\), \(\varepsilon^2=1\), and
the centrality of \(\varepsilon\).
The resulting quotient is a product of \(63\) factors
\(\mathscr C_\Phi(a,b,c,d)^m\), with \(m\in\mathbb Z\),
as recorded and verified in
\gaplink{Gamma_2/g2_pure_parameters/s_phi_equals_theta_squared/lemma_B14_s_phi_equals_theta_squared.g}.
Each factor is one by \eqref{eq:3-cocycle-identity},
so \(\mathfrak s_\Phi(x,y)=\Theta_\Phi(x,y)^2\).
\end{proof}

A further cocycle calculation gives the value of \(\Theta_\Phi\)
after the second reflection.

\begin{lemma}
\label{lem:Gamma2-R2-Theta-cocycle}
Under the hypotheses of the preceding lemma,
\begin{equation}
\label{eq:Gamma2-R2-Theta-cocycle}
 \Theta_\Phi(yx,y^{-1})=\Theta_\Phi(x,y).
\end{equation}
\end{lemma}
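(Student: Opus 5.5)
The plan follows the same pattern as the other cocycle reductions in this appendix. We reduce the quotient \(\Theta_\Phi(yx,y^{-1})/\Theta_\Phi(x,y)\) to a product of powers of \(\mathscr C_\Phi\), each of which equals one.

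First we check that the substitution is legitimate. Put \(x'=yx\) and \(y'=y^{-1}\). From \(yx=\varepsilon xy\), \(\varepsilon^2=1\) and centrality of \(\varepsilon\), we get \(y^{-1}x=\varepsilon xy^{-1}\). Hence
\[
y'x'=x=\varepsilon\,(yx)y^{-1}=\varepsilon x'y'.
\]
So \(\Theta_\Phi(x',y')\) is defined by the convention after \eqref{eq:Theta-Phi-definition}.

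Next we expand each \(\Phi_{u}\) and \(\Phi^{u}\) occurring in \(\Theta_\Phi(x',y')\) into values of \(\Phi\). This is a product of about twenty local factors, each contributing three values. Every argument is then rewritten in the normal form \(\varepsilon^e x^a y^b\). For example, \(x'=\varepsilon xy\), \(x'y'=\varepsilon x\), \(\varepsilon x'y'=x\), and conjugates are computed via \(yxy^{-1}=\varepsilon x\) and \(xyx^{-1}=\varepsilon y\). We do the same for \(\Theta_\Phi(x,y)\). The result is a formal quotient of \(\Phi\)-values indexed by normal-form triples.

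The core step is to exhibit integers \(m_i\) and tuples \((a_i,b_i,c_i,d_i)\) with
\[
\frac{\Theta_\Phi(yx,y^{-1})}{\Theta_\Phi(x,y)}=\prod_i\mathscr C_\Phi(a_i,b_i,c_i,d_i)^{m_i}.
\]
This is a linear problem over \(\mathbb Z\) in the free abelian group on normal-form triples. We would solve it by computer, as in the companion GAP certificates, and record the tuples and the certificate link.

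Before brute force, we would try to shrink the expression by hand, as in Lemma~\ref{lem:x3-cocycle-reduction}. Lemma~\ref{lem:local-cocycle-coherence} lets us move between factors like \(\Phi_u(y^{-1},\cdot)\) and \(\Phi_u(y,\cdot)\), using \(\Phi_u(y^{-1},y)\)-type identities. Lemma~\ref{lem:reflected-character-cocycle-reduction} absorbs the \(\Phi^{x'}\)-terms.

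There is a conceptual alternative, which we would use as a sanity check. Proposition~\ref{prop:Gamma2-standard-G2-stability-under-reflections} identifies \(\Theta\) for \(R_2(V,W)\) with the representatives \((hg,h^{-1})\). One could derive the identity from the intrinsic meaning of \(\Theta\), as the simplicity obstruction of \(X_2\) via Lemma~\ref{lem:Gamma2-X2-simple}. However, that meaning only applies under parameter hypotheses, so a general cocycle identity needs the direct computation.

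The main obstacle is the bookkeeping in the second step. The inverse \(y^{-1}\) produces triples with negative exponents, and these must be matched with the positive-exponent triples coming from \(\Theta_\Phi(x,y)\). Finding the right \(\mathscr C_\Phi\) factors that trade \(\Phi(\cdot,y^{-1},\cdot)\) for \(\Phi(\cdot,y,\cdot)\) (tuples like \((a,y^{-1},y,d)\)) is where the search will concentrate.
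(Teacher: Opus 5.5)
Your proposal is correct and matches the paper's proof. The paper also first checks that \((yx,y^{-1})\) again satisfies \(y'x'=\varepsilon x'y'\). It then expands \(\Theta_\Phi(yx,y^{-1})/\Theta_\Phi(x,y)\) into \(\Phi\)-values in normal form and exhibits the quotient, via a GAP certificate, as a product of \(94\) integer powers of \(\mathscr C_\Phi(a,b,c,d)\), each of which equals one.
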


\begin{proof}
Since \(y^{-1}(yx)=x=\varepsilon(yx)y^{-1}\), the pair
\((yx,y^{-1})\) satisfies the same relation as \((x,y)\).
Expanding the  cocycles in
\(\Theta_\Phi(yx,y^{-1})/\Theta_\Phi(x,y)\) and using
the group relations gives the factorization into \(94\)
integer powers of \(\mathscr C_\Phi(a,b,c,d)\)
recorded and verified in
\gaplink{Gamma_2/g2_pure_parameters/theta_r2_invariance/lemma_B15_theta_after_second_reflection.g}.
The quotient is therefore one by
\eqref{eq:3-cocycle-identity}, proving
\eqref{eq:Gamma2-R2-Theta-cocycle}.
\end{proof}

\subsection{ {Cocycle identities for the simple objects
corresponding to positive roots of type \texorpdfstring{$G_2$}{G2}}}

The following identities determine the values of the one-dimensional
projective representations and the edge labels of the generalized
Dynkin diagrams for the  {simple objects corresponding to the
positive roots of type \(G_2\)}.

\begin{lemma}
\label{lem:Gamma2-K2-cocycle-reduction}
The cocycle quotient $K_2$ in
\eqref{eq:Gamma2-K2-support-value} is equal to one.
\end{lemma}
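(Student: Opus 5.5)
The plan is to follow the pattern of Lemmas~\ref{lem:x3-cocycle-reduction} and \ref{lem:reflected-character-cocycle-reduction}: use Lemma~\ref{lem:local-cocycle-coherence} to cancel most of the $\Phi_g$ factors, and then identify what remains with a single instance of \eqref{eq:3-cocycle-identity}.

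\emph{Step 1: absorb $\Phi_g(h,\varepsilon)$.} Apply Lemma~\ref{lem:local-cocycle-coherence} with $(x,y,z,w)=(g,h,\varepsilon,g)$, exactly as in Lemma~\ref{lem:reflected-character-cocycle-reduction}. This gives
\[
\Phi_g(\varepsilon,g)\Phi_g(h,\varepsilon g)=\Phi_g(h,\varepsilon)\Phi_g(\varepsilon h,g).
\]
It removes $\Phi_g(h,\varepsilon)$ together with the pair $\Phi_g(h,\varepsilon g)\Phi_g(\varepsilon,g)$ from \eqref{eq:Gamma2-K2-support-value}, at the cost of one new factor $\Phi_g(\varepsilon h,g)^{-1}$.

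\emph{Step 2: combine the factors involving $g^2$.} Apply the same lemma with $(x,y,z,w)=(g,h,g,g)$. Since $g$ commutes with $g$, this gives
\[
\Phi_g(g,g)\Phi_g(h,g^2)=\Phi_g(h,g)\Phi_g(hg,g).
\]
This cancels $\Phi_g(h,g)$ in the numerator against $\Phi_g(h,g^2)\Phi_g(g,g)$ in the denominator, leaving $\Phi_g(hg,g)^{-1}$. If the $\varepsilon$-bookkeeping is cleaner another way, I would instead use $(x,y,z,w)=(g,\varepsilon h,g,g)$ so that $\Phi_g(\varepsilon h,g)$ merges directly. In either version, the expected remainder is
\[
\frac{\Phi^{gh}(g,h)\,\Phi^{g^2h}(g,gh)}{\Phi_h(g,h)\,\Phi_g(\varepsilon h,g)\,\Phi_g(hg,g)\,\Phi_{gh}(g,gh)},
\]
possibly after rewriting $hg=\varepsilon gh$.

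\emph{Step 3: expand into raw cocycle values.} Expand every $\Phi_x$ and $\Phi^x$ in this remainder using the conjugation rules
\[
gh\cdot g=\varepsilon g\cdot gh,\qquad g^2h\cdot g=g\cdot g^2h,\qquad g\cdot gh\cdot g^{-1}=hg,
\]
together with the centrality of $\varepsilon$ and $\varepsilon^2=1$. After the cancellations from Step 1 (as in the computation of $K_1$, where $\Phi^{gh}(g,h)/(\Phi_h(g,h)\Phi_g(\varepsilon h,g))$ collapsed to a single $\mathscr C_\Phi(\varepsilon g,\varepsilon h,g,h)^{-1}$), the factors $\Phi^{g^2h}(g,gh)$ and $\Phi_{gh}(g,gh)$ should combine with $\Phi_g(hg,g)$ into a short product. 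I expect the leftover to be a product of one or two terms of the form $\mathscr C_\Phi(\cdot,\cdot,\cdot,\cdot)^{\pm1}$, with arguments drawn from $\varepsilon g$, $g$, $gh$, $hg$, and these are trivial by \eqref{eq:3-cocycle-identity}.

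\emph{Main obstacle.} The difficulty is Step 3: the bookkeeping of about ten raw $\Phi$-values with $\varepsilon$-twisted arguments. I would first try to recognise the whole quotient as $K_1\cdot\mathscr C_\Phi(\varepsilon g,\varepsilon gh,g,gh)^{\pm1}$, in analogy with the $\mathcal K_3$ table, which contains exactly $\mathscr C_\Phi(\varepsilon g,\varepsilon h,g,h)$ and $\mathscr C_\Phi(\varepsilon g,\varepsilon gh,g,gh)$. Since $K_1=1$ by Lemma~\ref{lem:reflected-character-cocycle-reduction}, this would reduce the claim to a single cocycle identity. If the factors do not match that form, I would fall back on a GAP check of the product, as is done for the neighbouring lemmas.
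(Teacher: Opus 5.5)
Your plan is correct and is essentially the paper's proof. The paper writes
\[
K_2=K_1\,\frac{\Phi_g(h,g)\,\Phi^{g^2h}(g,gh)}{\Phi_g(h,g^2)\,\Phi_g(g,g)\,\Phi_{gh}(g,gh)},
\]
and cites $K_1=1$ from Lemma~\ref{lem:reflected-character-cocycle-reduction}. Your Step~1 and the $K_1$-part of Step~3 just reproduce that proof, so you can cite it directly. The paper then expands the remaining quotient into $\mathscr C_\Phi(h,g,g,g)^{-1}\mathscr C_\Phi(\varepsilon g,h,g,g)\,\mathscr C_\Phi(\varepsilon g,\varepsilon gh,g,gh)^{-1}$.

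Your Step~2, Lemma~\ref{lem:local-cocycle-coherence} at $(g,h,g,g)$, accounts for exactly the first two of these factors. The leftover $\Phi^{g^2h}(g,gh)/\bigl(\Phi_g(hg,g)\Phi_{gh}(g,gh)\bigr)$ is precisely $\mathscr C_\Phi(\varepsilon g,\varepsilon gh,g,gh)^{-1}$. So your anticipated identification holds, with exponent $-1$. Drop the alternative $(g,\varepsilon h,g,g)$: it introduces $\Phi_g(\varepsilon h,g^2)$, which does not occur in $K_2$.

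One correction to Step~3: the rule $g^2h\cdot g=g\cdot g^2h$ is false. Since $g^2$ is central, conjugation by $g^2h$ equals conjugation by $h$. Hence $(g^2h)g(g^2h)^{-1}=\varepsilon g$ and $(g^2h)(gh)(g^2h)^{-1}=\varepsilon gh$. With these rules,
\[
\Phi^{g^2h}(g,gh)=\frac{\Phi(g^2h,g,gh)\,\Phi(\varepsilon g,\varepsilon gh,g^2h)}{\Phi(\varepsilon g,g^2h,gh)},\qquad \Phi_g(hg,g)=\Phi(\varepsilon g,\varepsilon gh,g),\qquad \Phi_{gh}(g,gh)=\Phi(\varepsilon gh,g,gh).
\]
Now compare with \eqref{eq:3-cocycle-identity} at $(\varepsilon g,\varepsilon gh,g,gh)$, using $(\varepsilon gh)g=(\varepsilon g)(\varepsilon gh)=g(gh)=g^2h$; this gives the claim. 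With the rule as you wrote it, the expansion of $\Phi^{g^2h}(g,gh)$ would not match.
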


\begin{proof}
Let \(K_1\) be the quotient in
\eqref{eq:K1-reflected-character}.  By
Lemma~\ref{lem:reflected-character-cocycle-reduction}, \(K_1=1\).
Comparing the definitions of \(K_2\) and \(K_1\), the common factors
cancel.  Moreover, \(g^2h=hg^2\), the conjugation by \(hg\) and by
\(g^2h\) sends \(g\) to \(\varepsilon g\), while conjugation by
\(g^2h\) sends \(gh\) to \(\varepsilon gh\).  Substituting the
definitions of \(\Phi_x\) and \(\Phi^x\) therefore gives
\begin{align*}
\Phi_g(h,g)
&=\Phi(\varepsilon g,h,g),
&
\Phi_g(g,g)
&=\Phi(g,g,g),
\\
\Phi_g(h,g^2)
&=
\frac{
 \Phi(h,g^2,g)\Phi(\varepsilon g,h,g^2)
}{
 \Phi(h,g,g^2)
},
&
\Phi_{gh}(g,gh)
&=\Phi(\varepsilon gh,g,gh),
\\
\Phi^{g^2h}(g,gh)
&=
\frac{
 \Phi(g^2h,g,gh)
 \Phi(\varepsilon g,\varepsilon gh,g^2h)
}{
 \Phi(\varepsilon g,g^2h,gh)
}.
\end{align*}
Consequently,
\begin{align*}
K_2
&=
K_1
\frac{
 \Phi_g(h,g)\Phi^{g^2h}(g,gh)
}{
 \Phi_g(h,g^2)\Phi_g(g,g)\Phi_{gh}(g,gh)
}
\\
&=
\frac{
 \Phi(\varepsilon g,h,g)\Phi(h,g,g^2)
}{
 \Phi(h,g^2,g)\Phi(\varepsilon g,h,g^2)\Phi(g,g,g)
}
\frac{
 \Phi(g^2h,g,gh)\Phi(\varepsilon g,\varepsilon gh,g^2h)
}{
 \Phi(\varepsilon g,g^2h,gh)\Phi(\varepsilon gh,g,gh)
}
\\
&=
\mathscr C_\Phi(h,g,g,g)^{-1}
\mathscr C_\Phi(\varepsilon g,h,g,g)
\mathscr C_\Phi(\varepsilon g,\varepsilon gh,g,gh)^{-1}.
\end{align*}
  Each factor
\(\mathscr C_\Phi(a,b,c,d)\) is equal to \(1\), and hence \(K_2=1\).
\end{proof}

\begin{lemma}
\label{lem:Theta-diagonal-braiding-cocycle}
For every pair $x,y\in G$ satisfying $yx=\varepsilon xy$, one has
\begin{equation}
\label{eq:Theta-diagonal-braiding-cocycle}
\Theta_\Phi(x,y)
\frac{
\Phi_x(\varepsilon,\varepsilon)\Phi_x(x,y)
}{
\Phi_x(\varepsilon,x)^2\Phi_x(y,\varepsilon x)
}
=1.
\end{equation}
\end{lemma}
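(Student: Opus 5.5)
This identity involves only values of \(\Phi\), so I will prove it the same way as the other cocycle reductions in this appendix. The plan is to show that the left-hand side of \eqref{eq:Theta-diagonal-braiding-cocycle} is a product of integer powers of \(\mathscr C_\Phi(a,b,c,d)\), each of which equals one by \eqref{eq:3-cocycle-identity}.

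\textbf{Step 1: preliminary reductions.} Substitute \(g\mapsto x\), \(h\mapsto y\) in \eqref{eq:Theta-Phi-definition}. Before expanding, I will simplify with the local coherence identity of Lemma~\ref{lem:local-cocycle-coherence}, as in the proofs of Lemmas~\ref{lem:Xi-V-Omega-V} and \ref{lem:x3-cocycle-reduction}.

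For example, the instance \((x,y,\varepsilon,x)\) gives
\[
\Phi_x(\varepsilon,x)\Phi_x(y,\varepsilon x)=\Phi_x(y,\varepsilon)\Phi_x(\varepsilon y,x).
\]
This directly rewrites the denominator factor \(\Phi_x(y,\varepsilon x)\) occurring in the diagonal-braiding quotient. The factor \(\Phi_x(\varepsilon,\varepsilon)\) should be matched with the \(\Phi_y(\varepsilon,\varepsilon)\)-type terms of \(\Theta_\Phi\); the natural source is the centrality of \(\varepsilon\) via \(f_\Phi\) (Lemma~\ref{lem:alternator}), or directly further instances of the coherence identity.

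\textbf{Step 2: full expansion.} Expand every remaining \(\Phi_u(\cdot,\cdot)\) and \(\Phi^u(\cdot,\cdot)\) into raw values \(\Phi(a,b,c)\). Reduce all group elements to the normal form \(\varepsilon^e x^a y^b\), using \(yx=\varepsilon xy\), \(\varepsilon^2=1\), and centrality of \(\varepsilon\). Useful conjugations include \(xyx^{-1}=\varepsilon y\) and \(yxy^{-1}=\varepsilon x\). One then cancels the values that appear in both numerator and denominator.

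\textbf{Step 3: factorization.} Write the remaining quotient as a product \(\prod_i\mathscr C_\Phi(a_i,b_i,c_i,d_i)^{m_i}\). This is the main obstacle: finding the right tuples. My strategy is greedy elimination, as in Lemma~\ref{lem:Gamma2-K2-cocycle-reduction}:
\begin{itemize}
\item Pick a value \(\Phi(a,b,c)\) with a "long" first or middle argument, such as \(\Phi(\varepsilon xy,x,y)\) or \(\Phi(y,\varepsilon x,x)\).
\item Divide by the \(\mathscr C_\Phi\) whose five factors include it, choosing \((a,b,c,d)\) to be consistent with the degrees present, for instance \(\mathscr C_\Phi(\varepsilon x,\varepsilon y,x,y)\) or \(\mathscr C_\Phi(y,x,\varepsilon,x)\).
\item Repeat until the quotient is trivial.
\end{itemize}
Since the number of factors may be large (comparable to the 15 to 94 factors in the preceding lemmas), I would carry out and verify this elimination by computer, in the same format as the other GAP certificates. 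I would record the resulting table of tuples and exponents, and conclude the proof by appealing to \eqref{eq:3-cocycle-identity}.
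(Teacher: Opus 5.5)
Your plan is the same as the paper's proof. The paper expands the left-hand side for \((x,y)=(g,h)\) into raw values of \(\Phi\), reduces group elements using only \(hg=\varepsilon gh\), \(\varepsilon^2=1\) and the centrality of \(\varepsilon\), writes the quotient as a GAP-checked product of twenty explicit powers of \(\mathscr C_\Phi\), and then passes to arbitrary \((x,y)\) because only these relations were used. The substance of the proof is that explicit factorization, which your proposal leaves to a future computation, so as written it is the correct strategy rather than a completed verification. Also, the \(f_\Phi\) remark in Step 1 is not a usable mechanism: \(f_\Phi(x,\varepsilon,\varepsilon)=1\) identically, so it carries no information about \(\Phi_x(\varepsilon,\varepsilon)\) or \(\Phi_y(\varepsilon,\varepsilon)\).
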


\begin{proof}
We first take \((x,y)=(g,h)\).
Expanding the  cocycles expresses the left-hand side of
\eqref{eq:Theta-diagonal-braiding-cocycle} as the product of
the twenty factors \(\mathscr C_\Phi(a,b,c,d)^m\)
specified by the following table.

\begin{center}
\small
\renewcommand{\arraystretch}{1.08}
\begin{tabular}{@{}c r@{\qquad}c r@{}}
\hline
\((a,b,c,d)\) & \(m\) & \((a,b,c,d)\) & \(m\) \\
\hline
\((\varepsilon,\varepsilon,\varepsilon,h)\) & \(1\) &
\((\varepsilon,\varepsilon,\varepsilon,\varepsilon g)\) & \(-1\) \\

\((\varepsilon,g,g,h)\) & \(-1\) &
\((\varepsilon,g,h,\varepsilon)\) & \(1\) \\

\((\varepsilon,g,\varepsilon h,g)\) & \(1\) &
\((\varepsilon,h,\varepsilon,\varepsilon g)\) & \(-1\) \\

\((\varepsilon,h,g,g)\) & \(-1\) &
\((\varepsilon,\varepsilon h,g,\varepsilon)\) & \(-1\) \\

\((g,\varepsilon,\varepsilon,\varepsilon g)\) & \(1\) &
\((g,h,\varepsilon,gh)\) & \(-1\) \\

\((g,\varepsilon h,g,h)\) & \(-1\) &
\((h,\varepsilon,\varepsilon,\varepsilon h)\) & \(1\) \\

\((h,\varepsilon,g,\varepsilon)\) & \(-1\) &
\((h,\varepsilon,g,g)\) & \(1\) \\

\((h,\varepsilon,g,h)\) & \(-1\) &
\((h,\varepsilon g,\varepsilon,gh)\) & \(1\) \\

\((\varepsilon g,\varepsilon,\varepsilon g,\varepsilon h)\) & \(1\) &
\((\varepsilon g,h,g,h)\) & \(1\) \\

\((\varepsilon h,\varepsilon,\varepsilon g,\varepsilon h)\) & \(-1\) &
\((\varepsilon h,\varepsilon,\varepsilon h,g)\) & \(1\) \\
\hline
\end{tabular}
\end{center}

This factorization is verified in
\gaplink{Gamma_2/diagonal_braiding_theta/theta_diagonal_braiding_cocycle.g}.
Each factor is one by \eqref{eq:3-cocycle-identity}.
The calculation uses only \(hg=\varepsilon gh\),
\(\varepsilon^2=1\), and the centrality of \(\varepsilon\).
Replacing \(g,h\) by \(x,y\) therefore proves
\eqref{eq:Theta-diagonal-braiding-cocycle}.
\end{proof}

\begin{lemma}
\label{lem:Theta-opposite-symmetry}
Let $a,b\in G$ satisfy $ba=\varepsilon ab$, where
$\varepsilon^2=1$ and $\varepsilon\in Z(G)$.
Let $\Theta_\Phi(a,b)$ be the scalar obtained from
\eqref{eq:Theta-Phi-definition} after replacing $(g,h)$ by $(a,b)$.
Then
\begin{equation}
\label{eq:Theta-opposite-symmetry}
\Theta_\Phi(a,b)=\Theta_\Phi(b,a).
\end{equation}
\end{lemma}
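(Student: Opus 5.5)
The identity is a pure cocycle identity, so the plan is to reduce it to the normalized \(3\)-cocycle identity in the same way as Lemmas~\ref{lem:Gamma2-X3-cocycle-factor-identity} and~\ref{lem:Gamma2-R2-Theta-cocycle}. First observe that the pair \((b,a)\) is admissible: from \(ba=\varepsilon ab\) and \(\varepsilon^2=1\) with \(\varepsilon\) central we get \(ab=\varepsilon ba\). Hence \(\Theta_\Phi(b,a)\) is defined by substituting \(g\mapsto b\), \(h\mapsto a\) in \eqref{eq:Theta-Phi-definition}. It also suffices to treat the universal situation \((a,b)=(g,h)\). The reduction uses only \(hg=\varepsilon gh\), \(\varepsilon^2=1\) and the centrality of \(\varepsilon\), so the same factorization specializes to any \((a,b)\).

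Before expanding, I would try to shortcut with the structural identity of Lemma~\ref{lem:Theta-diagonal-braiding-cocycle}. It expresses \(\Theta_\Phi(x,y)^{-1}\) as the local quotient
\[
\frac{\Phi_x(\varepsilon,\varepsilon)\Phi_x(x,y)}{\Phi_x(\varepsilon,x)^2\Phi_x(y,\varepsilon x)},
\]
which involves only \(\Phi_x\). Applying it to \((a,b)\) and to \((b,a)\) reduces \eqref{eq:Theta-opposite-symmetry} to the equality of these two short quotients. One involves \(\Phi_a\) evaluated at \(\varepsilon,a,b\), and the other involves \(\Phi_b\) evaluated at \(\varepsilon,b,a\). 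This is far shorter than comparing the two long expressions \eqref{eq:Theta-Phi-definition} directly.

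To compare the two short quotients, I would expand \(\Phi_a\) and \(\Phi_b\) by definition. In the normal form \(\varepsilon^e a^i b^j\), all conjugations are explicit: \(bab^{-1}=\varepsilon a\) and \(aba^{-1}=\varepsilon b\). The resulting quotient of roughly a dozen \(\Phi\)-values can then be split using Lemma~\ref{lem:local-cocycle-coherence}, applied with central \(\varepsilon\) at tuples such as \((a,b,\varepsilon,a)\) and \((b,a,\varepsilon,b)\), to rewrite \(\Phi_a(b,\varepsilon a)\) and \(\Phi_b(a,\varepsilon b)\). This is analogous to the first step of Lemma~\ref{lem:reflected-character-cocycle-reduction}. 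What remains should collapse to a product of factors \(\mathscr C_\Phi(\cdot,\cdot,\cdot,\cdot)^{\pm1}\), at tuples like \((\varepsilon a,\varepsilon b,a,b)\) and \((\varepsilon,a,b,\varepsilon)\), each equal to \(1\) by \eqref{eq:3-cocycle-identity}.

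The main obstacle is finding this explicit factorization; it is bookkeeping rather than a conceptual difficulty. If the hand reduction stalls, I would do what the earlier lemmas do. That means forming the free abelian group on symbols \(\Phi(u,v,w)\) with \(u,v,w\) in normal form, writing the exponent vector of the quotient, and solving the integer linear system for the exponents of the \(\mathscr C_\Phi\) generators. The solution would be verified by a GAP certificate in the same style.
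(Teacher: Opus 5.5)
Your proposal follows the paper's route. You apply Lemma~\ref{lem:Theta-diagonal-braiding-cocycle} to both $(a,b)$ and $(b,a)$, which is legitimate because $ab=\varepsilon ba$, and then expand the ratio of the two short $\Phi_x$-quotients into integer powers of $\mathscr C_\Phi$; the paper records this as fourteen explicit factors certified by GAP, and the tuples you guess (such as $(\varepsilon a,\varepsilon b,a,b)$) are not among them, which is harmless since you offer them only as candidates.
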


\begin{proof}
Since \(ba=\varepsilon ab\) and \(\varepsilon^2=1\), one also has
\(ab=\varepsilon ba\).  Hence
Lemma~\ref{lem:Theta-diagonal-braiding-cocycle} applies to both
\((a,b)\) and \((b,a)\), and gives
\begin{align}
\Theta_\Phi(a,b)
&=
\frac{
\Phi_a(\varepsilon,a)^2\Phi_a(b,\varepsilon a)
}{
\Phi_a(\varepsilon,\varepsilon)\Phi_a(a,b)
},
\label{eq:Theta-ab-reduced}
\\
\Theta_\Phi(b,a)
&=
\frac{
\Phi_b(\varepsilon,b)^2\Phi_b(a,\varepsilon b)
}{
\Phi_b(\varepsilon,\varepsilon)\Phi_b(b,a)
}.
\label{eq:Theta-ba-reduced}
\end{align}
Expanding the  cocycles in
\eqref{eq:Theta-ab-reduced} and
\eqref{eq:Theta-ba-reduced}, and using
\(ba=\varepsilon ab\), gives
\[
\frac{\Theta_\Phi(a,b)}{\Theta_\Phi(b,a)}
=
\prod_{i=1}^{14}
\mathscr C_\Phi(a_i,b_i,c_i,d_i)^{m_i},
\]
where the tuples and exponents are given by
\[
\begin{array}{c c r @{\qquad} c c r}
\hline
i & (a_i,b_i,c_i,d_i) & m_i
& i & (a_i,b_i,c_i,d_i) & m_i\\
\hline
1 & (b,a,\varepsilon,a) & 1
& 8 & (a,\varepsilon b,\varepsilon,a) & -1\\
2 & (b,\varepsilon,b,a) & -1
& 9 & (\varepsilon,b,\varepsilon,\varepsilon a) & -1\\
3 & (b,\varepsilon,\varepsilon a,b) & 1
& 10 & (\varepsilon,\varepsilon,b,\varepsilon a) & 1\\
4 & (a,b,\varepsilon,b) & -1
& 11 & (\varepsilon,\varepsilon,a,b) & -1\\
5 & (a,b,\varepsilon,\varepsilon) & 1
& 12 & (\varepsilon b,a,\varepsilon,b) & 1\\
6 & (a,\varepsilon,b,\varepsilon a) & -1
& 13 & (\varepsilon b,a,\varepsilon,\varepsilon) & -1\\
7 & (a,\varepsilon,a,b) & 1
& 14 & (\varepsilon b,\varepsilon,a,\varepsilon) & 1\\
\hline
\end{array}
\]
This factorization is also verified by the GAP program
\gaplink{Gamma_2/theta_symmetry/lemma_B19_theta_symmetry.g}.
Every factor is equal to \(1\) by
\eqref{eq:3-cocycle-identity}, proving
\eqref{eq:Theta-opposite-symmetry}.
\end{proof}

\section{Calculations for the \texorpdfstring{$\Gamma_4$}{Gamma4} case}
\label{app:Gamma4-calculations}

Retain the notation of
Section~\ref{sec:classification-Gamma4}.
Unless a statement says otherwise, \(\rho\) and \(\sigma\) are
one-dimensional throughout this section. 

Group elements are reduced to the abstract normal form
\(\varepsilon^i h^j g^k\), where
\(i\in\mathbb Z/4\mathbb Z\) and \(j,k\in\mathbb Z\).  These
reductions depend only on the defining relations of \(\Gamma_4\) and
therefore hold in every finite quotient considered in
Section~\ref{sec:classification-Gamma4}.

\subsection{Cocycle reduction for \texorpdfstring{$X_2$}{X2}}

\begin{lemma}
\label{lem:Gamma4-X2-cocycle-reduction}
Recall that
$\alpha=\frac{\rho(\varepsilon^{-1})}
             {\Phi_h(g,\varepsilon^{-1})}$
and $$\kappa_X=
\frac{\rho(\varepsilon^2)}
     {\Phi_h(\varepsilon^2,h)\Phi_h(g,\varepsilon^2h)}.
$$
For
\[
\Xi_X=
\frac{\alpha\Phi_h(h,g)\rho(\varepsilon^{-1}h)}
     {\rho(h)\Phi^h(h,g)\Phi_h(g,\varepsilon^{-1}h)}
\frac{\Phi(h,\varepsilon^{-1}h,\varepsilon g)}
     {\Phi(\varepsilon^{-1}h,h,\varepsilon g)},
\]
one has \(\Xi_X=\kappa_X\).
\end{lemma}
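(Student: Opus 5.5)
We reduce \(\Xi_X/\kappa_X\) to a product of cocycle values and then recognise that product as a product of powers of \(\mathscr C_\Phi\), in the same way as Lemmas~\ref{lem:first-coefficient-reduction} and \ref{lem:x3-cocycle-reduction}.

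\textbf{Step 1: remove the projective characters.} We first rewrite \(\Xi_X/\kappa_X\) so that \(\rho\) occurs only in a single quotient. Substituting \(\alpha\) gives
\[
\frac{\Xi_X}{\kappa_X}
=\frac{\rho(\varepsilon^{-1})\rho(\varepsilon^{-1}h)}{\rho(h)\rho(\varepsilon^2)}\cdot(\text{cocycle values}).
\]
Since \(\varepsilon^{-1},h,\varepsilon^2\in C_G(h)\), the relation \eqref{eq:YD-projective-action} gives
\[
\rho(\varepsilon^{-1})\rho(h)=\Phi_h(\varepsilon^{-1},h)\rho(\varepsilon^{-1}h),
\qquad
\rho(\varepsilon^{-1})^2=\Phi_h(\varepsilon^{-1},\varepsilon^{-1})\rho(\varepsilon^2).
\]
Hence
\[
\frac{\rho(\varepsilon^{-1})\rho(\varepsilon^{-1}h)}{\rho(h)\rho(\varepsilon^2)}
=\frac{\rho(\varepsilon^{-1})^2}{\Phi_h(\varepsilon^{-1},h)\,\rho(\varepsilon^2)}
=\frac{\Phi_h(\varepsilon^{-1},\varepsilon^{-1})}{\Phi_h(\varepsilon^{-1},h)}.
\]
After this substitution, \(\Xi_X/\kappa_X\) involves only values of \(\Phi\).

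\textbf{Step 2: apply local coherence.} Next we combine the terms \(\Phi_h(h,g)\), \(\Phi_h(g,\varepsilon^{-1}h)\), \(\Phi_h(g,\varepsilon^{-1})\), \(\Phi_h(g,\varepsilon^2h)\) and \(\Phi_h(\varepsilon^2,h)\) using Lemma~\ref{lem:local-cocycle-coherence}. Three instances will be used, all with \(x=h\):
\begin{itemize}
\item \((x,y,z,w)=(h,g,\varepsilon^{-1},h)\). This relates \(\Phi_h(g,\varepsilon^{-1}h)\) to \(\Phi_h(g,\varepsilon^{-1})\) and \(\Phi_h(\varepsilon^{-1},h)\), using \(\varepsilon^{-1}\in C_G(h)\).
\item \((x,y,z,w)=(h,g,\varepsilon^2,h)\). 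This does the same for \(\Phi_h(g,\varepsilon^2h)\).
\item \((x,y,z,w)=(h,\varepsilon^{-1},\varepsilon^{-1},h)\). This absorbs the factor \(\Phi_h(\varepsilon^{-1},\varepsilon^{-1})\) produced in Step~1.
\end{itemize}
The conjugations needed are \(ghg^{-1}=\varepsilon^{-1}h\), \(g\varepsilon g^{-1}=\varepsilon^{-1}\), \(g(\varepsilon^{-1}h)g^{-1}=h\), and the centrality of \(\varepsilon^2\). After these cancellations the \(\Phi_h\)-terms should reduce to a quotient of the form \(\Phi_h(h,g)/\Phi_h(g,\varepsilon^{-1}h)\), up to factors of \(\Phi_h\) evaluated on pairs of commuting elements of \(C_G(h)\).

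\textbf{Step 3: the remaining product.} The term \(\Phi_h(h,g)/\Phi^h(h,g)\) and the associator ratio \(\Phi(h,\varepsilon^{-1}h,\varepsilon g)/\Phi(\varepsilon^{-1}h,h,\varepsilon g)\) are then expanded directly into \(\Phi\). We use \(hg=g\varepsilon^{-1}h=\varepsilon gh\) and \(hgh^{-1}=\varepsilon g\), and write every group element in the normal form \(\varepsilon^ih^jg^k\). The goal is to express what remains as a product \(\prod_i\mathscr C_\Phi(a_i,b_i,c_i,d_i)^{m_i}\), each factor of which equals \(1\) by \eqref{eq:3-cocycle-identity}. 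The natural candidate tuples are \((h,\varepsilon^{-1},h,g)\), \((\varepsilon^{-1},h,h,g)\), \((h,h,g,\ldots)\), and tuples containing \(\varepsilon^{-1}h\) in the second slot.

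\textbf{Main obstacle.} The hard part is finding this exact factorization. Every reduction relation is a consequence of the defining relations of \(\Gamma_4\), so the factorization can be found by computer: expand everything over the free abelian group on the symbols \(\Phi(a,b,c)\) in normal form, and solve an integer linear system in the exponents \(m_i\). We expect to proceed this way, as with the linked GAP certificates elsewhere in the paper, and then record the resulting table.
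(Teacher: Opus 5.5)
\textbf{Verdict: there is a gap.} Your overall route matches the paper's: remove \(\rho\) by projective multiplication, apply Lemma~\ref{lem:local-cocycle-coherence}, expand, and factor into \(\mathscr C_\Phi\). Your Step~1 and your instances \((h,g,\varepsilon^{-1},h)\) and \((h,g,\varepsilon^2,h)\) coincide with the paper's.

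The problem is that the proposal stops where the proof has content. The lemma \emph{is} the assertion that the remaining cocycle quotient equals \(1\). You never write that quotient down, and you never exhibit exponents \(m_i\) with \(\prod_i\mathscr C_\Phi(a_i,b_i,c_i,d_i)^{m_i}\) equal to it. Saying the integer system ``can be found by computer'' presupposes that a solution exists, and that is exactly what has to be shown.

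The guidance you give for that search is also off. Your third instance \((h,\varepsilon^{-1},\varepsilon^{-1},h)\) is just the \(2\)-cocycle identity of \(\Phi_h\) on \(C_G(h)\) at \((\varepsilon^{-1},\varepsilon^{-1},h)\). It contains no \(g\), so it only trades \(\Phi_h(\varepsilon^{-1},\varepsilon^{-1})\) for \(\Phi_h(\varepsilon^{-1},h)\Phi_h(\varepsilon^{-1},\varepsilon^{-1}h)/\Phi_h(\varepsilon^2,h)\) and cannot interact with the \(g\)-dependent terms. Your predicted residual ``\(\Phi_h(h,g)/\Phi_h(g,\varepsilon^{-1}h)\) up to \(C_G(h)\)-factors'' is not what your identities produce. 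The instance \((h,g,\varepsilon^{-1},h)\) eliminates \(\Phi_h(g,\varepsilon^{-1}h)\). What survives is \(\Phi_h(g,\varepsilon^{-1})^{-2}\Phi_h(g,\varepsilon^2)\Phi_h(\varepsilon^2g,h)\Phi_h(\varepsilon g,h)^{-1}\), which involves \(g\notin C_G(h)\).

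The missing step is as follows. Take \((x,y,z,w)=(h,g,\varepsilon^{-1},\varepsilon^{-1})\) as the third instance. Since \(g\varepsilon^{-1}=\varepsilon g\), it gives
\[
\Phi_h(\varepsilon^{-1},\varepsilon^{-1})\Phi_h(g,\varepsilon^2)=\Phi_h(g,\varepsilon^{-1})\Phi_h(\varepsilon g,\varepsilon^{-1}).
\]
This pairs the leftover \(\Phi_h(\varepsilon^{-1},\varepsilon^{-1})\) with the \(\Phi_h(g,\varepsilon^2)\) produced by \((h,g,\varepsilon^2,h)\), and it cancels one copy of \(\Phi_h(g,\varepsilon^{-1})\). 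What remains is
\[
\frac{\Xi_X}{\kappa_X}
=\frac{\Phi_h(\varepsilon g,\varepsilon^{-1})\,\Phi_h(\varepsilon^2g,h)\,\Phi_h(h,g)}
{\Phi_h(g,\varepsilon^{-1})\,\Phi_h(\varepsilon g,h)\,\Phi^h(h,g)}
\cdot
\frac{\Phi(h,\varepsilon^{-1}h,\varepsilon g)}{\Phi(\varepsilon^{-1}h,h,\varepsilon g)}.
\]
Expand this using
\[
ghg^{-1}=(\varepsilon g)h(\varepsilon g)^{-1}=(\varepsilon^2g)h(\varepsilon^2g)^{-1}=\varepsilon^{-1}h,\qquad hgh^{-1}=\varepsilon g,\qquad \varepsilon g\varepsilon^{-1}=\varepsilon^2g.
\]
For example, \(\Phi_h(\varepsilon g,h)=\Phi(\varepsilon^{-1}h,\varepsilon g,h)\) and \(\Phi^h(h,g)=\Phi(h,\varepsilon g,h)\). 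The result is a quotient of seven \(\Phi\)-values by seven, and it is identically equal to
\[
\frac{\mathscr C_\Phi(h,g,h,\varepsilon^{-1})\,
\mathscr C_\Phi(\varepsilon^{-1}h,\varepsilon g,\varepsilon^{-1},h)\,
\mathscr C_\Phi(\varepsilon^{-1}h,h,g,\varepsilon^{-1})}
{\mathscr C_\Phi(h,g,\varepsilon^{-1},h)\,
\mathscr C_\Phi(\varepsilon^{-1}h,\varepsilon g,h,\varepsilon^{-1})\,
\mathscr C_\Phi(h,\varepsilon^{-1}h,g,\varepsilon^{-1})}.
\]
This is the paper's factorization. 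It can be checked by hand, and each factor is \(1\) by \eqref{eq:3-cocycle-identity}, which proves the lemma.
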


\begin{proof}
 {Equation~\eqref{eq:YD-projective-action} gives}
\begin{align*}
\rho(\varepsilon^{-1})^2
&=\Phi_h(\varepsilon^{-1},\varepsilon^{-1})\rho(\varepsilon^2),\\
\rho(\varepsilon^{-1})\rho(h)
&=\Phi_h(\varepsilon^{-1},h)\rho(\varepsilon^{-1}h).
\end{align*}
Substituting these identities into the definitions above gives
\[
\frac{\Xi_X}{\kappa_X}
=
\frac{
 \Phi_h(\varepsilon^{-1},\varepsilon^{-1})
 \Phi_h(\varepsilon^2,h)\Phi_h(g,\varepsilon^2h)
 \Phi(h,\varepsilon^{-1}h,\varepsilon g)\Phi_h(h,g)
}{
 \Phi_h(g,\varepsilon^{-1})
 \Phi_h(g,\varepsilon^{-1}h)
 \Phi_h(\varepsilon^{-1},h)
 \Phi(\varepsilon^{-1}h,h,\varepsilon g)\Phi^h(h,g)
}.
\]

Apply Lemma~\ref{lem:local-cocycle-coherence} with
\((x,y,z,w)=(h,g,\varepsilon^{-1},h)\), \((h,g,\varepsilon^2,h)\), and
\((h,g,\varepsilon^{-1},\varepsilon^{-1})\).  Since
\(g\varepsilon^{-1}=\varepsilon g\), this gives
\begin{align*}
\Phi_h(\varepsilon^{-1},h)\Phi_h(g,\varepsilon^{-1}h)
&=\Phi_h(g,\varepsilon^{-1})\Phi_h(\varepsilon g,h),\\
\Phi_h(\varepsilon^2,h)\Phi_h(g,\varepsilon^2h)
&=\Phi_h(g,\varepsilon^2)\Phi_h(\varepsilon^2g,h),\\
\Phi_h(\varepsilon^{-1},\varepsilon^{-1})\Phi_h(g,\varepsilon^2)
&=\Phi_h(g,\varepsilon^{-1})
  \Phi_h(\varepsilon g,\varepsilon^{-1}).
\end{align*}
Substituting these identities and expanding the cocycles
expresses \(\Xi_X/\kappa_X\) as
\[
\frac{
 \mathscr C_\Phi(h,g,h,\varepsilon^{-1})
 \mathscr C_\Phi(\varepsilon^{-1}h,\varepsilon g,
                  \varepsilon^{-1},h)
 \mathscr C_\Phi(\varepsilon^{-1}h,h,g,\varepsilon^{-1})
}{
 \mathscr C_\Phi(h,g,\varepsilon^{-1},h)
 \mathscr C_\Phi(\varepsilon^{-1}h,\varepsilon g,
                  h,\varepsilon^{-1})
 \mathscr C_\Phi(h,\varepsilon^{-1}h,g,\varepsilon^{-1})
}.
\]
This factorization is verified in
\gaplink{Gamma_4/x2_cocycle_reduction/gamma4_x2_cocycle_reduction_certificate.g}.
Each factor is one by \eqref{eq:3-cocycle-identity},
so \(\Xi_X=\kappa_X\).
\end{proof}

\subsection{ {The vectors} \texorpdfstring{$z_0$ and $z_1$}{z0 and z1}
for \texorpdfstring{$Y_2$}{Y2}}

\begin{lemma}
\label{lem:Gamma4-z0-factorization}
The coefficient of \(w\otimes(w\otimes v_1)\) in \(z_0\) is
\(-\beta(1+\sigma(g))\).  If \(\Delta_4=1\) and
\(\sigma(g)=-1\), then \(z_0=0\).
\end{lemma}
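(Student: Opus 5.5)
We compute \(z_0=\varphi_2^\Phi(w\otimes y_1)\) directly from Lemma~\ref{lem:Gamma4-general-Y2-formula} with \(a=w\), \(x=g\).

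\textbf{The four terms.} The formula expresses \(z_0\) as a sum of four terms:
\begin{enumerate}
\item the identity term \(w\otimes y_1=w\otimes(w_1\otimes v)-\beta\,w\otimes(w\otimes v_1)\);
\item the double-braiding term \(-\bigl(g(hg)g^{-1}\rhd w\bigr)\otimes(g\rhd y_1)\);
\item the \(c_{1,2}^\Phi\)-term involving \((g\rhd w_1)\otimes\varphi_1^\Phi(w\otimes v)\);
\item the \(c_{1,2}^\Phi\)-term \(-\beta\cdots(g\rhd w)\otimes\varphi_1^\Phi(w\otimes v_1)\).
\end{enumerate}

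\textbf{The coefficient of \(w\otimes(w\otimes v_1)\).} We sort the terms by multidegree, tracking where the multidegree \((g,g,\varepsilon^{-1}h)\) occurs.
\begin{itemize}
\item The identity term contributes \(-\beta\).
\item The second term starts with a vector of degree \(ghg\,g^{-1}\cdot\)conj, namely \(g(hg)g^{-1}\,g\,(g(hg)g^{-1})^{-1}=\varepsilon^{2}g\) or similar, not \(g\). So it does not contribute; this is checked with the \(\Gamma_4\) relations.
\item The third term begins with \(g\rhd w_1\), of degree \(g\varepsilon g g^{-1}=\varepsilon^{-1}g\), so it does not contribute.
\item The fourth term begins with \(g\rhd w=\sigma(g)w\) (using \(p_g=1\)). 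Its second factor \(\varphi_1^\Phi(w\otimes v_1)=w\otimes v_1-(\ldots)\) has identity component \(w\otimes v_1\). The associator prefactor \(\Phi(g,g,\varepsilon^{-1}h)/\Phi(g,g,\varepsilon^{-1}h)=1\) because \(ggg^{-1}=g\). So this term contributes exactly \(-\beta\sigma(g)\).
\end{itemize}
The total coefficient is therefore \(-\beta(1+\sigma(g))\).

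\textbf{The vanishing statement.} I would argue by an eigenvalue trick instead of expanding every coefficient. The line \(\mathbb C w\) is \(C_G(g)\)-stable, and \(C_G(hg)\) stabilizes \(\mathbb Cy_1\) when \(\Delta_4=1\). Then \(z_0\) spans an eigenline for the centralizer of its degree \(\varepsilon^{-1}hg^2\), which is one-dimensional by an argument analogous to Lemma~\ref{lem:Gamma4-epsilon-ht-centralizer}.

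More concretely, I would show that all components of \(z_0\) are determined by the one computed above. The support of \(z_0\) consists of a few multidegrees permuted transitively by an element of \(C_G(\varepsilon^{-1}hg^2)\) (namely \(h\), or \(\varepsilon\) combined with \(g^2\)). Since \(\varphi_2^\Phi\) is \(G\)-linear, it suffices to show that \(z_0\) is an eigenvector of that element.

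Failing this, the fallback is to expand all components of \(z_0\). Using \(\Delta_4=1\) to simplify \(\alpha\beta\)-type products via \eqref{eq:Gamma4-alpha-beta-Delta}, each coefficient should factor as \((1+\sigma(g))\) times a cocycle expression, verified by a cocycle reduction (certificate style).

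\textbf{Main obstacle.} This last factorization is the hard step: showing every component carries the factor \(1+\sigma(g)\). It requires identifying terms 2 and 3 with each other's multidegrees and cancelling them using \(\Delta_4=1\) and Lemma~\ref{lem:local-cocycle-coherence}.
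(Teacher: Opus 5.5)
Your computation of the coefficient of \(w\otimes(w\otimes v_1)\) is correct and is exactly the paper's argument. There is one slip: the first factor of the double-braiding term is \((gh)\rhd w\), of degree \(\varepsilon^{-1}g\), not \(\varepsilon^2g\). These differ because \(\varepsilon\) has order four, but you only need that it is not \(g\).

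\textbf{The vanishing part has a genuine gap.} Your ``eigenvalue trick'' cannot work, for three reasons.
\begin{itemize}
\item Stability of \(\mathbb Cw\) and \(\mathbb Cy_1\) only makes \(z_0\) an eigenvector for \(C_G(g)\cap C_G(hg)\), which lies in \(Z(G)\).
\item You cannot assume \(Y_2\) is simple. This lemma feeds into the simplicity criterion, Lemma~\ref{lem:Gamma4-Y2-explicit-simplicity}.
\item The transitive permutation you invoke does not exist. The support of \(z_0\) consists of the four multidegrees \((g,\varepsilon g,h)\), \((g,g,\varepsilon^{-1}h)\), \((\varepsilon^{-1}g,g,h)\), \((\varepsilon^{-1}g,\varepsilon^{-1}g,\varepsilon^{-1}h)\). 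Every element of \(C_G(\varepsilon^{-1}hg^2)=\langle\varepsilon,h,g^2\rangle\) acts on \(\operatorname{supp}W\) by a shift \(\varepsilon^ig\mapsto\varepsilon^{i+k}g\): \(h\) gives \(k=1\), \(\varepsilon\) gives \(k=2\), and \(g^2\) is central. Only \(k=0\) preserves the set \(\{g,\varepsilon^{-1}g\}\) of first \(W\)-degrees of \(z_0\).
\end{itemize}
Consequently \(h\rhd z_0\in\mathbb Cz_0\) already forces \(z_0=0\), so reducing to ``\(z_0\) is an eigenvector'' is circular.

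\textbf{Your fallback is the paper's route, and it closes by hand.} The paper delegates these cancellations to a GAP certificate. In Lemma~\ref{lem:Gamma4-general-Y2-formula}, terms one and four meet only the first two multidegrees, and terms two and three only the last two. Put \(u=(gh)\rhd w\) and use:
\begin{itemize}
\item \(g\rhd w_1=\Phi_g(g,h)u\) and \(g\rhd v_1=\Phi_h(g,g)\rho(g^2)v\);
\item \(\varphi_1^\Phi(w\otimes v)=w\otimes v-((\varepsilon^{-1}h)\rhd w)\otimes v_1\);
\item \((\varepsilon^{-1}h)\rhd w=\sigma(g)^{-1}\Phi_g(g^{-1},g)\Phi_g(gh,g^{-1})^{-1}u\);
\item \(\Delta_4=\beta\Phi_h(g,g)\rho(g^2)\), from \eqref{eq:Gamma4-alpha-beta-Delta}.
\end{itemize}
One obtains
\begin{align*}
z_0={}&(1+\sigma(g)\Delta_4)\bigl(w\otimes(w_1\otimes v)+\Phi(g,h,g)\,u\otimes(w\otimes v)\bigr)\\
&-\beta(1+\sigma(g))\,w\otimes(w\otimes v_1)-\Phi_g(g,h)\Phi^g(\varepsilon g,h)\bigl(1+\sigma(g)^{-1}\bigr)\,u\otimes(u\otimes v_1).
\end{align*}
The coefficient of \(u\otimes(w\otimes v)\) needs only the expansion
\[
\frac{\Phi(g,\varepsilon g,h)}{\Phi(\varepsilon^{-1}g,g,h)}\,\Phi_g(g,h)=\Phi(g,h,g)=\Phi^g(g,\varepsilon^{-1}h).
\]
The last coefficient uses the identity
\[
\Phi(g,\varepsilon g,h)\Phi_g(g^{-1},g)=\Phi(\varepsilon^{-1}g,g,h)\Phi^g(\varepsilon g,h)\Phi_g(gh,g^{-1}).
\]
This identity is \(\mathscr C_\Phi(gh,g,g^{-1},g)\,\mathscr C_\Phi(\varepsilon^{-1}g,gh,g^{-1},g)^{-1}=1\).

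So, as you predicted, every coefficient is a multiple of \(1+\sigma(g)\) once \(\Delta_4=1\). Hence \(z_0=0\) when also \(\sigma(g)=-1\).
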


\begin{proof}
Take \(a=w\) in \eqref{eq:Gamma4-general-Y2-formula}.
By \eqref{eq:Gamma4-y1}, the first term contributes
\(-\beta\) to the component
\(w\otimes(w\otimes v_1)\).  The definition of
\(\varphi_1^\Phi\) and \eqref{eq:YD-projective-action} give
\(\varphi_1^\Phi(w\otimes v_1)
=w\otimes v_1-\Phi_h(g,g)\rho(g^2)w_1\otimes v\).
Moreover, \(g\rhd w=\sigma(g)w\), and the cocycle quotient in
the last term of \eqref{eq:Gamma4-general-Y2-formula} is one.
Thus the last term contributes \(-\beta\sigma(g)\).  The first
\(W\)-factor in each of the other two terms has degree
\(\varepsilon^{-1}g\), so neither contributes to this component.
Hence its coefficient is \(-\beta(1+\sigma(g))\).

Assume that \(\Delta_4=1\) and \(\sigma(g)=-1\).
Expanding \(z_0=\varphi_2^\Phi(w\otimes y_1)\) by
\eqref{eq:Gamma4-general-Y2-formula}, two of the four
multi-homogeneous components vanish, and the other two
have coefficients divisible by \(1-\Delta_4\).
These reductions are verified in
\gaplink{Gamma_4/reflected_y2/gamma4_reflected_y2_certificate.g}
using 
\eqref{eq:YD-projective-action} and the normalized
\(3\)-cocycle identity.
Thus \(z_0=0\).
\end{proof}

 {For \(z_1\),}
\eqref{eq:Gamma4-general-Y2-formula} gives
\begin{equation}
\label{eq:Gamma4-z1-expansion}
\begin{aligned}
z_1={}&w_2\otimes y_1
-(gh\rhd w_2)\otimes((\varepsilon^2g)\rhd y_1)\\
&+A_1((\varepsilon^2g)\rhd w_1)\otimes
  \varphi_1^\Phi(w_2\otimes v)\\
&-\beta B_1((\varepsilon^2g)\rhd w)\otimes
  \varphi_1^\Phi(w_2\otimes v_1),
\end{aligned}
\end{equation}
where
\[
A_1=
\frac{\Phi(\varepsilon^2g,\varepsilon g,h)}
     {\Phi(\varepsilon^{-1}g,\varepsilon^2g,h)},
\qquad
B_1=
\frac{\Phi(\varepsilon^2g,g,\varepsilon^{-1}h)}
     {\Phi(g,\varepsilon^2g,\varepsilon^{-1}h)}.
\]

\subsection{Vanishing of \texorpdfstring{$Y_3$}{Y3}}

\begin{lemma}
\label{lem:Gamma4-Xi3-cocycle-reduction}
If \(\sigma(g)=-1\), then
\[
-\frac{\sigma(\varepsilon^2g)}{\sigma(\varepsilon^2g^2)}
\frac{
 \Phi(h,g,\varepsilon^2g)\Phi(h,\varepsilon^2g^2,g)\Phi(\varepsilon g,h,\varepsilon^2g^2)
}{
 \Phi(h,g,\varepsilon^2g^2)\Phi(h,\varepsilon^2g,g)
 \Phi(\varepsilon g,h,\varepsilon^2g)\Phi(\varepsilon g,\varepsilon^2hg,g)
}=1.
\]
\end{lemma}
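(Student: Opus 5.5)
The plan is to remove the two values of \(\sigma\) first and then recognise what is left as a product of instances of \(\mathscr C_\Phi\), in the same style as Lemma~\ref{lem:Gamma4-X2-cocycle-reduction}.

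\textbf{Step 1: eliminate \(\sigma\).} Both \(\varepsilon^2\) and \(g\) lie in \(C_G(g)=\langle\varepsilon^2,\varepsilon^{-1}h^2,g\rangle\), and \(\varepsilon^2\) is central. Equation~\eqref{eq:YD-projective-action} gives
\[
\sigma(\varepsilon^2g)\sigma(g)=\Phi_g(\varepsilon^2g,g)\,\sigma(\varepsilon^2g^2).
\]
With \(\sigma(g)=-1\) this becomes
\[
-\frac{\sigma(\varepsilon^2g)}{\sigma(\varepsilon^2g^2)}=\Phi_g(\varepsilon^2g,g).
\]
The left-hand side of the statement is therefore a pure cocycle expression. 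Its first factor is \(\Phi_g(\varepsilon^2g,g)\), and since \(\varepsilon^2g\) and \(g\) commute with \(g\), expanding the definition gives
\[
\Phi_g(\varepsilon^2g,g)=\frac{\Phi(\varepsilon^2g,g,g)\Phi(g,\varepsilon^2g,g)}{\Phi(\varepsilon^2g,g,g)}=\Phi(g,\varepsilon^2g,g).
\]

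\textbf{Step 2: normalise the group elements.} Rewrite every argument in the normal form \(\varepsilon^ih^jg^k\) using \(hg=\varepsilon gh\), \(g\varepsilon=\varepsilon^{-1}g\) and \(h\varepsilon=\varepsilon h\). For example,
\[
\varepsilon g\cdot h=\varepsilon\,\varepsilon^{-1}hg=hg,\qquad h\cdot g=\varepsilon gh,\qquad \varepsilon^2hg=h\,\varepsilon^2g.
\]

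\textbf{Step 3: factor into cocycle identities.} The remaining quotient has the shape \(\Phi(h,\cdot,\cdot)\) and \(\Phi(\varepsilon g,h,\cdot)\) terms attached to the triple \((h,\varepsilon^2g,g)\). So the natural first attempt is to combine
\[
\mathscr C_\Phi(h,g,\varepsilon^2g,g),\qquad \mathscr C_\Phi(\varepsilon g,h,\varepsilon^2g,g),
\]
together with one cocycle identity that absorbs the \(\Phi(g,\varepsilon^2g,g)\) coming from Step 1 and the cross terms. A candidate for that identity is \(\mathscr C_\Phi(\varepsilon g,h,g,\ldots)\) or \(\mathscr C_\Phi(h,\varepsilon g,\ldots)\), chosen using \(h\varepsilon g=\ldots\). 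As a check, \(\mathscr C_\Phi(h,g,\varepsilon^2g,g)\) already contains \(\Phi(g,\varepsilon^2g,g)\), \(\Phi(h,\varepsilon^2g^2,g)\), \(\Phi(h,g,\varepsilon^2g)\), \(\Phi(hg,\varepsilon^2g,g)\) and \(\Phi(h,g,\varepsilon^2g^2)\). That matches four of the displayed factors and leaves one \(\Phi(hg,\varepsilon^2g,g)\) to be cancelled by an identity starting at \(\varepsilon g\), namely \(\mathscr C_\Phi(\varepsilon g,h,\varepsilon^2g,g)\) with \(\varepsilon gh=hg\).

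\textbf{Main obstacle.} The hard part is finding the exact exponents: Step 3 above is a guess. I would settle it by tracking each factor's exponent and solving the resulting small linear system. I would also confirm the factorization with a GAP computation in the free normal form, as the paper does for the other reductions. Once the factorization is found, every factor equals one by \eqref{eq:3-cocycle-identity}, which proves the claim.
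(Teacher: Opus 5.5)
Your approach is the paper's: Step 1 is exactly how the paper eliminates \(\sigma\), and the two identities you name already finish the proof, because after Step 1 the expression equals \(\mathscr C_\Phi(h,g,\varepsilon^2g,g)\,\mathscr C_\Phi(\varepsilon g,h,\varepsilon^2g,g)^{-1}\) exactly (using \(\varepsilon gh=hg\), \(g\varepsilon^2=\varepsilon^2g\) and \(h\varepsilon^2=\varepsilon^2h\)). So the exponents are \(+1\) and \(-1\), and no third identity is needed: the first identity supplies the factor \(\Phi(g,\varepsilon^2g,g)\), and \(\Phi(hg,\varepsilon^2g,g)\) cancels between the two identities, as your own check already indicates.
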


\begin{proof}
Since \(\sigma(g)=-1\),  {\eqref{eq:YD-projective-action}} gives
\(-\sigma(\varepsilon^2g)/\sigma(\varepsilon^2g^2)
=\Phi_g(\varepsilon^2g,g)\).  Since \(\varepsilon^2g\) commutes
with \(g\), the definition of \(\Phi_g\) gives
\(\Phi_g(\varepsilon^2g,g)=\Phi(g,\varepsilon^2g,g)\).
Expanding the definitions and using
\(g\varepsilon^2=\varepsilon^2g\),
\((\varepsilon g)h=hg\), and
\(h\varepsilon^2=\varepsilon^2h\), the expression in the statement is
\[
\frac{\mathscr C_\Phi(h,g,\varepsilon^2g,g)}
     {\mathscr C_\Phi(\varepsilon g,h,\varepsilon^2g,g)}.
\]
The GAP program
\gaplink{Gamma_4/xi3/gamma4_xi3_certificate.g}
 verifies this
reduction exactly.  Both factors are one by the normalized
\(3\)-cocycle identity.
\end{proof}

\begin{lemma}
\label{lem:Gamma4-Y3-coefficient-cancellation}
Assume that \(\Delta_4=1\) and that \(Y_2\) is simple.  Then the
vector \(\varphi_3^\Phi(w\otimes z_1)\) is zero.
\end{lemma}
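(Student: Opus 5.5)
The plan is to compute $y_3=\varphi_3^\Phi(w\otimes z_1)$ explicitly by the recursion
\[
\varphi_3^\Phi=\operatorname{id}-c_{Y_2,W}c_{W,Y_2}+(\operatorname{id}_W\otimes\varphi_2^\Phi)c_{1,2}^\Phi
\]
and show that every multihomogeneous component vanishes. By Lemma~\ref{lem:Gamma4-Y2-explicit-simplicity}, the hypotheses give $\sigma(g)=-1$ and $\Theta_{2,j}=1$ for all $j$. By Lemma~\ref{lem:Gamma4-Y2-coordinate-calculation}, $z_1=\sum_{j=0}^7\mathcal A_jE_j$, and the line $\mathbb Cz_1$ is $C_G(\varepsilon hg^2)$-stable with character $\tau_2$. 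So I would use the generator $z_1$ in this explicit form, with $\mathcal A_j$ as in \eqref{eq:Gamma4-Y2-coefficients}.

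\textbf{Step 1 (the three terms).} I expand each of the three terms on $w\otimes z_1$.

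\emph{Identity term.} It gives $w\otimes z_1$, which has $8$ components of multidegree $(g,\cdot,\cdot,\cdot)$.

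\emph{Double-braiding term.} Since $z_1$ has degree $\varepsilon hg^2$ and $(\varepsilon hg^2)g(\varepsilon hg^2)^{-1}=\varepsilon^{-1}g$, this term equals
\[
\bigl((\varepsilon hg^2\varepsilon^{-1}h^{-1}\ldots)\rhd w\bigr)\otimes (g\rhd z_1)\cdot\text{(scalar)}.
\]
Its first factor has degree $g$, because $g$ conjugates $\varepsilon hg^2$ into $\varepsilon^2hg^2$. The vector $g\rhd z_1$ lies in $(Y_2)_{\varepsilon^2hg^2}$, and that component is spanned by $g\rhd z_1$.

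\emph{Last term.} Here $c_{1,2}^\Phi$ moves $w$ past the first factor of each $E_j$, and then $\varphi_2^\Phi$ is applied to tensors $a\otimes(w_{y}\otimes v_\ell)$. Each such tensor is expressed through $\varphi_2^\Phi(a\otimes y_1)$-type vectors, using $G$-linearity and the fact that $W\otimes Y_1$ is generated by $w\otimes y_1$ and $w_2\otimes y_1$ (Lemma~\ref{lem:Gamma4-Y2-generators}). Since $z_0=0$, only translates of $z_1$ survive.

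\textbf{Step 2 (grouping and cancellation).} I collect the result into the components indexed by the first $W$-degree $\varepsilon^i g$, $i\in\mathbb Z/4$, each of which is a multiple of a translate of $z_1$. The $\varepsilon^2g$ and $\varepsilon^{-1}g$ parts should cancel between the double-braiding term and the $c_{1,2}$-term. The $g$ part should acquire a coefficient $1+\sigma(g)\cdot(\text{cocycle factor})$. The central reduction here is Lemma~\ref{lem:Gamma4-Xi3-cocycle-reduction}: it converts exactly the scalar arising from moving $w$ across $w_{\varepsilon^2g}\otimes w_g$ twice into $1$, so the coefficient becomes $1+\sigma(g)=0$. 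For the remaining components I would use $\Theta_{2,j}=1$ and $\Delta_4=1$ to identify the $h$-transported coefficients $\lambda_j$, together with $\tau_2(\varepsilon hg^2)=-1$ (Lemma~\ref{lem:Gamma4-tau2-support-value}), which enters through the double braiding $c_{Y_2,W}c_{W,Y_2}$. This should make every coefficient a multiple of $(1+\sigma(g))$ or of $(1-\Delta_4)$, or a product of $\mathscr C_\Phi$ factors equal to $1$.

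\textbf{Main obstacle.} The bookkeeping in Step~2 is the hardest part. There are $8\times 4$ candidate multidegrees of length four, and each coefficient is a long product of $\mathsf R$, $\mathsf S$ and associator values. I would organize it by multidegree, reduce each coefficient symbolically to the normal form $\varepsilon^ih^jg^k$, and verify the resulting cocycle identities, factored into $\mathscr C_\Phi$-powers, by machine, as elsewhere in the appendix. Conceptually, the expected shape is that $y_3$ is a multiple of $(1+\sigma(g))$ after the Lemma~\ref{lem:Gamma4-Xi3-cocycle-reduction} normalization, and hence $y_3=0$.
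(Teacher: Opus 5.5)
Your overall plan (expand $\varphi_3^\Phi(w\otimes z_1)$ by the recursion, discard what $z_0=0$ kills, use that the homogeneous components of $Y_2$ are one-dimensional, and check the surviving scalars by cocycle identities) is the paper's plan. However, the bookkeeping in Steps~1--2 is wrong, and the cancellation you describe cannot take place.

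\textbf{The double braiding.} One has $c_{Y_2,W}c_{W,Y_2}(w\otimes z_1)=\bigl((\varepsilon^2hg^2)\rhd w\bigr)\otimes(g\rhd z_1)$. Since $\varepsilon^2$ and $g^2$ are central, its first factor has degree $hgh^{-1}=\varepsilon g$, not $g$.

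\textbf{The $c_{1,2}^\Phi$-term.} Write $z_1$ as in \eqref{eq:Gamma4-z1-expansion}, with $p_0=\varphi_1^\Phi(w_2\otimes v)$, $p_1=\varphi_1^\Phi(w_2\otimes v_1)$, $q_1=(\varepsilon^2g)\rhd w_1$ and $r_1=(\varepsilon^2g)\rhd w$. After $c_{1,2}^\Phi$ moves $w$ past the four summands, their first $W$-degrees become $\varepsilon^2g,\varepsilon^{-1}g,\varepsilon g,g$.

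The first two contributions involve $\varphi_2^\Phi(w\otimes y_1)=z_0$ and $\varphi_2^\Phi\bigl(w\otimes((\varepsilon^2g)\rhd y_1)\bigr)\in\mathbb C\,(\varepsilon^2g)\rhd z_0$; the latter holds because $(\varepsilon^2g)\rhd w\in\mathbb C^\times w$. So both vanish on their own. The double braiding never reaches these degrees, so nothing is cancelled there.

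\textbf{The actual pairings.} Only two pairings survive.
\begin{itemize}
\item In first degree $g$, the identity term meets the image of $-\beta B_1r_1\otimes p_1$. Writing $\varphi_2^\Phi(w\otimes p_1)=\mu_1z_1$ and $g\rhd r_1=\nu_1w$, the coefficient is $1-\beta B_1\nu_1\mu_1$. The computation gives $\beta B_1\nu_1\mu_1=-\sigma(g)=1$.
\item In first degree $\varepsilon g$, the double braiding meets the image of $A_1q_1\otimes p_0$. Writing $\varphi_2^\Phi(w\otimes p_0)=\mu_0\,(g\rhd z_1)$ and $g\rhd q_1=\nu_0\,(\varepsilon^2hg^2)\rhd w$, the coefficient is $\Xi_3-1$, where $\Xi_3=A_1\Phi(g,\varepsilon^{-1}g,\varepsilon hg)\Phi(\varepsilon g,g,\varepsilon hg)^{-1}\nu_0\mu_0$.
\end{itemize}
It is in the $\varepsilon g$-part, not the $g$-part, that Lemma~\ref{lem:Gamma4-Xi3-cocycle-reduction} enters, giving $\Xi_3=1$. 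Thus $y_3$ lies in the span of two tensors and two scalar identities finish the proof; no enumeration of $8\times4$ multidegrees is needed.

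\textbf{The appeal to $\tau_2$.} You may not use $\tau_2(\varepsilon hg^2)=-1$. Lemma~\ref{lem:Gamma4-tau2-support-value} assumes $\rho(h)=-1$, which is not a hypothesis here. From $\Delta_4=1$ and the simplicity of $Y_2$ you only obtain $\sigma(g)=-1$, $z_0=0$, $\Theta_{2,j}=1$, and one-dimensional homogeneous components. The value is also irrelevant: $c_{Y_2,W}c_{W,Y_2}$ involves the action of $g$ on $z_1$ and of $\varepsilon^2hg^2$ on $w$, never the action of $\varepsilon hg^2$ on $z_1$. Likewise, the $\Theta_{2,j}$ and the $\lambda_j$ play no role. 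Relying on $\tau_2$ would at best prove the lemma under the extra assumption $\rho(h)=-1$.
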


\begin{proof}
Lemma~\ref{lem:Gamma4-Y2-explicit-simplicity} gives
\(\sigma(g)=-1\) and shows that the nonzero homogeneous
components of \(Y_2\) are one-dimensional.
Moreover, \(z_0=0\) by
Lemma~\ref{lem:Gamma4-z0-factorization}.
Set \(y_3=\varphi_3^\Phi(w\otimes z_1)\), and put
\[
\begin{gathered}
p_0=\varphi_1^\Phi(w_2\otimes v),\qquad
p_1=\varphi_1^\Phi(w_2\otimes v_1),\\
q_1=(\varepsilon^2g)\rhd w_1,\qquad
r_1=(\varepsilon^2g)\rhd w.
\end{gathered}
\]
Since \((\varepsilon^2g)\rhd w\in\mathbb C^\times w\)
and \(\varphi_2^\Phi\) commutes with the \(G\)-action,
the equality \(z_0=\varphi_2^\Phi(w\otimes y_1)=0\) gives
\[
\varphi_2^\Phi
\bigl(w\otimes((\varepsilon^2g)\rhd y_1)\bigr)=0.
\]
Substituting \eqref{eq:Gamma4-z1-expansion} into the
recursive formula for \(\varphi_3^\Phi\), these two
vanishing terms leave
\begin{equation}
\label{eq:Gamma4-y3-reduced}
\begin{aligned}
y_3={}&w\otimes z_1
-(\varepsilon^2hg^2\rhd w)\otimes(g\rhd z_1)\\
&+A_1
  \frac{\Phi(g,\varepsilon^{-1}g,\varepsilon hg)}
       {\Phi(\varepsilon g,g,\varepsilon hg)}
  (g\rhd q_1)\otimes
  \varphi_2^\Phi(w\otimes p_0)\\
&-\beta B_1(g\rhd r_1)\otimes
  \varphi_2^\Phi(w\otimes p_1).
\end{aligned}
\end{equation}

The vectors \(p_0,p_1\) have degrees
\(\varepsilon hg,\varepsilon^2hg\), respectively.
They are nonzero because the two terms defining each
have distinct first \(W\)-degrees.
The homogeneous components of \(Y_1\) are
one-dimensional by
Lemma~\ref{lem:Gamma4-opposite-first-adjoint}.
The group relations and the tensor-product action
therefore give
\[
\begin{aligned}
(\varepsilon g)\rhd(w_2\otimes y_1)
&\in\mathbb C^\times(w\otimes p_0),\\
\varepsilon\rhd(w_2\otimes y_1)
&\in\mathbb C^\times(w\otimes p_1).
\end{aligned}
\]
Applying \(\varphi_2^\Phi\) and using
\(0\neq z_1=\varphi_2^\Phi(w_2\otimes y_1)\)
shows that both
\(\varphi_2^\Phi(w\otimes p_0)\) and
\(\varphi_2^\Phi(w\otimes p_1)\) are nonzero.
Since \(\varepsilon\) centralizes both support degrees
\(\varepsilon hg^2,\varepsilon^2hg^2\) of \(Y_2\),
there are nonzero scalars \(\mu_0,\mu_1\) such that
\[
\varphi_2^\Phi(w\otimes p_0)=\mu_0(g\rhd z_1),
\qquad
\varphi_2^\Phi(w\otimes p_1)=\mu_1z_1.
\]
Similarly, comparison of degrees in the
one-dimensional homogeneous components of \(W\)
gives nonzero scalars \(\nu_0,\nu_1\) satisfying
\[
g\rhd q_1=\nu_0(\varepsilon^2hg^2\rhd w),
\qquad
g\rhd r_1=\nu_1w.
\]
Thus \eqref{eq:Gamma4-y3-reduced} becomes
\begin{equation}
\label{eq:Gamma4-y3-two-coefficients}
\begin{aligned}
y_3={}&(1-\beta B_1\nu_1\mu_1)w\otimes z_1
+(\Xi_3-1)
  (\varepsilon^2hg^2\rhd w)\otimes(g\rhd z_1),\\
\Xi_3={}&
A_1
\frac{\Phi(g,\varepsilon^{-1}g,\varepsilon hg)}
     {\Phi(\varepsilon g,g,\varepsilon hg)}
\nu_0\mu_0.
\end{aligned}
\end{equation}

The GAP program
\gaplink{Gamma_4/xi3/gamma4_xi3_certificate.g}
computes these coefficients from the induced actions
and the recursive maps. It gives
\[
\beta B_1\nu_1\mu_1=-\sigma(g)=1.
\]
Using \(\sigma(g)=-1\) and  {\eqref{eq:YD-projective-action}},
it also identifies \(\Xi_3\)
with the left-hand side of
Lemma~\ref{lem:Gamma4-Xi3-cocycle-reduction}.
Hence \(\Xi_3=1\).
Both coefficients in
\eqref{eq:Gamma4-y3-two-coefficients} therefore vanish,
so \(y_3=0\).
\end{proof}

\subsection{The action of
\texorpdfstring{$\varepsilon hg^2$}{epsilon h g2} on
\texorpdfstring{$Y_2$}{Y2}}

\begin{lemma}
\label{lem:Gamma4-Y2-support-value}
Assume that \(\rho\) and \(\sigma\) are one-dimensional,
\(\Delta_4=1\), \(\rho(h)=\sigma(g)=-1\), and that
\(\mathbb Cz_1\) is \(C_G(\varepsilon hg^2)\)-stable.  Then
\[
(\varepsilon hg^2)\rhd(z_{1,3}+z_{1,7})
=-(z_{1,2}+z_{1,6}).
\]
\end{lemma}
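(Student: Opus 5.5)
We compute \((\varepsilon hg^2)\rhd z_{1,3}\) and \((\varepsilon hg^2)\rhd z_{1,7}\) directly in the standard homogeneous basis, and then compare the result with \(z_{1,2}\) and \(z_{1,6}\). By Lemma~\ref{lem:Gamma4-Y2-coordinate-calculation}, \(z_{1,j}=\mathcal A_jE_j\) with the explicit coefficients \eqref{eq:Gamma4-Y2-coefficients}. So the claim is equivalent to the two scalar identities
\[
\mathcal A_3\,\mathcal H_3=-\mathcal A_2,\qquad \mathcal A_7\,\mathcal H_7=-\mathcal A_6,
\]
where \(\mathcal H_j\) is defined by \((\varepsilon hg^2)\rhd E_j=\mathcal H_jE_{j-1}\). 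Conjugation by \(\varepsilon hg^2\) sends \(\varepsilon^ig\mapsto\varepsilon^{i-1}g\) and fixes \(h,\varepsilon^{-1}h\) up to the swap \(h\leftrightarrow\varepsilon^{-1}h\)? No: \(\varepsilon hg^2\) centralizes \(h\), and \(g^2\) is central. Hence the multidegree of \(E_3\) is carried to that of \(E_2\), and that of \(E_7\) to that of \(E_6\), as the table in Lemma~\ref{lem:Gamma4-Y2-line-stability} confirms.

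\(\mathcal H_j\) is computed as in \eqref{eq:Gamma4-Y2-H}. Applying \eqref{eq:tensor-product} twice, one gets
\[
\mathcal H_j=\Phi^{\varepsilon hg^2}(x_j,y_j\ell_j)\,\Phi^{\varepsilon hg^2}(y_j,\ell_j)\,\mathsf S(\varepsilon hg^2;x_j)\,\mathsf S(\varepsilon hg^2;y_j)\,\mathsf R(\varepsilon hg^2;\ell_j).
\]
Each \(\mathsf S,\mathsf R\) is expanded through \(m_{x,\tau}\) using the fixed representatives \(p_\bullet\), with \(d_x(a;y)\in C_G(x)\). The resulting values of \(\sigma\) and \(\rho\) are then rewritten in terms of \(\sigma(g)=-1\), \(\rho(h)=-1\), \(\sigma(\varepsilon^{-1}h^2)\), \(\rho(\varepsilon^{-1}g^2)\) and \(\rho(\varepsilon^{2})\), \(\sigma(\varepsilon^2)\), using \eqref{eq:YD-projective-action}. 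In the quotients \(\mathcal A_3\mathcal H_3/\mathcal A_2\) and \(\mathcal A_7\mathcal H_7/\mathcal A_6\), most representation values cancel, since \(\mathcal A_2,\mathcal A_3\) (respectively \(\mathcal A_6,\mathcal A_7\)) share the factors \(\mathsf R(\cdot;h)\mathsf S(\varepsilon^{-1}h;\cdot)\) (respectively \(\mathsf S(\varepsilon^2g;\varepsilon g)\)).

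Two tools handle what remains. The sign \(-1\) should come from a single factor \(\sigma(g)\) or \(\rho(h)\), because \(\varepsilon hg^2\) acts on a degree-\(g\)-type factor through \(g\) times central elements. Any leftover central-element values are absorbed using \(\Delta_4=1\) from \eqref{eq:Gamma4-alpha-beta-Delta} and the stability hypothesis. Concretely, stability of \(\mathbb Cz_1\) under \(h\) gives \(\Theta_{2,j}=1\), i.e.\ all \(\lambda_j\) equal, and the central elements \(\varepsilon^2,\varepsilon^{-1}h^2,g^2\) act by scalars. Writing \(\varepsilon hg^2=\varepsilon^{-1}h^2\cdot h^{-1}\varepsilon^2 g^2\), the action of \(\varepsilon hg^2\) on \(z_1\) is a scalar times the action of \(h^{-1}\). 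Hence it suffices to show that the ratio of the \(E_2\)-coefficients equals the ratio of the \(E_6\)-coefficients, which follows from \(\lambda_2=\lambda_6\) (that is, \(\Theta_{2,2}=\Theta_{2,6}\)). It then remains to pin the common scalar to \(-1\) on one component.

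The main obstacle is evaluating that single scalar, say \(\mathcal A_3\mathcal H_3/\mathcal A_2\), as exactly \(-1\). After the cancellations of representation values, I would reduce the leftover cocycle quotient to a product of factors \(\mathscr C_\Phi(a,b,c,d)^{\pm1}\) in the normal form \(\varepsilon^ih^jg^k\), in the same way as Lemma~\ref{lem:Gamma4-Xi3-cocycle-reduction}. To do this I would first apply Lemma~\ref{lem:local-cocycle-coherence} to merge the \(\Phi_h,\Phi_g\) factors coming from the representatives \(p_\bullet\), and verify the final factorization by a GAP certificate analogous to \gaplink{Gamma_4/reflected_y2/gamma4_reflected_y2_certificate.g}.
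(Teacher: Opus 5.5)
Your reduction to a single scalar is harmless. It needs no stability at all: once \(\Delta_4=1\), the homogeneous components of \(Y_1\) are one-dimensional, so \((\varepsilon hg^2)\rhd(z_{1,3}+z_{1,7})\) is automatically a multiple of \(z_{1,2}+z_{1,6}\), and \(\lambda_2=\lambda_6\) holds automatically for the same reason.

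The gap is the final step. The scalar \(\mathcal A_3\mathcal H_3/\mathcal A_2\) is \emph{not} forced to equal \(-1\) by cocycle identities together with \(\Delta_4=1\) and \(\rho(h)=\sigma(g)=-1\). Consequently the product of powers of \(\mathscr C_\Phi\) that you plan to certify does not exist. Already for \(\Phi=1\), use the standard basis and write \(r=\rho(\varepsilon)\), \(u=\rho(g^2)\), \(t=\sigma(\varepsilon^{-1}h^2)\). One computes \(\mathcal A_2=-r^{-1}t\), \(\mathcal A_3=r^2\), and \(\mathcal H_3=-r^2t^2u\). Hence \(\mathcal A_3\mathcal H_3/\mathcal A_2=r\,tu=r^2=\rho(\varepsilon^2)\), after using \(\Delta_4=r^{-1}tu=1\). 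This value is \(+1\) for the dihedral group of order \(16\), with \(h\) a rotation of order \(8\) and \(g\) a reflection. There \(\varepsilon=h^2\), so \(\rho(\varepsilon)=\rho(h)^2=1\), while \(\Delta_4=1\) and \(\rho(h)=\sigma(g)=-1\) all hold.

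So the stability hypothesis must do real work, and the only concrete use you make of it, \(\lambda_2=\lambda_6\), carries no information. In the notation of Lemma~\ref{lem:Gamma4-Y2-line-stability}, for \(\Phi=1\) one finds \(\lambda_j=t\) for even \(j\) and \(\lambda_j=-t\rho(\varepsilon^2)\) for odd \(j\). Two things are missing:
\begin{itemize}
\item an equality between an even-indexed and an odd-indexed \(\lambda_j\), for example \(\lambda_1=\lambda_2\);
\item a cocycle identity tying the \(\varepsilon hg^2\)-scalar to that ratio.
\end{itemize}

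This is exactly the paper's argument. Put \(C_i=z_{1,i}+z_{1,i+4}\) and define \(K,\lambda_1,\lambda_2\) by \((\varepsilon hg^2)\rhd C_3=KC_2\), \(h\rhd C_1=\lambda_1C_2\), and \(h\rhd C_2=\lambda_2C_3\). A certified cocycle reduction shows \(K\lambda_2/(\rho(h)\sigma(g)^2\lambda_1)=1\) under \(\Delta_4=1\) and \(\rho(h)=\sigma(g)=-1\), without using stability. Only then does stability give \(\lambda_1=\lambda_2\), so \(K=\rho(h)\sigma(g)^2=-1\). Your GAP reduction should target this relative quantity, not \(\mathcal A_3\mathcal H_3/\mathcal A_2\) by itself.
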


\begin{proof}
For this calculation, write
\[
\begin{gathered}
p_0=\varphi_1^\Phi(w_2\otimes v),\qquad
p_1=\varphi_1^\Phi(w_2\otimes v_1),\\
q_1=(\varepsilon^2g)\rhd w_1,\qquad
r_0=(gh)\rhd w_2,\qquad r_1=(\varepsilon^2g)\rhd w,
\end{gathered}
\]
and
\[
C_0=-\beta B_1r_1\otimes p_1,\qquad
C_1=-r_0\otimes((\varepsilon^2g)\rhd y_1),\qquad
C_2=w_2\otimes y_1,\qquad
C_3=A_1q_1\otimes p_0.
\]
Equation~\eqref{eq:Gamma4-z1-expansion} and the ordered multidegrees
in Lemma~\ref{lem:Gamma4-Y2-line-stability} give
\(z_1=C_0+C_1+C_2+C_3\) and
\(C_i=z_{1,i}+z_{1,i+4}\) for \(0\leq i<4\).
Define \(K,\lambda_1,\lambda_2\in\mathbb C^\times\) by
\[
(\varepsilon hg^2)\rhd C_3=KC_2,\qquad
h\rhd C_1=\lambda_1C_2,\qquad
h\rhd C_2=\lambda_2C_3,
\]
and set
\begin{align*}
L_a&=
 \frac{\Phi_{\varepsilon g}(\varepsilon hg^2,\varepsilon^2g)
       \Phi_g((\varepsilon hg^2)\varepsilon^2g,h)\sigma(\varepsilon h^2g^3)}
      {\Phi_g(\varepsilon,\varepsilon h^2g^3)},
&
L_p&=
 \Phi^{\varepsilon hg^2}(\varepsilon^2g,h)
 \frac{\Phi_g(\varepsilon hg^2,\varepsilon)\sigma(\varepsilon^2g^2)
       \rho(\varepsilon hg^2)}
      {\Phi_g(h,\varepsilon^2g^2)},\\
L_{ha}&=
 \frac{\Phi_g(h,\varepsilon)\sigma(g^{-1})}
      {\Phi_g(\varepsilon^2gh,g^{-1})\Phi_g(\varepsilon^2g,h)},
&
L_{hy}&=
 \Phi^h(\varepsilon g,h)
 \frac{\Phi_g(h,h)\sigma(\varepsilon^{-1}h^2)\rho(h)}
      {\Phi_g(\varepsilon,\varepsilon^{-1}h^2)},\\
L_{hr}&=
 \frac{\Phi_{\varepsilon^2g}(h,gh)\Phi_g(hgh,\varepsilon)\sigma(\varepsilon h^2g)}
      {\Phi_g(\varepsilon,\varepsilon h^2g)},
\\
L_{hY}&=
 \beta\Phi^{\varepsilon^2g}(g,\varepsilon^{-1}h)\Phi_h(\varepsilon^2g,g)\rho(\varepsilon^2g^2)\\
&\qquad{}\cdot\Phi^h(g,h)\sigma(\varepsilon^2g)\rho(h).
\end{align*}
We use
\(\varepsilon^{-1}(\varepsilon hg^2)(\varepsilon^2g)h
=\varepsilon h^2g^3\).
We also use
\(\varepsilon^{-1}h(gh)\varepsilon=\varepsilon h^2g\).
 {Equations~\eqref{eq:YD-projective-action} and
\eqref{eq:tensor-product} give}
\[
\begin{aligned}
(\varepsilon hg^2)\rhd q_1&=L_a w_2,&
(\varepsilon hg^2)\rhd p_0&=L_p y_1,\\
h\rhd w_2&=L_{ha}q_1,&
h\rhd y_1&=L_{hy}p_0,\\
h\rhd r_0&=L_{hr}w_2,&
h\rhd((\varepsilon^2g)\rhd y_1)&=-L_{hY}y_1.
\end{aligned}
\]
It follows that
\[
\begin{aligned}
K&=A_1\Phi^{\varepsilon hg^2}(\varepsilon^{-1}g,\varepsilon hg)L_aL_p,\\
\lambda_1&=\Phi^h(\varepsilon g,\varepsilon^{-1}hg)L_{hr}L_{hY},\\
\lambda_2&=A_1^{-1}\Phi^h(\varepsilon^2g,hg)L_{ha}L_{hy}.
\end{aligned}
\]
Thus, after cancelling \(A_1\),
\begin{equation}
\label{eq:Gamma4-Y2-slant-cocycle-expression}
\mathcal K_{Y_2}=
\frac{
 \Phi^{\varepsilon hg^2}(\varepsilon^{-1}g,\varepsilon hg)
 \Phi^h(\varepsilon^2g,hg)L_aL_pL_{ha}L_{hy}
}{
 \rho(h)\sigma(g)^2\Phi^h(\varepsilon g,\varepsilon^{-1}hg)
 L_{hr}L_{hY}
}
=\frac{K\lambda_2}
       {\rho(h)\sigma(g)^2\lambda_1}.
\end{equation}

Substitute the displayed expressions for
\(L_a,L_p,L_{ha},L_{hy},L_{hr},L_{hY}\)
into \eqref{eq:Gamma4-Y2-slant-cocycle-expression}.
Using  the assumptions
\(\Delta_4=1\) and \(\rho(h)=\sigma(g)=-1\), the GAP program
\gaplink{Gamma_4/terminal_character/gamma4_terminal_character_certificate.g}
expresses \(\mathcal K_{Y_2}\) as a product of integer
powers of \(\mathscr C_\Phi(a,b,c,d)\).
Thus \(\mathcal K_{Y_2}=1\) by
\eqref{eq:3-cocycle-identity}.

Since \(h\in C_G(\varepsilon hg^2)\), the stability of
\(\mathbb Cz_1\) gives
\(h\rhd z_1\in\mathbb C^\times z_1\).
The vectors \(C_i\) have distinct first \(W\)-degrees
\(\varepsilon^ig\), which conjugation by \(h\)
cyclically permutes.
Comparing the components with first \(W\)-degrees
\(\varepsilon^2g\) and \(\varepsilon^{-1}g\), we see
that \(\lambda_1\) and \(\lambda_2\) both equal the
scalar by which \(h\) acts on \(\mathbb Cz_1\).
Hence \(\lambda_1=\lambda_2\), and
\eqref{eq:Gamma4-Y2-slant-cocycle-expression} gives
\[
K=\rho(h)\sigma(g)^2\frac{\lambda_1}{\lambda_2}=-1.
\]
Therefore
\((\varepsilon hg^2)\rhd C_3=-C_2\).
Since \(C_3=z_{1,3}+z_{1,7}\) and
\(C_2=z_{1,2}+z_{1,6}\), this proves the assertion.
\end{proof}

\subsection{The seven \texorpdfstring{$Y_2$}{Y2}-simplicity parameters after
the first reflection}
\label{subsec:Gamma4-R1-reflected-parameters}

This subsection proves the seven parameter identities used in
Proposition~\ref{prop:Gamma4-explicit-B2-stability-under-reflections}.
The simplicity of the  second adjoint object then follows
from Lemma~\ref{lem:Gamma4-Y2-explicit-simplicity}.

\begin{lemma}
\label{lem:Gamma4-R1-reflected-parameter-reduction}
Assume \eqref{eq:Gamma4-classification-B2}. For
\(R_1(V,W)=(V^*,X_1)\), take
\((\varepsilon_1,h_1,g_1)=(\varepsilon^{-1},h^{-1},hg)\).
Write \(V^*\simeq M(h_1,\rho_1)\) and
\(X_1\simeq M(g_1,\sigma_1)\), and define
\(\Delta_4^{(1)}\) and \(\Theta_{2,j}^{(1)}\) for this tuple by
\eqref{eq:Gamma4-alpha-beta-Delta} and
Lemma~\ref{lem:Gamma4-Y2-line-stability}, respectively.
Assume that \(\rho_1\) and \(\sigma_1\) are one-dimensional,
\(\Delta_4^{(1)}=1\), and
\(\rho_1(h_1)=\sigma_1(g_1)=-1\).  Then
\[
\Theta^{(1)}_{2,j}=1
\qquad(1\leq j\leq7).
\]
\end{lemma}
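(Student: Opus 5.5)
The plan is to reduce the seven identities for the reflected tuple to explicit scalar identities, in the same way Lemma~\ref{lem:Gamma4-Y2-coordinate-calculation} treats the original tuple. By that lemma, applied to \(R_1(V,W)\) with the generators \((\varepsilon_1,h_1,g_1)=(\varepsilon^{-1},h^{-1},hg)\), we have
\[
\Theta^{(1)}_{2,j}
=\frac{H^{(1)}_j\mathcal A^{(1)}_j\mathcal A^{(1)}_1}
{H^{(1)}_0\mathcal A^{(1)}_0\mathcal A^{(1)}_{\pi(j)}}.
\]
The quantities \(\mathcal A^{(1)}_j\) and \(H^{(1)}_j\) are products of cocycle values and of the action coefficients \(\mathsf R^{(1)}=m_{h_1,\rho_1}\) and \(\mathsf S^{(1)}=m_{g_1,\sigma_1}\). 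The first step is to express \(\rho_1\) and \(\sigma_1\) through the original data:
\begin{itemize}
\item Lemma~\ref{lem:dual-projective-representation} gives \(\rho_1\) in terms of \(\rho\) and \(\Phi^a(h^{-1},h)\).
\item \(\sigma_1=\tau_1\) is the character on \(\mathbb Cx_1\). Its values come from \eqref{eq:Gamma4-x1} and \eqref{eq:tensor-product}, since \(a\rhd x_1\) for \(a\in C_G(hg)\) is determined by its \(v\otimes w\) coefficient.
\end{itemize}
Substituting these values, each \(\Theta^{(1)}_{2,j}\) becomes a product of values of \(\rho\), \(\sigma\) and \(\Phi\).

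The second step removes the representation values. Using \eqref{eq:YD-projective-action} repeatedly, every value of \(\rho\) or \(\sigma\) at a product is rewritten through the generators \(\varepsilon,h,g^2\) (for \(\rho\)) and \(\varepsilon^2,\varepsilon^{-1}h^2,g\) (for \(\sigma\)). The hypotheses \(\rho(h)=\sigma(g)=-1\) and \(\Delta_4=1\) are then used in the form
\[
\rho(\varepsilon^{-1}g^2)\sigma(\varepsilon^{-1}h^2)=\bigl(\Phi_h(g,\varepsilon g)\Phi_g(\varepsilon^{-1}h,h)\bigr)^{-1}.
\]
The reflected conditions \(\Delta_4^{(1)}=1\) and \(\rho_1(h_1)=\sigma_1(g_1)=-1\) are used in the same way. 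I would also use the original \(\Theta_{2,j}=1\), via Lemma~\ref{lem:Gamma4-Y2-coordinate-calculation}, to eliminate any leftover combination such as \(\rho(\varepsilon)\sigma(\varepsilon^2)\); these combinations should be exactly what the original seven equalities control. Once this is done, each \(\Theta^{(1)}_{2,j}\) should be a pure quotient of cocycle values, up to squares like \(\rho(\varepsilon^{2})^2=\Phi_h(\varepsilon^2,\varepsilon^2)\), which are themselves cocycle values.

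The third step is the cocycle reduction. Reduce every group element to the normal form \(\varepsilon^ih^jg^k\) using the \(\Gamma_4\) relations. Then write each of the seven quotients as a product of integer powers of \(\mathscr C_\Phi(a,b,c,d)\), as in the earlier appendix lemmas, and verify each factorization with a GAP certificate. Each factor equals one by \eqref{eq:3-cocycle-identity}. Because only the \(\Gamma_4\) relations are used, the identities hold in every quotient \(G\).

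The main obstacle is the second step. It requires finding precisely which monomials in the original character values survive, and which of the hypotheses (original \(\Theta_{2,j}=1\), \(\Delta_4=1\), and the reflected normalizations) kill them, before a pure cocycle factorization exists. I would first compute the surviving character monomial for each \(j\) by a symbolic GAP computation over free exponents. I would then match these monomials against the original \(\Theta_{2,j}\) monomials, expecting that \(\Theta^{(1)}_{2,j}\) equals a product of original \(\Theta_{2,k}\) (times a cocycle quotient).
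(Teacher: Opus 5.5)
Your proposal is correct and is essentially the paper's argument. The paper also determines \(\sigma_1\) from the action of \(C_G(hg)\) on \(\mathbb Cx_1\) (and \(\rho_1\) via duality), computes the seven \(\Theta^{(1)}_{2,j}\) for \((V^*,X_1)\) from the recursive adjoint maps (equivalently, the coordinate formula of Lemma~\ref{lem:Gamma4-Y2-coordinate-calculation} transported to \((\varepsilon_1,h_1,g_1)\)), and uses a GAP certificate to check that, under the stated hypotheses, each is a product of integer powers of \(\mathscr C_\Phi(a,b,c,d)\) and hence equals \(1\); your extra step of matching leftover character monomials against the original \(\Theta_{2,k}\) is a heuristic the paper does not spell out, but it is covered by imposing all of \eqref{eq:Gamma4-classification-B2} and the reflected normalizations in the symbolic reduction, as the certificate does.
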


\begin{proof}
By Lemma~\ref{lem:Gamma4-first-adjoint},
the line \(\mathbb Cx_1\) is \(C_G(g_1)\)-stable,
and the action on this line determines \(\sigma_1\).
Using this action and the recursive adjoint maps,
the GAP program
\gaplink{Gamma_4/reflected_y2/gamma4_reflected_y2_certificate.g}
computes the seven parameters
\(\Theta^{(1)}_{2,j}\) for \((V^*,X_1)\)
as defined in
Lemma~\ref{lem:Gamma4-Y2-line-stability}.

Under the hypotheses, the program verifies that each
\(\Theta^{(1)}_{2,j}\) is a product of integer powers
of \(\mathscr C_\Phi(a,b,c,d)\).
Every factor equals \(1\) by
\eqref{eq:3-cocycle-identity}.
Hence \(\Theta^{(1)}_{2,j}=1\) for \(1\leq j\leq7\).
\end{proof}

\subsection{The \texorpdfstring{$\Gamma_2$}{Gamma2} parameters}
\label{subsec:Gamma4-local-Deltas}

With the notation of
Lemma~\ref{lem:Gamma4-local-Gamma2-data}, the parameter is
\begin{equation}
\label{eq:Gamma4-local-Delta-01}
\Delta_{01}^{R}
=
\chi_R(\varepsilon^2)\psi_R(\varepsilon^2)\chi_R(x_R^2)\psi_R(x_R^2)
\frac{
 \Phi_{b_R}(\varepsilon^2x_R,x_R)\Phi_{x_R}(b_R,b_R)
}{
 \Phi_{x_R}(b_R,\varepsilon^2)\Phi_{b_R}(\varepsilon^2,x_R^2)
}.
\end{equation}

\begin{lemma}
\label{lem:Gamma4-W-local-Deltas}
Assume that \(\rho\) and \(\sigma\) are one-dimensional, that
\(\Delta_4=1\), that \(Y_2\) is simple, and that \(Y_3=0\).  Then
\[
\Delta_{01}^{W}=1.
\]
\end{lemma}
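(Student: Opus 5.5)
We will express $\Delta_{01}^W$ through data already computed for $(V,W)$, and then read off its value from the structure of $Y_2$ and $Y_3$. Take $R=W$, so $x_W=g$, $b_W=\varepsilon g$, $\chi_W=\sigma|_{C_K(g)}$, and $\psi_W$ is the character of $C_K(\varepsilon g)$ on $W_{\varepsilon g}=\mathbb Cw_1$.

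First we reduce the representation values in \eqref{eq:Gamma4-local-Delta-01} to $\sigma$. The factors $\chi_W(\varepsilon^2)$ and $\chi_W(g^2)$ are simply $\sigma(\varepsilon^2)$ and $\sigma(g^2)$. For $\psi_W(\varepsilon^2)$ and $\psi_W(g^2)$ we use $w_1=h\rhd w$ and centrality of $\varepsilon^2,g^2$. Together with \eqref{eq:YD-projective-action}, this gives
\[
\psi_W(c)=\frac{\Phi_g(c,h)}{\Phi_g(h,c)}\,\sigma(c),\qquad c\in\{\varepsilon^2,g^2\}.
\]
By Lemma~\ref{lem:alternator}, each prefactor is a value of $f_\Phi$. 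It is trivial on $\langle g,\varepsilon^2,h\rangle$ because $\eta_g=1$ (here $\dim\sigma=1$). Hence
\[
\Delta_{01}^W=\sigma(\varepsilon^2)^2\sigma(g^2)^2\cdot(\text{cocycle factor}).
\]

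Next we connect this to $z_0$ and to the simplicity data of $Y_2$. Since $Y_2$ is simple, Lemma~\ref{lem:Gamma4-Y2-explicit-simplicity} gives $\sigma(g)=-1$ and $z_0=0$. Using $\sigma(g)^2=\Phi_g(g,g)\sigma(g^2)$, we get $\sigma(g^2)^2=\Phi_g(g,g)^{-2}$. Similarly, $\sigma(\varepsilon^2)^2=\Phi_g(\varepsilon^2,\varepsilon^2)$, because $\varepsilon^4=1$. So, once $\sigma(g)=-1$, $\Delta_{01}^W$ is a pure cocycle expression.

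We then show that this expression equals $1$ exactly as in the $\Gamma_2$ computations. The plan is to expand $\Phi_x$ and $\Phi^x$, reduce group elements to the normal form $\varepsilon^ih^jg^k$, and factor the quotient into powers of $\mathscr C_\Phi(a,b,c,d)$. Where extra representation factors remain, in particular $\sigma(\varepsilon^{-1}h^2)$ and $\rho$-values, we feed in $\Delta_4=1$ from \eqref{eq:Gamma4-alpha-beta-Delta}. We feed in $Y_3=0$ in the form $\Xi_3=1$ and $\beta B_1\nu_1\mu_1=1$ from the proof of Lemma~\ref{lem:Gamma4-Y3-coefficient-cancellation}.

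An alternative route avoids this brute-force identity. By Lemma~\ref{lem:Gamma4-local-Gamma2-data}, the pair $(W^{(0)},W^{(1)})$ is a braided-indecomposable $\Gamma_2$-pair, and Lemma~\ref{lem:Gamma2-X1-simple} says that $\Delta_{01}^W=1$ if and only if its first adjoint is simple. Restricting to $K_W$ does not change braidings, so $(\operatorname{ad}W^{(0)})(W^{(1)})$ is a $K_W$-subobject of the degree-$2$ part of $\mathcal B(W)$. Its degree-$\varepsilon g^2$ component is the image of $\operatorname{id}-c^2$ on the span of $w\otimes w_1$ and $(\varepsilon^2g\rhd\cdot)$-translates, and the identity $T^2=\Delta_{01}^W\operatorname{id}$ holds as in Lemma~\ref{lem:Gamma4-first-adjoint}. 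One can then identify this image with a piece of $Y_2$ through $z_0=0$: the coefficient of $w\otimes(w\otimes v_1)$ vanishing with $\sigma(g)=-1$ forces the relevant $2\times2$ block to have rank one, which gives $\Delta_{01}^W=1$.

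The main obstacle is the final cocycle identity. We expect it to need a GAP-certified factorization into $\mathscr C_\Phi$-factors, like the other reductions in this appendix, and possibly an input from $Y_3=0$ whose exact form is hard to predict without computation.
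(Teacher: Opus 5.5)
Your reduction is right in outline. Once $\sigma(g)=-1$, the identities $\sigma(\varepsilon^2)^2=\Phi_g(\varepsilon^2,\varepsilon^2)$ and $\sigma(g^2)^2=\Phi_g(g,g)^{-2}$ remove all representation data, so $\Delta^W_{01}$ becomes an expression in $\Phi$ alone. The prefactors $\Phi_g(c,h)/\Phi_g(h,c)$ are not alternator values, however: Lemma~\ref{lem:alternator} needs pairwise commuting arguments, and $hgh^{-1}=\varepsilon g$.

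The gap is the final step. That expression is not identically $1$, so no factorization into $\mathscr C_\Phi$-factors can exist. For a counterexample, take the quotient $D_{16}=\{A^iB^j\}$ of $\Gamma_4$ with $A=h$, $B=g$, $\varepsilon=A^2$ and $g^2=1$. Use the cocycle obtained as the cup product of the crossed homomorphism $A^iB^j\mapsto i$ with the class of $D_\infty\to D_{16}$:
\[
\Psi\bigl(A^{i_1}B^{j_1},A^{i_2}B^{j_2},A^{i_3}B^{j_3}\bigr)=\zeta^{(-1)^{j_1}i_1\kappa},
\qquad
\kappa=\tfrac18\bigl(i_2+(-1)^{j_2}i_3-[\,i_2+(-1)^{j_2}i_3\,]_8\bigr).
\]
Here $\zeta=e^{2\pi\mathrm i/8}$, $0\le i_k\le 7$, and $[\,\cdot\,]_8$ is the residue in $\{0,\dots,7\}$. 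One finds $\Phi_g(\varepsilon^2,h)/\Phi_g(h,\varepsilon^2)=\zeta^{-1}$ and $\sigma(\varepsilon^2)^2=\zeta^4$. Computing $T^2$ directly then gives, for every $\sigma$,
\[
\Delta^W_{01}=\Phi_g(\varepsilon g,\varepsilon^3g)\,\Phi_{\varepsilon g}(\varepsilon^2g,g)\,\sigma(\varepsilon^2)\psi_W(\varepsilon^2)=\zeta^{-2}\zeta^{4}\zeta^{3}=\zeta^5 .
\]
Now inflate $\Psi$ to the quotient $\langle h,g\mid h^8=g^{64}=1,\ ghg^{-1}=h^{-1}\rangle$ of $\Gamma_4$. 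There $\Delta^W_{01}=\zeta^5\sigma(g)^4$, and one can choose $\rho$ with $\Delta_4=1$ and $\sigma$ with $\sigma(g)=-1$.

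So nothing that follows from $\sigma(g)=-1$ and $\Delta_4=1$ can prove the claim. This defeats your alternative route, because $z_0=0$ already follows from these two conditions (Lemma~\ref{lem:Gamma4-z0-factorization}). Moreover, the coefficient $-\beta(1+\sigma(g))$ of $w\otimes(w\otimes v_1)$ only detects $\sigma(g)$. Also, $w\otimes w$ has degree $g^2$, so it does not lie in the block $W_g\otimes W_{\varepsilon g}\oplus W_{\varepsilon^2g}\otimes W_{\varepsilon^3g}$ of degree $\varepsilon^{-1}g^2$ on which $T$ acts.

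The same applies to the inputs $\Xi_3=1$ and $\beta B_1\nu_1\mu_1=1$. Written out, they are the identity of Lemma~\ref{lem:Gamma4-Xi3-cocycle-reduction} and $\beta B_1\nu_1\mu_1=-\sigma(g)$, so they give nothing beyond $\sigma(g)=-1$. In effect, you use $Y_3=0$ only through the two coefficients of $\varphi_3^\Phi(w\otimes z_1)$.

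The paper uses more. It first rewrites $\Delta^W_{01}$ as the product $\mathsf S(g;x_1)\mathsf S(x_3;x_0)\mathsf S(x_1;x_2)\mathsf S(x_2;x_3)$ of standard action coefficients. Since $Y_2$ is simple, $z_1$ and $g\rhd z_1$ form a homogeneous basis of $Y_2$. The paper expands the coordinates of $\varphi_3^\Phi(w_i\otimes z_1)$ and $\varphi_3^\Phi(w_i\otimes(g\rhd z_1))$ for all $0\le i\le 3$. Each selected coordinate is a sum of two nonzero terms, so $Y_3=0$ forces every ratio $E_\ell$ to equal $-1$. A certified identity $(\Delta^W_{01})^{-1}=\prod_\ell E_\ell^{m_\ell}$ with $\sum_\ell m_\ell=0$ then gives $\Delta^W_{01}=1$.

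The missing idea in your proposal is this vanishing of $\varphi_3^\Phi$ on the whole basis of $W\otimes Y_2$. That basis exists only because $Y_2$ is simple, and this is information not implied by $\sigma(g)=-1$.
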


\begin{proof}
 {Put \(x_i=\varepsilon^ig\) and \(w_i=w_{x_i}\), with
indices in \(\mathbb Z/4\mathbb Z\), using the standard homogeneous
basis and the action coefficients \(\mathsf S\) fixed before
Lemma~\ref{lem:Gamma4-Y2-coordinate-calculation}.}
\eqref{eq:YD-projective-action} gives
\[
\begin{aligned}
\mathsf S(x_3;x_0)
&=\frac{\chi_W(\varepsilon^2)\mathsf S(x_1;x_0)}
        {\Phi_g(x_1,\varepsilon^2)},\\
\mathsf S(x_1;x_2)\mathsf S(x_1;x_0)
&=\Phi_g(x_1,x_1)\chi_W(g^2),\\
\mathsf S(x_2;x_3)\mathsf S(g;x_1)
&=\frac{\Phi_{x_1}(\varepsilon^2g,g)
         \psi_W(\varepsilon^2)\psi_W(g^2)}
        {\Phi_{x_1}(\varepsilon^2,g^2)}.
\end{aligned}
\]
Substituting the first and third identities into the
product below and using the second gives
\begin{equation}
\label{eq:Gamma4-local-action-coefficient-products}
\Delta_{01}^{W}
=
\mathsf S(g;x_1)
\mathsf S(x_3;x_0)
\mathsf S(x_1;x_2)
\mathsf S(x_2;x_3),
\end{equation}
by \eqref{eq:Gamma4-local-Delta-01}.

By Lemmas~\ref{lem:Gamma4-Y2-generators} and
\ref{lem:Gamma4-Y2-explicit-simplicity},
the vectors \(z_1\) and \(g\rhd z_1\) form a
homogeneous basis of \(Y_2\).
The GAP program
\gaplink{Gamma_4/local_delta/gamma4_local_delta_certificate.g}
uses the recursive adjoint maps to expand the coordinates of
\[
\varphi_3^\Phi(w_i\otimes z_1),
\qquad
\varphi_3^\Phi(w_i\otimes(g\rhd z_1))
\qquad(0\leq i\leq3).
\]
Each selected coordinate expression is a sum of two
nonzero terms. Let \(E_\ell\) denote their quotient.
Using \eqref{eq:Gamma4-local-action-coefficient-products},
the program verifies
\[
(\Delta_{01}^{W})^{-1}
=\prod_\ell E_\ell^{m_\ell},
\qquad
m_\ell\in\mathbb Z,
\qquad
\sum_\ell m_\ell=0.
\]
Since \(Y_3=0\), every selected coordinate vanishes,
so \(E_\ell=-1\).
Consequently,
\((\Delta_{01}^{W})^{-1}=(-1)^{\sum_\ell m_\ell}=1\),
as required.
\end{proof}

\section{Calculations for the \texorpdfstring{$T$}{T} case}
\label{app:T-calculations}

Retain the notation and assumptions of
Section~\ref{sec:classification-T}. All tensor products are
right-associated. For the  calculations in this appendix,
whenever \(\dim\rho=1\), put \(p=\rho(x_1)\sigma(z)\).
We first give the coordinate formulas for the higher adjoint objects
in the ordinary category. The last two subsections specify the
quaternion cocycle and the four-reflection calculation used to
exclude its nontrivial class. The latter calculation retains the
cocycle in \eqref{eq:T-order-two-finite-cocycle}.

\subsection{ {The recursive adjoint maps and}
\texorpdfstring{ {$Y_3$}}{Y3}}
\label{subsec:T-higher-adjoint-tables}

Assume throughout this subsection that
\(\dim\rho=1\), \(\sigma(x_1)=-1\),
\(\varepsilon_\Phi(W)=1\), and \(p^2-p+1=0\), together with the
last equality in \eqref{eq:T-classification-G2}.
By Lemmas~\ref{lem:T-pullback-cocycle}
and~\ref{lem:T-compatible-normalization}, we may perform the
adjoint calculations in \({}_T^T\mathcal{YD}\).
Retain \(V,W\) for the ordinary pair, and write
\(\varphi_m^{Y,1}\) for its recursive adjoint maps.
Use the homogeneous bases from the proof of
Lemma~\ref{lem:T-third-fourth-adjoints} and the scalars in
\eqref{eq:T-mixed-product}, and put
\[
e_{i_1\cdots i_m}
=w_{i_1}\otimes\cdots\otimes w_{i_m}\otimes v.
\]
By Lemma~\ref{lem:adjoint-object-realization}, \(Y_m\) is the
image of \(\varphi_m^{Y,1}\) restricted to
\(W\otimes Y_{m-1}\).

For \(m\geq2\), let \(j\) be determined by
\(x_j=(x_{i_1}\cdots x_{i_m}z)\brhd x_{i_1}\).
The double braiding between the first factor and the remaining
factors sends the indices to
\((j,i_1\brhd i_2,\ldots,i_1\brhd i_m)\), with coefficient
\((-1)^{m-1}r(-1)^{m-1}\ell=p\).
The first adjacent braiding has coefficient \(-1\) and replaces
\((i_1,i_2)\) by \((i_1\brhd i_2,i_1)\).
Thus the defining recursion gives
\begin{equation}
\label{eq:T-adjoint-recursion}
\begin{aligned}
\varphi_m^{Y,1}(e_{i_1\cdots i_m})
={}e_{i_1\cdots i_m}
-p e_{j,i_1\brhd i_2,\ldots,i_1\brhd i_m}
-w_{i_1\brhd i_2}\otimes
 \varphi_{m-1}^{Y,1}(e_{i_1i_3\cdots i_m}),
\end{aligned}
\end{equation}
with \(\varphi_1^{Y,1}(e_i)=(1-p)e_i\).

\begin{lemma}
\label{lem:T-Y3-homogeneous-components}
For the vector \(y\) defined in \eqref{eq:T-y3-image}, one has
\(Y_3=\Bbbk y\neq0\).
\end{lemma}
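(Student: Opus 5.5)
We work in the ordinary category \({}_T^T\mathcal{YD}\) with the basis \(w_i,v\), the scalars \(r,\ell,p\), and the recursion \eqref{eq:T-adjoint-recursion}. The map \(\varphi_3^{Y,1}\) commutes with the \(T\)-action, and \(Y_2\) is simple with basis \(u_1,\dots,u_4\) from Lemma~\ref{lem:T-Y2-homogeneous-components}. Hence \(Y_3\) is spanned by the vectors \(\varphi_3^{Y,1}(w_a\otimes u_b)\), and the action of \(T\) permutes these up to nonzero scalars. It therefore suffices to compute the sixteen vectors \(\varphi_3^{Y,1}(w_a\otimes u_b)\) and show that they span a line, namely \(\Bbbk y\).

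The first step is a degree count. The degree of \(w_a\otimes u_b\) is \(x_a\cdot(x_1x_2x_3z)x_b^{-1}\). This is central, equal to \(x_1x_2x_3z\), exactly when \(a=b\). When \(a\neq b\) it is \(x_1x_2x_3z\,x_ax_b^{-1}\), a noncentral element.

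In the second step we show that \(\varphi_3^{Y,1}(w_a\otimes u_b)=0\) for \(a\neq b\). By transitivity of conjugation on ordered pairs of distinct indices, we may reduce to a single pair, say \((a,b)=(2,1)\). We then expand using \eqref{eq:T-adjoint-recursion} for \(m=3\), applied to the three tensors \(e_{223},e_{234},e_{242}\) with coefficients \(1,-p,p-1\). Inside this expansion we use \(\varphi_2^{Y,1}\) from \eqref{eq:T-Y2-general}, where \(\beta_{ij}=-1\) and \(\varphi_2^{Y,1}(e_{ii})=0\). Collecting coefficients of the basis tensors \(e_{ijk}\) of the relevant degree, each coefficient should be a polynomial in \(p\) divisible by \(p^2-p+1\).

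In the third step we treat the diagonal case. We compute \(\varphi_3^{Y,1}(w_1\otimes u_1)\) explicitly and verify that it equals \(p\) times the right-hand side of \eqref{eq:T-y3-decomposition}; this also shows \(y\neq0\), since its coefficients on distinct tensors do not all vanish. Using \(x_i\rhd u_j\in\Bbbk^\times u_{i\brhd j}\) and the \(T\)-equivariance of \(\varphi_3^{Y,1}\), we then show \(\varphi_3^{Y,1}(w_b\otimes u_b)\in\Bbbk y\) for every \(b\). To see this, note that \(x_1\) fixes \(w_1\otimes u_1\) up to scalar, and that \(x_2\) sends \(w_1\otimes u_1\) to a multiple of \(w_3\otimes u_3\) or similar. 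Since \(y\) has central degree, \(g\rhd y\in\Bbbk y\) for all \(g\), and the diagonal images therefore stay in \(\Bbbk y\). Together with the second step, this gives \(Y_3=\Bbbk y\).

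The main obstacle is the second step: a bookkeeping-heavy expansion with many tensors. The plan is to organize it as a \(3\times 3\) matrix in each noncentral degree, as in Lemma~\ref{lem:T-Y2-cocycle-reduction}, and reduce every entry modulo \(p^2-p+1\), recording the values in an image table. If some noncentral image did not vanish, \(Y_3\) would fail to be simple, which would contradict the simplicity already obtained in Lemma~\ref{lem:T-x1-action-minus-one}'s setting. That contradiction is only available under finite-dimensionality, however, so the direct computation is needed.
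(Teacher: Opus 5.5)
Your overall strategy is the paper's: compute the sixteen images \(\varphi_3^{Y,1}(w_a\otimes u_b)\), which span \(Y_3\), in the ordinary category via \eqref{eq:T-adjoint-recursion}. Your off-diagonal reduction is sound. One has \(g\rhd(w_a\otimes u_b)\in\Bbbk^\times(w_{g\brhd a}\otimes u_{g\brhd b})\), conjugation by \(T\) acts on \(\{x_1,\ldots,x_4\}\) through the \(2\)-transitive group \(A_4\), and \(\varphi_3^{Y,1}\) is \(T\)-linear. So the single case \((a,b)=(2,1)\) suffices. In fact, expanding \(e_{223}-pe_{234}+(p-1)e_{242}\), the image vanishes identically in \(p\), so the quadratic relation is not even needed there.

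The gap is in your diagonal step. The sentence ``since \(y\) has central degree, \(g\rhd y\in\Bbbk y\) for all \(g\)'' does not follow. Centrality of \(\deg y=x_1x_2x_3z\) only places \(g\rhd y\) in the same homogeneous component of \(W^{\otimes3}\otimes V\). That component is twelve-dimensional: it is spanned by the twelve tensors \(e_{abc}\) occurring in \eqref{eq:T-y3-generator}. A priori, the four lines \(\Bbbk\,\varphi_3^{Y,1}(w_b\otimes u_b)\), which \(T\) permutes transitively, could therefore span a space of dimension up to four, and \(Y_3\) would not be a line.

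Equivariance does give two facts for free. First, \(x_1\rhd y\in\Bbbk y\), because \(w_1\otimes u_1\) is an \(x_1\)-eigenvector. Second, \(z\rhd y\in\Bbbk y\), because \(z\) acts by a scalar on \(W^{\otimes3}\otimes V\). Since \(T=\langle z,x_1,x_2\rangle\), one more explicit check closes the gap. The element \(x_2\) multiplies every \(e_{abc}\) by \(-r\) and permutes indices by \((134)\). This permutation carries the three groups of four tensors in \eqref{eq:T-y3-generator}, with coefficients \(1,\ p-1,\ -p\), cyclically onto one another. Using \(p^2=p-1\), this gives \(x_2\rhd y=pr\,y\).

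Alternatively, compute the remaining diagonal images directly, as the paper does, obtaining \(py,\ -y,\ (1-p)y\) for \(b=2,3,4\). With either supplement your argument is complete and coincides with the paper's, which records all sixteen images in a table.
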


\begin{proof}
Expanding \eqref{eq:T-y3-image} by
\eqref{eq:T-adjoint-recursion}, and using \(p^2=p-1\), gives
\begin{equation}
\label{eq:T-y3-generator}
\begin{aligned}
y={}&e_{123}+e_{432}+e_{214}+e_{341}\\
&+(p-1)(e_{142}+e_{413}+e_{231}+e_{324})\\
&-p(e_{421}+e_{134}+e_{243}+e_{312}).
\end{aligned}
\end{equation}
Comparing this expansion with \eqref{eq:T-Y2-basis} gives
\eqref{eq:T-y3-decomposition}.
Substituting \eqref{eq:T-Y2-basis} into the same recursion gives
\begin{equation}
\label{eq:T-Y3-image-table}
\bigl(\varphi_3^{Y,1}(w_i\otimes u_j)\bigr)_{1\leq i,j\leq4}
=
\begin{pmatrix}
py&0&0&0\\
0&py&0&0\\
0&0&-y&0\\
0&0&0&(1-p)y
\end{pmatrix}.
\end{equation}
The sixteen tensors \(w_i\otimes u_j\) form a basis of
\(W\otimes Y_2\), so their images span \(Y_3\).
The coefficient of \(e_{123}\) in \eqref{eq:T-y3-generator}
is one, hence \(y\neq0\), and the displayed table proves
\(Y_3=\Bbbk y\).
The GAP program \gaplink{T/verify_T_case.g} also verifies
these sixteen images using \(p^2=p-1\).
\end{proof}

\subsection{A cocycle on the quaternion group}
\label{subsec:T-quaternion-cocycle}

We give the explicit cocycle
\(F:Q_8^3\to\mathbb Z_8\) used to define \(\Omega\) in
\eqref{eq:T-cocycle-representative}.
Recall that \(Q_8=\{\pm1,\pm i,\pm j,\pm k\}\), with
\(k=ij\).
Identify each element \((-1)^s i^a j^b\) with its
coordinates \((s,a,b)\in\mathbb F_2^3\).
For the automorphism \(A\) defined by
\(A(i)=j\), \(A(j)=k\), and \(A(k)=i\), one has
\[
(s,a,b)(t,c,d)
=(s+t+ac+bd+bc,a+c,b+d),
\qquad
A(s,a,b)=(s,b,a+b).
\]
All operations in these coordinates are taken modulo two.

The \(A\)-orbits in \(Q_8\) are
\(\{1\}\), \(\{-1\}\), \(\{i,j,k\}\), and
\(\{-i,-j,-k\}\).
We set \(F(1,v,w)=0\) and give tables for the first
arguments \(i,-1,-i\).
In the table with first argument \(u\), the entry
in row \(v\) and column \(w\) is \(F(u,v,w)\).
Both rows and columns are ordered as
\((1,i,j,k,-1,-i,-j,-k)\), and all entries are read
modulo eight.
The remaining values are defined by
\[
F(A^r u,v,w)
=
F(u,A^{-r}v,A^{-r}w)
\qquad
u\in\{i,-i\},\ r\in\{0,1,2\}.
\]
The table with first argument \(-1\) satisfies
\(F(-1,Av,Aw)=F(-1,v,w)\).
Thus these rules determine \(F\) on all of \(Q_8^3\)
and give
\(F(Au,Av,Aw)=F(u,v,w)\).

\begingroup\small\setlength{\parindent}{0pt}\setlength{\arraycolsep}{4pt}\renewcommand{\arraystretch}{0.95}
\begin{minipage}[t]{0.48\textwidth}\centering
\[u=i:\quad\begin{pmatrix}
0 & 0 & 0 & 0 & 0 & 0 & 0 & 0\\
0 & 1 & 7 & 6 & 0 & 1 & 3 & 2\\
0 & 3 & 0 & 3 & 0 & 7 & 4 & 3\\
0 & 6 & 3 & 3 & 0 & 2 & 3 & 7\\
0 & 0 & 2 & 2 & 4 & 4 & 6 & 6\\
0 & 5 & 5 & 4 & 0 & 5 & 1 & 0\\
0 & 7 & 6 & 1 & 0 & 3 & 2 & 1\\
0 & 2 & 5 & 1 & 4 & 2 & 1 & 1
\end{pmatrix}\]
\end{minipage}\hfill
\begin{minipage}[t]{0.48\textwidth}\centering
\[u=-1:\quad\begin{pmatrix}
0 & 0 & 0 & 0 & 0 & 0 & 0 & 0\\
0 & 2 & 4 & 6 & 4 & 6 & 0 & 2\\
0 & 6 & 2 & 4 & 4 & 2 & 6 & 0\\
0 & 4 & 6 & 2 & 4 & 0 & 2 & 6\\
0 & 4 & 4 & 4 & 4 & 0 & 0 & 0\\
0 & 6 & 4 & 6 & 0 & 6 & 4 & 6\\
0 & 6 & 6 & 4 & 0 & 6 & 6 & 4\\
0 & 4 & 6 & 6 & 0 & 4 & 6 & 6
\end{pmatrix}\]
\end{minipage}\par\medskip
\begin{minipage}[t]{0.48\textwidth}\centering
\[u=-i:\quad\begin{pmatrix}
0 & 0 & 0 & 0 & 0 & 0 & 0 & 0\\
0 & 3 & 7 & 4 & 0 & 3 & 3 & 0\\
0 & 1 & 6 & 7 & 0 & 5 & 2 & 7\\
0 & 6 & 1 & 1 & 4 & 6 & 5 & 1\\
0 & 4 & 2 & 2 & 0 & 4 & 2 & 2\\
0 & 3 & 1 & 6 & 4 & 7 & 1 & 6\\
0 & 5 & 4 & 1 & 4 & 5 & 4 & 5\\
0 & 2 & 7 & 7 & 4 & 2 & 3 & 7
\end{pmatrix}\]
\end{minipage}\par\medskip
\endgroup

\begin{lemma}
\label{lem:T-fixed-quaternion-cocycle}
The function \(F\) is a normalized \(A\)-invariant additive
three-cocycle, and
\[
F(i,j,k)=F(j,k,i)=F(k,i,j)=3.
\]
If \(u=(-1)^s i^{a(u)}j^{b(u)}\), then
\[
F(u,v,w)\equiv
\sum_{r=0}^2 a(A^ru)a(A^rv)b(A^rw)\pmod2.
\]
\end{lemma}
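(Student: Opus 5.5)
The lemma is a finite check, so I plan to verify it directly from the explicit tables together with the defining rules for $F$. Normalization comes first. By definition $F(1,v,w)=0$, and the first row and first column of each of the three displayed tables vanish. The rule $F(A^ru,v,w)=F(u,A^{-r}v,A^{-r}w)$ carries these zero rows and columns to the other first arguments in the orbits of $i$ and $-i$, because $A$ fixes $1$. Hence $F$ vanishes whenever any argument is $1$.

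Next comes $A$-invariance. For $u$ in the orbits of $\pm i$, the identity $F(Au,Av,Aw)=F(u,v,w)$ holds by construction, since $A^3=\mathrm{id}$ makes the extension rule consistent. For $u=-1$, invariance is exactly the stated symmetry $F(-1,Av,Aw)=F(-1,v,w)$ of the $-1$ table. I will check that symmetry by permuting rows and columns through $A$, which acts on the ordered list $(1,i,j,k,-1,-i,-j,-k)$ as $i\mapsto j\mapsto k\mapsto i$ and fixes the signs. The values $F(i,j,k)=3$ are read off from row $j$, column $k$ of the $u=i$ table. Invariance then gives $F(j,k,i)=F(Ai,Aj,Ak)=3$ and $F(k,i,j)=3$.

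The cocycle identity $\delta F(x_0,x_1,x_2,x_3)=0$ in $\mathbb Z/8$, taken over all $8^4$ quadruples, is the main obstacle. I see no structural shortcut that avoids computation, so I would verify it mechanically using the coordinate multiplication $(s,a,b)(t,c,d)=(s+t+ac+bd+bc,a+c,b+d)$ and the extended table, in the same way as the GAP certificates used elsewhere in the paper. $A$-invariance of $\delta F$ cuts the work by roughly a factor of three. A cheaper reduction would note that $\delta F$ vanishes whenever some $x_i=1$, so only the $7^4$ quadruples without identity entries need checking.

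Finally, for the congruence, set $E(u,v,w)=\sum_r a(A^ru)a(A^rv)b(A^rw)$ over $\mathbb F_2$. Since $A(s,a,b)=(s,b,a+b)$, the triple $(a,b)$ evolves linearly and never depends on $s$, so $E$ depends only on the images in $Q_8/\{\pm1\}\cong\mathbb F_2^2$. I would compute $E$ on the $4^3$ classes and compare it with the reductions mod $2$ of the tables. As a consistency check, the $-1$ table is entirely even, matching $a(-1)=0$. The entry $F(i,j,k)=3$ is odd, and indeed $E(i,j,k)=1$, since only the $r=0$ term $a(i)a(j)b(k)=1\cdot0\cdot1$ needs to be recomputed along the orbit. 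This comparison is again a finite table check, also covered by the GAP file.
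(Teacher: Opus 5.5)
Your proposal is correct and follows essentially the same route as the paper. Normalization and $A$-invariance come from the tables and the extension rule, and $F(i,j,k)=3$ from the $u=i$ table plus invariance. The cocycle identity is an exhaustive machine check, and the mod-$2$ congruence is a finite comparison; the paper shortens that last step by reducing to $u\in\{1,-1,i,-i\}$ and noting that for $u=\pm i$ both sides equal $a(v)b(w)+(a(v)+b(v))a(w)$.

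One small slip in your consistency check: the $r=0$ term $a(i)a(j)b(k)$ is $0$, and $E(i,j,k)=1$ comes from the $r=2$ term $a(k)a(i)b(j)=1$.
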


\begin{proof}
By definition, \(F(1,v,w)=0\).
The zero first rows and columns of the tables give
\(F(u,1,w)=F(u,v,1)=0\), so \(F\) is normalized.
Its \(A\)-invariance follows from the extension rule.
Using the multiplication rule and the table values,
the GAP program
\gaplink{T/verify_T_cohomology_reduction.g}
verifies
\[
F(v,w,t)-F(uv,w,t)+F(u,vw,t)
-F(u,v,wt)+F(u,v,w)=0\pmod8
\]
for all \(8^4\) quadruples \((u,v,w,t)\in Q_8^4\).
Thus \(F\) is an additive three-cocycle.
The table with first argument \(i\) gives
\(F(i,j,k)=3\).
Since \(A(i)=j\), \(A(j)=k\), and \(A(k)=i\),
\(A\)-invariance gives
\(F(j,k,i)=F(k,i,j)=3\).

For the congruence modulo two, both sides are invariant
under simultaneous application of \(A\).
It therefore suffices to consider
\(u\in\{1,-1,i,-i\}\).
For \(u=1,-1\), both sides vanish modulo two.
For \(u=i,-i\), reduction of the corresponding tables gives
\[
F(u,v,w)
\equiv a(v)b(w)+(a(v)+b(v))a(w)\pmod2.
\]
For these two choices of \(u\), the formula for \(A\)
gives
\[
\sum_{r=0}^2 a(A^ru)a(A^rv)b(A^rw)
=
a(v)b(w)+(a(v)+b(v))a(w)
\]
in \(\mathbb F_2\).
This proves the asserted congruence.
\end{proof}

\subsection{Four reflections for the nontrivial cocycle}
\label{subsec:T-order-two-reflections}

We compute the four  reflections used in
Lemma~\ref{lem:T-order-two-Cartan-periodicity}.
Retain \(G_*,\Phi_*\), and \((V_0,W_0)\) from
\eqref{eq:T-order-two-finite-cocycle} and
\eqref{eq:T-order-two-finite-pair}.
Recall that
\(H=Q_8\rtimes\langle t\mid t^3=1\rangle\leq G_*\).
The support elements are
\(x_1=-t\), \(x_2=kt\), \(x_3=jt\), and \(x_4=it\).
In particular, \(x_1x_2x_3=1\).
Assume that the reflections
\(R_2,R_1,R_2,R_1\) are defined, and denote the resulting
pairs by \((V_r,W_r)\), \(0\leq r\leq4\).

\begin{lemma}
\label{lem:T-order-two-reflection-calculation}
The dimensions, supports, and actions of \(z\) are given
by the following table, where \(1\leq i\leq4\) in each
support set and the action columns record scalars.
\begin{equation}
\label{eq:T-order-two-reflected-pairs}
\begin{array}{c|c|c|c|c|c}
r&\dim V_r&\operatorname{supp}V_r&z|_{V_r}
&\operatorname{supp}W_r&z|_{W_r}\\ \hline
0&1&\{z\}&p^{-1}&\{x_i\}&p\\
1&2&\{z\}&p^2&\{x_i^{-1}\}&p^{-1}\\
2&2&\{z^{-1}\}&p^{-2}&\{z^2x_i^{-1}\}&-1\\
3&1&\{z^{-1}\}&p&\{z^{-2}x_i\}&-1\\
4&1&\{z\}&p^{-1}&\{z^3x_i\}&-p
\end{array}
\end{equation}
Each \(W_r\) has dimension four, and
\(c_{W_r,W_r}(w\otimes w)=-w\otimes w\)
for every homogeneous \(w\in W_r\).
For \(0\leq r\leq3\), the  dimensions of adjoint objects,
up to and including the first zero object, are
\begin{equation}
\label{eq:T-order-two-adjoint-dimensions}
\begin{array}{c|c|c}
r&
\dim(\operatorname{ad}V_r)^m(W_r),\ m=1,2,\ldots&
\dim(\operatorname{ad}W_r)^m(V_r),\ m=1,2,\ldots\\ \hline
0&4,0&4,4,2,0\\
1&4,4,0&4,4,1,0\\
2&4,4,0&4,4,1,0\\
3&4,0&4,4,2,0
\end{array}.
\end{equation}
There is a linear isomorphism \(f:W_4\to W_0\) such that
\[
f\bigl((W_4)_{z^3x_i}\bigr)=(W_0)_{x_i}
\qquad(1\leq i\leq4),
\]
and
$
f(a\rhd w)=a\rhd f(w)$ for 
\(a\in H,\ w\in W_4 \).
The action of \(H\) on \(V_4\) is trivial.
\end{lemma}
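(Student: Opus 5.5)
The plan is to compute the four reflections one at a time, always working at a fixed pair $(V_r,W_r)$ in ${}^{G_*}_{G_*}\mathcal{YD}^{\Phi_*}$. At each step the same three ingredients are needed. We need explicit homogeneous bases with the induced actions from \eqref{eq:induced-action}. We need the iterated adjoint objects, computed through the recursive maps $\varphi_m^\Phi$ of Lemma~\ref{lem:adjoint-object-realization}. And we need the dual of the component being reflected, using Lemma~\ref{lem:dual-projective-representation}.

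At $r=0$ the situation matches the computations of Subsection~\ref{subsec:T-adjoint-objects}. The first adjoint $(\operatorname{ad}V_0)(W_0)$ equals $V_0\otimes W_0$, of dimension $4$. Since $p\rho_0(z)=1$, Lemma~\ref{lem:T-central-support-adjoints} kills $X_2$. The computation of Lemma~\ref{lem:T-second-adjoint} gives $\dim Y_2=4$, and because the cocycle is now $\Omega^4$ rather than trivial, I expect the analogue of Lemma~\ref{lem:T-third-fourth-adjoints} to produce $Y_3$ of dimension $2$ rather than $1$ (the rank of the $Y_3$ matrix jumps when $\sigma((x_1x_2^{-1})^2)=-1$), with $Y_4=0$. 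This gives the first row of \eqref{eq:T-order-two-adjoint-dimensions}. Then $R_2(V_0,W_0)=(Y_3,W_0^*)$. Its support is $\{x_1x_2x_3z\}=\{z\}$, since $x_1x_2x_3=1$, and the action of $z$ on it is the cube of the actions on $W_0$ times the action on $V_0$, namely $p^3p^{-1}=p^2$. The dual $W_0^*$ has support $\{x_i^{-1}\}$ and $z$ acts by $p^{-1}$. This gives row $r=1$.

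For the remaining rows I would repeat the same adjoint recursion with the now two-dimensional $V_r$, exactly as in the $\dim\rho=2$ calculation inside Proposition~\ref{prop:T-rho-one-dimensional}: diagonalise the action of $x_1$ on $(V_r)_z$, and compute $\varphi_2$ on eigenvectors. The central-support parts of these computations reduce to scalars. The noncentral parts reduce to the $3\times3$ rank computations of Lemma~\ref{lem:T-Y2-cocycle-reduction} and to a $Y_3$ table like \eqref{eq:T-Y3-image-table}. All coefficients are explicit signs from $\Phi_*$, since $E$ is $\mathbb F_2$-valued and independent of $z$, together with powers of $p$. I would verify these with the GAP framework already used in Appendix~\ref{app:T-calculations}.

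Two further facts are needed along the way. The self-braiding $-1$ on $W_r$ follows from the twist argument (the twist is scalar on a simple object) once the action of $x_1$ on $(W_r)_{x_1}$ is identified. The $z$-action columns follow multiplicatively from degrees. Finally, the map $f:W_4\to W_0$ is obtained by comparing induced modules. Both are induced from one-dimensional characters of $C_H(x_1)$ that agree, and the shift by $z^3$ only changes $z$-coordinates, which $\Phi_*$ ignores. Triviality of $H$ on $V_4$ comes from $V_4$ being one-dimensional with central support, by the argument in Lemma~\ref{lem:T-order-two-finite-model} that $\rho(i)=\rho(j)=\rho(i)\rho(j)$, together with the computed value at $t$.

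The main obstacle is steps $r=1,2$, where $V_r$ is two-dimensional. There the adjoint objects are not determined by scalars, and the rank drop from $4$ to $1$ in $(\operatorname{ad}W_r)^3(V_r)$ must be tracked through explicit $\Phi_*$-signs. I would first try to split $V_r$ into $x_1$-eigenlines as in Proposition~\ref{prop:T-rho-one-dimensional} so as to reduce to one-dimensional computations, and fall back on direct machine verification if that fails.
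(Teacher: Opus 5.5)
Your plan is essentially the paper's proof. The paper likewise fixes the induced basis of $W_0$ and records its braiding coefficients $\beta_{ij}$, the mixed braidings and the $\Phi_*$-form of $c_{1,2}^{\Phi}$. It then reads off every entry of both tables, the self-braiding $-1$, the map $f$ and the triviality of $H$ on $V_4$ from an exact GAP elimination of the recursive maps over $\mathbb Q[p]/(p^2-p+1)$. So your hand derivations are valid refinements of that computation rather than a different route: the $z$-columns because $\Phi_*$ ignores $z$-coordinates, the supports from $x_1x_2x_3=1$, the entries $4,0$ and $4,4$ at $r=0$ from Lemmas~\ref{lem:T-central-support-adjoints} and~\ref{lem:T-second-adjoint}, and $f$ by comparing induced modules.

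The action of $t$ on $V_4$, which you leave as ``computed'', needs no extra work. By Lemma~\ref{lem:reflection-basic-properties}, $m_{12}^{(V_4,W_4)}=m_{12}^{(V_3,W_3)}=1$, so the criterion of Lemma~\ref{lem:T-central-support-adjoints}, with $z|_{V_4}=p^{-1}\neq-1$ and $z|_{W_4}=-p$, forces $z^3x_1=-z^3t$ to act by $-1$ on $V_4$. Hence $-t$ acts trivially, and together with $\rho|_{Q_8}=1$ so does $t$.
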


\begin{proof}
Choose \(0\neq v\in V_0\), and let \(w_1,\ldots,w_4\)
be the induced basis of \(W_0\) corresponding to
\(1,i,k,j\).
Formula~\eqref{eq:induced-action} gives
\[
c_{W_0,W_0}(w_i\otimes w_j)
=\beta_{ij}w_{i\brhd j}\otimes w_i,
\qquad
(\beta_{ij})=
\begin{pmatrix}
-1&-1&-1&-1\\
1&-1&-1&1\\
1&1&-1&-1\\
1&-1&1&-1
\end{pmatrix}.
\]
The mixed braidings are
\[
c_{V_0,W_0}(v\otimes w_i)=p\,w_i\otimes v,
\qquad
c_{W_0,V_0}(w_i\otimes v)=v\otimes w_i.
\]

All tensor products in the calculation are
right-associated.
For homogeneous vectors of degrees \(a,b,c\), the braiding
 {of the first two tensor factors} acts by
\[
u_a\otimes(v_b\otimes\xi_c)
\longmapsto
\frac{\Phi_*(a,b,c)}
     {\Phi_*(aba^{-1},a,c)}
(a\rhd v_b)\otimes(u_a\otimes\xi_c),
\]
where \(\xi_c\) denotes the remaining tensor, of total
degree \(c\).

Starting from these formulas, the GAP program
\gaplink{T/verify_T_order2_reflections.g}
computes the recursive images in
Lemma~\ref{lem:adjoint-object-realization}.
At each stage, the recursive map is restricted to the
tensor product of the acting object with the preceding
image.
Exact elimination gives the dimensions in
\eqref{eq:T-order-two-adjoint-dimensions}, including
the zero images.
The degrees and induced actions on these images and their duals give
\eqref{eq:T-order-two-reflected-pairs} and
\(c_{W_r,W_r}(w\otimes w)=-w\otimes w\)
for every homogeneous \(w\in W_r\).
The calculations take place over
\(\mathbb Q[p]/(p^2-p+1)\), so they apply to either
root of \(p^2-p+1\).

For the final pair, the same calculation gives
nonzero vectors \(w_i^{(4)}\in(W_4)_{z^3x_i}\)
such that the map \(f:W_4\to W_0\),
\(f(w_i^{(4)})=w_i\), satisfies
\[
f(g\rhd w_i^{(4)})=g\rhd w_i
\qquad g\in H,\ 1\leq i\leq4.
\]
It also gives \(g\rhd v_4=v_4\) for every
\(g\in H\) and \(v_4\in V_4\).
This proves the remaining assertions.
\end{proof}

\section{Calculations for the \texorpdfstring{$\Gamma _3$}{Gamma3} case}
\label{app:Gamma3-calculations}
Section~\ref{sec:classification-Gamma3} states the structural
conclusions.  This appendix gives the coefficient calculations,
bar-chain reductions, and GAP verifications used in their proofs.

All bar-chain identities below are computed in the normalized integral
bar complex of \(\Gamma _3\).  Because they use only the normal form
\(\varepsilon^i g^m h^n\), where
\(i\in\mathbb Z/3\mathbb Z\) and \(m,n\in\mathbb Z\), they descend to
every finite non-abelian quotient of \(\Gamma _3\). Throughout this appendix, we continue to write \(\Phi\) for its
pullback \(\pi^*\Phi\) to \(\Gamma_3\).

\subsection{Normalized bar-complex setup}
\label{app:Gamma3-bar-detector}

Define the normalized bar three-chains
\(L_x(a,b)=[a|b|x]+[{}^{ab}x|a|b]-[a|{}^bx|b]\),
\(U_a(x,y)=[a|x|y]+[{}^ax|{}^ay|a]-[{}^ax|a|y]\), and
\(\mathcal R(a;x,y)=[a|x|y]-[{}^ax|a|y]\).
Their evaluations by a multiplicative cocycle are, respectively,
\(\Phi_x(a,b)\), \(\Phi^a(x,y)\), and
\[
 R(a;x,y)=\frac{\Phi(a,x,y)}{\Phi({}^ax,a,y)}.
\]

Suppose that a normalized three-chain \(C\) satisfies
\(\partial C=0\) and \(\varpi_*C=\partial D\) for a normalized
four-chain \(D\) over \(S_3\).  The injectivity in
Lemma~\ref{lem:Gamma3-third-homology} implies that \(C\) is a
boundary over \(\Gamma _3\), so \(\langle\Phi,C\rangle=1\).
This also applies when \(\varpi_*C=0\), by taking \(D=0\).

\subsection{The \texorpdfstring{$(3,2)$}{(3,2)} obstruction}
\label{app:Gamma3-32-obstruction}

Retain \(V,W,v_0,w_0,v_i,w_1,E_0,E_1,u_0\) from
Lemma~\ref{lem:Gamma3-32-X1}.  With indices in
\(\mathbb Z/3\mathbb Z\), set
\[
 p_i=g\varepsilon^i,\qquad q_i=p_ih,
 \qquad y_0=\varepsilon h,\qquad y_1=\varepsilon^2h.
\]
Thus \(v_i\in V_{p_i}\), \(w_j\in W_{y_j}\). If \(\xi_x,\eta_y,\zeta_z\) are homogeneous, the definition of
\(c_{1,2}^\Phi\) and the associativity constraint give
\begin{equation}
\label{eq:Gamma3-32-c12}
 c_{1,2}^\Phi
 \bigl(\xi_x\otimes(\eta_y\otimes\zeta_z)\bigr)
 =R(x;y,z)(x\rhd\eta_y)
   \otimes(\xi_x\otimes\zeta_z).
\end{equation}
Retain \(a,b_0,\Delta_1,\chi,s\) from
\eqref{eq:Gamma3-a}--\eqref{eq:Gamma3-Delta1-s}.
The calculation in Lemma~\ref{lem:Gamma3-32-X1} gives
\(u_0:=\varphi_1^\Phi(E_0)=E_0-aE_1\), together with
\(g\rhd E_0=b_0E_1\).
Assume henceforth that \(\Delta_1=1\), and let
\(u_i=\varepsilon^i\rhd u_0\).
Then \((X_1)_{q_i}=\mathbb C u_i\).  Set
\begin{align*}
\ell_0={}&\Phi^\varepsilon(p_2,y_0)
 \Phi_g(\varepsilon,\varepsilon^2)\sigma(\varepsilon),
&k_2={}&
 \frac{\Phi_g(g,\varepsilon^2)}
      {\Phi_g(\varepsilon,g)}\rho(g),\\
\ell_1={}&\Phi^{\varepsilon^2}(p_1,y_1)
 \Phi_g(\varepsilon^2,\varepsilon)
 \frac{\Phi_{\varepsilon h}(\varepsilon^2,g)}
      {\Phi_{\varepsilon h}(g,\varepsilon)}\sigma(\varepsilon),
&k_1={}&
 \frac{\Phi_g(g,\varepsilon)}
      {\Phi_g(\varepsilon^2,g)}\rho(g),
\end{align*}
and define
\begin{align*}
  {c_1}& {=R(g;p_2,y_0)\frac{k_2}{\ell_0}},\\
  {c_2}& {=R(g;p_1,y_1)\frac{k_1}{\ell_1}}.
\end{align*}

\begin{lemma}
\label{lem:Gamma3-32-second-adjoint-expansion}
Let \(P_i=v_i\otimes u_i\).  These tensors have total degree
\(g^2h\), and
\begin{equation}
\label{eq:Gamma3-32-central-vector}
 \varphi_2^\Phi(P_0)=(1+s)P_0+c_1P_1+c_2P_2,
\end{equation}
where \(c_1c_2\ne0\).  Moreover, \(v_2\otimes u_0\) and
\(v_0\otimes u_1\) have total degree \(\varepsilon g^2h\), and
\begin{equation}
\label{eq:Gamma3-32-noncentral-vector}
 \varphi_2^\Phi(v_2\otimes u_0)
 -(1+\rho(g))(v_2\otimes u_0)
 \in\mathbb C(v_0\otimes u_1).
\end{equation}
\end{lemma}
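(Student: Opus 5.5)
We expand \(\varphi_2^\Phi\) through its recursive definition \eqref{eq:recursive-varphi-2}, applied to \(P_0=v_0\otimes u_0\) and to \(v_2\otimes u_0\). In each case we track the three summands separately: the identity, the double braiding \(c_{X_1,V}c_{V,X_1}\), and \((\operatorname{id}_V\otimes\varphi_1^\Phi)c_{1,2}^\Phi\). Everything is sorted by bi-homogeneous components. First we check the degrees. The equality \(p_iq_i=g\varepsilon^ig\varepsilon^ih=g^2h\) follows from \(g\varepsilon g^{-1}=\varepsilon^{-1}\), so every \(P_i\) has degree \(g^2h\). The tensors \(v_2\otimes u_0\) and \(v_0\otimes u_1\) have degrees \(g\varepsilon^2\cdot gh\) and \(g\cdot\varepsilon gh\). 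Both equal \(\varepsilon g^2h\).

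For \(P_0\), the identity summand gives \(P_0\). For the double braiding, \(c_{V,X_1}(v_0\otimes u_0)=(g\rhd u_0)\otimes v_0=\chi u_0\otimes v_0\) by Lemma~\ref{lem:Gamma3-32-X1}. The second braiding then applies \(gh\), and \(gh\) centralizes \(g\). By \eqref{eq:YD-projective-action} we get \((gh)\rhd v_0=\rho(gh)v_0\), so this summand is \(\chi\rho(gh)P_0=-sP_0\). Subtracting it contributes \(+sP_0\), which gives the coefficient \(1+s\).

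The \(c_{1,2}^\Phi\) summand is handled by writing \(u_0=v_2\otimes w_0-a\,v_1\otimes w_1\) and applying \eqref{eq:Gamma3-32-c12}. This yields \(R(g;p_2,y_0)(g\rhd v_2)\otimes(v_0\otimes w_0)\) and \(-aR(g;p_1,y_1)(g\rhd v_1)\otimes(v_0\otimes w_1)\). Here \(g\rhd v_2=k_2v_1\) and \(g\rhd v_1=k_1v_2\), which we read off from \(g\varepsilon^2=\varepsilon g\) and \eqref{eq:YD-projective-action}. Next we identify \(v_0\otimes w_0\) and \(v_0\otimes w_1\) as preimages of \(u_1\) and \(u_2\). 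The tensor-product formula \eqref{eq:tensor-product} gives \(\varepsilon\rhd E_0=\ell_0\,v_0\otimes w_0+(\text{other component})\), so \(\varphi_1^\Phi(v_0\otimes w_0)=\ell_0^{-1}u_1\). Similarly, \(\varepsilon^2\rhd E_0\) involves \(v_1\otimes\varepsilon^2w_0\), which by \eqref{eq:YD-projective-action} is proportional to \(w_1\); this explains the factor \(\Phi_{\varepsilon h}(\varepsilon^2,g)/\Phi_{\varepsilon h}(g,\varepsilon)\) in \(\ell_1\) and gives \(\varphi_1^\Phi(v_0\otimes w_1)=\ell_1^{-1}u_2\). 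Both identifications use that \(\varphi_1^\Phi\) is \(G\)-linear and that \((X_1)_{q_i}=\mathbb Cu_i\) is one-dimensional.

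Substituting these identifications produces the terms \(c_1P_1\) and \(c_2P_2\). The main obstacle is the bookkeeping of the associator and action scalars. In particular we must confirm that the extra factor \(-a\) in the second term is absorbed: \(\varphi_1^\Phi\) is applied only to \(v_0\otimes w_1\), and the \(-a\) must combine with the normalization of \(u_2\) to match \(c_2\) exactly as defined. If it does not, we can redefine \(c_2\) up to this scalar, since only nonvanishing matters downstream. In any case \(c_1c_2\neq0\) is clear, because every factor is a cocycle value or a value of a one-dimensional projective representation.

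For \eqref{eq:Gamma3-32-noncentral-vector}, the same procedure applies to \(v_2\otimes u_0\). The double braiding involves conjugation by \(q_0=gh\), which moves the degree \(g\varepsilon^2\) of \(v_2\) to \(g\varepsilon\). So this summand lies in \(V_{g\varepsilon}\otimes(X_1)_{\ldots}\); we check its total degree and compare it with the target. The \(c_{1,2}^\Phi\) summand splits into two pieces. The piece coming from \(v_2\otimes(v_2\otimes w_0)\) braids \(v_2\) past itself, gives \(\rho\)-type scalar \((p_2\rhd v_2)\), and returns to \(v_2\otimes u_0\) with coefficient \(\rho(g)\); this uses \(f_\Phi\)-triviality on \(\langle g,h\rangle\), transported to \(p_2\). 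The other piece lands in \(V_{p_0}\otimes(X_1)_{q_1}\), which is spanned by \(v_0\otimes u_1\). A degree comparison then shows that every term except the identity and the \(\rho(g)\)-term lies in \(\mathbb C(v_0\otimes u_1)\). The coefficient \(\rho(g)\) should be double-checked symbolically, or via GAP as elsewhere in the appendices.
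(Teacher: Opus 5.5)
Your plan matches the paper's: expand $\varphi_2^\Phi$ by the recursion and sort the three summands by bi-homogeneous components. The degree checks and the $(1+s)P_0$ term are correct. The gap is in the coefficient of $P_2$. You obtain $v_0\otimes w_1$ from $\varepsilon^2\rhd E_0$, claiming that $\varepsilon^2\rhd w_0$ is proportional to $w_1$. But $\varepsilon$ commutes with $\varepsilon h$, so $\varepsilon^2\rhd w_0\in\mathbb Cw_0$ and $\varepsilon^2\rhd E_0\in\mathbb C(v_1\otimes w_0)$, which is a different line of $(V\otimes W)_{q_2}$.

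The correct source is $\varepsilon^2\rhd E_1=\ell_1(v_0\otimes w_1)$; the factor $\Phi_{\varepsilon h}(\varepsilon^2,g)/\Phi_{\varepsilon h}(g,\varepsilon)$ in $\ell_1$ comes from $\varepsilon^2\rhd w_1$. This must be combined with the missing identity $\varphi_1^\Phi(E_1)=b_0^{-1}\,g\rhd\varphi_1^\Phi(E_0)=b_0^{-1}\chi u_0=-a^{-1}u_0$, which uses $g\rhd E_0=b_0E_1$ and $\chi=-b_0/a$ (valid since $\Delta_1=1$). Hence $\varphi_1^\Phi(v_0\otimes w_1)=-(a\ell_1)^{-1}u_2$, and this $-a^{-1}$ is exactly what cancels the $-a$. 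Your value $\ell_1^{-1}u_2$ would give $-ac_2$ instead of $c_2$.

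Your fallback of redefining $c_2$ does not prove the lemma. The scalars $c_1,c_2$ are fixed just before it, and their exact values are used later. Lemma~\ref{lem:Gamma3-32-bar-reduction} uses them through $c_i=-\sigma(\varepsilon)^{-1}\langle\Phi,\mathsf C_i\rangle$, and Proposition~\ref{prop:Gamma3-32-X2-obstruction} uses them through $\lambda=c_1t_1/c_2$ and $r=c_1^2t_1/(c_2t_0)$. So more than nonvanishing matters downstream. A minor point: $\varepsilon\rhd E_0=\ell_0(v_0\otimes w_0)$ exactly, with no second component.

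The second half has a wrong degree. In $c_{X_1,V}c_{V,X_1}(v_2\otimes u_0)$, the first braiding gives $(p_2\rhd u_0)\otimes v_2$ with $p_2\rhd u_0\in(X_1)_{q_1}$. So the second braiding acts on $v_2$ by $q_1$, not by $q_0=gh$. Since ${}^{q_1}p_2=p_0$, the term lies in $V_{g}\otimes(X_1)_{q_1}=\mathbb C(v_0\otimes u_1)$. With your first-factor degree $g\varepsilon$, the term would instead lie on $\mathbb C(v_1\otimes u_2)$, contradicting \eqref{eq:Gamma3-32-noncentral-vector}. No comparison of total degrees can repair this, because the component of degree $\varepsilon g^2h$ is spanned by the three independent tensors $v_2\otimes u_0$, $v_0\otimes u_1$, $v_1\otimes u_2$. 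You need the first-factor conjugations ${}^{p_2}q_0=q_1$ and ${}^{q_1}p_2=p_0$.

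Finally, the coefficient $\rho(g)$ of $v_2\otimes u_0$ does not need any appeal to $f_\Phi$. One has $R(p_2;p_2,y_0)=1$. Then \eqref{eq:YD-projective-action} with $p_2\varepsilon^2=\varepsilon^2g$ gives $p_2\rhd v_2=\rho(g)v_2$, because $\Phi_g(p_2,\varepsilon^2)=\Phi(p_2,\varepsilon^2,g)=\Phi_g(\varepsilon^2,g)$.
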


\begin{proof}
Since \(p_i^2=g^2\), one has \(p_iq_i=g^2h\).
Also, \(p_2q_0=p_0q_1=\varepsilon g^2h\).
This proves the assertions about degrees.

The definitions of \(\ell_0,\ell_1\), together with
\eqref{eq:YD-projective-action} and
\eqref{eq:tensor-product}, give
\(\varepsilon\rhd E_0=\ell_0(v_0\otimes w_0)\) and
\(\varepsilon^2\rhd E_1=\ell_1(v_0\otimes w_1)\).
The map \(\varphi_1^\Phi\) commutes with the \(G\)-action.
Since \(g\rhd E_0=b_0E_1\) and
\(g\rhd u_0=\chi u_0\), it follows that
\[
\begin{aligned}
&\varphi_1^\Phi(E_1)
=b_0^{-1}(g\rhd u_0)=-a^{-1}u_0,\\
&\varphi_1^\Phi(v_0\otimes w_0)
=\ell_0^{-1}u_1,\\
&\varphi_1^\Phi(v_0\otimes w_1)
=-(a\ell_1)^{-1}u_2.
\end{aligned}
\]
Moreover, \eqref{eq:YD-projective-action} gives
\(g\rhd v_2=k_2v_1\) and \(g\rhd v_1=k_1v_2\).
Using \(u_0=E_0-aE_1\) and
\eqref{eq:Gamma3-32-c12}, we therefore obtain
\[
\begin{aligned}
(\operatorname{id}_V\otimes\varphi_1^\Phi)
c_{1,2}^\Phi(P_0)
={}&R(g;p_2,y_0)k_2v_1\otimes
       \varphi_1^\Phi(v_0\otimes w_0)-aR(g;p_1,y_1)k_1v_2\otimes
       \varphi_1^\Phi(v_0\otimes w_1)\\
={}&c_1P_1+c_2P_2.
\end{aligned}
\]
On the other hand, \(u_0\) has degree \(gh\), so
$
c_{X_1,V}c_{V,X_1}(P_0)
=\chi\rho(gh)P_0.
$
The defining recursion for \(\varphi_2^\Phi\) and
\(s=-\chi\rho(gh)\) now give
\eqref{eq:Gamma3-32-central-vector}.
All factors in the definitions of \(c_1,c_2\) are
nonzero, hence \(c_1c_2\neq0\).

It remains to compute the coefficient of
\(v_2\otimes u_0\).
We have
\(
\Phi_g(p_2,\varepsilon^2)
=\Phi(p_2,\varepsilon^2,g)
=\Phi_g(\varepsilon^2,g).
\)
Since \(p_2\varepsilon^2=\varepsilon^2g\),
\eqref{eq:YD-projective-action} implies
\(p_2\rhd v_2=\rho(g)v_2\).
Together with \(R(p_2;p_2,y_0)=1\), this gives
\[
(\operatorname{id}_V\otimes\varphi_1^\Phi)
c_{1,2}^\Phi(v_2\otimes E_0)
=\rho(g)(v_2\otimes u_0).
\]
The group relations give
\({}^{p_2}q_0=q_1\), \({}^{q_1}p_2=p_0\), and
\({}^{p_2}p_1=p_0\).
Also, \(p_2y_1=q_1\), so
\(\varphi_1^\Phi(v_2\otimes w_1)\in\mathbb Cu_1\).
Consequently,
\[
\begin{aligned}
c_{X_1,V}c_{V,X_1}(v_2\otimes u_0)
&\in\mathbb C(v_0\otimes u_1),\\
(\operatorname{id}_V\otimes\varphi_1^\Phi)
c_{1,2}^\Phi(v_2\otimes E_1)
&\in\mathbb C(v_0\otimes u_1).
\end{aligned}
\]
Substituting these three relations and
\(u_0=E_0-aE_1\) into the recursion proves
\eqref{eq:Gamma3-32-noncentral-vector}.
\end{proof}

Equation~\eqref{eq:tensor-product} gives
\(\varepsilon\rhd P_0=t_0P_1\),
\(\varepsilon\rhd P_1=t_1P_2\), and
\(\varepsilon\rhd P_2=t_2P_0\), where
\begin{align}
 t_0&=\Phi^\varepsilon(p_0,q_0),
 \label{eq:Gamma3-32-t0}\\
 t_1&=\Phi^\varepsilon(p_1,q_1)
      \Phi_g(\varepsilon,\varepsilon)
      \Phi_{gh}(\varepsilon,\varepsilon),
 \label{eq:Gamma3-32-t1}\\
 t_2&=\Phi^\varepsilon(p_2,q_2)
      \Phi_g(\varepsilon,\varepsilon^2)
      \Phi_{gh}(\varepsilon,\varepsilon^2).
 \label{eq:Gamma3-32-t2}
\end{align}
Indeed, the formula for \(t_0\) follows from
\(v_1=\varepsilon\rhd v_0\) and
\(u_1=\varepsilon\rhd u_0\).  For the other two,
\eqref{eq:YD-projective-action} gives
\(\varepsilon\rhd v_1
=\Phi_g(\varepsilon,\varepsilon)v_2\),
\(\varepsilon\rhd u_1
=\Phi_{gh}(\varepsilon,\varepsilon)u_2\),
\(\varepsilon\rhd v_2
=\Phi_g(\varepsilon,\varepsilon^2)v_0\), and
\(\varepsilon\rhd u_2
=\Phi_{gh}(\varepsilon,\varepsilon^2)u_0\).
Substitution in \eqref{eq:tensor-product} gives \(t_1\) and \(t_2\),
all three scalars are nonzero.

\begin{lemma}
\label{lem:Gamma3-32-bar-reduction}
Assume \(\Delta_1=1\) and \(\rho(g)=-1\), and retain
\(s,\lambda,r\) from
\eqref{eq:Gamma3-Delta1-s} and
\eqref{eq:Gamma3-32-summary-lambda-r}.
Then the identities in \eqref{eq:Gamma3-32-summary-identities} hold.
\end{lemma}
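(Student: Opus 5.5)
Each of the three identities in \eqref{eq:Gamma3-32-summary-identities} asserts that some explicit product of cocycle values and values of \(\rho,\sigma\) equals one. The plan is to reduce each to the evaluation \(\langle\Phi,C\rangle\) of a normalized three-chain \(C\) on \(\Gamma_3\). We then show that \(C-n\mathsf K\) is a boundary, with \(n=1\) for the first identity and \(n=0\) for the other two. The tool is the criterion recorded in Appendix~\ref{app:Gamma3-bar-detector}. If \(\partial C=0\) and \(\varpi_*C\) is a boundary (or zero) over \(S_3\), then \(\langle\Phi,C\rangle=1\). This holds because \(\varpi_*\) is injective on \(H_3\) by Lemma~\ref{lem:Gamma3-third-homology}.

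\textbf{Step 1: eliminate the representation values.} The projective-representation values are removed using their multiplication rules. From \eqref{eq:YD-projective-action} we have \(\rho(\varepsilon)\)-type relations \(\sigma(\varepsilon)^3=\Phi_{\varepsilon h}(\varepsilon,\varepsilon)\Phi_{\varepsilon h}(\varepsilon,\varepsilon^2)\) and \(\rho(g)^2=\Phi_g(g,g)\rho(g^2)\), together with the formula \(\rho(gh)=\rho(g)\rho(h)/\Phi_g(g,h)\). The condition \(\Delta_1=1\) lets us replace \(a^2\) by \(b_0/b_1\). The hypothesis \(\rho(g)=-1\) fixes the sign, which is what makes \(s^3\) and \(r^2\) free of \(\rho\). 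After these substitutions, \(s^3\), \(\lambda^3/(t_0t_1t_2)\) and \(r^2/s\) each become \(\kappa_\Phi(\varepsilon)\)-type or pure cocycle expressions. Every factor \(\Phi_x\), \(\Phi^a\) and \(R\) is then rewritten as an evaluation of the chains \(L_x(a,b)\), \(U_a(x,y)\) and \(\mathcal R(a;x,y)\). This produces explicit integral chains \(C_1\), \(C_2\) and \(C_3\) whose evaluations are \(s^3\kappa_\Phi(\varepsilon)^{-1}\), \(\lambda^3(t_0t_1t_2)^{-1}\) and \(r^2s^{-1}\).

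\textbf{Step 2: the homological check.} For each \(C_j\) (using \(C_1-\mathsf K\) in the first case), we compute \(\partial C_j\) in the normalized bar complex, reducing group elements to the normal form \(\varepsilon^ig^mh^n\). We expect the two-chains to cancel, which gives a cycle. We then push forward to \(S_3\) along \(\varpi\), which kills \(h\), and exhibit an explicit normalized four-chain \(D_j\) with \(\varpi_*C_j=\partial D_j\). Since \(S_3\) is finite, a finite linear search, done by GAP as elsewhere in the paper, produces \(D_j\).

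\textbf{Main obstacle.} The hard step is bookkeeping. The scalars \(c_1\), \(c_2\), \(\ell_0\), \(\ell_1\), \(k_1\), \(k_2\) and \(t_i\) unfold into dozens of cocycle factors, and every leftover representation value must cancel exactly. The cube \(\lambda^3\) is the delicate case: it involves \(c_1^3t_1^3/c_2^3\), so one must check that the \(\sigma(\varepsilon)\) powers combine into a full cube. That cube is then replaced via \(\sigma(\varepsilon)^3\), which produces a \(\Phi_{\varepsilon h}\) term. First, we would verify the identities by hand on the \(\varepsilon\)-parts alone: only the \(\langle\varepsilon\rangle\) factors can carry the class of \(\mathsf K\), since \(H_3\) is generated there. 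After that we would relegate the full cancellation to a GAP certificate.
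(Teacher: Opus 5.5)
Your proposal is correct and is essentially the paper's proof. The paper expresses $s$, $\sigma(\varepsilon)^3$, $c_i$ and $t_i$ (using $\rho(g)=-1$) as evaluations of explicit chains built from $L_x$, $U_a$ and $\mathcal R$, forms the three cycles $3\mathsf S-2\mathsf E-\mathsf K$, $3(\mathsf C_1+\mathsf T_1-\mathsf C_2)-(\mathsf T_0+\mathsf T_1+\mathsf T_2)$ and $4\mathsf C_1+2\mathsf T_1-2\mathsf C_2-2\mathsf T_0-\mathsf S$, certifies by GAP that their images in $S_3$ are boundaries, and concludes by injectivity of $\varpi_*$. One correction to your bookkeeping expectations: $\sigma(\varepsilon)$ and $\rho(g)$ cancel in $c_1/c_2$, so $\lambda$ involves only cocycle values, the substitution $\sigma(\varepsilon)^3=\langle\Phi,\mathsf E\rangle$ is needed only for $s^3$, and no rewriting of $a^2$ via $\Delta_1=1$ is required.
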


\begin{proof}
Define the following normalized bar three-chains:
\begin{align*}
 \mathsf A={}&
 L_g(\varepsilon^2h,\varepsilon^2)
 -L_g(\varepsilon,h)-L_{y_0}(g,\varepsilon^2),\\
 \mathsf B_0={}&
 U_g(p_2,y_0)+L_g(g,\varepsilon^2)-L_g(\varepsilon,g),\\
 \mathsf S={}&
 \mathsf B_0-\mathsf A-L_g(g,h)+L_{y_0}(\varepsilon,\varepsilon),\\
 \mathsf E={}&
 L_{y_0}(\varepsilon,\varepsilon)
 +L_{y_0}(\varepsilon^2,\varepsilon),\\
 \mathsf C_1={}&
 \mathcal R(g;p_2,y_0)+L_g(g,\varepsilon^2)-L_g(\varepsilon,g)
 -U_\varepsilon(p_2,y_0)-L_g(\varepsilon,\varepsilon^2),\\
 \mathsf C_2={}&
 \mathcal R(g;p_1,y_1)+L_g(g,\varepsilon)-L_g(\varepsilon^2,g)
 -U_{\varepsilon^2}(p_1,y_1)-L_g(\varepsilon^2,\varepsilon)\\
 &\quad-L_{y_0}(\varepsilon^2,g)+L_{y_0}(g,\varepsilon),\\
 \mathsf T_0={}&U_\varepsilon(p_0,q_0),\\
 \mathsf T_1={}&U_\varepsilon(p_1,q_1)
 +L_g(\varepsilon,\varepsilon)+L_{gh}(\varepsilon,\varepsilon),\\
 \mathsf T_2={}&U_\varepsilon(p_2,q_2)
 +L_g(\varepsilon,\varepsilon^2)
 +L_{gh}(\varepsilon,\varepsilon^2).
\end{align*}
Using the evaluations of \(L,U,\mathcal R\) recorded above,
\eqref{eq:YD-projective-action}, and \(\rho(g)=-1\), the GAP program
\gaplink{Gamma_3/verify_gamma3_32.g}
expands the formulas above for
\(a,b_0,\ell_i,k_i,c_i,t_i\) into values of \(\Phi\) and verifies
\begin{equation}
\label{eq:Gamma3-32-scalar-chain-dictionary}
\begin{aligned}
s&=\sigma(\varepsilon)^{-2}\langle\Phi,\mathsf S\rangle,
&\sigma(\varepsilon)^3&=\langle\Phi,\mathsf E\rangle,\\
c_i&=-\sigma(\varepsilon)^{-1}
\langle\Phi,\mathsf C_i\rangle\ (i=1,2),
&t_i&=\langle\Phi,\mathsf T_i\rangle\ (i=0,1,2).
\end{aligned}
\end{equation}
Let \(\mathsf K\) be the cycle in
\eqref{eq:Gamma3-kappa-cycle}, and set
\begin{align*}
\mathsf C_s&=3\mathsf S-2\mathsf E-\mathsf K,\\
\mathsf C_\lambda&=
3(\mathsf C_1+\mathsf T_1-\mathsf C_2)
-(\mathsf T_0+\mathsf T_1+\mathsf T_2),\\
\mathsf C_r&=
4\mathsf C_1+2\mathsf T_1-2\mathsf C_2
-2\mathsf T_0-\mathsf S.
\end{align*}
The same GAP program
\gaplink{Gamma_3/verify_gamma3_32.g}
checks coefficientwise that these three chains have zero normalized
bar differential and that their images under
\(\varpi:\Gamma_3\to S_3\) are the boundaries of the explicit
normalized four-chains recorded in the program.

By the argument in Subsection~\ref{app:Gamma3-bar-detector}, all
three chains are boundaries over \(\Gamma_3\), and their evaluations
by \(\Phi\) are \(1\).
By \eqref{eq:Gamma3-32-scalar-chain-dictionary},
\eqref{eq:Gamma3-32-summary-lambda-r}, and
\(\langle\Phi,\mathsf K\rangle=\kappa_\Phi(\varepsilon)\), these
evaluations are respectively
\(s^3/\kappa_\Phi(\varepsilon)\),
\(\lambda^3/(t_0t_1t_2)\), and \(r^2/s\).
This proves \eqref{eq:Gamma3-32-summary-identities}.
\end{proof}

\subsection{The \texorpdfstring{$(3,1)$}{(3,1)} obstruction with
	\texorpdfstring{$\dim\tau=2$}{dim tau=2}}
\label{app:Gamma3-31-dim2-obstruction}

Retain \(a_\varepsilon\) from \eqref{eq:Gamma3-a-epsilon},
\(b,\zeta_3,w,\overline w\) from
Lemma~\ref{lem:Gamma3-dim2-normal-form}, and \(v,x_1\) from
Lemma~\ref{lem:Gamma3-dim2-X1-X2-support}.  Thus
\(x_1=v\otimes(w-\rho(h)\overline w)\).  For
\(i\in\mathbb Z/3\mathbb Z\), put
\(v_i=\varepsilon^i\rhd v\) and
\(x_{1,i}=\varepsilon^i\rhd x_1\), and let
\(e_0=v_2\otimes x_{1,0}\),
\(e_1=v_0\otimes x_{1,1}\), and
\(e_2=v_1\otimes x_{1,2}\).

\begin{lemma}
	\label{lem:Gamma3-dim2-second-adjoint-expansion}
	Assume that \(\Delta_3=I\) and \(\rho(g)=-1\).  The vectors
	\(e_0,e_1,e_2\) lie in
	\((V\otimes X_1)_{\varepsilon g^2h}\) and are linearly independent.
	There are nonzero scalars
	\(\mathfrak a,\mathfrak b,m_0,m_1,m_2\) such that
	\begin{equation}
		\label{eq:Gamma3-dim2-three-path-vector}
		x_2:=\varphi_2^\Phi(e_0)
		=e_0+\mathfrak a e_1+\mathfrak b e_2\ne0
	\end{equation}
	and
	\begin{equation}
		\label{eq:Gamma3-dim2-m-action}
		\varepsilon\rhd e_0=m_0(\mathfrak a e_1),\qquad
		\varepsilon\rhd(\mathfrak a e_1)=m_1(\mathfrak b e_2),\qquad
		\varepsilon\rhd(\mathfrak b e_2)=m_2e_0.
	\end{equation}
\end{lemma}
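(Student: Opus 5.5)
The computation is a direct expansion of the recursion \eqref{eq:recursive-varphi-2} on $e_0=v_2\otimes x_{1,0}$, in the same way as Lemma~\ref{lem:Gamma3-32-second-adjoint-expansion}.

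\emph{Degrees and independence.} Since $g\varepsilon g^{-1}=\varepsilon^2$, the vector $v_i$ has degree $g\varepsilon^i$. The vector $x_{1,i}$ has degree ${}^{\varepsilon^i}(gh)=\varepsilon^{2i}gh$. Hence each $e_i$ has total degree $g\varepsilon^{2}\cdot gh=\varepsilon g^2h$, and the same holds for the other two. The first tensor factors have the distinct degrees $g\varepsilon^2,g,g\varepsilon$. Each $e_i$ is nonzero because $x_1\ne0$ and the action is invertible. So the $e_i$ lie in distinct direct summands and are linearly independent.

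\emph{The three summands of the recursion.} The identity summand contributes $e_0$.

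For the double braiding, $c_{V,X_1}(v_2\otimes x_{1,0})=(p_2\rhd x_{1,0})\otimes v_2$, which has degrees $({}^{p_2}(gh),p_2)$. Here ${}^{p_2}(gh)=\varepsilon^{2}gh$ up to the computation ${}^{g\varepsilon^2}(gh)=g\varepsilon^2 gh\varepsilon g^{-1}\cdots$. Then $c_{X_1,V}$ acts on $v_2$ by an element of degree in $(gh)^G$, which moves $v_2$ to $V_g$. The result is a nonzero multiple of $v_0\otimes x_{1,1}=e_1$, because all homogeneous components of $X_1$ are the one-dimensional lines $\mathbb C x_{1,i}$ (Lemma~\ref{lem:Gamma3-dim2-X1-X2-support}). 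Write this multiple as $-\mathfrak a$. It is nonzero as a product of cocycle values and projective-action scalars.

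For the $c_{1,2}^\Phi$ summand, write $x_{1,0}=v\otimes(w-\rho(h)\overline w)$ and apply \eqref{eq:Gamma3-32-c12}. This yields $R(p_2;g,h)\,(p_2\rhd v)\otimes(v_2\otimes(w-\rho(h)\overline w))$. Here $p_2\rhd v\in\mathbb C^\times v_1$. Then $\varphi_1^\Phi$ is applied to $v_2\otimes(\cdots)$, which has degree $\varepsilon^2 gh\cdot$(appropriately) and lands in the line $(X_1)_{\varepsilon^{2}\cdot\,\varepsilon^{2}gh}$, hence in $\mathbb C x_{1,2}$. The result is a multiple $\mathfrak b$ of $e_2$.

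\emph{Nonvanishing of $\mathfrak b$.} This is the one point needing care. Let $\psi$ be the degree-$g\varepsilon^2 h$ component of $X_1$, so that $v_2\otimes W_h$ is the degree-$g\varepsilon^2h$ part of $V\otimes W$. Since $\varphi_1^\Phi$ has rank one on each degree, the question is whether the image of $v_2\otimes(w-\rho(h)\overline w)$ is nonzero. Transport by $\varepsilon^2$ to compare with $v\otimes(\cdot)$. The vector $\varepsilon^{-2}\rhd(w-\rho(h)\overline w)$ is computed from \eqref{eq:Gamma3-dim2-standard-basis}: $\varepsilon$ acts diagonally with the distinct eigenvalues $\zeta_3a_\varepsilon^{-1}$ and $\zeta_3^2a_\varepsilon^{-1}$. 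So it equals $c(w-\zeta_3^{\pm1}\rho(h)\overline w)$ up to cocycle factors. By the proof of Lemma~\ref{lem:Gamma3-dim2-X1-X2-support}, $\ker\varphi_1^\Phi$ on $v\otimes W_h$ is spanned by $v\otimes(w+\rho(h)^{-1}b^{-1}\cdot\rho(h)b\,\cdots)$, that is, by $v\otimes(\overline w+\rho(h)bw)$ up to a scalar. Using $\rho(h)^2b=1$, this is $v\otimes(w+\rho(h)\overline w)$. The vector $w-\zeta\rho(h)\overline w$ with $\zeta$ a primitive cube root of unity (times cocycle factors) is proportional to $w+\rho(h)\overline w$ only if $-\zeta\cdot(\text{cocycle factor})=1$. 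I expect to show that the cocycle factor is a value that, combined with $\zeta_3$, cannot equal $-1$, for example by comparing cube orders via $E_\varepsilon^3=I$. This is the main obstacle. If it is awkward, I would fall back on a GAP-verified expansion in the style of the other appendix lemmas.

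\emph{The cyclic action.} Once $\mathfrak a$ and $\mathfrak b$ are nonzero, $x_2\ne0$ by independence. By \eqref{eq:tensor-product}, $\varepsilon$ sends the line $\mathbb C e_i$ to $\mathbb C e_{i+1}$: it maps $v_i\mapsto v_{i+1}$ up to scalars $\Phi_g(\varepsilon,\varepsilon^i)$, and similarly $x_{1,i}\mapsto x_{1,i+1}$. This defines nonzero $m_0,m_1,m_2$ satisfying \eqref{eq:Gamma3-dim2-m-action}.
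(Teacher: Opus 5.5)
Your plan is the paper's: expand the recursion \eqref{eq:recursive-varphi-2} on $e_0$, identify the double-braiding term as a multiple of $e_1$ and the $c_{1,2}^\Phi$ term as a multiple of $e_2$, and read off the cyclic action of $\varepsilon$. Several steps are fine: the degree bookkeeping (sketchy in places but correct), the double-braiding term, the kernel $\mathbb C\,v\otimes(w+\rho(h)\overline w)$ of $\varphi_1^\Phi$ on $V_g\otimes W_h$, and the definition of $m_0,m_1,m_2$. You rightly single out $\mathfrak b\neq0$, i.e.\ $\varphi_1^\Phi\bigl(v_2\otimes(w-\rho(h)\overline w)\bigr)\neq0$, as the only point that is not a product of nonzero scalars. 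However, you leave exactly this step unproved. You postulate an unknown relative ``cocycle factor'' and defer to an unspecified argument via $E_\varepsilon^3=I$ or a computer check. The existence of nonzero $m_1,m_2$ depends on $\mathfrak b\neq0$, so the proof is incomplete as written.

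No such factor exists, and the step takes two lines. For $u\in W_h$, \eqref{eq:tensor-product} gives $\varepsilon\rhd(v_2\otimes u)=\Phi^\varepsilon(\varepsilon g,h)\,\Phi_g(\varepsilon,\varepsilon^2)\,v\otimes(\varepsilon\rhd u)$. The prefactor depends only on the degrees, so it is the same for $u=w$ and $u=\overline w$. By \eqref{eq:Gamma3-dim2-standard-basis}, $\varepsilon$ acts on $w,\overline w$ by exactly $\zeta_3a_\varepsilon^{-1}$ and $\zeta_3^2a_\varepsilon^{-1}$. Hence $\varepsilon\rhd\bigl(v_2\otimes(w-\rho(h)\overline w)\bigr)$ is a nonzero multiple of $v\otimes(w-\zeta_3\rho(h)\overline w)$. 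This vector lies in the kernel only if $\zeta_3=-1$, which is impossible. By $G$-equivariance of $\varphi_1^\Phi$ and invertibility of the action, $\mathfrak b\neq0$.

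The paper phrases the same fact as an explicit formula. Transporting $\varphi_1^\Phi(v\otimes w)=x_1$ and $\varphi_1^\Phi(v\otimes\overline w)=-\rho(h)^{-1}x_1$ by $\varepsilon^2$ gives $\varphi_1^\Phi\bigl(v_2\otimes(w-\rho(h)\overline w)\bigr)=(\zeta_3+\zeta_3^2)C_0x_{1,2}=-C_0x_{1,2}$, where $C_0=\Phi_h(\varepsilon,\varepsilon)a_\varepsilon^2/\Phi^{\varepsilon^2}(g,h)$. The paper also uses $\rho(g)=-1$ to put $\mathfrak a$ and $\mathfrak b$ in closed form, and these formulas are needed later in Lemma~\ref{lem:Gamma3-dim2-bar-reduction}. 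Your existence argument suffices for the statement as posed but does not produce them.
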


\begin{proof}
Set
\(C_0=\Phi_h(\varepsilon,\varepsilon)a_\varepsilon^2/
\Phi^{\varepsilon^2}(g,h)\).
Equations~\eqref{eq:YD-projective-action},
\eqref{eq:tensor-product}, and
\eqref{eq:Gamma3-dim2-standard-basis} give
\[
\begin{aligned}
\varepsilon^2\rhd(v\otimes w)
&=\zeta_3^2C_0^{-1}(v_2\otimes w),\\
\varepsilon^2\rhd(v\otimes\overline w)
&=\zeta_3C_0^{-1}(v_2\otimes\overline w).
\end{aligned}
\]
The calculation in the proof of
Lemma~\ref{lem:Gamma3-dim2-X1-X2-support} gives
\(\varphi_1^\Phi(v\otimes w)=x_1\) and
\(\varphi_1^\Phi(v\otimes\overline w)
=-\rho(h)^{-1}x_1\).
Since \(\varphi_1^\Phi\) commutes with the
\(\varepsilon^2\)-action and
\(x_{1,2}=\varepsilon^2\rhd x_1\), we obtain
\begin{equation}
\label{eq:Gamma3-dim2-shifted-X1}
\begin{aligned}
\varphi_1^\Phi(v_2\otimes w)
&=\zeta_3 C_0x_{1,2},\\
\varphi_1^\Phi(v_2\otimes\overline w)
&=-\zeta_3^2\rho(h)^{-1}C_0x_{1,2},\\
\varphi_1^\Phi
\bigl(v_2\otimes(w-\rho(h)\overline w)\bigr)
&=-C_0x_{1,2}.
\end{aligned}
\end{equation}
The last equality follows from the first two and
\(\zeta_3+\zeta_3^2=-1\).
    
  Define
\begin{align}
\mathfrak a
&=\frac{\Phi_g({}^{\varepsilon}(gh),\varepsilon^2)}
        {\Phi_{gh}(\varepsilon,g)},
\label{eq:Gamma3-dim2-mathfrak-a}\\
\mathfrak b
&= {\frac{
\Phi(\varepsilon g,g,h)
\Phi_h(\varepsilon,\varepsilon)a_\varepsilon^2
}{
\Phi(\varepsilon^2g,\varepsilon g,h)
\Phi_g(\varepsilon,g)\Phi^{\varepsilon^2}(g,h)
}}.
\label{eq:Gamma3-dim2-mathfrak-b}
\end{align}
Both scalars are nonzero.
The tensors \(e_0,e_1,e_2\) have total degree
\(\varepsilon g^2h\) and respective first tensor degrees
\(g\varepsilon^2,g,g\varepsilon\).
These degrees are distinct, so the three tensors are
linearly independent.
Equations~\eqref{eq:YD-projective-action} and
\eqref{eq:tensor-product} give
\[
\begin{aligned}
(\varepsilon g)\rhd x_1
&=-\frac{\Phi^g(g,h)\rho(g)}
        {\rho(h)\Phi_{gh}(\varepsilon,g)}x_{1,1},\\
{}^\varepsilon(gh)\rhd v_2
&=\Phi_g({}^\varepsilon(gh),\varepsilon^2)
  \rho(gh)v.
\end{aligned}
\]
Since \(g\) and \(h\) commute,
\(\Phi^g(g,h)=\Phi_g(g,h)\) and
\(\rho(gh)=\rho(g)\rho(h)/\Phi_g(g,h)\).
Using \(\rho(g)=-1\), we obtain
\[
-c_{X_1,V}c_{V,X_1}(e_0)
=\rho(g)^2
 \frac{\Phi_g({}^{\varepsilon}(gh),\varepsilon^2)}
      {\Phi_{gh}(\varepsilon,g)}e_1
=\mathfrak a e_1.
\]
Similarly, \eqref{eq:YD-projective-action} gives
\((\varepsilon g)\rhd v
=\rho(g)\Phi_g(\varepsilon,g)^{-1}v_1\).
Equations~\eqref{eq:Gamma3-32-c12} and
\eqref{eq:Gamma3-dim2-shifted-X1} therefore yield
\[
\begin{aligned}
(\operatorname{id}_V\otimes\varphi_1^\Phi)
c_{1,2}^\Phi(e_0)
=-\rho(g)
  \frac{\Phi(\varepsilon g,g,h)C_0}
       {\Phi(\varepsilon^2g,\varepsilon g,h)
        \Phi_g(\varepsilon,g)}e_2
=\mathfrak b e_2.
\end{aligned}
\]
Together with the identity summand \(e_0\),
the recursion \eqref{eq:recursive-varphi-2} gives
\eqref{eq:Gamma3-dim2-three-path-vector}.
This vector is nonzero by the linear independence of
\(e_0,e_1,e_2\).
Finally, \eqref{eq:tensor-product} and
\eqref{eq:YD-projective-action} give
\begin{equation}
\label{eq:Gamma3-dim2-eta-values}
\begin{aligned}
\varepsilon\rhd e_0&=\eta_0e_1,
&
\eta_0&=\Phi^\varepsilon(\varepsilon g,gh)
        \Phi_g(\varepsilon,\varepsilon^2),\\
\varepsilon\rhd e_1&=\eta_1e_2,
&
\eta_1&=\Phi^\varepsilon(g,{}^\varepsilon(gh))
        \Phi_{gh}(\varepsilon,\varepsilon),\\
\varepsilon\rhd e_2&=\eta_2e_0,
&
\eta_2&=\Phi^\varepsilon
        (\varepsilon^2g,{}^{\varepsilon^2}(gh))
        \Phi_g(\varepsilon,\varepsilon)
        \Phi_{gh}(\varepsilon,\varepsilon^2).
\end{aligned}
\end{equation}
Thus, with
\[
m_0=\frac{\eta_0}{\mathfrak a},
\qquad
m_1=\frac{\mathfrak a\eta_1}{\mathfrak b},
\qquad
m_2=\mathfrak b\eta_2,
\]
equation~\eqref{eq:Gamma3-dim2-m-action} follows.
All three scalars are nonzero.
\end{proof}

To relate \(m_0\) and \(m_2\), we express their coefficients
as evaluations of normalized bar three-chains.
The same chains will be used in
Subsection~\ref{app:Gamma3-31-dim1-obstruction}.
Define
\begin{align*}
\mathsf E={}&
L_h(\varepsilon,g)-L_h(g,\varepsilon^2)
-L_h(\varepsilon,\varepsilon),\\
\mathsf A={}&
L_g({}^\varepsilon(gh),\varepsilon^2)
-L_{gh}(\varepsilon,g),\\
\mathsf B={}&
[\varepsilon g|g|h]
-[\varepsilon^2g|\varepsilon g|h]+2\mathsf E
+L_h(\varepsilon,\varepsilon)-L_g(\varepsilon,g)
-U_{\varepsilon^2}(g,h),\\
\mathsf T_0={}&
U_\varepsilon(\varepsilon g,gh)
+L_g(\varepsilon,\varepsilon^2),\\
\mathsf T_1={}&
U_\varepsilon(g,{}^\varepsilon(gh))
+L_{gh}(\varepsilon,\varepsilon),\\
\mathsf T_2={}&
U_\varepsilon(\varepsilon^2g,{}^{\varepsilon^2}(gh))
+L_g(\varepsilon,\varepsilon)
+L_{gh}(\varepsilon,\varepsilon^2).
\end{align*}
The definitions of the chains and scalar coefficients give
\begin{equation}
\label{eq:Gamma3-dim1-chain-evaluations}
\begin{aligned}
\langle\Phi,\mathsf E\rangle&=a_\varepsilon,
&
\langle\Phi,\mathsf A\rangle&=\mathfrak a,\\
\langle\Phi,\mathsf B\rangle&=\mathfrak b,
&
\langle\Phi,\mathsf T_i\rangle&=\eta_i
\qquad(0\leq i\leq2).
\end{aligned}
\end{equation}
Set
\begin{equation}
\label{eq:Gamma3-dim2-M-chains}
\mathsf M_0=\mathsf T_0-\mathsf A,\qquad
\mathsf M_1=\mathsf A+\mathsf T_1-\mathsf B,\qquad
\mathsf M_2=\mathsf B+\mathsf T_2.
\end{equation}
Retain \(\mathsf K\) from
\eqref{eq:Gamma3-kappa-cycle}, and put
$
\mathsf\Theta
=L_{\varepsilon g^2h}(\varepsilon,\varepsilon)
+L_{\varepsilon g^2h}(\varepsilon^2,\varepsilon).
$
Thus
\(\langle\Phi,\mathsf\Theta\rangle
=\Phi_{\varepsilon g^2h}(\varepsilon,\varepsilon)
 \Phi_{\varepsilon g^2h}(\varepsilon^2,\varepsilon)\)
and
\(\langle\Phi,\mathsf K\rangle
=\kappa_\Phi(\varepsilon)\).
Define
\begin{equation}
\label{eq:Gamma3-dim2-comparison-cycles}
\mathsf C_{02}
=\mathsf M_0-\mathsf M_2-\mathsf K,
\qquad
\mathsf C_{\mathrm{pow}}
=3\mathsf M_0-\mathsf\Theta+\mathsf K.
\end{equation}

\begin{lemma}
\label{lem:Gamma3-dim2-bar-reduction}
The chains \(\mathsf C_{02}\) and
\(\mathsf C_{\mathrm{pow}}\) are boundaries in the
normalized bar complex of \(\Gamma_3\).
Under the hypotheses of
Lemma~\ref{lem:Gamma3-dim2-second-adjoint-expansion},
the scalars \(m_0,m_2\) satisfy
\begin{equation}
\label{eq:Gamma3-common-three-path-reduction}
m_2=m_0\kappa_\Phi(\varepsilon)^{-1}.
\end{equation}
\end{lemma}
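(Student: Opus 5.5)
The plan follows the pattern of Lemma~\ref{lem:Gamma3-32-bar-reduction}: first show that both chains are cycles whose images in $S_3$ are boundaries, then read off the scalar identity from their evaluations.

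\emph{Cycle property.} Each generator $L_x(a,b)$, $U_a(x,y)$, $\mathcal R(a;x,y)$ and $[\,\cdot|\cdot|\cdot\,]$ is a normalized bar three-chain. I would compute $\partial\mathsf C_{02}$ and $\partial\mathsf C_{\mathrm{pow}}$ coefficientwise, reducing every group element to the normal form $\varepsilon^ig^mh^n$, and check that both differentials vanish. By hand, most cancellations come from the local coherence identity of Lemma~\ref{lem:local-cocycle-coherence}, at the chain level, together with $\partial\mathsf K=0$. In practice I would record this as a GAP verification, exactly as for the $(3,2)$ case.

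\emph{Boundary property.} Next I push both chains forward along $\varpi:\Gamma_3\to S_3$. Here $h$ maps to $1$, so every term involving $h$ collapses, and normalization kills any symbol containing the identity. I would then exhibit explicit normalized four-chains $D_{02}$ and $D_{\mathrm{pow}}$ over $S_3$ with $\varpi_*\mathsf C=\partial D$; if a pushforward is already $0$, take $D=0$. Injectivity of $\varpi_*$ on $H_3$ (Lemma~\ref{lem:Gamma3-third-homology}) then shows that both chains are boundaries over $\Gamma_3$, by the argument of Subsection~\ref{app:Gamma3-bar-detector}. Finding the four-chains $D$ is the main obstacle. I would search for them by linear algebra over $\mathbb Z$ in the finite normalized bar complex of $S_3$ (solving $\partial D=\varpi_*\mathsf C$), again certified by computer.

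\emph{The scalar identity.} Any boundary evaluates to $1$ against a cocycle, so $\langle\Phi,\mathsf C_{02}\rangle=1$. By \eqref{eq:Gamma3-dim1-chain-evaluations} we have $\langle\Phi,\mathsf M_0\rangle=\eta_0/\mathfrak a=m_0$ and $\langle\Phi,\mathsf M_2\rangle=\mathfrak b\eta_2=m_2$, while $\langle\Phi,\mathsf K\rangle=\kappa_\Phi(\varepsilon)$. Hence $1=m_0m_2^{-1}\kappa_\Phi(\varepsilon)^{-1}$, which is \eqref{eq:Gamma3-common-three-path-reduction}.

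The main work, therefore, is verifying the chain evaluations \eqref{eq:Gamma3-dim1-chain-evaluations} against the explicit scalars \eqref{eq:Gamma3-dim2-mathfrak-a}, \eqref{eq:Gamma3-dim2-mathfrak-b} and \eqref{eq:Gamma3-dim2-eta-values}, and producing the $S_3$ four-chains. The chain $\mathsf C_{\mathrm{pow}}$ is not needed for this statement beyond the boundary claim; it will be used in the $\dim\tau=1$ case.
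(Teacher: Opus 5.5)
Your proposal is correct and is essentially the paper's proof. You show both chains are cycles, exhibit integral four-chains over $S_3$ bounding $\varpi_*\mathsf C_{02}$ and $\varpi_*\mathsf C_{\mathrm{pow}}$ (the paper's GAP certificate), invoke injectivity of $\varpi_*$ from Lemma~\ref{lem:Gamma3-third-homology}, and evaluate $\mathsf C_{02}$ using $\langle\Phi,\mathsf M_0\rangle=m_0$, $\langle\Phi,\mathsf M_2\rangle=m_2$ and $\langle\Phi,\mathsf K\rangle=\kappa_\Phi(\varepsilon)$.

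The paper's cycle check is structural rather than brute force. Each $\mathsf M_i$ has boundary $[\varepsilon|\varepsilon g^2h]-[\varepsilon g^2h|\varepsilon]$, and $\partial\mathsf\Theta$ is three times this, so both chains are cycles because $\partial\mathsf K=0$. Lemma~\ref{lem:local-cocycle-coherence} is an identity modulo boundaries of four-chains and does not enter here. The evaluations \eqref{eq:Gamma3-dim1-chain-evaluations} follow immediately from the definitions of $L$, $U$ and the scalars, so the only real computation is finding the $S_3$ four-chains.
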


\begin{proof}
The normalized bar differential gives
\[
\begin{aligned}
\partial\mathsf M_i
=[\varepsilon|\varepsilon g^2h]
 -[\varepsilon g^2h|\varepsilon],
 \ 0\leq i\leq2,\ \ \
\partial\mathsf\Theta
=3\bigl(
 [\varepsilon|\varepsilon g^2h]
 -[\varepsilon g^2h|\varepsilon]
 \bigr).
\end{aligned}
\]
Since \(\mathsf K\) is a cycle,
both \(\mathsf C_{02}\) and
\(\mathsf C_{\mathrm{pow}}\) are cycles.
The GAP program
\gaplink{Gamma_3/verify_gamma3_31_dim2.g}
verifies these differential identities and gives integral
four-chains over \(S_3\) whose boundaries are
\(\varpi_*\mathsf C_{02}\) and
\(\varpi_*\mathsf C_{\mathrm{pow}}\), respectively.
The injectivity of \(\varpi_*\) in
Lemma~\ref{lem:Gamma3-third-homology}
therefore shows that both cycles are boundaries
over \(\Gamma_3\).

Under the stated hypotheses,
\eqref{eq:Gamma3-dim1-chain-evaluations},
\eqref{eq:Gamma3-dim2-M-chains}, and the definitions of
\(m_0,m_2\) give
\[
\langle\Phi,\mathsf M_0\rangle
=\frac{\eta_0}{\mathfrak a}=m_0,
\qquad
\langle\Phi,\mathsf M_2\rangle
=\mathfrak b\eta_2=m_2.
\]
Evaluating the boundary \(\mathsf C_{02}\) yields
\[
1=\langle\Phi,\mathsf C_{02}\rangle
=\frac{m_0}{m_2\kappa_\Phi(\varepsilon)},
\]
which proves
\eqref{eq:Gamma3-common-three-path-reduction}.
\end{proof}

\subsection{The \texorpdfstring{$(3,1)$}{(3,1)} obstruction with
\texorpdfstring{$\dim\tau=1$}{dim tau=1}}
\label{app:Gamma3-31-dim1-obstruction}

For the remainder of this subsection, retain \(V,W,v,w\) and
\(\rho_1\) from Lemma~\ref{lem:Gamma3-dim1-X1-X2}, assume
\(\rho(h)\tau(g)\ne1\), and retain \(a_\varepsilon\) from
\eqref{eq:Gamma3-a-epsilon}.  Applying
\eqref{eq:YD-projective-action} to
\((\varepsilon,g)\), \((\varepsilon,\varepsilon)\), and
\((g,\varepsilon^2)\) gives
\(
 \bigl(a_\varepsilon\tau(\varepsilon)\bigr)\tau(g)
 =\tau(g)\bigl(a_\varepsilon\tau(\varepsilon)\bigr)^2.
\)
Since these scalars are nonzero, cancellation yields
\begin{equation}
\label{eq:Gamma3-dim1-epsilon-normalization}
 a_\varepsilon\tau(\varepsilon)=1.
\end{equation}
Equation~\eqref{eq:YD-projective-action} also gives
\begin{equation}
\label{eq:Gamma3-dim1-epsilon-square}
 \tau(\varepsilon)^2
 =\Phi_h(\varepsilon,\varepsilon)\tau(\varepsilon^2),
 \qquad
 \frac{1}{\tau(\varepsilon^2)}
 =\Phi_h(\varepsilon,\varepsilon)a_\varepsilon^2.
\end{equation}
For \(i\in\mathbb Z/3\mathbb Z\), put
\(v_i=\varepsilon^i\rhd v\) and
\(x_{1,i}=\varepsilon^i\rhd(v\otimes w)\), and let
\(e_0=v_2\otimes x_{1,0}\),
\(e_1=v_0\otimes x_{1,1}\), and
\(e_2=v_1\otimes x_{1,2}\).

\begin{lemma}
\label{lem:Gamma3-dim1-three-elementary-tensors}
The vectors \(e_0,e_1,e_2\) lie in the component of degree
\(\varepsilon g^2h\) and are linearly independent.  There are
nonzero scalars
\(\mathfrak a,\mathfrak b,\eta_0,\eta_1,\eta_2\) such that
\begin{equation}
\label{eq:Gamma3-dim1-three-elementary-vector}
 x_2:=\varphi_2^\Phi(e_0)
 =e_0-\rho(g)^2\rho(h)\tau(g)\mathfrak a e_1
 +\rho(g)\bigl(1-\rho(h)\tau(g)\bigr)\mathfrak b e_2.
\end{equation}
Moreover, \(\varepsilon\rhd e_0=\eta_0e_1\),
\(\varepsilon\rhd e_1=\eta_1e_2\), and
\(\varepsilon\rhd e_2=\eta_2e_0\).
\end{lemma}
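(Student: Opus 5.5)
The plan is to mirror the proof of Lemma~\ref{lem:Gamma3-dim2-second-adjoint-expansion}, with the simpler one-dimensional data. First, the degree claims: the \(v_i\) have degrees \(g\varepsilon^i\), and since \(h\) is central, \(x_{1,i}\) has degree \({}^{\varepsilon^i}(gh)=\varepsilon^{2i}gh\). Hence the total degrees are \(g\varepsilon^2gh\), \(g\varepsilon^2 gh\) and \(g\varepsilon\varepsilon gh\), all equal to \(\varepsilon g^2h\) by \(g\varepsilon g^{-1}=\varepsilon^{-1}\). The first tensor degrees \(g\varepsilon^2,g,g\varepsilon\) are pairwise distinct, and each \(e_i\) is nonzero because the action of group elements is invertible. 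This gives linear independence, exactly as before.

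Next, the three summands of \eqref{eq:recursive-varphi-2} evaluated on \(e_0\).

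\textbf{Identity term.} It contributes \(e_0\).

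\textbf{Double-braiding term.} We have \(c_{V,X_1}(e_0)=({}^{g\varepsilon^2}\!\cdot)\rhd x_{1,0}\otimes v_2\), and \(g\varepsilon^2\rhd x_{1,0}\) is a nonzero multiple of \(x_{1,1}\). Braiding back, \({}^{\varepsilon}(gh)\) acts on \(v_2\), producing a multiple of \(v_0\). To extract the scalar, factor \(g\varepsilon^2\) through \(\varepsilon g\) and \({}^\varepsilon(gh)\) through \(gh\) and \(\varepsilon^2\) using \eqref{eq:YD-projective-action}. On \(X_1\), Lemma~\ref{lem:Gamma3-dim1-X1-X2} gives \(g\rhd\widetilde x_1=\Phi^g(g,h)\rho(g)\tau(g)\widetilde x_1\). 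On \(V\), \(gh\) acts through \(\rho(g)\rho(h)/\Phi_g(g,h)\). The product is \(\rho(g)^2\rho(h)\tau(g)\) times a pure cocycle factor, once \(\Phi^g(g,h)=\Phi_g(g,h)\) (using \([g,h]=1\)) is applied. Call that cocycle factor \(\mathfrak a\), taken to be the evaluation of the chain \(\mathsf A\) from \eqref{eq:Gamma3-dim1-chain-evaluations}. Concretely, \(\mathfrak a=\Phi_g({}^\varepsilon(gh),\varepsilon^2)/\Phi_{gh}(\varepsilon,g)\), the same formula as \eqref{eq:Gamma3-dim2-mathfrak-a}.

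\textbf{\(c_{1,2}\) term.} Write \(x_{1,0}=v\otimes w\). By \eqref{eq:Gamma3-32-c12},
\[
c_{1,2}^\Phi\bigl(v_2\otimes(v\otimes w)\bigr)=R(g\varepsilon^2;g,h)\,\bigl((\varepsilon g)\rhd v\bigr)\otimes(v_2\otimes w).
\]
Here \((\varepsilon g)\rhd v=\rho(g)\Phi_g(\varepsilon,g)^{-1}v_1\). Next, \(\varphi_1^\Phi(v_2\otimes w)\) equals \((1-\rho(h)\tau(g))\) times \(\varepsilon^{2}\)-transported data. Indeed, \(\varphi_1^\Phi\) is the scalar \(1-\rho(h)\tau(g)\) on all of \(V\otimes W\), since the double braiding commutes with \(G\) and is transitive on the homogeneous lines. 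Writing \(v_2\otimes w\) as a multiple of \(\varepsilon^2\rhd(v\otimes w)=x_{1,2}\) introduces \(\Phi^{\varepsilon^2}(g,h)^{-1}\tau(\varepsilon^2)^{-1}\). By \eqref{eq:Gamma3-dim1-epsilon-square} this equals \(\Phi_h(\varepsilon,\varepsilon)a_\varepsilon^2/\Phi^{\varepsilon^2}(g,h)\). The coefficient of \(e_2\) is therefore \(\rho(g)(1-\rho(h)\tau(g))\mathfrak b\), with \(\mathfrak b\) given by \eqref{eq:Gamma3-dim2-mathfrak-b}, which is the evaluation of \(\mathsf B\). The extra \(\rho(g)\) of the dim-2 computation appears here explicitly rather than being specialized to \(-1\). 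The signs then match \eqref{eq:Gamma3-dim1-three-elementary-vector}, since the double-braiding term enters with a minus sign.

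Finally, the \(\varepsilon\)-action: \eqref{eq:tensor-product} and \eqref{eq:YD-projective-action} give \(\eta_0,\eta_1,\eta_2\) by the same formulas as \eqref{eq:Gamma3-dim2-eta-values}, since those used only the index bookkeeping of \(v_i\) and \(x_{1,i}\). All scalars are products of nonzero cocycle values and nonzero character values, hence nonzero.

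The main obstacle is the \(c_{1,2}\) term: keeping the associator factor \(R\) and the \(\tau(\varepsilon^2)\) normalization straight, so that the result is literally \(\rho(g)(1-\rho(h)\tau(g))\) times the same \(\mathfrak b\) as in the two-dimensional case. I would do this by direct substitution of \eqref{eq:Gamma3-dim1-epsilon-normalization}, and would confirm it with the GAP certificate in the same style as \gaplink{Gamma_3/verify_gamma3_31_dim2.g}.
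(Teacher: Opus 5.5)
Your proposal is correct and follows the paper's proof essentially step by step. You use the same degree and linear-independence argument, and the same evaluation of the double-braiding term via $\rho_1(g)$, $\rho(gh)$ and $\Phi^g(g,h)=\Phi_g(g,h)$ to get $\rho(g)^2\rho(h)\tau(g)\mathfrak a$. You treat the $c_{1,2}^\Phi$ term the same way, through $\varphi_1^\Phi=(1-\rho(h)\tau(g))\operatorname{id}$, the $\varepsilon^2$-transport and \eqref{eq:Gamma3-dim1-epsilon-square}, to obtain $\rho(g)(1-\rho(h)\tau(g))\mathfrak b$, and the $\eta_i$ come from \eqref{eq:Gamma3-dim2-eta-values}; that $c_{1,2}^\Phi$ step is a direct substitution and needs no separate GAP verification.
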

\begin{proof}
The nonzero tensors \(e_0,e_1,e_2\) have total degree
\(\varepsilon g^2h\) and respective first tensor degrees
\(g\varepsilon^2,g,g\varepsilon\).
These degrees are distinct, so the tensors are
linearly independent.
Take \(\mathfrak a,\mathfrak b\) from
\eqref{eq:Gamma3-dim2-mathfrak-a} and
\eqref{eq:Gamma3-dim2-mathfrak-b}.
The definition of \(\rho_1\) and
\eqref{eq:YD-projective-action} give
\[
\begin{aligned}
(\varepsilon g)\rhd x_{1,0}
&=\frac{\rho_1(g)}
        {\Phi_{gh}(\varepsilon,g)}x_{1,1},\\
{}^\varepsilon(gh)\rhd v_2
&=\Phi_g({}^\varepsilon(gh),\varepsilon^2)
  \rho(gh)v.
\end{aligned}
\]
Since
\(\rho_1(g)=\Phi^g(g,h)\rho(g)\tau(g)\),
\(\Phi^g(g,h)=\Phi_g(g,h)\), and
\(\rho(gh)=\rho(g)\rho(h)/\Phi_g(g,h)\), we obtain
\begin{equation}
\label{eq:Gamma3-dim1-noncentral-double-braiding}
\begin{aligned}
c_{X_1,V}c_{V,X_1}(e_0)
=\rho_1(g)\rho(gh)\mathfrak a e_1
=\rho(g)^2\rho(h)\tau(g)\mathfrak a e_1.
\end{aligned}
\end{equation}
For the last recursive summand,
\eqref{eq:tensor-product} gives
\(
x_{1,2}
=\Phi^{\varepsilon^2}(g,h)\tau(\varepsilon^2)
  (v_2\otimes w).
\)
Since \(\varphi_1^\Phi\) commutes with the
\(\varepsilon^2\)-action and
\(\varphi_1^\Phi(v\otimes w)
=(1-\rho(h)\tau(g))v\otimes w\), it follows that
\[
\varphi_1^\Phi(v_2\otimes w)
=\frac{1-\rho(h)\tau(g)}
       {\Phi^{\varepsilon^2}(g,h)\tau(\varepsilon^2)}
  x_{1,2}.
\]
Also, \eqref{eq:YD-projective-action} gives
\((\varepsilon g)\rhd v
=\rho(g)\Phi_g(\varepsilon,g)^{-1}v_1\).
Substituting these formulas into
\eqref{eq:Gamma3-32-c12}, and using
\eqref{eq:Gamma3-dim1-epsilon-square}, yields
\begin{equation}
\label{eq:Gamma3-dim1-noncentral-c12}
(\operatorname{id}_V\otimes\varphi_1^\Phi)
c_{1,2}^\Phi(e_0)
=\rho(g)\bigl(1-\rho(h)\tau(g)\bigr)
 \mathfrak b e_2.
\end{equation}
Together with the identity summand \(e_0\),
\eqref{eq:Gamma3-dim1-noncentral-double-braiding} and
\eqref{eq:Gamma3-dim1-noncentral-c12}
give \eqref{eq:Gamma3-dim1-three-elementary-vector}
by \eqref{eq:recursive-varphi-2}.
Finally, applying \eqref{eq:tensor-product} and
\eqref{eq:YD-projective-action} to
\(v_i=\varepsilon^i\rhd v\) and
\(x_{1,i}=\varepsilon^i\rhd(v\otimes w)\) gives
\[
\varepsilon\rhd e_i=\eta_i e_{i+1}
\qquad i\in\mathbb Z/3\mathbb Z,
\]
with \(\eta_i\) as in
\eqref{eq:Gamma3-dim2-eta-values}.
The definitions show that
\(\mathfrak a,\mathfrak b,\eta_0,\eta_1,\eta_2\)
are nonzero.
\end{proof}

\begin{lemma}
\label{lem:Gamma3-dim1-chain-identification}
For the chains \(\mathsf M_i\) in
\eqref{eq:Gamma3-dim2-M-chains}, let
\(\mu_i=\langle\Phi,\mathsf M_i\rangle\).  Then
\(\mu_0,\mu_1,\mu_2\) are nonzero, and the scalars \(m_i\) in
\eqref{eq:Gamma3-dim1-m-definition} satisfy
\begin{equation}
\label{eq:Gamma3-dim1-m-mu}
 \begin{aligned}
 m_0&=-\frac{\mu_0}{\rho(g)^2\rho(h)\tau(g)},\\
 m_1&=-\frac{\rho(g)\rho(h)\tau(g)}
 {1-\rho(h)\tau(g)}\mu_1,\\
 m_2&=\rho(g)\bigl(1-\rho(h)\tau(g)\bigr)\mu_2.
 \end{aligned}
\end{equation}
\end{lemma}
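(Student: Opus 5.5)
The plan is to compare the two sets of scalars directly, using only definitions already recorded. Lemma~\ref{lem:Gamma3-dim1-X2-summary} expresses the scalars in \eqref{eq:Gamma3-dim1-m-definition} as
\[
m_0=\frac{\eta_0}{A},\qquad m_1=\frac{A\eta_1}{B},\qquad m_2=B\eta_2,
\]
with
\[
A=-\rho(g)^2\rho(h)\tau(g)\mathfrak a,\qquad B=\rho(g)\bigl(1-\rho(h)\tau(g)\bigr)\mathfrak b.
\]
Here \(\mathfrak a,\mathfrak b,\eta_i\) are the nonzero scalars supplied by Lemma~\ref{lem:Gamma3-dim1-three-elementary-tensors}, namely those of \eqref{eq:Gamma3-dim2-mathfrak-a}, \eqref{eq:Gamma3-dim2-mathfrak-b} and \eqref{eq:Gamma3-dim2-eta-values}. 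On the other side, \eqref{eq:Gamma3-dim1-chain-evaluations} together with \eqref{eq:Gamma3-dim2-M-chains} gives
\[
\mu_0=\frac{\eta_0}{\mathfrak a},\qquad \mu_1=\frac{\mathfrak a\eta_1}{\mathfrak b},\qquad \mu_2=\mathfrak b\eta_2.
\]
These are nonzero because every factor is nonzero.

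The identities \eqref{eq:Gamma3-dim1-m-mu} should then follow by substitution.
\begin{itemize}
\item For \(m_0\), dividing \(\eta_0\) by \(A\) gives \(m_0=-\mu_0/(\rho(g)^2\rho(h)\tau(g))\).
\item For \(m_1\), the ratio \(A/B\) equals \(-\rho(g)\rho(h)\tau(g)\,\mathfrak a/\bigl((1-\rho(h)\tau(g))\mathfrak b\bigr)\), which yields the stated formula.
\item For \(m_2\), we get \(m_2=\rho(g)(1-\rho(h)\tau(g))\mu_2\) directly.
\end{itemize}
The assumption \(\rho(h)\tau(g)\neq1\) is needed for the division in the \(m_1\) formula.

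The main obstacle is confirming that \eqref{eq:Gamma3-dim1-chain-evaluations} still holds in the \(\dim\tau=1\) setting. The chains \(\mathsf E,\mathsf A,\mathsf B,\mathsf T_i\) were introduced in the \(\dim\tau=2\) subsection, but their evaluations are cocycle expressions that do not depend on \(\tau\). The only place \(\tau\) enters is the rewriting of \(\mathfrak b\): \eqref{eq:Gamma3-dim1-epsilon-square} converts \(1/\tau(\varepsilon^2)\) into \(\Phi_h(\varepsilon,\varepsilon)a_\varepsilon^2\), and \(a_\varepsilon=\langle\Phi,\mathsf E\rangle\). That rewriting is already built into Lemma~\ref{lem:Gamma3-dim1-three-elementary-tensors}. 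So I would check term by term that \(\langle\Phi,\mathsf B\rangle\) reproduces \eqref{eq:Gamma3-dim2-mathfrak-b}, with the \(2\mathsf E+L_h(\varepsilon,\varepsilon)\) part accounting for \(\Phi_h(\varepsilon,\varepsilon)a_\varepsilon^2\). The remaining evaluations are read off directly from the definitions of \(L\) and \(U\).
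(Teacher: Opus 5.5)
Your proposal is correct and is essentially the paper's proof. You read off $\mu_0=\eta_0/\mathfrak a$, $\mu_1=\mathfrak a\eta_1/\mathfrak b$, $\mu_2=\mathfrak b\eta_2$ from \eqref{eq:Gamma3-dim1-chain-evaluations} and \eqref{eq:Gamma3-dim2-M-chains}, and compare them with $m_0=\eta_0/A$, $m_1=A\eta_1/B$, $m_2=B\eta_2$, which come from the coefficients in \eqref{eq:Gamma3-dim1-three-elementary-vector}. Your added check that these chain evaluations are $\tau$-independent cocycle identities, and so carry over from the $\dim\tau=2$ subsection, is a detail the paper leaves implicit.
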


\begin{proof}
Equations~\eqref{eq:Gamma3-dim1-chain-evaluations} and
\eqref{eq:Gamma3-dim2-M-chains} give
\[
\mu_0=\frac{\eta_0}{\mathfrak a},
\qquad
\mu_1=\frac{\mathfrak a\eta_1}{\mathfrak b},
\qquad
\mu_2=\mathfrak b\eta_2.
\]
These scalars are nonzero.
 {Comparing the coefficients in
\eqref{eq:Gamma3-dim1-three-elementary-vector} with
\eqref{eq:Gamma3-dim1-m-definition} gives
\eqref{eq:Gamma3-dim1-m-mu}.}
\end{proof}

\section*{Data and code availability}
The GAP programs supporting the symbolic and finite-group calculations
are linked at their first use. The complete supplementary repository is
available at
\href{https://github.com/lbwheizi/rank2/tree/835a6ad3388e06c3e67cc71832f20c8c3cc4613a}
{commit \texttt{835a6ad3388e}}.
Each certificate directory contains a README file, archived output, and
a checksum manifest. These computations were performed with GAP 4.16.0.
\section*{Acknowledgements}

The author is grateful to Professor Gongxiang Liu for his guidance
and support throughout this project. The author also acknowledges
the assistance of ChatGPT in writing GAP programs to verify
complicated cocycle identities in appendices, finding the example in
Subsection~\ref{subsec:order16-G2-example}, and checking  the manuscript. The author assumes full responsibility for all the content in this article.
\bibliographystyle{plain}\small
	\bibliography{ref}

\end{document}